%%%
%%%	Identify the file, author, date, and other information. 
%%%
%%%

\documentclass{zlondon}

%%%%%%%%%%%%%%%%%%%%%%%%%%%%%%%%%%%%%%

% Next include any packages you need,
% and define any macros you use.
% The following are examples.  Yours may be different.

\usepackage{amsthm}
\usepackage{amsmath}
\usepackage{amsfonts}
\usepackage{amssymb}
\usepackage{mathrsfs}  
\usepackage{enumitem}
\usepackage{chngcntr}

\newcommand{\comp}{\between}
\newcommand{\incomp}{\,\bot\,}

\usepackage{stmaryrd}  

\usepackage{tikz}
\usepackage{pgf} 
\usetikzlibrary{patterns,automata,arrows,shapes,snakes,topaths,trees,backgrounds,positioning,through,calc}

\usepackage[all]{xy}

\newtheorem{theorem}{Theorem}[subsection]
\newtheorem{lemma}[theorem]{Lemma}
\newtheorem{proposition}[theorem]{Proposition}
\newtheorem{corollary}[theorem]{Corollary}

\theoremstyle{definition}
\newtheorem{definition}[theorem]{Definition}
\newtheorem{example}[theorem]{Example}
\newtheorem{question}[theorem]{Question}

\theoremstyle{remark}
\newtheorem{remark}[theorem]{Remark}

\newcommand{\inqvee}{\,\rotatebox[origin=c]{-90}{$\geqslant$}\,}
\newcommand{\Inqvee}{\rotatebox[origin=c]{-90}{$\geqslant$}}

%\numberwithin{equation}{section}

%\newcommand{\arrow}{{\rightarrow}}

%\newcommand{\limplies}{{\implies}}
%\newcommand{\limplies}{{\supset}}
%\newcommand{\land}{{\wedge}}
%\newcommand{\lor}{{\vee}}
%\newcommand{\lnot}{{\neg}}

\newcommand{\upR}{\textbf{up}-$\boldsymbol{R}$}

\newcommand{\RWin}{$\boldsymbol{R\mathord{\Leftrightarrow}}\textbf{\underline{win}}$}
\newcommand{\Rwinweak}{$\boldsymbol{R\mathord{\Rightarrow}}\textbf{win}$}
\newcommand{\Rrule}{$\boldsymbol{R}$-\textbf{rule}}

\newcommand{\Rdown}{$\boldsymbol{R}$-\textbf{down}}
\newcommand{\Rdense}{$\boldsymbol{R}$-\textbf{dense}}
\newcommand{\Rref}{$\boldsymbol{R}$-\textbf{refinability}}
\newcommand{\RrefPlus}{$\boldsymbol{R}$-\textbf{refinability}$^+$}
\newcommand{\RrefPlusPlus}{$\boldsymbol{R}$-\textbf{refinability}$^{++}$}

\newcommand{\SqForth}{\textit{$\sqsubseteq$-forth}}

\newcommand{\SqBack}{\textit{$\sqsubseteq$-back}}

\newcommand{\RForth}{\textit{$R$-forth}}

\newcommand{\SRBack}{\textit{$R$-back}}

\newcommand{\pRBack}{\textit{$p$-$R$-back}}
\newcommand{\pSqBack}{\textit{$p$-$\sqsubseteq$-back}}

\usepackage{comment}

\usepackage{datetime}

\usepackage{hyperref}

%%%%%%%%%%%%%%%%%%%%%%%%%%%%%%%%%%%%%%

% Next the information that will be used to identify you in your typeset document

%	Publisher / Editor stamp
%	\honour{London Project}	%%% AR: Default 
%	\jyear{2019}			%%% AR: Default
	\jyear{2024}			%%% AR: Print Publihsher info [i.e. `College Publication, London'] + year 
	\honour{\today}		%%% AR: Print the Editor's note `Last revised \today{}'
	\nopagenumber
	
%  The body of the tex file starts

\begin{document}

%  Information to be printed about the paper

\settimeformat{ampmtime}

\title{Possibility Semantics}
\titlerunning{Possibility Semantics}

\addauthor[wesholliday@berkeley.edu]{Wesley H. Holliday}{University of California, Berkeley}
\authorrunning{Wesley H. Holliday}

%  The usual making of the title

\maketitle

% Now the abstract

\begin{quote} \small Previous version in \textit{Selected Topics from Contemporary Logics}, ed.~Melvin Fitting, Volume 2 of Landscapes in Logic, College Publications, London, 2021, ISBN 97-1-84890-350-0, pp.~363-476. This version corrects \S~4.3.
\end{quote}

\noindent

\begin{quote} 
\noindent\small\textsc{Abstract.} In traditional semantics for classical logic and its extensions, such as modal logic, propositions are interpreted as subsets of a set, as in discrete duality, or as clopen sets of a Stone space, as in topological duality. A point in such a set can be viewed as a ``possible world,'' with the key property of a world being \textit{primeness}---a world makes a disjunction true only if it makes one of the disjuncts true---which classically implies  \textit{totality}---for each proposition, a world either makes the proposition true or makes its negation true. This chapter surveys a more general approach to logical semantics, known as \textit{possibility semantics}, which replaces possible worlds with possibly \textit{partial} ``possibilities.'' In classical possibility semantics, propositions are interpreted as regular open sets of a poset, as in set-theoretic forcing, or as compact regular open sets of an upper Vietoris space, as in the recent theory of ``choice-free Stone duality.'' The elements of these sets, viewed as possibilities, may be partial in the sense of making a disjunction true without settling which disjunct is true. We explain how possibilities may be used in semantics for classical logic and modal logics and generalized to semantics for intuitionistic logics. The goals are to overcome or deepen incompleteness results for traditional semantics, to avoid the nonconstructivity of traditional semantics, and to provide richer structures for the interpretation of new languages. \end{quote} 

%  Keywords, AMS classification, if appropriate
\noindent

\begin{quote} 
{\bf Keywords:} first-order logic, modal logic, provability logic, intuitionistic logic, inquisitive logic, Boolean algebra, regular open algebra, canonical extension, MacNeille completion, Heyting algebra, Stone duality, possible world semantics, Kripke frame, axiom of choice, forcing
\end{quote} 

\noindent
\begin{quote} 
{\bf AMS classification (2010):} 
%	2010 Mathematics Subject Classification:
03B10, 03B20, 03B45, 03B55, 06D20, 06D22, 03F45, 03G05, 06E25
\end{quote} 

%%%%%%%%%%%%%%%%%%%%%%%%%%%%%%%%%%%%%%

%  Now the body of the paper.  This part will probably
%  simply transfer from article style with minimal changes.

\newpage

\tableofcontents

\section{Introduction}\label{IntroSection}

Traditional semantics for classical logic and its extensions, such as modal logic, can be seen as based on two fundamental relationships:
\begin{enumerate}
\item Tarski's \cite{Tarski1935} discrete duality between complete and atomic Boolean algebras (CABAs) and sets;
\item Stone's \cite{Stone1936} topological duality between Boolean algebras (BAs) and Stone spaces.
\end{enumerate}
In the traditional approach, propositions are interpreted as subsets of a set, as in 1, or as clopen sets of a Stone space, as in 2. Negation, conjunction, and disjunction of propositions are interpreted as set-theoretic complementation, intersection, and union, respectively. As a consequence, the points in such sets can be viewed as ``possible worlds,'' with the key property of a possible world being:
\begin{itemize}
\item \textit{primeness}: a world makes a disjunction true only if it makes one of the disjuncts true.
\end{itemize}
This follows from the interpretation of disjunction as union. Given the interpretation of negation as complementation, classical worlds also have the property~of:
\begin{itemize}
\item \textit{totality}: for each proposition, a world either makes the proposition true or makes its negation true.
\end{itemize}
If we start with an algebra of propositions, then such worlds can be recovered as \textit{atoms}---if the algebra of propositions is a CABA---or \textit{prime filters}---if the algebra of propositions is an arbitrary BA and we avail ourselves of the Axiom of Choice.\footnote{The Prime Filter Theorem needed for Stone duality is weaker than the Axiom of Choice \cite{Halpern1971}, but still beyond ZF set theory \cite{Feferman1964} (as well as ZF with Dependent Choice \cite{Pincus1977}).}

In this chapter, we survey a more general approach to logical semantics, known as \textit{possibility semantics}, which replaces possible worlds with possibly \textit{partial} ``possibilities.'' The classical version of possibility semantics is based on the following two relationships:
\begin{enumerate}
\item[$1'$.] the representation of complete Boolean algebras (CBAs) using posets, exploited in set-theoretic forcing \cite{Takeuti1973};
\item[$2'$.] the duality between BAs and  upper Vietoris spaces (UV-spaces), developed in the recent theory of ``choice-free Stone duality'' \cite{BH2020}.
\end{enumerate}
In this approach, propositions are interpreted as \textit{regular open sets} of a poset, as in $1'$, or as \textit{compact regular open sets} of a UV-space, as in $2'$ (all of these notions will be defined in what follows). The partial order $\sqsubseteq$ in the poset of possibilities represents a relationship of \textit{refinement} between partial possibilities: roughly speaking, $x\sqsubseteq y$ means that $x$ contains all the information that $y$ does and possibly more. Instead of interpreting negation and disjunction as set-theoretic complementation and union, they are interpreted using the refinement relation $\sqsubseteq$:  
\begin{itemize}
\item a possibility $x$ makes the negation of a proposition true just in case no refinement $x'\sqsubseteq x$ makes the proposition true;
\item  a possibility $x$ makes the disjunction of two propositions true just in case for every refinement $x'\sqsubseteq x$ there is a further refinement $x''\sqsubseteq x'$ that makes one of the disjuncts true.
\end{itemize} 
As a result, a possibility may be neither total nor prime;  e.g., a possibility may settle the proposition ``$p$ or not $p$'' as true and yet not settle which disjunct is true, leaving this to be settled by refinements of the possibility.

By endowing posets of possibilities with additional structure, one can give possibility semantics for first-order logic, as done by Fine \cite{Fine1975} and van Benthem \cite{Benthem1981a}, and modal logics, as done by Humberstone \cite{Humberstone1981}, who coined the term `possibility semantics'. As we will see, for modal logic the move from worlds to possibilities allows us to overcome some incompleteness results for traditional~semantics. 

There is also a natural generalization of possibility semantics for intuitionistic logic. Possible world semantics for intuitionistic logic \cite{Kripke1965} is based on:
\begin{itemize}
\item[$3.$] the representation of certain complete Heyting algebras---namely those that are generated by completely join-prime elements---using posets, generalizing Birkhoff's \cite{Birkhoff1937} representation of finite distributive lattices.\footnote{In the case of topological duality, there is Esakia's \cite{Esakia85,Esakia2019} duality between Heyting algebras and Esakia spaces. For a possibility-based topological duality for Heyting algebras, several ideas have been proposed, but the advantages of different proposals are not yet clear.}
\end{itemize}
In this traditional approach, propositions are interpreted as downward-closed subsets (downsets) of the poset.\footnote{It is more common in the intuitionistic literature to think in terms of upward-closed subsets (upsets) (for an exception, see \cite{Dragalin1988}), but in line with the predominant practice in set-theoretic forcing, we think in terms of downsets in this chapter.} Intuitionistic implication and negation are interpreted using the relation $\sqsubseteq$ of the poset just as in possibility semantics. As a consequence, the elements of the poset need not have the property of \textit{totality}. However, disjunction is still interpreted as union, so the elements still have the \textit{primeness} property of worlds, explaining our use of the term ``possible world semantics for intuitionistic logic.'' By contrast, possibility semantics for intuitionistic logic is based on the following (for the definition of a nucleus, see \S~\ref{IntuitionisticSection}):
\begin{itemize}
\item[$3'$.] the representation of arbitrary complete Heyting algebras using posets \\ equipped with a nucleus on the algebra of downsets, in the manner of \\ Dragalin \cite{Dragalin1988}.
\end{itemize}
The nucleus can be realized concretely in various ways (the most concrete way uses a distinguished subrelation of the partial order), as we will review later. In this approach, propositions are interpreted as fixpoints of the nucleus. The disjunction of two propositions is interpreted by applying the nucleus to the union of the interpretations of the disjuncts. Since the result may strictly extend the union, a possibility may settle a disjunction as true without settling which disjunct is true, so the  primeness property need not hold. Possibility semantics for classical logic is the special case of this approach using the nucleus of double negation (see~\S~\ref{IntuitionisticSection}).

Possibility semantics for classical logic may also be seen as a special case of possibility semantics for \textit{orthologic}, the propositional logic of ortholattices. This is based on the representation of ortholattices using symmetric and reflexive compatibility relations (or symmetric and irreflexive orthogonality relations) on a set \cite{Goldblatt1974b,Goldblatt1975a,Holliday2021,McDonald2021}. Propositions are fixpoints of a closure operator defined using the compatibility relation. The disjunction of two propositions is interpreted by applying the closure operator to the union of the interpretations of the disjuncts. Classical possibility semantics is a special case where additional properties are assumed to hold of compatibility \cite[Example~3.16]{Holliday2021}. Rather than covering possibility semantics for orthologic here, we refer the reader to \cite{HM2021} for an introduction with applications to epistemic modals and conditionals.

There are at least three benefits in the move from worlds to possibilities:
\begin{enumerate}
\item In the setting of discrete duality, by lifting the restriction to \textit{atomic} CBAs, possibility semantics allows us to characterize logics that cannot be characterized using possible world semantics based on CABAs. Examples are given in \S\S~\ref{PropQuantSection}, \ref{BasicNeighSection}, \ref{FullRelFrameSection}, and \ref{QuantModalSection}.
\item In the setting of topological duality, while world-based topological dualities require nonconstructive choice principles, possibility-based topological dualities can be carried out in the setting of \textit{quasi-constructive mathematics}, defined as  ``mathematics that permits conventional rules of reasoning plus ZF + DC, but no stronger forms of Choice'' \cite[\S~14.76]{Schechter1996}.\footnote{In fact, ZF suffices for the dualities, though Dependent Choice may be useful in applications.}
\item Even when atomicity or nonconstructivity are not concerns (e.g., because finite algebras are sufficient for one's purposes), the extra refinement structure in possibility semantics makes possible the interpretation of \textit{extended languages} that cannot be handled in a purely world-based semantics. An example is given in \S~\ref{InquisitiveSection} with the language of \textit{inquisitive logic}.
\end{enumerate}

The rest of the chapter is organized as follows. In \S~\ref{PhilSection}, our philosophical preamble, we derive the essential elements of (classical) possibility semantics from a few basic axioms about truth and falsity. Successive subsections then explain possibility semantics for different algebras/logics: Boolean algebra/logic (\S~\ref{BooleanSection}), first-order logic (\S~\ref{FOSection}), modal algebra/logic (\S~\ref{ModalSection}), Heyting algebra/intuitionistic logic (\S~\ref{IntuitionisticSection}), and inquisitive logic (\S~\ref{InquisitiveSection}). \S\S~\ref{Intervals}-\ref{Bimodal} provide perspectives on possibility semantics from the point of view of interval semantics and traditional possible world semantics. In addition to surveying the basic theory and existing results for possibility semantics, we highlight a number of open problems. 

\subsection*{How to read this chapter}
This chapter is designed for the reader to dip in and out of sections based on the reader's interest.  The main prerequisites for later sections are \S\S~\ref{PosetsOfPoss}, \ref{CompleteBooleanSection}, and \ref{ArbitraryBooleanSection1}. Readers mainly interested in possibility semantics for propositional modal logic can skip \S~\ref{FOSection}; readers mainly interesting in possibility semantics for intuitionistic propositional logic can skip  \S\S~\ref{FOSection}-\ref{ModalSection}, etc.  Results from previous sections are cited explicitly rather than assumed. Theorems are numbered by subsection. Thus, e.g., Theorem 3.1.1 is the first theorem in \S~3.1. If Theorem 3.1.1 has multiple parts, we refer to Theorem 3.1.1.1 for the first part,  3.1.1.2 for the second,~etc.

\section{Philosophical explanation}\label{PhilSection}

In this section, we explain the core ideas of classical possibility semantics philosophically. This section is not meant to give set-theoretic mathematical definitions and results, so we do not use definition or theorem environments. But we do give ``proofs'' from axioms using classical logic, which could be fully formalized.

For further philosophical motivation, see \cite{Humberstone1981,Humberstone2015},  \cite[\S~6.44]{Humberstone2011}, and \cite{Garson2013,Hale2013,Rumfitt2015}. A comparison of possibility semantics to other semantics based on partial states, such as \textit{situation semantics} or \textit{truthmaker semantics}, is beyond the scope of this chapter; we recommend the surveys in \cite{Kratzer2019} and \cite{Fine2017} to interested readers.

\subsection{Partiality}

Unlike a possible world, a possibility may be \textit{partial}, in two related respects:

\begin{enumerate}
\item It may fail to settle the truth or falsity of a proposition (non-totality).
\item It may settle the truth or falsity of a proposition without settling exactly how the proposition is made true or false (non-primeness); e.g., it may settle that \textit{the particle is spin up or spin down} but not settle that \textit{the particle is spin up} and not settle that \textit{the particle is spin down}.\footnote{Hale \cite{Hale2013} calls these types of partiality \textit{global incompleteness} and \textit{local incompleteness}.}\end{enumerate}

\subsection{Refinement}\label{RefinementSection}

Some possibilities are less partial than others, inducing a relation of refinement: 

\begin{itemize}
\item[(A0)]A possibility $x$ is a \textbf{refinement} of a possibility $y$ (notation: $x\sqsubseteq y$) iff:
\begin{itemize}
\item[$\bullet$] $x$ settles as true every proposition that $y$ settles as true, and 
\item[$\bullet$] $x$ settles as false every proposition that $y$ settles as false.
\end{itemize}
\end{itemize}
The relation $\sqsubseteq$ is therefore \textit{reflexive} and \textit{transitive}. If we do not distinguish possibilities that settle as true/false exactly the same propositions, then we may also assume \textit{antisymmetry} for $\sqsubseteq$. Then $\sqsubseteq$ is a partial order.

\subsection{Truth and falsity}\label{TruthFalsity}

In possibility semantics, the notions of a possibility $x$ settling a proposition $\mathbf{P}$ as true or as false are related as stated in the following axioms:
\begin{itemize}
\item[(A1)] $x$ settles $\mathbf{P}$ as true iff no possibility refining $x$ settles $\mathbf{P}$ as false;
\item[(A2)] $x$ settles $\mathbf{P}$ as false iff no possibility refining $x$ settles $\mathbf{P}$ as true.
\end{itemize}

\noindent If one thinks of settling true as a kind of \textit{necessitation}, then one can see (A1) and (A2) as related to Aristotle's idea of the duality of necessity and possibility \cite{Malink2016}.

Using (A1) and (A2), we may argue that the $\sqsubseteq$ relation is \textit{separative}: 
\begin{itemize}
\item if $x$ is not a refinement of $y$ ($x\not\sqsubseteq y$), then there is a refinement $x'$ of $x$ that is incompatible with $y$, in the sense that $x'$ and $y$ have no common refinement. 
\end{itemize}

\begin{proof} If $x$ is not a refinement of $y$, then by (A0), there is a proposition $\mathbf{P}$ that (i) $y$ settles as true but $x$ does not or (ii) $y$ settles as false but $x$ does not. In case (i), it follows by (A1) that there is a refinement $x'$ of $x$ that settles $\mathbf{P}$ as false, which by (A2) implies that no refinement $x''$ of $x'$ settles $\mathbf{P}$ as true. It follows that no refinement $x''$ of $x'$ is a refinement of $y$, because  $y$ settles $\mathbf{P}$ as true and hence  any refinement of $y$ settles $\mathbf{P}$ as true by (A0). Thus, $x'$ is incompatible with $y$. In case (ii), the argument is analogous, flipping the roles of (A1) and (A2) and of truth and falsity.
\end{proof}

\subsection{Regular sets}\label{RegularSets}

From (A1) and (A2), we have as a consequence:
\begin{itemize}
\item[(C1)] $x$ settles $\mathbf{P}$ as true iff for every possibility $x'$ refining $x$, there is a possibility $x''$ refining $x'$ that settles $\mathbf{P}$ as true.
\end{itemize}
\begin{proof}From left to right, suppose $x$ settles $\mathbf{P}$ as true and $x'$ is a possibility refining $x$. Then by (A1), $x'$ does not settle $\mathbf{P}$ as false. Thus, by (A2), there is a possibility $x''$ refining $x'$ that settles $\mathbf{P}$ as true. From right to left, we prove the contrapositive. If $x$ does not settle $\mathbf{P}$ as true, then by (A1), there is a possibility $x'$ refining $x$ that settles $\mathbf{P}$ as false. Hence, by (A2), no possibility $x''$ refining $x'$ settles $\mathbf{P}$ as true.
\end{proof}

We say that a set $X$ of possibilities is \textbf{regular} if:
\begin{itemize}
\item $x\in X$ iff for every possibility $x'$ refining $x$, there is a possibility $x''$ refining $x'$ such that $x''\in X$.
\end{itemize}
We say that a set $X$ of possibilities \textbf{corresponds to} a proposition $\mathbf{P}$ if $X$ contains exactly the possibilities that settle $\mathbf{P}$ as true. Thus, as an immediate consequence of (C1), we have:
\begin{itemize}
\item[(C2)] If a set $X$ of possibilities corresponds to a proposition, then $X$ is regular. 
\end{itemize}

\subsection{Connectives}\label{ConnectivesSection}

Three final axioms relate truth and falsity for propositions formed using logical connectives:
\begin{itemize}
\item[(A3)] $x$ settles the negation of $\mathbf{P}$ as \textbf{true} iff $x$ settles $\mathbf{P}$ as \textbf{false};
\item[(A4)] $x$ settles the conjunction of $\mathbf{P}$ and $\mathbf{Q}$ as \textbf{true} iff $x$ settles $\mathbf{P}$ as \textbf{true} and $x$ settles $\mathbf{Q}$ as \textbf{true};
\item[(A5)] $x$ settles the disjunction of $\mathbf{P}$ and $\mathbf{Q}$ as \textbf{false} iff $x$ settles $\mathbf{P}$ as \textbf{false} and $x$ settles $\mathbf{Q}$ as \textbf{false}.
\end{itemize}

As a consequence of (A3) and (A0), assuming that every proposition has a negation, we can characterize refinement purely in terms of truth:
\begin{itemize}
\item[(C3)] A possibility $x$ is a refinement of a possibility $y$ iff $x$ settles as true every proposition that $y$ settles as true.
\end{itemize}

As a consequence of (A3) and (A2), we can give truth conditions for negation purely in terms of truth:

\begin{itemize}
\item[(C4)] $x$ settles the negation of $\mathbf{P}$ as \textbf{true} iff no possibility refining $x$ settles $\mathbf{P}$ as \textbf{true}.
\end{itemize}

Finally, as a consequence of (A5), (A1), and (A2), we can give truth conditions for disjunction purely in terms of truth:
\begin{itemize}
\item[(C5)] $x$ settles the disjunction of $\mathbf{P}$ and $\mathbf{Q}$ as \textbf{true} iff for every possibility $x'$ refining $x$, there is a possibility $x''$ refining $x'$ such that $x''$ settles $\mathbf{P}$ as true or $x''$ settles $\mathbf{Q}$ as true.
\end{itemize}

\begin{proof}From left to right, suppose $x$ settles the disjunction as true and $x'$ is a possibility refining $x$. Then by (A1), $x'$ does not settle the disjunction as false. Thus, by (A5), either $x'$ does not settle $\mathbf{P}$ as false, in which case by (A2) there is a possibility $x''$ refining $x'$ that settles $\mathbf{P}$ as true, or $x'$ does not settle $\mathbf{Q}$ as false, in which case by (A2) there is a possibility $x''$ refining $x'$ that settles $\mathbf{Q}$ as true. In either case, there is a possibility $x''$ refining $x'$ that settles $\mathbf{P}$ as true or settles $\mathbf{Q}$ as true. From right to left, we prove the contrapositive. Suppose $x$ does not settle the disjunction as true, so by (A1), there is a possibility $x'$ refining $x$ that settles the disjunction as false. It follows by (A5) that $x'$ settles $\mathbf{P}$ as false and settles $\mathbf{Q}$ as false, which by (A2) implies that no possibility $x''$ refining $x'$ settles $\mathbf{P}$ as true or settles $\mathbf{Q}$ as true.
\end{proof}

\subsection{Summary}

From (A0)-(A5), we have derived the essential elements of classical possibility semantics, which we will see in this chapter:
\begin{itemize}
\item there is a partially ordered set of possibilities;
\item propositions correspond to regular sets of possibilities;
\item truth conditions for negation and disjunction involve quantification over refinements as in (C3) and (C4).
\end{itemize}

\section{Boolean case}\label{BooleanSection}

In both classical possible world semantics and classical possibility semantics, propositions form a Boolean algebra (BA) under the operations of negation, conjunction, and disjunction. The key difference between these semantics is in how such BAs of propositions are represented. In this section, we review the ways of representing BAs used in possibility semantics. After some preliminaries concerning partially ordered sets in \S~\ref{PosetsOfPoss}, we review a well-known way of representing complete BAs in \S~\ref{CompleteBooleanSection}: we represent a complete BA as the BA of \textit{regular open sets} of a partially ordered set. The way of representing arbitrary BAs in \S\S~\ref{ArbitraryBooleanSection1}-\ref{TopFrameSection} is the basis for the recent theory of ``choice-free Stone duality''~\cite{BH2020}: we represent an arbitrary BA as the BA of \textit{compact regular open sets} of an appropriate topological space. It is a short step from these \textit{representations} to \textit{semantics} for formal languages: we simply interpret formulas as propositions in the BAs represented using our partially ordered sets or topological spaces. Thus, the algebraic and topological material in this section is at the core of possibility semantics.

\subsection{Posets of possibilities}\label{PosetsOfPoss}

A \textit{partially ordered set} (or \textit{poset}) is a pair $(S,\sqsubseteq)$ of a nonempty set $S$ and a partial order $\sqsubseteq$ on $S$. As in \S~\ref{PhilSection}, we think of elements of $S$ as possibilities, and for $x,y\in S$, we take $x\sqsubseteq y$ to mean that $x$ refines $y$. Define $x\sqsubset y$ to mean $x\sqsubseteq y$ and $x\neq y$. 

A set $U\subseteq S$ is a \textit{downward-closed subset} or \textit{downset} of $(S,\sqsubseteq)$ if $U$ is closed under refinement: for all $x\in U$ and $x'\sqsubseteq x$, we have $x'\in U$. For $x\in S$, the \textit{principal downset of $x$} is the set of all possibilities refining~$x$:
\[\mathord{\downarrow}x=\{y\in S\mid y\sqsubseteq x\}.\]

We are especially interested in posets in which every possibility can be properly refined (so there are no \textit{worlds} in the sense of Definition \ref{WorldDef}): for all $x\in S$, there is a $y\in S$ such that $y\sqsubset x$. Two running examples will be the full infinite binary tree and the collection of open intervals of $\mathbb{Q}$ ordered by inclusion.

\begin{example}\label{BinaryTree0} Consider the full infinite binary tree $2^{<\omega}$:
\begin{center}
\begin{tikzpicture}[yscale=1, ->,>=stealth',shorten >=1pt,shorten <=1pt, auto,node
distance=2cm,thick,every loop/.style={<-,shorten <=1pt}]
\tikzstyle{every state}=[fill=gray!20,draw=none,text=black]
\node[circle,draw=black!100,fill=black!100, label=above:$\epsilon$,inner sep=0pt,minimum size=.175cm] (0) at (0,0) {{}};
\node[circle,draw=black!100,fill=black!100, label=left:$0$,inner sep=0pt,minimum size=.175cm] (00) at (-2,-1) {{}};
\node[circle,draw=black!100,fill=black!100, label=right:$1$,inner sep=0pt,minimum size=.175cm] (01) at (2,-1) {{}};
\node[circle,draw=black!100,fill=black!100, label=left:$00$,inner sep=0pt,minimum size=.175cm] (000) at (-3,-2) {{}};
\node[circle,draw=black!100,fill=black!100, label=right:$01$,inner sep=0pt,minimum size=.175cm] (001) at (-1,-2) {{}};
\node[circle,draw=black!100,fill=black!100, label=left:$10$,inner sep=0pt,minimum size=.175cm] (010) at (1,-2) {{}};
\node[circle,draw=black!100,fill=black!100, label=right:$11$,inner sep=0pt,minimum size=.175cm] (011) at (3,-2) {{}};
\path (0) edge[->] node {{}} (00);
\path (0) edge[->] node {{}} (01);
\path (00) edge[->] node {{}} (000);
\path (00) edge[->] node {{}} (001);
\path (01) edge[->] node {{}} (010);
\path (01) edge[->] node {{}} (011);

\node at (-3.3,-2.3) {{$ \rotatebox{45}{\dots}$}};
\node at (-2.66,-2.34) {{$ \rotatebox{-45}{\dots}$}};

\node at (-1.3,-2.3) {{$ \rotatebox{45}{\dots}$}};
\node at (-0.66,-2.34) {{$ \rotatebox{-45}{\dots}$}};

\node at (3.34,-2.34) {{$ \rotatebox{-45}{\dots}$}};
\node at (2.7,-2.3) {{$ \rotatebox{45}{\dots}$}};

\node at (1.34,-2.34) {{$ \rotatebox{-45}{\dots}$}};
\node at (0.7,-2.3) {{$ \rotatebox{45}{\dots}$}};

\end{tikzpicture}
\end{center}
We may regard $2^{<\omega}$ as a poset $(S,\sqsubseteq)$ where $S$ is the set of all finite sequences of 0's and 1's and for any such sequences $\sigma$ and $\tau$, we have $\sigma\sqsubseteq\tau$ iff $\tau$ is an initial segment of $\sigma$. Thus, in the diagram above (and later diagrams), an arrow from $\tau$ to $\sigma$ indicates $\sigma\sqsubseteq\tau$. Viewing this as a poset of possibilities, each possibility may be taken to settle a finite sequence of ``yes or no'' questions. If $\sigma\sqsubseteq\tau$, then $\sigma$ answers all the questions that $\tau$ does in the same way as $\tau$ does but may answer additional questions on which $\tau$ is silent. No possibility settles all questions.
\end{example}

\begin{example}\label{IntervalEx} Consider the set  $S=\{(a,b)\mid a,b\in \mathbb{Q},a<b\}$ of all nonempty open intervals $(a,b)=\{x\in \mathbb{Q}\mid a<x<b\}$ of rational numbers. Let $\sqsubseteq$ be the inclusion order: $(a,b)\sqsubseteq (c,d)$ if $(a,b)\subseteq (c,d)$. Thus, we obtain infinite sequences of refinements from infinite chains of shrinking intervals:
\begin{center}
\begin{tikzpicture}[yscale=1, ->,>=stealth',shorten >=1pt,shorten <=1pt, auto,node
distance=2cm,thick,every loop/.style={<-,shorten <=1pt}]
\tikzstyle{every state}=[fill=gray!20,draw=none,text=black]

\node (c) at (0,0) {{}};
\node (l) at (-3,0) {{$($}};
\node (r) at (3,0) {{$)$}};
\path (l) edge[-] node {{}} (r);

\node (c1) at (0,-.75) {{}};
\node (l1) at (-2,-.75) {{$($}};
\node (r1) at (2,-.75) {{$)$}};
\path (l1) edge[-] node {{}} (r1);

\node (c2) at (0,-1.5) {{}};
\node (l2) at (-1,-1.5) {{$($}};
\node (r2) at (1,-1.5) {{$)$}};
\path (l2) edge[-] node {{}} (r2);

\node (c3) at (0,-2.25) {{}};
\node (l3) at (-.5,-2.25) {{$($}};
\node (r3) at (.5,-2.25) {{$)$}};
\path (l3) edge[-] node {{}} (r3);

\node at (0,-2.75) {{$ \rotatebox{90}{\dots}$}};

\path (c) edge[->] node {{}} (c1);
\path (c1) edge[->] node {{}} (c2);
\path (c2) edge[->] node {{}} (c3);

\end{tikzpicture}
\end{center}
Adopting the temporal interpretation of the poset $(S,\sqsubseteq)$ in \cite[p.~59]{Benthem1983a}, we may think of a possibility as settling that we are now temporally located in some stretch (or ``period'' or ``region'') of time. There is no possibility of a sharpest localization, i.e., no temporal ``instants.''
\end{example}

\subsection{Posets and complete Boolean algebras}\label{CompleteBooleanSection}

In this section, we review how a poset of possibilities gives rise to a (complete) Boolean algebra of propositions.

Given a poset $(S,\sqsubseteq)$, say that a subset $U\subseteq S$ is \textit{regular open} in $(S,\sqsubseteq)$  if
\begin{equation}U=\{x\in S\mid \forall x'\sqsubseteq x\;\exists x''\sqsubseteq x': x''\in U\}.\label{ROeq}\end{equation}
This is the same notion of regularity introduced in \S~\ref{RegularSets}. Equivalently, $U\subseteq S$ is regular open in $(S,\sqsubseteq)$ if $U$ satisfies:
\begin{itemize}
\item \textit{persistence}: if $x\in U$ and $x'\sqsubseteq x$, then $x'\in U$;
\item \textit{refinability}: if $x\not\in U$, then $\exists x'\sqsubseteq x$ $\forall x''\sqsubseteq x'$ $x''\not\in U$.
\end{itemize}
Persistence is just the condition of being a downset from \S~\ref{PosetsOfPoss}. If we think of $U$ as a \textit{proposition}, then persistence is built into our notion of refinement from \S~\ref{RefinementSection}. As for refinability, the philosophical explanation comes from \S~\ref{TruthFalsity}: if $x$ does not make $U$ true, then there must be a refinement $x'$ of $x$ that makes $U$~\textit{false}.

\begin{example}\label{BinaryTree0b} In the infinite binary tree from Example \ref{BinaryTree0}, every principal downset is regular open: persistence is immediate, and for refinability, if $y\not\in\mathord{\downarrow}x$, then there is a child $y'\sqsubseteq y$ such that for all $y''\sqsubseteq y'$, ${y''\not\in \mathord{\downarrow}x}$. The union of $\mathord{\downarrow}x$ and $\mathord{\downarrow}x'$ is also regular open provided $x$ and $x'$ are not children of the same node. By contrast, consider $\mathord{\downarrow}00 \cup \mathord{\downarrow}01$, represented by the area under the dashed curve:
\begin{center}
\begin{tikzpicture}[yscale=1, ->,>=stealth',shorten >=1pt,shorten <=1pt, auto,node
distance=2cm,thick,every loop/.style={<-,shorten <=1pt}]
\tikzstyle{every state}=[fill=gray!20,draw=none,text=black]
\node[circle,draw=black!100, label=above:$\epsilon$,inner sep=0pt,minimum size=.175cm] (0) at (0,0) {{}};
\node[circle,draw=black!100, label=left:$0$,inner sep=0pt,minimum size=.175cm] (00) at (-2,-1) {{}};
\node[circle,draw=black!100,fill=black!100, label=right:$1$,inner sep=0pt,minimum size=.175cm] (01) at (2,-1) {{}};
\node[circle,draw=black!100, label=left:$00$,inner sep=0pt,minimum size=.175cm] (000) at (-3,-2) {{}};
\node[circle,draw=black!100,fill=black!100, label=right:$01$,inner sep=0pt,minimum size=.175cm] (001) at (-1,-2) {{}};
\node[circle,draw=black!100,fill=black!100, label=left:$10$,inner sep=0pt,minimum size=.175cm] (010) at (1,-2) {{}};
\node[circle,draw=black!100,fill=black!100, label=right:$11$,inner sep=0pt,minimum size=.175cm] (011) at (3,-2) {{}};
\path (0) edge[->] node {{}} (00);
\path (0) edge[->] node {{}} (01);
\path (00) edge[->] node {{}} (000);
\path (00) edge[->] node {{}} (001);
\path (01) edge[->] node {{}} (010);
\path (01) edge[->] node {{}} (011);

\node at (-3.3,-2.3) {{$ \rotatebox{45}{\dots}$}};
\node at (-2.66,-2.34) {{$ \rotatebox{-45}{\dots}$}};

\node at (-1.3,-2.3) {{$ \rotatebox{45}{\dots}$}};
\node at (-0.66,-2.34) {{$ \rotatebox{-45}{\dots}$}};

\node at (3.34,-2.34) {{$ \rotatebox{-45}{\dots}$}};
\node at (2.7,-2.3) {{$ \rotatebox{45}{\dots}$}};

\node at (1.34,-2.34) {{$ \rotatebox{-45}{\dots}$}};
\node at (0.7,-2.3) {{$ \rotatebox{45}{\dots}$}};

\draw[dashed,-] (-4.1,-2.55) arc (180:0: 2.1cm and 1.15cm);

\end{tikzpicture}
\end{center}
\noindent The set $\mathord{\downarrow}00 \cup \mathord{\downarrow}01$ is not regular open: for $0\not\in \mathord{\downarrow}00 \cup \mathord{\downarrow}01$, yet there is no  $y\sqsubseteq 0$ such that for all $z\sqsubseteq y$, $z\not\in \mathord{\downarrow}00 \cup \mathord{\downarrow}01$, so refinability fails for $\mathord{\downarrow}00 \cup \mathord{\downarrow}01$. Another set that is not regular open is $U=\{\sigma\in S\mid \sigma\mbox{ contains at least one }1\}$, represented by the filled-in black nodes in the diagram above: for $\epsilon\not\in U$, yet there is no $y\sqsubseteq \epsilon$ such that for all $z\sqsubseteq y$, $z\not\in U$, so refinability fails for $U$.\end{example}

\begin{example}\label{IntervalEx2} In the poset of intervals from Example \ref{IntervalEx}, every principal downset is regular open. Moreover, if $(a_i,b_i)$ and $(a_j,b_j)$ are such that ${b_i<a_j}$, then $U=\mathord{\downarrow}(a_i,b_i)\cup \mathord{\downarrow}(a_j,b_j)$ is regular open. For suppose $(x,y)\not\in U$, so that $(x,y)\not\subseteq (a_i,b_i)$ and $(x,y)\not\subseteq (a_j,b_j)$. We choose a subinterval $(x',y')\subseteq (x,y)$ as follows. If $(x,y)\cap ((a_i,b_i)\cup (a_j,b_j))=\varnothing$, let $(x',y')=(x,y)$. Otherwise, suppose $(x,y)\cap ((a_i,b_i)\cup (a_j,b_j))\neq\varnothing$. If $(x,y)\cap (a_i,b_i)\neq\varnothing$ and $(x,y)\cap (a_j,b_j)\neq\varnothing$, let $(x',y')=(b_i,a_j)$. In a diagram (for the case where $a_i<x$ and $y<b_j$):
\begin{center}
\begin{tikzpicture}[yscale=1, ->,>=stealth',shorten >=1pt,shorten <=1pt, auto,node
distance=2cm,thick,every loop/.style={<-,shorten <=1pt}]
\tikzstyle{every state}=[fill=gray!20,draw=none,text=black]

\node (c) at (0,0) {{}};
\node (l) at (-2,0) {{$a_i$}};
\node (r) at (2,0) {{$b_i$}};
\path (l) edge[-] node {{}} (r);

\node (c1) at (5,0) {{}};
\node (l1) at (3,0) {{$a_j$}};
\node (r1) at (7,0) {{$b_j$}};
\path (l1) edge[-] node {{}} (r1);

\node (c2) at (2.5,1) {{}};
\node (l2) at (1,1) {{$x$}};
\node (r2) at (4,1) {{$y$}};
\path (l2) edge[-] node {{}} (r2);

\node (c3) at (2.5,-1) {{}};
\node (l3) at (2,-1) {{$x'$}};
\node (r3) at (3,-1) {{$y'$}};
\path (l3) edge[-] node {{}} (r3);

\path (c2) edge[->] node {{}} (c3);

\end{tikzpicture}
\end{center}
If $(x,y)\cap (a_i,b_i)=\varnothing$, then let $(x',y')=(x,a_j)$ if $x<a_j$ and $(x',y')=(b_j,y)$ if $a_j\leq x$ and $b_j<y$; similarly, if $(x,y)\cap (a_j,b_j)=\varnothing$, then let $(x',y')=(x,a_i)$ if $x<a_i$ and $(x',y')=(b_i,y)$ if $a_i\leq x$ and $b_i<y$. Then observe that for all $(x'',y'')\subseteq (x',y')$, we have $(x'',y'')\not\in U$, so refinability holds for $U$. 

By contrast, $V=\mathord{\downarrow}(a,b)\cup\mathord{\downarrow}(b,c)$ is \textit{not} regular open: for ${(a,c)\not\in V}$, yet for every subinterval of $(a,c)$, there is a further subinterval that is either a subinterval of $(a,b)$ or a subinterval of $(b,c)$, so refinability fails for $V$.\end{example}

We now recall how the regular open sets of a poset form a Boolean algebra of propositions. The following result is widely used in  forcing in set theory \cite{Takeuti1973} and can be considered the crucial starting point for possibility semantics.

\begin{theorem}\label{FirstThm} $\,$\textnormal{
\begin{enumerate}
\item\label{FirstThm1} For any poset $(S,\sqsubseteq)$, the collection $\mathcal{RO}(S,\sqsubseteq)$ of regular open sets, ordered by inclusion, forms a complete Boolean algebra with the following operations:
\begin{eqnarray*}
\neg U&=&\{x\in S\mid \forall x'\sqsubseteq x\;\, x'\not\in U\}; \\
\bigwedge\{U_i\mid i\in I\}&=&\bigcap \{U_i\mid i\in I\};\\
\bigvee\{U_i\mid i\in I\}&=&\{x\in S\mid \forall x'\sqsubseteq x\;\exists x''\sqsubseteq x': x''\in \bigcup \{U_i\mid i\in I\}\}.
\end{eqnarray*}
Elements of $\mathcal{RO}(S,\sqsubseteq)$ are precisely those $U\subseteq S$ such that $U=\neg\neg U$.
\item\label{FirstThm2} Given any Boolean algebra $(B,\leq)$, let $B_+$ be the result of deleting the bottom element of $B$, and let $\leq_+$ be the restriction of $\leq$ to $B_+$. Then $(B,\leq)$ embeds into $\mathcal{RO}(B_+,\leq_+)$ as a regular subalgebra;\footnote{Recall that a \textit{regular subalgebra} of a Boolean algebra $\mathcal{B}$ is a subalgebra $\mathcal{B}'$ such that if a set $S$ of elements of $\mathcal{B}'$ has a join $a$ in $\mathcal{B}'$, then $a$ is also the join of $S$ in $\mathcal{B}$. Contrast Theorem \ref{FirstThm}.\ref{FirstThm2} with the fact that not every Boolean $\sigma$-algebra can be represented as a regular subalgebra of a powerset algebra (see, e.g., \cite[Ch.~25]{Givant2009}).} and if $(B,\leq)$ is complete, then  it is isomorphic to $\mathcal{RO}(B_+,\leq_+)$. 
\end{enumerate}}\end{theorem}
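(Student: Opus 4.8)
My plan is to read the regularity condition \eqref{ROeq} through the operation $\neg$ itself. For \emph{any} $U\subseteq S$, unfolding the displayed definition of $\neg$ twice gives
\[\neg\neg U=\{x\in S\mid \forall x'\sqsubseteq x\;\exists x''\sqsubseteq x':\,x''\in U\},\]
which is exactly the right-hand side of \eqref{ROeq}; hence ``$U$ is regular open'' means precisely ``$U=\neg\neg U$'', establishing the last sentence of \ref{FirstThm1}. Next I would record three elementary closure facts, each a direct check against \eqref{ROeq}: (i) if $U$ is a downset then $\neg U$ is regular open (persistence is immediate; for refinability, if $x\notin\neg U$ take $x'\sqsubseteq x$ with $x'\in U$, whence $x'$ and all its refinements lie in $U$, so outside $\neg U$) --- in particular $\neg U\in\mathcal{RO}(S,\sqsubseteq)$ for regular open $U$, and $\neg\neg U\in\mathcal{RO}(S,\sqsubseteq)$ for arbitrary $U$; (ii) an arbitrary intersection of regular open sets is regular open (persistence is clear; refinability at a point $x$ transfers from whichever member fails to contain $x$); (iii) $\bigcup\{U_i\mid i\in I\}$ is a downset, so $\neg\neg\bigcup\{U_i\mid i\in I\}$ is regular open, and it is the least regular open set containing every $U_i$ (it contains each $U_i$ since $U_i\subseteq\bigcup U_i\subseteq\neg\neg\bigcup U_i$, and if $V\in\mathcal{RO}(S,\sqsubseteq)$ with every $U_i\subseteq V$ then $\bigcup U_i\subseteq V$, so $\neg\neg\bigcup U_i\subseteq\neg\neg V=V$). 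These show $(\mathcal{RO}(S,\sqsubseteq),\subseteq)$ is a complete lattice with $\bigwedge=\bigcap$, join as displayed, top $S$, bottom $\varnothing$. For the Boolean structure I would check $U\cap\neg U=\varnothing$ and $\neg(U\cup\neg U)=\varnothing$ (both obtained by testing the defining condition at the point $x$ itself), giving $U\wedge\neg U=\varnothing$ and $U\vee\neg U=S$; distributivity is the only clause that is not a one-line verification, and the cleanest route is to observe that $U\mapsto\neg\neg U$ is a nucleus on the frame of all downsets of $(S,\sqsubseteq)$ (equivalently, $\mathcal{RO}(S,\sqsubseteq)$ is the algebra of regular opens of the Alexandrov space whose opens are the downsets), so its fixpoint algebra is a frame, and a frame with complements is a complete Boolean algebra. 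I expect the only ``obstacle'' here to be bookkeeping: confirming that each displayed operation genuinely lands in $\mathcal{RO}(S,\sqsubseteq)$ and computes the abstract lattice operation.

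\textbf{Part 2.} Define $f\colon B\to\mathcal{P}(B_+)$ by $f(b)=\mathord{\downarrow}b=\{c\in B_+\mid c\leq b\}$ (so $f(0)=\varnothing$). First I would show $f(b)\in\mathcal{RO}(B_+,\leq_+)$: persistence is immediate, and refinability is exactly \emph{separativity} of $(B_+,\leq_+)$ --- if $c\not\leq b$ then $c\wedge\overline b\ne 0$ (with $\overline b$ the complement of $b$ in $B$), and every nonzero $c''\leq c\wedge\overline b$ has $c''\not\leq b$, so $c''\notin f(b)$. Then $f$ is an order-embedding ($f(b_1)\subseteq f(b_2)$ forces $b_1\leq b_2$, else $0\ne b_1\wedge\overline{b_2}\in f(b_1)\setminus f(b_2)$) and preserves $\wedge$, $\neg$, $0$, $1$; the only clause needing an argument is $f(\neg b)=\neg f(b)$, i.e.\ $\{c\in B_+\mid c\leq\overline b\}=\{c\in B_+\mid\forall c'\leq_+ c\;\;c'\not\leq b\}$, where $\subseteq$ is immediate and $\supseteq$ uses that $c\not\leq\overline b$ produces the witness $c\wedge b\in f(b)$. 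Since $\vee$ is term-definable from $\wedge$ and $\neg$, this makes $f$ an injective Boolean homomorphism.

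The heart of the remaining claims is the lemma: \emph{if $U\subseteq B_+$ is a downset whose join $\bigvee U$ exists in $B$, then $\neg\neg U=f(\bigvee U)$}, whose key algebraic input is the De Morgan identity $\overline{\bigvee U}=\bigwedge_{d\in U}\overline d$, valid precisely because $\bigvee U$ is assumed to exist. For $\subseteq$: if $c\in\neg\neg U$ but $c\not\leq\bigvee U$, then $c\wedge\overline{\bigvee U}\ne 0$, and any nonzero $c'\leq c\wedge\overline{\bigvee U}$ has $c'\wedge d=0$ for every $d\in U$, so no $c''\leq_+ c'$ lies in $U$ (else $c'\wedge c''=c''\ne 0$), contradicting $c\in\neg\neg U$. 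For $\supseteq$: if $0\ne c\leq\bigvee U$ and $c'\leq_+ c$, then $c'\wedge d\ne 0$ for some $d\in U$ (otherwise $c'\leq\overline{\bigvee U}$, whence $c'\leq\bigvee U\wedge\overline{\bigvee U}=0$, impossible), and $c'\wedge d\leq_+ c'$ lies in the downset $U$; hence $c\in\neg\neg U$. Applying the lemma with $U=\bigcup_i f(b_i)$ in the case where $\bigvee_i b_i$ exists in $B$ yields $\bigvee^{\mathcal{RO}}_i f(b_i)=\neg\neg\bigcup_i f(b_i)=f(\bigvee_i b_i)$, which is exactly the assertion that $f$ embeds $B$ as a \emph{regular} subalgebra of $\mathcal{RO}(B_+,\leq_+)$. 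Finally, if $(B,\leq)$ is complete, then every $U\in\mathcal{RO}(B_+,\leq_+)$ is a downset with $\bigvee U$ existing in $B$ and with $U=\neg\neg U$ by Part 1, so $U=f(\bigvee U)$; thus $f$ is onto, and a bijective Boolean homomorphism is an isomorphism. The main obstacle in Part 2 is the lemma --- all else is the routine check that the principal-downset map respects the Boolean operations --- and within the lemma the delicate point is exactly where the existence of the relevant join (and, in the last step, completeness) is used, namely in De Morgan.
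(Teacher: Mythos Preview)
Your proposal is correct and, for Part~\ref{FirstThm2}, follows essentially the same route as the paper: the map $b\mapsto\mathord{\downarrow}_+b$, the check that it is a Boolean embedding, the key join-preservation lemma, and the surjectivity argument from completeness are all the same. The only cosmetic differences are that you phrase the key step via infinite De~Morgan ($\overline{\bigvee U}=\bigwedge_{d\in U}\overline d$) where the paper invokes the equivalent join-infinite distributive law, and that you state the lemma for arbitrary downsets with existing join rather than specifically for unions of principal downsets; for Part~\ref{FirstThm1} the paper gives no proof (deferring to Tarski's theorem and the nucleus framework of \S~\ref{IntuitionisticSection}), so your sketch there simply fills in what the paper leaves as background.
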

\begin{proof}[Sketch of 2] We claim that the map $\varphi: B\to \mathcal{RO}(B_+,\leq_+)$ given by $\varphi(b)=\mathord{\downarrow}_+b:=\{b'\in B_+\mid b'\leq_+ b\}$ is a Boolean embedding such that for any family $\{a_i\mid i\in I\}\subseteq B$, if the join of $\{a_i\mid i\in I\}$ exists in $(B,\leq)$, then 
\begin{equation} \mathord{\downarrow}_+\bigvee \{a_i\mid i\in I\} = \bigvee \{\mathord{\downarrow}_+a_i \mid i\in I\}.\label{JoinComm}\end{equation}
It is easy to see that $\mathord{\downarrow}_+b\in\mathcal{RO}(B_+,\leq_+)$, $\varphi$ is injective, and $ \mathord{\downarrow}_+ \neg b = \neg \mathord{\downarrow}_+b$. As for (\ref{JoinComm}), let $a= \bigvee \{a_i\mid i\in I\}$ and $A=\bigcup \{\mathord{\downarrow}_+a_i\mid i\in I\}$. Suppose $b\in B_+$ is not in the left-hand side of (\ref{JoinComm}), so $b\not\leq a$. Then $b':=b\wedge\neg a \neq 0$, and for all $b''\leq_+ b'$, we have $b''\not\in A$. Thus, $b$ is not in the right-hand side by the definition of join in $\mathcal{RO}(B_+,\leq_+)$. Now suppose $b\in B_+$ is in the left-hand side, so $b\leq a$, and consider $b'\leq_+ b$. Suppose for contradiction that there is no $b''\leq_+ b'$ with $b''\in A$. It follows that $b'\wedge a_i=0$ for each $i\in I$, so $\bigvee \{b'\wedge a_i \mid i\in I\}=0$. Then by the join-infinite distributive law for Boolean algebras, $b'\wedge a=0$, contradicting $b'\leq b\leq a$ and the fact that $b'\in B_+$. Thus, we conclude there is a $b''\leq_+ b'$ such that $b''\in A$. Hence $b$ is in the right-hand side.

Since for each $U\in \mathcal{RO}(B_+,\leq_+)$, we have $U=\bigvee \{ \mathord{\downarrow}_+ a\mid a\in U\}$, if $(B,\leq)$ is complete, then (\ref{JoinComm}) implies that $\varphi$ is surjective and hence an isomorphism. \end{proof}

Thus, any complete Boolean algebra can be represented as the regular opens of a poset. The poset $(B_+,\leq_+)$ is an example of a \textit{separative} poset, as in \S~\ref{TruthFalsity}.

\begin{definition}\label{SepDef} A poset $(S,\sqsubseteq)$ is \textit{separative} if  every principal downset $\mathord{\downarrow}x$ is regular open in $(S,\sqsubseteq)$; equivalently, for all $x,y\in S$, if $y\not\sqsubseteq x$, then there is a $z\sqsubseteq y$ such that $\mathord{\downarrow}z\cap\mathord{\downarrow}x=\varnothing$.
\end{definition}
\noindent The posets in Examples \ref{BinaryTree0} and \ref{IntervalEx} are also separative. By contrast, a linear order with more than one element is an example of a non-separative poset.

Separative posets have the following good properties for our purposes.

\begin{proposition}\label{SepProp} $\,$\textnormal{
\begin{enumerate}
\item\label{SepProp1} For each poset, its regular open algebra is isomorphic to the regular open algebra of a separative poset.
\item\label{SepProp2} The restriction of a separative poset to a regular open subset is again a separative poset.
\item\label{SepProp3} If $(S,\sqsubseteq)$ is a separative poset, then the map $x\mapsto \mathord{\downarrow}x$ is a dense order-embedding of the poset $(S,\sqsubseteq)$ into the poset $(\mathcal{RO}(S,\sqsubseteq),\subseteq)$.\footnote{By a \textit{dense} order-embedding of a poset $(S,\sqsubseteq)$ into a poset $(S',\sqsubseteq')$, we mean an order-embedding $e$ of $(S,\sqsubseteq)$ into $(S',\sqsubseteq')$ such that $e[S]$ is a dense subset of $(S',\sqsubseteq')$, i.e., for every $x'\in S'$ there is a $y'\sqsubseteq ' x'$ such that $y'\in e[S]$.}
\item\label{SepProp4} If $(S,\sqsubseteq)$ is a separative poset and $U,V\in\mathcal{RO}(S,\sqsubseteq)$ are such that $U\vee V$ is infinite, then $U$ is infinite or $V$ is infinite.
\end{enumerate}}
\end{proposition}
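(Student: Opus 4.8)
The plan is to dispatch parts 1--3 quickly using Theorem \ref{FirstThm} and the definitions, and then concentrate on part 4, which is where the real work lies.

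For part 1, given a poset $(S,\sqsubseteq)$ I would set $B=\mathcal{RO}(S,\sqsubseteq)$; this is a complete Boolean algebra by Theorem \ref{FirstThm}.\ref{FirstThm1}, hence isomorphic to $\mathcal{RO}(B_+,\leq_+)$ by Theorem \ref{FirstThm}.\ref{FirstThm2}, and $(B_+,\leq_+)$ is separative because each $\mathord{\downarrow}_+b$ is regular open in it --- the first observation in the proof sketch of Theorem \ref{FirstThm}.\ref{FirstThm2}. For part 2, let $(S,\sqsubseteq)$ be separative and $U\in\mathcal{RO}(S,\sqsubseteq)$ nonempty; since $U$ is a downset, the principal downset of any $z\in U$ computed inside $(U,\sqsubseteq{\restriction}U)$ is just $\mathord{\downarrow}z$, so given $x,y\in U$ with $y\not\sqsubseteq x$ I would apply separativity of $(S,\sqsubseteq)$ to get $z\sqsubseteq y$ with $\mathord{\downarrow}z\cap\mathord{\downarrow}x=\varnothing$, note $z\in U$, and conclude that $x,z$ witness separativity of $(U,\sqsubseteq{\restriction}U)$ via the incompatibility form of Definition \ref{SepDef}. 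For part 3, the map $x\mapsto\mathord{\downarrow}x$ takes values in $\mathcal{RO}(S,\sqsubseteq)$ exactly by separativity; it is an order-embedding since $x\sqsubseteq y$ gives $\mathord{\downarrow}x\subseteq\mathord{\downarrow}y$ by transitivity, while $\mathord{\downarrow}x\subseteq\mathord{\downarrow}y$ gives $x\in\mathord{\downarrow}y$, i.e.\ $x\sqsubseteq y$, by reflexivity (injectivity then follows by antisymmetry); and it has dense image since any nonempty $U\in\mathcal{RO}(S,\sqsubseteq)$ contains some $x$ and hence contains $\mathord{\downarrow}x$, the bottom element $\varnothing$ being the usual exception.

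For part 4 I would argue the contrapositive: assuming $U$ and $V$ are finite, show $U\vee V$ is finite. Recalling that $\neg\neg T=\{x\mid\forall x'\sqsubseteq x\,\exists x''\sqsubseteq x': x''\in T\}$ --- which for $T=U\cup V$ is exactly the join formula of Theorem \ref{FirstThm}.\ref{FirstThm1} --- we have $U\vee V=\neg\neg(U\cup V)$ with $T:=U\cup V$ finite, so it suffices to prove the claim: in a separative poset $\neg\neg T$ is finite whenever $T$ is finite. For this I would define $g\colon\neg\neg T\to\mathcal{P}(T)$ by $g(x)=T\cap\mathord{\downarrow}x$; since $\mathcal{P}(T)$ is finite it is enough that $g$ be injective. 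Given distinct $x,x'\in\neg\neg T$, antisymmetry gives (say) $x\not\sqsubseteq x'$, so separativity yields $z\sqsubseteq x$ with $\mathord{\downarrow}z\cap\mathord{\downarrow}x'=\varnothing$. Because $\neg\neg T$ is a downset (being regular open) and $x\in\neg\neg T$, we get $z\in\neg\neg T$; instantiating the condition defining $\neg\neg T$ at $z$ itself produces $t\sqsubseteq z$ with $t\in T$. Then $t\sqsubseteq z\sqsubseteq x$ puts $t\in g(x)$, while $t\in\mathord{\downarrow}z$ and $\mathord{\downarrow}z\cap\mathord{\downarrow}x'=\varnothing$ keep $t\notin g(x')$; hence $g(x)\neq g(x')$.

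The main obstacle is precisely this finiteness claim --- parts 1--3 are bookkeeping, and the only mild subtlety there is the degenerate role of $\varnothing$ in the density clause of part 3. In part 4 the step I expect to have to be most careful about is checking that the argument uses only two inputs: that $\neg\neg T$ is a downset (true of any regular open set, in particular in the relevant case where $T=U\cup V$ is \emph{not} itself regular open) and separativity (which is what supplies the incompatible refinement $z$). Getting these right yields the clean conclusion that the trace map $g$ embeds $\neg\neg T$ into $\mathcal{P}(T)$, so in fact $|\neg\neg T|\le 2^{|T|}$, and hence $U\vee V$ is finite when $U$ and $V$ are.
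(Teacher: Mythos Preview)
Your treatment of parts 1--3 matches the paper's (the paper simply says parts 2--3 are ``easy to check'' and for part 1 gives the same $(B_+,\leq_+)$ construction plus an alternative separative quotient). Your aside about $\varnothing$ in part 3 is a fair observation; the paper's footnoted definition of density, read literally, has the same issue, so this is at worst a shared wrinkle, not a defect in your argument.

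Part 4 is where your proof genuinely diverges from the paper's, and your route is the cleaner one. The paper argues directly: it uses part 2 to restrict to $S=U\vee V$, assumes $U$ is finite, partitions $S$ by the equivalence $x\sim y\Leftrightarrow \mathord{\downarrow}x\cap U=\mathord{\downarrow}y\cap U$ into finitely many cells, picks an infinite cell $I$, and then shows that $x\mapsto \mathord{\downarrow}x\cap V$ is injective on $I$ (using separativity together with $x\sim y$ to force the incompatible refinement $z$ into $\neg U$, hence into $V$), whence $V$ is infinite. Your contrapositive instead isolates a reusable lemma---in a separative poset, $|\neg\neg T|\le 2^{|T|}$ for any $T$---via the single trace map $x\mapsto T\cap\mathord{\downarrow}x$ on $\neg\neg T$. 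This avoids the restriction step, the appeal to part 2, and the two-stage partition argument, and it yields an explicit cardinality bound the paper's proof does not. Both proofs hinge on the same mechanism (separativity produces an incompatible refinement, which one then pushes down into the ``target'' set), but you apply it once to $T=U\cup V$ rather than splitting the work between $U$ and $V$.
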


\begin{proof} For part \ref{SepProp1}, given $(S,\sqsubseteq)$, the poset $(\mathcal{RO}(S,\sqsubseteq)_+,\subseteq_+)$ obtained in Theorem \ref{FirstThm}.\ref{FirstThm2} from the BA $\mathcal{RO}(S,\sqsubseteq)$ is such a separative poset. More directly, one can take the quotient of $(S,\sqsubseteq)$ by the equivalence relation $\sim$ defined by: $x\sim y$ iff (i) $\forall x'\sqsubseteq x$ $\exists x''\sqsubseteq x': x''\sqsubseteq y$ and (ii) $\forall y'\sqsubseteq y$ $\exists y''\sqsubseteq y': y''\sqsubseteq x$. 

Parts \ref{SepProp2}-\ref{SepProp3} are also easy to check.

For part \ref{SepProp4},\footnote{The proof is nearly the same as in \cite[Lemma 2.3]{BH2020} for the case of $V=\neg U$.} by part \ref{SepProp2} we may assume without loss of generality that $S=U\vee V$. Let $x\sim y$ iff $\mathord{\downarrow}x\cap U=\mathord{\downarrow}y\cap U$. If $U$ is finite, then $\sim$ partitions the infinite set $S$ into finitely many cells, one of which must be infinite. Call it $I$, and define $f\colon I\to \wp(V)$ by $f(x)=\mathord{\downarrow}x\cap  V$. We claim that $f$ is injective. For if $x,y\in I$ and $y\not\sqsubseteq x$, then by separativity, there is a $z\in \mathord{\downarrow}y$ such that $\mathord{\downarrow}z\cap\mathord{\downarrow}x=\varnothing$. It follows, since $\mathord{\downarrow}x\cap U=\mathord{\downarrow}y\cap U$, that $\mathord{\downarrow}z\cap U=\varnothing$, so $z\in \neg U$, which with $z\in U\vee V$ implies $z\in V$. Thus, $z\in f(y)$ but $z\not\in f(x)$, so $f$ is injective. Then since $I$ is infinite, it follows that $\wp(V)$ is infinite and hence $V$ is infinite\end{proof}

Part \ref{FirstThm2} of Theorem \ref{FirstThm} can be restated in the form of Proposition \ref{MacNeilleProp} below.  Recall that the MacNeille completion of a Boolean algebra $B$ is the unique (up to isomorphism) complete Boolean algebra $B^*$ such that there is a dense embedding of $B$ into $B^*$, i.e., a Boolean embedding of $B$ into $B^*$ such that every element of $B^*$ is a join of images of elements of $B$ (see, e.g., \cite[Ch.~25]{Givant2009}).\footnote{Compare the realization of $B^*$ as in Proposition \ref{MacNeilleProp} with its realization as the regular open algebra of the Stone space of $B$ (see Remark \ref{MacNeilleCanonicalComparison}).} 

\begin{proposition}\label{MacNeilleProp} \textnormal{For any Boolean algebra $B$, $\mathcal{RO}(B_+,\leq_+)$ is (up to isomorphism) the MacNeille completion of $B$.}
\end{proposition}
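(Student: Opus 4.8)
The plan is to recognize Proposition \ref{MacNeilleProp} as essentially a repackaging of Theorem \ref{FirstThm}.\ref{FirstThm2} together with the standard uniqueness of the MacNeille completion. Recall from the discussion preceding the proposition that $B^*$ is characterized up to isomorphism as \emph{any} complete Boolean algebra admitting a dense Boolean embedding of $B$, where ``dense'' means that every element of the codomain is a join of images of elements of $B$. So it suffices to exhibit such an embedding into $\mathcal{RO}(B_+,\leq_+)$ and to note that $\mathcal{RO}(B_+,\leq_+)$ is complete. Completeness is immediate from Theorem \ref{FirstThm}.\ref{FirstThm1} applied to the poset $(B_+,\leq_+)$ (which is nonempty as soon as $B$ is nontrivial; the degenerate one-element algebra is vacuous and can be set aside), and Theorem \ref{FirstThm}.\ref{FirstThm2} supplies the Boolean embedding $\varphi\colon B\to\mathcal{RO}(B_+,\leq_+)$ given by $\varphi(b)=\mathord{\downarrow}_+b$.

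The one substantive point is density. Given $U\in\mathcal{RO}(B_+,\leq_+)$, every $a\in U$ lies in $B_+\subseteq B$, and since $U$ is a downset we have $\mathord{\downarrow}_+a\subseteq U$ for each such $a$; conversely each $x\in U$ belongs to $\mathord{\downarrow}_+x$, so $U=\bigcup\{\mathord{\downarrow}_+a\mid a\in U\}$. As this union is already regular open, namely equal to $U$ itself, it coincides with the join of the family $\{\mathord{\downarrow}_+a\mid a\in U\}$ computed in $\mathcal{RO}(B_+,\leq_+)$; hence $U=\bigvee\{\varphi(a)\mid a\in U\}$ exhibits $U$ as a join of images of elements of $B$. (This is exactly the observation used at the end of the sketch of Theorem \ref{FirstThm}.\ref{FirstThm2}.) Thus $\varphi$ is a dense Boolean embedding of $B$ into the complete Boolean algebra $\mathcal{RO}(B_+,\leq_+)$, and by uniqueness of the MacNeille completion we conclude $\mathcal{RO}(B_+,\leq_+)\cong B^*$.

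I do not anticipate a genuine obstacle. The only two things to handle with a little care are (i) the trivial algebra, for which $B_+$ is empty and the statement is degenerate, and (ii) making explicit the step where a set-theoretic union of principal downsets that happens to be regular open is recognized as the $\mathcal{RO}$-join of those downsets, so that set-theoretic density upgrades to density in the Boolean-algebra sense. Both are routine, and the latter is already implicit in the proof of Theorem \ref{FirstThm}.
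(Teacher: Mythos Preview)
Your argument is correct, but it differs from the paper's. You invoke the uniqueness characterization of the MacNeille completion stated just before the proposition: since $\mathcal{RO}(B_+,\leq_+)$ is complete and the map $b\mapsto\mathord{\downarrow}_+b$ from Theorem~\ref{FirstThm}.\ref{FirstThm2} is a dense Boolean embedding (using the identity $U=\bigvee\{\mathord{\downarrow}_+a\mid a\in U\}$ already noted in that sketch), uniqueness yields the isomorphism. The paper instead establishes a literal equality between the standard cut-based construction and the regular open construction: it shows that for every $A\subseteq B$, $A^{u\ell}\setminus\{0\}=\neg\neg A$, whence $\{A\setminus\{0\}\mid A=A^{u\ell}\}$ coincides as a poset with $\mathcal{RO}(B_+,\leq_+)$. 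Your route is shorter given what has already been proved, while the paper's route is more explicit---it identifies the two constructions on the nose rather than up to an abstract isomorphism, and does not rely on the uniqueness theorem for the MacNeille completion.
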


\begin{proof} For $A\subseteq B$, let $A^u$ be the set of all upper bounds of $A$ and $A^\ell$ the set of all lower bounds of $A$. Recall that the MacNeille completion of $B$ (as a poset) can be constructed as $(\{A \mid A\subseteq B, A=A^{u\ell}\},\subseteq)$. Note that this is isomorphic to $(\{A\setminus\{0\} \mid A\subseteq B, A=A^{u\ell}\},\subseteq)$. Now we claim that for any $A\subseteq B$,
\begin{equation}A^{u\ell}\setminus\{0\}=\neg\neg A\label{MacEq}\end{equation}
where $\neg\neg A=\{b\in B_+\mid \forall b'\leq_+ b\;\exists b''\leq_+ b':b''\in A \}$. Since elements of $\mathcal{RO}(B_+,\leq_+)$ are precisely those $A\subseteq B$ such that $A=\neg\neg A$, equation (\ref{MacEq}) implies that  $(\{A\setminus\{0\} \mid A\subseteq B, A=A^{u\ell}\},\subseteq)= (\mathcal{RO}(B_+,\leq_+),\subseteq)$. To prove (\ref{MacEq}), let $b$ be a nonzero element of $B$. If $b$ is not in the left-hand side of (\ref{MacEq}), then there is some $a\in A^u$ such that $b\not\leq a$, which implies $b\wedge \neg a\neq 0$. Since $a\in A^u$, for every $c\in A$, $c\leq a$ and hence $c\wedge\neg a=0$. Thus, setting $b'=b\wedge\neg a$, we have $b'\leq_+ b$ and for all $ b''\leq_+b'$, $b''\not\in A$. Hence $b$ is not in the right-hand side. Conversely, suppose $b$ is not in the right-hand side of (\ref{MacEq}), so there is some $b'\leq_+b$ such that for all $b''\leq_+b'$, $b''\not\in A$. It follows that for every $c\in A$, $b'\wedge c=0$, so $c\leq \neg b'$. Thus, $\neg b'\in A^u$, so $b'\not\in A^{u\ell}$ and then $b\not\in A^{u\ell}$. Hence $b$ is not in the left-hand side.
\end{proof}

The reason for calling sets satisfying (\ref{ROeq}) ``regular open'' is the following. In a topological space, a \textit{regular open} set is an open set $U$ such that \[U=\mathsf{int}(\mathsf{cl}(U))\]
where $\mathsf{int}$ and $\mathsf{cl}$ are the interior and closure operations, respectively. Any poset $(S,\sqsubseteq)$ may be regarded as a topological space $(S,\tau)$, where the family $\tau$ of open sets is the family $\mathsf{Down}(S,\sqsubseteq)$ of all downsets of $(S,\sqsubseteq)$. Such spaces are $T_0$, i.e., for any two distinct points, there is an open set containing one point but not the other, as well as Alexandroff, i.e., the intersection of any family of opens is open. Indeed, $T_0$ Alexandroff spaces are in one-to-one correspondence with posets. Recall that the \textit{specialization order} of a space $(S,\tau)$ is the binary relation on $S$ defined by: $x\leqslant y$ iff for every $U\in\tau$, $x\in U$ implies $y\in U$. Given a $T_0$ Alexandroff space $(S,\tau)$ with specialization order $\leqslant$, whose converse is $\geqslant$, the poset $(S,\geqslant)$ is such that $(S,\mathsf{Down}(S,\geqslant))=(S,\tau)$.\footnote{This is a case where thinking in terms of the family of all upward-closed sets would be more convenient, but Theorem \ref{FirstThm}.\ref{FirstThm2} is a case where thinking in terms of downsets is more convenient, as we do not have to flip the relation of the Boolean algebra. There is no winning in choosing up or down.} For any poset $(S,\sqsubseteq)$, the interior and closure operations of the space $(S,\mathsf{Down}(S,\sqsubseteq))$ are given by
\begin{eqnarray*}
\mathsf{int}(U)&=&\{x\in S\mid \forall x'\sqsubseteq x\;\, x'\in U\}\\
\mathsf{cl}(U)&=&\{x\in S\mid \exists x'\sqsubseteq x: x'\in U\},
\end{eqnarray*}
so that 
\[\mathsf{int}(\mathsf{cl}(U))=\{x\in S\mid \forall x'\sqsubseteq x\,\exists x''\sqsubseteq x': x''\in U\}.\]
Thus, a subset $U$ of a poset is regular open in the topological space $(S,\mathsf{Down}(S,\sqsubseteq))$ iff $U$ is regular open as defined above Theorem \ref{FirstThm}. 

Now Theorem \ref{FirstThm}.\ref{FirstThm1} is a special case of the following result of Tarski.

\begin{theorem}[\cite{Tarski1937,Tarski1938}]\label{TarskiThm} \textnormal{For any topological space $X$, the collection $\mathcal{RO}(X)$ of regular open sets of $X$, ordered by inclusion, forms a complete Boolean algebra with the following operations:
\begin{eqnarray*}
\neg U&=&\mathsf{int}(X\setminus U); \\
\bigwedge \{U_i\mid i\in I\}&=&\mathsf{int}(\bigcap \{U_i\mid i\in I\});  \\
 \bigvee \{U_i\mid i\in I\}&=&\mathsf{int}(\mathsf{cl}(\bigcup \{U_i\mid i\in I\}).
 \end{eqnarray*}}
\end{theorem}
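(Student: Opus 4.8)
The plan is to realize $\mathcal{RO}(X)$ as the fixpoints of the operator $r(U):=\mathsf{int}(\mathsf{cl}(U))$ on the complete lattice $\mathcal{O}(X)$ of open sets of $X$ (ordered by $\subseteq$, with $\bigwedge_i U_i=\mathsf{int}(\bigcap_i U_i)$ and $\bigvee_i U_i=\bigcup_i U_i$), and then to verify that this lattice of fixpoints is complemented and distributive, hence a complete Boolean algebra. First I would check that $r$ restricted to $\mathcal{O}(X)$ is a closure operator: it is inflationary on open sets ($U\subseteq\mathsf{int}(\mathsf{cl}(U))$ because $U$ is open and $U\subseteq\mathsf{cl}(U)$), monotone, and idempotent, the last via the identity $\mathsf{cl}(\mathsf{int}(\mathsf{cl}(U)))=\mathsf{cl}(U)$ for open $U$ (one inclusion from $\mathsf{int}(\mathsf{cl}(U))\subseteq\mathsf{cl}(U)$, the other from $U\subseteq\mathsf{int}(\mathsf{cl}(U))$). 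Its fixpoints are exactly the regular open sets. Invoking the standard fact that the fixpoints of a closure operator $j$ on a complete lattice $L$ again form a complete lattice, with $\bigwedge^{L_j}S=\bigwedge^{L}S$ and $\bigvee^{L_j}S=j(\bigvee^{L}S)$, I conclude that $\mathcal{RO}(X)$ is a complete lattice whose meets are $\mathsf{int}(\bigcap_i U_i)$, whose joins are $\mathsf{int}(\mathsf{cl}(\bigcup_i U_i))$, with bottom $\varnothing$ and top $X$ --- matching the displayed formulas for $\bigwedge$ and $\bigvee$.

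Next I would record that for regular open $U,V$ the meet is just $U\cap V$: it lies below both, and conversely $\mathsf{int}(\mathsf{cl}(U\cap V))\subseteq\mathsf{int}(\mathsf{cl}(U))\cap\mathsf{int}(\mathsf{cl}(V))=U\cap V$. For complementation, put $\neg U:=\mathsf{int}(X\setminus U)=X\setminus\mathsf{cl}(U)$. Since $X\setminus U$ is closed, $\mathsf{cl}(\mathsf{int}(X\setminus U))\subseteq X\setminus U$, so $r(\neg U)\subseteq\neg U$ and $\neg U$ is regular open. Then $U\wedge\neg U=U\cap(X\setminus\mathsf{cl}(U))=\varnothing$, and $U\cup\neg U$ is dense in $X$ --- any nonempty open $V$ either meets $U$ or, failing that, satisfies $V\subseteq\mathsf{int}(X\setminus U)=\neg U$ --- so $U\vee\neg U=\mathsf{int}(\mathsf{cl}(U\cup\neg U))=X$. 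Hence $\mathcal{RO}(X)$ is complemented.

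The step I expect to require the most care is distributivity. I would first prove the set-theoretic lemma that $U\cap\mathsf{cl}(A)\subseteq\mathsf{cl}(U\cap A)$ whenever $U$ is open (if $x\in U\cap\mathsf{cl}(A)$ and $V$ is an open neighbourhood of $x$, then $U\cap V$ is one too, so $(U\cap V)\cap A\neq\varnothing$). It follows that for $U$ regular open and $A$ open, the open set $U\cap\mathsf{int}(\mathsf{cl}(A))$ is contained in $U\cap\mathsf{cl}(A)\subseteq\mathsf{cl}(U\cap A)$, hence in $\mathsf{int}(\mathsf{cl}(U\cap A))$; the reverse inclusion is immediate since $U\cap A\subseteq A$ gives $\mathsf{int}(\mathsf{cl}(U\cap A))\subseteq\mathsf{int}(\mathsf{cl}(A))$ and $U\cap A\subseteq U$ regular open gives $\mathsf{int}(\mathsf{cl}(U\cap A))\subseteq U$. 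Taking $A=V\cup W$ for regular open $V,W$, this yields $U\wedge(V\vee W)=U\cap\mathsf{int}(\mathsf{cl}(V\cup W))=\mathsf{int}(\mathsf{cl}(U\cap(V\cup W)))=\mathsf{int}(\mathsf{cl}((U\wedge V)\cup(U\wedge W)))=(U\wedge V)\vee(U\wedge W)$.

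Finally, a bounded, distributive, complemented lattice is a Boolean algebra, and complements in it are unique; combined with the completeness from the first step, this shows that $\mathcal{RO}(X)$ is a complete Boolean algebra whose complement, meets and joins are exactly $\neg U=\mathsf{int}(X\setminus U)$, $\bigwedge_i U_i=\mathsf{int}(\bigcap_i U_i)$ and $\bigvee_i U_i=\mathsf{int}(\mathsf{cl}(\bigcup_i U_i))$, as claimed. (One could instead observe that $r=\neg\neg$ is the double-negation nucleus on the complete Heyting algebra $\mathcal{O}(X)$ and appeal to the general fact that the fixpoints of that nucleus form a Boolean algebra, but that route relies on the theory of nuclei developed only later in the chapter.)
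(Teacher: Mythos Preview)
Your argument is correct and complete. The paper itself does not prove this theorem: it is cited as a classical result of Tarski, and the only hint at a proof is the remark immediately following the statement that it ``can in turn be seen to follow from a more general theorem about nuclei on Heyting algebras'' (Theorem~\ref{FixpointThm}), namely that the double-negation nucleus on a locale has a Boolean algebra of fixpoints. You explicitly acknowledge this alternative in your parenthetical and choose instead to give a self-contained point-set argument: verifying directly that $r=\mathsf{int}\circ\mathsf{cl}$ is a closure operator on $\mathcal{O}(X)$, that $\neg U=\mathsf{int}(X\setminus U)$ supplies complements, and---the only nontrivial step---that the key identity $U\cap r(A)=r(U\cap A)$ for $U$ regular open and $A$ open yields distributivity. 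What your route buys is that the result is established here and now, without forward reference to \S~\ref{IntuitionisticSection}; what the paper's remark buys is a conceptual explanation of \emph{why} the regular opens are Boolean (double negation) and a single theorem that simultaneously covers the intuitionistic generalizations in \S~\ref{IntuitionisticSection}. Both are legitimate; yours is the more elementary.
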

\noindent Theorem \ref{TarskiThm} can in turn be seen to follow from a more general theorem about nuclei on Heyting algebras, explained in \S~\ref{IntuitionisticSection} (Theorem \ref{FixpointThm}).

Tarski \cite{Tarski1935} also observed that for any set $W$, its powerset $\wp(W)$ is a complete and atomic Boolean algebra (CABA), and for any CABA $B$ whose set of atoms is $\mathsf{At}(B)$, $\wp(\mathsf{At}(B))$ is isomorphic to $B$.\footnote{For another route to CABAs based on postulates about consistent sets of propositions, see~\cite{FritzForthcoming}.} Let us relate this representation of CABAs to the representation of complete Boolean algebras (CBAs) in Theorem \ref{FirstThm}. For this we introduce the following notion, inspired by the philosophers' conception of a ``possible world'' as maximally specific and hence having no proper refinements.

\begin{definition}\label{WorldDef} A \textit{world} in a poset $(S,\sqsubseteq)$ is an element $x\in S$ such that for all $x'\in S$, if $x'\sqsubseteq x$ then $x'=x$.\end{definition}

\begin{proposition} \textnormal{Let $(S,\sqsubseteq)$ be a poset in which for every $x\in S$, there is a world $x'\sqsubseteq x$. Then $\mathcal{RO}(S,\sqsubseteq)$ is isomorphic to the powerset algebra $\wp(W)$, where $W$ is the set of worlds of $(S,\sqsubseteq)$.}\end{proposition}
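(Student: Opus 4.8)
The plan is to show that the restriction map $\varphi\colon\mathcal{RO}(S,\sqsubseteq)\to\wp(W)$ given by $\varphi(U)=U\cap W$ is a Boolean isomorphism. The density-of-worlds hypothesis (every $x\in S$ has a world $x'\sqsubseteq x$) will be used exactly once, to prove the following claim: for every $U\in\mathcal{RO}(S,\sqsubseteq)$ and every $x\in S$, $x\in U$ iff $\mathord{\downarrow}x\cap W\subseteq U$. Left-to-right is immediate from persistence. For the converse I argue contrapositively: if $x\notin U$, then by refinability there is $x'\sqsubseteq x$ with $\mathord{\downarrow}x'\cap U=\varnothing$; choosing a world $w\sqsubseteq x'$, we get $w\in\mathord{\downarrow}x\cap W$ with $w\notin U$, so $\mathord{\downarrow}x\cap W\not\subseteq U$. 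I will also repeatedly use that a world $w$ has $\mathord{\downarrow}w=\{w\}$ by Definition~\ref{WorldDef}, so that $\mathord{\downarrow}w\cap W=\{w\}$.

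From the claim, $\varphi$ is injective, since $U$ is recovered from $\varphi(U)$ as $\{x\in S\mid\mathord{\downarrow}x\cap W\subseteq\varphi(U)\}$. For surjectivity, given $A\subseteq W$ put $U_A=\{x\in S\mid\mathord{\downarrow}x\cap W\subseteq A\}$. This set is persistent because $x'\sqsubseteq x$ gives $\mathord{\downarrow}x'\cap W\subseteq\mathord{\downarrow}x\cap W$, and it is refinable because if $x\notin U_A$ then some world $w\sqsubseteq x$ has $w\notin A$, and then the only $w''\sqsubseteq w$ is $w$ itself, with $w\notin U_A$ (as $\{w\}\not\subseteq A$), so $w$ witnesses refinability at $x$. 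Hence $U_A\in\mathcal{RO}(S,\sqsubseteq)$, and $\varphi(U_A)=A$ because for $w\in W$ we have $w\in U_A$ iff $\{w\}=\mathord{\downarrow}w\cap W\subseteq A$ iff $w\in A$.

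Finally, $\varphi$ is a Boolean homomorphism: being a bijection, it suffices that $\varphi$ and $\varphi^{-1}$ both preserve $\subseteq$, and an order-isomorphism of Boolean algebras is a Boolean isomorphism. That $U\subseteq V$ implies $\varphi(U)\subseteq\varphi(V)$ is trivial; conversely, if $U\cap W\subseteq V\cap W$ and $x\in U$, the claim gives $\mathord{\downarrow}x\cap W\subseteq U\cap W\subseteq V\cap W\subseteq V$, whence $x\in V$ by the claim again. (Alternatively one can verify directly that $\varphi$ commutes with the operations of Theorem~\ref{FirstThm}.\ref{FirstThm1}, using that $w\in\neg U\iff w\notin U$ and $w\in\bigvee_{i\in I}U_i\iff w\in\bigcup_{i\in I}U_i$ for worlds $w$, both immediate from $\mathord{\downarrow}w=\{w\}$.) The only substantive ingredient is the claim, and the place where the hypothesis is genuinely needed is its nontrivial direction (and the refinability of $U_A$): without enough worlds a regular open set need not be determined by the worlds it contains — e.g.\ the poset of Example~\ref{BinaryTree0} has no worlds at all, yet $\mathcal{RO}(2^{<\omega})$ is far from trivial.
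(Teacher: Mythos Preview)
Your proof is correct and takes exactly the same approach as the paper: the isomorphism $U\mapsto U\cap W$. The paper gives only a one-line sketch indicating this map, while you have carefully filled in all the details (the key claim characterizing membership in $U$ via worlds below, injectivity, surjectivity via $U_A$, and order-preservation in both directions).
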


\begin{proof}[Proof sketch] The isomorphism sends $U\in \mathcal{RO}(S,\sqsubseteq)$ to $U\cap W$.\end{proof}

We conclude this section with a basic fact about complete BAs whose proof shows (i) how the representation in Theorem \ref{FirstThm} can be used to prove facts about complete BAs and (ii) how every poset has the same regular open algebra as the union of a \textit{poset of only worlds} and a \textit{poset with no worlds}.

\begin{proposition} \textnormal{Any complete BA $B$ is the product of a complete and atomic BA and a complete and atomless BA.}
\end{proposition}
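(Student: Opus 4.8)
The plan is to use the representation theorem (Theorem \ref{FirstThm}.\ref{FirstThm2}) together with the decomposition of a poset into its world-part and worldless-part hinted at in the text just before the statement. Given a complete Boolean algebra $B$, we realize it as $\mathcal{RO}(S,\sqsubseteq)$ where $(S,\sqsubseteq)=(B_+,\leq_+)$ is separative. We then partition $S$ into $W$, the set of worlds of $(S,\sqsubseteq)$, together with $S_0=S\setminus (\mathord{\downarrow}W)$, the set of possibilities having no world below them; but more carefully, we want each piece to be a regular open set so that its regular open algebra is a factor of $B$. The natural choice is to let $W$ be the set of worlds (which is automatically a downset, since a world has only itself below it, and is regular open in a separative poset), and let $S_1 = \neg W$ be the regular open complement of $W$ in $\mathcal{RO}(S,\sqsubseteq)$.

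First I would check that $W$ and $S_1=\neg W$ are complementary elements of $\mathcal{RO}(S,\sqsubseteq)$, so that by the standard fact about Boolean algebras $B\cong (W] \times (S_1]$, where $(W]$ and $(S_1]$ are the principal ideals, each of which is itself a complete Boolean algebra (the relative algebra below a fixed element of a complete BA is complete). Then, invoking Proposition \ref{SepProp}.\ref{SepProp2}, the restriction of the separative poset $(S,\sqsubseteq)$ to the regular open set $W$ is again a separative poset, and its regular open algebra is isomorphic to $(W]$; likewise for $S_1$. So it remains to show: (a) $(W,\sqsubseteq{\restriction}W)$, being a poset in which every element is a world, has regular open algebra a CABA — indeed $(W,\sqsubseteq{\restriction}W)$ is an antichain, so $\mathcal{RO}(W,\sqsubseteq{\restriction}W)=\wp(W)$ by the Proposition on powerset algebras just above; and (b) $(S_1,\sqsubseteq{\restriction}S_1)$ is a poset with no worlds, so its regular open algebra is atomless.

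For (b), the key point is that $S_1=\neg W$ contains no worlds of $(S,\sqsubseteq)$ by construction, but I must check that it also contains no worlds \emph{relative to the subposet} — that is, that for every $x\in S_1$ there is $x'\sqsubset x$ with $x'\in S_1$. This is where I expect the main obstacle, and it is the crux of the argument. If $x\in \neg W$, then $x$ is not a world, so there is $x'\sqsubset x$; but $x'$ need not lie in $\neg W$. However, since $\neg W$ is regular open and $x\in\neg W$, persistence gives that \emph{every} $x'\sqsubseteq x$ lies in $\neg W$ — wait, that is exactly persistence/downward closure, so in fact any proper refinement of $x$ already lies in $S_1$. So the obstacle dissolves: $\neg W$ is a downset, hence closed under refinement, so every proper refinement of a point in $S_1$ is again in $S_1$, and since no point of $\neg W$ is a world of $(S,\sqsubseteq)$, each has a proper refinement, which then witnesses that it is not a world of the subposet either. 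Then an atomless regular open algebra follows: an atom of $\mathcal{RO}(S_1,\sqsubseteq{\restriction}S_1)$ would have to be $\mathord{\downarrow}_{S_1} z$ for a minimal $z$ (using separativity and density from Proposition \ref{SepProp}.\ref{SepProp3}), but there are no minimal elements, so $\mathcal{RO}(S_1,\sqsubseteq{\restriction}S_1)$ is atomless. Assembling (a) and (b) with the product decomposition $B\cong \wp(W)\times \mathcal{RO}(S_1,\sqsubseteq{\restriction}S_1)$ completes the proof.
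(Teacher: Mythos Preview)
There is a genuine gap: the set $W$ of worlds is \emph{not} in general regular open in $(B_+,\leq_+)$, contrary to your parenthetical claim. The issue is refinability. Take $B=\wp(\{1,2\})$, so $B_+=\{\{1\},\{2\},\{1,2\}\}$ and $W=\{\{1\},\{2\}\}$. Then $\{1,2\}\notin W$, but every $x'\leq_+\{1,2\}$ has an atom below it, so there is no $x'\leq_+\{1,2\}$ all of whose refinements miss $W$. Hence $\neg\neg W=B_+\neq W$. More generally, whenever $B$ has an atom below some non-atom (e.g., any atomic $B$ with $|B|>2$), $W$ fails to be regular open, so $W$ is not an element of $\mathcal{RO}(S,\sqsubseteq)$ and your decomposition $B\cong (W]\times(\neg W]$ does not get off the ground.

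The fix is small: replace $W$ by its regular open closure $U:=\neg\neg W$. Then $U$ and $\neg U=\neg W$ are genuine complements in $\mathcal{RO}(S,\sqsubseteq)$, and your argument for (b) that $(\neg W,\sqsubseteq)$ has no minimal elements (hence $\mathcal{RO}(\neg W,\sqsubseteq)$ is atomless) goes through verbatim. For (a), observe that by definition every $x\in\neg\neg W$ has a world below it, and the worlds of the subposet $(\neg\neg W,\sqsubseteq)$ coincide with the worlds of $(S,\sqsubseteq)$; so the Proposition immediately preceding this one gives $\mathcal{RO}(\neg\neg W,\sqsubseteq)\cong\wp(W)$, which is complete and atomic.

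With this correction your argument is essentially the paper's, organized differently. The paper first passes to the subposet $B_\star=\{x\in B_+:\text{no world lies strictly below }x\}$, which one checks equals $W\cup\neg W$, shows $\mathcal{RO}(B_+,\leq_+)\cong\mathcal{RO}(B_\star,\leq_\star)$, and then decomposes $B_\star$ as the disjoint union of its worlds $A=W$ and its worldless part $C=\neg W$. Your (corrected) route avoids the auxiliary poset $B_\star$ by working directly with the Boolean complement pair $\neg\neg W,\neg W$ inside $\mathcal{RO}(B_+,\leq_+)$; the price is that the ``atomic'' factor is realized on $\neg\neg W$ rather than on $W$ itself.
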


\begin{proof} Let $(B_+,\leq_+)$ be the poset given by Theorem \ref{FirstThm}.\ref{FirstThm2}. Where $x<_+y$ iff $x\leq_+y$ and $y\not\leq_+x$, let \[B_\star=\{x\in B_+\mid \forall x' <_+ x \;\exists x''<_+x'\},\] and let $\leq_\star$ be the restriction of $\leq_+$ to $B_\star$. Thus, we have \textit{deleted all possibilities that are properly refined by worlds}. Then it is easy to check the following:
\begin{enumerate}
\item The map $U\mapsto U\cap B_\star$ is an isomorphism from $\mathcal{RO}(B_+,\leq_+)$ to $\mathcal{RO}(B_\star,\leq_\star)$.
\item $B_\star$ is the union of its ``atomic'' part $A=\{x\in B_\star\mid \neg\exists x' <_\star x\}$ and its ``atomless'' part $C=\{x\in B_\star\mid \forall x'\leq_\star x \;\exists x''<_\star x'\}$.
\item Where $\leq_A$ and $\leq_C$ are the restrictions of $\leq_\star$ to $A$ and $C$, respectively, the BAs $\mathcal{RO}(A,\leq_A)$ and $\mathcal{RO}(C,\leq_C)$ are atomic and atomless, respectively.
\item The map $(U,V)\mapsto U\cup V$ is an isomorphism from $\mathcal{RO}(A,\leq_A)\times \mathcal{RO}(C,\leq_C)$ to $\mathcal{RO}(B_\star,\leq_\star)$.
\end{enumerate}
Putting the above facts together, $B$ is isomorphic to $\mathcal{RO}(A,\leq_A)\times \mathcal{RO}(C,\leq_C)$.
\end{proof}

\subsection{Possibility frames and Boolean algebras}\label{ArbitraryBooleanSection1}

We now move from the representation of complete Boolean algebras to that of arbitrary Boolean algebras. We consider two closely related representations: the first, given in this section, is in the spirit of ``general frame'' theory in modal logic \cite[\S~5]{Blackburn2001}: propositions are not \textit{arbitrary} regular open sets of possibilities, but rather certain distinguished regular open sets. The second representation, given in \S~\ref{TopFrameSection}, ``topologizes'' the first representation: propositions are not arbitrary regular open sets of possibilities, but rather those regular open sets with a special property---namely compactness---in a certain kind of topological space.

The most straightforward way to move from complete to arbitrary Boolean algebras is to simply add to a poset $(S,\sqsubseteq)$ a distinguished collection of regular open sets, forming a Boolean subalgebra of $\mathcal{RO}(S,\sqsubseteq)$.

\begin{definition}\label{PosFrameDef} A \textit{possibility frame} is a triple $\mathcal{F}=(S,\sqsubseteq, P)$ such that:
\begin{enumerate}
\item $(S,\sqsubseteq)$ is a poset;
\item $\varnothing\neq P\subseteq\mathcal{RO}(S,\sqsubseteq)$ (elements of $P$ are called \textit{admissible sets});
\item for all $U,V\in P$, $\neg U\in P$ and $U\cap V\in P$.
\end{enumerate}
A possibility frame $\mathcal{F}$ is \textit{full} if $P=\mathcal{RO}(S,\sqsubseteq)$. A \textit{world} frame is a possibility frame in which $\sqsubseteq$ is the identity relation.
\end{definition}
\noindent Obviously full possibility frames are in one-to-one correspondence with posets, and full world frames are in one-to-one correspondence with sets.

Every possibility frame give rise to an associated BA as follows.

\begin{lemma}\label{BAsfromPosFrames} \textnormal{Given a possibility frame $\mathcal{F}= ( S,\sqsubseteq,P ) $, the collection $P$ forms a Boolean subalgebra of  $\mathcal{RO}(S,\sqsubseteq)$, which we denote by $\mathcal{F}^\mathsf{b}$.}\end{lemma}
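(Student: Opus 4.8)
The plan is to verify directly that $P$, equipped with the operations inherited from $\mathcal{RO}(S,\sqsubseteq)$ (namely $\neg$ for complement, $\cap$ for meet, and the derived join $U\vee V = \neg(\neg U\cap \neg V)$), satisfies the definition of a Boolean subalgebra. By Theorem \ref{FirstThm}.\ref{FirstThm1}, $\mathcal{RO}(S,\sqsubseteq)$ is already a (complete) Boolean algebra with these operations, so the only thing left to check is \emph{closure}: that $P$ is closed under $\neg$, under $\cap$, and under the induced join, and that it contains the bottom and top elements of $\mathcal{RO}(S,\sqsubseteq)$.

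First I would note that closure under $\neg$ and under $\cap$ is given outright by clause (3) of Definition \ref{PosFrameDef}, so nothing needs proving there. Next, closure under join: for $U,V\in P$ we have $\neg U,\neg V\in P$ by clause (3), hence $\neg U\cap \neg V\in P$ again by clause (3), hence $\neg(\neg U\cap\neg V)\in P$ once more by clause (3); and in $\mathcal{RO}(S,\sqsubseteq)$ one has $U\vee V = \neg(\neg U\cap \neg V)$ (De Morgan, valid since $\mathcal{RO}(S,\sqsubseteq)$ is Boolean by Theorem \ref{FirstThm}.\ref{FirstThm1}), so $U\vee V\in P$. Finally, for the bounds: $P$ is nonempty by clause (2), so fix some $U\in P$; then $\varnothing = U\cap\neg U\in P$ is the bottom element, and $S = \neg\varnothing\in P$ is the top element (both computed in $\mathcal{RO}(S,\sqsubseteq)$). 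Since $P$ contains $\varnothing$ and $S$ and is closed under the Boolean operations $\neg,\cap,\vee$ of $\mathcal{RO}(S,\sqsubseteq)$, it is a subalgebra, and as a subalgebra of a Boolean algebra it is itself a Boolean algebra; we name it $\mathcal{F}^{\mathsf{b}}$.

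There is really no substantive obstacle here—the lemma is essentially unpacking the definition—so the ``hard part'' is merely making sure one invokes the De Morgan identity correctly to reduce closure under $\vee$ to the two closure conditions explicitly assumed, and being careful that the empty set and $S$ are genuinely the lattice bounds of $\mathcal{RO}(S,\sqsubseteq)$ (which is immediate from the meet and complement formulas in Theorem \ref{FirstThm}.\ref{FirstThm1}, since $\bigwedge\varnothing = S$ and $\neg S = \varnothing$). In the write-up I would state it in one short paragraph, citing Theorem \ref{FirstThm}.\ref{FirstThm1} for the ambient algebra structure and Definition \ref{PosFrameDef}(3) for the closure conditions.
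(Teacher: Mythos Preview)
Your proof is correct and takes essentially the same approach as the paper, which simply says the lemma ``follows from Theorem~\ref{FirstThm}.\ref{FirstThm1}.'' You have just spelled out in full the closure verifications that the paper leaves implicit.
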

\begin{proof} Follows from Theorem \ref{FirstThm}.\ref{FirstThm1}.
\end{proof}

\begin{example}\label{BinaryTree} Recall from Example \ref{BinaryTree} the full infinite binary tree regarded as a poset $(S,\sqsubseteq)$. A downset $U$ in $(S,\sqsubseteq)$ is \textit{finitely generated} if there is a finite set $U_0\subseteq S$ such that \[U=\mathord{\downarrow}U_0:=\{y\in S\mid \exists x\in U_0: y\sqsubseteq x\}.\]
Let 
\[P=\{U\in \mathcal{RO}(S,\sqsubseteq)\mid U\mbox{ is finitely generated}\}.\]
Since each node has finite depth and the tree is finitely branching, one can check that for all $U,V\in P$, we have $\neg U\in P$ and $U\cap V\in P$. Hence $\mathcal{F}=(S,\sqsubseteq, P)$ is a possibility frame. The Boolean algebra $\mathcal{F}^\mathsf{b}$ is clearly atomless: for any $U\in P$, $x\in U$, and $y\sqsubset x$, we have $\mathord{\downarrow}y\in P$ and $\mathord{\downarrow}y\subsetneq U$. Then since there are only countably many finitely generated downsets, $\mathcal{F}^\mathsf{b}$ is the unique (up to isomorphism) countable atomless Boolean algebra \cite[Ch.~16]{Givant2009}. Since $\mathcal{F}^\mathsf{b}$ is a dense subalgebra of $\mathcal{RO}(S,\sqsubseteq)$, it follows that $\mathcal{RO}(S,\sqsubseteq)$  is (up to isomorphism) the MacNeille completion of the countable atomless Boolean algebra.\end{example}

\begin{example} Another source of examples of possibility frames comes from semantics for intuitionistic logic. An \textit{intuitionistic general frame} \cite[\S~8.1]{Chagrov1997} is a triple $(S,\sqsubseteq,P)$ where $(S,\sqsubseteq)$ is a poset and $P$ is a set of downsets of $(S,\sqsubseteq)$ containing $\varnothing$ and closed under $\cap$, $\cup$, and the operation $\to$ defined by \[U\to V = \{x\in U\mid \forall x'\sqsubseteq x\, (x'\in U\Rightarrow x'\in V) \}.\]
Then $(S,\sqsubseteq, P\cap \mathcal{RO}(S,\sqsubseteq))$ is a possibility frame, using the fact that $\neg U=U\to \varnothing$.
\end{example}

Below we will consider two main approaches to constructing possibility frames from BAs in \S~\ref{NonzeroProp} and \S~\ref{PropFilters}, respectively: constructing possibilities as \textit{nonzero propositions} or as \textit{proper filters of propositions}---compare this with the use of \textit{atoms} and \textit{prime filters} in possible world semantics, as in Figure \ref{WorldVsPossFig}. Each of these two approaches has two versions, giving us four ways of constructing a possibility frame from a BA, summarized in Figure \ref{FourWays}.

\renewcommand{\arraystretch}{1.5}

\begin{figure}
\begin{tabular}{l|l|l}
& world representation uses\dots & possibility representation uses\dots \\
\hline
for CABAs & atoms (or principal & nonzero elements (or principal  \\
or CBAs: & prime filters) of a CABA & proper filters) of a CBA \\
 \hline
for BAs: & prime filters of a BA & proper filters of a BA
\end{tabular}
\caption{world vs. possibility representations}\label{WorldVsPossFig}
\end{figure}

\begin{figure}{\small
\begin{tabular}{l|l}
\textbf{possibility frame built from a BA $B$} & \textbf{associated BA} \\ 
\hline
full: $B_\mathsf{u}=(B_+,\leq_+, \mathcal{RO}(B_+,\leq_+))$ & $(B_\mathsf{u})^\mathsf{b}$ is MacNeille of $B$  \\
principal: $B_\mathsf{p}= (B_+,\leq_+, \{\mathord{\downarrow}_+a\mid a\in B\})$ & $(B_\mathsf{p})^\mathsf{b}\cong B$ \\
filter: $B_\mathsf{f}= (\mathsf{PropFilt}(B), \supseteq, \mathcal{RO}(\mathsf{PropFilt}(B), \supseteq))$ & $(B_\mathsf{f})^\mathsf{b}$ is canonical extension~of~$B$  \\
general filter: $B_\mathsf{g}= (\mathsf{PropFilt}(B), \supseteq, \{\widehat{a}\mid a\in B\})$ & $(B_\mathsf{g})^\mathsf{b}\cong B$ 
\end{tabular}}

\caption{four ways of building a possibility frame from a BA}\label{FourWays}
\end{figure}

\subsubsection{Constructing possibilities from nonzero propositions}\label{NonzeroProp}

The first way of constructing possibilities already appeared in Theorem \ref{FirstThm}.\ref{FirstThm2}: simply take possibilities to be nonzero propositions. Philosophically, this fits with how we often describe possibilities in natural language, e.g., the possibility \textit{that it is raining in Beijing}, using \textit{that}-clauses that philosophers traditionally take to refer to propositions \cite{McGrath2020}. On this view, given a BA $B$, the set of possibilities is $B_+$ as in Theorem~\ref{FirstThm}.\ref{FirstThm2}. 

\begin{proposition}[\cite{Holliday2018}] \textnormal{Given a Boolean algebra $B$, define its \textit{full frame}
\[B_\mathsf{u}=(B_+,\leq_+, \mathcal{RO}(B_+,\leq_+))\]
and its \textit{principal frame} 
\[B_\mathsf{p}=(B_+,\leq_+, \{\mathord{\downarrow}_+a\mid a\in B\})\]
where $(B_+,\leq_+)$ is as in Theorem \ref{FirstThm}.\ref{FirstThm2} and \[\mathord{\downarrow}_+a=\{b\in B_+\mid b\leq_+ a\},\]
noting that where $0$ is the bottom element of $B$, $\mathord{\downarrow}_+0=\varnothing$. Then:
\begin{enumerate}
\item $B_\mathsf{u}$ and $B_\mathsf{p}$ are possibility frames;
\item $(B_\mathsf{u})^\mathsf{b}$, i.e., $\mathcal{RO}(B_+,\leq_+)$, is (up to isomorphism) the MacNeille completion of $B$ (see Proposition \ref{MacNeilleProp});
\item $(B_\mathsf{p})^\mathsf{b}$ is isomorphic to $B$;\footnote{Note that $\{\mathord{\downarrow}_+a\mid a\in B\}$ ordered by inclusion is isomorphic to $\{\mathord{\downarrow} a\mid a\in B\}$ ordered by inclusion. Dana Scott (personal communication) reports having discussed this ``baby'' representation of BAs by lattices of sets with Tarski over 60 years ago.}
\item $B_\mathsf{u}=B_\mathsf{p}$ if and only if $B$ is complete.
\end{enumerate}}
\end{proposition}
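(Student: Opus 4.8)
The plan is to verify the four numbered claims in turn, most of which reduce to facts already established in the excerpt. For claim~1, that $B_\mathsf{u}$ and $B_\mathsf{p}$ are possibility frames, I would first recall that $(B_+,\leq_+)$ is a poset (deleting the bottom of $B$ and restricting $\leq$ clearly yields a partial order), so condition~1 of Definition~\ref{PosFrameDef} holds. For $B_\mathsf{u}$, conditions~2 and~3 are immediate since $P=\mathcal{RO}(B_+,\leq_+)$ is nonempty and closed under $\neg$ and $\cap$ by Theorem~\ref{FirstThm}.\ref{FirstThm1}. For $B_\mathsf{p}$, I need each $\mathord{\downarrow}_+a$ to be regular open: this is exactly the observation made inside the proof sketch of Theorem~\ref{FirstThm}.\ref{FirstThm2}, where it is noted that $\mathord{\downarrow}_+b\in\mathcal{RO}(B_+,\leq_+)$. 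Nonemptiness of $\{\mathord{\downarrow}_+a\mid a\in B\}$ is clear (e.g.\ $\mathord{\downarrow}_+1=B_+$), and closure under $\neg$ and $\cap$ follows from $\mathord{\downarrow}_+\neg a=\neg\mathord{\downarrow}_+a$ (also noted in that sketch) together with $\mathord{\downarrow}_+(a\wedge b)=\mathord{\downarrow}_+a\cap\mathord{\downarrow}_+b$, which is an easy direct check from the definition.

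Claims~2 and~3 are essentially restatements of earlier results: claim~2 is exactly Proposition~\ref{MacNeilleProp} (together with the definition $(B_\mathsf{u})^\mathsf{b}=\mathcal{RO}(B_+,\leq_+)$ from Lemma~\ref{BAsfromPosFrames}), so I would simply cite it. For claim~3, the map $\varphi\colon B\to\mathcal{RO}(B_+,\leq_+)$ with $\varphi(b)=\mathord{\downarrow}_+b$ was shown in the sketch of Theorem~\ref{FirstThm}.\ref{FirstThm2} to be an injective Boolean homomorphism, and its image is precisely $\{\mathord{\downarrow}_+a\mid a\in B\}=(B_\mathsf{p})^\mathsf{b}$ by definition; hence $\varphi$ corestricts to an isomorphism $B\cong(B_\mathsf{p})^\mathsf{b}$. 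I should double-check that $\varphi$ does preserve the finite Boolean operations (not merely $\neg$ and arbitrary joins, which is what the earlier sketch emphasized): $\varphi(a\wedge b)=\varphi(a)\cap\varphi(b)$ is clear, $\varphi(\neg a)=\neg\varphi(a)$ was noted, and $\varphi(a\vee b)=\varphi(a)\vee\varphi(b)$ follows from equation~(\ref{JoinComm}) applied to the two-element family, or from De Morgan using the other two.

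For claim~4, I would argue both directions. If $B$ is complete, then by Theorem~\ref{FirstThm}.\ref{FirstThm2} the embedding $\varphi$ is surjective, so $\{\mathord{\downarrow}_+a\mid a\in B\}=\mathcal{RO}(B_+,\leq_+)$, giving $B_\mathsf{p}=B_\mathsf{u}$ (the underlying posets agree by construction). Conversely, if $B_\mathsf{p}=B_\mathsf{u}$, then every $U\in\mathcal{RO}(B_+,\leq_+)$ equals $\mathord{\downarrow}_+a$ for some $a\in B$; given an arbitrary family $\{a_i\mid i\in I\}\subseteq B$, the join $\bigvee\{\mathord{\downarrow}_+a_i\mid i\in I\}$ exists in the complete algebra $\mathcal{RO}(B_+,\leq_+)$, hence equals $\mathord{\downarrow}_+a=\varphi(a)$ for some $a$, and since $\varphi$ is an order-embedding this $a$ is the least upper bound of $\{a_i\mid i\in I\}$ in $B$; thus $B$ is complete.

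None of this is hard, since the substantive content lives in Theorem~\ref{FirstThm} and Proposition~\ref{MacNeilleProp}. The only place demanding genuine care is confirming that $\varphi$ is a full Boolean homomorphism in claim~3 and spelling out the ``only if'' direction of claim~4 cleanly, i.e.\ that $B_\mathsf{p}=B_\mathsf{u}$ forces \emph{arbitrary} joins in $B$ to exist and be computed correctly via $\varphi$. I expect the main (mild) obstacle to be bookkeeping: making sure one does not accidentally invoke completeness of $B$ when it has not yet been assumed, and correctly transporting joins across $\varphi$ using its order-embedding property rather than assuming surjectivity prematurely.
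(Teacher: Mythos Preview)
Your proposal is correct. The paper does not actually supply a proof of this proposition; it simply states the result with a citation to \cite{Holliday2018} and moves on. Your plan of reducing each claim to Theorem~\ref{FirstThm} and Proposition~\ref{MacNeilleProp}, both of which \emph{are} proved in the paper, is exactly the intended route, and all the details you flag (that $\varphi$ is a full Boolean homomorphism, and that the ``only if'' direction of claim~4 transports suprema back along the order-embedding $\varphi$) are handled correctly.
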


\subsubsection{Constructing possibilities from proper filters of propositions}\label{PropFilters}

To prepare for the second way of constructing possibilities, recall that in a lattice $L$, a \textit{filter} is a nonempty set $F$ of elements of $L$ such that $a,b\in L$ implies $a\wedge b\in L$, and $a\in F$ and $a\leq b$ together imply $b\in F$. $F$ is \textit{proper} if it does not contain all elements of $L$. A proper filter $F$ is \textit{prime} if $a\vee b\in F$ implies that $a\in F$ or $b\in F$. In a Boolean algebra $B$, the prime filters are exactly the \textit{ultrafilters}, those proper filters $F$ such that for all $a\in B$, $a\in F$ or $\neg a\in F$.

The second way of constructing possibilities is inspired by the idea of defining worlds as ultrafilters, or in logical terms, as \textit{maximally consistent sets of sentences}; but since possibilities may be partial, we construct them as proper filters, or in logical terms, as \textit{consistent and deductively closed sets of sentence}. 

\begin{remark}In logic, the idea of constructing canonical models using consistent and deductively closed sets of formulas appears in \cite{Roper1980,Humberstone1981,Benthem1981a,Benthem1986,Benthem1988}. In lattice theory, a representation of ortholattices using proper filters appears in \cite{Goldblatt1974b}, and a representation of arbitrary lattices using filters appears in \cite{Moshier2014}.\end{remark}

While obtaining worlds from arbitrary Boolean algebras requires the nonconstructive Boolean Prime Filter Theorem, which is not provable in ZF set theory \cite{Feferman1964} or even ZF plus Dependent Choice (ZF+DC) \cite{Pincus1977}, this is not needed for the construction of possibilities from arbitrary BAs. All results in this section are provable in ZF, with the exception of Proposition \ref{ChainsProp}, which uses ZF+DC. 

\begin{proposition}[\cite{Holliday2018}]\label{FiltFrameDef} \textnormal{Given a Boolean algebra $B$, define its \textit{filter frame}
  \[B_\mathsf{f}= (\mathsf{PropFilt}(B), \supseteq, \mathcal{RO}(\mathsf{PropFilt}(B), \supseteq)) \]
  and \textit{general filter frame}
  \[B_\mathsf{g}= (\mathsf{PropFilt}(B), \supseteq, \{\widehat{a}\mid a\in B\}) \]
where  $\mathsf{PropFilt}(B)$ is the set of proper filters of $B$, and \[\widehat{a}=\{F\in \mathsf{PropFilt}(B)\mid a\in F\}.\]
Then:
\begin{enumerate}
\item $B_\mathsf{f}$ and $B_\mathsf{g}$ are possibility frames;
\item\label{FiltFrameDef2} $(B_\mathsf{f})^\mathsf{b}$, i.e., $\mathcal{RO}(\mathsf{PropFilt}(B), \supseteq)$, is (up to isomorphism) the canonical extension of $B$;\footnote{Here we use the ``constructive'' definition of canonical extension from \cite{Gehrke2001}, which is equivalent to the standard definition in ZFC but not in ZF: the \textit{constructive canonical extension} of a Boolean algebra $B$ is the unique (up to isomorphism) complete Boolean algebra $C$ for which there is an embedding $e$ of $B$ into $C$ such that every element of $C$ is a join of meets of $e$-images of elements of $B$ (or equivalently in this Boolean case, every element of $C$ is a meet of joins of $e$-images of elements of $B$), and for any sets $X,Y$ of elements of $B$, if $\bigwedge^C e[X]\leq^C \bigvee^C e[Y]$, then there are finite $X'\subseteq X$ and $Y'\subseteq Y$ such that $\bigwedge X'\leq \bigvee Y'$. Compare the realization of the canonical extension in Theorem \ref{FiltFrameDef}.\ref{FiltFrameDef2} as the regular open algebra of the poset of proper filters with the traditional realization as the powerset of the set of prime filters (see Remark \ref{MacNeilleCanonicalComparison} below).}
\item $(B_\mathsf{g})^\mathsf{b}$ is isomorphic to $B$.
\end{enumerate}}
\end{proposition}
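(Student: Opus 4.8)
The plan is to dispatch the three claims in order, doing essentially all the work up front by establishing two facts about the poset $(\mathsf{PropFilt}(B),\supseteq)$. Assume $B$ is nontrivial, so $\mathsf{PropFilt}(B)\neq\varnothing$; write $\mathord{\uparrow}a=\{b\in B\mid a\leq b\}$, and note $\widehat a=\mathord{\downarrow}(\mathord{\uparrow}a)$ when $a\neq 0$ while $\widehat 0=\varnothing$. The single device I would reuse throughout is: for a proper filter $G$ and $a\in B$, the filter generated by $G\cup\{a\}$ is proper iff $\neg a\notin G$. First I would show $(\mathsf{PropFilt}(B),\supseteq)$ is \emph{separative}: if $G\not\supseteq F$, pick $a\in F\setminus G$; then the filter $H$ generated by $G\cup\{\neg a\}$ is proper, satisfies $H\supseteq G$, and no proper filter can extend both $H$ and $F$, since it would contain $a$ and $\neg a$. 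By Definition~\ref{SepDef} this makes every principal downset $\mathord{\downarrow}F$ --- hence every $\widehat a=\mathord{\downarrow}(\mathord{\uparrow}a)$ --- regular open. The same device yields $\widehat{a\wedge b}=\widehat a\cap\widehat b$ (from the filter axioms) and $\widehat{\neg a}=\neg\widehat a$ (the inclusion $\subseteq$ is immediate, and if $\neg a\notin F$ the filter generated by $F\cup\{a\}$ is a proper extension of $F$ containing $a$, so $F\notin\neg\widehat a$).

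Parts 1 and 3 then follow quickly. For part 1: $B_\mathsf{f}$ is the full frame on the poset $(\mathsf{PropFilt}(B),\supseteq)$, hence a possibility frame by Theorem~\ref{FirstThm}.\ref{FirstThm1}, and $\{\widehat a\mid a\in B\}$ is a nonempty family of regular open sets closed under $\neg$ and $\cap$ by the computations above, so $B_\mathsf{g}$ is a possibility frame. For part 3: the surjection $\eta\colon a\mapsto\widehat a$ onto $(B_\mathsf{g})^\mathsf{b}$ is a Boolean homomorphism by those same computations, and it is injective because if $a\neq b$, say $c:=a\wedge\neg b\neq 0$, then $\mathord{\uparrow}c$ is a proper filter with $a\in\mathord{\uparrow}c$ and $b\notin\mathord{\uparrow}c$, so $\widehat a\neq\widehat b$; hence $\eta$ is an isomorphism $B\cong(B_\mathsf{g})^\mathsf{b}$.

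Part 2 is the substantive claim. Since $B_\mathsf{f}$ is full, $(B_\mathsf{f})^\mathsf{b}=\mathcal{RO}(\mathsf{PropFilt}(B),\supseteq)$, which is a complete Boolean algebra by Theorem~\ref{FirstThm}.\ref{FirstThm1}, and $\eta$ is a Boolean embedding of $B$ into it. I would verify that $\eta$ exhibits this algebra as the constructive canonical extension, i.e., that density and compactness hold. For density, note $\bigcap\{\widehat a\mid a\in F\}=\mathord{\downarrow}F$, which is regular open, hence equals the meet of $\{\widehat a\mid a\in F\}$ in $\mathcal{RO}(\mathsf{PropFilt}(B),\supseteq)$; and a direct check (if $V\in\mathcal{RO}(\mathsf{PropFilt}(B),\supseteq)$ contains $\mathord{\downarrow}F$ for every $F\in U$, then $U\subseteq V$) shows each $U\in\mathcal{RO}(\mathsf{PropFilt}(B),\supseteq)$ is the join of $\{\mathord{\downarrow}F\mid F\in U\}$, so $U$ is a join of meets of $\eta$-images. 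For compactness, suppose $X,Y\subseteq B$ and $\bigwedge\{\widehat a\mid a\in X\}\subseteq\bigvee\{\widehat b\mid b\in Y\}$ in $\mathcal{RO}(\mathsf{PropFilt}(B),\supseteq)$. Using De~Morgan in the complete Boolean algebra and $\neg\widehat b=\widehat{\neg b}$, rewrite this as $\bigcap\{\widehat a\mid a\in X\}\cap\bigcap\{\widehat{\neg b}\mid b\in Y\}=\varnothing$, i.e., no proper filter contains $X\cup\{\neg b\mid b\in Y\}$; this holds exactly when the filter generated by that set is improper, i.e., when $a_1\wedge\dots\wedge a_n\wedge\neg b_1\wedge\dots\wedge\neg b_m=0$ for some $a_i\in X$ and $b_j\in Y$, which is precisely $a_1\wedge\dots\wedge a_n\leq b_1\vee\dots\vee b_m$ in $B$. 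Taking $X'=\{a_1,\dots,a_n\}$ and $Y'=\{b_1,\dots,b_m\}$ gives the required finite subsets, so $\eta$ realizes $(B_\mathsf{f})^\mathsf{b}$ as the canonical extension of $B$.

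The main obstacle is the compactness half of part 2. One must be careful that ``$\bigcap\{\widehat a\mid a\in X\}\cap\bigcap\{\widehat{\neg b}\mid b\in Y\}=\varnothing$'' is genuinely equivalent to a \emph{single} finite meet over $X\cup\{\neg b\mid b\in Y\}$ being $0$ (this uses that a set generates an improper filter iff some finite sub-meet is $0$, and it needs the degenerate cases $X=\varnothing$ or $Y=\varnothing$ to be checked separately), and that the De~Morgan rewriting is legitimate inside $\mathcal{RO}(\mathsf{PropFilt}(B),\supseteq)$, where joins are not unions. As noted before the proposition, every step here proceeds by explicitly generating filters and testing properness, so the argument goes through in ZF without any choice principle.
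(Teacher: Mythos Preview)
The paper does not include a proof of this proposition; it simply cites \cite{Holliday2018}. So there is nothing to compare against directly, and your proposal must be judged on its own merits.

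Your argument is correct and well organized. The key technical device---testing properness of the filter generated by $G\cup\{a\}$ via whether $\neg a\in G$---is exactly the right tool, and you deploy it cleanly to get separativity, the identities $\widehat{a\wedge b}=\widehat a\cap\widehat b$ and $\widehat{\neg a}=\neg\widehat a$, and the compactness step. The density argument is also fine: once you know the poset is separative, $\mathord{\downarrow}F=\bigcap_{a\in F}\widehat a$ is regular open and hence is the meet in $\mathcal{RO}$, and then $U=\bigcup_{F\in U}\mathord{\downarrow}F$ combined with $U$ already being regular open gives that $U$ is the join. For compactness, your De~Morgan rewriting is legitimate because you are working inside the complete Boolean algebra $\mathcal{RO}$, where $A\leq\neg B$ iff $A\wedge B=0$ iff $A\cap B=\varnothing$ (meet being intersection); and the passage from ``no proper filter contains $X\cup\{\neg b\mid b\in Y\}$'' to ``some finite submeet is $0$'' is exactly the characterization of when a generating set yields the improper filter. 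Your attention to the degenerate cases $X=\varnothing$ or $Y=\varnothing$ is appropriate and those cases do go through under the standing assumption that $B$ is nontrivial.

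One very minor stylistic point: in part~3 you say ``if $a\neq b$, say $c:=a\wedge\neg b\neq 0$''; strictly you should note that $a\neq b$ implies either $a\wedge\neg b\neq 0$ or $b\wedge\neg a\neq 0$, and then take $c$ accordingly. This is implicit in ``say,'' but worth making explicit.
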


Part 3 points to a duality between Boolean algebras and those special possibility frames in the image of the $(\cdot)_\mathsf{g}$ map. They can be given the following characterization, analogous to the definition of \textit{descriptive frames} in possible world semantics \cite{Goldblatt1974} but using filters instead of ultrafilers. 

\begin{proposition}[\cite{Holliday2018}]\label{SepReal}\textnormal{For any possibility frame $\mathcal{F}= ( S,\sqsubseteq,P) $, the following are equivalent:
\begin{enumerate}
\item $\mathcal{F}$ is isomorphic to $(\mathcal{F}^\mathsf{b})_\mathsf{g}$;
\item $\mathcal{F}$ satisfies the following conditions:
\begin{enumerate}
\item the \textit{separation} condition: for all $x,y\in S$, if $y\not\sqsubseteq x$, then there is a $U\in P$ such that $x\in U$ but $y\not\in U$; 
\item the \textit{filter realization} condition: for every proper filter $F$ in $\mathcal{F}^\mathsf{b}$, there is an $x\in S$ such that $F=\{U\in P\mid x\in U\}$.
\end{enumerate}
\end{enumerate}
We call an $\mathcal{F}$ satisfying (a) \textit{separative}. We call an $\mathcal{F}$ satisfying (a) and (b) \textit{filter-descriptive}.}
\end{proposition}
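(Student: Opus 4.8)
The plan is to prove the two implications separately, with almost all of the work going into $(2)\Rightarrow(1)$.

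For $(1)\Rightarrow(2)$, I would first check that $B_{\mathsf g}$ is separative and filter-descriptive for \emph{every} Boolean algebra $B$, and then observe that conditions (a) and (b) transfer along an isomorphism of possibility frames. Transfer is routine: an isomorphism $h\colon\mathcal F\to\mathcal F'$ is in particular an order-isomorphism carrying $P$ onto $P'$, hence $U\mapsto h[U]$ is a Boolean isomorphism $\mathcal F^{\mathsf b}\to(\mathcal F')^{\mathsf b}$, and one simply pushes the witnessing admissible sets and realizing points back and forth. That $B_{\mathsf g}$ is separative is immediate: if $G\not\supseteq F$ in $(\mathsf{PropFilt}(B),\supseteq)$, choose $a\in F\setminus G$, so $F\in\widehat a$ and $G\notin\widehat a$. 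For filter realization I would invoke part~3 of Proposition~\ref{FiltFrameDef}, that $a\mapsto\widehat a$ is an isomorphism of $B$ onto $(B_{\mathsf g})^{\mathsf b}$: under it a proper filter $\mathcal G$ of $(B_{\mathsf g})^{\mathsf b}$ corresponds to the proper filter $F=\{a\in B\mid\widehat a\in\mathcal G\}$ of $B$, and then $\mathcal G=\{\widehat a\mid a\in F\}=\{U\in\{\widehat b\mid b\in B\}\mid F\in U\}$, so the point $F\in\mathsf{PropFilt}(B)$ realizes $\mathcal G$. Hence $(\mathcal F^{\mathsf b})_{\mathsf g}$, and therefore any $\mathcal F$ isomorphic to it, satisfies (a) and (b).

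For $(2)\Rightarrow(1)$, assume $\mathcal F=(S,\sqsubseteq,P)$ satisfies (a) and (b) and define $h\colon S\to\mathsf{PropFilt}(\mathcal F^{\mathsf b})$ by $h(x)=\{U\in P\mid x\in U\}$. I would verify the following in order. First, $h(x)$ is a proper filter of $\mathcal F^{\mathsf b}$: closure under $\cap=\wedge_P$ and upward closure in $P$ are immediate from $x\in U\cap V$ whenever $x\in U,V$ and from persistence; properness holds because $\varnothing\in P$ (it equals $U\wedge\neg U$ for any $U\in P$, by Theorem~\ref{FirstThm}.\ref{FirstThm1}) and $x\notin\varnothing$, while $S\in h(x)$ gives nonemptiness. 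Second, $h$ is an order-embedding into $(\mathsf{PropFilt}(\mathcal F^{\mathsf b}),\supseteq)$: persistence gives $x\sqsubseteq y\Rightarrow h(y)\subseteq h(x)$, and the separation condition (a) supplies the converse, since $x\not\sqsubseteq y$ yields $U\in P$ with $y\in U$, $x\notin U$, i.e.\ $U\in h(y)\setminus h(x)$; injectivity then follows from antisymmetry of $\sqsubseteq$. Third, $h$ is surjective onto $\mathsf{PropFilt}(\mathcal F^{\mathsf b})$, which is exactly the filter realization condition (b). Thus $h$ is an order-isomorphism of $(S,\sqsubseteq)$ onto $(\mathsf{PropFilt}(\mathcal F^{\mathsf b}),\supseteq)$.

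It remains to match the algebras of admissible sets, i.e.\ to show $h[U]=\widehat U$ for every $U\in P$, where $\widehat U=\{F\in\mathsf{PropFilt}(\mathcal F^{\mathsf b})\mid U\in F\}$. The inclusion $h[U]\subseteq\widehat U$ is trivial ($x\in U$ gives $U\in h(x)$), and $\widehat U\subseteq h[U]$ is the one place surjectivity is genuinely used: if $U\in F$, write $F=h(x)$, and $U\in h(x)$ means $x\in U$, so $F\in h[U]$. Hence $\{h[U]\mid U\in P\}=\{\widehat U\mid U\in P\}$ is precisely the admissible family of $(\mathcal F^{\mathsf b})_{\mathsf g}$, so $h$ is an isomorphism of possibility frames $\mathcal F\cong(\mathcal F^{\mathsf b})_{\mathsf g}$. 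The argument is entirely elementary; the only points requiring care are bookkeeping about the reverse-inclusion order on $\mathsf{PropFilt}$ (so that refinement corresponds to enlarging the filter), and the fact that an isomorphism of possibility frames must respect the designated admissible sets — which is exactly what forces the use of filter realization (b) rather than separation (a) alone, and so explains why (a) by itself only yields ``separative'' and not ``filter-descriptive.''
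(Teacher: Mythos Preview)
Your proof is correct and follows the natural approach. Note, however, that the paper does not actually give its own proof of this proposition: it is stated with a citation to \cite{Holliday2018} and left unproved. So there is no paper proof to compare against, but your argument is the expected one and would serve perfectly well as the omitted verification.

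One very minor quibble: when you justify that $h(x)$ is upward closed in $\mathcal{F}^{\mathsf b}$, you cite ``persistence,'' but persistence concerns the refinement order on $S$, not the inclusion order on $P$. Upward closure of $h(x)$ as a filter is simply that $x\in U\subseteq V$ implies $x\in V$, which is immediate from set inclusion. This does not affect correctness.
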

The separation condition corresponds to the characterization of refinement in (C4) of \S~\ref{ConnectivesSection}. The filter realization condition captures the idea that for any consistent set $F$ of propositions, there is \textit{the possibility of all propositions in $\mathcal{F}$ being true}, which is included in the frame. In contrast to filter-descriptive frames, full frames may easily fail the filter realization condition. For example, when we regard the infinite binary tree in Examples \ref{BinaryTree0}, \ref{BinaryTree0b}, and \ref{BinaryTree} as a full possibility frame $(S,\sqsubseteq,\mathcal{RO}(S,\sqsubseteq))$, the filter  $F$ generated by the set $F_0=\{\mathord{\downarrow}0,\mathord{\downarrow}00, \mathord{\downarrow}000,\dots\}$ of propositions is  proper, as any finite subset of $F_0$ has a nonempty intersection; but there is no possibility in $S$ that belongs to all propositions in $F_0$.

Let us note two useful consequences of separation and filter realization.

\begin{lemma}\label{FiltDLem} \textnormal{
\begin{enumerate}
\item\label{FiltDLem1} If $\mathcal{F}= ( S,\sqsubseteq,P) $ is separative, then the poset $(S,\sqsubseteq)$ is separative.
\item\label{FiltDLem2} If $\mathcal{F}=(S,\sqsubseteq,P)$ satisfies filter realization, then for any $\{U_i\mid i\in I\}\subseteq P$ such that $S=\bigvee \{U_i\mid i\in I\}$, there is a finite $I_0\subseteq I$ such that $S=\bigvee\{U_i\mid i\in I_0\}$.
\end{enumerate}}
\end{lemma}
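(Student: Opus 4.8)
\textbf{Proof proposal for Lemma~\ref{FiltDLem}.}

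The plan is to prove the two parts separately. For part~\ref{FiltDLem1}, I would unpack the definition of separativity of a possibility frame (condition~(a) of Proposition~\ref{SepReal}) and that of a separative poset (Definition~\ref{SepDef}). Suppose $\mathcal{F}=(S,\sqsubseteq,P)$ is separative and $y\not\sqsubseteq x$. By the frame separation condition there is a $U\in P$ with $x\in U$ and $y\not\in U$. Since $U$ is regular open, from $y\not\in U$ we get by refinability a $z\sqsubseteq y$ such that for all $z'\sqsubseteq z$, $z'\not\in U$; in particular $\mathord{\downarrow}z\cap U=\varnothing$. Now I would argue $\mathord{\downarrow}z\cap\mathord{\downarrow}x=\varnothing$: if $w\sqsubseteq z$ and $w\sqsubseteq x$, then $w\sqsubseteq x\in U$ forces $w\in U$ by persistence, contradicting $\mathord{\downarrow}z\cap U=\varnothing$. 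Hence $(S,\sqsubseteq)$ is separative. This part is routine and I expect no obstacle.

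For part~\ref{FiltDLem2}, the strategy is a contrapositive argument via filter realization. Suppose $\{U_i\mid i\in I\}\subseteq P$ with $S=\bigvee\{U_i\mid i\in I\}$ (join computed in $\mathcal{F}^\mathsf{b}$, equivalently in $\mathcal{RO}(S,\sqsubseteq)$), but assume for contradiction that no finite subjoin equals $S$. Then the family $\{\neg U_i\mid i\in I\}$, together with finite meets, generates a proper filter $F$ in the Boolean algebra $\mathcal{F}^\mathsf{b}$: indeed, a finite meet $\neg U_{i_1}\wedge\cdots\wedge\neg U_{i_n}$ equals $\neg(U_{i_1}\vee\cdots\vee U_{i_n})$, which is nonzero precisely because $U_{i_1}\vee\cdots\vee U_{i_n}\neq S$, so no finite subset of $\{\neg U_i\mid i\in I\}$ has meet $0$. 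By filter realization there is an $x\in S$ with $F=\{U\in P\mid x\in U\}$. Then $\neg U_i\in F$ for every $i$, so $x\in\neg U_i$, i.e., $x\notin U_i$ for every $i$; but $x\notin U_i$ for all $i$ means no refinement of $x$ lies in $\bigcup_i U_i$ that is... more precisely, $x\in\bigcap_i \neg U_i$, and since each $\neg U_i$ is a downset, $\mathord{\downarrow}x\cap U_i=\varnothing$ for all $i$, hence $\mathord{\downarrow}x\cap\bigcup_i U_i=\varnothing$, which gives $x\notin\bigvee\{U_i\mid i\in I\}=S$, a contradiction.

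The main thing to get right is the claim that $\{\neg U_i\mid i\in I\}$ generates a \emph{proper} filter under the assumption that no finite subjoin of the $U_i$ is all of $S$; this rests on the de Morgan identity $\neg(U_{i_1}\vee\cdots\vee U_{i_n})=\neg U_{i_1}\wedge\cdots\wedge\neg U_{i_n}$ in $\mathcal{F}^\mathsf{b}$ (valid since $\mathcal{F}^\mathsf{b}$ is a Boolean algebra by Lemma~\ref{BAsfromPosFrames}) and on the fact that in a Boolean algebra $\neg a=0$ iff $a=1$, here $1=S$. I expect this bookkeeping step to be the only place requiring care; everything else is a direct application of the definitions of regular open sets, persistence, and the filter realization condition. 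Note also that part~\ref{FiltDLem2} is exactly the compactness-type statement that motivates the topological reformulation in \S~\ref{TopFrameSection}, so it is worth stating the argument cleanly.
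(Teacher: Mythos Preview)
Your proposal is correct and follows essentially the same approach as the paper. Part~\ref{FiltDLem1} is identical; for part~\ref{FiltDLem2}, the paper first isolates the sublemma ``if $P_0\subseteq P$ has the finite intersection property, then $\bigcap P_0\neq\varnothing$'' and then applies it to $\{\neg U_i\mid i\in I\}$, whereas you argue the fip for $\{\neg U_i\}$ directly via the de Morgan identity in $\mathcal{F}^\mathsf{b}$ and then invoke filter realization---but the logical content is the same.
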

\begin{proof} For part \ref{FiltDLem1}, suppose $y\not\sqsubseteq x$. Then by the separation condition, there is a $U\in P$ such that $x\in U$ but $y\not\in U$. Then since $U\in\mathcal{RO}(S,\sqsubseteq)$, there is a $z\sqsubseteq y$ such that for all $z'\sqsubseteq z$, $z''\not\in U$, which implies $\mathord{\downarrow}z\cap\mathord{\downarrow}x=\varnothing$.

For part \ref{FiltDLem2}, we first claim that if $P_0\subseteq P$ has the finite intersection property (fip), then $\bigcap P_0\neq\varnothing$. For if $P_0$ has the fip, then the filter $F$ in $\mathcal{F}^\mathsf{b}$ generated by $P_0$ is proper, in which case filter realization implies there is an $x\in S$ such that $F=\{U\in P\mid x\in U\}$, so $x\in \bigcap P_0$. Now suppose toward a  contradiction that for every finite $J\subseteq I$, we have $S\not\subseteq\bigvee\{U_j\mid j\in J\}$, so there is an $x_J\in S\setminus\bigvee  \{U_j\mid j\in J\}$. Then as $\bigvee\{U_j\mid j\in J\}\in \mathcal{RO}(S,\sqsubseteq)$, there is an $x'_J\sqsubseteq x_J$ such that $x'_J\in \neg \bigvee\{U_j\mid j\in J\}=\bigwedge\{\neg U_j\mid j\in J\}=\bigcap \{\neg U_j\mid j\in J\}$. Thus, $\{\neg U_i\mid i\in I\}$ has the fip, so $ \bigcap \{\neg U_i\mid i\in I\}\neq\varnothing$, contradicting $S=\bigvee \{U_i\mid i\in I\}$.\end{proof}

To obtain a categorical duality between filter-descriptive frames and BAs, we introduce morphisms on the possibility side, based on the standard notion of a p-morphism between relational structures (see, e.g., \cite[p.~30]{Chagrov1997}).

\begin{definition}\label{PossPMorph} Given possibility frames $\mathcal{F}= ( S,\sqsubseteq,P ) $ and $\mathcal{F}'= ( S',\sqsubseteq',P' ) $, a \textit{p-morphism} from $\mathcal{F}$ to $\mathcal{F}'$ is a map $h:S\to S'$ satisfying the following conditions for all $x,y\in S$ and $y'\in S'$:
\begin{enumerate}
\item\label{PossPMorph1} for all $ U'\in P'$, $h^{-1}[U']\in P$;
\item \SqForth{}: if $y\sqsubseteq x$, then $h(y)\sqsubseteq h(x)$;
\item\label{PossPMorph2} \SqBack{}: if $y'\sqsubseteq' h(x)$, then $\exists y$: $y\sqsubseteq x$ and $h(y)=y'$ (see Figure \ref{p-morphismFig}).
\end{enumerate}
\end{definition}
\begin{remark} If $\mathcal{F}$ is a \textit{full} possibility frame, then together \SqForth{}  and \SqBack{} imply condition \ref{PossPMorph1} of Definition \ref{PossPMorph} (see Fact 3.5 of \cite{Holliday2018}).
\end{remark}

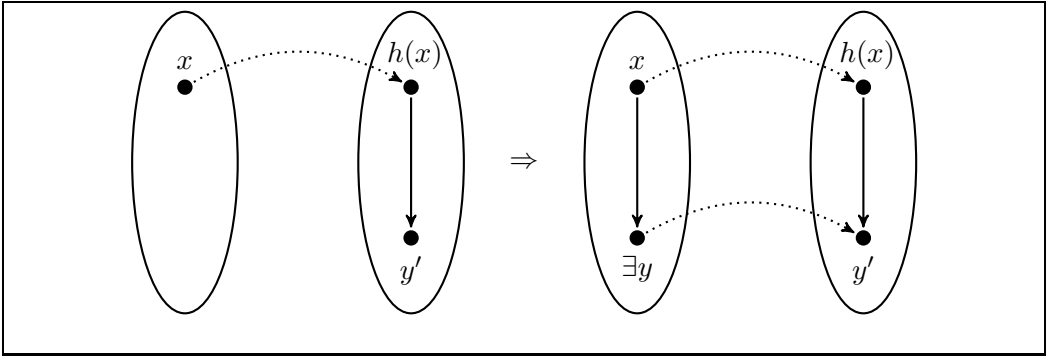
\begin{figure}
\begin{center}
\begin{tikzpicture}[->,>=stealth',shorten >=1pt,shorten <=1pt, auto,node
distance=2cm,thick,every loop/.style={<-,shorten <=1pt}] \tikzstyle{every state}=[fill=gray!20,draw=none,text=black]

\node[circle,draw=black!100,fill=black!100, label=above:$x$,inner sep=0pt,minimum size=.175cm] (x) at (0,2) {{}};

\draw[rotate=90] (1,0) ellipse (2cm and .7cm);

\draw[rotate=90] (1,-3) ellipse (2cm and .7cm);

\draw[rotate=90] (1,-6) ellipse (2cm and .7cm);

\draw[rotate=90] (1,-9) ellipse (2cm and .7cm);

\node[circle,draw=black!100,fill=black!100, label=above:$\;h(x)$,inner sep=0pt,minimum size=.175cm] (x') at (3,2) {{}};

\node[circle,draw=black!100,fill=black!100, label=below:$y'$,inner sep=0pt,minimum size=.175cm] (y') at (3,0) {{}};

\path (x') edge[->] node {{}} (y'); 

\path (x) edge[bend left,dotted,->] node {{}} (x');

\node[circle,draw=black!100,fill=black!100, label=above:$x$,inner sep=0pt,minimum size=.175cm] (x2) at (6,2) {{}};

\node[circle,draw=black!100,fill=black!100, label=below:$\exists y$,inner sep=0pt,minimum size=.175cm] (y2) at (6,0) {{}};

\node[circle,draw=black!100,fill=black!100, label=above:$\;h(x)$,inner sep=0pt,minimum size=.175cm] (x'2) at (9,2) {{}};

\node[circle,draw=black!100,fill=black!100, label=below:$y'$,inner sep=0pt,minimum size=.175cm] (y'2) at (9,0) {{}};

\path (x2) edge[->] node {{}} (y2); 
\path (x'2) edge[->] node {{}} (y'2); 

\node at (4.5,1) {{$\Rightarrow$}};

\path (x2) edge[bend left,dotted,->] node {{}} (x'2); 
\path (y2) edge[bend left,dotted,->] node {{}} (y'2); 

\end{tikzpicture}
\end{center}
\caption{The \SqBack{} condition of p-morphisms, where solid arrows represent the relation $\sqsubseteq$ and dotted arrows represent the function $h$.}\label{p-morphismFig}
\end{figure}

We can now state a choice-free duality theorem for BAs.

\begin{theorem}[\cite{Holliday2018}]\label{FiltPossDuality} \textnormal{(ZF) The category \textbf{FiltPoss} of filter-descriptive possibility frames with p-morphisms is dually equivalent to the category \textbf{BA} of Boolean algebras with Boolean homomorphisms.}
\end{theorem}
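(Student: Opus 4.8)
The plan is to establish a dual equivalence $\textbf{FiltPoss} \simeq \textbf{BA}^{\mathrm{op}}$ by exhibiting the contravariant functors and the natural isomorphisms witnessing the equivalence, building entirely on the representation results already in hand (Proposition \ref{FiltFrameDef}, Proposition \ref{SepReal}, and Lemma \ref{FiltDLem}). First I would define the functor $(\cdot)^\mathsf{b}\colon \textbf{FiltPoss}\to\textbf{BA}^{\mathrm{op}}$ on objects by sending a filter-descriptive frame $\mathcal{F}=(S,\sqsubseteq,P)$ to the Boolean algebra $\mathcal{F}^\mathsf{b}$ of Lemma \ref{BAsfromPosFrames}, and on a p-morphism $h\colon\mathcal{F}\to\mathcal{F}'$ to the map $h^\mathsf{b}\colon (\mathcal{F}')^\mathsf{b}\to\mathcal{F}^\mathsf{b}$ given by $U'\mapsto h^{-1}[U']$; I would check that $h^{-1}[U']\in P$ by condition \ref{PossPMorph1} of Definition \ref{PossPMorph}, that $h^{-1}$ commutes with $\cap$ trivially and with $\neg$ using \SqForth{} and \SqBack{} (this is the standard p-morphism computation: $x\in h^{-1}[\neg U']$ iff no $x'\sqsubseteq x$ has $h(x')\in U'$, and \SqBack{} supplies preimages of refinements of $h(x)$ while \SqForth{} sends refinements of $x$ to refinements of $h(x)$), so $h^\mathsf{b}$ is a Boolean homomorphism, and that $(\cdot)^\mathsf{b}$ respects composition and identities.

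Next I would define the functor $(\cdot)_\mathsf{g}\colon\textbf{BA}^{\mathrm{op}}\to\textbf{FiltPoss}$ on objects by $B\mapsto B_\mathsf{g}=(\mathsf{PropFilt}(B),\supseteq,\{\widehat{a}\mid a\in B\})$, which is a filter-descriptive frame: it is a possibility frame by Proposition \ref{FiltFrameDef}, it is separative because the sets $\widehat{a}$ separate distinct filters (if $G\not\supseteq F$ pick $a\in F\setminus G$, so $F\in\widehat a$, $G\notin\widehat a$), and it satisfies filter realization essentially by construction, since a proper filter of $(B_\mathsf{g})^\mathsf{b}\cong B$ corresponds via $a\mapsto\widehat a$ to a proper filter $F$ of $B$, which is a point of $\mathsf{PropFilt}(B)$ with $\{\widehat a\mid a\in F\}=\{\widehat a\mid F\in\widehat a\}$. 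On a Boolean homomorphism $f\colon B\to B'$ I would set $f_\mathsf{g}\colon\mathsf{PropFilt}(B')\to\mathsf{PropFilt}(B)$ by $G\mapsto f^{-1}[G]$, which is again a proper filter (properness uses that $f$ is a homomorphism so $f(0_B)=0_{B'}\notin G$), and verify the three p-morphism conditions: \SqForth{} is monotonicity of preimage under $\supseteq$, condition \ref{PossPMorph1} holds because $(f_\mathsf{g})^{-1}[\widehat a] = \widehat{f(a)}$, and \SqBack{} — given a proper filter $H\supseteq f^{-1}[G]$ on $B$, one must produce a proper filter $G'\supseteq G$ on $B'$ with $f^{-1}[G']=H$ — is the one genuinely delicate point: I would take $G'$ to be the filter of $B'$ generated by $G\cup f[H]$ and argue it is proper (if $b_1\wedge\cdots\wedge b_n\wedge f(a_1)\wedge\cdots\wedge f(a_m)=0$ with $b_i\in G$, $a_j\in H$, then $f(a_1\wedge\cdots\wedge a_m)\le \neg(b_1\wedge\cdots\wedge b_n)$, and one deduces $a_1\wedge\cdots\wedge a_m\in f^{-1}[\uparrow\! f(a_1\wedge\cdots\wedge a_m)]$ forces a contradiction with $H$ proper and $f^{-1}[G]\subseteq H$ — this is where I expect to spend the most care), together with the check $f^{-1}[G']=H$.

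Finally I would exhibit the two natural isomorphisms. On the algebra side, Proposition \ref{FiltFrameDef}(3) gives $(B_\mathsf{g})^\mathsf{b}\cong B$ via $\eta_B\colon a\mapsto\widehat a$, and I would check naturality: for $f\colon B\to B'$ the square relating $\eta_{B'}\circ f$ and $(f_\mathsf{g})^\mathsf{b}\circ\eta_B$ commutes precisely because $(f_\mathsf{g})^{-1}[\widehat a]=\widehat{f(a)}$, as noted above. On the frame side, Proposition \ref{SepReal} gives, for each filter-descriptive $\mathcal{F}$, an isomorphism $\varepsilon_\mathcal{F}\colon\mathcal{F}\xrightarrow{\sim}(\mathcal{F}^\mathsf{b})_\mathsf{g}$, concretely $x\mapsto\{U\in P\mid x\in U\}$ (this is a well-defined proper filter by separation and is surjective onto $\mathsf{PropFilt}(\mathcal{F}^\mathsf{b})$ by filter realization, injective by separation, and an order-isomorphism for $\sqsubseteq$ vs.\ $\supseteq$ using separation in both directions); I would check that $\varepsilon_\mathcal{F}$ is a p-morphism isomorphism and that the naturality square for a p-morphism $h$ commutes, which unwinds to the identity $\{U'\in P'\mid h(x)\in U'\}$ viewed through $h^\mathsf{b}$ equalling $\{U\in P\mid x\in U\}$ restricted along $(h^\mathsf{b})_\mathsf{g}$ — again a direct preimage computation. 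Putting $\eta$ and $\varepsilon$ together yields the dual equivalence. The main obstacle, and the only place where more than bookkeeping is needed, is the \SqBack{} condition for $f_\mathsf{g}$, i.e.\ the properness of the filter generated by $G\cup f[H]$; everything else is the routine verification that preimage maps are Boolean homomorphisms / p-morphisms and that the canonical maps $\widehat{(\cdot)}$ and $x\mapsto\{U\mid x\in U\}$ are natural, all of which goes through in ZF since no prime filters or choice are invoked.
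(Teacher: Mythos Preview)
The paper does not include a proof of this theorem; it is stated with a citation to \cite{Holliday2018}. Your proposal follows the standard route one would expect in that reference: set up the contravariant functors $(\cdot)^\mathsf{b}$ and $(\cdot)_\mathsf{g}$, verify functoriality, and use Propositions \ref{FiltFrameDef} and \ref{SepReal} to supply the unit and counit. This is correct in outline.

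Your one flagged step---properness of the filter $G'$ generated by $G\cup f[H]$---is indeed the only nontrivial point, but the reasoning you sketch (``$a\in f^{-1}[\uparrow f(a)]$ forces a contradiction'') does not quite land as written, since $a\in f^{-1}[\uparrow f(a)]$ is a triviality. The clean argument: with $b=\bigwedge b_i\in G$ and $a=\bigwedge a_j\in H$, from $b\wedge f(a)=0$ you get $b\le \neg f(a)=f(\neg a)$, hence $f(\neg a)\in G$, hence $\neg a\in f^{-1}[G]\subseteq H$; but $a\in H$, so $0\in H$, contradicting properness. You should also spell out $f^{-1}[G']=H$: the inclusion $H\subseteq f^{-1}[G']$ is immediate, and for the converse, if $f(c)\ge b\wedge f(a)$ with $b\in G$, $a\in H$, then $b\le f(c\vee\neg a)$, so $c\vee\neg a\in f^{-1}[G]\subseteq H$, whence $c\ge (c\vee\neg a)\wedge a\in H$. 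With those two paragraphs filled in, your proof is complete and choice-free.
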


As an example of how possibility frames can be used to prove facts about BAs, consider the following basic fact in Proposition \ref{ChainsProp}, the proof of which is almost copied verbatim from our proof using upper Vietoris spaces in \cite{BH2020} (note that the filter realization property in Proposition \ref{SepReal} is not needed in the proof). Recall that the Axiom of Dependent Choice (DC) states that if a binary relation $R$ on a set $X$ is serial, i.e., for every $x\in X$ there is a $y\in X$ with $xRy$, then there is an infinite sequence $x_0,x_1,\dots$ of elements of $X$ such that $x_iR_{i+1}$ for each $i\in\mathbb{N}$.

\begin{proposition}\label{ChainsProp} \textnormal{(ZF+DC) Every infinite BA has infinite chains and infinite anti-chains}.
\end{proposition}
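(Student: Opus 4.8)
The plan is to derive both claims from the filter-descriptive / possibility-frame machinery, using DC only to manufacture the infinite chain of refinements. Given an infinite Boolean algebra $B$, form its general filter frame $B_\mathsf{g}=(\mathsf{PropFilt}(B),\supseteq,\{\widehat{a}\mid a\in B\})$, whose associated Boolean algebra $(B_\mathsf{g})^\mathsf{b}$ is isomorphic to $B$; thus it suffices to find an infinite chain and an infinite antichain in $(B_\mathsf{g})^\mathsf{b}$, equivalently in $B$ itself. First I would dispose of the infinite antichain: since $B$ is infinite, it cannot be atomic-with-finitely-many-atoms in a way that keeps it finite, so either $B$ has infinitely many atoms (which form an infinite antichain immediately) or $B$ has an atomless part; in the atomless case one splits an element repeatedly to get a strictly descending sequence $b_0 > b_1 > b_2 > \cdots$ of nonzero elements, and then $\{b_0 \wedge \neg b_1,\ b_1 \wedge \neg b_2,\ \dots\}$ is an infinite antichain of pairwise disjoint nonzero elements. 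Only the construction of the descending sequence needs a choice principle, and a single application of DC to the serial relation ``$y$ is a nonzero proper refinement of $x$'' on $B_+$ suffices — but seriality of that relation is exactly the statement that $B$ has no atoms, so one must first argue that an infinite $B$ with only finitely many atoms still has an atomless part below some element.

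Concretely, the key step is: if $B$ is infinite, there is a nonzero $c \in B$ such that the relativization $B{\restriction}c$ is infinite and either atomless or has infinitely many atoms. Enumerate the (finitely many, in the bad case) atoms $a_1,\dots,a_n$ of $B$; then $c := \neg(a_1 \vee \cdots \vee a_n)$ is nonzero (else $B$ would be finite and atomic) and $B{\restriction}c$ is infinite and atomless. Now apply DC on $B_+{\restriction}c$ with the relation ``$y \sqsubset_+ x$ and $y \neq 0$'', which is serial precisely because $B{\restriction}c$ is atomless, to obtain $b_0 \sqsupset_+ b_1 \sqsupset_+ \cdots$. This chain $b_0,b_1,\dots$ is itself the desired infinite chain, and the differences $b_i \wedge \neg b_{i+1}$ give the infinite antichain as above. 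If instead $B$ has infinitely many atoms $\{a_i\}_{i\in\mathbb{N}}$ (which can be enumerated without choice once we have a countably infinite set of them — though extracting a countably infinite subset from an infinite set is where DC is genuinely used again), the atoms form the antichain and the partial joins $a_0,\ a_0 \vee a_1,\ a_0\vee a_1 \vee a_2,\ \dots$ form a strictly increasing infinite chain.

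The main obstacle is the bookkeeping around \emph{where} DC is actually invoked and making sure nothing sneaks in a stronger choice principle: the Boolean Prime Filter Theorem is \emph{not} available, so one cannot pass to ultrafilters or prime filters, and the argument must stay inside $B$ (or its proper-filter frame) and use only that an infinite set has a countably infinite subset (a consequence of DC) plus the serial-relation form of DC to build the descending/ascending sequence. The case split — finitely many atoms versus infinitely many atoms — is the conceptual heart; once the atomless-part element $c$ is isolated, the rest is the routine splitting argument. Alternatively, one could phrase the whole thing in the possibility frame $B_\mathsf{g}$ directly: an infinite separative poset of proper filters either contains an infinite strictly $\supseteq$-descending chain (apply DC to the refinement relation, using that the poset has no worlds on the relevant part by separativity and infiniteness) or, if every filter is refined only finitely, a König-type argument forces infinite branching and hence an infinite antichain of admissible sets $\widehat{a}$; but the algebraic formulation above is cleaner and I would present that.
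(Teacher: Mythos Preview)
Your argument is correct, but it takes a different route from the paper's, and the possibility-frame setup you open with is superfluous to what you actually do: once you observe that $(B_\mathsf{g})^\mathsf{b}\cong B$, you work entirely inside $B$ via the classical atom case split (finitely many atoms $\Rightarrow$ nonzero atomless part $c$, then DC on ``$0<y<x$'' below $c$; infinitely many atoms $\Rightarrow$ DC to extract a countable subfamily, then take partial joins). That is fine and elementary.

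The paper, by contrast, deliberately runs the proof \emph{through} the separative frame to illustrate the method. It proves a single uniform lemma---for every infinite $U\in P$ there is an infinite $U'\in P$ with $U\supsetneq U'$ and $U\cap\neg U'\neq\varnothing$---using separativity to split $U$ as $U\cap V$ and $U\cap\neg V$ and Proposition~\ref{SepProp}.\ref{SepProp4} to conclude one piece is infinite. A single DC application to this lemma yields the descending chain $U_0\supsetneq U_1\supsetneq\cdots$ and simultaneously the antichain $\{U_i\cap\neg U_{i+1}\}$, with no case split on atoms. Translated back to $B$, the lemma says: if $[0,a]$ is infinite then there is $0<b<a$ with $[0,b]$ infinite---which your case analysis also establishes, just less uniformly. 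So your approach is more elementary and self-contained; the paper's buys a cleaner single-pass argument and, more to the point given the surrounding text, actually exercises the frame machinery it is meant to advertise.
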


\begin{proof} As the dual of any infinite BA is an infinite separative possibility frame, it suffices to show that for any infinite separative $\mathcal{F}=(S,\sqsubseteq, P)$, there is an infinite chain $U_0\supsetneq U_1\supsetneq\dots$ of sets from  $P$, as well as an infinite family of pairwise disjoint sets from $P$. For this it suffices to show that for every infinite  $U\in P$ (note that $S$ is such a $U$), there is an infinite $U'\in P$ with $U\supsetneq U'$ and $U\cap \neg U'\neq\varnothing$. For then by DC, there is an infinite chain $U_0\supsetneq U_1\supsetneq \dots$ of  sets from $P$ with $U_i\cap \neg U_{i+1}\neq\varnothing$ for each $i\in\mathbb{N}$, in which case $\{U_0\cap \neg U_1,U_1\cap \neg U_2,\dots\}$ is our antichain.

Assume $U\in P$ is infinite. Since $\sqsubseteq$ antisymmetric, take $x,y\in U$ such that $y\not\sqsubseteq x$. Then since $\mathcal{F}$ is separative, there is a $V\in P$ such that $x\in V$ and $y\not\in V$, which with $y\in U$ and $U,V\in P$ implies that there is a $z\sqsubseteq y$ such that $z\in U\cap \neg V$. Since $U,V\in P$, we have $U\cap V,U\cap \neg V\in P$; and since  $z\in U\cap \neg V$ and $x\in U\cap V$, we have $z\in U\cap \neg (U\cap V)\neq \varnothing$ and $x\in U\cap \neg (U\cap \neg V)\neq \varnothing$. Thus, if $U\cap V$ is infinite, then we can set $U':=U\cap V$, and otherwise we claim that $U\cap\neg V$ is infinite, in which case we can set $U':=U\cap\neg V$. Let $\sqsubseteq'$ be the restriction of $\sqsubseteq$ to $U$. Since $(S,\sqsubseteq)$ is a separative poset by Lemma \ref{FiltDLem}.\ref{FiltDLem1}, $(U,\sqsubseteq')$ is a separative poset by Proposition \ref{SepProp}.\ref{SepProp2} and the fact that $U\in \mathcal{RO}(S,\sqsubseteq)$. Given $V\in\mathcal{RO}(S,\sqsubseteq)$, we have  $U\cap V,U\cap \neg V\in \mathcal{RO}(U,\sqsubseteq')$ and $U\cap \neg V=\neg'(U\cap V)$, where $\neg'$ is the negation operation in $\mathcal{RO}(U,\sqsubseteq')$. Then since $U$ is infinite, by Proposition \ref{SepProp}.\ref{SepProp4} either $U\cap V$ or $\neg'(U\cap V)$ is infinite, as desired.\end{proof}

\subsection{Spaces of possibilities and Boolean algebras}\label{TopFrameSection}

Dualities between categories of algebras and categories of topological structures play a central role in the study of modal and nonclassical logics (see, e.g., \cite{Esakia2017}). Along these lines, in this section we ``topologize'' the possibility frames of \S~\ref{ArbitraryBooleanSection1} by using the family $P$ of admissible sets to generate a topology on $S$. This is analogous to the way that general frames in modal logic give rise to topological spaces (see \cite{Sambin1988}). We first briefly consider topologizing arbitrary possibility frames in \S~\ref{TopFrameSection1} and then filter-descriptive possibility frames in particular in \S~\ref{TopFrameSection2}. \S~\ref{TopFrameSection1} is based on \cite{BH2018}, which introduces relational topological possibility frames for modal logic.

\subsubsection{Topological possibility frames}\label{TopFrameSection1}

To describe the spaces that arise from topologizing a possibility frame, we need the following variation on the standard topological notion of a clopen set. 

\begin{definition}\label{NegClopDef} Let $(S,\sqsubseteq,\tau)$ be a triple such that $(S,\sqsubseteq)$ is a poset and $(S,\tau)$ is a topological space. For $U\subseteq S$:
\begin{enumerate}
\item $U\subseteq S$ is \textit{open} in $(S,\sqsubseteq,\tau)$ if $U$ is open in $(S,\tau)$; 
\item $U$ is \textit{neg-closed} in $(S,\sqsubseteq,\tau)$ if $\neg U=\{x\in S\mid \forall x'\sqsubseteq x \; x'\not\in U\}$ is open in~$(S,\tau)$; \item $U$ is \textit{neg-clopen} in $(S,\sqsubseteq,\tau)$ if $U$ is both open and neg-closed in $(S,\sqsubseteq,\tau)$;
\item $\mathsf{NegClop}(S,\sqsubseteq,\tau)$ is the family of neg-clopen sets;
\item $\mathsf{NegClop}\mathcal{RO}(S,\sqsubseteq,\tau)=\mathsf{NegClop}(S,\sqsubseteq,\tau)\cap \mathcal{RO}(S,\sqsubseteq)$.
\end{enumerate}
\end{definition}

\begin{definition} A \textit{topological possibility frame} is a triple $\mathcal{T}=(S,\sqsubseteq,\tau)$ such that $(S,\sqsubseteq)$ is a poset, $(S,\tau)$ is a topological space, and $\mathsf{NegClop}\mathcal{RO}(S,\sqsubseteq,\tau)$ is closed under intersection and forms a basis for $\tau$.
\end{definition}

Possibility frames may be turned into topological possibility frames, and vice versa, as follows. 

\begin{proposition}\label{TopPossProp} $\,$\textnormal{
\begin{enumerate}
\item\label{TopPossProp1} For any possibility frame $\mathcal{F}=(S,\sqsubseteq, P)$, the triple
$T(\mathcal{F})={(S,\sqsubseteq, \tau)}$
where $\tau$ is the topology generated by taking the elements of $P$ as basic opens is a topological possibility frame. 
\item\label{TopPossProp2} For any topological possibility frame $\mathcal{T}=(S,\sqsubseteq,\tau)$, the triple \\
$F(\mathcal{T})={(S,\sqsubseteq, P)}$ where $P=\mathsf{NegClop}\mathcal{RO}(S,\sqsubseteq,\tau)$ is a possibility frame.
\item\label{TopPossProp3} If $\mathcal{F}=(S,\sqsubseteq, P)$ is a separative possibility frame, then $\sqsubseteq$ is the converse of the specialization order of $(S,\tau)$ in $T(\mathcal{F})$.\footnote{In \cite{BH2018} we define regular opens of a poset to be regular opens in the \textit{upset} topology on the poset, in which case $\sqsubseteq$ coincides with the specialization order of the topology generated by $P$.}
\item\label{TopPossProp4} If $\mathcal{F}$ is a possibility frame satisfying filter realization, then $F(T(\mathcal{F}))=\mathcal{F}$.
\item\label{TopPossProp5} For any topological possibility frame $\mathcal{T}$, $T(F(\mathcal{T}))=\mathcal{T}$.
\end{enumerate}}
\end{proposition}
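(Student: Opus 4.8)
The plan is to verify the five assertions of Proposition \ref{TopPossProp} one at a time, since each is essentially a routine unwinding of definitions built on top of Theorem \ref{FirstThm} and the notion of neg-clopen regular open set. For part \ref{TopPossProp1}, given a possibility frame $\mathcal{F}=(S,\sqsubseteq,P)$, I would first observe that since $P$ is closed under $\cap$, it is already a basis (not merely a subbasis) for the generated topology $\tau$, so the opens of $(S,\tau)$ are exactly the unions of members of $P$. I then need to show each $U\in P$ is neg-clopen regular open: it is regular open by clause 2 of Definition \ref{PosFrameDef}, it is open since $U\in P\subseteq\tau$, and it is neg-closed because $\neg U\in P\subseteq\tau$ by clause 3 of Definition \ref{PosFrameDef}. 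Conversely I should check that every neg-clopen regular open set $W$ is a union of members of $P$: since $W$ is open it is a union of basic opens from $P$, so $\mathsf{NegClop}\mathcal{RO}(S,\sqsubseteq,\tau)$ has $P$ among its members and its members form a basis; closure under intersection follows since $P\subseteq\mathsf{NegClop}\mathcal{RO}(S,\sqsubseteq,\tau)$ is already closed under $\cap$ and, more generally, $\mathsf{NegClop}\mathcal{RO}$ is closed under $\cap$ by Theorem \ref{FirstThm}.\ref{FirstThm1} together with the fact that $\neg$ of an intersection of regular opens is the interior-closure of the union of the $\neg$'s, which stays open.

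For part \ref{TopPossProp2}, I would take $\mathcal{T}=(S,\sqsubseteq,\tau)$ a topological possibility frame and set $P=\mathsf{NegClop}\mathcal{RO}(S,\sqsubseteq,\tau)$; then $P\subseteq\mathcal{RO}(S,\sqsubseteq)$ by definition, $P\neq\varnothing$ since a basis for a nonempty space is nonempty (and $S$ itself is neg-clopen regular open), $P$ is closed under $\cap$ by hypothesis, and $P$ is closed under $\neg$ because if $U$ is neg-clopen then $\neg U$ is open (neg-closedness of $U$) and $\neg\neg\neg U=\neg U$ is open (regularity of $U$ gives $\neg U$ regular open hence neg-closed). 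So $F(\mathcal{T})$ satisfies Definition \ref{PosFrameDef}. For part \ref{TopPossProp3}, assuming $\mathcal{F}$ is separative I want the specialization order $\leqslant$ of $(S,\tau)$ to be the converse of $\sqsubseteq$: if $x\sqsubseteq y$ then every downset containing $y$ contains $x$, and every basic open is a downset, so $x$ lies in every basic open (hence every open) containing $y$, giving $y\leqslant x$; conversely, if $x\not\sqsubseteq y$, separativity gives $U\in P$ with $y\in U$ but $x\notin U$, and $U$ is open, so $y\not\leqslant x$. Thus $\leqslant$ is the converse of $\sqsubseteq$ on the nose.

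Parts \ref{TopPossProp4} and \ref{TopPossProp5} are the two round-trip identities and are where a little more care is needed. For \ref{TopPossProp5}, $T(F(\mathcal{T}))$ uses $P=\mathsf{NegClop}\mathcal{RO}(S,\sqsubseteq,\tau)$ as basic opens, and since $\mathcal{T}$ is a topological possibility frame this family is a basis for $\tau$, so the generated topology is again $\tau$; the poset and carrier are untouched, so $T(F(\mathcal{T}))=\mathcal{T}$. For \ref{TopPossProp4}, I need $F(T(\mathcal{F}))=\mathcal{F}$, i.e. that $\mathsf{NegClop}\mathcal{RO}(S,\sqsubseteq,\tau)=P$ where $\tau$ is generated by $P$. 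The inclusion $P\subseteq\mathsf{NegClop}\mathcal{RO}(S,\sqsubseteq,\tau)$ was already established in part \ref{TopPossProp1}; the reverse inclusion is the substantive direction and is exactly where filter realization enters. I expect this to be the main obstacle: one must show that a neg-clopen regular open $W$ — a priori only a union of members of $P$ — is itself a single member of $P$. The argument should run: $W=\bigcup\{U_i\mid i\in I\}$ with $U_i\in P$, and $\neg W$ is open so $\neg W=\bigcup\{V_j\mid j\in J\}$ with $V_j\in P$; since $W$ is regular open, $W\vee\neg W=S$ computed in $\mathcal{RO}(S,\sqsubseteq)$, i.e. $S=\bigvee\{U_i\mid i\in I\}\cup\{V_j\mid j\in J\}$; now invoke Lemma \ref{FiltDLem}.\ref{FiltDLem2} (filter realization yields the finite-subcover property for admissible joins) to extract a finite subfamily whose join is $S$, and then argue $W$ equals the finite join of the selected $U_i$'s — which lies in $P$ since $P$ is closed under finite joins in $\mathcal{F}^\mathsf{b}$ (finite joins being definable from $\cap$ and $\neg$). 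The delicate point is checking that the finitely many $U_i$'s suffice to recover all of $W$ and none of $\neg W$, using that the discarded $V_j$'s together with the chosen $U_i$'s already cover $S$; I would nail this down by showing $W\subseteq\bigvee\{U_i\mid i\in I_0\}$ and $\neg W\subseteq\neg\bigvee\{U_i\mid i\in I_0\}$ from the finite cover, whence equality. I would note in passing that part \ref{TopPossProp4} genuinely needs filter realization — full frames can fail it, as the infinite binary tree example following Proposition \ref{SepReal} shows — whereas parts \ref{TopPossProp1}–\ref{TopPossProp3} and \ref{TopPossProp5} do not.
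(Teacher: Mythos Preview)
Your proposal is correct and follows essentially the same approach as the paper. The paper dismisses parts \ref{TopPossProp1}--\ref{TopPossProp3} and \ref{TopPossProp5} as ``easy to check from the definitions'' (your sketches for these are more detailed than the paper's), and for the substantive part \ref{TopPossProp4} your argument matches the paper's exactly: write $W$ and $\neg W$ as unions of admissible sets, pass to joins in $\mathcal{RO}(S,\sqsubseteq)$ using that $W\vee\neg W=S$, invoke Lemma \ref{FiltDLem}.\ref{FiltDLem2} to extract a finite subfamily $A_1,\dots,A_n,B_1,\dots,B_m$ with $A_i\subseteq W$ and $B_j\subseteq\neg W$, and then verify $W=A_1\vee\dots\vee A_n\in P$ (the paper does this last step via $W=W\wedge S=W\wedge(A_1\vee\dots\vee A_n)$ using distributivity and $W\wedge(B_1\vee\dots\vee B_m)=\varnothing$, which is exactly what you describe as ``nailing down'').
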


\begin{proof} Parts \ref{TopPossProp1}, \ref{TopPossProp2}, \ref{TopPossProp3}, and \ref{TopPossProp5} are easy to check from the definitions.

For part \ref{TopPossProp4}, we must show $U\in P$ iff $U\in \mathsf{NegClop}\mathcal{RO}(S,\sqsubseteq,\tau)$, where $\tau$ is the topology generated by $P$. The left-to-right direction is straightforward. From right to left, suppose $U\in \mathsf{NegClop}\mathcal{RO}(S,\sqsubseteq,\tau)$. Then $U$ is open in $(S,\tau)$, so $U=\bigcup\{A\in P\mid A\subseteq U\}$, and $\neg U$ is open in $(S,\tau)$, so $\neg U=\bigcup\{B\in P\mid B\subseteq \neg U\}$. Since $U,\neg U\in \mathcal{RO}(S,\sqsubseteq)$, it follows that $U=\bigvee \{A\in P\mid A\subseteq U\}$ and $V=\bigvee\{B\in P\mid B\subseteq \neg U\}$, and then since $U\vee\neg U=S$, we have $S=\bigvee \{A\in P\mid A\subseteq U\}\vee \bigvee\{B\in P\mid B\subseteq \neg U\}$. It follows by Lemma \ref{FiltDLem}.\ref{FiltDLem2} that $S=A_1\vee\dots\vee A_n\vee B_1\vee\dots\vee B_m$ where $A_i\subseteq U$ and $B_i\subseteq \neg U$. Hence $A_1\cup\dots\cup A_n\subseteq U$, which with $U\in\mathcal{RO}(S,\sqsubseteq)$ implies $A_1\vee\dots\vee A_n\subseteq U$, and  $U\cap (B_1\cup\dots \cup B_m)=\varnothing$, which implies $U\wedge (B_1\vee\dots \vee B_m)=\varnothing$. Then:
\begin{eqnarray*}
U&=&U\wedge S = U\wedge (A_1\vee\dots\vee A_n\vee B_1\vee\dots\vee B_m) \\
&=& (U\wedge ( A_1\vee\dots\vee A_n))\vee (U\wedge (B_1\vee\dots\vee B_m))\\
&=& U\wedge ( A_1\vee\dots\vee A_n).
\end{eqnarray*}
Since $\wedge $ is intersection, $U= U\wedge ( A_1\vee\dots\vee A_n)$ implies $U\subseteq A_1\vee\dots\vee A_n$, and we already derived $A_1\vee\dots\vee A_n\subseteq U$ above. Thus, $U=A_1\vee\dots\vee A_n\in P$.\end{proof}

In light of part \ref{TopPossProp3}, there is little harm in dropping $\sqsubseteq$ from the signature of our structures and simply working with spaces, as we will do in the next section.

\subsubsection{UV-spaces}\label{TopFrameSection2}

In this section, we characterize the topological spaces corresponding (via Proposition \ref{TopPossProp}.\ref{TopPossProp1}) to the \textit{filter-descriptive} possibility frames of Proposition \ref{SepReal}.

Recall that Stone \cite{Stone1938} proved that every distributive lattice $L$ can be represented as the lattice of compact open sets of a spectral space, namely the spectral space $S(L)$ of prime filters of $L$ topologized with basic open sets of the form 
\[\beta(a)=\{F\in\mathsf{PrimeFilt}(L)\mid a\in F\}\] for $a\in L$. Recall that a \textit{spectral space} is a $T_0$ space $X$ such that:
\begin{enumerate}
\item $X$ is \textit{coherent}: $\mathsf{CO}(X)$, the family of compact open sets of $X$, is closed under finite intersections and forms a basis for the topology of $X$;
\item $X$ has \textit{enough points}: every completely prime filter in $\mathsf{O}(X)$, the lattice of open sets of $X$, is of the form $\mathsf{O}(x)=\{U\in \mathsf{O}(X)\mid x\in U\}$ for some $x\in X$.\footnote{Equivalently, a spectral space is a space that is coherent and \textit{sober} \cite[\S~II.1.6]{Johnstone1982}, where $X$ is sober if the map from $X$ to the set of completely prime filters in $\mathsf{O}(X)$ is not only a surjection but a bijection. Sobriety is equivalent to the conjunction of $T_0$ and having enough points.}
\end{enumerate}
A special case of this representation is Stone's \cite{Stone1936} representation of Boolean algebras: if $L$ is a BA, then the compact open sets are precisely the clopen sets, and $S(L)$ is a zero-dimensional compact Hausdorff space or \textit{Stone space} \cite{Johnstone1982}.

The use of prime filters in Stone's theorems requires the nonconstructive Prime Filter Theorem. However, as shown in \cite{BH2020}, a choice-free representation of BAs is possible using the following special spectral spaces.

\begin{definition}\label{UVDef} An \textit{upper Vietoris space} (\textit{UV-space}) is a $T_0$ space $X$ such that:
\begin{enumerate}
\item\label{UVDef1} $\mathsf{CRO}(X)$, the family of compact regular open sets of $X$, forms a basis and is closed under finite intersections and the operation $U\mapsto \mathsf{int}(X\setminus U )$;
\item\label{UVDef2} every proper filter in $\mathsf{CRO}(X)$ is of the form $\mathsf{CRO}(x)=\{U\in \mathsf{CRO}(X)\mid {x\in U}\}$ for some $x\in X$.
\end{enumerate}
\end{definition}
\noindent Note the similarity in logical form between conditions 1 and 2 of a UV-space and the conditions of coherence and having enough points of a spectral space. The choice of the name ``upper Vietoris space'' is explained in~\cite{BH2020}. In brief, it comes from the ``upper Vietoris'' variant of the Vietoris hyperspace construction \cite{Vietoris1922}. Assuming the Prime Filter Theorem, every upper Vietoris space as in Definition \ref{UVDef} is isomorphic to the upper Vietoris hyperspace of a Stone space.

\begin{proposition}[\cite{BH2020}]\label{UVProp} $\,$\textnormal{
\begin{enumerate}
\item Every UV-space is a spectral space.
\item For any UV-space, $\mathsf{CRO}(X)$ forms a Boolean algebra with $\cap$ as meet and $U\mapsto \mathsf{int}(X\setminus U )$ as complement.
\item\label{UVProp3} For any Boolean algebra $B$, the space $UV(B)$ whose underlying set is $\mathsf{PropFilt}(B)$ and whose topology is generated by $\{\widehat{a}\mid a\in B\}$, with $\widehat{a}$ defined as in Proposition \ref{FiltFrameDef}, is a UV-space. 
\item A space $X$ is homeomorphic to $UV(\mathsf{CRO}(X))$ iff $X$ is a UV-space.
\end{enumerate}}
\end{proposition}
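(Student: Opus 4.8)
The plan is to prove the substantive direction, $(\Leftarrow)$, by constructing an explicit homeomorphism, and to read off $(\Rightarrow)$ from what is already proved. For $(\Rightarrow)$: if $X$ is homeomorphic to $UV(\mathsf{CRO}(X))$, then (granting, as the very statement presupposes, that $\mathsf{CRO}(X)$ is a Boolean algebra) $X$ is homeomorphic to the UV-space $UV(\mathsf{CRO}(X))$ by Proposition \ref{UVProp}.\ref{UVProp3}; since being a UV-space is visibly a homeomorphism invariant---it is stated purely in terms of the lattice of opens, the compact sets, and the operations $\cap$ and $U\mapsto\mathsf{int}(X\setminus U)$---$X$ is a UV-space.

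For $(\Leftarrow)$, let $X$ be a UV-space. By part 2 of the proposition, $\mathsf{CRO}(X)$ is a Boolean algebra with meet $\cap$, so $UV(\mathsf{CRO}(X))$ is defined: its points are the proper filters of $\mathsf{CRO}(X)$ and its topology is generated by the sets $\widehat{U}=\{F\in\mathsf{PropFilt}(\mathsf{CRO}(X))\mid U\in F\}$ for $U\in\mathsf{CRO}(X)$, which already form a basis since $\widehat{U}\cap\widehat{V}=\widehat{U\cap V}$ and every proper filter is nonempty. I would define $\eta_X\colon X\to UV(\mathsf{CRO}(X))$ by $\eta_X(x)=\mathsf{CRO}(x):=\{U\in\mathsf{CRO}(X)\mid x\in U\}$ and check: (i) \emph{well defined}: $\mathsf{CRO}(x)$ is upward closed and closed under binary meets in $\mathsf{CRO}(X)$, hence a filter, and proper because the bottom element $\varnothing$ is not in it; (ii) \emph{injective}: $X$ is $T_0$ and $\mathsf{CRO}(X)$ is a basis (Definition \ref{UVDef}.\ref{UVDef1}), so distinct points lie in different members of $\mathsf{CRO}(X)$, whence $\mathsf{CRO}(x)\neq\mathsf{CRO}(y)$; (iii) \emph{surjective}: this is exactly Definition \ref{UVDef}.\ref{UVDef2}.

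It then remains to verify that $\eta_X$ is a homeomorphism. For each $U\in\mathsf{CRO}(X)$, $\eta_X^{-1}[\widehat{U}]=\{x\in X\mid U\in\mathsf{CRO}(x)\}=U$, so $\eta_X$ is continuous; and using surjectivity, $\eta_X[U]=\widehat{U}$ (if $F\in\widehat{U}$, write $F=\mathsf{CRO}(x)$, so $U\in F$ forces $x\in U$ and $F=\eta_X(x)$). Since $\{U\mid U\in\mathsf{CRO}(X)\}$ is a basis for $X$ (Definition \ref{UVDef}.\ref{UVDef1}) and $\eta_X$ is a bijection carrying it onto the basis $\{\widehat{U}\mid U\in\mathsf{CRO}(X)\}$ of $UV(\mathsf{CRO}(X))$, $\eta_X$ is open, hence a homeomorphism.

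I expect no real obstacle: part 4 is essentially bookkeeping on top of parts 1--3. The two spots that want care are invoking part 2 \emph{before} speaking of ``proper filters of $\mathsf{CRO}(X)$'' (so that $\mathsf{CRO}(x)$ really is a proper filter of a Boolean algebra), and the image computation $\eta_X[U]=\widehat{U}$, where one must use surjectivity (Definition \ref{UVDef}.\ref{UVDef2}) rather than resting content with the trivial inclusion, since it is this equality that delivers openness of $\eta_X$.
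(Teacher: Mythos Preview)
Your argument for part 4 is correct and is the standard one: the map $x\mapsto\mathsf{CRO}(x)$ is well defined by closure of $\mathsf{CRO}(X)$ under finite intersection, injective by $T_0$ plus the fact that $\mathsf{CRO}(X)$ is a basis, surjective by Definition~\ref{UVDef}.\ref{UVDef2}, and a homeomorphism because it matches the basis $\mathsf{CRO}(X)$ with the basis $\{\widehat{U}\mid U\in\mathsf{CRO}(X)\}$. Your flagged subtleties---invoking part 2 before forming proper filters of $\mathsf{CRO}(X)$, and using surjectivity to get $\eta_X[U]=\widehat{U}$ rather than just $\subseteq$---are exactly the right places to pause.

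Note, however, that the paper itself does not supply a proof of Proposition~\ref{UVProp}: it is stated with a citation to \cite{BH2020} and no argument is given here, so there is no in-paper proof to compare against. Your write-up also addresses only part 4; parts 1--3 are treated as inputs. That is consistent with how you framed the proposal, but if the intent was to prove the full proposition you would still owe arguments that (1) every UV-space is spectral (in particular that $\mathsf{CRO}(X)=\mathsf{CO}(X)$ in a UV-space, so coherence and enough points follow from the UV axioms), (2) $\mathsf{CRO}(X)$ is closed under $U\mapsto\mathsf{int}(X\setminus U)$ and this is a complement, and (3) $UV(B)$ satisfies both clauses of Definition~\ref{UVDef}; these are where the real work in \cite{BH2020} lies.
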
 

\begin{remark}Though we defined UV-spaces above using the family $\mathsf{CRO}(X)$, we could have equivalently done so using the family $\mathsf{CO}\mathcal{RO}(X)$ of sets (as in \cite{BH2020}) that are both compact open in $X$ and regular open in the poset $(X,\sqsubseteq)$, where $\sqsubseteq$ is the converse of the specialization order of $X$,\footnote{In \cite{BH2020} we define regular opens of a poset to be regular opens in the \textit{upset} topology on the poset, in which case $\sqsubseteq$ coincides with the specialization order of $X$.} or using the family $\mathsf{NegClop}\mathcal{RO}(X)$ (as in \cite{BH2018}) defined as in Definition \ref{NegClopDef} in terms of $\sqsubseteq$ and the topology of $X$.
\end{remark}

\begin{proposition}[\cite{BH2020,BH2018}]\label{3UV} For any UV-space $X$,
\[\mathsf{CRO}(X)=\mathsf{CO}\mathcal{RO}(X)=\mathsf{NegClop}\mathcal{RO}(X).\]
\end{proposition}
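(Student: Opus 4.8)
The claim is a three-way equality of families of subsets of a UV-space $X$: the compact regular opens $\mathsf{CRO}(X)$, the sets that are both compact open in $X$ and regular open in the poset $(X,\sqsubseteq)$ (where $\sqsubseteq$ is the converse specialization order), and the neg-clopen regular opens $\mathsf{NegClop}\mathcal{RO}(X)$. The plan is to prove $\mathsf{CRO}(X)\subseteq \mathsf{CO}\mathcal{RO}(X)\subseteq \mathsf{NegClop}\mathcal{RO}(X)\subseteq \mathsf{CRO}(X)$, using throughout the fact (Proposition \ref{UVProp}.1) that a UV-space is spectral, so that ``open'' sets and the interior/closure operators behave well, and the explicit description of $\mathsf{int}$, $\mathsf{cl}$, and $\neg$ on a poset-with-topology from \S~\ref{CompleteBooleanSection} and Definition \ref{NegClopDef}.

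\textbf{First inclusion.} Let $U\in\mathsf{CRO}(X)$. Since $X$ is a UV-space, $\mathsf{CRO}(X)$ is a basis, so every open set is a union of compact regular opens; in particular every element of $\mathsf{CRO}(X)$ is open, hence $U$ is open. It is compact by assumption. It remains to see $U$ is regular open \emph{in the poset} $(X,\sqsubseteq)$, i.e. $U=\mathsf{int}(\mathsf{cl}(U))$ computed in the downset topology $\mathsf{Down}(X,\sqsubseteq)$. Here I would use that for a spectral space the specialization order is exactly the poset order being used, and that an open set of $X$ in a UV/spectral space is in particular a downset for $\sqsubseteq$ (persistence), while the ``regular open in $X$'' condition $U=\mathsf{int}_X(\mathsf{cl}_X(U))$ transfers to the poset topology because, for the relevant sets, $\mathsf{cl}_X$ and $\mathsf{cl}_{\mathsf{Down}}$ agree on the point-membership condition $\exists x'\sqsubseteq x:x'\in U$. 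This is the point where I expect to have to be careful: matching the topological closure in the spectral topology of $X$ with the order-theoretic closure $\{x\mid \exists x'\sqsubseteq x: x'\in U\}$. The key is that $U$ open and compact forces $\mathsf{cl}_X(U)$ and the down-closure to coincide on the sets in play, since basic opens $\widehat a$ are themselves both kinds of regular open.

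\textbf{Second and third inclusions.} For $\mathsf{CO}\mathcal{RO}(X)\subseteq \mathsf{NegClop}\mathcal{RO}(X)$: if $U$ is compact open in $X$ and regular open in $(X,\sqsubseteq)$, then already $U\in\mathcal{RO}(X,\sqsubseteq)$, so it suffices to check $U$ is neg-clopen, i.e. $U$ is open in $X$ (given) and $\neg U=\{x\mid \forall x'\sqsubseteq x\; x'\notin U\}$ is open in $X$. By Proposition \ref{UVProp}.1 and the axiom that $\mathsf{CRO}(X)$ is closed under $U\mapsto\mathsf{int}(X\setminus U)$, together with the identification $\neg U=\mathsf{int}_X(X\setminus U)$ for regular opens, $\neg U$ lands back in $\mathsf{CRO}(X)$ and in particular is open; this gives neg-closedness. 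For $\mathsf{NegClop}\mathcal{RO}(X)\subseteq \mathsf{CRO}(X)$: given $U\in\mathsf{NegClop}\mathcal{RO}(X)$, both $U$ and $\neg U$ are open in $X$ and $U\vee\neg U = X$; now the argument mirrors exactly the proof of Proposition \ref{TopPossProp}.\ref{TopPossProp4} --- write $U$ and $\neg U$ as unions of basic compact regular opens, invoke Lemma \ref{FiltDLem}.\ref{FiltDLem2} (filter realization holds since $X$ is a UV-space) to get a finite subcover $S=A_1\vee\dots\vee A_n\vee B_1\vee\dots\vee B_m$ with $A_i\subseteq U$, $B_j\subseteq\neg U$, and conclude $U=A_1\vee\dots\vee A_n$, a finite join of compact regular opens, hence compact and regular open, i.e. $U\in\mathsf{CRO}(X)$. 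The main obstacle, as noted, is the bookkeeping in the first inclusion reconciling the two closure operators; the rest is a recombination of Proposition \ref{UVProp}, Proposition \ref{TopPossProp}, and Lemma \ref{FiltDLem}.
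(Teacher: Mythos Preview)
Your cyclic strategy is reasonable, but the second inclusion contains a genuine circularity that breaks the argument. You write that, for $U\in\mathsf{CO}\mathcal{RO}(X)$, the UV-axiom ``$\mathsf{CRO}(X)$ is closed under $V\mapsto\mathsf{int}_X(X\setminus V)$'' together with the identification $\neg U=\mathsf{int}_X(X\setminus U)$ gives $\neg U\in\mathsf{CRO}(X)$. But the UV-axiom only applies once you know $U\in\mathsf{CRO}(X)$, which is exactly what the cycle has not yet established. Moreover, the identity $\neg U=\mathsf{int}_X(X\setminus U)$ does \emph{not} hold for an arbitrary open $U$ in a UV-space: one always has $\mathsf{int}_X(X\setminus U)\subseteq\neg U$, but the reverse inclusion is equivalent to $\neg U$ being open in $X$, which is precisely what you are trying to prove. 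A concrete failure: in $UV(B)$ with $U=\bigcup_{a\in A}\widehat a$, a filter $F$ lies in $\neg U$ iff $\neg a\in F$ for all $a\in A$; for infinite $A$ there need be no single $c\in F$ with $\widehat c\cap U=\varnothing$, so $F\notin\mathsf{int}_X(X\setminus U)$.

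The clean repair is to prove $\mathsf{CO}\mathcal{RO}(X)\subseteq\mathsf{CRO}(X)$ directly and then derive the neg-clopen statement from that. Since $X$ is spectral with basis $\mathsf{CRO}(X)$, any compact open $U$ is a finite union $\widehat{a_1}\cup\dots\cup\widehat{a_n}$; if in addition $U\in\mathcal{RO}(X,\sqsubseteq)$, then $U=\neg\neg U=\widehat{a_1}\vee\dots\vee\widehat{a_n}=\widehat{a_1\vee\dots\vee a_n}\in\mathsf{CRO}(X)$ (using that $a\mapsto\widehat a$ is a Boolean embedding into $\mathcal{RO}(X,\sqsubseteq)$, as in Proposition~\ref{FiltFrameDef}). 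Once $U=\widehat a$, the poset negation $\neg U=\widehat{\neg a}$ is open, giving neg-clopenness. Your first inclusion is also left at the level of an intention (``$\mathsf{cl}_X$ and $\mathsf{cl}_{\mathsf{Down}}$ agree on the sets in play''): this is true for $U\in\mathsf{CRO}(X)$, but you should actually verify it---e.g., by checking directly that each $\widehat a$ satisfies refinability in $(\mathsf{PropFilt}(B),\supseteq)$, which is the content of Proposition~\ref{FiltFrameDef}.1. Your third inclusion, modeled on Proposition~\ref{TopPossProp}.\ref{TopPossProp4}, is fine.
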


UV-spaces and filter-descriptive possibility frames as in Definition \ref{SepReal} are in one-to-one correspondence via analogues of the $T$ and $F$ maps of Definition \ref{TopPossProp}.

\begin{proposition}\textnormal{For any UV-space $X$, if $S$ is the underlying set of $X$ and $\sqsubseteq$ the converse of the specialization order of $X$, then $(S,\sqsubseteq,\mathsf{CRO}(X))$ is a filter-descriptive possibility frame. For any filter-descriptive possibility frame $(S,\sqsubseteq, P)$, the space $(S,\tau)$ where $\tau$ is the topology generated by $P$ is a UV-space.}
\end{proposition}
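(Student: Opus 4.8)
The statement has two halves, and I would prove each by reducing to the machinery already assembled in \S\ref{TopFrameSection}, rather than verifying axioms from scratch.

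\emph{From a UV-space to a filter-descriptive possibility frame.} Let $X$ be a UV-space with underlying set $S$, and let $\sqsubseteq$ be the converse of its specialization order. First I would check that $(S,\sqsubseteq,\mathsf{CRO}(X))$ is a possibility frame. By Proposition \ref{3UV}, $\mathsf{CRO}(X)=\mathsf{NegClop}\mathcal{RO}(X)$, so in particular $\mathsf{CRO}(X)\subseteq\mathcal{RO}(S,\sqsubseteq)$; by Proposition \ref{UVProp} it is a Boolean algebra with $\cap$ as meet and $U\mapsto\mathsf{int}(X\setminus U)$ as complement, and since each $U\in\mathsf{CRO}(X)$ is neg-closed one checks that $\mathsf{int}(X\setminus U)$ coincides with the operation $\neg$ of $\mathcal{RO}(S,\sqsubseteq)$ (the inclusion $\mathsf{int}(X\setminus U)\subseteq\neg U$ holds because the topology of $X$ is contained in the downset topology of $(S,\sqsubseteq)$, and the reverse inclusion holds because $\neg U$ is open in $X$ and disjoint from the downset $U$). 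Hence $\mathsf{CRO}(X)$ is a Boolean subalgebra of $\mathcal{RO}(S,\sqsubseteq)$ (Lemma \ref{BAsfromPosFrames}), so $(S,\sqsubseteq,\mathsf{CRO}(X))$ is a possibility frame. It remains to verify the two conditions of Proposition \ref{SepReal}: \emph{separation} follows because if $y\not\sqsubseteq x$ then by definition of the specialization order there is an open set containing $x$ but not $y$, and refining it inside the basis $\mathsf{CRO}(X)$ gives an admissible set with the same property; \emph{filter realization} is precisely condition \ref{UVDef2} in the definition of a UV-space. Thus $(S,\sqsubseteq,\mathsf{CRO}(X))$ is filter-descriptive.

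\emph{From a filter-descriptive possibility frame to a UV-space.} Let $\mathcal{F}=(S,\sqsubseteq,P)$ be filter-descriptive and put $B=\mathcal{F}^\mathsf{b}$. By Proposition \ref{SepReal}, $\mathcal{F}$ is isomorphic to $(\mathcal{F}^\mathsf{b})_\mathsf{g}=B_\mathsf{g}=(\mathsf{PropFilt}(B),\supseteq,\{\widehat{a}\mid a\in B\})$; such an isomorphism is an order-isomorphism that carries the basis $P$ bijectively onto the basis $\{\widehat{a}\mid a\in B\}$, hence a homeomorphism from $(S,\tau)$, where $\tau$ is generated by $P$, onto the space underlying $T(B_\mathsf{g})$. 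But that space is exactly $UV(B)$ as defined in Proposition \ref{UVProp}.\ref{UVProp3}, which is a UV-space; since being a UV-space is a purely topological property, $(S,\tau)$ is a UV-space. To see that the two halves are genuinely inverse one also wants $\sqsubseteq$ to be the converse of the specialization order of $(S,\tau)$ and $P$ to equal $\mathsf{CRO}(S,\tau)$: the former is Proposition \ref{TopPossProp}.\ref{TopPossProp3}, and the latter follows by combining Proposition \ref{TopPossProp}.\ref{TopPossProp4} (using filter realization) with Proposition \ref{3UV}.

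\emph{Main obstacle.} The only point that is not bookkeeping is, in the second half, identifying $\mathsf{CRO}(S,\tau)$ with $P$ if one attempts it directly: extracting a finite subcover of $U=\bigcup_i A_i$ by admissible sets yields only $U=A_{i_1}\vee\cdots\vee A_{i_n}$ (join in $\mathcal{RO}$), not a set-theoretic union, and there are attendant subtleties about regular-openness in $\tau$ versus in the downset topology. This is exactly the work that filter realization does for us through Lemma \ref{FiltDLem}.\ref{FiltDLem2} and Proposition \ref{TopPossProp}.\ref{TopPossProp4}, which is why the cleanest route is to transport everything along the isomorphism $\mathcal{F}\cong(\mathcal{F}^\mathsf{b})_\mathsf{g}$ (and, dually, the homeomorphism $X\cong UV(\mathsf{CRO}(X))$) rather than to check the defining conditions afresh.
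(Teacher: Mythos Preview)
Your argument is correct and follows essentially the same route as the paper: for the first half you invoke Proposition~\ref{3UV} to place $\mathsf{CRO}(X)$ inside $\mathcal{RO}(S,\sqsubseteq)$ as a Boolean subalgebra and then read off separation and filter realization from the two clauses of Definition~\ref{UVDef}, and for the second half you transport along the isomorphism $\mathcal{F}\cong(\mathcal{F}^\mathsf{b})_\mathsf{g}$ from Proposition~\ref{SepReal} and conclude via Proposition~\ref{UVProp}.\ref{UVProp3}. Your explicit verification that $\mathsf{int}(X\setminus U)=\neg U$ and your closing remarks on the two directions being mutually inverse go a bit beyond what the paper spells out, but they are correct and helpful elaborations rather than a different approach.
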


\begin{proof} For the first claim, $(S,\sqsubseteq)$ is a poset, and $\mathsf{CRO}(X)$ is a nonempty subset of $\mathcal{RO}(S,\sqsubseteq)$ closed under $\neg$ and $\cap$, as $\mathsf{CRO}(X)=\mathsf{CO}\mathcal{RO}(X)$ by Proposition \ref{3UV}. That $\mathsf{CO}\mathcal{RO}(X)$ forms a basis for $X$, given by part \ref{UVDef1} of Definition \ref{UVDef}, implies the separation property of filter-descriptive possibility frames in Proposition \ref{SepReal}, and part \ref{UVDef2} of Definition \ref{UVDef} implies the filter realization property. The second claim follows from Propositions \ref{SepReal} and \ref{UVProp}.\ref{UVProp3}.\end{proof}

To go beyond representation of BAs to a full duality for BAs, we introduce morphisms similar to those in Definition \ref{PossPMorph}. A \textit{spectral map} \cite{Hochster1969} between spectral spaces $X$ and $X'$ is a map $f\colon X\to X'$ such that $f^{-1}[U]\in\mathsf{CO}(X)$ for each $U\in\mathsf{CO}(X')$, which implies that $f$ is continuous.

\begin{definition} Let $X$ and $X'$ be UV-spaces with specialization orders $\leqslant$ and $\leqslant'$, respectively. A \textit{UV-map} from $X$ to $X'$ is a spectral map $f\colon X\to X'$ that also satisfies the \textit{p-morphism condition}:
\[\mbox{if $f(x) \leqslant' y'$, then $\exists y: x\leqslant y$ and $f(y)=y'$}.\]
\end{definition}
\noindent This condition matches the p-morphic \SqBack{} condition of Definition \ref{PossPMorph}.\ref{PossPMorph2} after adjusting for the fact that $\leqslant$ is the converse of $\sqsubseteq$. That $f$ preserves the specialization order follows from $f$ being continuous.

We can now state a duality between UV-spaces and BAs analogous to the duality between filter-descriptive possibility frames and BAs (Theorem \ref{FiltPossDuality}).

\begin{theorem}[\cite{BH2020}] \textnormal{(ZF) The category \textbf{UV} of UV-spaces with UV-maps is dually equivalent to the category \textbf{BA} of Boolean algebras with Boolean homomorphisms.}
\end{theorem}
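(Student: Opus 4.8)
The plan is to \emph{transport} the dual equivalence $\mathbf{FiltPoss}\simeq\mathbf{BA}^{\mathrm{op}}$ of Theorem~\ref{FiltPossDuality} along a covariant \emph{isomorphism} of categories $\mathbf{UV}\cong\mathbf{FiltPoss}$ obtained by restricting the $T$ and $F$ maps of Proposition~\ref{TopPossProp} to UV-spaces and filter-descriptive possibility frames. On objects this is already in hand: by the proposition immediately preceding this theorem, $F(X)=(S,\sqsubseteq,\mathsf{CRO}(X))$ is filter-descriptive (using $\mathsf{CRO}(X)=\mathsf{NegClop}\mathcal{RO}(X)$ from Proposition~\ref{3UV}), $T$ of a filter-descriptive frame is a UV-space, and $F(T(\mathcal{F}))=\mathcal{F}$, $T(F(\mathcal{T}))=\mathcal{T}$ follow from Proposition~\ref{TopPossProp}.\ref{TopPossProp4}--\ref{TopPossProp5} since filter-descriptive frames satisfy filter realization. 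So $F$ and $T$ are mutually inverse bijections on objects.

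The crux is the morphism correspondence: I claim a function $f\colon X\to X'$ between UV-spaces is a UV-map iff it is a p-morphism (Definition~\ref{PossPMorph}) from $F(X)$ to $F(X')$. For the forward direction, a spectral map is continuous, hence preserves the specialization order, hence reverses it into \SqForth{}; the p-morphism clause of a UV-map is literally \SqBack{} once $\leqslant$ is replaced by its converse $\sqsubseteq$; and for clause~\ref{PossPMorph1} of Definition~\ref{PossPMorph}, note that \SqForth{} and \SqBack{} already force $f^{-1}[U']\in\mathcal{RO}(X)$ for every $U'\in\mathcal{RO}(X')$ (the remark following Definition~\ref{PossPMorph}, applied to the full frames on the underlying posets), while $f$ spectral gives $f^{-1}[U']\in\mathsf{CO}(X)$ for $U'\in\mathsf{CRO}(X')\subseteq\mathsf{CO}(X')$; since $\mathsf{CO}(X)\cap\mathcal{RO}(X)=\mathsf{CRO}(X)$ by Proposition~\ref{3UV}, we get $f^{-1}[U']\in\mathsf{CRO}(X)$. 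Conversely, a p-morphism $f$ has \SqBack{} $=$ the p-morphism clause, and it is spectral because every $U\in\mathsf{CO}(X')$ is a finite union of basic opens from $\mathsf{CRO}(X')$ (by compactness, since $\mathsf{CRO}(X')$ is a basis by Definition~\ref{UVDef}.\ref{UVDef1}), so $f^{-1}[U]$ is the corresponding finite union of sets $f^{-1}[B_i]\in\mathsf{CRO}(X)\subseteq\mathsf{CO}(X)$, hence compact open. Because $T$ and $F$ act as the identity on underlying functions and composition is ordinary function composition, this makes $F$ (with inverse $T$) an isomorphism $\mathbf{UV}\cong\mathbf{FiltPoss}$.

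Composing $\mathbf{UV}\cong\mathbf{FiltPoss}$ with the dual equivalence $\mathbf{FiltPoss}\simeq\mathbf{BA}^{\mathrm{op}}$ of Theorem~\ref{FiltPossDuality} then yields $\mathbf{UV}\simeq\mathbf{BA}^{\mathrm{op}}$. Tracing through the construction, the composite functors are $X\mapsto\mathsf{CRO}(X)$ (a UV-map $f$ sent to the Boolean homomorphism $U\mapsto f^{-1}[U]$, well defined by the morphism correspondence and the preservation of $\neg$ and $\cap$ under preimage along a p-morphism) and $B\mapsto UV(B)$ (a Boolean homomorphism $h$ sent to $F\mapsto h^{-1}[F]$), and the two natural isomorphisms are $x\mapsto\mathsf{CRO}(x)$ from Proposition~\ref{UVProp} and $a\mapsto\widehat a$ from Proposition~\ref{SepReal} / Proposition~\ref{FiltFrameDef}; their naturality squares unwind to the identities $\{U\in\mathsf{CRO}(X'):f(x)\in U\}=\{U:f^{-1}[U]\in\mathsf{CRO}(x)\}$ and $\widehat{h(a)}=\{F:h(a)\in F\}=(h^{-1})^{-1}[\widehat a]$. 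Since Theorem~\ref{FiltPossDuality} is proved in ZF and nothing in the morphism-correspondence argument invokes choice, the whole duality is ZF.

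The main obstacle is precisely the morphism correspondence of the second paragraph: the interplay whereby the ``preimage of an admissible set is admissible'' clause of a p-morphism and the ``$f^{-1}$ of a compact open is compact open'' clause of a spectral map become equivalent in the presence of \SqForth{} and \SqBack{}. This rests on the remark after Definition~\ref{PossPMorph} (full-frame p-morphisms preserve regular opens), on $\mathsf{CRO}$ being a basis of compact regular opens, and on $\mathsf{CO}(X)\cap\mathcal{RO}(X)=\mathsf{CRO}(X)$; everything else is bookkeeping. (One could instead build the two functors directly and verify the unit and counit are natural isomorphisms; there the analogous obstacle is checking that $h^{-1}[\cdot]\colon UV(B')\to UV(B)$ satisfies the p-morphism condition, which requires showing that the proper filter of $B'$ generated by $F\cup h[G]$ is proper whenever $h^{-1}[F]\subseteq G$ — a short explicit filter computation.)
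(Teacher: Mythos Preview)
The paper does not give its own proof of this theorem; it simply cites \cite{BH2020}. Your approach---transporting the dual equivalence of Theorem~\ref{FiltPossDuality} along an isomorphism $\mathbf{UV}\cong\mathbf{FiltPoss}$---is correct and is in fact the most natural route given what the paper has already assembled: the object-level bijection is exactly the content of the proposition immediately preceding the theorem together with Proposition~\ref{TopPossProp}.\ref{TopPossProp4}--\ref{TopPossProp5}, and your morphism correspondence is sound. The two directions of that correspondence are handled cleanly: for a UV-map, the combination of spectrality (giving $f^{-1}[U']\in\mathsf{CO}(X)$) with the full-frame consequence of \SqForth{} and \SqBack{} (giving $f^{-1}[U']\in\mathcal{RO}(X)$) lands in $\mathsf{CO}\mathcal{RO}(X)=\mathsf{CRO}(X)$ by Proposition~\ref{3UV}; conversely, for a p-morphism, writing an arbitrary compact open as a finite union of $\mathsf{CRO}$-basics recovers spectrality.

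The original treatment in \cite{BH2020} builds the two functors $\mathsf{CRO}$ and $UV$ directly and checks the unit and counit, essentially the alternative you sketch in your final parenthetical. Your route is a genuine shortcut relative to the paper's internal development, since Theorem~\ref{FiltPossDuality} has already absorbed the work of verifying naturality and the filter computation you allude to; what it buys is that no new argument about $UV(h)$ being a UV-map is needed. Nothing in your argument invokes choice, so the ZF claim stands.
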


This duality leads to a dictionary for translating between BA concepts and {UV-concepts}, given in \cite{BH2020}, making it possible to prove facts about BAs using point-set topological reasoning. Here we highlight only one entry in the dictionary. For this we stress the distinction between $\mathsf{RO}(X)$, the family of regular open sets in the UV-space $X$, and $\mathcal{RO}(X)$, the family of regular open sets in the poset $(X,\sqsubseteq)$ where $\sqsubseteq$ is the converse of the specialization order of $X$. While $\mathcal{RO}(X)$ realizes the canonical extension of the BA dual to $X$ (recall Proposition \ref{FiltFrameDef}.\ref{FiltFrameDef2}), $\mathsf{RO}(X)$ realizes the MacNeille completion of the BA dual to $X$.

\begin{proposition}[\cite{BH2020}]\label{MacNeilleProp2} \textnormal{Let $B$ be a BA and $X$ its dual UV-space. Then $\mathsf{RO}(X)$ is (up to isomorphism) the MacNeille completion of $B$.}
\end{proposition}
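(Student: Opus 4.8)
The plan is to realize $\mathsf{RO}(X)$ as a complete Boolean algebra into which a copy of $B$ sits densely, and then invoke the uniqueness of the MacNeille completion recalled just before Proposition~\ref{MacNeilleProp}. Concretely: first, $\mathsf{RO}(X)$ is a complete Boolean algebra by Tarski's Theorem~\ref{TarskiThm}, with complement $\neg U=\mathsf{int}(X\setminus U)$, binary meet equal to intersection (the intersection of two regular opens of a space is again regular open, since $U\cap V$ is open and $\mathsf{cl}(U\cap V)\subseteq\mathsf{cl}(U)\cap\mathsf{cl}(V)$), and arbitrary join $\bigvee\{U_i\}=\mathsf{int}(\mathsf{cl}(\bigcup U_i))$.

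Next I would check that $\mathsf{CRO}(X)$ is a \emph{Boolean subalgebra} of $\mathsf{RO}(X)$, not merely a sub-poset. By clause~\ref{UVDef1} of Definition~\ref{UVDef}, $\mathsf{CRO}(X)$ is closed under finite intersection and under $U\mapsto\mathsf{int}(X\setminus U)$, and these are exactly the meet and complement of $\mathsf{RO}(X)$ computed above; since finite join is definable from meet and complement by De Morgan, $\mathsf{CRO}(X)$ is closed under the finite join of $\mathsf{RO}(X)$ as well, and it contains $\varnothing$ and $X$. So the inclusion $\mathsf{CRO}(X)\hookrightarrow\mathsf{RO}(X)$ is a Boolean embedding. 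Moreover, since $X$ is the dual UV-space of $B$, i.e.\ $X$ is homeomorphic to $UV(B)$, Proposition~\ref{UVProp} gives $B\cong\mathsf{CRO}(UV(B))\cong\mathsf{CRO}(X)$; transporting the inclusion along this isomorphism gives a Boolean embedding $e\colon B\to\mathsf{RO}(X)$.

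The remaining point, and the real content, is \emph{density} of $e$: every $W\in\mathsf{RO}(X)$ must be the join \emph{in} $\mathsf{RO}(X)$ of $\mathcal{C}_W=\{V\in\mathsf{CRO}(X)\mid V\subseteq W\}$. Because $\mathsf{CRO}(X)$ is a basis for $X$ (clause~\ref{UVDef1} of Definition~\ref{UVDef}) and $W$ is open, we have $\bigcup\mathcal{C}_W=W$; hence $\bigvee^{\mathsf{RO}(X)}\mathcal{C}_W=\mathsf{int}(\mathsf{cl}(\bigcup\mathcal{C}_W))=\mathsf{int}(\mathsf{cl}(W))=W$, the last equality holding precisely because $W$ is regular open in the space $X$. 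Thus $e$ is a dense embedding of $B$ into the complete Boolean algebra $\mathsf{RO}(X)$, so by the uniqueness of the MacNeille completion, $\mathsf{RO}(X)$ is (up to isomorphism) $B^{\ast}$.

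I do not expect a deep obstacle here. The one step needing care is the density computation — justifying that the topological join $\mathsf{int}(\mathsf{cl}(\bigcup\mathcal{C}_W))$ of the compact regular opens below $W$ genuinely recovers $W$ — which rests on $\mathsf{CRO}(X)$ being a basis \emph{and} $W$ being regular open in $X$ rather than merely in the poset $(X,\sqsubseteq)$. A secondary bookkeeping point is confirming that the meet and complement of $\mathsf{RO}(X)$ restrict to those of $\mathsf{CRO}(X)$ (so that the inclusion is a homomorphism), which is immediate from Tarski's formulas and clause~\ref{UVDef1} of Definition~\ref{UVDef}.
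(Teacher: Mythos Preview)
The paper does not actually supply its own proof of this proposition; it merely states the result with a citation to \cite{BH2020}. Your argument is correct and is essentially the standard one: $\mathsf{RO}(X)$ is a complete Boolean algebra by Theorem~\ref{TarskiThm}; $\mathsf{CRO}(X)\cong B$ sits inside it as a Boolean subalgebra because the meet and complement of $\mathsf{RO}(X)$ are precisely the intersection and $U\mapsto\mathsf{int}(X\setminus U)$ operations under which $\mathsf{CRO}(X)$ is closed by Definition~\ref{UVDef}.\ref{UVDef1}; and density follows since $\mathsf{CRO}(X)$ is a basis, so any regular open $W$ satisfies $W=\bigcup\{V\in\mathsf{CRO}(X)\mid V\subseteq W\}$ and hence $W=\mathsf{int}(\mathsf{cl}(W))$ equals the $\mathsf{RO}(X)$-join of those $V$'s. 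Your identification of the key step---that density uses both that $\mathsf{CRO}(X)$ is a basis and that $W$ is regular open in the \emph{space} $X$---is exactly right.
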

\noindent Thus, we have seen two ways of realizing the MacNeille completion of a BA $A$: $\mathcal{RO}(B_+,\leq_+)$, as in Proposition \ref{MacNeilleProp}, or $\mathsf{RO}(UV(B))$, as in Proposition \ref{MacNeilleProp2}. 

\begin{remark}\label{MacNeilleCanonicalComparison} Compare the choice-free possibilistic approaches to realizing the MacNeille completion and constructive canonical extension of a BA to the classical methods based on Stone duality, assuming the Boolean Prime Filter Theorem: the MacNeille completion of a BA $B$ can be realized as $\mathsf{RO}(X)$ where $X$ is the Stone space of $B$; and the canonical extension of a BA $B$ can be realized as the powerset of the Stone space of $B$. Figure \ref{MacNeilleCanonical} summarizes the foregoing points.
\end{remark}

\begin{figure}
\begin{tabular}{l|l|l}
& world approach & possibility approach \\
\hline
MacNeille  of $B$ & regular opens of Stone space & $\mathcal{RO}(B_+,\leq_+)$ or regular    \\
&&  opens of UV-space \\
canonical extension & powerset of Stone space  & regular opens of poset \\
of $B$ && of proper filters  \\
\end{tabular}
\caption{realizations of MacNeille completion and canonical extension}\label{MacNeilleCanonical}
\end{figure}

\section{First-order case}\label{FOSection}

In this section, we give possibility semantics for classical first-order logic, a topic previously studied by van Benthem \cite{Benthem1981a,Benthem2016b}, Harrison-Trainor \cite{HT2019}, and Massas \cite{Massas2016}.\footnote{These versions of possibility semantics for first-order logic differ in their details. Van Benthem and Massas consider first-order languages without identity or function symbols, and their models are ``varying domain'' models, whereas our models below are ``constant domain'' models, like Harrison-Trainor's. Our models differ from the others in adding ``equality'' relations.} Fine \cite{Fine1975} also gave what can be considered a kind of possibility semantics for classical first-order logic with applications to vague languages (see \cite[\S~8.1]{Holliday2018} for comparison). As we briefly show in \S~\ref{ForcingSection}, first-order possibility semantics can be seen as generalizing an interpretation of the language of  first-order set theory used in forcing (see, e.g., \cite{Bell2005}) to arbitrary first-order languages. In the terminology of topos theory, first-order possibility semantics can be seen as a variant of sheaf semantics for first-order logic using the Cohen topos \cite[p.~318]{MacLane1992}.

After defining the semantics, our main goal is a completeness theorem. The traditional completeness theorem for first-order logic for uncountable languages \cite{Malcev1936}, stating that every consistent set of first-order sentences has a \textit{Tarskian model}, is not provable in ZF, as it is equivalent in ZF to the Boolean Prime Filter Theorem \cite{Henkin1954} (for a proof, see \cite[p.~104]{Bell1974}). By contrast, we will prove in ZF that for arbitrary languages, every consistent set of first-order sentences has a \textit{possibility model}. This suggest the possibility of a program of ``choice-free model theory.''

We assume familiarity with the definition of a first-order language $L$, including identity and function symbols. The only detail to note is that we treat constant symbols as $0$-ary function symbols for simplicity (as in, e.g., \cite[\S~2.4]{Shoenfield2010}). Let $\mathsf{Var}(L)$ and $\mathsf{Term}(L)$ be the set of variables and set of terms of $L$, respectively. We also assume familiarity with first-order logic as a proof system. 

\subsection{First-order possibility models}

We begin with possibility models for a first-order language $L$. 

In addition to a poset $(S,\sqsubseteq)$ of possibilities, these models have a set $D$ of what we call \textit{guises of objects}. A possibility $s\in S$ may settle that two guises $a$ and $b$ are guises of the same object: $a\asymp _s b$.\footnote{The use of a family of equivalence relations $\{\asymp_s\}_{s\in S}$ appears in the standard definition of \textit{Kripke models with equality} in \cite[\S~3.5]{Gabbay2009}.} On the other hand, a partial possibility $s$ may not yet settle that they are guises of the same object; borrowing Frege's \cite{Frege1892} famous example, $s$ may not settle that the object known under the guise of \textit{The Morning Star} is the same as the object known under the guise of \textit{The Evening Star}. Then by the standard refinability property in possibility semantics (see Definition \ref{FOmodel}.\ref{FOmodel3}), there will be a possibility $s'\sqsubseteq s$ that settles that they are \textit{not} guises of the same object: for all $s''\sqsubseteq s'$, $a\not\asymp_{s''}b$. Note the distinction between $a\not\asymp_{s'}b$, which means $s'$ does not settle that $a$ and $b$ are guises of the same object, and the stronger condition that for all $s''\sqsubseteq s'$, $a\not\asymp_{s''}b$, which means $s'$ settles that $a$ and $b$ are not guises of the same object. 

A similar distinction applies to the interpretation of predicate symbols by an interpretation function $V$: if $(a,b)\in V(R,s)$, then $s$ settles that the pair $(a,b)$ is in the interpretation of $R$, which we take to mean that the objects of which $a$ and $b$ are guises, respectively, stand in the relation to which $R$ corresponds; if $(a,b)\not\in V(R,s)$, then $s$ does not settle that $(a,b)$ is in the interpretation of  $R$; and if for all $s'\sqsubseteq s$, $(a,b)\not\in V(R,s')$, then $s$ settles that $(a,b)$ is \textit{not} in the interpretation of  $R$. We comment on the interpretation of function symbols below.

\begin{definition}\label{FOmodel} Let $L$ be a first-order language. A \textit{first-order possibility model for $L$} is a tuple $\mathfrak{A}=(S,\sqsubseteq, D, \asymp, V)$ where:
\begin{enumerate}
\item $(S,\sqsubseteq)$ is a poset;
\item $D$ is a nonempty set;
\item\label{FOmodel3} $\asymp$ is a function assigning to each $s\in S$ an equivalence relation $\asymp_s$ on $D$ satisfying:

\item[$\bullet$] \textit{persistence} for $\asymp$: if $a\asymp_{s}b$ and $s'\sqsubseteq s$, then $a\asymp_{s'}b$;
\item[$\bullet$] \textit{refinability} for $\asymp$: if $a\not\asymp_{s}b$, then $\exists s'\sqsubseteq s$ $\forall s''\sqsubseteq s'$ $a\not\asymp_{s''}b$.

\item $V$ is a function assigning to each pair of an $n$-ary predicate $R$ of $L$ and $s\in S$ a set $V(R,s)\subseteq D^n$ and to each $n$-ary function symbol $f$ of $L$ and $s\in S$ a set $V(f,s)\subseteq D^{n+1}$ satisfying:

\item[$\bullet$] \textit{persistence} for $R$: if $\overline{a}\in V(R,s)$, $s'\sqsubseteq s$, and $\overline{a}\asymp_{s'}\overline{b}$, then $\overline{b}\in V(R,s')$;\footnote{Here $\overline{a}\asymp_{s'}\overline{b}$ means that $a_1\asymp_{s'}b_1,\dots,a_n\asymp_{s'}b_n$.}
\item[$\bullet$] \textit{refinability} for $R$: if $\overline{a}\not\in V(R,s)$, then $\exists s'\sqsubseteq s$ $\forall s''\sqsubseteq s'$ $\overline{a}\not\in V(R,s'')$;
\item[$\bullet$] \textit{persistence} for $f$: if $\overline{a}\in V(f,s)$, $s'\sqsubseteq s$, and $\overline{a}\asymp_{s'}\overline{b}$, then $\overline{b}\in V(f,s')$;\
\item[$\bullet$] \textit{quasi-functionality} for $f$: if $(\overline{a},b)\in V(f,s)$ and $(\overline{a},b')\in V(f,s)$, then $b\asymp_s b'$;
\item[$\bullet$] \textit{eventual definedness} for $f$:  $\forall \overline{a}\in D^n$  $\exists s'\sqsubseteq s$ $\exists b\in D$: $(\overline{a},b)\in V(f,s)$.
\end{enumerate}

We say $\mathfrak{A}$ \textit{has total functions} if for each $n$-ary function symbol $f$ of $L$, $s\in S$, and $\overline{a}\in D^n$, there is some $b\in D$ such that $(\overline{a},b)\in V(f,s)$.

A \textit{Tarskian model} is a first-order possibility model with total functions in which $S$ contains only one possibility $s$, and $\asymp_s$ is the identity relation.

A \textit{pointed model} is a pair $\mathfrak{A},s$ of a possibility model $\mathfrak{A}$ and possibility $s$ in $\mathfrak{A}$.\end{definition}

\begin{example} The following example of a non-Tarskian first-order possibility model comes from the development of choice-free nonstandard analysis using possibility models by Massas \cite{Massas2020b}. Let $\mathfrak{M}$ be a Tarskian model with domain $M$. The Fr\'{e}chet power $\mathfrak{M}_\mathscr{F}$ of $\mathfrak{M}$ is the tuple $\mathfrak{M}_\mathscr{F} = (\mathscr{F},\supseteq, D, \asymp, V)$ where:
\begin{enumerate}
\item $\mathscr{F}$ is the set of all proper filters of $\wp(\omega)$ extending the Fr\'{e}chet filter (i.e., the set of all cofinite subsets of $\omega$), which we order by reverse inclusion $\supseteq$;
\item $D=M^\omega$, i.e., the set of all functions $f:\omega\to M$;
\item for $a,b\in D$ and $F\in\mathfrak{F}$, $a\asymp_F b$ iff $\{i\in \omega\mid a(i)=b(i)\}\in F$;
\item for any $n$-ary predicate $R$ of $L$, $\overline{a}\in D^n$, and $F\in\mathfrak{F}$, $\overline{a}\in V(R,F)$ iff $\{i\in \omega\mid M\models_{g[\overline{x}:=\overline{a(i)}]} R(\overline{x})\}\in F$, where $\models _{g[\overline{x}:=\overline{a(i)}]}$ is the standard Tarskian satisfaction relation with the variables $\overline{x}$ mapped to $\overline{a(i)}$.
\end{enumerate} 
Then one can show that $\mathfrak{M}_\mathscr{F}$ is a first-order possibility model.
\end{example}

Let us note two aspects of our interpretation of an $n$-ary function symbol $f$. First, in traditional semantics, $f$ is interpreted as an $n$-ary function on the domain of objects, i.e.,  an $n+1$-ary  relation such that if $(\overline{a},b)$ and $(\overline{a},b')$ are both in the relation, then $b=b'$. By contrast, we ask only that $b$ and $b'$ be guises of the same object, i.e., $b\asymp_s b'$, not that they are one and the same guise. For $(\overline{a},b)\in V(f,s)$ means that when the function to which $f$ corresponds takes in objects of which $\overline{a}$ are guises, then it outputs an object of which $b$ is a guise. Hence if $b\asymp_s b'$, we should have $(\overline{a},b')\in V(f,s)$ as well. Thus, we do not interpret $f$ at $s$ as an $n$-ary function on the domain of guises. Instead, our approach is equivalent to interpreting $f$ at $s$ as an $n$-ary function on $D/\mathord{\asymp_s}$,\footnote{There is a mathematically viable alternative version of the semantics that interprets $f$ at $s$ as an $n$-ary function on $D$, which for a given input $\overline{a}\in D^n$ chooses a unique output $b$ from the equivalence class $\{b'\in D\mid b\asymp_s b'\}$. Although directly transforming one of our quasi-functional models to such a properly functional model by choosing representatives of equivalence classes may require the Axiom of Choice, the canonical model construction below can be adapted to show that properly functional models would suffice for completeness. A possible convenience of the properly functional approach is that one can take the denotation of a term to be a single element of $D$ (provided one then takes the truth clause for $t_1=t_2$ at $s$ to be that the denotations of $t_1$ and $t_2$ stand in the $\asymp_s$ relation), rather than an equivalence class of elements of $D$, as we do in Definition \ref{DenoteDef}. On the other hand, an attraction of the quasi-functional approach is that it treats relations and function in a uniform manner. Consider the binary relation symbol $Square$ such that $Square(t_1,t_2)$ means that $t_2$ is the square of $t_1$ vs.~the unary function symbol $square$ such that $square(t_1)=t_2$ means that $t_2$ is the square of $t_1$. If $(a,b)$ is in the interpretation of the $Square$ relation at $s$, and $b\asymp_s b'$, then $(a,b')$ should be in the interpretation of $Square$ at $s$, since we take $(a,b)\in V(Square,s)$ (resp.~$(a,b')\in V(Square,s)$) to mean that \textit{the objects} of which $a,b$ (resp.~$a,b'$) are guises stand in the relevant relation, and $b$ and $b'$ are guises of the same object. The quasi-functional approach applies the same idea to the interpretation of $square$.} as well as interpreting an $n$-ary relation symbol $R$ at $s$ as an $n$-ary relation on $D/\mathord{\asymp_s}$ (see Definition \ref{InterpretEquiv}). However, our conditions on $V$ are somewhat simpler to state when officially taking the interpretations of symbols at $s$ to be relations on~$D$.

Second, we have allowed the ``quasi-function'' interpreting a function symbol at a given partial possibility to be undefined for a given argument, i.e., there is some $\overline{a}\in D^n$ such that for no $b\in D$ do we have $(\overline{a},b)\in V(f,s)$, as the possibility $s$ has not yet settled any guise of the output of the function. Allowing such undefinedness is not strictly necessary. We will see that (in ZF) every pointed model has the same first-order theory as a pointed model with total functions. However, allowing partiality of functions may be convenient in constructing specific models---though functions must eventually become defined in order to validate theorems such that $\forall x\exists y f(x)=y$. Of course, it will turn out that in ZF + Boolean Prime Filter Theorem, every pointed model has the same first-order theory as a Tarskian model with no partiality at all, thanks to the soundness of first-order logic with respect to possibility models and the standard completeness theorem for first-order logic (using the Prime Filter Theorem) with respect to Tarskian models.

Next we define the denotation of a term relative to a possibility and variable assignment. The denotation of a term will be an equivalence class of $\asymp_s$ or $\varnothing$. 

\begin{definition}\label{DenoteDef} Given a first-order possibility model $\mathfrak{A}=(S,\sqsubseteq, D, \asymp, V)$, ${s\in S}$, and variable assignment $g:\mathsf{Var}(L)\to D$, we define a function \\ $\llbracket \;\rrbracket_{\mathfrak{A},s,g}:{\mathsf{Term}(L)\to \wp(D)}$ recursively as follows:
\begin{enumerate}
\item\label{DenoteDef1} $\llbracket x\rrbracket_{\mathfrak{A},s,g}=\{a\in D\mid a\asymp_s g(x)\}$ for $x\in\mathsf{Var}(L)$;
\item\label{DenoteDef2} for an $n$-ary function symbol $f$ and $t_1,\dots,t_n\in \mathsf{Term}(L)$,
\begin{eqnarray*}\llbracket f(t_1,\dots,t_n)\rrbracket_{\mathfrak{A},s,g}&=& \{b \in D\mid \exists a_1,\dots,a_n: a_i\in \llbracket t_i\rrbracket_{\mathfrak{A},s,g}\mbox{ and }\\
&& \qquad\qquad\qquad\qquad\quad\, (a_1,\dots,a_n,b)\in V(f,s)\}.
\end{eqnarray*}
\end{enumerate}
\end{definition}

Given terms $t,u$ and a variable $x$, let $u^x_t$ be the result of replacing $x$ in $u$ by $t$. Given a variable assignment $g$, variable $x$, and $a\in D$, let $g[x:=a]$ be the variable assignment that differs from $g$ at most in assigning $a$ to $x$. 

\begin{lemma}\label{TermEquiv} \textnormal{For any model $\mathfrak{A}=(S,\sqsubseteq, D, \asymp, V)$, $s\in S$, variable assignment $g:\mathsf{Var}(L)\to D$,   $t,u\in\mathsf{Term}(L)$, and $x\in\mathsf{Var}(L)$:
\begin{enumerate}
\item\label{TermEquiv1} $\llbracket t\rrbracket_{\mathfrak{A},s,g}$ is an $\asymp_s$-equivalence class or $\varnothing$;
\item\label{TermEquiv1.25} if $g(x)\asymp_s a$, then $\llbracket t\rrbracket_{\mathfrak{A},s,g}=\llbracket t\rrbracket_{\mathfrak{A},s,g[x:=a]}$;
\item\label{TermEquiv2} if $a\in \llbracket t\rrbracket_{\mathfrak{A},s,g}$ and $s'\sqsubseteq s$, then $a\in \llbracket t\rrbracket_{\mathfrak{A},s',g}$;
\item\label{TermEquiv1.5} there is some $s'\sqsubseteq s$ such that $\llbracket t\rrbracket_{\mathfrak{A},s',g}\neq\varnothing$;
\item\label{TermEquiv2.5} if $\mathfrak{A}$ has total functions, then $\llbracket t\rrbracket_{\mathfrak{A},s,g}\neq\varnothing$;
\item\label{TermEquiv3} $\llbracket u^x_t\rrbracket_{\mathfrak{A},s,g}=\llbracket u\rrbracket_{\mathfrak{A},s,g[x:=a]}$ for any $a\in \llbracket t\rrbracket_{\mathfrak{A},s,g}$.
\end{enumerate}}
\end{lemma}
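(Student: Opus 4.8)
The plan is to prove all six parts by simultaneous (or successive) induction on the structure of the term $t$, exploiting that each part's function-symbol case unwinds Definition \ref{DenoteDef}.\ref{DenoteDef2} and then applies exactly one clause of Definition \ref{FOmodel} to the arguments $t_1,\dots,t_n$ supplied by the induction hypothesis. The only dependencies between parts are that the inductive step of part \ref{TermEquiv1.5} uses part \ref{TermEquiv2}, and the base case of part \ref{TermEquiv3} uses part \ref{TermEquiv1}, so I would establish them in the listed order. In every part the variable base case $t=x$ is immediate from Definition \ref{DenoteDef}.\ref{DenoteDef1} and the fact that each $\asymp_s$ is an equivalence relation --- with \emph{persistence} for $\asymp$ added in part \ref{TermEquiv2}. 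Since terms are finite and the inductions range over finite structure, no form of Choice enters.

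The one step needing genuine care is the function-symbol case of part \ref{TermEquiv1}. For $t=f(t_1,\dots,t_n)$: if some $\llbracket t_i\rrbracket_{\mathfrak{A},s,g}=\varnothing$ then $\llbracket t\rrbracket_{\mathfrak{A},s,g}=\varnothing$ and we are done; otherwise fix representatives $a_i\in\llbracket t_i\rrbracket_{\mathfrak{A},s,g}$, and if some $b$ satisfies $(a_1,\dots,a_n,b)\in V(f,s)$ I claim $\llbracket t\rrbracket_{\mathfrak{A},s,g}$ is exactly the $\asymp_s$-class of $b$ (and it is $\varnothing$ if no such $b$ exists). Containment in that class is where \emph{quasi-functionality} for $f$ is essential, together with the observation that by the induction hypothesis each $\llbracket t_i\rrbracket_{\mathfrak{A},s,g}$ is a single $\asymp_s$-class, so any alternative choice $a_i'\in\llbracket t_i\rrbracket_{\mathfrak{A},s,g}$ has $a_i'\asymp_s a_i$ and hence, by \emph{persistence} for $f$ applied with $s'=s$, still lands in $V(f,s)$ with whatever output was valid before; the reverse containment is the same instance of \emph{persistence} for $f$ applied to $(a_1,\dots,a_n,b)\asymp_s(a_1,\dots,a_n,b')$. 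This is precisely the point where ``union of $\asymp_s$-classes'' is upgraded to ``a single $\asymp_s$-class.''

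Parts \ref{TermEquiv1.25}, \ref{TermEquiv2} and \ref{TermEquiv2.5} are then routine inductions: for \ref{TermEquiv1.25} the only nontrivial base case is $t=x$, where both sides are the $\asymp_s$-class of $g(x)$ since $g(x)\asymp_s a$, and the function step is immediate once the $\llbracket t_i\rrbracket$ agree; for \ref{TermEquiv2} the base case uses \emph{persistence} for $\asymp$ and the function step uses \emph{persistence} for $f$ with the trivial instance $\overline{a}\asymp_{s'}\overline{a}$; for \ref{TermEquiv2.5} the function step chooses representatives from the $\llbracket t_i\rrbracket_{\mathfrak{A},s,g}$ (nonempty by the hypothesis) and applies totality of $V(f,s)$. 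For part \ref{TermEquiv1.5}, in the step $t=f(t_1,\dots,t_n)$ I would descend through a finite chain of refinements: apply the induction hypothesis to $t_1$ at $s$ to get $s_1\sqsubseteq s$ with $\llbracket t_1\rrbracket_{\mathfrak{A},s_1,g}\neq\varnothing$, then to $t_2$ at $s_1$ to get $s_2\sqsubseteq s_1$ with $\llbracket t_2\rrbracket_{\mathfrak{A},s_2,g}\neq\varnothing$ (and $\llbracket t_1\rrbracket_{\mathfrak{A},s_2,g}\neq\varnothing$ by part \ref{TermEquiv2}), and so on down to an $s_n$ where all $n$ argument-denotations are simultaneously nonempty; picking representatives $a_i$ and applying \emph{eventual definedness} for $f$ yields $s'\sqsubseteq s_n$ and $b$ with $(\overline{a},b)\in V(f,s')$, and one more appeal to part \ref{TermEquiv2} keeps the $a_i$ in $\llbracket t_i\rrbracket_{\mathfrak{A},s',g}$, so $b\in\llbracket t\rrbracket_{\mathfrak{A},s',g}$. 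Finally, part \ref{TermEquiv3} is the term substitution lemma, proved by induction on $u$ with $t,x,g,s$ fixed and $a\in\llbracket t\rrbracket_{\mathfrak{A},s,g}$ given: the case $u=x$ uses $u^x_t=t$ and part \ref{TermEquiv1} (since $\llbracket t\rrbracket_{\mathfrak{A},s,g}$ contains $a$ it is exactly the $\asymp_s$-class of $a$, which is $\llbracket x\rrbracket_{\mathfrak{A},s,g[x:=a]}$); the case $u=y\neq x$ is immediate because $g$ and $g[x:=a]$ agree on $y$; and the function-symbol step substitutes the induction hypothesis into Definition \ref{DenoteDef}.\ref{DenoteDef2}. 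I expect no real obstacle beyond the ``single class'' bookkeeping in part \ref{TermEquiv1} and the nested-refinement bookkeeping in part \ref{TermEquiv1.5}.
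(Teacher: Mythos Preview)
Your proposal is correct and follows essentially the same approach as the paper, which also proceeds by induction on terms and invokes \emph{persistence} and \emph{quasi-functionality} for $f$ in part~\ref{TermEquiv1}, \emph{persistence} for $\asymp$ and function symbols in part~\ref{TermEquiv2}, \emph{eventual definedness} in part~\ref{TermEquiv1.5}, and straightforward inductions for the remaining parts. Your account is in fact more explicit than the paper's in two places where the paper is terse: you spell out the role of the induction hypothesis on the $\llbracket t_i\rrbracket$ in the quasi-functionality step of part~\ref{TermEquiv1}, and you make visible the chain-of-refinements bookkeeping (and the appeal to part~\ref{TermEquiv2}) needed in part~\ref{TermEquiv1.5}.
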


\begin{proof} For part \ref{TermEquiv1}, the case for a variable $x$ is immediate from Definition \ref{DenoteDef}.\ref{DenoteDef1}. For terms with function symbols, assuming $b\in \llbracket f(t_1,\dots,t_n)\rrbracket_{\mathfrak{A},s,g}$, we claim that ${b\asymp_s b'}$ iff $b'\in \llbracket f(t_1,\dots,t_n)\rrbracket_{\mathfrak{A},s,g}$. The left-to-right direction follows from \textit{persistence} for $f$, while the right-to-left direction follows from \textit{quasi-functionality} for $f$. Part \ref{TermEquiv1.25} is an obvious induction on $t$.  Part \ref{TermEquiv2} follows from part \ref{TermEquiv1} and \textit{persistence} for $\asymp$ and function symbols. Part \ref{TermEquiv1.5} follows from \textit{eventual definedness} for function symbols. Parts \ref{TermEquiv2.5} and \ref{TermEquiv3} are straightforward inductions on $t$ and $u$, respectively.\end{proof}

We can also view the interpretation of a relation symbol at a possibility $s$ as a relation on the set of $\asymp_s$-equivalence classes, as follows.

\begin{definition}\label{InterpretEquiv} Given a first-order possibility model $\mathfrak{A}=(S,\sqsubseteq, D, \asymp, V)$, define a function $I_\asymp$ that assigns to each pair of an $n$-ary predicate $R$ of $L$ and $s\in S$ a set $I_\asymp(R,s)\subseteq (D/\mathord{\asymp_s})^n$ by
\[(\xi_1,\dots,\xi_n )\in I_\asymp(R,s)\mbox{ iff }\exists a_1,\dots,a_n: a_i\in \xi_i\mbox{ and }(a_1,\dots,a_n)\in V(R,s).\]
\end{definition}

Finally, we define the satisfaction of a formula by a possibility and variable assignment.

\begin{definition}\label{FOPossSat} Given a first-order possibility model $\mathfrak{A}=(S,\sqsubseteq, D, \asymp, V)$ for $L$, formula $\varphi$ of $L$, $s\in S$, and variable assignment $g:\mathsf{Var}(L)\to D$, we define the satisfaction relation $\mathfrak{A},s\Vdash_g \varphi$ recursively as follows:
\begin{enumerate}
\item\label{FOPossSat1} $\mathfrak{A},s\Vdash_g t_1=t_2$ iff for all $s'\sqsubseteq s$, if each $\llbracket t_i\rrbracket_{\mathfrak{A},s',g}$ is nonempty,  then \\ $\llbracket t_1\rrbracket_{\mathfrak{A},s',g}=\llbracket t_2\rrbracket_{\mathfrak{A},s',g}$;
\item\label{FOPossSat2} $\mathfrak{A},s\Vdash_g R(t_1,\dots,t_n)$ iff for all $s'\sqsubseteq s$, if each $\llbracket t_i\rrbracket_{\mathfrak{A},s',g}$ is nonempty, then $(\llbracket t_1\rrbracket_{\mathfrak{A},s',g},\dots ,\llbracket t_n\rrbracket_{\mathfrak{A},s',g})\in I_\asymp(R,s')$;
\item\label{FOPossSat3} $\mathfrak{A},s\Vdash_g \neg\varphi$ iff for all $s'\sqsubseteq s$, $\mathfrak{A},s'\nvDash_g \varphi$;
\item $\mathfrak{A},s\Vdash_g \varphi\wedge\psi$ iff $\mathfrak{A},s\Vdash_g\varphi$ and $\mathfrak{A},s\Vdash_g\psi$;
\item\label{FOPossSatForall} $\mathfrak{A},s\Vdash_g \forall x \varphi$ iff for all $a\in D$, $\mathfrak{A},s\Vdash_{g[x:=a]} \varphi$.
\end{enumerate}
We then say that:
\begin{itemize}
\item a set $\Gamma$ of formulas is \textit{satisfiable in $\mathfrak{A}$} if there is some possibility $s$ in $\mathfrak{A}$ and variable assignment $g$ such that $\mathfrak{A},s\Vdash_g\varphi$ for all $\varphi\in\Gamma$;

\item if $\varphi$ is a sentence, i.e., a formula with no free variables, then  $\mathfrak{A},s\Vdash\varphi$ if $\mathfrak{A},s\Vdash_g\varphi$ for some (equivalently, every) variable assignment $g$;

\item two pointed models $\mathfrak{A},s$ and $\mathfrak{A}',s'$ are \textit{elementarily equivalent} if for all sentences $\varphi$, $\mathfrak{A},s\Vdash \varphi$ iff $\mathfrak{A}',s'\Vdash \varphi$.
\end{itemize}
\end{definition}

In possibility models with total functions, the definition of satisfaction simplifies as follows.

\begin{lemma} \textnormal{If $\mathfrak{A}=(S,\sqsubseteq, D, \asymp, V)$ has total functions, then for any $s\in S$ and variable assignment $g:\mathsf{Var}(L)\to D$:
\begin{enumerate}
\item $\mathfrak{A},s\Vdash_g t_1=t_2$ iff $\llbracket t_1\rrbracket_{\mathfrak{A},s,g}=\llbracket t_2\rrbracket_{\mathfrak{A},s,g}$;
\item $\mathfrak{A},s\Vdash_g R(t_1,\dots,t_n)$ iff  $(\llbracket t_1\rrbracket_{\mathfrak{A},s,g},\dots ,\llbracket t_n\rrbracket_{\mathfrak{A},s,g})\in I_\asymp(R,s)$.
\end{enumerate}}
\end{lemma}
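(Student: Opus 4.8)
The plan is to observe that the ``total functions'' hypothesis trivializes the side conditions ``if each $\llbracket t_i\rrbracket_{\mathfrak{A},s',g}$ is nonempty'' appearing in Definition \ref{FOPossSat}.\ref{FOPossSat1} and \ref{FOPossSat2}. Indeed, by Lemma \ref{TermEquiv}.\ref{TermEquiv2.5}, when $\mathfrak{A}$ has total functions we have $\llbracket t\rrbracket_{\mathfrak{A},s',g}\neq\varnothing$ for every term $t$ and every possibility $s'$, so the universally quantified conditional over $s'\sqsubseteq s$ in each atomic clause collapses to the requirement that the consequent hold at \emph{every} $s'\sqsubseteq s$, with no hypothesis to discharge. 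Both claims of the lemma then reduce to showing that the relevant atomic fact at $s$ is equivalent to that atomic fact holding at all refinements of $s$.

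For the left-to-right directions of (1) and (2), I would simply instantiate $s'=s$ in the respective clause of Definition \ref{FOPossSat}: since each $\llbracket t_i\rrbracket_{\mathfrak{A},s,g}$ is nonempty, the consequent at $s'=s$ is precisely the right-hand side of the lemma. For the right-to-left directions I would propagate the atomic fact downward along $\sqsubseteq$. For (1), assuming $\llbracket t_1\rrbracket_{\mathfrak{A},s,g}=\llbracket t_2\rrbracket_{\mathfrak{A},s,g}$, pick a member $a$ of this common class (it is nonempty by Lemma \ref{TermEquiv}.\ref{TermEquiv2.5}); for $s'\sqsubseteq s$, Lemma \ref{TermEquiv}.\ref{TermEquiv2} gives $a\in\llbracket t_1\rrbracket_{\mathfrak{A},s',g}\cap\llbracket t_2\rrbracket_{\mathfrak{A},s',g}$, and since by Lemma \ref{TermEquiv}.\ref{TermEquiv1} each of these is an $\asymp_{s'}$-equivalence class and two equivalence classes sharing a member coincide, we get $\llbracket t_1\rrbracket_{\mathfrak{A},s',g}=\llbracket t_2\rrbracket_{\mathfrak{A},s',g}$, which is the consequent in Definition \ref{FOPossSat}.\ref{FOPossSat1}. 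For (2), assuming $(\llbracket t_1\rrbracket_{\mathfrak{A},s,g},\dots,\llbracket t_n\rrbracket_{\mathfrak{A},s,g})\in I_\asymp(R,s)$, unpack this via Definition \ref{InterpretEquiv} to obtain $a_i\in\llbracket t_i\rrbracket_{\mathfrak{A},s,g}$ with $(a_1,\dots,a_n)\in V(R,s)$; for $s'\sqsubseteq s$, Lemma \ref{TermEquiv}.\ref{TermEquiv2} yields $a_i\in\llbracket t_i\rrbracket_{\mathfrak{A},s',g}$, and \emph{persistence} for $R$ (taking $\overline{b}=\overline{a}$, using reflexivity of $\asymp_{s'}$) yields $(a_1,\dots,a_n)\in V(R,s')$, so $(\llbracket t_1\rrbracket_{\mathfrak{A},s',g},\dots,\llbracket t_n\rrbracket_{\mathfrak{A},s',g})\in I_\asymp(R,s')$, which is the consequent in Definition \ref{FOPossSat}.\ref{FOPossSat2}.

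There is essentially no obstacle here: the only point requiring care is that the simplified right-hand sides must be verified not merely at $s$ but at every refinement $s'\sqsubseteq s$, and this is exactly what the persistence properties (for $\asymp$, for function symbols, and for $R$, packaged for terms in Lemma \ref{TermEquiv}.\ref{TermEquiv2}) deliver. Total functions are used only to guarantee that term denotations are everywhere nonempty, so that the ``nonempty'' guards in Definition \ref{FOPossSat} can be dropped.
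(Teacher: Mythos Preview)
Your proposal is correct and follows essentially the same approach as the paper: use Lemma~\ref{TermEquiv}.\ref{TermEquiv2.5} to discharge the nonemptiness guards, and then invoke persistence (for $\asymp$ and for $R$, together with Lemma~\ref{TermEquiv}.\ref{TermEquiv2} for term denotations) to eliminate the universal quantification over $s'\sqsubseteq s$. The paper compresses all of this into a single sentence, whereas you have spelled out the left-to-right and right-to-left directions separately and made explicit the appeal to Lemma~\ref{TermEquiv}.\ref{TermEquiv1} for the equality case; the underlying argument is the same.
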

\begin{proof} If $\mathfrak{A}$ has total functions, then $\llbracket t_i\rrbracket_{\mathfrak{A},s',g}$ is always nonempty by Lemma \ref{TermEquiv}.\ref{TermEquiv2.5}, so the nonemptiness condition drops out of Definition \ref{FOPossSat}.\ref{FOPossSat1}-\ref{FOPossSat2}, and the quantification over $s'\sqsubseteq s$ can be dropped thanks to \textit{persistence} for $\asymp$ and $R$ (Definition \ref{FOmodel}).\end{proof}

Moreover, the following lemma, which is a consequence of Lemma \ref{CanModel} and Theorem \ref{StrongComp} below, shows that we can work with possibility models with total functions without loss of generality. 

\begin{lemma} \textnormal{For every pointed first-order possibility model $\mathfrak{A},s$, there is a pointed first-order possibility model $\mathfrak{A}',s'$ with total functions such that $\mathfrak{A},s$ and $\mathfrak{A}',s'$ are elementarily equivalent.}
\end{lemma}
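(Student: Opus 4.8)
The plan is to obtain $\mathfrak{A}',s'$ directly from the canonical model construction applied to the complete first-order theory realized at $\mathfrak{A},s$. Set $T=\{\varphi\mid \varphi \text{ is an } L\text{-sentence and } \mathfrak{A},s\Vdash\varphi\}$. The first step is to observe that $T$ is deductively consistent, and in fact deductively closed: this is immediate from the soundness of first-order logic with respect to possibility models (established before Theorem~\ref{StrongComp}), since a contradiction derivable from $T$ would be derivable from finitely many of its members, all of which are forced at $\mathfrak{A},s$, and since truth at a pointed model is preserved under derivability.

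The second step is to feed $T$ into the canonical model of Lemma~\ref{CanModel} (the construction underlying the strong completeness Theorem~\ref{StrongComp}). This yields a possibility model $\mathfrak{A}'$ which, by Lemma~\ref{CanModel}, has total functions, together with a distinguished possibility $s'$ built from $T$ such that the associated Truth Lemma gives $\mathfrak{A}',s'\Vdash\varphi$ iff $\varphi\in T$, for every $L$-sentence $\varphi$. If the canonical model is built in a language $L^+$ extending $L$ with Henkin witnesses, one then passes to the $L$-reduct of $\mathfrak{A}'$, which still has total functions and the same $L$-theory at $s'$, using the standard fact that adjoining fresh constants adds no new theorems in $L$, so that the $L$-part of $s'$ is exactly $T$. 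Combining, for every $L$-sentence $\varphi$ we get $\mathfrak{A},s\Vdash\varphi$ iff $\varphi\in T$ iff $\mathfrak{A}',s'\Vdash\varphi$, which is precisely elementary equivalence of $\mathfrak{A},s$ and $\mathfrak{A}',s'$.

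I expect essentially all the real work to lie in Lemma~\ref{CanModel} and Theorem~\ref{StrongComp} themselves; granting those, the one point here demanding care is that one needs \emph{both} directions of the biconditional defining elementary equivalence. Strong completeness alone only guarantees that $\mathfrak{A}',s'$ forces everything in $T$; the converse --- that $s'$ forces nothing beyond $T$ --- is \emph{not} the completeness of $T$ (which fails in possibility semantics, since a pointed model may force $\varphi\vee\neg\varphi$ without forcing either disjunct) but rather a consequence of the Truth Lemma, namely that a canonical possibility forces exactly the sentences it contains, together with the observation that Henkin-izing $T$ over a language expansion introduces no new $L$-sentences.
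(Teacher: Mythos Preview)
Your proposal is correct and is exactly the argument the paper has in mind: the paper's own ``proof'' is merely the remark that the lemma ``is a consequence of Lemma~\ref{CanModel} and Theorem~\ref{StrongComp},'' and you have unpacked precisely how those two results combine. Your observation that Strong Completeness alone only yields one direction of elementary equivalence, and that the converse requires the Truth Lemma together with conservativity of the Henkin expansion over $L$, is a genuine point the paper leaves implicit; your handling of it (via $s'=H(T)$, deductive closure of $T$ from soundness, and the fact that $H(T)\cap L = T$) is correct.
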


Finally, if our only concern is which formulas are satisfied at a particular possibility $s$ in a possibility model, then we can delete from the model all possibilities that are not refinements of $s$ (cf.~the fact that in modal logic, satisfaction is invariant under taking generated submodels \cite[Prop.~2.6]{Blackburn2001}).

\begin{lemma} \textnormal{Given a  first-order possibility model $\mathfrak{A}$ with $\mathfrak{A}=(S,\sqsubseteq, D, \asymp, V)$ and $s\in S$, let $\mathfrak{A}_s$ be the obvious restriction of $\mathfrak{A}$ to the set $\{s'\in S\mid s'\sqsubseteq s\}$ of possibilities, called the \textit{generated submodel of $\mathfrak{A}$ generated by $s$}. Then $\mathfrak{A},s$ and $\mathfrak{A}_s,s$ are elementarily equivalent.}
\end{lemma}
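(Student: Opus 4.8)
The plan is first to check that $\mathfrak{A}_s$ is a genuine first-order possibility model, then to prove by induction on formulas that satisfaction at any possibility $s'\sqsubseteq s$ is computed the same way in $\mathfrak{A}$ and in $\mathfrak{A}_s$; the lemma follows by taking $s'=s$ and restricting to sentences.

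For the first step, let $\mathord{\downarrow}s=\{s'\in S\mid s'\sqsubseteq s\}$ be the underlying set of $\mathfrak{A}_s$, with $\sqsubseteq$, $\asymp$, and $V$ restricted to $\mathord{\downarrow}s$ in the obvious way. Since $\mathord{\downarrow}s$ is a downset of $(S,\sqsubseteq)$, the restricted relation is still a partial order, and $\mathord{\downarrow}s$ is nonempty as $s\in\mathord{\downarrow}s$. Persistence for $\asymp$, for $R$, and for $f$ is inherited immediately, as is quasi-functionality for $f$. The conditions whose verification uses downward-closure are refinability for $\asymp$ and for $R$ and eventual definedness for $f$: whenever such a condition, applied at some $s'\in\mathord{\downarrow}s$, produces a witness $s''\sqsubseteq s'$, transitivity of $\sqsubseteq$ gives $s''\sqsubseteq s$, so $s''\in\mathord{\downarrow}s$ and the witness is present in $\mathfrak{A}_s$ as well. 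Hence $\mathfrak{A}_s$ is a first-order possibility model.

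For the second step, note that the recursive clauses defining $\llbracket t\rrbracket_{\mathfrak{A},s',g}$ (Definition \ref{DenoteDef}) and $I_\asymp(R,s')$ (Definition \ref{InterpretEquiv}) refer only to $\asymp_{s'}$, $V(R,s')$, $V(f,s')$, and denotations at the very same $s'$; since these data agree in $\mathfrak{A}$ and $\mathfrak{A}_s$ for every $s'\in\mathord{\downarrow}s$, a trivial induction on terms gives $\llbracket t\rrbracket_{\mathfrak{A},s',g}=\llbracket t\rrbracket_{\mathfrak{A}_s,s',g}$, and likewise $I_\asymp(R,s')$ is the same relation in both models, for all $s'\sqsubseteq s$. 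Now prove by induction on $\varphi$ that for all $s'\sqsubseteq s$ and all $g$, $\mathfrak{A},s'\Vdash_g\varphi$ iff $\mathfrak{A}_s,s'\Vdash_g\varphi$. The atomic cases ($t_1=t_2$ and $R(t_1,\dots,t_n)$) and the negation case each quantify over $s''\sqsubseteq s'$; since $s'\sqsubseteq s$, transitivity gives $\{s''\mid s''\sqsubseteq s'\}\subseteq\mathord{\downarrow}s$, so the quantifier ranges over exactly the same possibilities in both models, and the bodies agree by the term/interpretation identities just established (and, for $\neg$, the induction hypothesis). The conjunction case is immediate from the induction hypothesis, and the $\forall x$ case reduces to the induction hypothesis at $s'$ with the variable assignments $g[x:=a]$ for $a\in D$, noting that $D$ is unchanged. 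This completes the induction.

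Finally, instantiating at $s'=s$ and restricting to sentences $\varphi$ yields $\mathfrak{A},s\Vdash\varphi$ iff $\mathfrak{A}_s,s\Vdash\varphi$, i.e., $\mathfrak{A},s$ and $\mathfrak{A}_s,s$ are elementarily equivalent. The only point requiring any care is that refinability and eventual definedness survive the restriction, which is precisely where downward-closure of $\mathord{\downarrow}s$ is used; everything else is a routine structural induction with no real obstacle.
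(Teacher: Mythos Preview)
Your proof is correct and follows exactly the approach indicated by the paper, which gives only a one-line sketch (``An easy induction on the structure of formulas $\varphi$, noting that the definition of truth for $\varphi$ at $s$ only quantifies over states in $\{s'\in S\mid s'\sqsubseteq s\}$''). You have simply filled in the details the paper omits, including the verification that $\mathfrak{A}_s$ is indeed a first-order possibility model.
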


\begin{proof}[Proof sketch.] An easy induction on the structure of formulas $\varphi$, noting that the definition of truth for $\varphi$ at $s$ only quantifies over states in $\{s'\in S\mid s'\sqsubseteq s\}$.\end{proof}

\subsection{Soundness of first-order logic with respect to possibility semantics}

In this section, we work up to a first main result concerning first-order possibility models: the soundness of first-order logic with respect to the semantics.

To that end, the first key point is that the set of possibilities satisfying a given formula always belongs to the Boolean algebra of regular open subset of the underlying poset of the possibility model.\footnote{For the last part of Lemma \ref{FORO} concerning $\forall x\varphi$, cf.~the algebraic interpretation of quantifiers \cite{RS63,Scott2008}.}

\begin{lemma}\label{FORO} \textnormal{For any first-order possibility model $\mathfrak{A}=(S,\sqsubseteq, D, \asymp, V)$ for $L$,  variable assignment $g:\mathsf{Var}\to D$, and formula $\varphi$ of $L$, define \[\|\varphi \|_{\mathfrak{A},g} = \{s\in S\mid \mathfrak{A},s\Vdash_g\varphi\}.\] Then for all formulas $\varphi,\psi$ and variables $x$:
\begin{eqnarray*}
\| \varphi \|_{\mathfrak{A},g} &\in & \mathcal{RO}(S,\sqsubseteq)\\
\| \neg\varphi \|_{\mathfrak{A},g} &=& \neg \| \varphi \|_{\mathfrak{A},g}  \\
\| \varphi \wedge\psi \|_{\mathfrak{A},g} &=&  \| \varphi \|_{\mathfrak{A},g}\wedge \| \varphi \|_{\mathfrak{A},g}   \\
\| \forall x\varphi \|_{\mathfrak{A},g} &=&\bigwedge \{ \| \varphi \|_{\mathfrak{A},g[x:=a]} \mid a\in D\}.
\end{eqnarray*}}
\end{lemma}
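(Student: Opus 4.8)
The plan is to prove all four claims simultaneously by induction on the structure of $\varphi$, where the three displayed equations for $\neg$, $\wedge$, and $\forall$ serve double duty: they are part of what we prove, and (together with Theorem \ref{FirstThm}.\ref{FirstThm1} and the fact that $\mathcal{RO}(S,\sqsubseteq)$ is closed under $\neg$, finite $\wedge$, and arbitrary $\bigwedge$) they immediately yield $\|\varphi\|_{\mathfrak{A},g}\in\mathcal{RO}(S,\sqsubseteq)$ once the inductive hypothesis gives membership for the immediate subformulas. So the real content is: (i) the two atomic base cases $t_1=t_2$ and $R(t_1,\dots,t_n)$ give regular open sets, and (ii) the three equations hold.

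For the atomic cases, I would verify \emph{persistence} and \emph{refinability} directly from Definition \ref{FOPossSat}.\ref{FOPossSat1}-\ref{FOPossSat2}. Persistence is essentially built in: the truth clauses for $t_1=t_2$ and $R(\overline{t})$ quantify universally over $s'\sqsubseteq s$, so if $s\in\|\varphi\|$ and $s''\sqsubseteq s$ then by transitivity of $\sqsubseteq$ every $s'\sqsubseteq s''$ is an $s'\sqsubseteq s$, giving $s''\in\|\varphi\|$. For refinability, suppose $s\notin\|t_1=t_2\|_{\mathfrak{A},g}$; then there is $s'\sqsubseteq s$ with each $\llbracket t_i\rrbracket_{\mathfrak{A},s',g}$ nonempty but $\llbracket t_1\rrbracket_{\mathfrak{A},s',g}\neq\llbracket t_2\rrbracket_{\mathfrak{A},s',g}$. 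Picking $a\in\llbracket t_1\rrbracket_{\mathfrak{A},s',g}$ and $b\in\llbracket t_2\rrbracket_{\mathfrak{A},s',g}$, by Lemma \ref{TermEquiv}.\ref{TermEquiv1} these are distinct $\asymp_{s'}$-classes, so $a\not\asymp_{s'}b$; \emph{refinability} for $\asymp$ gives $s''\sqsubseteq s'$ with $a\not\asymp_{t}b$ for all $t\sqsubseteq s''$, and then using Lemma \ref{TermEquiv}.\ref{TermEquiv2} (denotations persist) together with Lemma \ref{TermEquiv}.\ref{TermEquiv1} one checks that no $t\sqsubseteq s''$ lies in $\|t_1=t_2\|$, as desired. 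The case for $R(t_1,\dots,t_n)$ is analogous, using \emph{refinability} for $R$ instead — here I would extract, at the witnessing $s'$, tuples $\overline{a}$ with $a_i\in\llbracket t_i\rrbracket_{\mathfrak{A},s',g}$ and $\overline{a}\notin V(R,s')$, apply refinability for $R$ to get $s''\sqsubseteq s'$ killing $\overline{a}$ below $s''$, and conclude. The one subtlety is coordinating the finitely many witnesses $t_i$ and their denotation-classes; since there are finitely many coordinates this is routine but should be spelled out. (These verifications are precisely the ``standard refinability property'' alluded to in the discussion before Definition \ref{FOmodel}.)

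For the inductive cases: $\|\neg\varphi\|_{\mathfrak{A},g}=\neg\|\varphi\|_{\mathfrak{A},g}$ is immediate from Definition \ref{FOPossSat}.\ref{FOPossSat3} matched against the formula for $\neg$ in Theorem \ref{FirstThm}.\ref{FirstThm1}; $\|\varphi\wedge\psi\|_{\mathfrak{A},g}=\|\varphi\|_{\mathfrak{A},g}\cap\|\psi\|_{\mathfrak{A},g}=\|\varphi\|_{\mathfrak{A},g}\wedge\|\psi\|_{\mathfrak{A},g}$ (meet is intersection) is immediate from the conjunction clause. The quantifier case $\|\forall x\varphi\|_{\mathfrak{A},g}=\bigcap\{\|\varphi\|_{\mathfrak{A},g[x:=a]}\mid a\in D\}$ is again immediate from Definition \ref{FOPossSat}.\ref{FOPossSatForall}, and since each $\|\varphi\|_{\mathfrak{A},g[x:=a]}\in\mathcal{RO}(S,\sqsubseteq)$ by the inductive hypothesis and $\mathcal{RO}(S,\sqsubseteq)$ is closed under arbitrary intersections (Theorem \ref{FirstThm}.\ref{FirstThm1}), the intersection is regular open and equals $\bigwedge$ of the family. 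The main obstacle is really just the bookkeeping in the atomic refinability arguments — making sure the finitely many term-denotations are simultaneously ``frozen'' as one passes to a refinement — rather than any conceptual difficulty; everything else follows formally from Theorem \ref{FirstThm}.\ref{FirstThm1} and the satisfaction clauses. (Note there is a harmless typo in the statement: the right-hand side of the conjunction line reads $\|\varphi\|_{\mathfrak{A},g}\wedge\|\varphi\|_{\mathfrak{A},g}$ but should be $\|\varphi\|_{\mathfrak{A},g}\wedge\|\psi\|_{\mathfrak{A},g}$.)
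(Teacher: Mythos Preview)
Your proposal is correct and follows essentially the same approach as the paper: induction on $\varphi$, with the real work being the direct verification of persistence and refinability for the atomic cases $t_1=t_2$ and $R(t_1,\dots,t_n)$ using Lemma~\ref{TermEquiv} together with \emph{refinability} for $\asymp$ and for $R$, while the $\neg$, $\wedge$, and $\forall x$ cases are read off from the satisfaction clauses and Theorem~\ref{FirstThm}.\ref{FirstThm1}. One small remark: the ``coordination'' worry you flag in the $R(\overline{t})$ case is not really needed, since \emph{refinability} for $R$ is applied once to the entire tuple $(a_1,\dots,a_n)$ rather than coordinate-by-coordinate; and yes, the $\|\varphi\|\wedge\|\varphi\|$ in the statement is indeed a typo for $\|\varphi\|\wedge\|\psi\|$.
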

\begin{proof} We prove only that $\| \varphi \|_{\mathfrak{A},g}\in \mathcal{RO}(S,\sqsubseteq)$, by induction on $\varphi$. Persistence for the semantic values of $t=t'$ and $R(t_1,\dots,t_n)$ is immediate from Definition \ref{FOPossSat}.

For refinability for $t_1=t_2$, suppose $\mathfrak{A},s\nvDash_g t_1=t_2$, so there is an $s'\sqsubseteq s$ such that  each $\llbracket t_i\rrbracket_{\mathfrak{A},s',g}$ is nonempty and  $\llbracket t_1\rrbracket_{\mathfrak{A},s',g}\neq\llbracket t_2\rrbracket_{\mathfrak{A},s',g}$. It follows  by Lemma \ref{TermEquiv}.\ref{TermEquiv1} that there is some $a\in \llbracket t_1\rrbracket_{\mathfrak{A},s',g}$ and $b\in \llbracket t_2\rrbracket_{\mathfrak{A},s',g}$ such that $a\not\asymp_{s'}b$. Then by \textit{refinability} for $\asymp$, there is an $s''\sqsubseteq s'$ such that for all $s'''\sqsubseteq s''$, $a\not\asymp_{s'''}b$. It follows by Lemma \ref{TermEquiv}.\ref{TermEquiv2} that for all $s'''\sqsubseteq s''$,  $\varnothing\neq\llbracket t_1\rrbracket_{\mathfrak{A},s''',g}\neq\llbracket t_2\rrbracket_{\mathfrak{A},s''',g}\neq\varnothing$  and hence $\mathfrak{A},s'''\nvDash_g t_1=t_2$. By transitivity of $\sqsubseteq$, $s''\sqsubseteq s'\sqsubseteq s$ implies $s''\sqsubseteq s$. Thus, assuming $\mathfrak{A},s\nvDash_g t_1=t_2$, we have shown that there is an $s''\sqsubseteq s$ such that for all $s'''\sqsubseteq s''$, $\mathfrak{A},s'''\nvDash_g t_1=t_2$, as desired. 

Similarly, for $R(t_1,\dots,t_n)$, suppose $\mathfrak{A},s\nvDash_g R(t_1,\dots,t_n)$, so there is an  ${s'\sqsubseteq s}$ such that each $\llbracket t_i\rrbracket_{\mathfrak{A},s',g}$ is nonempty and $(\llbracket t_1\rrbracket_{\mathfrak{A},s',g},\dots ,\llbracket t_n\rrbracket_{\mathfrak{A},s',g})\not\in I_\asymp(R,s')$. It follows by Definition \ref{InterpretEquiv} that there are $a_1,\dots,a_n$ such that $a_i\in \llbracket t_i\rrbracket_{\mathfrak{A},s',g}$ and $(a_1,\dots, a_n)\not\in V(R,s)$. Then by \textit{refinability} for $R$, there is an $s''\sqsubseteq s'$ such that for all $s'''\sqsubseteq s''$, $(a_1,\dots, a_n)\not\in V(R,s''')$. By transitivity of $\sqsubseteq$, $s'''\sqsubseteq s''\sqsubseteq s'$ implies $s'''\sqsubseteq s'$. Then $a_i\in \llbracket t_i\rrbracket_{\mathfrak{A},s',g}$ implies $a_i\in \llbracket t_i\rrbracket_{\mathfrak{A},s''',g}$ by Lemma \ref{TermEquiv}.\ref{TermEquiv2}, whence  $(a_1,\dots, a_n)\not\in V(R,s''')$ implies that there are no $b_1,\dots,b_n$ such that $b_i\in \llbracket t_i\rrbracket_{\mathfrak{A},s''',g}$ and $(b_1,\dots, b_n)\not\in V(R,s''')$ by \textit{persistence} for $R$. Thus, $(\llbracket t_1\rrbracket_{\mathfrak{A},s''',g},\dots ,\llbracket t_n\rrbracket_{\mathfrak{A},s''',g})\not\in I_\asymp(R,s''')$ and hence $\mathfrak{A},s'''\nvDash_g R(t_1,\dots,t_n)$. By transitivity of $\sqsubseteq$ again, we have $s''\sqsubseteq s$. Thus, assuming $\mathfrak{A},s\nvDash_g R(t_1,\dots,t_n)$, we have shown that there is an $s''\sqsubseteq s$ such that for all $s'''\sqsubseteq s''$, $\mathfrak{A},s'''\nvDash_g R(t_1,\dots,t_n)$.

The cases for $\neg$, $\wedge$, and $\forall x$ are straightforward.\end{proof}

Given the standard definitions of $\vee$, $\to$, and $\leftrightarrow$ in terms of $\neg$ and $\wedge$ and of $\exists$ in terms of $\neg$ and $\forall$, we have the following derived semantic clauses.

\begin{lemma}\label{DerivedClauses} \textnormal{Given a first-order possibility model $\mathfrak{A}=(S,\sqsubseteq, D, \asymp, V)$ for $L$, formula $\varphi$ of $L$, $s\in S$, and variable assignment $g:\mathsf{Var}(L)\to D$:
\begin{enumerate}
\item $\mathfrak{A},s\Vdash_g \varphi\vee\psi$ iff for all $ s'\sqsubseteq s$ there is $ s''\sqsubseteq s'$: $\mathfrak{A},s''\Vdash_g \varphi$ or $\mathfrak{A},s''\Vdash_g \psi$.
\item\label{DerivedClauses2} $\mathfrak{A},s\Vdash_g \varphi\to\psi$ iff for all $ s'\sqsubseteq s$, if $\mathfrak{A},s'\Vdash_g \varphi$ then $\mathfrak{A},s'\Vdash_g \psi$;
\item $\mathfrak{A},s\Vdash_g \varphi\leftrightarrow\psi$ iff for all $ s'\sqsubseteq s$,  $\mathfrak{A},s'\Vdash_g \varphi$ iff $\mathfrak{A},s'\Vdash_g \psi$;
\item $\mathfrak{A},s\Vdash_g \exists x\varphi $ iff for all $ s'\sqsubseteq s$ there is $ s''\sqsubseteq s'$  and $a\in D$: $\mathfrak{A},s''\Vdash_{g[x:=a]} \varphi$.
\end{enumerate}}
\end{lemma}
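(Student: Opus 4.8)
The plan is to expand each abbreviation --- $\varphi\vee\psi:=\neg(\neg\varphi\wedge\neg\psi)$, $\varphi\to\psi:=\neg(\varphi\wedge\neg\psi)$, $\varphi\leftrightarrow\psi:=(\varphi\to\psi)\wedge(\psi\to\varphi)$, and $\exists x\varphi:=\neg\forall x\neg\varphi$ --- using the primitive clauses of Definition \ref{FOPossSat}, and then to simplify the resulting quantifier prefixes. Three of the four cases ($\vee$, $\exists$, $\leftrightarrow$) reduce to pure first-order bookkeeping; only the case of $\to$ needs an extra ingredient, namely Lemma \ref{FORO}.

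For $\vee$: by the clauses for $\neg$ and $\wedge$, $\mathfrak{A},s\Vdash_g\varphi\vee\psi$ holds iff for every $s'\sqsubseteq s$ we have $\mathfrak{A},s'\nvDash_g\neg\varphi$ or $\mathfrak{A},s'\nvDash_g\neg\psi$, and by the clause for $\neg$ again, $\mathfrak{A},s'\nvDash_g\neg\chi$ says precisely that there is $s''\sqsubseteq s'$ with $\mathfrak{A},s''\Vdash_g\chi$. For a fixed $s'$ the two existentials thus obtained range over the same set $\{s''\mid s''\sqsubseteq s'\}$, so disjunction commutes with the existential, yielding the stated clause. The case of $\exists$ is handled identically: $\mathfrak{A},s\Vdash_g\exists x\varphi$ iff for every $s'\sqsubseteq s$ there is $a\in D$ with $\mathfrak{A},s'\nvDash_{g[x:=a]}\neg\varphi$, i.e.\ with some $s''\sqsubseteq s'$ and some $a\in D$ such that $\mathfrak{A},s''\Vdash_{g[x:=a]}\varphi$; reordering the existentials over $a$ and $s''$ gives the claim. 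The case of $\leftrightarrow$ follows from the case of $\to$ proved next, together with the clause for $\wedge$ and the fact that ``for all $s'\sqsubseteq s$'' distributes over the conjunction of the two implications.

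For $\to$: expanding $\varphi\to\psi$ as $\neg(\varphi\wedge\neg\psi)$ and applying the clauses for $\neg$, $\wedge$, and $\neg$, we get that $\mathfrak{A},s\Vdash_g\varphi\to\psi$ holds iff $(\ast)$: for every $s'\sqsubseteq s$, either $\mathfrak{A},s'\nvDash_g\varphi$ or there is $s''\sqsubseteq s'$ with $\mathfrak{A},s''\Vdash_g\psi$. The implication from ``for all $s'\sqsubseteq s$, if $\mathfrak{A},s'\Vdash_g\varphi$ then $\mathfrak{A},s'\Vdash_g\psi$'' to $(\ast)$ is immediate, taking $s''=s'$. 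For the converse, assume $(\ast)$, let $s'\sqsubseteq s$ with $\mathfrak{A},s'\Vdash_g\varphi$, and suppose for contradiction that $\mathfrak{A},s'\nvDash_g\psi$. By Lemma \ref{FORO}, $\|\psi\|_{\mathfrak{A},g}\in\mathcal{RO}(S,\sqsubseteq)$, so by refinability there is $s''\sqsubseteq s'$ with $\mathfrak{A},s'''\nvDash_g\psi$ for all $s'''\sqsubseteq s''$. By Lemma \ref{FORO} again, $\|\varphi\|_{\mathfrak{A},g}$ is a downset, so from $\mathfrak{A},s'\Vdash_g\varphi$ and $s''\sqsubseteq s'$ we get $\mathfrak{A},s''\Vdash_g\varphi$; since also $s''\sqsubseteq s$, applying $(\ast)$ to $s''$ produces some $s'''\sqsubseteq s''$ with $\mathfrak{A},s'''\Vdash_g\psi$, contradicting the choice of $s''$. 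Hence $\mathfrak{A},s'\Vdash_g\psi$, which completes the case of $\to$.

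The one place with actual content is this last argument: in possibility semantics ``$\mathfrak{A},s'\Vdash_g\psi$'' is strictly stronger than ``some refinement of $s'$ satisfies $\psi$'', so the mechanical unwinding of $\neg(\varphi\wedge\neg\psi)$ delivers only the weaker disjunct appearing in $(\ast)$, and bridging the gap requires refinability of $\|\psi\|_{\mathfrak{A},g}$ together with persistence of $\|\varphi\|_{\mathfrak{A},g}$ --- both supplied by Lemma \ref{FORO}. I expect this to be the main (and essentially only) obstacle; the remaining cases are routine propositional and quantifier manipulation.
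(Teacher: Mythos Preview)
Your proof is correct and follows essentially the same approach as the paper: routine unwinding for $\vee$, $\exists$, and $\leftrightarrow$, with the $\to$ case handled by expanding $\neg(\varphi\wedge\neg\psi)$ to the intermediate condition $(\ast)$ and then closing the gap using refinability of $\|\psi\|_{\mathfrak{A},g}$ and persistence of $\|\varphi\|_{\mathfrak{A},g}$ from Lemma~\ref{FORO}. Your identification of the $\to$ case as the only nontrivial one, and the specific use of Lemma~\ref{FORO} there, matches the paper exactly.
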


\begin{proof} We define $\varphi\vee\psi$ as $\neg(\neg\varphi\wedge\neg\psi)$. By the semantic clauses for $\neg$ and $\wedge$, it is easy to see that $\mathfrak{A},s\Vdash_g \neg(\neg\varphi\wedge\neg\psi)$ iff for all $ s'\sqsubseteq s$ there is $ s''\sqsubseteq s'$: $\mathfrak{A},s''\Vdash_g \varphi$ or $\mathfrak{A},s''\Vdash_g \psi$. The proof for $\exists x\varphi$ as $\neg\forall x\neg\varphi$ is just as easy.  The proof for $\varphi\to \psi$ as $\neg (\varphi\wedge\neg\psi)$ requires slightly more. First, observe that $\mathfrak{A},s\Vdash_g \neg(\varphi\wedge\neg\psi)$ iff 
\begin{equation}\mbox{for all }s'\sqsubseteq s\mbox{, if $\mathfrak{A},s'\Vdash_g \varphi$, then there is } s''\sqsubseteq s': \mathfrak{A},s''\Vdash_g \varphi.\label{ImpEq1}\end{equation}
In fact (\ref{ImpEq1}) is equivalent to 
\begin{equation}\mbox{for all }u\sqsubseteq s\mbox{, if $\mathfrak{A},u\Vdash_g \varphi$, then $\mathfrak{A},u\Vdash_g \psi$}.\label{ImpEq2}\end{equation}
The implication from (\ref{ImpEq2}) to (\ref{ImpEq1}) is obvious. From (\ref{ImpEq1}) to (\ref{ImpEq2}), suppose $u\sqsubseteq s$, $\mathfrak{A},u\Vdash_g \varphi$, and $\mathfrak{A},u\not\Vdash_g \psi$. As $\{v\in S\mid \mathfrak{A},v\Vdash_g\varphi\}\in \mathcal{RO}(S,\sqsubseteq)$ by Lemma \ref{FORO}, $\mathfrak{A},u\not\Vdash_g \psi$ implies that there is a $u'\sqsubseteq u$ such that for all $u''\sqsubseteq u'$, $\mathfrak{A},u''\not\Vdash_g\psi$. Then as $\mathfrak{A},u\Vdash_g \varphi$ and $u'\sqsubseteq u$, we have $\mathfrak{A},u'\Vdash_g \varphi$. But then (\ref{ImpEq1}) is false with $s':=u'$. This completes the proof for $\to$, from which the $\leftrightarrow$ case follows.\end{proof}

The next lemma is used to prove the validity of the key first-order axiom $\forall x\varphi\to \varphi^x_t$ when $t$ is substitutable for $x$ in $\varphi$ and $\varphi^x_t$ is the result of replacing all free occurrences of $x$ in $\varphi$ by $t$.

\begin{lemma}[Substitution Lemma]\label{SubLem} \textnormal{For any model $\mathfrak{A}=(S,\sqsubseteq, D, \asymp, V)$ for $L$, $s\in S$,  variable assignment $g:\mathsf{Var}\to D$, formula $\varphi$ of $L$, and $x\in \mathsf{Var}(L)$:
\begin{enumerate}[label=\arabic*.,ref=\arabic*]
\item\label{SubLem1} if $\mathfrak{A},s\Vdash_g\varphi$ and $g(x)\asymp_s a$, then $\mathfrak{A},s\Vdash_{g[x:=a]}\varphi$;
\end{enumerate}
and for any $t\in\mathsf{Term}(L)$  substitutable for $x$ in $\varphi$, the following are equivalent:
\begin{enumerate}[label=\arabic*.,ref=\arabic*,resume]
\item\label{SubLem2} $\mathfrak{A},s\Vdash_g\varphi_t^x$;
\item\label{SubLem3} for all $s'\sqsubseteq s$, $\mathfrak{A},s'\Vdash_{g[x:=a]}\varphi$ for all $a\in \llbracket t\rrbracket_{\mathfrak{A},s',g}$.
\end{enumerate}
If $\mathfrak{A}$ has total functions, then \ref{SubLem3} may be replaced with:
\begin{enumerate}[label=\arabic*$'$.,ref=\arabic*$'$,start=3]
\item\label{SubLem3'} $\mathfrak{A},s\Vdash_{g[x:=a]}\varphi$ for all $a\in \llbracket t\rrbracket_{\mathfrak{A},s,g}$.
\end{enumerate}}
\end{lemma}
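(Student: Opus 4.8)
The plan is to prove all three parts by induction on the structure of $\varphi$, establishing part \ref{SubLem1} first and then using it freely while proving the equivalence of \ref{SubLem2} and \ref{SubLem3} (and, under totality, of \ref{SubLem3'}). Two standing tools are Lemma \ref{TermEquiv} (persistence, the fact that each $\llbracket t\rrbracket_{\mathfrak{A},s,g}$ is an $\asymp_s$-class or $\varnothing$, eventual definedness, and the substitution identity $\llbracket u^x_t\rrbracket_{\mathfrak{A},s,g}=\llbracket u\rrbracket_{\mathfrak{A},s,g[x:=a]}$ for $a\in\llbracket t\rrbracket_{\mathfrak{A},s,g}$) and Lemma \ref{FORO} (every semantic value is regular open, hence a downset). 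For part \ref{SubLem1}, the atomic cases are immediate from Lemma \ref{TermEquiv}.\ref{TermEquiv1.25}: since $g(x)\asymp_s a$ implies $g(x)\asymp_{s'}a$ at every $s'\sqsubseteq s$, the subterm denotations under $g$ and $g[x:=a]$ agree at every such $s'$, so the truth clauses of Definition \ref{FOPossSat}.\ref{FOPossSat1}--\ref{FOPossSat2} are literally the same; the cases for $\wedge$, $\neg$, and $\forall y$ are routine inductions (for $\forall y$ with $y\neq x$ one uses $g[y:=c][x:=a]=g[x:=a][y:=c]$ and $g[y:=c](x)=g(x)\asymp_s a$, and for $\neg$ one applies the induction hypothesis in both directions, which is legitimate as $\asymp_{s'}$ is symmetric and $g[x:=a][x:=g(x)]=g$).

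For the equivalence, fix $t$ substitutable for $x$ in $\varphi$ and write $\beta_\varphi(s)$ for the condition that for all $s'\sqsubseteq s$ and all $a\in\llbracket t\rrbracket_{\mathfrak{A},s',g}$ we have $\mathfrak{A},s'\Vdash_{g[x:=a]}\varphi$; this is exactly \ref{SubLem3} at $s$. I would then record two observations, consequences of part \ref{SubLem1} (now in hand) together with Lemma \ref{TermEquiv}.\ref{TermEquiv1}, \ref{TermEquiv2} and persistence: (i) if $a\in\llbracket t\rrbracket_{\mathfrak{A},s,g}$ and $\mathfrak{A},s\Vdash_{g[x:=a]}\varphi$, then $\beta_\varphi(s)$ holds (any $b\in\llbracket t\rrbracket_{\mathfrak{A},s',g}$ with $s'\sqsubseteq s$ satisfies $a\asymp_{s'}b$, while $\mathfrak{A},s'\Vdash_{g[x:=a]}\varphi$ by persistence, so $\mathfrak{A},s'\Vdash_{g[x:=b]}\varphi$ by part \ref{SubLem1}); and (ii) $\{s\mid\beta_\varphi(s)\}$ is regular open, the nontrivial half being that if every $s'\sqsubseteq s$ has a refinement satisfying $\beta_\varphi$ then $\beta_\varphi(s)$, proved by fixing $s'\sqsubseteq s$, $a\in\llbracket t\rrbracket_{\mathfrak{A},s',g}$ and deriving $\mathfrak{A},s'\Vdash_{g[x:=a]}\varphi$ via regular openness of $\|\varphi\|_{\mathfrak{A},g[x:=a]}$ (Lemma \ref{FORO}), picking inside each $u\sqsubseteq s'$ a refinement witnessing $\beta_\varphi$ and noting $a$ is still a denotation of $t$ there by Lemma \ref{TermEquiv}.\ref{TermEquiv2}.

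Now the induction. For $\varphi$ atomic, the $(\Rightarrow)$ direction transcribes the truth clause: for $s'\sqsubseteq s$, $a\in\llbracket t\rrbracket_{\mathfrak{A},s',g}$, and $s''\sqsubseteq s'$, the denotations of the subterms of $\varphi^x_t$ at $s''$ under $g$ equal those of the subterms of $\varphi$ at $s''$ under $g[x:=a]$ by Lemma \ref{TermEquiv}.\ref{TermEquiv3} (valid since $a\in\llbracket t\rrbracket_{\mathfrak{A},s'',g}$), so $\mathfrak{A},s\Vdash_g\varphi^x_t$ yields $\mathfrak{A},s'\Vdash_{g[x:=a]}\varphi$. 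The $(\Leftarrow)$ direction uses that $\|\varphi^x_t\|_{\mathfrak{A},g}$ is regular open, reducing to finding below any $s'\sqsubseteq s$ a state $s''$ with $\llbracket t\rrbracket_{\mathfrak{A},s'',g}\neq\varnothing$ (Lemma \ref{TermEquiv}.\ref{TermEquiv1.5}); there $\beta_\varphi(s)$ supplies $\mathfrak{A},s''\Vdash_{g[x:=a]}\varphi$ for $a\in\llbracket t\rrbracket_{\mathfrak{A},s'',g}$, and the same transcription (valid at $s''$ and below, where $a$ remains a denotation of $t$) gives $\mathfrak{A},s''\Vdash_g\varphi^x_t$. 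The cases $\varphi=\psi_1\wedge\psi_2$ and $\varphi=\forall y\psi$ (with $x\neq y$, $y$ not in $t$, $(\forall y\psi)^x_t=\forall y(\psi^x_t)$, and $\llbracket t\rrbracket_{\mathfrak{A},s',g[y:=c]}=\llbracket t\rrbracket_{\mathfrak{A},s',g}$) follow from the induction hypothesis by elementary quantifier manipulation. For $\varphi=\neg\psi$, the induction hypothesis and observation (ii) for $\psi$ give $\|\psi^x_t\|_{\mathfrak{A},g}=\{s\mid\beta_\psi(s)\}$, so $\mathfrak{A},s\Vdash_g\neg(\psi^x_t)$ iff $\beta_\psi$ fails at every $s'\sqsubseteq s$, and one checks this equals $\beta_{\neg\psi}(s)$ using Lemma \ref{TermEquiv}.\ref{TermEquiv1.5} in one direction and observation (i) for $\psi$ in the other. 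Finally, when $\mathfrak{A}$ has total functions, Lemma \ref{TermEquiv}.\ref{TermEquiv2.5} makes every $\llbracket t\rrbracket_{\mathfrak{A},s,g}$ nonempty, so \ref{SubLem3}$\,\Rightarrow\,$\ref{SubLem3'} by taking $s'=s$, and conversely any $a\in\llbracket t\rrbracket_{\mathfrak{A},s',g}$ with $s'\sqsubseteq s$ is $\asymp_{s'}$-equivalent to any $b\in\llbracket t\rrbracket_{\mathfrak{A},s,g}$ (Lemma \ref{TermEquiv}.\ref{TermEquiv1}, \ref{TermEquiv2}), so \ref{SubLem3'} plus persistence plus part \ref{SubLem1} yields $\mathfrak{A},s'\Vdash_{g[x:=a]}\varphi$.

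The main obstacle is the interaction of term partiality with the refinement quantifiers in the atomic truth clauses: in the $(\Leftarrow)$ direction of the atomic case (and again in the $\neg$ case), $\beta_\varphi(s)$ says nothing at a refinement $s'$ where $t$ is undefined, so one cannot argue pointwise. The resolution, used uniformly, is to descend via eventual definedness (Lemma \ref{TermEquiv}.\ref{TermEquiv1.5}) to a state where $t$ has a denotation and then climb back using regular openness of the relevant semantic value (Lemma \ref{FORO}); this same phenomenon is what forces the quantifier ``for all $s'\sqsubseteq s$'' in \ref{SubLem3} and underlies observation (ii). Everything else is the bookkeeping of variable assignments familiar from the classical Substitution Lemma.
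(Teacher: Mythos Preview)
Your proposal is correct and follows essentially the same route as the paper: part~\ref{SubLem1} by induction using Lemma~\ref{TermEquiv}.\ref{TermEquiv1.25}, the equivalence of \ref{SubLem2} and \ref{SubLem3} by induction using Lemma~\ref{TermEquiv}.\ref{TermEquiv3} in the base cases and handling term partiality via eventual definedness (Lemma~\ref{TermEquiv}.\ref{TermEquiv1.5}) combined with regular openness (Lemma~\ref{FORO}) and part~\ref{SubLem1}, and the reduction to \ref{SubLem3'} under totality via Lemma~\ref{TermEquiv}.\ref{TermEquiv1}, \ref{TermEquiv2}, \ref{TermEquiv2.5}. Your observations (i) and (ii) package steps that the paper carries out inline in its $\neg$ case (the only case it details), but the underlying argument is the same; note that observation~(ii), while correct, is not actually needed---the induction hypothesis alone already gives $\|\psi^x_t\|_{\mathfrak{A},g}=\{s\mid\beta_\psi(s)\}$ as sets.
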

\begin{proof} The proof of part \ref{SubLem1} is an easy induction on $\varphi$ using Lemma \ref{TermEquiv}.\ref{TermEquiv1.25}. The equivalence of \ref{SubLem3} and \ref{SubLem3'} under the assumption of total functions follows from parts \ref{TermEquiv1}, \ref{TermEquiv2}, and \ref{TermEquiv2.5} of Lemma \ref{TermEquiv}.

For the equivalence of \ref{SubLem2} and \ref{SubLem3}, the proof is by induction on $\varphi$ using Lemma \ref{TermEquiv}.\ref{TermEquiv3} in the base cases. For the inductive step, we give only the $\neg\varphi$ case, which is the most involved. We have the following equivalences:
\begin{itemize}
\item[] $\mathfrak{A},s\Vdash_g (\neg\varphi)_t^x$;
\item[$\Leftrightarrow$] (by the recursive definition of substitution) $\mathfrak{A},s\Vdash_g \neg \varphi_t^x$;
\item[$\Leftrightarrow$] (by the semantics of $\neg$) for all $s_*\sqsubseteq s$, $\mathfrak{A},s_* \nvDash_g\varphi_t^x$;
\item[$\Leftrightarrow$] (by the inductive hypothesis) for all $s_*\sqsubseteq s$ there is an $s_{**}\sqsubseteq s_*$ such that  $\mathfrak{A},s_{**}\nvDash_{g[x:=b]}\varphi$ for some $b\in \llbracket t\rrbracket_{\mathfrak{A},s_{**},g}$;
\item[$\Leftrightarrow$]  for all $s'\sqsubseteq s$,  $\mathfrak{A},s'\Vdash_{g[x:=a]}\neg \varphi$ for all $a\in \llbracket t\rrbracket_{\mathfrak{A},s',g}$. 
\end{itemize}
For the last equivalence, assume the right-hand side. Suppose $s_*\sqsubseteq s$. Then by Lemma \ref{TermEquiv}\ref{TermEquiv1.5}, there is an $s_{**}\sqsubseteq s_*$ such that $\llbracket t\rrbracket_{\mathfrak{A},s_{**},g}\neq\varnothing$. By the right-hand side, we have $\mathfrak{A},s_{**}\Vdash_{g[x:=a]}\neg \varphi$ for all $a\in \llbracket t\rrbracket_{\mathfrak{A},s_{**},g}$. It follows that $\mathfrak{A},s_{**}\nvDash_{g[x:=b]}\varphi$ for some $b\in \llbracket t\rrbracket_{\mathfrak{A},s_{**},g}$ by the semantics of $\neg$ and the fact that $\llbracket t\rrbracket_{\mathfrak{A},s_{**},g}\neq\varnothing$. Conversely, assume the left-hand side. Suppose $s'\sqsubseteq s$ and pick any $a\in \llbracket t\rrbracket_{\mathfrak{A},s',g}$. To show that $\mathfrak{A},s'\Vdash_{g[x:=a]}\neg \varphi$, consider any $s'_*\sqsubseteq s'$, so $s'_*\sqsubseteq s$ by transitivity of $\sqsubseteq$. Then by the left-hand side, there is an $s'_{**}\sqsubseteq s'_{*}$ such that $\mathfrak{A},s'_{**}\nvDash_{g[x:=b]}\varphi$ for some $b\in \llbracket t\rrbracket_{\mathfrak{A},s'_{**},g}$. By Lemma \ref{TermEquiv}.\ref{TermEquiv2}, $a\in \llbracket t\rrbracket_{\mathfrak{A},s',g}$ and $s_{**}'\sqsubseteq s'$ together imply $a\in \llbracket t\rrbracket_{\mathfrak{A},s'_{**},g}$, so $\mathfrak{A},s'_{**}\nvDash_{g[x:=b]}\varphi$ implies $\mathfrak{A},s'_{**}\nvDash_{g[x:=a]}\varphi$ by part \ref{SubLem1}. Given $s'_{**}\sqsubseteq s'_{*}$, it follows by persistence for $\varphi$ (Lemma \ref{FORO}) that $\mathfrak{A},s'_*\nvDash_{g[x:=a]}\varphi$. Thus, $\mathfrak{A},s'\Vdash_{g[x:=a]}\neg \varphi$, establishing the right-hand side.\end{proof}

Lemmas \ref{FORO}--\ref{SubLem} are the key parts of the proof that first-order logic is sound with respect to possibility semantics. Given a set $\Gamma$ of formulas and formula $\varphi$, let $\Gamma \vdash\varphi$ mean that $\varphi$ is provable in first-order logic from assumptions in $\Gamma$.

\begin{theorem}[Soundness] \textnormal{For any set $\Gamma$ of formulas and formula $\varphi$ of $L$, if $\Gamma\vdash\varphi$, then for every pointed model $\mathfrak{A},s$ and variable assignment $g$, if $\mathfrak{A},s\Vdash_g\psi$ for all $\psi\in\Gamma$, then $\mathfrak{A},s\Vdash_g\varphi$.}
\end{theorem}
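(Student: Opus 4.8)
The plan is to argue by induction on the length of a derivation of $\varphi$ from $\Gamma$, proving the uniform statement that every formula $\chi$ appearing in the derivation has the property: for every pointed model $\mathfrak{A},s$ and every variable assignment $g$, if $\mathfrak{A},s\Vdash_g\psi$ for all $\psi\in\Gamma$, then $\mathfrak{A},s\Vdash_g\chi$. The two base cases are $\chi\in\Gamma$, which is immediate, and $\chi$ a logical axiom, where we must establish the stronger fact that $\chi$ is \emph{valid}: $\mathfrak{A},s\Vdash_g\chi$ for every pointed model and assignment whatsoever.

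The axioms are dispatched using the earlier lemmas. Lemma \ref{FORO} shows that, for fixed $\mathfrak{A}$ and $g$, the assignment $\varphi\mapsto\|\varphi\|_{\mathfrak{A},g}$ carries $\neg$ and $\wedge$ --- and, via Lemma \ref{DerivedClauses}, the defined connectives $\vee,\to,\leftrightarrow,\exists$ --- to the matching Boolean operations of $\mathcal{RO}(S,\sqsubseteq)$; consequently every instance of a propositional tautology is interpreted as $S$ (the top element), which covers the propositional axioms. The quantifier axiom $\forall x\,\varphi\to\varphi^x_t$ (with $t$ substitutable for $x$ in $\varphi$) is exactly what the Substitution Lemma is built for: using the clause for $\to$ in Lemma \ref{DerivedClauses}.\ref{DerivedClauses2}, fix $s'\sqsubseteq s$ with $\mathfrak{A},s'\Vdash_g\forall x\varphi$; by the equivalence of Lemma \ref{SubLem}.\ref{SubLem2} and Lemma \ref{SubLem}.\ref{SubLem3} it suffices to see that $\mathfrak{A},s''\Vdash_{g[x:=a]}\varphi$ for every $s''\sqsubseteq s'$ and every $a\in\llbracket t\rrbracket_{\mathfrak{A},s'',g}$, and this is immediate from $\mathfrak{A},s''\Vdash_g\forall x\varphi$ (persistence, Lemma \ref{FORO}) and Definition \ref{FOPossSat}.\ref{FOPossSatForall}. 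The axiom $\forall x(\varphi\to\psi)\to(\varphi\to\forall x\psi)$ for $x$ not free in $\varphi$ unwinds directly from the clauses for $\to$ and $\forall$ plus a routine coincidence lemma (satisfaction at a possibility depends only on the assignment to the free variables of the formula). The identity and equality axioms --- $t=t$, symmetry, transitivity, and the congruence schemata --- are the one family without a ready-made lemma and must be checked by hand from Definition \ref{FOPossSat}.\ref{FOPossSat1}, Definition \ref{FOPossSat}.\ref{FOPossSat2}, the definition of $\llbracket\cdot\rrbracket$, and \emph{persistence} for $\asymp$, $R$, and $f$; note that the universal quantifier over $s'\sqsubseteq s$ built into those two clauses is precisely what makes each such axiom come out true at every possibility.

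For the inductive step we treat the rules. Modus ponens is immediate: if the inductive hypothesis yields $\mathfrak{A},s\Vdash_g\varphi$ and $\mathfrak{A},s\Vdash_g\varphi\to\psi$ whenever $\Gamma$ holds at $(\mathfrak{A},s,g)$, then taking $s':=s$ in Lemma \ref{DerivedClauses}.\ref{DerivedClauses2} gives $\mathfrak{A},s\Vdash_g\psi$. For generalization --- from $\chi'$ infer $\forall x\,\chi'$, with the customary side condition that $x$ is not free in any member of $\Gamma$ used in the derivation --- given $(\mathfrak{A},s,g)$ at which $\Gamma$ holds, we must show $\mathfrak{A},s\Vdash_{g[x:=a]}\chi'$ for all $a\in D$ (Definition \ref{FOPossSat}.\ref{FOPossSatForall}); the side condition together with the coincidence lemma shows $\Gamma$ still holds at $(\mathfrak{A},s,g[x:=a])$, so the inductive hypothesis applies. (If one prefers a Shoenfield-style calculus, the propositional rules and the $\exists$-introduction rule ``from $\varphi\to\psi$ infer $\exists x\,\varphi\to\psi$ when $x$ is not free in $\psi$'' are handled identically, again via Lemma \ref{DerivedClauses} and the coincidence lemma.) The whole argument is a syntactic induction and requires no choice principles.

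I expect essentially all of the real labor to be concentrated in the identity and equality axioms, since every genuinely quantificational point has been pre-packaged into Lemmas \ref{FORO}--\ref{SubLem}; the two places where care is needed are (i) remembering to use \emph{persistence} to transport the truth clauses for $t_1=t_2$ and $R(t_1,\dots,t_n)$ downward along $\sqsubseteq$, and (ii) making the variable side condition on generalization mesh with open hypotheses through a coincidence lemma. Neither is difficult, but both are easy to underestimate.
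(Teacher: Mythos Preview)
Your proposal is correct and follows essentially the same approach as the paper's proof sketch: induct on a Hilbert-style derivation, use Lemma~\ref{FORO} for tautologies, Lemma~\ref{SubLem} for $\forall x\varphi\to\varphi^x_t$, and Lemma~\ref{DerivedClauses}.\ref{DerivedClauses2} for the remaining $\forall/\to$ axioms and modus ponens. In fact your treatment is more thorough than the paper's sketch---you explicitly flag the equality axioms and the coincidence lemma, both of which the paper leaves implicit.
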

\begin{proof}[Proof sketch] Consider a Hilbert-style proof system for first-order logic as in \cite{Enderton2001}. Use Lemma \ref{FORO} to verify that all first-order substitution instances of tautologies are valid, Lemma \ref{SubLem} to verify that $\forall x\varphi\to \varphi^x_t$ is valid when $t$ is substitutable for $x$ in $\varphi$,  and Lemma \ref{DerivedClauses}.\ref{DerivedClauses2} to verify that the other axioms involving $\forall$ and $\to$ are valid and that modus ponens preserves validity.\end{proof}

\subsection{Completeness of first-order logic with respect to possibility semantics}

In this section, we prove the completeness of first-order logic---for an arbitrary first-order language $L$---with respect to first-order possibility semantics. 

We first recall how to extend any consistent set of first-order formulas to one with witnesses for existential formulas---a standard construction not specific to possibility semantics.

\begin{definition} Given a set $\Gamma$ of formulas of $L$, let
\[\mathsf{Cn}_L(\Gamma)=\{\varphi \mbox{ a formula of }L\mid \Gamma\vdash \varphi \}.\]

$\Gamma$ is \textit{deductively closed} if $\Gamma=\mathsf{Cn}_L(\Gamma)$.

$\Gamma$ is \textit{Henkinized} if for every formula of $L$ of the form $\exists x\varphi$, there is a constant symbol $c$ of $L$ such that $\exists x\varphi\to \varphi^x_c\in \Gamma$.
\end{definition}

There is sometimes confusion about whether extending a consistent set to a Henkinized one requires choice principles or transfinite recursion. In fact, no matter the cardinality of $L$, no choice or transfinite recursion is needed. 

\begin{definition}\label{Lomega}
Given any first-order language $L$, we define a countable sequence of first-order languages as follows:
\begin{eqnarray*}
L_0&=& L \\
L_{n+1}&=&\mbox{extension of $L_n$ with new constant $c_{\exists x \varphi}$ for each formula $\exists x\varphi$ of $L_n$}\\
L_\omega &= &\underset{n\in\omega}{\bigcup}L_n .
\end{eqnarray*}
\end{definition}

\begin{lemma}\label{Henkinization}\label{HenkinLem} \textnormal{(ZF) Let $\Gamma$ be a consistent set of formulas of $L$. Then 
\[H(\Gamma)=\mathsf{Cn}_{L_\omega}(\Gamma\cup \{\exists x\varphi \to \varphi^x_{c_{\exists x\varphi}}\mid \exists x\varphi \mbox{ a formula of }L_\omega\})\]
 is a consistent, deductively closed, and Henkinized set of formulas of $L_\omega$.}
\end{lemma}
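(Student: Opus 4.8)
The plan is to verify three properties of $H(\Gamma)$ in turn: that it is deductively closed, that it is Henkinized, and — the main point — that it is consistent. Deductive closure is immediate, since $H(\Gamma)$ is by definition $\mathsf{Cn}_{L_\omega}$ of a set of $L_\omega$-formulas, and $\mathsf{Cn}_{L_\omega}$ is idempotent. For the Henkinized property, let $\exists x\varphi$ be any formula of $L_\omega$; then $\exists x\varphi$ is a formula of some $L_n$, so by construction $L_{n+1}$ — and hence $L_\omega$ — contains the constant $c_{\exists x\varphi}$, and the axiom $\exists x\varphi \to \varphi^x_{c_{\exists x\varphi}}$ is one of the sentences explicitly thrown into the closure, hence belongs to $H(\Gamma)$. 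Note that no transfinite recursion is needed here precisely because the languages $L_n$ are defined by an ordinary recursion on $\omega$ and $L_\omega$ is their union; the Henkin witnesses for formulas introduced at stage $n$ become available already at stage $n+1$, so a single $\omega$-indexed construction suffices regardless of the cardinality of $L$.

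The heart of the argument is consistency. Suppose for contradiction that $H(\Gamma)$ is inconsistent. Then, since a proof is a finite object, there is a finite subset $\{\exists x_1\varphi_1 \to (\varphi_1)^{x_1}_{c_1},\dots,\exists x_k\varphi_k \to (\varphi_k)^{x_k}_{c_k}\}$ of the added Henkin axioms (with $c_i := c_{\exists x_i\varphi_i}$) such that $\Gamma$ together with these $k$ axioms proves $\bot$ in $L_\omega$. I would then argue by induction on $k$ that $\Gamma$ alone is inconsistent, contradicting the hypothesis. The key step is the standard observation that each witnessing constant $c_i$ can be eliminated: if $\Gamma \cup \Delta \cup \{\exists x\varphi \to \varphi^x_c\} \vdash \bot$ where $c$ is a constant not occurring in $\Gamma$, in $\Delta$, or in $\exists x\varphi$, then $\Gamma \cup \Delta \vdash \bot$ already. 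This is proved by generalizing on $c$: from the deduction of $\bot$ one obtains $\Gamma \cup \Delta \vdash \neg(\exists x\varphi \to \varphi^x_c)$, i.e. $\Gamma\cup\Delta \vdash \exists x\varphi \wedge \neg\varphi^x_c$; since $c$ is fresh, replacing $c$ by a fresh variable $y$ and applying the generalization-on-constants theorem (see, e.g., \cite{Enderton2001}) gives $\Gamma\cup\Delta\vdash \forall y(\exists x\varphi \wedge \neg\varphi^x_y)$, whence $\Gamma\cup\Delta\vdash \exists x\varphi$ and $\Gamma\cup\Delta\vdash \forall y\,\neg\varphi^x_y$, i.e. $\Gamma\cup\Delta\vdash\neg\exists x\varphi$, so $\Gamma\cup\Delta\vdash\bot$. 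One must peel the $k$ Henkin axioms off in an order that respects the stage at which each constant was introduced: choose $c_i$ to be a constant of maximal stage among those appearing, so that $c_i$ does not occur in $\Gamma$, in any of the other remaining $k-1$ axioms, or in $\exists x_i\varphi_i$ itself (the latter because $c_{\exists x\varphi}$ is a brand-new symbol not in the language of $\exists x\varphi$), and then apply the elimination lemma and the induction hypothesis.

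The main obstacle — and the only place requiring real care — is precisely the ordering bookkeeping in the induction: one needs to be sure that at each stage there is an axiom whose witnessing constant is eliminable, i.e. fails to occur in $\Gamma$ or in any of the other currently-present Henkin axioms or in the matrix of its own existential formula. Picking a constant of highest stage index handles this, since a constant $c_{\exists x\varphi}$ introduced at stage $n+1$ cannot occur in any formula of $L_n$, in particular not in $\exists x\varphi$ nor in any Henkin axiom indexed by an existential formula of strictly lower stage; the only subtlety is that two of our $k$ axioms could be indexed by formulas of the same stage, but then neither of their fresh constants occurs in the other's existential formula either, so the choice still works. Everything else is routine proof-theory; no use of Choice, the Boolean Prime Filter Theorem, or transfinite recursion enters at any point, which is the whole purpose of phrasing the construction via the $\omega$-chain $L_0\subseteq L_1\subseteq\cdots$.
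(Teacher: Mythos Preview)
Your argument is correct and is precisely the standard textbook proof; the paper does not reproduce a proof of this lemma at all but simply defers to the references \cite{Shoenfield2010,Ebbinghaus1994,Dalen2013}, and your write-up is essentially what one finds there. The bookkeeping you flag---peeling off Henkin axioms in order of maximal stage of the witnessing constant so that the eliminated constant is fresh for $\Gamma$, for the remaining axioms, and for its own existential matrix---is exactly the point, and you handle the equal-stage case correctly.
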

\noindent For proofs of Lemma \ref{Henkinization}, see, e.g., \cite[pp.~46-7]{Shoenfield2010}, \cite[pp.~82-4]{Ebbinghaus1994}, \cite[pp.~98-9]{Dalen2013}.

Now we begin the specifically possibility-semantic part of the completeness proof, by defining a single first-order possibility model in which all consistent sets of formulas are satisfiable.

\begin{definition}\label{CanModelDef} The \textit{canonical model for $L_\omega$} is the tuple $\mathfrak{A}_{L_\omega}=(S,\sqsubseteq, D, \asymp, V)$ where:
\begin{enumerate}
\item\label{CanModelDef1} $S$ is the set of all consistent, deductively closed, and Henkinized sets of formulas of $L_\omega$;
\item $\Gamma'\sqsubseteq \Gamma$ iff $\Gamma'\supseteq \Gamma$;
\item $D$ is the set of terms of $L_\omega$;
\item $t\asymp_\Gamma t'$ iff $t=t'\in \Gamma$;
\item for any $n$-ary predicate symbol $R$ and $\Gamma\in S$, \[V(R,\Gamma)=\{(t_1,\dots,t_n)\mid R(t_1,\dots t_n)\in \Gamma\};\]
\item\label{CanModelDefFunc} for any $n$-ary function symbol $f$ of $L_\omega$ and $\Gamma\in S$, 
\[V(f,\Gamma)=\{(t_1,\dots,t_{n+1}) \mid  f(t_1,\dots,t_n)=t_{n+1}\in \Gamma\}.\]
\end{enumerate}
The \textit{canonical model for $L$}, $\mathfrak{A}_L$, is the reduct of $\mathfrak{A}_{L_\omega}$ to $L$, i.e., the model exactly like  $\mathfrak{A}_{L_\omega}$ except it does not interpret constants that are in $L_\omega$ but not~$L$.
\end{definition}

In Definition \ref{CanModelDef}.\ref{CanModelDef1}, one may replace `Henkinized' with `canonically Henkinized', where a set of formulas of $L_\omega$ is \textit{canonically Henkinized} if for every formula of $L_\omega$ of the form $\exists x\varphi$, we have $\exists x\varphi\to \varphi^x_{c_{\exists x\varphi}}\in \Gamma$ for $c_{\exists x\varphi}$  in particular, where $c_{\exists x\varphi}$ is the constant introduced for $\exists x\varphi$ in Definition \ref{Lomega}.

\begin{lemma}\label{CanModel} \textnormal{The canonical model for $L_\omega$ (resp.~$L$) is a first-order possibility model for $L_\omega$ (resp.~$L$) with total functions.}
\end{lemma}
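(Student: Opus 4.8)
The plan is to verify each defining condition of a first-order possibility model (Definition \ref{FOmodel}) for the tuple $\mathfrak{A}_{L_\omega}=(S,\sqsubseteq,D,\asymp,V)$, checking that it follows from standard facts about first-order deduction applied to consistent, deductively closed, Henkinized theories; the case of $\mathfrak{A}_L$ then follows since reducts of possibility models are possibility models (the conditions only become easier). First I would note that $(S,\supseteq)$ is a poset — reflexivity and transitivity of $\supseteq$ are trivial, antisymmetry because two deductively closed sets containing each other are equal — and $S\neq\varnothing$ because $\mathsf{Cn}_{L_\omega}$ of any consistent set is in $S$ (using the Henkinization machinery of Lemma \ref{HenkinLem}, noting $L_\omega$ already has all needed witness constants so Henkinization does not leave $L_\omega$). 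And $D=\mathsf{Term}(L_\omega)$ is nonempty since $L_\omega$ has constants.

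Next I would check the conditions on $\asymp$. That each $\asymp_\Gamma$ is an equivalence relation is exactly the derivability of the reflexivity, symmetry, and transitivity axioms for $=$ together with deductive closure of $\Gamma$. Persistence for $\asymp$ is immediate: $t=t'\in\Gamma$ and $\Gamma'\supseteq\Gamma$ give $t=t'\in\Gamma'$. Refinability for $\asymp$ is the one genuinely non-routine point, so I expect this (and its analogues for $R$ and $f$) to be the main obstacle: given $t\neq t'\notin\Gamma$, i.e. $\Gamma\not\vdash t=t'$, I must produce $\Gamma'\supseteq\Gamma$ in $S$ with $t=t'\notin\Gamma''$ for every $\Gamma''\supseteq\Gamma'$; the natural candidate is $\Gamma'=H(\Gamma\cup\{\neg(t=t')\})$, which is consistent because $\Gamma\not\vdash t=t'$, is in $S$ by Lemma \ref{HenkinLem}, and then any $\Gamma''\supseteq\Gamma'$ contains $\neg(t=t')$, so by consistency of $\Gamma''$ we cannot have $t=t'\in\Gamma''$. (Here one must be slightly careful that Henkinizing a consistent set of $L_\omega$-formulas stays within $L_\omega$ and stays consistent, which is what Lemma \ref{HenkinLem} provides, applied with $L=L_\omega$.) The refinability conditions for $R$ and for $f$ are handled by the identical move, extending $\Gamma$ by $\neg R(\overline a)$, respectively by $\neg\exists y\,f(\overline a)=y$ — using that $\Gamma\not\vdash R(\overline a)$, resp.\ the failure of eventual definedness would contradict $\vdash\exists y\,f(\overline a)=y$ — so these add nothing conceptually new.

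The remaining conditions on $V$ are bookkeeping with the equality and congruence axioms. Persistence for $R$ and for $f$ follows from $\Gamma'\supseteq\Gamma$ together with the congruence axioms: from $R(\overline a)\in\Gamma$ (resp.\ $f(\overline a)=b\in\Gamma$), $\overline a\asymp_{\Gamma'}\overline b$ (i.e.\ each $a_i=b_i\in\Gamma'$), and deductive closure of $\Gamma'$, we get $R(\overline b)\in\Gamma'$ (resp.\ $f(\overline b)=b\in\Gamma'$, whence $(\overline b,b)\in V(f,\Gamma')$). Quasi-functionality for $f$: if $f(\overline a)=b\in\Gamma$ and $f(\overline a)=b'\in\Gamma$, then $b=b'\in\Gamma$ by transitivity and symmetry of $=$, i.e.\ $b\asymp_\Gamma b'$. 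Eventual definedness for $f$: given $\overline a\in D^n$, since $\vdash\exists y\,f(\overline a)=y$ and $\Gamma$ is Henkinized, there is a witnessing constant $c$ with $\exists y\,f(\overline a)=y\to f(\overline a)=c\in\Gamma$, so $f(\overline a)=c\in\Gamma$ by deductive closure, giving $(\overline a,c)\in V(f,\Gamma)$ — and this witness lies in $\Gamma$ itself, not merely in a refinement, which is exactly the stronger statement that $\mathfrak{A}_{L_\omega}$ \emph{has total functions}. Finally, for the canonical model for $L$: its underlying poset, domain, $\asymp$, and interpretation of $L$-symbols agree with those of $\mathfrak{A}_{L_\omega}$ (the states are still the full, consistent, Henkinized $L_\omega$-theories; only the reduct forgets the extra constants), so all the verifications above transfer verbatim, and in particular total functions is preserved since every $L$-function symbol is an $L_\omega$-function symbol. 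This completes the plan; the only real content is the Henkinized-extension trick for the three refinability clauses, everything else being direct unwinding of the axioms of first-order logic.
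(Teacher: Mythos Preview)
Your proof is correct and follows essentially the same approach as the paper. Two minor simplifications are available: for the refinability clauses you need not re-Henkinize, since $\Gamma'=\mathsf{Cn}_{L_\omega}(\Gamma\cup\{\neg(t=t')\})$ already inherits Henkinization from $\Gamma\subseteq\Gamma'$ and so lies in $S$; and for total functions the paper simply observes that $f(t_1,\dots,t_n)=f(t_1,\dots,t_n)\in\Gamma$ by reflexivity of equality, so $(t_1,\dots,t_n,f(t_1,\dots,t_n))\in V(f,\Gamma)$, avoiding any appeal to Henkin witnesses. (Also note there is no ``refinability for $f$'' condition in Definition~\ref{FOmodel}---only persistence, quasi-functionality, and eventual definedness---so your aside there is misphrased, though harmless since you handle eventual definedness correctly afterward.)
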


\begin{proof} Conditions 1 and 2 of Definition \ref{FOmodel} are immediate. The other conditions correspond to the following easily verified syntactic facts:
\begin{itemize}
\item that $\asymp_\Gamma$ is an equivalence relation follows from the facts that $\vdash t=t$, $\vdash t=t'\to t'=t$, and $\vdash (t=t'\wedge t'=t'')\to t=t''$;
\item persistence for $\asymp$: if $a=b\in \Gamma$ and $\Gamma'\sqsubseteq \Gamma$, so $\Gamma'\supseteq \Gamma$, then $a=b\in \Gamma'$;
\item refinability for $\asymp$: if $a=b\not\in \Gamma$, then there is a $\Gamma'\sqsubseteq \Gamma$ such that $\neg a=b\in \Gamma'$, namely $\Gamma ' = \mathsf{Cn}_{L_\omega}(\Gamma\cup \{\neg a=b\})$, and hence $ a=b\not\in \Gamma''$ for all $\Gamma''\sqsubseteq\Gamma'$;
\item persistence for $R$: if $R(t_1,\dots,t_n)\in \Gamma$, $\Gamma'\sqsubseteq \Gamma$, and for each $i$, \\ ${t_i=t_i' \in \Gamma'}$, then $R(t_1',\dots,t_n')\in \Gamma'$;
\item refinability for $R$: if $R(t_1,\dots,t_n)\not\in \Gamma$, then there is a  $\Gamma'\sqsubseteq \Gamma$ such that $\neg R(t_1,\dots,t_n)\in \Gamma'$ and hence $R(t_1,\dots,t_n)\not\in \Gamma''$ for all $\Gamma''\sqsubseteq \Gamma'$;
\item persistence for $f$: if $f(t_1,\dots,t_n)=t_{n+1}\in \Gamma$, $\Gamma'\sqsubseteq \Gamma$, and for each $i$, $t_i=t_i' \in \Gamma'$, then  $f(t_1',\dots,t_n')=t_{n+1}'\in \Gamma'$.
\item quasi-functionality for $f$: if $f(t_1,\dots,t_n)=t_{n+1}\in \Gamma$, then \\ $f(t_1,\dots,t_n)=t'\in \Gamma$ implies $t_{n+1}=t'\in \Gamma$.
\end{itemize}
Finally, that the model has total functions follows from the fact that for every term $f(t_1,\dots,t_n)$ and $\Gamma\in S$, we have $f(t_1,\dots,t_n)=f(t_1,\dots,t_n)\in \Gamma$, so $(t_1,\dots, t_n,f(t_1,\dots,t_n))\in V(f,\Gamma)$.\end{proof}

For the next two lemmas, let $g$ be the variable assignment for $\mathfrak{A}_{L_\omega}$ such that $g(x)=x$.

\begin{lemma}[Term Lemma]\label{TermLemma} \textnormal{For every term $t$ of $L_\omega$ and $\Gamma\in S$, $t\in \llbracket t\rrbracket_{\mathfrak{A}_{L_\omega},\Gamma,g}$.}
\end{lemma}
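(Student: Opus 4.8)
The plan is a straightforward induction on the structure of the term $t$, unwinding Definition \ref{DenoteDef} and using only that $\Gamma$ is deductively closed and contains the reflexivity axioms $\vdash u = u$.

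For the base case, let $t$ be a variable $x$. By Definition \ref{DenoteDef}.\ref{DenoteDef1} and the canonical definition of $\asymp_\Gamma$ in Definition \ref{CanModelDef}, we have $\llbracket x\rrbracket_{\mathfrak{A}_{L_\omega},\Gamma,g} = \{a \in D \mid a \asymp_\Gamma g(x)\} = \{a\in D \mid (a = x) \in \Gamma\}$, using $g(x) = x$. Since $\vdash x = x$ and $\Gamma$ is deductively closed, $(x = x) \in \Gamma$, so $x \in \llbracket x\rrbracket_{\mathfrak{A}_{L_\omega},\Gamma,g}$.

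For the inductive step, suppose $t = f(t_1,\dots,t_n)$ where each $t_i$ satisfies the claim. By Definition \ref{DenoteDef}.\ref{DenoteDef2}, we have $b \in \llbracket f(t_1,\dots,t_n)\rrbracket_{\mathfrak{A}_{L_\omega},\Gamma,g}$ iff there are $a_1,\dots,a_n \in D$ with $a_i \in \llbracket t_i\rrbracket_{\mathfrak{A}_{L_\omega},\Gamma,g}$ and $(a_1,\dots,a_n,b) \in V(f,\Gamma)$. I would take $a_i := t_i$, which is permitted by the inductive hypothesis, and $b := f(t_1,\dots,t_n)$; it then remains to check that $(t_1,\dots,t_n,f(t_1,\dots,t_n)) \in V(f,\Gamma)$, which by Definition \ref{CanModelDef}.\ref{CanModelDefFunc} means $(f(t_1,\dots,t_n) = f(t_1,\dots,t_n)) \in \Gamma$. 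This holds because $\vdash f(t_1,\dots,t_n) = f(t_1,\dots,t_n)$ and $\Gamma$ is deductively closed. Hence $f(t_1,\dots,t_n) \in \llbracket f(t_1,\dots,t_n)\rrbracket_{\mathfrak{A}_{L_\omega},\Gamma,g}$.

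There is essentially no obstacle here beyond bookkeeping: the only point requiring attention is that, in the inductive step, it is precisely the inductive hypothesis that licenses choosing the witnesses $a_i$ in Definition \ref{DenoteDef}.\ref{DenoteDef2} to be the subterms $t_i$ themselves. Combined with Lemma \ref{TermEquiv}.\ref{TermEquiv1}, this lemma in fact yields the sharper statement that $\llbracket t\rrbracket_{\mathfrak{A}_{L_\omega},\Gamma,g}$ is exactly the $\asymp_\Gamma$-class $\{u \mid (t = u) \in \Gamma\}$ of $t$, which is presumably the form in which it will be used in the Truth Lemma to follow. No choice principle is invoked at any point, so the argument goes through in ZF.
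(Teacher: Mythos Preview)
Your proof is correct and follows exactly the approach the paper sketches: induction on the structure of $t$, using Definition~\ref{DenoteDef}.\ref{DenoteDef2} for the inductive step and Definition~\ref{CanModelDef}.\ref{CanModelDefFunc} together with reflexivity of equality to verify the membership condition. The paper's own proof is only a one-line sketch, and you have filled it in accurately; your additional observation about the sharper description of $\llbracket t\rrbracket_{\mathfrak{A}_{L_\omega},\Gamma,g}$ as the $\asymp_\Gamma$-class of $t$ is also correct and useful.
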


\begin{proof}[Proof sketch.] By induction on the structure of $t$, using the semantics of terms in Definition \ref{DenoteDef}.\ref{DenoteDef2} and interpretation of function symbols in Definition \ref{CanModelDef}.\ref{CanModelDefFunc}.\end{proof}

\begin{lemma}[Truth Lemma]\label{TruthLemma} \textnormal{For every formula $\varphi$ of $L_\omega$ and $\Gamma\in S$, \[\mathfrak{A}_{L_\omega},\Gamma\Vdash_g \varphi\mbox{ iff } \varphi\in \Gamma.\]}
\end{lemma}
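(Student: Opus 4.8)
The plan is to prove the Truth Lemma by induction on the structure of $\varphi$, treating the base cases $t_1 = t_2$ and $R(t_1,\dots,t_n)$ using the Term Lemma, and handling the inductive cases $\neg$, $\wedge$, $\forall x$ with the semantic clauses of Definition \ref{FOPossSat} together with standard syntactic facts about the deductively closed Henkinized sets in $S$. Throughout, the key recurring tools are: (i) the Term Lemma (Lemma \ref{TermLemma}), giving $t \in \llbracket t\rrbracket_{\mathfrak{A}_{L_\omega},\Gamma,g}$, so that $\llbracket t\rrbracket_{\mathfrak{A}_{L_\omega},\Gamma,g}$ is exactly the $\asymp_\Gamma$-class of $t$, i.e.\ $\{t' \mid t = t' \in \Gamma\}$; (ii) the fact that each $\Gamma \in S$ is consistent and deductively closed, so $\varphi \in \Gamma$ iff $\Gamma \vdash \varphi$, and $\varphi \notin \Gamma$ iff $\Gamma \cup \{\neg\varphi\}$ is consistent iff there is a $\Gamma' = \mathsf{Cn}_{L_\omega}(\Gamma \cup \{\neg\varphi\})$ with $\Gamma' \sqsubseteq \Gamma$ and $\neg\varphi \in \Gamma'$ (hence $\varphi \notin \Gamma''$ for all $\Gamma'' \sqsubseteq \Gamma'$); and (iii) for the quantifier case, that $\Gamma$ is Henkinized and deductively closed.

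For the base case $t_1 = t_2$: using Lemma \ref{TermLemma}, for every $\Gamma' \sqsubseteq \Gamma$ both $\llbracket t_i\rrbracket_{\mathfrak{A}_{L_\omega},\Gamma',g}$ are nonempty (they contain $t_i$), and they are equal iff $t_1 = t_2 \in \Gamma'$. So $\mathfrak{A}_{L_\omega},\Gamma \Vdash_g t_1 = t_2$ iff $t_1 = t_2 \in \Gamma'$ for all $\Gamma' \sqsubseteq \Gamma$; by persistence of membership under $\supseteq$ the right-to-left direction is immediate, and for left-to-right, if $t_1 = t_2 \notin \Gamma$ then $\mathsf{Cn}_{L_\omega}(\Gamma \cup \{\neg t_1 = t_2\})$ is a refinement witnessing failure. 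The case $R(t_1,\dots,t_n)$ is entirely parallel, now using that $(\llbracket t_1\rrbracket,\dots,\llbracket t_n\rrbracket) \in I_\asymp(R,\Gamma')$ iff $R(t_1,\dots,t_n) \in \Gamma'$ — one direction by the definition of $V(R,\Gamma')$ and the Term Lemma, the other by persistence for $R$ and the congruence axioms (which give $R(t_1',\dots,t_n') \in \Gamma'$ whenever $R(t_1,\dots,t_n) \in \Gamma'$ and $t_i = t_i' \in \Gamma'$). The cases $\neg\varphi$ and $\varphi \wedge \psi$ follow the usual pattern: for $\neg$, $\mathfrak{A}_{L_\omega},\Gamma \Vdash_g \neg\varphi$ iff no $\Gamma' \sqsubseteq \Gamma$ satisfies $\varphi$, iff (by IH) no $\Gamma' \supseteq \Gamma$ contains $\varphi$, iff $\Gamma \cup \{\varphi\}$ is inconsistent (else extend by Henkinization), iff $\neg\varphi \in \Gamma$; for $\wedge$ use deductive closure. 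For $\forall x \varphi$: left-to-right, if $\forall x\varphi \notin \Gamma$ then (consistency, Henkinization) there is a constant $c$ with $\neg\varphi^x_c \in$ some refinement, and combined with the Substitution Lemma (Lemma \ref{SubLem}) and the IH this yields an $a = c \in D$ with $\Gamma \nVdash_{g[x:=a]} \varphi$; right-to-left, if $\varphi^x_t \in \Gamma$ fails for some term $t$, the same route produces a counterexample, while if $\forall x\varphi \in \Gamma$ then $\varphi^x_t \in \Gamma$ for every term $t$ by the axiom $\forall x\varphi \to \varphi^x_t$, and by the Substitution Lemma and IH this gives $\Gamma \Vdash_{g[x:=a]} \varphi$ for all $a \in D$.

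The main obstacle I expect is the quantifier case, specifically matching the \emph{substitutional} character of the semantics — the domain $D$ is the set of terms, and $\Vdash_{g[x:=t]}$ quantifies over assignments sending $x$ to terms — with the Henkin machinery, which only supplies witnessing \emph{constants}. The delicate point is the left-to-right direction: from $\forall x\varphi \notin \Gamma$ one needs a genuine witness in $D$. Since $\Gamma$ is Henkinized, $\exists x \neg\varphi \to (\neg\varphi)^x_{c} \in \Gamma$ for the canonical constant $c = c_{\exists x \neg\varphi}$; one must check that $\forall x \varphi \notin \Gamma$ forces $\exists x\neg\varphi \in \Gamma$ (by deductive closure, since $\neg\forall x\varphi \vdash \exists x \neg\varphi$ and $\neg\forall x\varphi \in \Gamma$ by consistency and deductive closure of $\Gamma$ — careful here: $\forall x\varphi \notin \Gamma$ gives consistency of $\Gamma \cup \{\neg\forall x\varphi\}$, and then one works in a refinement), hence $(\neg\varphi)^x_c \in \Gamma$ or its relevant refinement. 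Then the Substitution Lemma converts $\neg\varphi^x_c \in \Gamma'$ into $\mathfrak{A}_{L_\omega},\Gamma' \nVdash_{g[x:=a]}\varphi$ for $a \in \llbracket c \rrbracket_{\mathfrak{A}_{L_\omega},\Gamma',g}$, and by persistence (Lemma \ref{FORO}) this descends to failure at $\Gamma$ with the witness $a$. The bookkeeping of which set ($\Gamma$ or a refinement of it) the witnessing constant lives in, and ensuring the Substitution Lemma is applied with $t$ substitutable for $x$ in $\varphi$ (which holds for fresh constants), is where the proof requires genuine care rather than routine verification.
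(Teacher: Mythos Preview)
Your proposal is correct and follows the same overall strategy as the paper (induction on complexity, using the Term Lemma for atomic formulas, and the Henkin property for the quantifier case). The one substantive difference is in how you handle the direction $\mathfrak{A}_{L_\omega},\Gamma\Vdash_g \forall x\varphi \Rightarrow \forall x\varphi\in\Gamma$.

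You argue this by contrapositive: from $\forall x\varphi\notin\Gamma$ you pass to a refinement $\Gamma'\sqsubseteq\Gamma$ containing $\neg\forall x\varphi$, extract the Henkin witness there, apply the inductive hypothesis and the Substitution Lemma at $\Gamma'$, and then use persistence to pull the failure back to $\Gamma$. This works, but it is exactly the ``bookkeeping of which set the witnessing constant lives in'' that you flagged as delicate. The paper sidesteps all of it by arguing the direction \emph{directly}. Since $\Gamma$ is Henkinized, the contrapositive of the Henkin axiom $\exists x\neg\varphi\to\neg\varphi^x_c$ gives $\varphi^x_c\to\forall x\varphi\in\Gamma$. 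So it suffices to show $\varphi^x_c\in\Gamma$. From $\mathfrak{A}_{L_\omega},\Gamma\Vdash_g\forall x\varphi$ one gets $\mathfrak{A}_{L_\omega},\Gamma\Vdash_{g[x:=c]}\varphi$ by the semantic clause for $\forall$, hence $\mathfrak{A}_{L_\omega},\Gamma\Vdash_g\varphi^x_c$ by the Substitution and Term Lemmas, hence $\varphi^x_c\in\Gamma$ by the inductive hypothesis. Everything stays at $\Gamma$; no refinements or persistence arguments are needed. Your route and the paper's reach the same conclusion, but the direct argument is shorter and eliminates precisely the obstacle you anticipated.
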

\begin{proof} As usual, the proof is by induction on the complexity of $\varphi$, and the most interesting part is the proof that $\mathfrak{A}_{L_\omega},\Gamma\Vdash_g \forall x\varphi$ implies $\forall x\varphi\in \Gamma$, which uses that $\Gamma$ is Henkinized, so there is a constant $c$ such that $\exists x\neg \varphi\to \neg\varphi^x_c\in \Gamma$:
\begin{eqnarray*}
\mathfrak{A}_{L_\omega},\Gamma\Vdash_g \forall x\varphi & \Rightarrow & \mathfrak{A}_{L_\omega},\Gamma\Vdash_{g[x:=c]} \varphi\mbox{ by Definition \ref{FOPossSat}.\ref{FOPossSatForall}} \\
& \Rightarrow & \mathfrak{A}_{L_\omega},\Gamma\Vdash_{g} \varphi^x_{c}\mbox{ by Lemmas \ref{SubLem} and \ref{TermLemma}} \\
& \Rightarrow & \varphi^x_{c}\in \Gamma\mbox{ by the inductive hypothesis} \\
&\Rightarrow& \forall x\varphi\in \Gamma\mbox{ since }\varphi^x_{c}\to  \forall x \varphi\in \Gamma.\qedhere
\end{eqnarray*} 
\end{proof}

\begin{theorem}[Strong Completeness]\label{StrongComp} \textnormal{(ZF) Every consistent set $\Gamma$ of formulas of $L$ is satisfiable in the canonical first-order possibility model for $L$.}
\end{theorem}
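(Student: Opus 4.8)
The plan is to chain together the three tools already assembled: the Henkinization of Lemma \ref{HenkinLem}, the Truth Lemma (Lemma \ref{TruthLemma}) for the canonical model of $L_\omega$, and a routine reduct observation to pass back down to the canonical model for $L$ itself. Throughout, everything stays within ZF, since neither the construction of $L_\omega$ and $H(\Gamma)$ (Lemma \ref{HenkinLem}) nor the proof of the Truth Lemma uses any choice principle.

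First I would take a consistent set $\Gamma$ of formulas of $L$ and set $\Delta := H(\Gamma)$. By Lemma \ref{HenkinLem}, $\Delta$ is a consistent, deductively closed, and Henkinized set of formulas of $L_\omega$, and $\Gamma\subseteq\Delta$ (every member of a set lies in its deductive closure). Hence $\Delta$ is an element of the set $S$ of possibilities of the canonical model $\mathfrak{A}_{L_\omega}=(S,\sqsubseteq,D,\asymp,V)$, which by Lemma \ref{CanModel} is a genuine first-order possibility model for $L_\omega$ (indeed with total functions). Letting $g$ be the variable assignment with $g(x)=x$ for every variable $x$ (recall the languages $L_n$ add only constants, so $\mathsf{Var}(L_\omega)=\mathsf{Var}(L)$), the Truth Lemma gives $\mathfrak{A}_{L_\omega},\Delta\Vdash_g\varphi$ for every $\varphi\in\Delta$, hence in particular for every $\varphi\in\Gamma$.

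The remaining step is to transfer this to $\mathfrak{A}_L$, the reduct of $\mathfrak{A}_{L_\omega}$ to $L$. I would prove: for every formula $\varphi$ of $L$, every $\Theta\in S$, and every variable assignment $h$, $\mathfrak{A}_L,\Theta\Vdash_h\varphi$ iff $\mathfrak{A}_{L_\omega},\Theta\Vdash_h\varphi$. The point is that $\mathfrak{A}_L$ and $\mathfrak{A}_{L_\omega}$ have the identical poset $(S,\sqsubseteq)$, the identical domain $D$ (the set of terms of $L_\omega$), the identical equality relations $\asymp$, and the identical interpretation $V$ of every predicate and function symbol of $L$; they differ only in that $\mathfrak{A}_L$ does not interpret the new constants of $L_\omega\setminus L$. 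Consequently, an easy induction on terms using Definition \ref{DenoteDef} shows $\llbracket t\rrbracket_{\mathfrak{A}_L,\Theta,h}=\llbracket t\rrbracket_{\mathfrak{A}_{L_\omega},\Theta,h}$ for every term $t$ of $L$, since the clauses refer only to $\asymp_\Theta$ and to $V(f,\Theta)$ for $L$-function symbols $f$. Then the claimed equivalence follows by induction on $\varphi$: the atomic cases $t_1=t_2$ and $R(t_1,\dots,t_n)$ reduce to this term identity together with the fact that $I_\asymp(R,\cdot)$ is computed from $V(R,\cdot)$ and $\asymp$, which coincide in the two models; and the clauses for $\neg$, $\wedge$, and $\forall x$ quantify only over $(S,\sqsubseteq)$ and $D$, which are identical, so they go through by the inductive hypothesis.

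Applying this with $\Theta=\Delta$ and $h=g$ yields $\mathfrak{A}_L,\Delta\Vdash_g\varphi$ for every $\varphi\in\Gamma$, so $\Gamma$ is satisfiable in $\mathfrak{A}_L$ at the possibility $\Delta$, as required. I expect no real obstacle here: the substantive work is already packaged in Lemma \ref{HenkinLem} and Lemma \ref{TruthLemma}, and the only step demanding any attention is the reduct equivalence, whose verification is entirely routine once one notes that the two canonical models agree on the poset, the domain, the equality relations, and all symbols of $L$.
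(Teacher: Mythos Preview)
Your proposal is correct and follows essentially the same approach as the paper: form $H(\Gamma)$ via Lemma~\ref{HenkinLem}, apply the Truth Lemma at the possibility $H(\Gamma)$ in $\mathfrak{A}_{L_\omega}$, and then pass to the reduct $\mathfrak{A}_L$. The paper compresses the reduct step into a single clause, whereas you spell out the routine induction; otherwise the arguments are identical.
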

\begin{proof} By Lemma \ref{TruthLemma}, $\Gamma$ is satisfied at the possibility $H(\Gamma)$ (recall Lemma \ref{Henkinization}) in $\mathfrak{A}_{L_\omega}$ and hence at the same possibility in the reduct $\mathfrak{A}_L$.
\end{proof}

Completeness is only the first step in a model theory for first-order languages based on possibility models.  Using his definition of first-order possibility models, van Benthem \cite{Benthem1981a,Benthem2016b} defines several operations on models: generated submodels, disjoint unions, zigzag images, filter products, and filter bases. He then proves a Keisler-type definability result, stating that a class of possibility models is definable by a set of first-order sentences iff the class is closed under the listed operations. His notion of \textit{filter product} is meant to play the role in possibility semantics of ultraproducts in traditional semantics. While taking a product $\{\mathfrak{A}_i\}_{i\in I}$ for an arbitrary $I$ requires the Axiom of Choice, one can handle a countable $I$ quasiconstructively, as Dependent Choice implies Countable Choice. It remains to be seen how other concepts and results of traditional first-order model theory may be developed for possibility models.

\subsection{First-order possibility models and forcing}\label{ForcingSection}

In this section, we briefly explain how special first-order possibilities models are essentially already implicit in forcing in set theory (cf.~\cite{Fitting1969}). We follow the presentation of forcing using Boolean-valued models in \cite{Bell2005}.

Assume there is a standard transitive Tarskian model $M$ of ZFC. Suppose $(S,\sqsubseteq)$ is a poset in $M$, which we will use as the underlying poset of a first-order possibility model. Next we will add a set $D$ of \textit{guises of sets}. The basic idea is this:  for a partial possibility $s$ in $S$ and guises $a$ and $b$ of sets, it may be that $s$ does not settle that \[\mbox{the set of which $a$ is a guise is a member of the set of which $b$ is a guise},\] but also $s$ does not settle that this is \textit{not} the case. Given this idea, we take the guise $b$ to be a \textit{function} that outputs, for a given guise $a$ in its domain, a regular open set   $b(a)$ of possibilities such that every $s\in b(a)$ settles that the set of which $a$ is a guise is a member of the set of which $b$ is a guise. Formally, let $B$ be the BA $\mathcal{RO}(S,\sqsubseteq)$ in $M$, and define the following sequence of sets by recursion on~$\alpha$:

\begin{itemize}
\item $M^{(B)}_\alpha$ is the set of all functions in $M$ whose domain is a subset of $M^{(B)}_\xi$ for some $\xi<\alpha$ and whose codomain is $B$;
\item $M^{(B)}=\underset{\alpha}{\bigcup}M^{(B)}_\alpha$.
\end{itemize}
Let the set $D$ of guises be $M^{(B)}$. Let $L_D$ be the language of first-order set theory, with $\dot{\in}$ the membership symbol, expanded with a constant $c_a$ for every $a\in D$. 

Next we define by a joint recursion\footnote{To make this rigorous in terms of recursion on an appropriate well-founded relation, see \cite[p.~23]{Bell2005}.} the functions $\asymp$ and $V(\dot{\in},\cdot)$ such that for all possibilities $s\in S$ and guises $a,b\in D$:
\begin{itemize}
\item $a\asymp_s b$ iff for all $c\in \mathrm{dom}(a)$ and $s'\sqsubseteq s$, if $s'\in a(c)$, then $(c,b)\in V(\dot{\in},s')$, and for all $c\in \mathrm{dom}(b)$ and $s'\sqsubseteq s$, if $s'\in b(c)$, then $(c,a)\in V(\dot{\in},s')$;
\item $(a,b)\in V(\dot{\in},s)$ iff  $\forall s'\sqsubseteq s$ $\exists s''\sqsubseteq s'$ $\exists c\in\mathrm{dom}(b)$: $s''\in b(c)$ and $a\asymp_{s''}c$. 
\end{itemize}
Finally, we interpret the constants in the obvious way:
\begin{itemize}
\item $b\in V(c_a,s)$ iff $a\asymp_s b$.
\end{itemize}

\begin{proposition} \textnormal{The tuple $(S,\sqsubseteq,D, \asymp, V)$ constructed above is a first-order possibility model for $L_D$ with total functions.}
\end{proposition}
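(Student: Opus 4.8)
The plan is to verify each clause of Definition \ref{FOmodel} for the tuple $(S,\sqsubseteq,D,\asymp,V)$, exploiting the recursive definitions of $\asymp$ and $V(\dot\in,\cdot)$ together with the fact that $B=\mathcal{RO}(S,\sqsubseteq)$ really is a complete Boolean algebra (Theorem \ref{FirstThm}.\ref{FirstThm1}), so that each $a(c)$ is a genuine regular open set. Conditions 1 and 2 are free: $(S,\sqsubseteq)$ is a poset by hypothesis and $D=M^{(B)}$ is nonempty (it contains the empty function). The bulk of the work is a simultaneous induction on the well-founded relation on $D$ (rank in the $M^{(B)}_\alpha$ hierarchy, with $a$ below $b$ when $a\in\mathrm{dom}(b)$), mirroring the standard Boolean-valued argument in \cite[pp.~22--24]{Bell2005}, but now reading $\llbracket a\in b\rrbracket$ and $\llbracket a=b\rrbracket$ of the Boolean-valued model as the \emph{regular open sets} $\{s\mid (a,b)\in V(\dot\in,s)\}$ and $\{s\mid a\asymp_s b\}$ respectively.

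First I would establish the basic structural facts about $\asymp$ and $V(\dot\in,\cdot)$ by induction. \textbf{Persistence:} if $a\asymp_s b$ and $s'\sqsubseteq s$, then $a\asymp_{s'}b$ — immediate since the defining condition quantifies universally over $\sqsubseteq$-refinements, and $\{t\mid t\sqsubseteq s'\}\subseteq\{t\mid t\sqsubseteq s\}$; similarly for $V(\dot\in,\cdot)$, whose defining condition $\forall s'\exists s''\dots$ is exactly the regular-open "persistence" pattern. \textbf{$\asymp_s$ is an equivalence relation:} reflexivity and symmetry are visible from the symmetric form of the clause; for reflexivity one uses the inductive hypothesis that $(c,a)\in V(\dot\in,s')$ whenever $s'\in a(c)$ (this is the Boolean-valued fact $\llbracket c\in a\rrbracket\le\llbracket c\in a\rrbracket$ reflected pointwise, and needs the sub-induction showing $c\asymp_{s''}c$ for the smaller guise $c$). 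Transitivity requires the usual lemma relating $\asymp$ and membership: if $a\asymp_s b$ and $(c,a)\in V(\dot\in,s)$ then $(c,b)\in V(\dot\in,s)$, and if $a\asymp_s b$, $b\asymp_s b'$ then $a\asymp_s b'$ — both proved by simultaneous induction exactly as the substitutivity lemmas for Boolean-valued models, tracking refinements carefully. \textbf{Refinability for $\asymp$:} if $a\not\asymp_s b$, then by definition of $B$'s operations the set $U=\{t\mid a\asymp_t b\}$ is regular open (this is the content to verify: $U$ satisfies persistence by the above, and refinability because the complementary condition "$\exists c\in\mathrm{dom}(a),\exists s'\sqsubseteq s$ with $s'\in a(c)$ but $(c,b)\notin V(\dot\in,s')$, or symmetrically" is itself a regular-open-complement condition once one knows $a(c)\in\mathcal{RO}(S,\sqsubseteq)$ and $V(\dot\in,\cdot)$ is regular open by induction). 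From $s\notin U$ and $U$ regular open one gets $s'\sqsubseteq s$ with $\mathord\downarrow s'\cap U=\varnothing$, i.e.\ $\forall s''\sqsubseteq s'$, $a\not\asymp_{s''}b$. \textbf{Refinability for $\dot\in$:} identical, using that $\{t\mid(a,b)\in V(\dot\in,t)\}$ is by construction of the form $\mathsf{int}(\mathsf{cl}(\bigcup_{c}(a(c)\cap\{t\mid a\asymp_t c\})))$, hence regular open.

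Next, the function-symbol conditions, which here concern only the unary-term function symbols arising from the constants $c_a$ (set theory has no function symbols beyond what we add, and $\dot\in$ is a relation symbol). For each constant $c_a$, treated as a $0$-ary function symbol, $V(c_a,s)=\{b\mid a\asymp_s b\}\subseteq D^{1}$: \textbf{persistence for $c_a$} is persistence for $\asymp$; \textbf{quasi-functionality} says if $b,b'\in V(c_a,s)$ then $b\asymp_s b'$, which follows from symmetry and transitivity of $\asymp_s$ via $a\asymp_s b$, $a\asymp_s b'$; \textbf{eventual definedness} says $\exists s'\sqsubseteq s$, $\exists b$: $b\in V(c_a,s')$, satisfied with $s'=s$, $b=a$, by reflexivity of $\asymp_s$. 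Finally \textbf{total functions}: for every constant and every $s$ there is $b$ with $b\in V(c_a,s)$, again by $a\asymp_s a$. (If one prefers to phrase $\dot\in$ via the uniform relation clause, persistence for $R=\dot\in$ — if $(a,b)\in V(\dot\in,s)$, $s'\sqsubseteq s$, $a\asymp_{s'}a'$, $b\asymp_{s'}b'$ then $(a',b')\in V(\dot\in,s')$ — is precisely the substitutivity lemma mentioned above.)

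The main obstacle is the simultaneous-induction bookkeeping for the substitutivity/congruence lemmas linking $\asymp$ and $V(\dot\in,\cdot)$ — i.e.\ that $\{t\mid a\asymp_t b\}$ and $\{t\mid(a,b)\in V(\dot\in,t)\}$ are regular open and that $\asymp_t$ is a congruence for $\dot\in$ at each $t$. These are exactly the lemmas that make the Boolean-valued model well-defined, and the only subtlety beyond \cite{Bell2005} is carrying the extra $\sqsubseteq$-quantifiers through without circularity; but the induction is on the set-theoretic rank of the guises, which is unaffected, so the argument goes through verbatim once one reads $\llbracket\varphi\rrbracket\in B$ as the corresponding regular open subset of $S$. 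Everything else is routine, and no choice principle is used beyond what ZFC in $M$ already provides.
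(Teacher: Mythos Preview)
Your proposal is correct and covers all the required clauses. The organization differs from the paper's in one notable way worth flagging.

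For refinability, the paper takes a more direct route than you do. The paper observes that refinability for $\dot\in$ is \emph{immediate} from the definitional form: the clause for $(a,b)\in V(\dot\in,s)$ begins with the quantifier pattern $\forall s'\sqsubseteq s\,\exists s''\sqsubseteq s'$, which is exactly the refinability condition, so no induction or regular-open computation is needed. Refinability for $\asymp$ then follows in one step: if $a\not\asymp_s b$, witness this by some $c\in\mathrm{dom}(a)$ and $s'\sqsubseteq s$ with $s'\in a(c)$ but $(c,b)\notin V(\dot\in,s')$; apply refinability for $\dot\in$ to get $s''\sqsubseteq s'$ below which $(c,b)\notin V(\dot\in,\cdot)$, and use that $a(c)$ is a downset to conclude $a\not\asymp_{s'''}b$ for all $s'''\sqsubseteq s''$. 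Your route via showing $\{t\mid a\asymp_t b\}$ is regular open is correct but unwinds to the same argument with extra packaging; in particular you do not need induction on rank for this part.

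Where your induction \emph{is} genuinely needed is for the equivalence-relation and substitutivity properties of $\asymp_s$ (and hence for the congruence form of persistence for $\dot\in$). The paper simply writes ``First observe that for each $s\in S$, $\asymp_s$ is an equivalence relation'' and then uses transitivity freely in its persistence-for-$\dot\in$ argument; you are right that this observation requires the simultaneous rank induction \`a la Bell, so your proposal is more explicit on the point the paper leaves to the reader.
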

\begin{proof} First observe that for each $s\in S$, $\asymp_s$ is an equivalence relation. 

 \textit{Persistence} for $\asymp$ is immediate from the definition of $\asymp$ due to the quantification over all $s'\sqsubseteq s$. For \textit{persistence} for $\dot{\in}$, suppose $(a,b)\in V(\dot{\in},s)$, $u\sqsubseteq s$, $a\asymp_{u}a^*$, and $b\asymp_{u}b^*$. From $(a,b)\in V(\dot{\in},s)$, we have (i) $\forall s'\sqsubseteq s$ $\exists s''\sqsubseteq s'$ $\exists c\in\mathrm{dom}(b)$: $s''\in b(c)$ and $a\asymp_{s''}c$. To show $(a^*,b^*)\in V(\dot{\in},u)$, let $u'\sqsubseteq u$. Then $u' \sqsubseteq s$, so by (i), there are $u''\sqsubseteq u'$ and $c\in\mathrm{dom}(b)$ with $u''\in b(c)$ and $a\asymp_{u''}c$. Since $b\asymp_{u}b^*$ and $u''\sqsubseteq u$, we have  $b\asymp_{u''}b^*$ by persistence for $\asymp$, which with $u''\in b(c)$ implies $(c,b^*)\in V(\dot{\in},u'')$. By definition of $V(\dot{\in},u'')$, it follows that for some  $u'''\sqsubseteq u''$ and $d\in \mathrm{dom}(b^*)$, we have $u'''\in b^*(d)$ and $c\asymp_{u'''}d$. Since $a\asymp_{u}a^*$, $a\asymp_{u''}c$, $c\asymp_{u'''}d$, and $u'''\sqsubseteq u''\sqsubseteq u$, we have  $a^*\asymp_{u'''}d$ by persistence for $\asymp$ and the fact that $\asymp_{u'''}$ is an equivalence relation. Thus, we have shown that for all $u'\sqsubseteq u$ there is a $u'''\sqsubseteq u'$ and $d\in \mathrm{dom}(b^*)$ such that $u'''\in b^*(d)$ and $a^*\asymp_{u'''}d$. Hence $(a^*,b^*)\in V(\dot{\in},u)$.

 \textit{Refinability} for $\dot{\in}$ is immediate from the definition of $V(\dot{\in},s)$ due to the $\forall s'\sqsubseteq s$  $\exists s''\sqsubseteq s'$ quantification pattern. For \textit{refinability} for $\asymp$, suppose $a\not\asymp_s b$. Without loss of generality, suppose there is a $c\in \mathrm{dom}(a)$ and $s'\sqsubseteq s$ such that $s'\in a(c)$ but $(c,b)\not\in V(\dot{\in},s')$. Then by \textit{refinability} for $\dot{\in}$, there is an $s''\sqsubseteq s'$ such that for all $s'''\sqsubseteq s''$, $(c,b)\not\in V(\dot{\in},s''')$. From $s'\in a(c)$ and $s'''\sqsubseteq s'$, we have $s'''\in a(c)$, which with $(c,b)\not\in V(\dot{\in},s''')$ implies $a\not\asymp_{s'''} b$. Thus, we have an $s''\sqsubseteq s$ such that for all $s'''\sqsubseteq s''$, $a\not\asymp_{s'''} b$, which establishes \textit{refinability} for $\asymp$.
 
 \textit{Persistence}, \textit{quasi-functionality}, and \textit{totality} for constant symbols follows from the properties of $\asymp$.\end{proof}

One may now verify that possibility semantics for $L_D$ using $(S,\sqsubseteq,D, \asymp, V)$ results in the same semantic values for formulas in the Boolean algebra $\mathcal{RO}(S,\sqsubseteq)$ (recall Lemma \ref{FORO}) as the algebraic semantics for $L_D$ in \cite[Ch.~1]{Bell2005}.

\section{Modal case}\label{ModalSection}

In this section, we cover possibility semantics for modal logic. We assume some previous familiarity with possible world semantics using Kripke frames \cite{Chagrov1997,Blackburn2001} and neighborhood frames \cite{Pacuit2017}, to which we will compare possibility semantics.

 We will consider four ways of interpreting the modal language using possibility frames:
\begin{enumerate}
\item (\S~\ref{PropQuantSection}) Interpret $\Box$ as the universal modality in possibility frames. When combined with the device of \textit{propositional quantification}, this simple semantics already allows us to characterize naturally occurring logics that cannot be characterized by Kripke frames.
\item (\S~\ref{NeighSection}) Add neighborhood functions to possibility frames to interpret arbitrary congruential modalities. This allow us to characterize simple modal logics (without propositional quantification) that cannot be characterized by possible world frames with neighborhood functions.
\item (\S~\ref{RelationalFrameSection}) Add accessibility relations to possibility frames to interpret normal modalities. This was Humberstone's \cite{Humberstone1981} original setting, though our definition of relational possibility frames is more general. Again this allows us to characterize modal logics (without propositional quantification) that cannot be characterized by Kripke frames or neighborhood world frames.
\item (\S~\ref{FunctionalPoss}) Add ``accessibility functions'' to possibility frames to interpret normal modalities: a modal formula $\Box\varphi$ is true at a possibility $x$ iff $\varphi$ is true at the functionally determined possibility $f(x)$. While this functional semantics is severely limited when paired with possible world frames, where $f(x)$ must be a complete world, it is quite general when paired with possibility frames.
\end{enumerate}

After covering these four ways of interpreting modalities in the context of propositional modal logic, in \S~\ref{QuantModalSection} we briefly consider \textit{first-order modal logic}, extending the first-order possibility frames of \S~\ref{FOSection} with accessibility relations as in \S~\ref{RelationalFrameSection}. We show there are philosophically controversial first-order modal inferences valid over possible world frames but invalid over possibility frames.

\subsection{Universal modality and propositional quantification}\label{PropQuantSection}

The language $\mathcal{L}$ of unimodal propositional logic is defined by the grammar
\[\varphi::= p \mid \neg\varphi\mid (\varphi\wedge\varphi)\mid \Box\varphi\]
where $p$ belongs to a countably infinite set $\mathsf{Prop}$ of propositional variables. We define $\vee$, $\rightarrow$, and $\leftrightarrow$ in terms of $\neg$ and $\wedge$ as usual, and
\[\Diamond \varphi:=\neg\Box\neg\varphi.\]
A simple possibility semantics for this language using posets $(S,\sqsubseteq)$ with $\Box$ interpreted as the universal modality (see Definition \ref{PossForcing1} below) provides a semantics for the logic $\mathsf{S5}$, the smallest normal modal logic (see Definition \ref{NormalLogic}) containing  \[\Box p\to p,\,\Box p\to \Box\Box p,\, \neg \Box p\to \Box\neg\Box p,\] 
which is a standard logic for the ``necessity'' interpretation of $\Box$ in philosophy and the ``knowledge'' interpretation of $\Box$ in computer science and game theory. As $\mathsf{S5}$ is already complete with respect to sets of worlds with $\Box$ interpreted as the universal modality, so far there is no gain concerning completeness. However, as we will see, if we enrich the language sufficiently, then there are extensions of $\mathsf{S5}$ that are incomplete with respect to a semantics based on worlds but complete with respect to possibility~semantics. 

Consider the language $\mathcal{L}\Pi$ that extends $\mathcal{L}$ with propositional quantifiers:
\[\varphi::= p \mid \neg\varphi\mid (\varphi\wedge\varphi)\mid \Box\varphi\mid \forall p \varphi\]
where $p\in\mathsf{Prop}$. We define $\exists p\varphi:=\neg\forall p\neg\varphi$. From a syntactic perspective, perhaps the most natural extension of $\mathsf{S5}$ with principles for propositional quantification is the logic \textsf{S5}$\Pi$ studied by Bull \cite{Bull1969} and Fine \cite{Fine1970}, which extends the axioms and rules of \textsf{S5} with the following axioms and rule for the propositional quantifiers:
\begin{itemize}
\item Universal distribution axiom: $\forall p (\varphi\to \psi) \to (\forall p\varphi\to\forall p\psi)$.
\item Universal instantiation axiom: $\forall p\varphi \to \varphi^p_\psi$ where $\psi$ is substitutable for $p$ in $\varphi$, and $\varphi^p_\psi$ is the result of replacing all free occurrences of $p$ in $\varphi$ by $\psi$ (defining substitutability and free occurences of propositional variables just as we do for individual variables in first-order logic \cite{Enderton2001}).
\item Vacuous quantification axiom: $\varphi\to\forall p\varphi$ where $p$ is not free in $\varphi$.
\item Rule of universal generalization: if $\varphi$ is a theorem, then $\forall p\varphi$ is a theorem.
\end{itemize}
The set of theorems of $\mathsf{S5}\Pi$ is the smallest set of sentences of $\mathcal{L}\Pi$ containing the stated axioms and closed under the stated rules.

A simple possibility semantics for the logic \textsf{S5}$\Pi$ is as follows.

\begin{definition}\label{PossForcing1} For a full possibility frame $\mathcal{F}=(S,\sqsubseteq, \mathcal{RO}(S,\sqsubseteq))$, a \textit{possibility model based on $\mathcal{F}$} is a pair $\mathcal{M}={(\mathcal{F},\pi)}$ such that $\pi: \mathsf{Prop}\to \mathcal{RO}(S,\sqsubseteq)$. For $\varphi\in\mathcal{L}\Pi$ and $x\in S$, we define the forcing relation $\mathcal{M},x\Vdash \varphi$ recursively as follows:
\begin{enumerate}
\item $\mathcal{M},x\Vdash p$ iff $x\in \pi(p)$;
\item $\mathcal{M},x\Vdash \neg\varphi$ iff for all $x'\sqsubseteq x$, $\mathcal{M},x'\nVdash\varphi$;
\item $\mathcal{M},x\Vdash (\varphi\wedge\psi)$ iff $\mathcal{M},x\Vdash\varphi$ and $\mathcal{M},x\Vdash\psi$;
\item $\mathcal{M},x\Vdash \Box\varphi$ iff for all $y\in S$, $\mathcal{M},y\Vdash \varphi$;
\item\label{PossForcing15} $\mathcal{M},x\Vdash \forall p \varphi$ iff for all $\mathcal{M}'\sim_p\mathcal{M}$, $\mathcal{M}',x\Vdash \varphi$,
\end{enumerate}
where $\mathcal{M}'\sim_p\mathcal{M}$ means that $\mathcal{M}'$ is a possibility model based on the same $\mathcal{F}$ as $\mathcal{M}$ whose valuation $\pi'$ differs from the valuation $\pi$ in $\mathcal{M}$ at most at $p$.

A formula $\varphi$ is \textit{valid on} $\mathcal{F}$ if for every model $\mathcal{M}$ based on $\mathcal{F}$ and every $x\in S$, $\mathcal{M},x\Vdash \varphi$; and $\varphi$ is \textit{valid on a class $\mathsf{K}$} of full possibility frames if it is valid on every $\mathcal{F}$ in $\mathsf{K}$. The \textit{logic of $\mathsf{K}$} is the set of all formulas valid on $\mathsf{K}$.
\end{definition}

\begin{remark}\label{PropQuantAlg}Algebraically, we are interpreting $\mathcal{L}\Pi$ in a complete Boolean algebra, namely ${\mathcal{RO}(S,\sqsubseteq)}$, taking the semantic value of $\Box \varphi$ to be 1 if the  value of $\varphi$ is 1, and 0 otherwise, and taking the semantic value of $\forall p\varphi$ to be the meet of all  values of $\varphi$ as we vary the  value of $p$ to be any element of the algebra (cf.~\cite{Holliday2019}).
\end{remark}

\begin{remark}\label{OtherModels} We can also define models based on arbitrary---not only full---possibility frames. For a frame $\mathcal{F}=(S,\sqsubseteq, P)$, a \textit{possibility model  based on $\mathcal{F}$} is a pair $\mathcal{M}=(\mathcal{F},\pi)$ with $\pi:\mathsf{Prop}\to P$. However, for interpreting $\mathcal{L}\Pi$, we restrict attention to full possibility frames so that $P=\mathcal{RO}(S,\sqsubseteq)$ and hence the semantic value of $\forall p\varphi$ belongs to $P$ by Remark \ref{PropQuantAlg}. This is used to prove the validity of the universal instantiation axiom for $\psi$ containing propositional quantifiers.\end{remark}

The derived semantic clauses for $\vee$, $\to$, $\leftrightarrow$, and $\exists$ are analogous to those in Lemma \ref{DerivedClauses}. In addition, we have \[\mbox{$\mathcal{M},x\Vdash \Diamond\varphi$ iff for some $y\in S$, $\mathcal{M},y\Vdash\varphi$.}\]

Recall that a \textit{world frame} is a possibility frame in which $\sqsubseteq$ is the identity relation (Definition \ref{PosFrameDef}).

\begin{proposition}\label{AtomNotThm} \textnormal{$\mathsf{S5}\Pi$ is incomplete with respect to the class of all full world frames, as
\begin{equation} \exists q (q\wedge \forall p(p\to \Box (q\to p)))\label{BadAx2}\tag{W}\end{equation}
 is valid on every full world frame but is not a theorem of $\mathsf{S5}\Pi$.}
\end{proposition}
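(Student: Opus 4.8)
The plan is to split the claim into its two halves: validity of (W) on every full world frame, and underivability of (W) in $\mathsf{S5}\Pi$.

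\emph{Validity on full world frames.} When $\sqsubseteq$ is the identity relation we have $\mathcal{RO}(S,\sqsubseteq)=\wp(S)$, the clauses for $\neg$, $\wedge$, $\Box$ collapse to complementation, intersection, and the universal modality, and the clause for $\exists q$ simply lets us reinterpret $q$ as an arbitrary subset of $S$. So, given a full world frame $\mathcal{F}=(S,\mathrm{id},\wp(S))$, a model $\mathcal{M}$, and $w\in S$, I would take the variant $\mathcal{M}'\sim_q\mathcal{M}$ assigning $q$ the singleton $\{w\}$ --- the atom of $\wp(S)$ below $w$ --- and verify $\mathcal{M}',w\Vdash q$ (immediate) together with $\mathcal{M}',w\Vdash \forall p\,(p\to\Box(q\to p))$: for any further variant $\mathcal{M}''$ assigning $p$ a set $A\subseteq S$, if $w\in A$ then $\{w\}\subseteq A$, so $q\to p$ holds at every world, hence $\Box(q\to p)$ holds. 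Thus $\mathcal{M},w\Vdash$ (W), and (W) is valid on every full world frame.

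\emph{Underivability.} I would invoke the soundness of $\mathsf{S5}\Pi$ with respect to full possibility frames --- equivalently, its soundness over the algebraic semantics of Remark \ref{PropQuantAlg} in complete Boolean algebras --- and then exhibit one full possibility frame refuting (W). The natural choice is the full frame over the infinite binary tree $2^{<\omega}$ of Example \ref{BinaryTree0}: by Theorem \ref{FirstThm}.\ref{FirstThm1} its algebra $\mathcal{RO}(2^{<\omega},\sqsubseteq)$ is a complete Boolean algebra, and it is atomless, since any nonempty $U\in\mathcal{RO}(2^{<\omega},\sqsubseteq)$ contains some principal downset $\mathord{\downarrow}\sigma$ by persistence, which properly contains the nonzero set $\mathord{\downarrow}(\sigma 0)$, so $U$ is not an atom. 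On this frame I would refute (W) at the root $\epsilon$ by showing that $\forall q\,\neg(q\wedge\forall p\,(p\to\Box(q\to p)))$ holds everywhere: fix any model $\mathcal{M}$, any assignment $Q$ of $q$, and any node $x$; if $x\notin Q$ then $q$ already fails at $x$; and if $x\in Q$, then reassigning $p$ the value $\mathord{\downarrow}(x0)$ falsifies $p\to\Box(q\to p)$ at $x0\sqsubseteq x$, because $x0\in\mathord{\downarrow}(x0)$ witnesses the antecedent while $x\in Q\setminus\mathord{\downarrow}(x0)$ gives $Q\not\subseteq\mathord{\downarrow}(x0)$ and hence $\Box(q\to p)$ is false, so $\forall p\,(p\to\Box(q\to p))$ fails at $x$; either way $q\wedge\forall p\,(p\to\Box(q\to p))$ fails at $x$ under every $q$-variant. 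Hence $\mathcal{M},\epsilon\nVdash$ (W), and by soundness (W) is not a theorem of $\mathsf{S5}\Pi$.

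The only delicate points I anticipate are (i) recording the soundness of $\mathsf{S5}\Pi$ over \emph{full} possibility frames, in particular that universal instantiation stays valid when the substituted formula itself contains propositional quantifiers --- which is exactly why fullness is needed (Remark \ref{OtherModels}), so that $\|\forall p\varphi\|$ is again admissible --- and (ii) keeping track of the quantifier-versus-$\sqsubseteq$ alternation in the refutation, i.e. checking that the negated formula holds at \emph{every} node of the tree under \emph{every} reinterpretation of $q$, not merely at the root. Neither is a real obstacle; conceptually, (W) just says ``there is an atom below the current possibility,'' which holds in every powerset algebra but fails in the atomless complete Boolean algebra $\mathcal{RO}(2^{<\omega},\sqsubseteq)$. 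As an alternative to the direct tree computation, the same refutation can be carried out purely algebraically in any atomless complete Boolean algebra, computing $\|\forall p\,(p\to\Box(q\to p))\|$ to be $0$ for every nonzero value assigned to $q$; I would note this as a remark.
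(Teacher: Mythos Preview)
Your proposal is correct and follows essentially the same route as the paper's proof: for validity on world frames you set $q$ to the singleton $\{w\}$, and for underivability you invoke soundness of $\mathsf{S5}\Pi$ over full possibility frames and refute (W) on the full infinite binary tree by, at any node $x$ in any $q$-variant with $x\in Q$, reassigning $p$ to the principal downset of a child of $x$. The only cosmetic differences are that you phrase the refutation as verifying $\forall q\,\neg(\ldots)$ everywhere rather than unpacking the $\exists q$ clause directly, and you add the (correct and helpful) algebraic gloss that (W) asserts the existence of an atom, which fails in the atomless $\mathcal{RO}(2^{<\omega},\sqsubseteq)$.
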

\begin{proof} Suppose $\mathcal{M}=(\mathcal{F},\pi)$ is a model based on a world frame $\mathcal{F}$, and $w$ is a world in $\mathcal{F}$. Since $\mathcal{F}$ is a world frame, we have $\mathcal{M},w\Vdash (\mathrm{W})$ iff there is some $\mathcal{M}'\sim_q\mathcal{M}$ such that $\mathcal{M}',w\Vdash q\wedge \forall p(p\to \Box (q\to p))$. To see that the right-hand side holds, let $\mathcal{M}'=(\mathcal{F},\pi')$ be such that $\pi'$ differs from $\pi$ only in that  $\pi'(q)=\{w\}$.

To see that (\ref{BadAx2}) is not a theorem of $\mathsf{S5}\Pi$, first it is easy to check that $\mathsf{S5}\Pi$ is valid on any full possibility frame. Second, we observe that (\ref{BadAx2}) is refuted by the full possibility frame based on the full infinite binary tree in Examples \ref{BinaryTree0} and \ref{BinaryTree}. For any possibility $x$ and formula $\varphi$, we have $\mathcal{M},x\Vdash \exists q\varphi$ iff  for all $ x'\sqsubseteq x$ there is $x''\sqsubseteq x'$ and $\mathcal{M}'\sim_q\mathcal{M}$ such that $\mathcal{M}',x''\Vdash\varphi$. But for any $x'\sqsubseteq x$,  $x''\sqsubseteq x'$, and regular open set $U$   chosen to interpret $q$ in $\mathcal{M}'$, we claim that \[\mathcal{M}',x''\nVdash q\wedge \forall p(p\to \Box(q\to p)).\]
Assume $x''\in U$, so the first conjunct is true. Where $y$ is a child of $x''$, interpret $p$ as the regular open set $\mathord{\downarrow}y\subsetneq U$. Then $p\to \Box(q\to p)$ is not true at $x''$, since the antecedent is true at $y\sqsubseteq x''$ but the consequent is not true at any possibility.\end{proof}

\noindent Stated algebraically, (\ref{BadAx2}) is valid on any complete \textit{and atomic} Boolean algebra. 

\begin{remark}More generally, if instead of interpreting $\mathcal{L}\Pi$ in complete Boolean algebras as in Remark \ref{PropQuantAlg}, we interpret $\mathcal{L}\Pi$ in complete Boolean algebra expansions, defined in the next section (Definition \ref{BAEdef}), then (\ref{BadAx2}) is valid on any complete \textit{and atomic} Boolean algebra expansion in which $\Box\top$ is valid.
\end{remark}

Although $\mathsf{S5}\Pi$ is incomplete with respect to possible world semantics, it is complete with respect to possibility semantics in virtue of its completeness with respect to complete Boolean algebras, shown in \cite{Holliday2019}, and the fact that we can transform any complete Boolean algebra refuting a formula into a possibility frame refuting the formula using Theorem \ref{FirstThm}.\ref{FirstThm2}.

\begin{theorem}[\cite{Holliday2019}]\label{S5PiComp} \textnormal{$\mathsf{S5}\Pi$ is the logic of the class of all full possibility frames.}
\end{theorem}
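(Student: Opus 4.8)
The plan is to establish the two inclusions separately. For \emph{soundness}, I would verify that every axiom of $\mathsf{S5}\Pi$ is valid on every full possibility frame and that the rules (necessitation, universal generalization, modus ponens) preserve validity. For \emph{completeness}, I would reduce to the algebraic completeness of $\mathsf{S5}\Pi$ with respect to complete Boolean algebras proved in \cite{Holliday2019}, and then transfer a refuting algebra to a refuting full possibility frame via Theorem \ref{FirstThm}.\ref{FirstThm2}.

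The key technical device for both directions is a semantic-value lemma mirroring Lemma \ref{FORO}: for any model $\mathcal{M}=(\mathcal{F},\pi)$ based on a full frame $\mathcal{F}=(S,\sqsubseteq,\mathcal{RO}(S,\sqsubseteq))$ and any $\varphi\in\mathcal{L}\Pi$, the set $\|\varphi\|_{\mathcal{M}}=\{x\in S\mid\mathcal{M},x\Vdash\varphi\}$ belongs to $\mathcal{RO}(S,\sqsubseteq)$, with $\|\neg\varphi\|_{\mathcal{M}}=\neg\|\varphi\|_{\mathcal{M}}$, with $\|\varphi\wedge\psi\|_{\mathcal{M}}=\|\varphi\|_{\mathcal{M}}\cap\|\psi\|_{\mathcal{M}}$, with $\|\Box\varphi\|_{\mathcal{M}}$ equal to $S$ when $\|\varphi\|_{\mathcal{M}}=S$ and to $\varnothing$ otherwise, and with $\|\forall p\varphi\|_{\mathcal{M}}=\bigcap\{\|\varphi\|_{\mathcal{M}'}\mid\mathcal{M}'\sim_p\mathcal{M}\}$. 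This is proved by induction on $\varphi$: persistence is immediate from the forcing clauses, refinability for $\neg,\wedge,\Box$ is routine, and the $\forall p$ case uses that $\mathcal{RO}(S,\sqsubseteq)$ is closed under arbitrary intersections (Theorem \ref{FirstThm}.\ref{FirstThm1}) --- this is the point at which fullness of the frame is essential, since it guarantees $\|\forall p\varphi\|_{\mathcal{M}}\in P$. One also needs a substitution lemma for propositional variables analogous to Lemma \ref{SubLem}, relating $\|\varphi^p_\psi\|_{\mathcal{M}}$ to $\|\varphi\|_{\mathcal{M}'}$ where $\pi'(p)=\|\psi\|_{\mathcal{M}}$. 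Granting these, soundness is routine: interpreting $\Box$ as the universal modality on the complete Boolean algebra $\mathcal{RO}(S,\sqsubseteq)$ validates the $\mathsf{S5}$ axioms and necessitation; universal distribution, vacuous quantification, and universal generalization follow directly from the meet characterization; and universal instantiation follows from the substitution lemma, with fullness again used so that the value of $\forall p\varphi$ lies in $P$ even when $\psi$ contains propositional quantifiers (cf.\ Remark \ref{OtherModels}).

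For completeness, suppose $\varphi$ is not a theorem of $\mathsf{S5}\Pi$. By the algebraic completeness theorem of \cite{Holliday2019}, there is a complete Boolean algebra $B$ and an assignment $v$ of elements of $B$ to the propositional variables such that, under the algebraic semantics of Remark \ref{PropQuantAlg} (value of $\Box\chi$ is $1$ if the value of $\chi$ is $1$ and $0$ otherwise; value of $\forall p\chi$ is the meet over all values of $p$ in $B$), the value of $\varphi$ is not $1$. By Theorem \ref{FirstThm}.\ref{FirstThm2}, the map $\iota\colon b\mapsto\mathord{\downarrow}_+b$ is an isomorphism of $B$ onto $\mathcal{RO}(B_+,\leq_+)$. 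Let $\mathcal{F}=(B_+,\leq_+,\mathcal{RO}(B_+,\leq_+))$, a full possibility frame, let $\pi(p)=\iota(v(p))$, and set $\mathcal{M}=(\mathcal{F},\pi)$. Using the semantic-value lemma, one checks by induction on $\chi$ that $\|\chi\|_{\mathcal{M}}=\iota(\llbracket\chi\rrbracket_v)$, where $\llbracket\chi\rrbracket_v$ is the algebraic value of $\chi$ under $v$: the Boolean cases use that $\iota$ is a Boolean isomorphism; the $\Box$ case uses that $\iota$ preserves $1$ and $0$; and the $\forall p$ case uses that $\mathcal{M}'\sim_p\mathcal{M}$ ranges over exactly the models assigning $p$ an arbitrary $U\in\mathcal{RO}(B_+,\leq_+)$, which under $\iota^{-1}$ is an arbitrary element of $B$, so that $\bigcap_{\mathcal{M}'\sim_p\mathcal{M}}\|\chi\|_{\mathcal{M}'}=\iota(\llbracket\forall p\chi\rrbracket_v)$. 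Hence $\|\varphi\|_{\mathcal{M}}\neq S$, so $\varphi$ is refuted on $\mathcal{F}$ and is therefore not in the logic of the class of all full possibility frames.

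The main obstacle is not any single deep step but assembling the bridging lemma correctly: one must verify that the possibility-semantic forcing relation on the specific full frame $\mathcal{RO}(B_+,\leq_+)$ agrees clause by clause with the algebraic semantics on $B$, with particular care for the propositional-quantifier clause --- the ``same frame'' condition built into $\mathcal{M}'\sim_p\mathcal{M}$ together with fullness is precisely what makes the range of admissible values of $p$ match the carrier of $B$ under the isomorphism. The substitution lemma for propositional variables and the attendant bookkeeping about substitutability are the other place where care is needed, but both follow the familiar pattern from Lemma \ref{SubLem}, and I do not expect the argument to require any idea beyond those already present in the excerpt.
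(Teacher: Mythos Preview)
Your proposal is correct and follows exactly the approach the paper itself sketches: the paragraph immediately preceding the theorem states that completeness holds ``in virtue of its completeness with respect to complete Boolean algebras, shown in \cite{Holliday2019}, and the fact that we can transform any complete Boolean algebra refuting a formula into a possibility frame refuting the formula using Theorem \ref{FirstThm}.\ref{FirstThm2}.'' You have simply made explicit the bridging lemma (that the forcing semantics on $\mathcal{RO}(B_+,\leq_+)$ matches the algebraic semantics on $B$ via the isomorphism) and the substitution lemma needed for universal instantiation, both of which are routine and implicit in the paper's one-sentence justification.
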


\begin{theorem}[\cite{Ding2018}]\label{S5PiExtComp} \textnormal{There are infinitely many normal extensions $\mathsf{L}$ of $\mathsf{S5}\Pi$ such that $\mathsf{L}$ is the logic of a class of full possibility frames but not of any class of full world frames.}
\end{theorem}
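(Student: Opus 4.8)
The plan is to reduce the statement to a fact about complete Boolean algebras and then exhibit the witnessing logics explicitly. By Theorem~\ref{FirstThm}.\ref{FirstThm1} and Remark~\ref{PropQuantAlg}, a full possibility frame $\mathcal F=(S,\sqsubseteq,\mathcal{RO}(S,\sqsubseteq))$ validates exactly those $\mathcal L\Pi$-sentences whose value is the top element of the complete Boolean algebra (CBA) $\mathcal{RO}(S,\sqsubseteq)$, and by Theorem~\ref{FirstThm}.\ref{FirstThm2} every CBA is (isomorphic to) such an $\mathcal{RO}(S,\sqsubseteq)$; a full \emph{world} frame corresponds in the same way to a complete and \emph{atomic} Boolean algebra (CABA), since $\mathcal{RO}(S,=)=\wp(S)$. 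So ``logic of a class of full possibility frames'' means ``logic of a class of CBAs'' and ``logic of a class of full world frames'' means ``logic of a class of CABAs.'' It therefore suffices to produce infinitely many pairwise distinct normal extensions of $\mathsf{S5}\Pi$, each the logic of a class of CBAs but of no class of CABAs.

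First I would record that $\mathcal L\Pi$ with $\Box$ the universal modality can talk about the structure of the ambient CBA: $\Box(\varphi\to\psi)$ has value $1$ iff $\|\varphi\|\le\|\psi\|$, and $\Diamond\varphi$ has value $1$ iff $\|\varphi\|\ne 0$. Hence there is a formula $\mathrm{at}(p):=\Diamond p\wedge\forall q\bigl(\Box(q\to p)\to(\Box\neg q\vee\Box(q\leftrightarrow p))\bigr)$ asserting ``$p$ names an atom''; from it one builds, for each $n\ge 1$, a sentence $\mathrm{At}_{=n}$ asserting ``there are exactly $n$ atoms'' (conjoin the statement that there are $n$ pairwise-distinct atoms with the negation of the corresponding statement for $n+1$), and a sentence $\mathrm{NonAt}:=\neg\forall p\bigl(\Diamond p\to\exists q(\mathrm{at}(q)\wedge\Box(q\to p))\bigr)$ asserting ``the algebra is not atomic.'' Put $\varphi_n:=\mathrm{At}_{=n}\wedge\mathrm{NonAt}$. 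One checks that a CBA $B$ validates $\varphi_n$ iff $B\cong 2^n\times C$ for a nontrivial atomless CBA $C$ (with $2^n$ the finite Boolean algebra with $n$ atoms): the $n$ atoms join to an element $e$ with $\mathord{\downarrow}e\cong 2^n$, the complement $\neg e$ carries no atoms and is nonzero precisely because $B$ is not atomic, and $\mathord{\downarrow}\neg e$ is then a nontrivial atomless CBA. Let $\mathsf K_n$ be the class of such $B$ and let $\mathsf L_n$ be the logic of $\mathsf K_n$. The class $\mathsf K_n$ is nonempty --- e.g.\ $2^n\times\mathcal{RO}(S,\sqsubseteq)$ for $(S,\sqsubseteq)$ the infinite binary tree of Example~\ref{BinaryTree} lies in it --- so $\mathsf L_n$ is consistent; and $\mathsf L_n$ is a normal extension of $\mathsf{S5}\Pi$, since it is the logic of a class of frames (hence closed under the relevant rules and substitution) and contains $\mathsf{S5}\Pi$ by Theorem~\ref{S5PiComp}. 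By Theorem~\ref{FirstThm}.\ref{FirstThm2}, $\mathsf L_n$ is the logic of the class of full possibility frames $(B_+,\le_+,\mathcal{RO}(B_+,\le_+))$ with $B\in\mathsf K_n$.

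It remains to check two things. The logics $\mathsf L_n$ are pairwise distinct: $\mathrm{At}_{=n}\in\mathsf L_n$ but $\mathrm{At}_{=n}\notin\mathsf L_m$ for $m\ne n$, because $\mathsf K_m$ contains a CBA with $m\ne n$ atoms, which refutes $\mathrm{At}_{=n}$. And $\mathsf L_n$ is not the logic of any class $\mathsf W$ of full world frames: if it were, then $\mathsf L_n$ would be the logic of the corresponding class of CABAs $\{\wp(S):(S,=,\wp(S))\in\mathsf W\}$; since $\mathsf L_n$ is consistent this class must contain a nontrivial CABA $B^\ast$ (an empty frame yields $\wp(\varnothing)$, whose logic is the inconsistent one and so does not constrain the intersection); but then $\varphi_n\in\mathsf L_n$ would force $B^\ast$ to validate $\mathrm{NonAt}$, contradicting that $B^\ast$, being a CABA, is atomic. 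Hence each $\mathsf L_n$ with $n\ge 1$ is as required, giving infinitely many.

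The only delicate step is the expressibility-and-structure claim at the heart of the second paragraph: verifying carefully that $\mathrm{at}(p)$, $\mathrm{At}_{=n}$, and $\mathrm{NonAt}$ really capture the intended algebraic conditions under the algebraic reading of $\mathcal L\Pi$ (with $\Box$ universal and the propositional quantifiers ranging over all of $\mathcal{RO}(S,\sqsubseteq)$), and that $\varphi_n$ pins down the isomorphism type $2^n\times C$. Everything downstream --- nonemptiness of $\mathsf K_n$, consistency, distinctness of the $\mathsf L_n$, and the exclusion of classes of CABAs --- is then routine.
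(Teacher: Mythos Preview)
Your argument is correct. The paper itself gives no proof of this theorem---it simply cites \cite{Ding2018}---so there is nothing in the text to compare your approach against; your reduction to CBAs versus CABAs via Theorem~\ref{FirstThm} and Remark~\ref{PropQuantAlg}, together with the $\mathcal{L}\Pi$-definability of ``exactly $n$ atoms'' and ``not atomic,'' cleanly delivers the infinitely many pairwise-distinct logics $\mathsf{L}_n$ with the required properties.

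Two small points worth tightening in a final write-up. First, you should be explicit that all the auxiliary formulas $\mathrm{at}(p)$, $\mathrm{At}_{=n}$, $\mathrm{NonAt}$ are $\{0,1\}$-valued in every CBA (because every subformula whose truth you care about is guarded by $\Box$ or $\Diamond$), so that ``$B$ validates $\varphi_n$'' really is the first-order condition on $B$ you claim. Second, the remark about ``an empty frame yields $\wp(\varnothing)$'' is unnecessary in this paper's setup: posets are nonempty by definition (Definition~\ref{PosFrameDef}), so every full world frame already gives a nontrivial CABA, and the consistency of $\mathsf{L}_n$ plus $\mathrm{NonAt}\in\mathsf{L}_n$ immediately rules out any nonempty class of such frames (and the empty class gives the inconsistent logic).
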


Below we will see that when we consider modal logics weaker than $\mathsf{S5}$, there are many open problems concerning propositional quantification.

\begin{remark} The approach to propositional quantification in the references above relies on the completeness of the Boolean algebra of propositions for the interpretation of $\forall$ and $\exists$. For a way of interpreting propositional quantification in possibly incomplete algebras, see \cite{H&L2016}.
\end{remark}

\subsection{Neighborhood frames}\label{NeighSection}

To go beyond semantics for the universal modality and cover all manner of other modalities, in this section we introduce the possibility semantic version of a very general style of modal semantics known as \textit{neighborhood semantics} \cite{Pacuit2017}, due to Scott \cite{Scott1970} and Montague \cite{Montague1970}. We give neighborhood possibility semantics for a language containing a family of modalities indexed by some nonempty set $I$. 

The grammar of the polymodal language $\mathcal{L}(I)$ is given by
\[\varphi::= p \mid \neg\varphi\mid (\varphi\wedge\varphi)\mid \Box_i\varphi\]
where $p\in\mathsf{Prop}$ and $i\in I$. $\mathcal{L}(I)\Pi$ adds the clause for $\forall p\varphi$. When $I$ is a singleton set, identify $\mathcal{L}$ from \S~\ref{PropQuantSection} with $\mathcal{L}(I)$.

The language $\mathcal{L}(I)$ has a straightforward interpretation in Boolean algebras equipped with functions interpreting the modalities.

\begin{definition}\label{BAEdef} A \textit{Boolean algebra expansion} (BAE) is a pair $\mathbb{B}=( B, \{f_i\}_{i\in I} )$ where $B$ is a Boolean algebra and $f_i:B\to B$. A \textit{valuation} on $\mathbb{B}$ is a function $\theta:\mathsf{Prop}\to B$, which extends to an \textit{$\mathcal{L}(I)$-valuation} $\tilde{\theta}:\mathcal{L}(I)\to B$ by: $\tilde{\theta}(p)=\theta(p)$; $\tilde{\theta}(\neg \varphi)=\neg \tilde{\theta}(\varphi)$; $\tilde{\theta}(\varphi\wedge\psi)=\tilde{\theta}(\varphi)\wedge \tilde{\theta}(\psi)$; and $\tilde{\theta}(\Box_i\varphi)=f_i(\tilde{\theta})$. We say $\mathbb{B}$ \textit{validates} a formula $\varphi$ if for every valuation $\theta$ on $\mathbb{B}$,  $\tilde{\theta}(\varphi)=1_B$ where $1_B$ is the top element of $B$; and a class of BAEs validates $\varphi$ if every BAE in the class validates $\varphi$.
\end{definition}

\noindent The set of formulas validated by any class of BAEs is a congruential modal logic.

\begin{definition}\label{CongLog} A \textit{congruential modal logic} for $\mathcal{L}(I)$  is a set $\mathsf{L}$ of formulas satisfying the following conditions:
\begin{enumerate}
\item $\mathsf{L}$ contains every tautology of classical propositional logic;
\item rule of uniform substitution: if $\varphi\in\mathsf{L}$ and $\varphi'$ is obtained from $\varphi$ by uniformly substituting formulas for propositional variables, then $\varphi'\in\mathsf{L}$;
\item rule of modus ponens: if $\varphi\to\psi\in\mathsf{L}$ and $\varphi\in\mathsf{L}$, then $\psi\in\mathsf{L}$;
\item congruence rule: if $\varphi\leftrightarrow\psi\in\mathsf{L}$, then $\Box_i\varphi\leftrightarrow\Box_i\psi\in\mathsf{L}$.
\end{enumerate}
\end{definition}

\subsubsection{Basic frames}\label{BasicNeighSection}

The first step toward neighborhood possibility semantics is to add to a poset a collection of neighborhood functions for interpreting the modalities $\Box_i$. Recall that in possible world semantics, a neighborhood frame  is a pair $(W,\{N_i\}_{i\in I})$ where $W$ is a nonempty set and $N_i$ is a function assigning to each $w\in W$ a set $N_i(w)$ of propositions, i.e., of subsets of $W$ (see \cite{Pacuit2017}). The modal operation $\Box_i$ is defined as follows: given a proposition $U$, a world $w$ belongs to the proposition $\Box_i U$ iff $U$ belongs to $N(w)$. Now we will replace $W$ by a poset of possibilities and require that $N_i$ assigns to each possibility a set of propositions in the sense of possibility semantics, i.e., regular open subsets of the poset.

\begin{definition} A \textit{neighborhood possibility foundation} is a triple \\ $F=( S,\sqsubseteq, \{N_i\}_{i\in I})$ such that:
\begin{enumerate}
\item $( S,\sqsubseteq)$ is a poset;
\item $N_i:S\to \wp(\mathcal{RO}(S,\sqsubseteq))$.
\end{enumerate}
For $i\in I$ and $U\in \mathcal{RO}(S,\sqsubseteq)$, we define 
\[\Box_{N_i}U=\{x\in S\mid U\in N_i(x)\}.\]
\end{definition}
These structures are not yet neighborhood possibility \textit{frames}, because without some interaction conditions relating $\sqsubseteq$ and $N_i$, there is no guarantee that $\Box_{N_i}U\in \mathcal{RO}(S,\sqsubseteq)$. The following conditions are necessary and sufficient to guarantee that for all $U\in \mathcal{RO}(S,\sqsubseteq)$, we have $\Box_{N_i}U\in \mathcal{RO}(S,\sqsubseteq)$:
\begin{itemize}
\item $\boldsymbol{N_i}$-\textbf{persistence}: if $x'\sqsubseteq x$, then $N_i(x')\supseteq N_i(x)$;
\item $\boldsymbol{N_i}$-\textbf{refinability}: if $U\not\in N_i(x)$, then $\exists x'\sqsubseteq x$ $\forall x''\sqsubseteq x'$ $U\not\in N_i(x'')$.
\end{itemize}

\begin{proposition}\label{NeighBoxClosure} \textnormal{For any foundation $F=( S,\sqsubseteq, \{N_i\}_{i\in I})$, the following are equivalent:
\begin{enumerate}
\item $\mathcal{RO}(S,\sqsubseteq)$ is closed under $\Box_{N_i}$;
\item\label{NeighBoxClosure2} $F$ satisfies  $\boldsymbol{N_i}$-\textbf{persistence} and $\boldsymbol{N_i}$-\textbf{refinability} for each $i\in I$.
\end{enumerate}}
\end{proposition}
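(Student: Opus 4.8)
The plan is to recognize $\boldsymbol{N_i}$-\textbf{persistence} and $\boldsymbol{N_i}$-\textbf{refinability} as exactly the pointwise reformulations of \textit{persistence} and \textit{refinability} for the set $\Box_{N_i}U$. Recall from the characterization of $\mathcal{RO}(S,\sqsubseteq)$ just below equation~(\ref{ROeq}) that $W\subseteq S$ belongs to $\mathcal{RO}(S,\sqsubseteq)$ iff $W$ is \textit{persistent} (a downset) and \textit{refinable}: if $x\notin W$, then there is $x'\sqsubseteq x$ with $x''\notin W$ for all $x''\sqsubseteq x'$. It is also worth recording at the outset that, by the definition of a neighborhood possibility foundation, $N_i$ takes values in $\wp(\mathcal{RO}(S,\sqsubseteq))$, so every member of any $N_i(x)$ is automatically regular open; hence to check closure of $\mathcal{RO}(S,\sqsubseteq)$ under $\Box_{N_i}$, and to verify the two interaction conditions, it suffices to consider regular open $U$.

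For the direction from condition~2 to condition~1, I would fix $i\in I$ and $U\in\mathcal{RO}(S,\sqsubseteq)$ and verify the two defining conditions for $\Box_{N_i}U\in\mathcal{RO}(S,\sqsubseteq)$. \textit{Persistence}: if $x\in\Box_{N_i}U$ and $x'\sqsubseteq x$, then $U\in N_i(x)$, so $U\in N_i(x')$ by $\boldsymbol{N_i}$-\textbf{persistence}, i.e.\ $x'\in\Box_{N_i}U$. \textit{Refinability}: if $x\notin\Box_{N_i}U$, then $U\notin N_i(x)$, so $\boldsymbol{N_i}$-\textbf{refinability} yields $x'\sqsubseteq x$ with $U\notin N_i(x'')$ for all $x''\sqsubseteq x'$, that is, $x''\notin\Box_{N_i}U$ for all $x''\sqsubseteq x'$. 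Hence $\Box_{N_i}U\in\mathcal{RO}(S,\sqsubseteq)$. (This direction in fact uses nothing about $U$ beyond its being a fixed subset of $S$.)

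For the direction from condition~1 to condition~2, I would fix $i\in I$ and use that $\Box_{N_i}U\in\mathcal{RO}(S,\sqsubseteq)$ whenever $U\in\mathcal{RO}(S,\sqsubseteq)$. For $\boldsymbol{N_i}$-\textbf{persistence}: given $x'\sqsubseteq x$ and $U\in N_i(x)$, note $U$ is regular open, so the downset $\Box_{N_i}U$ lies in $\mathcal{RO}(S,\sqsubseteq)$; since $x\in\Box_{N_i}U$ and $x'\sqsubseteq x$, we get $x'\in\Box_{N_i}U$, i.e.\ $U\in N_i(x')$. For $\boldsymbol{N_i}$-\textbf{refinability}: given regular open $U$ with $U\notin N_i(x)$ (recall only regular open $U$ need be considered), we have $x\notin\Box_{N_i}U$, so \textit{refinability} of the regular open set $\Box_{N_i}U$ furnishes $x'\sqsubseteq x$ with $x''\notin\Box_{N_i}U$, equivalently $U\notin N_i(x'')$, for all $x''\sqsubseteq x'$. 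I expect no genuine obstacle here; the only points demanding care are the nested-quantifier bookkeeping in \textit{refinability} and the observation that the conditions need only be tested on regular open inputs. I would present the argument for one fixed $i$ and note it is uniform across $i\in I$.
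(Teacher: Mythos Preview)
Your proposal is correct and follows exactly the approach the paper takes: the paper's one-sentence proof simply observes that persistence and refinability of $\Box_{N_i}U$ for all regular open $U$ are literally the conditions $\boldsymbol{N_i}$-\textbf{persistence} and $\boldsymbol{N_i}$-\textbf{refinability}, and you have unpacked this equivalence in both directions. Your remark that only regular open $U$ need be tested (since $N_i$ takes values in $\wp(\mathcal{RO}(S,\sqsubseteq))$) is a helpful clarification that the paper leaves implicit.
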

\begin{proof} The claim that for all $U\in \mathcal{RO}(S,\sqsubseteq)$, $\Box_{N_i}U$ satisfies persistence and refinability (recall \S~\ref{CompleteBooleanSection}) is clearly equivalent to $\boldsymbol{N_i}$-\textbf{persistence} and $\boldsymbol{N_i}$-\textbf{refinability}.\end{proof}

Now if we only wish to represent \textit{complete} BAEs, i.e., BAEs whose underlying Boolean algebras are complete, then there is no need for the distinguished family $P$ of admissible sets in a possibility frame (recall Definition \ref{PosFrameDef}), so it suffices to work with the following ``basic'' frames that simply expand posets with appropriate neighborhood functions.

\begin{definition}\label{NeighPossFrame} A \textit{basic neighborhood possibility frame} is a neighborhood possibility foundation $F=( S,\sqsubseteq, \{N_i\}_{i\in I})$ satisfying, for each $i\in I$, $\boldsymbol{N_i}$-\textbf{persistence} and $\boldsymbol{N_i}$-\textbf{refinability}.

A \textit{basic neighborhood world frame} is a neighborhood possibility foundation in which $\sqsubseteq$ is the identity relation.
\end{definition}

\begin{proposition}\label{BasicNeighToBAE} \textnormal{For any basic neighborhood possibility frame $F$, the pair $F^\mathsf{b}=( \mathcal{RO}(S,\sqsubseteq), \{\Box_{N_i}\}_{i\in I})$ is a complete BAE.}
\end{proposition}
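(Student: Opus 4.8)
The plan is to assemble the statement directly from two results already in hand. First I would invoke Theorem \ref{FirstThm}.\ref{FirstThm1}, which tells us that for the poset $(S,\sqsubseteq)$ underlying $F$, the collection $\mathcal{RO}(S,\sqsubseteq)$ ordered by inclusion is a \emph{complete} Boolean algebra, with meets given by intersection, complement given by $\neg$, and joins given by the double-negation of the union. So the ``underlying Boolean algebra is complete'' half of the claim is immediate, and all that remains is to check that the operations $\Box_{N_i}$ are genuinely unary operations on this algebra, i.e., that $\Box_{N_i}$ maps $\mathcal{RO}(S,\sqsubseteq)$ into $\mathcal{RO}(S,\sqsubseteq)$.

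Next I would unwind the definition of a basic neighborhood possibility frame (Definition \ref{NeighPossFrame}): $F=(S,\sqsubseteq,\{N_i\}_{i\in I})$ is by definition a neighborhood possibility foundation that satisfies $\boldsymbol{N_i}$-\textbf{persistence} and $\boldsymbol{N_i}$-\textbf{refinability} for every $i\in I$. By Proposition \ref{NeighBoxClosure}, these two conditions are equivalent to $\mathcal{RO}(S,\sqsubseteq)$ being closed under $\Box_{N_i}$ for each $i$. Hence each $\Box_{N_i}$ restricts to a function $\mathcal{RO}(S,\sqsubseteq)\to\mathcal{RO}(S,\sqsubseteq)$, and therefore the pair $F^\mathsf{b}=(\mathcal{RO}(S,\sqsubseteq),\{\Box_{N_i}\}_{i\in I})$ meets the definition of a Boolean algebra expansion (Definition \ref{BAEdef}); since the Boolean reduct is complete, it is a complete BAE.

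Strictly speaking there is essentially no obstacle here: the proposition is a bookkeeping consequence of Theorem \ref{FirstThm} and Proposition \ref{NeighBoxClosure}, and the one-line proof is just ``immediate from Theorem \ref{FirstThm}.\ref{FirstThm1} and Proposition \ref{NeighBoxClosure}.'' The only point worth spelling out, if anything, is the very mild one that $\Box_{N_i}U$ as defined equals $\{x\in S\mid U\in N_i(x)\}$ is \emph{a priori} just some subset of $S$, and it is exactly the content of $\boldsymbol{N_i}$-persistence and $\boldsymbol{N_i}$-refinability (via Proposition \ref{NeighBoxClosure}) that pins it down as regular open; so the hypothesis built into the word ``frame'' (as opposed to merely ``foundation'') is precisely what makes $F^\mathsf{b}$ well defined.
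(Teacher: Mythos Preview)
Your proposal is correct and matches the paper's proof exactly: the paper's entire proof is ``Follows from Theorem \ref{FirstThm}.\ref{FirstThm1} and Proposition \ref{NeighBoxClosure},'' which is precisely the two-step assembly you describe. Your additional remark about why the ``frame'' hypothesis (as opposed to ``foundation'') is what makes $F^\mathsf{b}$ well defined is a helpful gloss but not strictly needed.
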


\begin{proof} Follows from Theorem \ref{FirstThm}.\ref{FirstThm1} and Proposition \ref{NeighBoxClosure}.
\end{proof}

\begin{remark}\label{BasicNeighWorld}Basic neighborhood \textit{world} frames may be identified with the standard ``neighborhood frames'' of possible world semantics mentioned at the beginning of this section. The key point about these world frames is that they can realize only \textit{atomic} algebras as $F^\mathsf{b}$.\end{remark}

\begin{example}\label{ImpreciseEx} This example is inspired by the theory of \textit{imprecise probability} \cite{BradleySEP}. Fix a measurable space $(W,\Sigma)$, i.e., $W$ is a nonempty set and $\Sigma$ is an algebra of subsets of $W$ closed under countable unions. A set $\mathcal{P}$ of probability measures on $(W,\Sigma)$ is \textit{convex} if for all $\mu_1,\mu_2\in\mathcal{P}$ and $\alpha\in [0,1]$, we have $\alpha\mu_1+(1-\alpha)\mu_2\in\mathcal{P}$, where $\alpha\mu_1+(1-\alpha)\mu_2$ is the probability measure defined for each $A\in\Sigma$ by $(\alpha\mu_1+(1-\alpha)\mu_2)(A)=\alpha\mu_1(A)+(1-\alpha)\mu_2(A)$. We call a convex set $\mathcal{P}$ of measures \textit{everywhere imprecise} if for all $A\in\Sigma$, there are $\mu,\mu'\in\mathcal{P}$ with $\mu(A)\neq\mu'(A)$. Let $S$ be the set of all pairs $(w,\mathcal{P})$ where $w\in W$ and $\mathcal{P}$ is an everywhere imprecise convex set of measures on $(W,\Sigma)$. We think of $w$ as the state of the world and $\mathcal{P}$ as an imprecise representation of an agent's uncertainty, which could be further refined, though never to the point of providing precise subjective probabilities. Thus, let $(w',\mathcal{P}')\sqsubseteq (w,\mathcal{P})$ iff $w'=w$ and $\mathcal{P}'\subseteq\mathcal{P}$. Hence $(S,\sqsubseteq)$ is a poset. For $A\in\Sigma$, let $\psi(A)=\{(w,\mathcal{P})\in S\mid w\in A\}$.  Observe that $\psi$ is a Boolean embedding of $\Sigma$ into $\mathcal{RO}(S,\sqsubseteq)$. For each $r\in [0,1]$, we define a neighborhood function by
\[N_{\geq r}(w,\mathcal{P})=\{\psi(A)\mid \mbox{for all }\mu\in\mathcal{P},\mu(A)\geq r \}.\]
We claim that $N_{\geq r}$ satisfies the two conditions of Proposition \ref{NeighBoxClosure}.\ref{NeighBoxClosure2}:
\begin{itemize}
\item $\boldsymbol{N_{\geq r}}$-\textbf{persistence}: suppose $(w,\mathcal{P}')\sqsubseteq (w,\mathcal{P})$ and $\psi(A)\in N_{\geq r}(w,\mathcal{P})$. Hence for all $\mu\in\mathcal{P}$, we have $\mu(A)\geq r$. Since $(w,\mathcal{P}')\sqsubseteq (w,\mathcal{P})$, we have $\mathcal{P}'\subseteq\mathcal{P}$, so for all $\mu\in\mathcal{P}'$, we have $\mu(A)\geq r$. Thus, $\psi(A)\in N_{\geq r}(w,\mathcal{P}')$.
\item $\boldsymbol{N_{\geq r}}$-\textbf{refinability}: suppose $\psi(A)\not\in N_{\geq r}(w,\mathcal{P})$, so there is some $\mu\in\mathcal{P}$ such that $\mu(A)< r$. Then we claim that the set $\mathcal{P}'=\{\nu\in\mathcal{P}\mid \nu(A)<r\}$ is an everywhere imprecise convex set of probability measures. For convexity, if $\nu_1,\nu_2\in \mathcal{P}'$, then $\alpha\nu_1(A)+(1-\alpha)\nu_2(A)< \alpha r+ (1-\alpha)r = r$, so that $(\alpha\nu_1+(1-\alpha)\nu_2)(A)\in \mathcal{P}'$. For everywhere imprecision, since $\mathcal{P}$ is everywhere imprecise, for each $B\in\Sigma$, there are $\mu_1,\mu_2\in\mathcal{P}$ with $\mu_1(B)\neq\mu_2(B)$. Hence for some $i\in \{1,2\}$, $\mu(B)\neq \mu_i(B)$. Since $\mu(A)<r$, taking $\alpha$ sufficiently large but less than 1, $\alpha\mu(A)+(1-\alpha)\mu_i(A)<r$. Thus, $\alpha\mu+(1-\alpha)\mu_i\in \mathcal{P}'$. Moreover, $\mu_i(B)\neq \mu(B)$ and $\alpha<1$ together imply that $\mu(B)\neq \alpha\mu(B)+(1-\alpha)\mu_i(B)=(\alpha \mu+(1-\alpha)\mu_i)(B)$. It follows that $\mathcal{P}'$ is everywhere imprecise. Thus, $(w,\mathcal{P}')\in S$ and $(w,\mathcal{P}')\sqsubseteq (w,\mathcal{P})$. Moreover, for all $(w,\mathcal{P}'')\sqsubseteq (w,\mathcal{P}')$, $(w,\mathcal{P}'')\not\in N_{\geq r}(w,\mathcal{P}'')$. This establishes $\boldsymbol{N_{\geq r}}$-\textbf{refinability}.
\end{itemize}
Hence $(S,\sqsubseteq, \{N_{\geq r}\}_{r\in [0,1]})$ is a basic neighborhood possibility frame. The modality $\Box_{\geq r}$ associated with $N_{\geq r}$, applied to the image $\psi(A)$ of an event $A\in\Sigma$, expresses the proposition that the probability of $A$ is settled to be at least $r$.\footnote{This model does not attribute to the agent subjective probabilities for higher-order propositions about her own uncertainty, i.e., propositions in $\mathcal{RO}(S,\sqsubseteq)$ not of the form $\psi(A)$ for $A\in\Sigma$, but richer models representing uncertainty about one's own uncertainty can also be defined.} Similarly, we can define neighborhood functions $N_{\leq r}$ and modalities $\Box_{\leq r}$ for expressing that the probability of $A$ is settled to be at most $r$. For  logics with such modalities interpreted using possible world semantics, see \cite[\S~4.1]{Demey2019}.\end{example}

For a converse of Proposition \ref{BasicNeighToBAE}, we can go from any complete BAE to a basic neighborhood possibility frame by extending Theorems \ref{FirstThm}.\ref{FirstThm2}.

\begin{theorem}\label{BAEtoBasicNeigh} \textnormal{For any complete BAE $\mathbb{B}=(B,\{f_i\}_{i\in I})$, define 
\[\mathbb{B}_\mathsf{n}=(B_+,\leq_+,\{N_i\}_{i\in I})\]
where  $(B_+,\leq_+)$ is as in Theorem \ref{FirstThm}.\ref{FirstThm2} and for $x\in B_+$ and $i\in I$, 
\[N_i(x)=\{U\in \mathcal{RO}(B_+,\leq_+)\mid x\leq_+ f_i(\bigvee U)\}.\]
Then:
\begin{enumerate}
\item\label{BAEtoBasicNeigh1}  $\mathbb{B}_\mathsf{n}$ is a basic neighborhood possibility frame;
\item\label{BAEtoBasicNeigh2} $(\mathbb{B}_\mathsf{n})^\mathsf{b}$ is isomorphic to $\mathbb{B}$;
\item\label{BAEtoBasicNeigh3} a basic frame $F$ is isomorphic to $(F^\mathsf{b})_\mathsf{n}$ iff the underlying poset of $F$ is obtained from a complete Boolean lattice by deleting its bottom element.
\end{enumerate}}
\end{theorem}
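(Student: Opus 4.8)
The plan is to establish the three parts in order, with part~\ref{BAEtoBasicNeigh1} being essentially a verification, part~\ref{BAEtoBasicNeigh2} using the representation machinery of Theorem~\ref{FirstThm}.\ref{FirstThm2}, and part~\ref{BAEtoBasicNeigh3} combining part~\ref{BAEtoBasicNeigh2} with a characterization of when a basic frame's underlying poset is of the form $(B_+,\leq_+)$.

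For part~\ref{BAEtoBasicNeigh1}, by Proposition~\ref{NeighBoxClosure} it suffices to check $\boldsymbol{N_i}$-\textbf{persistence} and $\boldsymbol{N_i}$-\textbf{refinability} for the $N_i$ defined from $f_i$. Persistence is immediate: if $x'\leq_+ x$ and $x\leq_+ f_i(\bigvee U)$, then $x'\leq_+ f_i(\bigvee U)$ by transitivity, so $U\in N_i(x')$. For refinability, suppose $U\notin N_i(x)$, i.e.\ $x\not\leq_+ f_i(\bigvee U)$; set $x'=x\wedge\neg f_i(\bigvee U)$, which is nonzero, hence lies in $B_+$, and satisfies $x'\leq_+ x$. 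For any $x''\leq_+ x'$ we have $x''\leq_+ \neg f_i(\bigvee U)$, so $x''\not\leq_+ f_i(\bigvee U)$ (as $x''\neq 0$), whence $U\notin N_i(x'')$. This gives refinability, so $\mathbb{B}_\mathsf{n}$ is a basic neighborhood possibility frame.

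For part~\ref{BAEtoBasicNeigh2}, recall from Theorem~\ref{FirstThm}.\ref{FirstThm2} that the map $\varphi\colon B\to\mathcal{RO}(B_+,\leq_+)$ given by $\varphi(b)=\mathord{\downarrow}_+b$ is a Boolean isomorphism (since $\mathbb{B}$ is complete), and moreover it commutes with arbitrary joins, so $\bigvee\varphi(b)=\varphi(b)$ has join $b$ in $B$ — concretely, for $U\in\mathcal{RO}(B_+,\leq_+)$ we have $\varphi^{-1}(U)=\bigvee U$ (the join taken in $B$). I would show $\varphi$ is a BAE-isomorphism from $\mathbb{B}$ to $(\mathbb{B}_\mathsf{n})^\mathsf{b}=(\mathcal{RO}(B_+,\leq_+),\{\Box_{N_i}\}_{i\in I})$ by checking it intertwines $f_i$ with $\Box_{N_i}$. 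Unwinding the definitions: $\Box_{N_i}\varphi(b)=\{x\in B_+\mid \varphi(b)\in N_i(x)\}=\{x\in B_+\mid x\leq_+ f_i(\bigvee\varphi(b))\}=\{x\in B_+\mid x\leq_+ f_i(b)\}=\mathord{\downarrow}_+ f_i(b)=\varphi(f_i(b))$. Hence $\varphi$ is a BAE-isomorphism, giving part~\ref{BAEtoBasicNeigh2}.

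For part~\ref{BAEtoBasicNeigh3}, the right-to-left direction is immediate from part~\ref{BAEtoBasicNeigh1} applied to $\mathbb{B}=F^\mathsf{b}$: if $F=(S,\sqsubseteq,\{N_i\})$ and we already know $(S,\sqsubseteq)=(B_+,\leq_+)$ for the complete Boolean lattice $B$ (namely $B=F^\mathsf{b}$, which is complete by Proposition~\ref{BasicNeighToBAE}), then one checks that $(F^\mathsf{b})_\mathsf{n}$ has underlying poset $((F^\mathsf{b})_+,\leq_+)$, which is isomorphic to $(S,\sqsubseteq)$, and that the neighborhood functions agree under this identification — the latter because the isomorphism $\varphi$ of part~\ref{BAEtoBasicNeigh2} applied to $F^\mathsf{b}$ must send $N_i$-values to $N_i$-values, using that $\Box_{N_i}$ on $F^\mathsf{b}$ determines $N_i$ via $U\in N_i(x)\iff x\in\Box_{N_i}U$. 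For the left-to-right direction, suppose $F\cong(F^\mathsf{b})_\mathsf{n}$; then its underlying poset is isomorphic to $((F^\mathsf{b})_+,\leq_+)$, and $F^\mathsf{b}$ is a complete Boolean algebra (Proposition~\ref{BasicNeighToBAE}), so the underlying poset of $F$ is obtained from the complete Boolean lattice $F^\mathsf{b}$ by deleting its bottom element, as required.

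I expect the main obstacle to be part~\ref{BAEtoBasicNeigh3}, specifically making precise and verifying that an isomorphism of the underlying posets automatically lifts to an isomorphism of the frames \emph{with} their neighborhood structure — one must argue that $N_i$ is recoverable from the poset together with $\Box_{N_i}$ on $\mathcal{RO}(S,\sqsubseteq)$ (via $U\in N_i(x)\iff x\in\Box_{N_i}U$) and that this recovery is natural with respect to poset isomorphisms, closing the loop back to the canonical construction $(\cdot)_\mathsf{n}$. The key identity $f_i(\bigvee U)=\bigvee\Box_{N_i}U$ in the complete algebra, which holds because $\Box_{N_i}U$ is the largest element below $f_i(\bigvee U)$ in $B_+$, is what makes $N_i$ reconstructible; once that is in place the argument is bookkeeping.
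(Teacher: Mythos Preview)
Your proposal is correct and follows essentially the same approach as the paper: parts~\ref{BAEtoBasicNeigh1} and~\ref{BAEtoBasicNeigh2} match the paper's proof almost verbatim (same refinability witness $x'=x\wedge\neg f_i(\bigvee U)$, same isomorphism $\varphi(b)=\mathord{\downarrow}_+b$, same calculation $\Box_{N_i}\varphi(b)=\varphi(f_i(b))$). For part~\ref{BAEtoBasicNeigh3} the paper is slightly more explicit---it names the poset isomorphism $\psi(x)=\mathord{\downarrow}x$ and writes out the chain $U\in N_i^F(x)\Leftrightarrow \mathord{\downarrow}x\subseteq\Box_{N_i}U\Leftrightarrow \mathord{\downarrow}x\leq_+\Box_{N_i}(\bigvee\psi[U])\Leftrightarrow \psi[U]\in N_i^{(F^\mathsf{b})_\mathsf{n}}(\psi(x))$---whereas your last paragraph gestures at this via the recoverability $U\in N_i(x)\Leftrightarrow x\in\Box_{N_i}U$; but the content is the same, and your anticipated ``obstacle'' is exactly the step the paper spells out.
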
 

\begin{proof} For part \ref{BAEtoBasicNeigh1}, $\boldsymbol{N_i}$-\textbf{persistence} is immediate from the definition of $N_i$. For $\boldsymbol{N_i}$-\textbf{refinability}, if $U\not\in N_i(x)$, so $x\not \leq_+ f_i(\bigvee U)$, then where $x'= x\wedge \neg f_i(\bigvee U)$, we have that for all $x''\leq_+ x'$, $x''\not \leq_+ f_i(\bigvee U)$, so $U\not\in N_i(x'')$. The proof of part \ref{BAEtoBasicNeigh2} extends that of Theorem \ref{FirstThm}.\ref{FirstThm2}, using the map $\varphi: B\to \mathcal{RO}(B_+,\leq_+)$ given by $\varphi(b)=\mathord{\downarrow}_+b:=\{b'\in B_+\mid b'\leq_+b\}$. Observe that 
\begin{eqnarray*} 
\mathord{\downarrow}_+ f_i (b) &=& \{b'\in B_+\mid b'\leq_+ f_i (b)\} \\
 &=&  \{b'\in B_+\mid b'\leq_+ f_i (\bigvee \mathord{\downarrow}_+ b)\}  \\
 &=& \{b'\in B_+\mid \mathord{\downarrow}_+ b\in N_i(b')\} \\
 &=& \Box_{N_i} \mathord{\downarrow}_+ b.
\end{eqnarray*}
For part \ref{BAEtoBasicNeigh3}, the left-to-right direction is by the definition of  $(\cdot)_\mathsf{n}$. For the right-to-left direction, the poset reduct of $F=(S,\sqsubseteq, \{N_i\}_{i\in I})$ is isomorphic to the poset reduct of $(F^\mathsf{b})_\mathsf{n}$ by Theorem \ref{FirstThm}.\ref{FirstThm2}. The isomorphism $\psi: S\to \mathcal{RO}(S,\sqsubseteq)_+$ is given by $\psi(x)=\mathord{\downarrow}x:=\{x'\in S\mid x'\sqsubseteq x\}$. Finally, we claim that for all $U\in \mathcal{RO}(S,\sqsubseteq)$, we have $U\in N_i^F(x)$ iff $\psi[U]\in N_i^{(F^\mathsf{b})_\mathsf{n}}(\psi(x))$.  Indeed, $U\in N_i^{F}(x)$ iff $\mathord{\downarrow}x\subseteq \Box_i U$ iff $\mathord{\downarrow}x\leq_+^{F^\mathsf{b}} \Box_i (\bigvee_{F^\mathsf{b}} \psi[U])$ iff $\psi[U]\in N_i^{(F^\mathsf{b})_\mathsf{n}}(\psi(x))$.\end{proof}

When constructing neighborhood frames, we will typically not build frames satisfying the condition of part \ref{BAEtoBasicNeigh3}. The whole point of using neighborhood possibility frames, as opposed to just working with complete BAEs, is that we can use simple posets---such as trees---equipped with neighborhood functions to realize a BAE that may be less intuitive to define algebraically and reason about directly. 

If we already know how to realize a complete Boolean algebra $B$ as the regular open sets of a nice poset, then we can apply the following.

\begin{proposition}\label{NeighDef} \textnormal{Suppose $B$ is a complete Boolean algebra isomorphic to  \\$\mathcal{RO}(S,\sqsubseteq)$  for some poset $(S,\sqsubseteq)$. Then for any BAE $\mathbb{B}=(B,\{f_i\}_{i\in I})$, there is a basic neighborhood possibility frame $\mathbb{B}_{(S,\sqsubseteq)}=(S,\sqsubseteq, \{N_i\}_{i\in I})$ such that $(\mathbb{B}_{(S,\sqsubseteq)})^\mathsf{b}$ is isomorphic to $\mathbb{B}$.}
\end{proposition}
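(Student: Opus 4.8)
The plan is to realize the claimed frame by a transport-of-structure argument that mimics the neighborhood construction of Theorem~\ref{BAEtoBasicNeigh}. Fix an isomorphism $\eta\colon B\to\mathcal{RO}(S,\sqsubseteq)$ of Boolean algebras, with inverse $\eta^{-1}$. For each $i\in I$ set $g_i=\eta\circ f_i\circ\eta^{-1}\colon\mathcal{RO}(S,\sqsubseteq)\to\mathcal{RO}(S,\sqsubseteq)$, so that $(\mathcal{RO}(S,\sqsubseteq),\{g_i\}_{i\in I})$ is a complete BAE and $\eta$ is an isomorphism from $\mathbb{B}$ onto it. Then define, for $x\in S$ and $i\in I$,
\[N_i(x)=\{U\in\mathcal{RO}(S,\sqsubseteq)\mid x\in g_i(U)\},\]
and put $\mathbb{B}_{(S,\sqsubseteq)}=(S,\sqsubseteq,\{N_i\}_{i\in I})$.

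First I would check that $\mathbb{B}_{(S,\sqsubseteq)}$ is a basic neighborhood possibility frame. By the definition of $\Box_{N_i}$ we have $\Box_{N_i}U=\{x\in S\mid U\in N_i(x)\}=\{x\in S\mid x\in g_i(U)\}=g_i(U)$, which lies in $\mathcal{RO}(S,\sqsubseteq)$ since $g_i$ maps into $\mathcal{RO}(S,\sqsubseteq)$. Thus $\mathcal{RO}(S,\sqsubseteq)$ is closed under $\Box_{N_i}$ for every $i\in I$, and Proposition~\ref{NeighBoxClosure} then yields $\boldsymbol{N_i}$-\textbf{persistence} and $\boldsymbol{N_i}$-\textbf{refinability} for each $i$, so $\mathbb{B}_{(S,\sqsubseteq)}$ meets the requirements of Definition~\ref{NeighPossFrame}.

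Next I would identify the associated BAE. By Proposition~\ref{BasicNeighToBAE}, $(\mathbb{B}_{(S,\sqsubseteq)})^\mathsf{b}=(\mathcal{RO}(S,\sqsubseteq),\{\Box_{N_i}\}_{i\in I})$, and the computation above gives $\Box_{N_i}=g_i=\eta\circ f_i\circ\eta^{-1}$ for each $i$. Hence $\eta$ is at once a Boolean isomorphism and commutes with all the modal operations, so it is an isomorphism of BAEs from $\mathbb{B}=(B,\{f_i\}_{i\in I})$ onto $(\mathbb{B}_{(S,\sqsubseteq)})^\mathsf{b}$, which is exactly what is claimed.

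There is essentially no obstacle: the result is pure transport of structure, generalizing Theorem~\ref{BAEtoBasicNeigh} by replacing the canonical realization $\mathcal{RO}(B_+,\leq_+)$ (in which a regular open $U$ of $B_+$ is coded by the element $\bigvee U$ of $B$, which is why that join appears in the $N_i$ there) by an arbitrary realization $\mathcal{RO}(S,\sqsubseteq)$ (in which $U$ is already an element of the algebra, so no join is needed). The only point requiring a line of care is that $g_i$ really maps into $\mathcal{RO}(S,\sqsubseteq)$ — immediate, being a composite of $\eta$, $f_i$, $\eta^{-1}$ — and it is precisely this fact that, via Proposition~\ref{NeighBoxClosure}, delivers the persistence and refinability conditions.
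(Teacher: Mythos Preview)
Your proof is correct and is essentially identical to the paper's: the paper likewise fixes an isomorphism between $B$ and $\mathcal{RO}(S,\sqsubseteq)$ and sets $Z\in N_i(x)$ iff $x\in\sigma^{-1}(f_i(\sigma(Z)))$, which is exactly your $x\in g_i(Z)$. Your explicit appeal to Proposition~\ref{NeighBoxClosure} to obtain persistence and refinability is a slight elaboration of the paper's ``easy to check,'' but the argument is the same.
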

\begin{proof}We use an isomorphism $\sigma$ from  $\mathcal{RO}(S,\sqsubseteq)$ to $B$ to define neighborhood functions on $S$: for any $x\in S$ and $Z\in \mathcal{RO}(S,\sqsubseteq)$, set \[\mbox{$Z\in N_i(x)$ iff $x\in \sigma^{-1}(f_i (\sigma(Z)))$.}\]
Then it is easy to check that $\boldsymbol{N_i}$-\textbf{persistence} and $\boldsymbol{N_i}$-\textbf{refinability} hold, and $(S,\sqsubseteq,\{N_i\}_{i\in I})^\mathsf{b}$ is isomorphic to $(B,\{f_i\}_{i\in I})$.\end{proof}

\begin{remark} Theorem \ref{BAEtoBasicNeigh} can be extended to an obvious categorical duality when morphisms are defined (see Definition \ref{NeighMorph}), but we omit the details. See \S~\ref{DualEquiv} for a more interesting duality for relational possibility frames. 
\end{remark}

The term ``neighborhood frame'' comes from the topological notion of a \textit{neighborhood system} on a set $S$, which is a function $N:S\to\wp(\wp(S))$ satisfying the following conditions for all $x\in S$ and $U,V\subseteq S$ \cite[p.~56]{Kelley1975}:
\begin{enumerate}
\item filter condition: $N(x)\neq\varnothing$, and $U\cap V\in N(x)$ iff $U\in N(x)$ and $V\in N(x)$;
\item neighborhood of a point condition: if $U\in N(x)$, then $x\in U$;
\item open neighborhood condition: for each $U\in N(x)$, there is a $V\in N(x)$ such that $V\subseteq U$ and for all $y\in V$, $V\in N(y)$.
\end{enumerate}
Recall the correspondence between neighborhood systems on a set and topological spaces on a set: given a neighborhood system, define an open set to be a set which is a neighborhood of each of its points; given a topological space, define a neighborhood of a point to be a superset of an open set containing the point.

\begin{definition} A \textit{neighborhood possibility system} is a basic neighborhood possibility frame $\mathcal{F}=(S,\sqsubseteq,N)$ such that $N$ satisfies conditions 1, 2, and 3 of a neighborhood system for every $U,V\in \mathcal{RO}(S,\sqsubseteq)$.
\end{definition}

Recall that an \textit{interior algebra} \cite{MT44,RS63,Blok1976,Esakia85} is a pair $(B,\Box)$ where $B$ is a Boolean algebra and $\Box$ is an operation on $B$ such that for all $a,b\in B$:
\[\Box a\wedge \Box b = \Box(a\wedge b),\Box a\leq a, \Box a\leq \Box\Box a.\]
Recall the correspondence between topological spaces and \textit{complete and atomic} interior algebras, i.e., whose underlying BA $B$ is complete and atomic: given a topological space $(S,\tau)$, we form a complete and atomic interior algebra $(\wp(S),\mathsf{int})$ where $\mathsf{int}$ is the interior operation in the space $(S,\tau)$; given a complete and atomic interior algebra, we represent the BA as a powerset and define a topology on the underlying set by taking the opens to be the fixpoints of the interior operation $\Box$.

By contrast, neighborhood possibility systems correspond to \textit{complete} interior algebras, i.e., whose underlying BA $B$ is complete but not necessarily atomic.

\begin{proposition} \textnormal{For any basic neighborhood possibility frame \\ $F=(S,\sqsubseteq,N)$, the following are equivalent:
\begin{enumerate}
\item $F$ is a neighborhood possibility system;
\item $F^\mathsf{b}$ is a complete interior algebra.
\end{enumerate}
Moreover, for any complete interior algebra $\mathbb{B}=(B,\Box)$, $\mathbb{B}_\mathsf{n}$ is a neighborhood possibility system.}
\end{proposition}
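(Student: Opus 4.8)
The plan is to prove the three-way claim in stages, exploiting the correspondence already set up between basic neighborhood possibility frames and complete BAEs via Proposition \ref{BasicNeighToBAE} and Theorem \ref{BAEtoBasicNeigh}, together with the defining conditions of a neighborhood possibility system.

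For the equivalence $1 \Leftrightarrow 2$, I would simply transcribe the three conditions of a neighborhood system---read off $\mathcal{RO}(S,\sqsubseteq)$---into the algebraic language of $F^\mathsf{b} = (\mathcal{RO}(S,\sqsubseteq), \Box_N)$, using $\Box_N U = \{x \mid U \in N(x)\}$. The filter condition says $N(x)$ is a proper filter in $\mathcal{RO}(S,\sqsubseteq)$ for each $x$; translated pointwise this says $\Box_N \top = \top$ (from $N(x) \neq \varnothing$ together with upward closure, or more directly since the top element lies in every filter) and $\Box_N(U \wedge V) = \Box_N U \wedge \Box_N V$ (from $U \cap V \in N(x) \Leftrightarrow U \in N(x) \text{ and } V \in N(x)$, recalling that $\wedge$ is $\cap$ in $\mathcal{RO}(S,\sqsubseteq)$). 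Wait---one must check that closure under finite meets plus $N(x) \neq \varnothing$ for a family of \emph{regular open} sets really does force $\Box_N\top$ and the multiplicativity without assuming upward closure; here I would note that a neighborhood system's filter condition as stated (with the ``iff'') plus condition 2 already pin this down, and in any case $\Box a \wedge \Box b = \Box(a \wedge b)$ is exactly what is wanted. The neighborhood-of-a-point condition ($U \in N(x) \Rightarrow x \in U$) translates to $\Box_N U \subseteq U$, i.e.\ $\Box a \leq a$. The open neighborhood condition translates to $\Box_N U \subseteq \Box_N \Box_N U$, i.e.\ $\Box a \leq \Box\Box a$: given $x \in \Box_N U$, so $U \in N(x)$, condition 3 yields $V \in N(x)$ with $V \subseteq U$ and $V \in N(y)$ for all $y \in V$; since $V \subseteq U$ we get $V \subseteq \Box_N U$, and $V \in N(x)$ with $\Box_N U \supseteq V$ gives $\Box_N U \in N(x)$ by upward closure, i.e.\ $x \in \Box_N \Box_N U$. (The reverse translation---from the interior-algebra axioms back to conditions 1--3---is the same computation read backwards, taking $V := \Box_N U$ as the witness for condition 3.) So $F^\mathsf{b}$ is a complete interior algebra iff $F$ is a neighborhood possibility system, and completeness of the Boolean reduct is free by Theorem \ref{FirstThm}.\ref{FirstThm1}.

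For the ``Moreover'' clause, given a complete interior algebra $\mathbb{B} = (B, \Box)$, I would apply Theorem \ref{BAEtoBasicNeigh} to get the basic neighborhood possibility frame $\mathbb{B}_\mathsf{n} = (B_+, \leq_+, N)$ with $(\mathbb{B}_\mathsf{n})^\mathsf{b} \cong \mathbb{B}$ (Theorem \ref{BAEtoBasicNeigh}.\ref{BAEtoBasicNeigh2}), and then invoke the equivalence $1 \Leftrightarrow 2$ just proved: since $(\mathbb{B}_\mathsf{n})^\mathsf{b}$ is (isomorphic to) a complete interior algebra, $\mathbb{B}_\mathsf{n}$ is a neighborhood possibility system.

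The main obstacle I anticipate is the direction ``open neighborhood condition $\Rightarrow$ $\Box a \leq \Box\Box a$'' and, more subtly, making sure the filter condition as stated for a \emph{neighborhood system} is strong enough---and not too strong---to match exactly $\Box a \wedge \Box b = \Box(a \wedge b)$ without smuggling in extra facts about $N$ being upward closed on all of $\mathcal{RO}(S,\sqsubseteq)$. The cleanest route is to verify that upward closure of $N(x)$ (within $\mathcal{RO}(S,\sqsubseteq)$) is a derived consequence of conditions 1 and 3: if $U \in N(x)$ and $U \subseteq W$ with $W$ regular open, condition 3 gives $V \in N(x)$ with $V \subseteq U \subseteq W$ and $V$ ``open'' in the relevant sense, whence $W \supseteq V$ forces $W \in N(x)$ via the filter condition $W \cap V = V \in N(x) \Leftrightarrow W \in N(x)$. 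Everything else is routine bookkeeping with the dictionary $\wedge = \cap$, $\neg = $ pseudocomplement in $\mathcal{RO}$, and the identifications of Theorem \ref{FirstThm}.\ref{FirstThm1}.
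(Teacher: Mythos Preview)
Your proposal is correct and takes essentially the same approach as the paper: the paper's proof sketch simply asserts that condition~$i$ of a neighborhood possibility system corresponds to the $i$-th defining equation of an interior algebra, and you flesh out precisely those three translations (filter $\leftrightarrow$ multiplicativity, point condition $\leftrightarrow \Box a\leq a$, open-neighborhood condition $\leftrightarrow \Box a\leq\Box\Box a$), then handle the ``Moreover'' clause via Theorem~\ref{BAEtoBasicNeigh}.\ref{BAEtoBasicNeigh2} and the equivalence just established.

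One minor simplification: your detour through condition~3 to derive upward closure of $N(x)$ is unnecessary. Upward closure follows directly from the filter condition's ``iff'': if $U\in N(x)$ and $U\subseteq W$ with $W\in\mathcal{RO}(S,\sqsubseteq)$, then $U\cap W=U\in N(x)$, and the filter condition says $U\cap W\in N(x)\Leftrightarrow U\in N(x)\text{ and }W\in N(x)$, so $W\in N(x)$. This also disposes of your later worry about whether the filter condition is ``strong enough'' without smuggling in upward closure.
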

\begin{proof}[Proof sketch.] It is easy to check that $F$ (resp.~$\mathbb{B}_\mathsf{n}$)  satisfying the first (resp. second, third) condition of neighborhood possibility systems corresponds to $F^\mathsf{b}$ (resp.~$\mathbb{B}$) satisfying the first (resp.~second third) defining equation of interior algebras.\end{proof}

\noindent Thus, neighborhood possibility systems can be regarded as a generalization of neighborhood systems and hence as a generalization of topological spaces (for investigation of interior algebras that are not necessarily atomic, see \cite{Naturman1990}). This generalization of topological spaces can be compared with another generalization of topological spaces from pointfree topology \cite{Picado2012}: \textit{locales}, defined as complete lattices in which binary meet distributes over arbitrary joins (see \S~\ref{IntuitionisticSection} for another characterization of locales as complete Heyting algebras). The set $\{a\in B\mid a=\Box a\}$ of ``open elements'' of any complete interior algebra $(B,\Box)$ forms a locale \cite{MT46}, and every locale can be represented in this way,\footnote{It is stated in \cite[p.~318]{Gabbay2009} as open whether every locale can be represented as the locale of open elements of a complete interior algebra. However, as Guram Bezhanishvili (personal communication) observed, if $L$ is a locale, then Funayama's Theorem (see, e.g., \cite{Bezhanishvili2013}) yields a localic $(\wedge,\bigvee)$-embedding of $L$ into a complete Boolean algebra $B$.  Then the image of $L$ is a relatively complete sublattice of $B$, so it defines an interior operation $\Box$ on $B$ whose algebra of fixpoints is isomorphic to $L$. Thus, $L$ is isomorphic to the algebra of open elements of the complete interior algebra $(B,\Box)$.} it follows that the set \[\{U\in\mathcal{RO}(S,\sqsubseteq)\mid U=\Box_{N} U\}\] of ``open sets'' of any neighborhood possibility system forms a locale, and every locale can be represented in this way.
\begin{theorem} \textnormal{A lattice $L$ is a locale iff $L$ is isomorphic to the lattice of open sets of a neighborhood possibility system.}
\end{theorem}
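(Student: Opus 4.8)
The plan is to stitch together the correspondences already in hand, so that the theorem becomes essentially a repackaging of the proposition immediately preceding it together with the Funayama-based representation of locales.

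First I would do the ``if'' direction. Assume $L$ is isomorphic to the lattice of open sets of a neighborhood possibility system $F=(S,\sqsubseteq,N)$. By the preceding proposition, $F^{\mathsf{b}}=(\mathcal{RO}(S,\sqsubseteq),\Box_N)$ is a complete interior algebra, and its set of open elements is precisely $\{U\in\mathcal{RO}(S,\sqsubseteq)\mid U=\Box_N U\}$, i.e., the lattice of open sets of $F$. Since the open elements of any complete interior algebra form a locale (the result cited above, going back to \cite{MT46}), $L$ is a locale.

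Next the ``only if'' direction. Assume $L$ is a locale. By Funayama's Theorem, as in the footnote above, fix a complete interior algebra $\mathbb{B}=(B,\Box)$ whose algebra of open elements is isomorphic to $L$. Apply the construction $\mathbb{B}\mapsto\mathbb{B}_\mathsf{n}$ of Theorem \ref{BAEtoBasicNeigh}, viewing $\mathbb{B}$ as a complete BAE with the single modality $\Box$. By the ``Moreover'' clause of the preceding proposition, $\mathbb{B}_\mathsf{n}$ is a neighborhood possibility system, and by Theorem \ref{BAEtoBasicNeigh}.\ref{BAEtoBasicNeigh2}, $(\mathbb{B}_\mathsf{n})^{\mathsf{b}}$ is isomorphic to $\mathbb{B}$ as interior algebras. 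Such an isomorphism commutes with $\Box$, hence carries fixpoints bijectively to fixpoints; so the lattice of open sets of $\mathbb{B}_\mathsf{n}$ is isomorphic to the lattice of open elements of $\mathbb{B}$, which is isomorphic to $L$. Thus $L$ is isomorphic to the lattice of open sets of the neighborhood possibility system $\mathbb{B}_\mathsf{n}$.

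The only thing to handle with a little care is that the ``lattice of open sets'' of a neighborhood possibility system and the ``lattice of open elements'' of a complete interior algebra each carry the order inherited from the ambient regular open (resp.\ Boolean) algebra, and that joins in these sublattices are not the ambient joins but $\Box_N$ (resp.\ $\Box$) applied to the ambient join, while meets coincide with the ambient meets because $\Box_N U\cap\Box_N V=\Box_N(U\cap V)$. Since an isomorphism of interior algebras is in particular an order-isomorphism that carries fixpoints bijectively to fixpoints, it restricts to an order-isomorphism of the fixpoint posets; as ``being a locale'' is a property of the underlying complete lattice order, locale-hood transfers along the isomorphisms $(\mathbb{B}_\mathsf{n})^{\mathsf{b}}\cong\mathbb{B}$ and $(\text{open elements of }\mathbb{B})\cong L$. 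I do not expect a genuine obstacle beyond this bookkeeping; the content of the theorem is already carried by the preceding proposition and the representation of locales via complete interior algebras.
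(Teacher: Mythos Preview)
Your proposal is correct and matches the paper's approach exactly: the paper does not give a separate proof of this theorem but states it as a summary of the preceding paragraph, which invokes precisely the ingredients you use (the preceding proposition linking neighborhood possibility systems to complete interior algebras, the fact from \cite{MT46} that open elements of a complete interior algebra form a locale, and the Funayama-based representation of any locale as the open elements of a complete interior algebra, combined with the $(\cdot)_\mathsf{n}$ construction). Your write-up simply spells out the bookkeeping that the paper leaves implicit.
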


In order to discuss logics that are incomplete with respect to neighborhood world semantics but complete with respect to neighborhood possibility semantics, let us give the official formal semantics for  $\mathcal{L}(I)$.

\begin{definition} Given a basic neighborhood frame $F=(S,\sqsubseteq, \{N_i\}_{i\in I})$, a \textit{neighborhood possibility model based on $F$} is a pair $M=(F,\pi)$ where $\pi: \mathsf{Prop}\to \mathcal{RO}(S,\sqsubseteq)$. For $\varphi\in\mathcal{L}(I)\Pi$ and $x\in S$, we define $M,x\Vdash \varphi$ and $\llbracket \varphi\rrbracket^M=\{y\in S\mid M,y\Vdash\varphi\}$  as in Definition \ref{PossForcing1} but with the following new clause for $\Box_i$:
\begin{itemize}
\item $M,x\Vdash \Box_i\varphi$ iff $\llbracket \varphi\rrbracket^M\in N_i(x)$.
\end{itemize}
A formula $\varphi$ is \textit{valid} on the frame $F$ if for every model $M$ based on $F$ and every $x\in S$, $M,x\Vdash \varphi$; and $\varphi$ is \textit{valid on a class $\mathsf{K}$} of frames if it is valid on every frame in $\mathsf{K}$. The $\mathcal{L}(I)$-\textit{logic} (resp.~$\mathcal{L}(I)\Pi$-\textit{logic}) \textit{of} $\mathsf{K}$ is the set of all formulas valid on $\mathsf{K}$.\end{definition}

\begin{proposition} \textnormal{The $\mathcal{L}(I)$-logic of any class of basic neighborhood possibility frames is a congruential modal logic.}
\end{proposition}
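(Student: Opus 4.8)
The plan is to reduce the statement to the algebraic observation recorded just after Definition~\ref{BAEdef}, namely that the set of $\mathcal{L}(I)$-formulas validated by any class of BAEs is a congruential modal logic. The bridge is Proposition~\ref{BasicNeighToBAE}: to each basic neighborhood possibility frame $F=(S,\sqsubseteq,\{N_i\}_{i\in I})$ it associates the complete BAE $F^\mathsf{b}=(\mathcal{RO}(S,\sqsubseteq),\{\Box_{N_i}\}_{i\in I})$. Observe that a model $M=(F,\pi)$ based on $F$ is literally the same datum as a valuation $\theta=\pi\colon\mathsf{Prop}\to\mathcal{RO}(S,\sqsubseteq)$ on $F^\mathsf{b}$, so models based on $F$ and valuations on $F^\mathsf{b}$ correspond bijectively.

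First I would prove a translation lemma by induction on $\varphi\in\mathcal{L}(I)$: for every model $M=(F,\pi)$, setting $\theta=\pi$, we have $\llbracket\varphi\rrbracket^M=\tilde\theta(\varphi)$, where $\llbracket\varphi\rrbracket^M=\{x\in S\mid M,x\Vdash\varphi\}$ and $\tilde\theta$ is the $\mathcal{L}(I)$-valuation extending $\theta$ as in Definition~\ref{BAEdef}. The atomic case is the definition of $\pi$; the cases for $\neg$ and $\wedge$ hold because the operations $\neg$ and $\wedge$ of the Boolean algebra $\mathcal{RO}(S,\sqsubseteq)$ supplied by Theorem~\ref{FirstThm}.\ref{FirstThm1} are exactly the ones appearing in the forcing clauses of Definition~\ref{PossForcing1}; and the case for $\Box_i$ holds because $M,x\Vdash\Box_i\varphi$ iff $\llbracket\varphi\rrbracket^M\in N_i(x)$ iff $x\in\Box_{N_i}\llbracket\varphi\rrbracket^M$, which by the inductive hypothesis and the definition of $f_i:=\Box_{N_i}$ equals $\tilde\theta(\Box_i\varphi)$. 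Here Proposition~\ref{NeighBoxClosure} (equivalently, the fact that $F$ is a \emph{basic} neighborhood possibility frame) is what guarantees that $\Box_{N_i}$ really is an operation on $\mathcal{RO}(S,\sqsubseteq)$, so that $F^\mathsf{b}$ is a genuine BAE and $\tilde\theta$ is well defined.

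From the translation lemma it follows that, for each $F$, $\varphi$ is valid on $F$ iff $\llbracket\varphi\rrbracket^M=S$ for every model $M$ based on $F$ iff $\tilde\theta(\varphi)=1_{F^\mathsf{b}}$ for every valuation $\theta$ on $F^\mathsf{b}$ (using that $S$ is the top element of $\mathcal{RO}(S,\sqsubseteq)$), i.e., iff $F^\mathsf{b}$ validates $\varphi$. Hence for any class $\mathsf{K}$ of basic neighborhood possibility frames, the $\mathcal{L}(I)$-logic of $\mathsf{K}$ coincides with the set of formulas validated by the class $\{F^\mathsf{b}\mid F\in\mathsf{K}\}$ of complete BAEs, which is a congruential modal logic by the remark following Definition~\ref{BAEdef}. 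If one prefers a self-contained argument, the four clauses of Definition~\ref{CongLog} can be checked directly from the algebraic description: propositional tautologies evaluate to $1_B$ under every $\tilde\theta$ in any Boolean algebra; $\tilde\theta(\varphi')=\widetilde{\theta'}(\varphi)$ whenever $\varphi'$ is obtained from $\varphi$ by a uniform substitution and $\theta'$ sends each variable to the value of its substituend, giving closure under uniform substitution; $\tilde\theta(\varphi\to\psi)=1_B$ together with $\tilde\theta(\varphi)=1_B$ forces $\tilde\theta(\psi)=1_B$, giving closure under modus ponens; and if $\tilde\theta(\varphi)=\tilde\theta(\psi)$ for all $\theta$, then $\tilde\theta(\Box_i\varphi)=f_i(\tilde\theta(\varphi))=f_i(\tilde\theta(\psi))=\tilde\theta(\Box_i\psi)$, giving the congruence rule. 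None of these steps is difficult; the only point that genuinely needs attention is the congruence rule, and it is immediate once we know---by Proposition~\ref{NeighBoxClosure}---that $\Box_{N_i}$ is a well-defined function on $\mathcal{RO}(S,\sqsubseteq)$, i.e., that the forcing clause for $\Box_i$ always outputs a proposition.
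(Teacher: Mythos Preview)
Your proposal is correct and follows the same approach as the paper: the paper's proof is simply ``Follows from Proposition~\ref{BasicNeighToBAE},'' leaving implicit exactly the translation between frame validity and BAE validity (and the appeal to the remark after Definition~\ref{BAEdef}) that you spell out explicitly.
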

\begin{proof} Follows from Proposition \ref{BasicNeighToBAE}.
\end{proof}

As in \S~\ref{PropQuantSection}, we can easily detect the difference between world frames and possibility frames using propositional quantification, but now below \textsf{S5} using neighborhood functions. Consider, for example, the standard logic of introspective belief, \textsf{KD45}, which is the smallest normal modal logic containing the axioms
\[\neg\Box\bot,\, \Box p \to \Box\Box p,\,\neg\Box p\to \Box \neg \Box p.\]
\textsf{KD45} is valid on a basic neighborhood possibility frame $F=(S,\sqsubseteq, N)$ iff $N$ satisfies the filter condition and open neighborhood conditions of a neighborhood possibility system, plus the following for all $x\in S$:
\begin{itemize}
\item proper condition: $\varnothing\not\in N(x)$;
\item for each $U\not\in N(x)$, there is a $V\in N(x)$ such that for all $y\in V$, $U\not\in N(y)$.
\end{itemize}
In fact, we will see below that simpler frames suffice for completeness.

\begin{definition} A \textit{uniform normal neighborhood possibility frame} is a basic neighborhood possibility frame $F=(S,\sqsubseteq,N$) such that $N$ satisfies the filter condition and proper condition, and for all $x,y\in S$, $N(x)=N(y)$.
\end{definition}

Let $\mathsf{KD4}^\forall \mathsf{5}\Pi$ \cite{Ding2020} be the extension of $\mathsf{KD45}$ with the $\Pi$ principles for propositional quantification from \S~\ref{PropQuantSection}, plus the $\mathsf{4}^\forall$ axiom:
\[\forall p\Box\varphi\to \Box\forall p\Box\varphi.\]
$\mathsf{KD4}^\forall \mathsf{5}\Pi$ is not complete with respect to any class of basic neighborhood \textit{world} frames for the same reason that $\mathsf{S5}\Pi$ was not: the formula (\ref{BadAx2}) from \S~\ref{PropQuantSection} is valid on every such world frame validating $\mathsf{KD45}$ (in fact, every such world frame validating $\Box\top$), but it is not a theorem of $\mathsf{S5}\Pi$, which extends $\mathsf{KD4}^\forall \mathsf{5}\Pi$. By contrast, Ding \cite{Ding2020} proved that $\mathsf{KD4}^\forall \mathsf{5}\Pi$ is complete with respect to complete proper filter algebras, i.e., complete BAs equipped with a  proper filter for interpreting $\Box$ (i.e., $\Box a=1$ if $a$ is in the filter, and otherwise $\Box a=0$), as well as sound with respect to all complete $\mathsf{KD45}$ BAEs. Combining this with Proposition \ref{BasicNeighToBAE} and Theorem \ref{BAEtoBasicNeigh}.\ref{BAEtoBasicNeigh1}-\ref{BAEtoBasicNeigh2}, we obtain the following.

\begin{theorem}[\cite{Ding2020}]\label{DingThm} \textnormal{$\mathsf{KD4}^\forall \mathsf{5}\Pi$ is the logic of all basic neighborhood possibility frames validating $\mathsf{KD45}$ (resp.~uniform normal neighborhood possibility frames), but $\mathsf{KD4}^\forall \mathsf{5}\Pi$ is not the logic of any class of basic neighborhood world frames.}
\end{theorem}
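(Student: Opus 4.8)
The plan is to reduce Theorem \ref{DingThm} to Ding's purely algebraic results for $\mathsf{KD4}^\forall \mathsf{5}\Pi$ (completeness with respect to complete proper filter algebras, soundness with respect to all complete $\mathsf{KD45}$ BAEs), which I take as given, using the frame/algebra translations of Proposition \ref{BasicNeighToBAE} and Theorem \ref{BAEtoBasicNeigh} for the positive half and the remark following Proposition \ref{AtomNotThm} for the negative half.

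\textbf{Positive half.} First I would record that validity of an $\mathcal{L}(I)\Pi$-formula on a basic neighborhood possibility frame $F$ coincides with its being validated by the complete BAE $F^\mathsf{b}$ of Proposition \ref{BasicNeighToBAE}, with $\forall$ read as the algebraic meet over all elements of $\mathcal{RO}(S,\sqsubseteq)$, as in Definition \ref{PossForcing1} and Remark \ref{PropQuantAlg}. Given this, writing $\mathsf{K}$ for the class of all basic neighborhood possibility frames validating $\mathsf{KD45}$, the inclusion $\mathsf{KD4}^\forall \mathsf{5}\Pi\subseteq\mathrm{Log}(\mathsf{K})$ is immediate from Ding's soundness result, since each such $F^\mathsf{b}$ is a complete $\mathsf{KD45}$ BAE. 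A routine check — from the filter and proper conditions together with $N(x)=N(y)$ for all $x,y$, plus validity of the $\Pi$-axioms on any full frame — shows that uniform normal neighborhood possibility frames also validate $\mathsf{KD4}^\forall \mathsf{5}\Pi$ and form a subclass of $\mathsf{K}$, so $\mathsf{KD4}^\forall \mathsf{5}\Pi\subseteq\mathrm{Log}(\mathsf{K})\subseteq\mathrm{Log}(\text{uniform normal frames})$. For the reverse inclusion, suppose $\varphi\notin\mathsf{KD4}^\forall \mathsf{5}\Pi$; by Ding's completeness theorem there is a complete proper filter algebra $\mathbb{B}=(B,\Box)$, determined by a proper filter $\mathcal{G}$ of $B$, with $\mathbb{B}$ refuting $\varphi$. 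Apply Theorem \ref{BAEtoBasicNeigh}.\ref{BAEtoBasicNeigh1}--\ref{BAEtoBasicNeigh2} to obtain a basic neighborhood possibility frame $\mathbb{B}_\mathsf{n}$ with $(\mathbb{B}_\mathsf{n})^\mathsf{b}\cong\mathbb{B}$, hence refuting $\varphi$. The key point is that $\mathbb{B}_\mathsf{n}$ is actually uniform normal: since the proper-filter operation $f$ satisfies $f(\bigvee U)\in\{0,1\}$, the definition $N(x)=\{U\mid x\leq_+ f(\bigvee U)\}$ collapses to $N(x)=\{U\in\mathcal{RO}(B_+,\leq_+)\mid \bigvee U\in\mathcal{G}\}$, independent of $x$, and this is a proper filter of $\mathcal{RO}(B_+,\leq_+)$ because $\mathcal{G}$ is a proper filter of $B$ and $U\mapsto\bigvee U$ is inverse (up to isomorphism) to $\mathord{\downarrow}_+$ by Theorem \ref{FirstThm}.\ref{FirstThm2}. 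Thus every non-theorem is refuted on a uniform normal frame, giving $\mathrm{Log}(\text{uniform normal frames})\subseteq\mathsf{KD4}^\forall \mathsf{5}\Pi$, so all three classes have $\mathsf{KD4}^\forall \mathsf{5}\Pi$ as their logic.

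\textbf{Negative half.} Let $\mathsf{K}$ be any class of basic neighborhood world frames, and suppose toward a contradiction that $\mathrm{Log}(\mathsf{K})=\mathsf{KD4}^\forall \mathsf{5}\Pi$. Then every $F\in\mathsf{K}$ validates $\mathsf{KD4}^\forall \mathsf{5}\Pi$, hence validates $\Box\top$ (a theorem of $\mathsf{KD45}$). Since $\sqsubseteq$ is the identity on $F$, the BAE $F^\mathsf{b}$ is complete \emph{and atomic} (Remark \ref{BasicNeighWorld}), so by the remark following Proposition \ref{AtomNotThm} the formula (\ref{BadAx2}) is valid on $F$. As this holds for every $F\in\mathsf{K}$, we get $(\ref{BadAx2})\in\mathrm{Log}(\mathsf{K})=\mathsf{KD4}^\forall \mathsf{5}\Pi$, contradicting Proposition \ref{AtomNotThm} together with $\mathsf{KD4}^\forall \mathsf{5}\Pi\subseteq\mathsf{S5}\Pi$, which give that (\ref{BadAx2}) is not even a theorem of $\mathsf{S5}\Pi$.

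The genuinely algebraic work — completeness and soundness for $\mathsf{KD4}^\forall \mathsf{5}\Pi$ relative to proper filter algebras — is Ding's and is assumed; on our side the one step requiring real care is verifying that $\mathbb{B}_\mathsf{n}$ built from a proper filter algebra is uniform normal, i.e., that its neighborhood function collapses to a single proper filter on $\mathcal{RO}(B_+,\leq_+)$, since that is precisely what lands the completeness argument inside the narrow class of frames. Everything else (the BAE/frame dictionary, the soundness checks, the atomicity of world frames) is already packaged in Proposition \ref{BasicNeighToBAE}, Theorem \ref{BAEtoBasicNeigh}, and Remark \ref{BasicNeighWorld}.
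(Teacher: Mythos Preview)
Your proposal is correct and follows essentially the same route as the paper: reduce the positive half to Ding's algebraic soundness/completeness via Proposition~\ref{BasicNeighToBAE} and Theorem~\ref{BAEtoBasicNeigh}.\ref{BAEtoBasicNeigh1}--\ref{BAEtoBasicNeigh2}, and derive the negative half from the validity of (\ref{BadAx2}) on atomic BAEs validating $\Box\top$ together with $\mathsf{KD4}^\forall\mathsf{5}\Pi\subseteq\mathsf{S5}\Pi$. The one place you go beyond the paper is in explicitly verifying that $\mathbb{B}_\mathsf{n}$ built from a proper filter algebra is a \emph{uniform normal} frame (by observing that $f(\bigvee U)\in\{0,1\}$ makes $N(x)$ independent of $x$), which the paper leaves implicit but is indeed what is needed for the ``resp.''\ clause.
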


It is open whether the $\mathcal{L}(I)\Pi$ logics of other classes of basic neighborhood possibility frames are recursively axiomatizable. Let us mention a few salient instances of the question.

\begin{question} Is the $\mathcal{L}\Pi$-logic of all neighborhood possibility systems (i.e., of complete interior algebras) recursively axiomatizable or enumerable? The logic of all basic neighborhood possibility frames? The logic of those \textit{normal} frames in which each $N(x)$ is a proper filter? The logics of other classes?
\end{question}

Let us now consider an example of a congruential modal logic \textit{without propositional quantifiers} that is the logic of a class of basic neighborhood possibility frames but not of any class of basic neighborhood world frames. Consider a bimodal language with modalities we will write as $\Box$ and $Q$ (instead of $\Box_1$ and $\Box_2$). Let $\mathsf{S}$ be the smallest congruential logic for this language containing $\Box\top$ and
\begin{equation}
p\to \big(\Diamond (p\wedge Qp)\wedge \Diamond (p\wedge\neg Qp)\big).\tag{\textsc{Split}}\label{Split}
\end{equation}
\begin{remark}\label{QArith}As shown in \cite[\S~7]{DH2020}, there is a natural \textit{arithmetic} interpretation of (\ref{Split}) where $\Diamond$ is interpreted as consistency in Peano arithmetic and $Q$ is interpreted as an arithmetic predicate defined by Shavrukov and Visser \cite{Shavrukov2014}.\end{remark} Let $\mathsf{ES}$ be the smallest congruential logic extending $\mathsf{S}$ with the following axioms:
\[ 
  \Box p \to p, \ \Box p \to \Box\Box p, \neg\Box p \to \Box\neg\Box p, \ \Box(p \leftrightarrow q) \to \Box(Qp \leftrightarrow Qq).
\]
Let $\mathsf{EST}$ be the smallest congruential modal logic extending $\mathsf{ES}$ by the T axiom for $Q$: $Qp \to p$. Then $\mathsf{EST}$ is sound with respect to the arithmetic interpretation mentioned in Remark \ref{QArith}. Yet basic neighborhood \textit{world} frames cannot give semantics for these logics.

\begin{proposition}\label{Snotvalid} \textnormal{$\mathsf{S}$ is not valid on any basic neighborhood world frame.}
\end{proposition}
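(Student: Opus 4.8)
The plan is to derive a contradiction from assuming that $\mathsf{S}$ is valid on a basic neighborhood world frame $F=(W,{=},\{N_\Box, N_Q\})$ — that is, a neighborhood possibility foundation in which the refinement relation $\sqsubseteq$ is the identity, so that $\mathcal{RO}(W,{=})$ is just the powerset $\wp(W)$ and $F^{\mathsf b}$ is an atomic BAE. The key observation is that \eqref{Split} forces a kind of ``splitting'' behavior that is impossible when each point is a complete world with no proper refinements. Concretely, I would argue as follows. Suppose $\mathsf{S}$ is valid on $F$; then in particular $\Box\top$ and \eqref{Split} are valid on $F$. Since $F$ is a world frame, for any valuation $\pi$ and $w\in W$, $\llbracket\varphi\rrbracket^M$ is an arbitrary subset of $W$, and the semantic clause for $Q$ says $M,w\Vdash Qp$ iff $\llbracket p\rrbracket^M\in N_Q(w)$.

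The main step: take any $w\in W$ and consider the valuation with $\pi(p)=\{w\}$. Validity of \eqref{Split} at $w$ gives $M,w\Vdash p$, hence $M,w\Vdash \Diamond(p\wedge Qp)$ and $M,w\Vdash\Diamond(p\wedge\neg Qp)$. Now $\Diamond$ here is $\neg\Box\neg$, and since this is a world frame one should unpack what $\Diamond\psi$ being true at $w$ amounts to using the clause for $\Box_1=\Box$; but actually the cleaner route is to note that $p\wedge Qp$ and $p\wedge\neg Qp$ are the only two formulas of interest and their semantic values are subsets of $\llbracket p\rrbracket^M=\{w\}$. So $\llbracket p\wedge Qp\rrbracket^M$ is either $\varnothing$ or $\{w\}$, and likewise for $\llbracket p\wedge\neg Qp\rrbracket^M$; moreover these two sets are disjoint and their union is $\{w\}$ (since at any world either $Qp$ holds or $\neg Qp$ holds). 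Hence exactly one of them is $\{w\}$ and the other is $\varnothing$. But $\Diamond\chi$ cannot be true at $w$ when $\llbracket\chi\rrbracket^M=\varnothing$: indeed $\Diamond\chi = \neg\Box\neg\chi$, and if $\llbracket\chi\rrbracket^M=\varnothing$ then $\llbracket\neg\chi\rrbracket^M = W$ (in a world frame $\neg$ is complementation), so $\neg\chi$ is equivalent to $\top$, and $\Box\top$ being valid forces $\Box\neg\chi$ true everywhere, whence $\Diamond\chi$ false at $w$. This contradicts the conclusion that $w$ forces both diamond-conjuncts.

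I would present this as: assume for contradiction $\mathsf{S}$ valid on a basic neighborhood world frame $F$; pick $w\in W$, set $\pi(p)=\{w\}$, and derive from \eqref{Split} that $M,w\Vdash\Diamond(p\wedge Qp)\wedge\Diamond(p\wedge\neg Qp)$; then show $\llbracket p\wedge Qp\rrbracket^M$ and $\llbracket p\wedge\neg Qp\rrbracket^M$ partition $\{w\}$ so one is empty; then show $\Box\top$-validity plus the semantics of $\Diamond$ and $\neg$ on a world frame make $\Diamond$ of an empty-valued formula false at $w$; contradiction. The algebraic phrasing of the same argument — in any atomic BAE validating $\Box\top$, for an atom $a$ one has $a = (a\wedge q)\vee(a\wedge\neg q)$ with the two joinands disjoint, hence one equals $0$, and $f_\Diamond(0) = \neg f_\Box(\neg 0) = \neg f_\Box(1) = \neg 1 = 0 \not\geq a$ — could be offered as an alternative or as the crux, since it makes transparent that atomicity is exactly what fails.

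The main obstacle is bookkeeping around the $\Diamond$-modality: since $\mathsf{S}$ is only a \emph{congruential} logic (not normal), $\Box$ here is the modality $\Box_1$ of the bimodal frame with its neighborhood function $N_\Box$, not the universal modality, so $\Diamond\chi$ true at $w$ means $W\setminus\llbracket\chi\rrbracket^M \notin N_\Box(w)$, and one must be careful that this genuinely fails when $\llbracket\chi\rrbracket^M=\varnothing$. The resolution is precisely the axiom $\Box\top$: its validity means $W\in N_\Box(x)$ for every $x$, so $W\notin N_\Box(w)$ is impossible, killing $\Diamond$ of anything with empty value. I expect the write-up to be short; the only genuine content is isolating that \eqref{Split} together with $\Box\top$ is unsatisfiable at an atom, and that world frames only realize atomic algebras (Remark~\ref{BasicNeighWorld}).
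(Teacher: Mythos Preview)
Your proposal is correct and follows essentially the same argument as the paper: pick $w$, set $\pi(p)=\{w\}$, use \textsc{Split} to force both $\Diamond(p\wedge Qp)$ and $\Diamond(p\wedge\neg Qp)$ at $w$, observe that one of the two conjuncts has empty semantic value since $\pi(p)$ is a singleton, and then use validity of $\Box\top$ to derive $W\notin N_\Box(w)$, a contradiction. Your additional remarks about the partition structure and the algebraic reformulation in terms of atoms are accurate elaborations of the same idea.
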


\begin{proof} Suppose $F=\langle W, N_\Box,N_Q\rangle$ validates $\mathsf{S}$.
Define a model $M=( F,\pi )$ such that for some
$w\in W$, $\pi(p)=\{w\}$, so $M,w\Vdash p$. Then since $F$
validates (\ref{Split}), we have $M,w\Vdash \Diamond (p\wedge
Qp)\wedge \Diamond (p\wedge\neg Qp)$, i.e., \[\llbracket \neg (p\wedge
Qp)\rrbracket^M\not\in N_\Box(w)\mbox{ and }\llbracket \neg (p\wedge \neg
Qp)\rrbracket^M\not\in N_\Box(w).\] Since $\pi(p)$ is a singleton set, either
$\llbracket p\wedge Qp\rrbracket^M=\varnothing$ or $\llbracket p\wedge
\neg Qp\rrbracket^M=\varnothing$, so $\llbracket \neg( p\wedge
Qp)\rrbracket^M=W$ or $\llbracket \neg (p\wedge \neg
Qp)\rrbracket^M=W$. Combining the previous two steps, we have
$W\not\in N(w)$, which contradicts the fact that $F$ validates
$\Box\top$. 
\end{proof}
In fact, it is not hard to check the following stronger result.

\begin{proposition}[\cite{DH2020}]\textnormal{$\mathsf{S}$ is not valid on any BAE whose underlying BA is \textit{atomic}.}
\end{proposition}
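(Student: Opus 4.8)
The statement generalizes Proposition \ref{Snotvalid} from basic neighborhood world frames to arbitrary atomic BAEs, so the natural strategy is to lift the proof of Proposition \ref{Snotvalid} to the algebraic setting. Suppose toward a contradiction that $\mathbb{B}=(B,\{f_\Box,f_Q\})$ is a BAE whose underlying BA $B$ is atomic and which validates $\mathsf{S}$. The single ingredient the earlier proof used about world frames was that the valuation could send $p$ to a \emph{singleton} $\{w\}$; the algebraic analogue of a singleton in an atomic BA is an \emph{atom}. So first I would fix an atom $a\in B$ and a valuation $\theta$ with $\theta(p)=a$.

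\medskip

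\noindent Since $\mathbb{B}$ validates $(\ref{Split})$, we have $a=\tilde\theta(p)\leq \tilde\theta\big(\Diamond(p\wedge Qp)\wedge \Diamond(p\wedge \neg Qp)\big)$, which means $a\leq f_\Diamond(a\wedge f_Q(a))$ and $a\leq f_\Diamond(a\wedge \neg f_Q(a))$, where $f_\Diamond(x):=\neg f_\Box(\neg x)$. Now because $a$ is an atom, $a\wedge f_Q(a)$ is either $a$ or $0$; accordingly $a\wedge\neg f_Q(a)$ is either $0$ or $a$. In the first case $a\wedge f_Q(a)=a$ gives nothing, but then $a\wedge\neg f_Q(a)=0$, so from the second conjunct $a\leq f_\Diamond(0)=\neg f_\Box(\neg 0)=\neg f_\Box(1)$; in the second case symmetrically $a\wedge f_Q(a)=0$ and the first conjunct gives $a\leq f_\Diamond(0)=\neg f_\Box(1)$. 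Either way $a\leq \neg f_\Box(1)$, i.e.\ $a\wedge f_\Box(1)=0$, so $f_\Box(1)\neq 1$ (since $a\neq 0$). But $\mathbb{B}$ validates $\Box\top$, which says $f_\Box(1)=1$, a contradiction. Hence no atomic BAE validates $\mathsf{S}$.

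\medskip

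\noindent There is essentially no obstacle here beyond bookkeeping: the only point requiring a moment's care is the passage from ``$(\ref{Split})$ is validated'' to the two inequalities $a\leq f_\Diamond(a\wedge f_Q(a))$ and $a\leq f_\Diamond(a\wedge\neg f_Q(a))$, which uses the recursive definition of $\tilde\theta$ together with the fact that the semantic value of $\alpha\to\beta$ is $1$ iff $\tilde\theta(\alpha)\leq\tilde\theta(\beta)$, applied with $\alpha=p$. One should also note that $\Diamond$ is a \emph{defined} symbol, so ``$f_\Diamond$'' is not part of the signature but merely abbreviates $\neg f_\Box(\neg(\cdot))$; unwinding this is what makes $f_\Diamond(0)=\neg f_\Box(1)$ and ties the contradiction to the $\Box\top$ axiom. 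If one prefers, the argument can be phrased entirely in terms of $f_\Box$: validity of $(\ref{Split})$ gives $a\not\leq f_\Box(\neg(a\wedge f_Q(a)))$ and $a\not\leq f_\Box(\neg(a\wedge\neg f_Q(a)))$, and atomicity forces one of the arguments of $f_\Box$ to be $1$, whence $a\not\leq f_\Box(1)=1$, absurd.
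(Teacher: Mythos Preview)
Your proof is correct. The paper itself does not supply a proof of this proposition; it merely states the result with a citation to \cite{DH2020} after remarking that ``it is not hard to check.'' Your argument is the natural algebraic lift of the paper's proof of the preceding Proposition~\ref{Snotvalid}: where that proof chose a valuation sending $p$ to a singleton $\{w\}$ in a world frame, you choose a valuation sending $p$ to an atom $a$ in the BA, and the dichotomy ``either $\llbracket p\wedge Qp\rrbracket=\varnothing$ or $\llbracket p\wedge\neg Qp\rrbracket=\varnothing$'' becomes ``either $a\wedge f_Q(a)=0$ or $a\wedge\neg f_Q(a)=0$.'' One minor remark on the alternative phrasing in your final paragraph: from $a\leq\neg f_\Box(\neg(a\wedge f_Q(a)))$ you get $a\wedge f_\Box(\neg(a\wedge f_Q(a)))=0$, which for an atom $a$ is indeed equivalent to $a\not\leq f_\Box(\neg(a\wedge f_Q(a)))$, so that reformulation is also fine.
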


By contrast, we have the following positive result when moving from basic neighborhood world frames to basic neighborhood possibility frames.

\begin{proposition}\label{ESTvalid} \textnormal{\textsf{EST} can be validated by a basic neighborhood possibility frame.}
\end{proposition}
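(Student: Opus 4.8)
The plan is to realize $\mathsf{EST}$ inside a complete Boolean algebra expansion and then transport it to a frame via Theorem~\ref{BAEtoBasicNeigh}. Any atomless complete Boolean algebra will do for the underlying algebra; for concreteness (and to keep the construction choice-free) I would take $B=\mathcal{RO}(2^{<\omega},\sqsubseteq)$, the regular open algebra of the infinite binary tree of Example~\ref{BinaryTree0}, which is complete by Theorem~\ref{FirstThm}.\ref{FirstThm1} and atomless (Example~\ref{BinaryTree}). For $\Box$ I would use the ``universal modality'' operation $f_\Box$, sending $S$ to $S$ and every other $U$ to $\varnothing$; the standard computations show $f_\Box$ validates $\Box\top$ together with the three $\mathsf{S5}$ axioms $\Box p\to p$, $\Box p\to\Box\Box p$, $\neg\Box p\to\Box\neg\Box p$. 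For $Q$ I would exploit atomlessness: set $f_Q(\varnothing)=\varnothing$ and, for nonempty regular open $U$, let $\tau_U$ be the lexicographically least element of minimal length in $U$ and put $f_Q(U)=\mathord{\downarrow}(\tau_U 0)$, which is regular open by Example~\ref{BinaryTree0b}. (Any assignment of some $b_U$ with $\varnothing\subsetneq b_U\subsetneq U$ to each nonempty $U$ would work equally well; this explicit recipe avoids invoking choice.) Let $\mathbb{B}=(B,\{f_\Box,f_Q\})$.

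Next I would verify that $\mathbb{B}$ validates the remaining axioms of $\mathsf{EST}$. The axiom $Qp\to p$ reduces to $f_Q(U)\subseteq U$, which holds because $\tau_U\in U$ and $U$ is a downset, so $\mathord{\downarrow}(\tau_U 0)\subseteq\mathord{\downarrow}\tau_U\subseteq U$. The axiom $\Box(p\leftrightarrow q)\to\Box(Qp\leftrightarrow Qq)$ is automatic for any BAE whose $\Box$-operation is the universal modality: $\Box(p\leftrightarrow q)$ has value $S$ exactly when $p$ and $q$ receive the same value, and then so do $Qp$ and $Qq$ since $f_Q$ is a function. For (\ref{Split}), observe that with this $f_\Box$ the derived operation $\Diamond$ sends $\varnothing$ to $\varnothing$ and every nonempty regular open to $S$, so validity of $p\to(\Diamond(p\wedge Qp)\wedge\Diamond(p\wedge\neg Qp))$ is equivalent to the condition that for every nonempty $U$ both $U\wedge f_Q(U)\neq\varnothing$ and $U\wedge\neg f_Q(U)\neq\varnothing$. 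The first holds since $\varnothing\neq f_Q(U)\subseteq U$; the second holds because $\tau_U 1\in\mathord{\downarrow}\tau_U\subseteq U$ while $\tau_U 1\notin\mathord{\downarrow}(\tau_U 0)$, so that $f_Q(U)$ is a \emph{proper} nonempty subset of $U$. Since the set of formulas validated by a single BAE is a congruential modal logic, and $\mathsf{EST}$ is the least congruential modal logic containing the listed axioms, it follows that $\mathbb{B}$ validates all of $\mathsf{EST}$.

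Finally I would invoke Theorem~\ref{BAEtoBasicNeigh}.\ref{BAEtoBasicNeigh1}--\ref{BAEtoBasicNeigh2}: $\mathbb{B}_\mathsf{n}$ is a basic neighborhood possibility frame and $(\mathbb{B}_\mathsf{n})^\mathsf{b}$ is isomorphic to $\mathbb{B}$. For any basic neighborhood possibility model $M=(F,\pi)$ the forcing clause $M,x\Vdash\Box_i\varphi$ iff $\llbracket\varphi\rrbracket^M\in N_i(x)$ gives $\llbracket\Box_i\varphi\rrbracket^M=\Box_{N_i}\llbracket\varphi\rrbracket^M$, and the Boolean clauses match the Boolean operations of $\mathcal{RO}(S,\sqsubseteq)$, so $\llbracket\varphi\rrbracket^M=\widetilde{\pi}(\varphi)$ and hence a formula is valid on $F$ iff it is validated by the complete BAE $F^\mathsf{b}$; therefore $\mathbb{B}_\mathsf{n}$ validates $\mathsf{EST}$. (Using Proposition~\ref{NeighDef} in place of Theorem~\ref{BAEtoBasicNeigh} yields such a frame directly on the poset $(2^{<\omega},\sqsubseteq)$ itself.) The only real content is the existence of an operation $f_Q$ with $\varnothing\subsetneq f_Q(U)\subsetneq U$ for every nonempty $U$ — equivalently, atomlessness of $B$ plus a way to split each nonzero element — and everything else is routine algebraic checking, so I do not anticipate a genuine obstacle here.
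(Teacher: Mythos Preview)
Your proof is correct but takes a genuinely different route from the paper. The paper constructs the frame \emph{directly} on $2^{<\omega}$: it sets $N_\Box(x)=\{S\}$ and defines $N_Q$ by an inductive recipe down the tree (essentially, each time you take a $0$-branch below a node $x$, all regular opens that ``first appear'' at $x$ get added to the $Q$-neighborhoods), then defers the verification that this frame validates \textsf{EST} to \cite{DH2020}. You instead work algebraically: choose any atomless complete Boolean algebra, equip it with the universal-modality $f_\Box$ and a splitting operation $f_Q$ satisfying $0<f_Q(U)<U$ for every nonzero $U$, check the axioms of \textsf{EST} by elementary Boolean computations, and then transport via Theorem~\ref{BAEtoBasicNeigh} (or Proposition~\ref{NeighDef}) to a frame.

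Your approach has the virtue of making the essential point completely transparent: validating \textsf{EST} requires nothing beyond atomlessness (plus a definable splitting to avoid choice), and the verification is self-contained rather than outsourced. The paper's inductive $N_Q$ is more intricate and its correctness less immediately visible, but it yields a concrete neighborhood description at each node that may be useful for the finer completeness analysis in \cite{DH2020} (Theorem~\ref{DHthm}). For the bare existence claim of Proposition~\ref{ESTvalid}, your argument is the cleaner one.
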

\begin{proof}[Proof sketch] Consider from Examples \ref{BinaryTree0}-\ref{BinaryTree} the full infinite binary tree regarded as a poset ${(S,\sqsubseteq)}$. For $x\in S$, let $N_\Box(x)=\{S\}$. Let $\mathsf{Par}(x)$ be the parent of $x$ in the tree and $x0$ and $x1$  the two
  extensions of $x$ by $0$ and $1$, respectively. Define $N_Q$ inductively by:
  \begin{align*}
    N_Q(\epsilon) &=  \varnothing; \\
    N_Q(x0) &= N_Q(x)\cup \{U\in \mathcal{RO}(S,\sqsubseteq)\mid x\in U,\,\mathsf{Par}(x)\not\in U\}; \\
    N_Q(x1) &= N_Q(x).
  \end{align*}
  As shown in \cite{DH2020}, $(S,\sqsubseteq,N_\Box,N_Q)$ is a basic neighborhood possibility frame that validates \textsf{EST}.\end{proof}
  
  In fact, these logics are not only consistent but also possibility complete.

\begin{theorem}[\cite{DH2020}]\label{DHthm} \textnormal{Each of the world-inconsistent logics \textsf{S}, \textsf{ES}, and \textsf{EST} is complete with respect to the class of basic neighborhood possibility frames that validate the logic.}
\end{theorem}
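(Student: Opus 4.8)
The plan is to obtain completeness by a canonical model construction in the style of the algebraic/neighborhood completeness proofs for congruential modal logics, then transport the canonical BAE to a basic neighborhood possibility frame using Proposition~\ref{BasicNeighToBAE} and Theorem~\ref{BAEtoBasicNeigh}. Concretely: fix one of the logics $\mathsf{L}\in\{\mathsf{S},\mathsf{ES},\mathsf{EST}\}$ and suppose $\varphi_0\notin\mathsf{L}$. Since $\mathsf{L}$ is congruential, the Lindenbaum--Tarski construction yields a BAE $\mathbb{B}_{\mathsf{L}}=(B_{\mathsf{L}},\{f_\Box,f_Q\})$ on the Boolean algebra of $\mathsf{L}$-equivalence classes of formulas, with $f_\Box[\psi]=[\Box\psi]$ and $f_Q[\psi]=[Q\psi]$ (well-defined precisely because of the congruence rule), such that $\mathbb{B}_{\mathsf{L}}$ validates exactly the theorems of $\mathsf{L}$; in particular it refutes $\varphi_0$. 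Now $\mathbb{B}_{\mathsf{L}}$ need not be complete, so pass to its MacNeille completion $\mathbb{B}_{\mathsf{L}}^*$ (realized concretely as $\mathcal{RO}((B_{\mathsf{L}})_+,\leq_+)$ by Proposition~\ref{MacNeilleProp}), extending $f_\Box,f_Q$ to operations on the completion. Applying Theorem~\ref{BAEtoBasicNeigh} to the resulting complete BAE gives a basic neighborhood possibility frame $F$ with $F^{\mathsf b}$ isomorphic to it, hence $F$ refutes $\varphi_0$; and by Proposition~\ref{BasicNeighToBAE} combined with soundness, $F$ validates $\mathsf{L}$. This establishes that every non-theorem of $\mathsf{L}$ is refuted on a basic neighborhood possibility frame validating $\mathsf{L}$.

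The first routine step is checking that the Lindenbaum--Tarski BAE genuinely validates only the theorems of $\mathsf{L}$: soundness of $\mathsf{L}$ over BAEs gives one direction, and the other is the standard observation that $[\varphi_0]\neq 1_{B_\mathsf{L}}$ under the canonical valuation $\theta(p)=[p]$, which follows by an easy induction showing $\tilde\theta(\psi)=[\psi]$ for all $\psi$. The second routine step is the extension of $f_\Box,f_Q$ to the MacNeille completion. For $f_\Box$ this is harmless because in all three logics $\Box$ behaves as an $\mathsf{S5}$-style operator once we have $\Box\top$ plus the $\mathsf{4}$, $\mathsf{5}$, $\mathsf{T}$ axioms for $\Box$ (for $\mathsf{ES},\mathsf{EST}$), so $f_\Box$ is determined by a filter and extends canonically; for $\mathsf{S}$, which has only $\Box\top$, one takes $f_\Box$ to be the operation sending $1$ to $1$ and everything else to $0$, which trivially extends. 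For $f_Q$ one uses the congruential-completeness machinery: extend $f_Q$ to $\mathbb{B}_\mathsf{L}^*$ by $f_Q^*(a)=\bigvee\{f_Q(b)\wedge\neg f_Q^{c}(\ldots)\}$—more cleanly, note that since $\mathsf{S},\mathsf{ES},\mathsf{EST}$ impose no monotonicity on $Q$, the simplest route is to use the \emph{lower} MacNeille-style extension $f_Q^*(a)=\bigvee\{f_Q(b)\mid b\in B_\mathsf{L},\ b=a\}$ when $a\in B_\mathsf{L}$ and argue that the validity of the (finitely many, congruential) axioms of $\mathsf{L}$ is preserved because each axiom, being a propositional combination of $Q$ applied to variables, is witnessed already by elements of the dense subalgebra $B_\mathsf{L}$. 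Alternatively, and most robustly, one avoids completing altogether by appealing to Proposition~\ref{NeighDef}: it suffices to realize $B_\mathsf{L}^*$ (or directly, via a separate argument, $B_\mathsf{L}$ embedded in $\mathcal{RO}$ of some poset together with admissible sets) and then define the neighborhood functions through the isomorphism—but since the theorem as stated speaks of \emph{basic} (full) neighborhood possibility frames, realizing the complete algebra $B_\mathsf{L}^*$ and invoking Theorem~\ref{BAEtoBasicNeigh} is the cleanest path.

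The main obstacle is precisely the MacNeille-extension step for $f_Q$: unlike $\Box$, the modality $Q$ in $\mathsf{S}$, $\mathsf{ES}$, $\mathsf{EST}$ is genuinely non-normal and non-monotone (indeed \eqref{Split} forces $Q$ to split the atom-free structure, which is why these logics are world-inconsistent), so one cannot simply quote the standard fact that normal modal logics are closed under MacNeille completions. The careful point to verify is that validity of the defining axioms of $\mathsf{L}$—in particular \eqref{Split}, $\Box(p\leftrightarrow q)\to\Box(Qp\leftrightarrow Qq)$, and $Qp\to p$—transfers from $\mathbb{B}_\mathsf{L}$ to $\mathbb{B}_\mathsf{L}^*$ under the chosen extension of $f_Q$. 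For the congruence axiom this is automatic from density; for $Qp\to p$ (in $\mathsf{EST}$) one checks $f_Q^*(a)\leq a$ is preserved by the extension since $f_Q(b)\leq b$ for $b\in B_\mathsf{L}$ and joins are monotone; for \eqref{Split}, which asserts $a\leq f_{\Diamond}(a\wedge f_Q(a))\wedge f_{\Diamond}(a\wedge\neg f_Q(a))$, one must show the witnessing $\Diamond$-values computed in $B_\mathsf{L}^*$ still dominate arbitrary $a\in B_\mathsf{L}^*$; this is where one uses that every $a\in B_\mathsf{L}^*$ is a join of elements of $B_\mathsf{L}$ together with the join-infinite distributivity of the complete Boolean algebra $B_\mathsf{L}^*$. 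Once these preservation checks go through, the rest is bookkeeping: cite Theorem~\ref{BAEtoBasicNeigh}.\ref{BAEtoBasicNeigh1}--\ref{BAEtoBasicNeigh2} to get $F$ with $F^{\mathsf b}\cong\mathbb{B}_\mathsf{L}^*$, cite Proposition~\ref{BasicNeighToBAE} plus soundness to see $F\models\mathsf{L}$, and conclude that the class of basic neighborhood possibility frames validating $\mathsf{L}$ refutes every non-theorem of $\mathsf{L}$, i.e., $\mathsf{L}$ is complete over that class.
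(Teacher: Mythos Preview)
Your high-level strategy---form the Lindenbaum BAE, pass to its MacNeille completion, and invoke Theorem~\ref{BAEtoBasicNeigh}---is the right one, and matches what the paper indicates about the proof in \cite{DH2020} (this is exactly why Proposition~\ref{TreeThm} applies to yield Theorem~\ref{DHthm2}). But the step you correctly flag as the ``main obstacle,'' extending $f_Q$ (and for $\mathsf{S}$ also $f_\Box$) to the completion while preserving the axioms, is not actually carried out, and the sketches you offer do not work.

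Concretely: your proposed $f_Q^*(a)=\bigvee\{f_Q(b)\mid b\in B_{\mathsf L},\, b=a\}$ says nothing for $a\notin B_{\mathsf L}$, and replacing $b=a$ by $b\leq a$ would make $f_Q^*$ monotone, which both fails to extend the non-monotone Lindenbaum $f_Q$ and is incompatible with \eqref{Split}. Your claim that in $\mathsf{S}$ the Lindenbaum $f_\Box$ sends everything below $1$ to $0$ is false: $\mathsf{S}$ contains only $\Box\top$ and \eqref{Split}, so for a non-theorem $\psi$ one cannot derive $\neg\Box\psi$, whence $[\Box\psi]\neq 0$ in general. And your preservation argument for \eqref{Split}---``every $a$ is a join of elements of $B_{\mathsf L}$ plus join-infinite distributivity''---does not go through, since \eqref{Split} applied to $a$ involves $f_Q^*(a)$ itself, not the values $f_Q(b)$ for $b\leq a$; validity must be checked for \emph{every} valuation into $B_{\mathsf L}^*$, not only those with range in the dense subalgebra.

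What is actually needed is an explicit definition of $f_Q^*$ on the whole completion that simultaneously agrees with $f_Q$ on $B_{\mathsf L}$ and satisfies, for every nonzero $a\in B_{\mathsf L}^*$, both $a\wedge f_Q^*(a)\neq 0$ and $a\wedge\neg f_Q^*(a)\neq 0$ (plus $f_Q^*(a)\leq a$ for $\mathsf{EST}$). Atomlessness of $B_{\mathsf L}^*$ guarantees such values exist element-by-element, but one must give a uniform construction; the recursive definition of $N_Q$ on $2^{<\omega}$ in the proof sketch of Proposition~\ref{ESTvalid} shows the flavor of the concrete work required. A generic ``extend by density'' argument is simply unavailable for a non-monotone, non-normal operator.
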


We conclude with two open problems suggested by the preceding theorem.

\begin{question} Is there a consistent congruential modal logic that is not valid on any basic neighborhood world frame, or at least not the logic of any class of such frames, axiomatized with only  one modality, one propositional variable, and modal depth two \cite{DH2020}? What other logics are incomplete with respect to basic neighborhood world frames but complete with respect to basic neighborhood possibility frames?
\end{question}

\subsubsection{Tree completeness}\label{TreeSection}

While Theorem \ref{BAEtoBasicNeigh} tells us that any congruential modal logic complete with respect to complete BAEs is automatically complete with respect to basic neighborhood possibility frames, more interesting would be completeness with respect to basic neighborhood possibility frames based on special posets such as trees.

\begin{definition} A congruential modal logic $\mathsf{L}$ is \textit{neighborhood tree complete} if there is a class of basic neighborhood possibility frames, whose underlying posets are trees, such that $\mathsf{L}$ is the logic of the class.
\end{definition}

The following observation gives us tree completeness for many standard logics.

\begin{proposition}[\cite{DH2020}]\label{TreeThm} \textnormal{If a congruential modal logic $\mathsf{L}$ is the logic of a class of BAEs based on the MacNeille completion of the countable atomless Boolean algebra, then $\mathsf{L}$ is the logic of a class of basic neighborhood possibility frames based on the tree $2^{<\omega}$.}
\end{proposition}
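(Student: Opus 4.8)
The plan is to assemble three facts already available: (i) from Example~\ref{BinaryTree}, the regular open algebra $\mathcal{RO}(2^{<\omega})$ of the binary tree realizes the MacNeille completion of the countable atomless Boolean algebra; (ii) Proposition~\ref{NeighDef}, which transports a complete Boolean algebra expansion onto any poset presenting its underlying algebra as a regular open algebra; and (iii) the routine fact that validity on a basic neighborhood possibility frame $F$ coincides with validation by its associated complete BAE $F^{\mathsf{b}}$. First I would fix notation: let $C$ be the (unique up to isomorphism) countable atomless Boolean algebra, $C^{*}$ its MacNeille completion, and by Example~\ref{BinaryTree} fix an isomorphism $\sigma\colon\mathcal{RO}(2^{<\omega})\to C^{*}$, where $2^{<\omega}=(S,\sqsubseteq)$ is the poset of Example~\ref{BinaryTree0}. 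Let $\mathsf{K}$ be a class of BAEs, each of the form $\mathbb{B}=(C^{*},\{f_{i}\}_{i\in I})$, with $\mathsf{L}$ the logic of $\mathsf{K}$.

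Next, for each $\mathbb{B}\in\mathsf{K}$ I would apply Proposition~\ref{NeighDef} to the complete Boolean algebra $C^{*}$, presented as $\mathcal{RO}(2^{<\omega})$ via $\sigma$: this produces a basic neighborhood possibility frame $\mathbb{B}_{2^{<\omega}}=(S,\sqsubseteq,\{N_{i}\}_{i\in I})$ whose underlying poset is literally the tree $2^{<\omega}$ and which satisfies $(\mathbb{B}_{2^{<\omega}})^{\mathsf{b}}\cong\mathbb{B}$. Let $\mathsf{K}'=\{\mathbb{B}_{2^{<\omega}}\mid\mathbb{B}\in\mathsf{K}\}$; this is a class of basic neighborhood possibility frames based on $2^{<\omega}$, so it remains to show that the logic of $\mathsf{K}'$ equals $\mathsf{L}$.

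The one step deserving explicit verification is the bridge between frame semantics and algebraic semantics: for any basic neighborhood possibility frame $F=(S,\sqsubseteq,\{N_{i}\}_{i\in I})$ and any formula $\varphi$, $\varphi$ is valid on $F$ iff $F^{\mathsf{b}}$ validates $\varphi$ in the sense of Definition~\ref{BAEdef}. The key observation is that a model $M=(F,\pi)$ is precisely a valuation $\theta=\pi$ on $F^{\mathsf{b}}$, since by Proposition~\ref{BasicNeighToBAE} the carrier of $F^{\mathsf{b}}$ is $\mathcal{RO}(S,\sqsubseteq)$ and $\pi$ ranges over exactly that set. An easy induction on $\varphi$, matching $\llbracket\neg\varphi\rrbracket^{M}=\neg\llbracket\varphi\rrbracket^{M}$, $\llbracket\varphi\wedge\psi\rrbracket^{M}=\llbracket\varphi\rrbracket^{M}\wedge\llbracket\psi\rrbracket^{M}$, and $\llbracket\Box_{i}\varphi\rrbracket^{M}=\Box_{N_{i}}\llbracket\varphi\rrbracket^{M}$ against the defining clauses of $\tilde\theta$, gives $\llbracket\varphi\rrbracket^{M}=\tilde\theta(\varphi)$; hence $\varphi$ is valid on $F$ iff $\tilde\theta(\varphi)=1$ for every valuation, i.e.\ iff $F^{\mathsf{b}}$ validates $\varphi$. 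Since validation by a BAE is invariant under isomorphism and $(\mathbb{B}_{2^{<\omega}})^{\mathsf{b}}\cong\mathbb{B}$, we get that $\varphi$ is valid on $\mathbb{B}_{2^{<\omega}}$ iff $\mathbb{B}$ validates $\varphi$. Quantifying over $\mathbb{B}\in\mathsf{K}$ shows that the logic of $\mathsf{K}'$ is the logic of $\mathsf{K}$, namely $\mathsf{L}$.

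The main obstacle is not any deep argument but the bookkeeping in the last two paragraphs: one must make precise that validity on a basic neighborhood possibility frame coincides with algebraic validity (so that transporting the BAE structure along $\sigma$ leaves the validated formulas unchanged), and one must check that Proposition~\ref{NeighDef} delivers a frame whose poset is the tree $2^{<\omega}$ itself, not merely an isomorphic copy — which is exactly what ``based on the tree $2^{<\omega}$'' demands. Both are immediate from the definitions, so the proof is short once these points are spelled out.
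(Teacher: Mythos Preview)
Your proposal is correct and follows exactly the paper's approach: invoke Example~\ref{BinaryTree} to identify $\mathcal{RO}(2^{<\omega})$ with the MacNeille completion of the countable atomless Boolean algebra, then apply Proposition~\ref{NeighDef}. The paper's proof is just those two sentences; your additional paragraph verifying that frame validity coincides with algebraic validity on $F^{\mathsf{b}}$ is sound bookkeeping that the paper leaves implicit.
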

\begin{proof} As in Example \ref{BinaryTree}, $\mathcal{RO}(2^{<\omega})$ is isomorphic to the MacNeille completion of the countable atomless Boolean algebra. Now apply Proposition~\ref{NeighDef}.\end{proof}

Taking the MacNeille completion of the Lindenbaum algebra of a logic is a standard technique for proving the completeness of a logic with respect to complete algebras. In the typical case, the Boolean reduct of the Lindenbaum algebra is (up to isomorphism) the countable atomless Boolean algebra. Thus, Theorem \ref{TreeThm} assures us that when this standard technique is applicable, we automatically have neighborhood tree completeness as well. For example, given the proof of Theorem \ref{DHthm} in \cite{DH2020}, Proposition \ref{TreeThm} yields the following.

\begin{theorem}[\cite{DH2020}]\label{DHthm2} \textnormal{Each of the world-inconsistent logics \textsf{S}, \textsf{ES}, and \textsf{EST} is neighborhood tree complete.}
\end{theorem}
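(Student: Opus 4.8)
The plan is to deduce Theorem \ref{DHthm2} from two facts already available in the excerpt: (i) Theorem \ref{DHthm}, which gives that each of \textsf{S}, \textsf{ES}, \textsf{EST} is complete with respect to the class of basic neighborhood possibility frames validating it; and (ii) Proposition \ref{TreeThm}, which upgrades completeness with respect to a class of BAEs based on the MacNeille completion of the countable atomless Boolean algebra to completeness with respect to basic neighborhood possibility frames based on the tree $2^{<\omega}$. The only genuine work is to bridge (i) and (ii): to check that each of these logics is in fact the logic of a class of BAEs whose underlying Boolean algebra is the MacNeille completion of the countable atomless Boolean algebra. The excerpt already hints at this, saying ``given the proof of Theorem \ref{DHthm} in \cite{DH2020}, Proposition \ref{TreeThm} yields the following,'' so I expect the intended argument to extract exactly this from how Theorem \ref{DHthm} was proved.

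First I would recall the standard Lindenbaum--Tarski construction: for a congruential modal logic $\mathsf{L}$ in our bimodal language, form the Lindenbaum algebra $\mathsf{Lind}(\mathsf{L})$, whose Boolean reduct is (up to isomorphism) the countable atomless Boolean algebra provided $\mathsf{L}$ is consistent and does not decide any non-trivial propositional combination of variables outright — which holds here since \textsf{S}, \textsf{ES}, \textsf{EST} are consistent (by Propositions \ref{ESTvalid} and \ref{DHthm}) and their propositional fragment is just classical logic. Then I would take the MacNeille completion $\mathsf{Lind}(\mathsf{L})^*$; the point is that the modal operations $\Box, Q$ on $\mathsf{Lind}(\mathsf{L})$ extend to operations on $\mathsf{Lind}(\mathsf{L})^*$ so that the resulting complete BAE still validates $\mathsf{L}$. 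For $\Box$ this is unproblematic since $\Box\top$ and the \textsf{S5} axioms make $\Box$ essentially the operation sending $1$ to $1$ and everything else to $0$, which transfers verbatim. The subtlety is $Q$: one must check that the congruential axioms for $Q$ in \textsf{S}, \textsf{ES}, \textsf{EST} — in particular (\textsc{Split}) and $Q p \to p$ — are preserved under whatever extension of $Q$ to the completion is used in \cite{DH2020}. Here I would simply invoke that the proof of Theorem \ref{DHthm} in \cite{DH2020} already establishes completeness via such a complete BAE (indeed the frame in the sketch of Proposition \ref{ESTvalid} is built on $2^{<\omega}$, so its dual BAE \emph{is} based on $\mathcal{RO}(2^{<\omega})$, which by Example \ref{BinaryTree} is the MacNeille completion of the countable atomless Boolean algebra).

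So the key steps, in order, are: (1) note consistency of \textsf{S}, \textsf{ES}, \textsf{EST}, hence the countable atomless Boolean algebra appears as the Boolean reduct of the relevant algebra; (2) cite from the proof of Theorem \ref{DHthm} that each of these logics is the logic of a class of complete BAEs whose underlying Boolean algebra is the MacNeille completion of the countable atomless Boolean algebra (equivalently $\mathcal{RO}(2^{<\omega})$); (3) apply Proposition \ref{TreeThm} to conclude that each is the logic of a class of basic neighborhood possibility frames based on $2^{<\omega}$, i.e., is neighborhood tree complete.

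The main obstacle is step (2): verifying that the completeness proof for Theorem \ref{DHthm} can be arranged to use \emph{exactly} BAEs on the MacNeille completion of the countable atomless Boolean algebra, rather than some other complete Boolean algebra. If the canonical-style construction in \cite{DH2020} produced a complete BAE on a larger or differently-sized Boolean algebra, one would instead need a separate argument — e.g. a MacNeille-completion argument showing the Lindenbaum algebra of each logic embeds densely into a complete BAE validating the logic, together with the observation that the Boolean reduct of that Lindenbaum algebra is countable atomless. But given the explicit frame on $2^{<\omega}$ in Proposition \ref{ESTvalid} and the remark immediately following Theorem \ref{DHthm2} in the text, I expect the intended proof is the short one: the completeness witnesses for Theorem \ref{DHthm} are already of the required form, so Proposition \ref{TreeThm} applies directly.
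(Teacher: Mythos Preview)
Your proposal is correct and follows essentially the same approach as the paper: you invoke that the proof of Theorem~\ref{DHthm} in \cite{DH2020} establishes completeness via complete BAEs whose underlying Boolean algebra is $\mathcal{RO}(2^{<\omega})$ (the MacNeille completion of the countable atomless Boolean algebra), and then apply Proposition~\ref{TreeThm}. The paper's argument is precisely this two-step appeal, stated more tersely; your additional discussion of the Lindenbaum algebra is reasonable background motivation but not strictly needed for the argument as given.
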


\subsubsection{General frames}\label{GenFrame1}

As basic neighborhood possibility frames can only be used to represent complete BAEs, it is natural to introduce a more general notion of neighborhood possibility frames that can be used to represent arbitrary BAEs, as we do below. This is a straightforward development of the theory of possibility frames in \S~\ref{PropFilters}, but we state the main definitions and results for the sake of completeness.

\begin{remark}In possible world semantics, Do\v{s}en's \cite{Dosen1989} general frames, equivalent to our general neighborhood \textit{world} frames below, already allow the representation of arbitrary BAEs and indeed provide a category of frames dually equivalent to that of BAEs (morphisms are discussed in Definition \ref{NeighMorph} below). Once again an advantage of general \textit{possibility} frames is that they provide such representation and categorical duality theorems choice free.
\end{remark}

\begin{definition}\label{GenNeighPossFrame} A (\textit{general}) \textit{neighborhood possibility frame} is a tuple \\ $\mathcal{F}= ( S,\sqsubseteq,P,\{N_i\}_{i\in I}) $ such that:
\begin{enumerate}
\item $( S,\sqsubseteq, P)$ is a possibility frame as in Definition \ref{PosFrameDef};
\item $N_i:S\to \wp(P)$;
\item for all $X\in P$,  $\Box_{N_i}X\in P$.  
\end{enumerate}
When $P=\mathcal{RO}(S,\sqsubseteq)$, we say that $\mathcal{F}$ is a \textit{full neighborhood possibility frame}. When $\sqsubseteq$ is the identity relation, we say that $\mathcal{F}$ is a \textit{neighborhood world frame}.
\end{definition}

\begin{example} Recall from Example \ref{ImpreciseEx} the basic neighborhood possibility frame $(S,\sqsubseteq,\{N_{\geq r}\}_{r\in [0,1]})$ for a given measurable space $(W,\Sigma)$.  For $A\subseteq W$, let $\chi(A)=\{(w,\mathcal{P})\in S\mid w\in A\}$, and observe that $\chi$ is a Boolean embedding of $\wp(W)$ into $\mathcal{RO}(S,\sqsubseteq)$. Thus, each element of $\wp(W)$ yields a proposition in $\mathcal{RO}(S,\sqsubseteq)$. Yet if $\wp(W)\neq\Sigma$, then not every element of $\wp(W)$ is an admissible proposition in the sense of belonging to $\Sigma$. Thus, we may wish to restrict what counts as a proposition in our neighborhood frame accordingly: let $P$ be the smallest subset of $\mathcal{RO}(S,\sqsubseteq)$ that includes $\{\chi(A)\mid A\in\Sigma\}$ and is closed under $\cap$, $\neg$, and $\Box_{N_\geq r}$ for each $r\in [0,1]$. Then $(S,\sqsubseteq,\{N_\geq r\}_{r\in [0,1]},P)$ is a general possibility frame. In this frame, we have a proposition corresponding to each proposition in $\Sigma$, plus propositions generated using the $\Box_{N_\geq r}$ modalities and Boolean operations.\end{example}

One may identify full frames as in Definition \ref{GenNeighPossFrame} and basic frames as in Definition \ref{NeighPossFrame}, by the following consequence of Proposition \ref{NeighBoxClosure}.

\begin{proposition} \textnormal{The map $(\cdot)^\sharp$ sending each basic frame \[F=( S,\sqsubseteq, \{N_i\}_{i\in I})\] to its \textit{associated full frame} \[F^\sharp=( S,\sqsubseteq, \mathcal{RO}(S,\sqsubseteq), \{N_i\}_{i\in I})\] is a bijection between the class of basic neighborhood possibility frames (resp.~basic neighborhood world frames) and the class of full neighborhood possibility frames (resp.~full neighborhood world frames).}
\end{proposition}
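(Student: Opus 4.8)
The plan is to observe that this statement is essentially a repackaging of Proposition \ref{NeighBoxClosure}: the only substantive content is that $\boldsymbol{N_i}$-\textbf{persistence}/\textbf{refinability} on the ``basic'' side and closure of $P$ under $\Box_{N_i}$ on the ``general'' side are two formulations of the same condition, and everything else is bookkeeping. So I would proceed in three short steps: well-definedness, injectivity, surjectivity, followed by the parenthetical world-frame case.

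First I would check that $(\cdot)^\sharp$ lands in the target class. Let $F=(S,\sqsubseteq,\{N_i\}_{i\in I})$ be a basic neighborhood possibility frame. Then $(S,\sqsubseteq,\mathcal{RO}(S,\sqsubseteq))$ is a possibility frame in the sense of Definition \ref{PosFrameDef}, since $\mathcal{RO}(S,\sqsubseteq)$ is nonempty and closed under $\neg$ and $\cap$ by Theorem \ref{FirstThm}.\ref{FirstThm1}; each $N_i$ maps $S$ into $\wp(\mathcal{RO}(S,\sqsubseteq))$ by the definition of a neighborhood possibility foundation; and because $F$ satisfies $\boldsymbol{N_i}$-\textbf{persistence} and $\boldsymbol{N_i}$-\textbf{refinability}, Proposition \ref{NeighBoxClosure} gives that $\mathcal{RO}(S,\sqsubseteq)$ is closed under each $\Box_{N_i}$, which is exactly condition 3 of Definition \ref{GenNeighPossFrame} taken with $P=\mathcal{RO}(S,\sqsubseteq)$. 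Hence $F^\sharp$ is a full neighborhood possibility frame. Injectivity is then immediate: deleting the third coordinate of $F^\sharp$ recovers $F$, and that third coordinate is entirely determined by $(S,\sqsubseteq)$, so distinct basic frames have distinct images.

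For surjectivity I would take an arbitrary full neighborhood possibility frame $\mathcal{F}=(S,\sqsubseteq,\mathcal{RO}(S,\sqsubseteq),\{N_i\}_{i\in I})$ and consider its reduct $F=(S,\sqsubseteq,\{N_i\}_{i\in I})$, which is a neighborhood possibility foundation. Condition 3 of Definition \ref{GenNeighPossFrame} asserts that $\mathcal{RO}(S,\sqsubseteq)$ is closed under each $\Box_{N_i}$, so applying Proposition \ref{NeighBoxClosure} in the other direction yields $\boldsymbol{N_i}$-\textbf{persistence} and $\boldsymbol{N_i}$-\textbf{refinability}; thus $F$ is a basic neighborhood possibility frame, and plainly $F^\sharp=\mathcal{F}$. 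Finally, for the ``world frame'' claim I would note that when $\sqsubseteq$ is the identity relation every subset of $S$ is regular open (persistence is vacuous, and refinability holds by taking $x'=x$), so $\mathcal{RO}(S,\sqsubseteq)=\wp(S)$; since $(\cdot)^\sharp$ leaves $\sqsubseteq$ unchanged, it carries basic neighborhood world frames to full neighborhood world frames and conversely, and the argument above restricts verbatim. I do not expect any genuine obstacle here — the one point that requires a moment's thought is identifying the two equivalent formulations of the interaction conditions, and that is precisely the content of Proposition \ref{NeighBoxClosure}.
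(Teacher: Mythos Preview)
Your proof is correct and follows exactly the approach the paper indicates: the paper states the proposition as ``the following consequence of Proposition \ref{NeighBoxClosure}'' without further detail, and you have correctly unpacked that consequence by using the equivalence in Proposition \ref{NeighBoxClosure} in both directions to establish well-definedness and surjectivity, with injectivity and the world-frame case handled by the obvious bookkeeping.
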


\begin{remark}\label{FullNeighWorld} Thus, we may identify full neighborhood world frames \\ $(W,=,\wp(W),\{N_i\}_{i\in I})$ with basic neighborhood world frames and hence the usual neighborhood frames $(W,\{N_i\}_{i\in I})$ of possible world semantics as in Remark \ref{BasicNeighWorld}.
\end{remark}

Each general frame gives rise to a BAE in the obvious way.

\begin{proposition} \textnormal{Given $\mathcal{F}= ( S,\sqsubseteq,P,\{N_i\}_{i\in I}) $, a  neighborhood possibility frame, define  $\mathcal{F}^\mathsf{b}=( \mathsf{BA}(P), \{\Box_{N_i}\}_{i\in I})$ where $\mathsf{BA}(P)$ is the Boolean subalgebra of $\mathcal{RO}(S,\sqsubseteq)$ based on $P$. Then $\mathcal{F}^\mathsf{b}$ is a BAE.}\end{proposition}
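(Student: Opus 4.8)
The plan is to verify the two ingredients required by Definition \ref{BAEdef}: that $\mathsf{BA}(P)$ is a Boolean algebra, and that each $\Box_{N_i}$ is a unary operation on it. Both are immediate unpackings of the hypotheses of Definition \ref{GenNeighPossFrame}.

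First I would observe that, by condition 1 of Definition \ref{GenNeighPossFrame}, the reduct $(S,\sqsubseteq,P)$ is a possibility frame in the sense of Definition \ref{PosFrameDef}. Lemma \ref{BAsfromPosFrames} then applies verbatim: $P$ is a Boolean subalgebra of $\mathcal{RO}(S,\sqsubseteq)$, the complete Boolean algebra supplied by Theorem \ref{FirstThm}.\ref{FirstThm1}. Hence $\mathsf{BA}(P)$—the structure on carrier $P$ with the operations $\neg$, $\cap$, $\cup$ inherited from $\mathcal{RO}(S,\sqsubseteq)$—is a Boolean algebra. Next I would check that for each $i\in I$ the assignment $X\mapsto \Box_{N_i}X=\{x\in S\mid X\in N_i(x)\}$ is a well-typed map from the carrier $P$ of $\mathsf{BA}(P)$ to itself. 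It is well-defined on $P$ because $N_i\colon S\to\wp(P)$ by condition 2, so the membership statement $X\in N_i(x)$ is meaningful precisely for $X\in P$; and it takes values in $P$ by condition 3, which says exactly that $\Box_{N_i}X\in P$ for all $X\in P$. Thus $\Box_{N_i}$ is a unary operation on $\mathsf{BA}(P)$. Putting these together, $\mathcal{F}^\mathsf{b}=(\mathsf{BA}(P),\{\Box_{N_i}\}_{i\in I})$ is a Boolean algebra equipped with an $I$-indexed family of unary operations, which is precisely a BAE by Definition \ref{BAEdef}.

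There is no real obstacle: the content of the statement is entirely front-loaded into the three clauses of Definition \ref{GenNeighPossFrame} together with Lemma \ref{BAsfromPosFrames}, and the proof is a short piece of bookkeeping. The only point worth stating explicitly is that Definition \ref{BAEdef} imposes no conditions on the operations $f_i$ beyond their being functions $B\to B$ (no monotonicity, normality, or additivity), so nothing further needs to be checked.
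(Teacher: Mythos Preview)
Your proposal is correct and is exactly the natural unpacking of the definitions; the paper states this proposition without proof, treating it as immediate from Definition \ref{GenNeighPossFrame} and Lemma \ref{BAsfromPosFrames}, which is precisely what you spell out.
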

Thus, we can give formal semantics for $\mathcal{L}(I)$ and congruential modal logics using general neighborhood possibility frames, where models interpret propositional variables as elements of $P$ instead of arbitrary elements of $\mathcal{RO}(S,\sqsubseteq)$.

We can go in the converse direction from BAEs to frames as follows.

\begin{proposition}\label{BAEstoFrames} \textnormal{Given any BAE $\mathbb{B}=( B, \{f_i\}_{i\in I})$, define  
\[\mathbb{B}_\mathsf{f}=(B_\mathsf{f}, \{N_i\}_{i\in I})\mbox{ and }\mathbb{B}_\mathsf{g}=(B_\mathsf{g}, \{N_i\}_{i\in I}) \]
where $B_\mathsf{f}$ and $B_\mathsf{g}$ are as in Proposition \ref{FiltFrameDef}, and for $F\in \mathsf{PropFilt}(B)$, \[N_i(F)=\{ \widehat{a}\mid \Box_i a\in F\}.\]
Then:
\begin{enumerate}
\item $\mathbb{B}_\mathsf{f}$ and $\mathbb{B}_\mathsf{g}$ are neighborhood possibility frames;
\item\label{BAEstoFrames2} $(\mathbb{B}_\mathsf{f})^\mathsf{b}$ is (up to isomorphism) the canonical extension of $\mathbb{B}$;
\item $(\mathbb{B}_\mathsf{g})^\mathsf{b}$ is isomorphic to $\mathbb{B}$.
\end{enumerate}}
\end{proposition}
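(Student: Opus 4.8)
The plan is to reduce almost everything to results already established for the underlying possibility frames $B_\mathsf{f}$ and $B_\mathsf{g}$ (Proposition \ref{FiltFrameDef}), adding only the new modal ingredient $N_i$. First I would verify part 1. The poset and admissible-set conditions of a possibility frame hold because $B_\mathsf{f}$ and $B_\mathsf{g}$ are already possibility frames by Proposition \ref{FiltFrameDef}, so the only thing to check is that each $N_i\colon \mathsf{PropFilt}(B)\to\wp(P)$ (where $P=\mathcal{RO}(\mathsf{PropFilt}(B),\supseteq)$ in the $B_\mathsf{f}$ case and $P=\{\widehat a\mid a\in B\}$ in the $B_\mathsf{g}$ case) is well-defined with values in $\wp(P)$, and that $\Box_{N_i}X\in P$ for all $X\in P$. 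Well-definedness is immediate from the definition $N_i(F)=\{\widehat a\mid \Box_i a\in F\}$ since each $\widehat a\in P$ in both cases. For the closure condition, by Proposition \ref{NeighBoxClosure} it suffices to check $\boldsymbol{N_i}$-persistence and $\boldsymbol{N_i}$-refinability for the poset $(\mathsf{PropFilt}(B),\supseteq)$. Persistence: if $F'\supseteq F$ (i.e.\ $F'\sqsubseteq F$) and $\widehat a\in N_i(F)$, then $\Box_i a\in F\subseteq F'$, so $\widehat a\in N_i(F')$; since every element of $N_i(F)$ has the form $\widehat a$, this gives $N_i(F')\supseteq N_i(F)$. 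Refinability: if $\widehat a\notin N_i(F)$, then $\Box_i a\notin F$, so the filter $F'$ generated by $F\cup\{\neg\Box_i a\}$ is proper (as $\Box_i a\notin F$), $F'\supseteq F$, and for every $F''\supseteq F'$ we have $\neg\Box_i a\in F''$, hence $\Box_i a\notin F''$ and $\widehat a\notin N_i(F'')$ --- and again refinability for $\Box_{N_i}$ applied to the particular $X=\widehat a$ is all that is needed since the closure claim only concerns $X\in P$. (In the $B_\mathsf{f}$ case one must additionally note, for arbitrary $X\in\mathcal{RO}$ rather than just $X=\widehat a$, that $\Box_{N_i}X$ depends only on which $\widehat a$ lie below $X$; but in fact $\Box_{N_i}X$ only ever ``sees'' sets of the form $\widehat a$ since $N_i(F)\subseteq\{\widehat b\mid b\in B\}$, so $\Box_{N_i}X=\Box_{N_i}(\bigvee\{\widehat a\mid \widehat a\subseteq X\})$, which reduces to the previous computation.)

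Next, part 3. By Proposition \ref{FiltFrameDef}.3 the Boolean reduct of $(\mathbb{B}_\mathsf{g})^\mathsf{b}$ is isomorphic to $B$ via $a\mapsto\widehat a$, so I would show this isomorphism commutes with the modal operations, i.e.\ $\widehat{f_i(a)}=\Box_{N_i}\widehat a$ for all $a\in B$. Unwinding: $F\in\Box_{N_i}\widehat a$ iff $\widehat a\in N_i(F)$ iff $\Box_i a=f_i(a)\in F$ iff $F\in\widehat{f_i(a)}$. (The middle equivalence uses that $\widehat a=\widehat b$ implies $a=b$, so ``$\widehat a\in N_i(F)$'' unambiguously names the condition $f_i(a)\in F$.) Hence $a\mapsto\widehat a$ is a BAE isomorphism, proving part 3.

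Finally, part 2 --- this is the main obstacle, and the intended route is to quote Proposition \ref{FiltFrameDef}.\ref{FiltFrameDef2}, which already identifies the Boolean reduct $(\mathbb{B}_\mathsf{f})^\mathsf{b}=\mathcal{RO}(\mathsf{PropFilt}(B),\supseteq)$ with the (constructive) canonical extension of $B$, together with the dense embedding $a\mapsto\widehat a$. What remains is to check that under this identification $\Box_{N_i}$ is the \emph{canonical extension} $f_i^\sigma$ of $f_i$, i.e.\ that $\Box_{N_i}$ agrees with $f_i$ on the image of $B$ (which is part 3's computation, valid verbatim here since $\widehat{f_i(a)}=\Box_{N_i}\widehat a$) and that $\Box_{N_i}$ is continuous/smooth in the way that pins down $f^\sigma$ uniquely. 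Concretely, $f_i^\sigma(u)=\bigvee\{\bigwedge\{\widehat{f_i(a)}\mid a\ge b,\ a\in B\}\mid b\le u,\ b\in B\}$ for $u$ in the canonical extension, and I would verify directly from the definition $N_i(F)=\{\widehat a\mid \Box_i a\in F\}$ that $\Box_{N_i}$ satisfies exactly this formula: $F\in\Box_{N_i}u$ iff $u\in N_i(F)$ --- but here one must be careful, since $u$ need not be of the form $\widehat a$. The correct reading is that $\Box_{N_i}$ as defined on $P$ via $\Box_{N_i}X=\{F\mid X\in N_i(F)\}$ only makes sense for $X$ of the form $\widehat a$, so on all of $\mathcal{RO}$ one takes $\Box_{N_i}u=\bigvee\{\Box_{N_i}\widehat a\mid \widehat a\subseteq u\}$; expanding $\Box_{N_i}\widehat a=\widehat{f_i(a)}$ and comparing with the $\sigma$-extension formula (using that $\widehat a\subseteq u$ corresponds to $a\le u$ in the canonical extension, and monotonizing $f_i$ if $f_i$ is not already order-preserving) yields the match. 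I expect the genuinely delicate point to be bookkeeping the order-preservation hypothesis: the canonical extension $f^\sigma$ is defined for arbitrary (not necessarily monotone) $f$, so one should use the standard two-sided formula for $f^\sigma$ and observe that the neighborhood definition $N_i(F)=\{\widehat a\mid f_i(a)\in F\}$ reproduces it term-for-term. Once that is in place, the embedding $a\mapsto\widehat a$ is a BAE embedding of $\mathbb{B}$ into $(\mathbb{B}_\mathsf{f})^\mathsf{b}$ realizing the latter as the canonical extension, which is part 2.
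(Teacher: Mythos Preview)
Your treatment of parts 1 and 3 is essentially correct and is exactly what the paper's one-line sketch (``easy extension of Proposition \ref{FiltFrameDef} taking into account $\Box_i$ and $N_i$'') has in mind: quote Proposition \ref{FiltFrameDef} for the underlying possibility frame, check $\boldsymbol{N_i}$-persistence and $\boldsymbol{N_i}$-refinability via the filter-extension argument, and verify $\Box_{N_i}\widehat a=\widehat{f_i(a)}$ so that $a\mapsto\widehat a$ is a BAE isomorphism in the $\mathbb{B}_\mathsf{g}$ case. One small addendum for the $\mathbb{B}_\mathsf{f}$ case: your refinability check handles only $U=\widehat a$, but for $U\in\mathcal{RO}$ not of this form refinability is trivial (since $N_i(F)\subseteq\{\widehat b\mid b\in B\}$ means $U\notin N_i(F'')$ for \emph{all} $F''$), so Proposition \ref{NeighBoxClosure} still applies.

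There is, however, a genuine error in your handling of $\Box_{N_i}$ on $\mathcal{RO}$ that undermines both your parenthetical in part 1 and your entire approach to part 2. You write that $\Box_{N_i}X=\Box_{N_i}(\bigvee\{\widehat a\mid \widehat a\subseteq X\})$ and later that ``on all of $\mathcal{RO}$ one takes $\Box_{N_i}u=\bigvee\{\Box_{N_i}\widehat a\mid \widehat a\subseteq u\}$.'' But this is not the definition: by Definition \ref{GenNeighPossFrame}, $\Box_{N_i}X=\{F\mid X\in N_i(F)\}$ for \emph{every} $X\in P$, and since $N_i(F)\subseteq\{\widehat a\mid a\in B\}$, this gives $\Box_{N_i}X=\varnothing$ whenever $X$ is not of the form $\widehat a$. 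You are not free to redefine $\Box_{N_i}$ by a join formula. A quick sanity check: if $f_i$ is the constant function $c$, your formula gives $\Box_{N_i}u=\widehat c$ for all nonempty $u$, whereas the actual $\Box_{N_i}u$ is $\varnothing$ for $u$ not of the form $\widehat a$. So the operation on $(\mathbb{B}_\mathsf{f})^\mathsf{b}$ is \emph{not} the $\sigma$-extension $f_i^\sigma$ you are trying to match it with, and your part-2 argument does not go through. (This also means the statement of part 2 should be read with some care as to what ``canonical extension of $\mathbb{B}$'' denotes for an arbitrary, possibly non-monotone $f_i$; but regardless of that interpretive issue, your computation of $\Box_{N_i}$ is incorrect.)
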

\begin{proof}[Proof sketch.] The proof is an easy extension of the proof of Proposition \ref{FiltFrameDef} taking into account $\Box_i$ and $N_i$.
\end{proof}

Frames in the image of the $(\cdot)_\mathsf{g}$ map can be characterized as in Proposition~\ref{SepReal}.

\begin{proposition}\textnormal{Let $\mathcal{F}= ( S,\sqsubseteq,P, \{N_i\}_{i\in I} ) $ be a neighborhood possibility frame. Then  $\mathcal{F}$ is isomorphic to $(\mathcal{F}^\mathsf{b})_\mathsf{g}$ iff $( S,\sqsubseteq,P )$ is a filter-descriptive possibility frame (as in Proposition \ref{SepReal}). We also call such an $\mathcal{F}$ \textit{filter-descriptive}.}
\end{proposition}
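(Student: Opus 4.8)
The plan is to piggyback on Proposition \ref{SepReal}, which already settles the purely Boolean version of this equivalence, and then check that the modal data (the neighborhood functions) adds only a one-line verification. Write $\mathcal{F} = (S,\sqsubseteq,P,\{N_i\}_{i\in I})$, let $\mathcal{F}_0 = (S,\sqsubseteq,P)$ be the underlying possibility frame, and set $B := \mathsf{BA}(P)$, so $\mathcal{F}^\mathsf{b} = (B,\{\Box_{N_i}\}_{i\in I})$. By Proposition \ref{BAEstoFrames}, $(\mathcal{F}^\mathsf{b})_\mathsf{g} = (B_\mathsf{g},\{M_i\}_{i\in I})$, where $B_\mathsf{g} = (\mathsf{PropFilt}(B),\supseteq,\{\widehat{a}\mid a\in B\})$ and, identifying elements of $B$ with admissible sets $U\in P$, $M_i(F) = \{\widehat{U}\mid \Box_{N_i}U\in F\}$. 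Note that $B_\mathsf{g}$ is the underlying possibility frame of $(\mathcal{F}^\mathsf{b})_\mathsf{g}$.

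For the left-to-right direction, suppose $\mathcal{F}\cong(\mathcal{F}^\mathsf{b})_\mathsf{g}$. Forgetting the neighborhood functions, $\mathcal{F}_0$ is then isomorphic as a possibility frame to $B_\mathsf{g}$. I would observe that $B_\mathsf{g}$ is itself filter-descriptive: by part 3 of Proposition \ref{FiltFrameDef} we have $(B_\mathsf{g})^\mathsf{b}\cong B$, hence $\big((B_\mathsf{g})^\mathsf{b}\big)_\mathsf{g}\cong B_\mathsf{g}$, and so $B_\mathsf{g}\cong\big((B_\mathsf{g})^\mathsf{b}\big)_\mathsf{g}$, which by Proposition \ref{SepReal} means $B_\mathsf{g}$ satisfies separation and filter realization. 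Since both conditions are invariant under isomorphism of possibility frames, $\mathcal{F}_0$ is filter-descriptive.

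For the right-to-left direction, assume $\mathcal{F}_0$ is filter-descriptive. By Proposition \ref{SepReal} the canonical map $h\colon S\to\mathsf{PropFilt}(B)$, $h(x) = \{U\in P\mid x\in U\}$, is an isomorphism of possibility frames from $\mathcal{F}_0$ onto $B_\mathsf{g}$; filter realization gives $h[U] = \widehat{U}$ for each $U\in P$. It remains to check that $h$ also transports the neighborhood functions, i.e. that for all $x\in S$ and $U\in P$,
\[U\in N_i(x)\iff \widehat{U}\in M_i(h(x)).\]
Unwinding: $\widehat{U}\in M_i(h(x))$ iff $\Box_{N_i}U\in h(x)$ iff $x\in\Box_{N_i}U$ iff $U\in N_i(x)$, by the definition of $M_i$, of $h$, and of $\Box_{N_i}$. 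Hence $h$ witnesses $\mathcal{F}\cong(\mathcal{F}^\mathsf{b})_\mathsf{g}$ as neighborhood possibility frames.

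I do not expect a genuine obstacle here; the substance is entirely in Proposition \ref{SepReal}, and the modal layer contributes only the displayed computation. The one point demanding care is making precise that an isomorphism of neighborhood possibility frames is exactly an isomorphism of the underlying possibility frames that additionally matches up the $N_i$ in the displayed sense — so that no separate compatibility condition involving $\Box_{N_i}$ (or the Boolean reducts $\mathcal{F}^\mathsf{b}$ and $\big((\mathcal{F}^\mathsf{b})_\mathsf{g}\big)^\mathsf{b}$) needs to be argued; this is automatic because any such possibility-frame isomorphism already induces a Boolean isomorphism $\mathsf{BA}(P)\to\mathsf{BA}(\{\widehat{a}\mid a\in B\})$ commuting with the induced box operators. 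Getting that framing right, and keeping the $B = \mathsf{BA}(P)$ identifications straight, is all the bookkeeping the proof requires.
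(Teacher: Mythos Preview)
Your proof is correct and follows exactly the approach the paper intends: the paper states this proposition without proof, treating it as an immediate extension of Proposition~\ref{SepReal} once one adds the neighborhood data, and your argument supplies precisely that extension---reducing the possibility-frame part to Proposition~\ref{SepReal} and then verifying the one-line computation $U\in N_i(x)\Leftrightarrow \widehat{U}\in M_i(h(x))$.
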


To obtain a full categorical duality between filter-descriptive neighborhood frames and BAEs, we need morphisms that combine conditions from the standard p-morphisms in modal logic and neighborhood morphisms as in \cite{Dosen1989}.

\begin{definition}\label{NeighMorph} Given neighborhood possibility frames $\mathcal{F}= ( S,\sqsubseteq,P, \{N_i\}_{i\in I} ) $ and $\mathcal{F}'= ( S',\sqsubseteq',P', \{N'_i\}_{i\in I}) $, a \textit{neighborhood p-morphism} from $\mathcal{F}$ to $\mathcal{F}'$ is a map $h:S\to S'$ satisfying the following conditions for all $x,y\in S$ and $y'\in S'$:
\begin{enumerate}
\item for all $ Z'\in P'$, $h^{-1}[Z']\in P$;
\item\label{SqForthCon} \SqForth{}: if $y\sqsubseteq x$, then $h(y)\sqsubseteq h(x)$;
\item \SqBack{}: if $y'\sqsubseteq' h( x )$, then $\exists y$: $y\sqsubseteq x $ and $h(y)= y'$;
\item for all $Z'\in P'$ and $i\in I$, $h^{-1}[Z']\in N_i(x)$ iff $Z'\in N'_i(h(x))$.
\end{enumerate}
\end{definition}

\begin{proposition}\label{NeighBAE}$\,$\textnormal{
\begin{enumerate}
\item\label{NeighBAE1} Filter-descriptive possibility frames together with neighborhood \\ p-morphisms form a category, \textbf{FiltNeighPoss}.
\item\label{NeighBAE2} BAEs with BAE-homomorphisms form a category, \textbf{BAE}.
\end{enumerate}}
\end{proposition}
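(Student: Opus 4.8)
The plan is to verify the category axioms directly for both parts; neither claim has any real content beyond checking that the relevant class of morphisms is closed under composition and contains the identities, with associativity and the unit laws inherited for free from the category of sets.

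For part \ref{NeighBAE2}, I would first fix the intended notion of morphism: a \emph{BAE-homomorphism} from $\mathbb{B}=(B,\{f_i\}_{i\in I})$ to $\mathbb{B}'=(B',\{f'_i\}_{i\in I})$ is a Boolean homomorphism $\varphi\colon B\to B'$ such that $\varphi(f_i(a))=f'_i(\varphi(a))$ for all $a\in B$ and $i\in I$ (cf.\ Definition \ref{BAEdef}). Then: the identity on $B$ is such a homomorphism; if $\varphi\colon\mathbb{B}\to\mathbb{B}'$ and $\psi\colon\mathbb{B}'\to\mathbb{B}''$ are BAE-homomorphisms, then $\psi\circ\varphi$ is a Boolean homomorphism and $(\psi\circ\varphi)(f_i(a))=\psi(f'_i(\varphi(a)))=f''_i(\psi(\varphi(a)))=f''_i((\psi\circ\varphi)(a))$, so $\psi\circ\varphi$ is again a BAE-homomorphism; and associativity and the identity laws hold because these are functions. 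This is immediate.

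For part \ref{NeighBAE1}, the work is to show that neighborhood p-morphisms (Definition \ref{NeighMorph}) are closed under composition and include the identity map on each filter-descriptive frame $\mathcal{F}=(S,\sqsubseteq,P,\{N_i\}_{i\in I})$. For $\mathrm{id}_S$: clause 1 holds since $\mathrm{id}_S^{-1}[Z]=Z$, \SqForth{} and \SqBack{} are trivial, and clause 4 reduces to $Z\in N_i(x)$ iff $Z\in N_i(x)$. For composition, given neighborhood p-morphisms $h\colon\mathcal{F}\to\mathcal{F}'$ and $h'\colon\mathcal{F}'\to\mathcal{F}''$, I would check each of the four clauses for $h'\circ h$: clause 1 follows from $(h'\circ h)^{-1}[Z'']=h^{-1}\big[(h')^{-1}[Z'']\big]$ together with two applications of clause 1; \SqForth{} follows from two applications of \SqForth{}; for \SqBack{}, given $y''\sqsubseteq'' (h'\circ h)(x)=h'(h(x))$ one first applies \SqBack{} for $h'$ to obtain $y'\sqsubseteq' h(x)$ with $h'(y')=y''$, then \SqBack{} for $h$ to obtain $y\sqsubseteq x$ with $h(y)=y'$, so $(h'\circ h)(y)=y''$; and clause 4 follows by chaining biconditionals, using clause 4 for $h$ applied to $(h')^{-1}[Z'']\in P'$ and then clause 4 for $h'$ applied to $Z''\in P''$ at the point $h(x)$. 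Associativity and the identity laws are again automatic since neighborhood p-morphisms are functions.

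The only place requiring a modicum of care is the composition of the \SqBack{} clause and of clause 4, where one must correctly thread points and admissible sets through the intermediate frame $\mathcal{F}'$; these are short diagram chases and present no genuine obstacle. I would also note that filter-descriptiveness of $\mathcal{F}$ is never used in these closure arguments --- only that $\mathcal{F}$, $\mathcal{F}'$, $\mathcal{F}''$ are neighborhood possibility frames --- so the same computation shows that the class of \emph{all} neighborhood possibility frames with neighborhood p-morphisms forms a category as well.
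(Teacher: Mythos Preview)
Your proposal is correct and takes the same approach as the paper, which simply asserts that it is easy to check that neighborhood p-morphisms compose (referring to analogous proofs in \cite{Holliday2018} and \cite{Dosen1989}) and that part~\ref{NeighBAE2} is well known. You have merely filled in the routine verifications that the paper omits, and your observation that filter-descriptiveness plays no role in the closure-under-composition argument is accurate.
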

\begin{proof}[Proof sketch.] For part \ref{NeighBAE1}, it is easy to check that the composition of two neighborhood p-morphisms is  a neighborhood p-morphism (cf.~\cite{Holliday2018} for the analogous proof for p-morphisms between relational possibility frames and \cite{Dosen1989} for p-morphisms between neighborhood world frames). Part \ref{NeighBAE2} is well known (cf.~\cite{Dosen1989}).\end{proof}

\begin{theorem}\textnormal{(ZF) \textbf{FiltNeighPoss} is dually equivalent to \textbf{BAE}.}
\end{theorem}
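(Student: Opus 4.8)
The plan is to exhibit a contravariant functor $(\cdot)^{\mathsf b}\colon\textbf{FiltNeighPoss}\to\textbf{BAE}$ and a contravariant functor $(\cdot)_{\mathsf g}\colon\textbf{BAE}\to\textbf{FiltNeighPoss}$ together with natural isomorphisms $\mathrm{id}\cong(\cdot)_{\mathsf g}\circ(\cdot)^{\mathsf b}$ and $\mathrm{id}\cong(\cdot)^{\mathsf b}\circ(\cdot)_{\mathsf g}$, piggybacking on the already-established choice-free duality $\textbf{FiltPoss}$ versus $\textbf{BA}$ (Theorem \ref{FiltPossDuality}) between the ``$\Box$-free'' categories. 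On objects, $(\cdot)^{\mathsf b}$ is the map $\mathcal F\mapsto\mathcal F^{\mathsf b}$ from the proposition preceding this theorem, which lands in $\textbf{BAE}$ since $\mathcal F^{\mathsf b}$ is a BAE; and $(\cdot)_{\mathsf g}$ is $\mathbb B\mapsto\mathbb B_{\mathsf g}$ from Proposition \ref{BAEstoFrames}, which lands in $\textbf{FiltNeighPoss}$ because $\mathbb B_{\mathsf g}$ is by construction in the image of the $(\cdot)_{\mathsf g}$ map and hence filter-descriptive by the characterization of filter-descriptive neighborhood frames given above. That these object maps are mutually inverse up to isomorphism is exactly parts 2--3 of Proposition \ref{BAEstoFrames} together with that characterization.

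On morphisms, a neighborhood p-morphism $h\colon\mathcal F\to\mathcal F'$ is sent to $h^{\mathsf b}\colon(\mathcal F')^{\mathsf b}\to\mathcal F^{\mathsf b}$, $Z'\mapsto h^{-1}[Z']$. Clauses 1--3 of Definition \ref{NeighMorph} guarantee, exactly as for plain p-morphisms in the proof of Theorem \ref{FiltPossDuality}, that $h^{\mathsf b}$ is a well-defined Boolean homomorphism $\mathsf{BA}(P')\to\mathsf{BA}(P)$; and clause 4 gives, for every $Z'\in P'$ and $i\in I$, $h^{-1}[\Box_{N'_i}Z']=\{x\in S\mid h^{-1}[Z']\in N_i(x)\}=\Box_{N_i}h^{-1}[Z']$, so $h^{\mathsf b}$ commutes with the modal operations and is a $\textbf{BAE}$-homomorphism. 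Conversely, a $\textbf{BAE}$-homomorphism $k\colon\mathbb B\to\mathbb B'$ is sent to $k_{\mathsf g}\colon(\mathbb B')_{\mathsf g}\to\mathbb B_{\mathsf g}$, $F'\mapsto k^{-1}[F']$, which is a proper filter of $B$ since $k$ preserves $0$ and $1$; clauses 1--3 of Definition \ref{NeighMorph} hold by applying Theorem \ref{FiltPossDuality} to the Boolean reduct, and for clause 4 one computes $(k_{\mathsf g})^{-1}[\widehat a]=\widehat{k(a)}$ (with the latter hat taken in $B'$), whence $(k_{\mathsf g})^{-1}[\widehat a]\in N_i(F')\iff f'_i(k(a))\in F'\iff k(f_i(a))\in F'\iff f_i(a)\in k^{-1}[F']\iff\widehat a\in N_i(k_{\mathsf g}(F'))$, using that $k$ intertwines $f_i$ and $f'_i$. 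Functoriality (preservation of identities and composition) is immediate because both assignments are inverse image, and that $\textbf{FiltNeighPoss}$ and $\textbf{BAE}$ are genuine categories is Proposition \ref{NeighBAE}.

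For the natural isomorphisms, take $\varepsilon_{\mathbb B}\colon\mathbb B\to(\mathbb B_{\mathsf g})^{\mathsf b}$, $a\mapsto\widehat a$, a $\textbf{BAE}$-isomorphism by part 3 of Proposition \ref{BAEstoFrames}; and $\eta_{\mathcal F}\colon\mathcal F\to(\mathcal F^{\mathsf b})_{\mathsf g}$, $x\mapsto\{Z\in P\mid x\in Z\}$, which is an isomorphism of neighborhood possibility frames precisely because $\mathcal F$ is filter-descriptive (the separation and filter-realization conditions of Proposition \ref{SepReal} make $\eta_{\mathcal F}$ injective and onto the proper filters, and it respects $\sqsubseteq$, $P$, and the $N_i$). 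Naturality of $\varepsilon$ is the identity $\widehat{k(a)}=(k_{\mathsf g})^{\mathsf b}(\widehat a)$ already verified; naturality of $\eta$ is the computation $\eta_{\mathcal F'}(h(x))=\{Z'\in P'\mid h(x)\in Z'\}=\{Z'\in P'\mid x\in h^{-1}[Z']\}=(h^{\mathsf b})_{\mathsf g}(\eta_{\mathcal F}(x))$. This yields the dual equivalence, entirely within ZF.

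I expect the only real work to be bookkeeping: recognizing that clause 4 of Definition \ref{NeighMorph} is exactly the condition making $(\cdot)^{\mathsf b}$ respect $\Box_{N_i}$, and that the pair of identities $(k_{\mathsf g})^{-1}[\widehat a]=\widehat{k(a)}$ and $k\circ f_i=f'_i\circ k$ forces $k_{\mathsf g}$ to satisfy that same clause; everything else is inherited verbatim from the Boolean duality (Theorem \ref{FiltPossDuality}) and Propositions \ref{FiltFrameDef}, \ref{BAEstoFrames}, and \ref{NeighBAE}. The main subtlety is purely notational---keeping the hats in $B$ versus $B'$ straight and confirming that $k^{-1}$ of a proper filter is proper---so that $(\cdot)_{\mathsf g}$ is genuinely defined on all of $\textbf{BAE}$.
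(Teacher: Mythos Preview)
Your proposal is correct and follows exactly the approach indicated by the paper, whose proof is a one-line sketch: ``an easy extension of the proof of Theorem~\ref{FiltPossDuality} taking into account the neighborhood functions and congruential modalities.'' You have simply filled in that sketch in full detail, correctly identifying clause~4 of Definition~\ref{NeighMorph} as the ingredient that makes $h^{\mathsf b}$ a BAE-homomorphism and verifying the matching computation for $k_{\mathsf g}$.
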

\begin{proof}[Proof sketch.] The proof is an easy extension of the proof of Theorem \ref{FiltPossDuality} taking into account the  neighborhood functions and congruential modalities.
\end{proof}

\subsection{Relational frames}\label{RelationalFrameSection}

In this section, we narrow our focus from semantics for arbitrary congruential modal logics, as in \S~\ref{NeighSection}, to semantics for normal modal logics in particular.

\begin{definition}\label{NormalLogic} A \textit{normal modal logic} is a congruential modal logic $\mathsf{L}$ as in Definition \ref{CongLog} satisfying the following conditions:
\begin{enumerate}
\item $(\Box p\wedge \Box q)\leftrightarrow \Box(p\wedge q)\in \mathsf{L}$;
\item necessitation rule: if $\varphi\in\mathsf{L}$, then $\Box_i\varphi\in\mathsf{L}$.
\end{enumerate}
\end{definition}
\noindent If we restrict attention to normal modal logics, we can use a possibility-semantic version of a very intuitive style of modal semantics known as \textit{relational semantics} (cf.~\cite{Blackburn2001}). In this section---the longest of the chapter---we cover a number of aspects of relational possibility semantics: 

In \S~\ref{AccessSection}, we define relational possibility frames and discuss the key issue of the interaction between a modal accessibility relation $R$ and the refinement relation $\sqsubseteq$. Every Kripke frame gives rise to a possibility frame (by taking $\sqsubseteq$ to be the identity relation on worlds), but the converse does not hold, since possibility frames can give rise to non-atomic modal algebras.

In \S~\ref{PossKripkeModels}, we briefly focus on models---frames with a fixed valuation of propositional variables---and show how possibility models can be turned into Kripke models. We then return to our focus on frames for the rest of the section.

In \S\S~\ref{VtoFrame}-\ref{DualEquiv}, we relate relational possibility frames to Boolean algebras with $\Box_i$ operations that distribute over arbitrary, not only finite, conjunctions (meets). The moral is that from an algebraic point of view, the essence of relational semantics for modal logic  in its most basic form (i.e., before the step to general frames or topological frames that can realize all modal algebras)  is the distribution of $\Box_i$ over arbitrary conjunctions---or dually, the distribution of $\Diamond_i$ over arbitrary disjunctions---not the atomicity that comes from assuming that accessibility relations relate possible \textit{worlds}.  

In \S\S~\ref{QuasiSection}, we briefly introduce possibility semantics for \textit{quasi-normal} modal logics, a generalization of normal modal logics not requiring the necessitation rule (see Definition \ref{NormalLogic}), which have important applications in provability logic \cite{Boolos1993}.

Provability logic is also the theme of \S\S~\ref{FullRelFrameSection}-\ref{PrincRelFrame}. In \S~\ref{FullRelFrameSection}, we use examples inspired by provability logic to illustrate the greater generality of full relational possibility frames over Kripke frames, by identifying provability-logical principles that cannot be validated by the latter frames but can by the former frames. Then in \S~\ref{PrincRelFrame}, we use  possibility semantics to deepen a well-known Kripke incompleteness result for bimodal provability logic, showing that this incompleteness does not depend on the atomicity of the dual algebras of Kripke frames and arises only from the assumption that $\Box_i$ distributes over arbitrary conjunctions. Thus, possibility semantics can be used both to overcome some incompleteness results (\S~\ref{FullRelFrameSection}) and to strengthen other incompleteness results (\S~\ref{PrincRelFrame}).

Finally, in \S~\ref{GenRelFrame}, we briefly sketch the theory of \textit{filter-descriptive} relational possibility frames, the relational analogue of the filter-descriptive neighborhood possibility frames of \S~\ref{GenFrame1}, extending the theory of filter-descriptive (non-modal) possibility frames from \S~\ref{PropFilters}. These filter-descriptive relational frames provide a fully general, choice-free relational semantics for normal modal logics.

Before introducing relational possibility frames for normal modal logics in \S~\ref{AccessSection}, we first recall the relevant algebras for such logics.

\begin{definition}\label{BAOdef} A \textit{Boolean algebra with} (\textit{unary}) \textit{operators} (BAOs) is a BAE $\mathbb{B}=( B, \{\Box_i\}_{i\in I} )$ as in Definition~\ref{BAEdef} in which each $\Box_i$ distributes over all finite meets.\end{definition}

\begin{remark} The term `operator' usually refers to the $\Diamond_i$ operation that distributes over all finite joins, whereas the $\Box_i$ operation that distributes over all finite meets is called the `dual operator'. It turns out to be convenient for us to take the $\Box_i$ operation as primitive and $\Diamond_i$ defined by $\Diamond_ia :=\neg \Box_i\neg a$.
\end{remark}

\subsubsection{Accessibility relations}\label{AccessSection}

The restriction to normal modal logics allows us to replace neighborhood functions $N_i$ with more easily visualized \textit{accessibility relations} $R_i$. As in \S~\ref{BasicNeighSection}, we could first define relational possibility \textit{foundations},  then \textit{basic} relational possibility frames, and finally \textit{general} relational possibility frames---but instead we will cut straight to the most general notion.

\begin{definition}\label{RellPossFrame} A (\textit{general}) \textit{relational possibility frame} is a tuple \\ $\mathcal{F}={( S,\sqsubseteq,P, \{R_i\}_{i\in I})}$ such that:
\begin{enumerate}
\item $(S,\sqsubseteq, P)$ is a possibility frame as in Definition \ref{PosFrameDef};
\item $R_i$ is a binary relation on $S$;
\item for all $Z\in P$, we have $\Box_iZ\in P$, where
\[\Box_i Z=\{x\in S\mid R_i(x)\subseteq Z\}\mbox{ with }R_i(x)=\{y\in S\mid xR_iy\}.\]
\end{enumerate}
When $P=\mathcal{RO}(S,\sqsubseteq)$, we say that $\mathcal{F}$ is a \textit{full} relational possibility frame. When $\sqsubseteq$ is the identity relation, we say that $\mathcal{F}$ is a \textit{relational world frame}.

A \textit{possibility model} based on $\mathcal{F}$ is a pair $\mathcal{M}=(\mathcal{F},\pi)$ where $\pi:\mathsf{Prop}\to P$. For $\varphi\in\mathcal{L}(I)$ and $x\in S$, we define $\mathcal{M},x\Vdash\varphi$ recursively as in Definition \ref{PossForcing1} but with the following new clause for $\Box_i$:
\begin{itemize}
\item $\mathcal{M},x\Vdash\Box_i\varphi$ iff for all $y\in S$, if $xR_iy$ then $\mathcal{M},y\Vdash\varphi$.
\end{itemize}
Validity of a formula on a frame or class of frames is defined as in Definition \ref{PossForcing1}, as is the logic of a class of frames.
\end{definition}

\begin{proposition}\label{BAOfromFrame} \textnormal{For a relational possibility frame $\mathcal{F}$, define the algebra $\mathcal{F}^\mathsf{b}=(\mathsf{BA}(P),\{\Box_i\}_{i\in I})$ where $\mathsf{BA}(P)$ is the Boolean subalgebra of $\mathcal{RO}(S,\sqsubseteq)$ based on $P$. Then $\mathcal{F}^\mathsf{b}$ is a BAO.}
\end{proposition}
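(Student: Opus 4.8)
The plan is to verify the three things needed: (i) $\mathsf{BA}(P)$ is a Boolean subalgebra of $\mathcal{RO}(S,\sqsubseteq)$; (ii) each $\Box_i$ restricts to an operation on $\mathsf{BA}(P)$; and (iii) each $\Box_i$ distributes over all finite meets (in particular $\Box_i$ applied to the top element $S$ is $S$, and $\Box_i(X\cap Y)=\Box_i X\cap\Box_i Y$). Points (i) and (ii) are essentially given to us: (i) is Lemma \ref{BAsfromPosFrames} applied to the underlying possibility frame $(S,\sqsubseteq,P)$, and (ii) is condition 3 of Definition \ref{RellPossFrame}, which says precisely that $\Box_i Z\in P$ whenever $Z\in P$. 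So the only real content is (iii) together with the observation that $\Box_i Z$ is actually a \emph{regular open} set (i.e.~lies in $\mathcal{RO}(S,\sqsubseteq)$, not merely in $P$ as an abstract subset), but this too follows from condition 3 since $P\subseteq\mathcal{RO}(S,\sqsubseteq)$.

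For (iii), I would argue purely set-theoretically using the definition $\Box_i Z=\{x\in S\mid R_i(x)\subseteq Z\}$. First, $\Box_i S = \{x\in S\mid R_i(x)\subseteq S\} = S$, the top element of $\mathsf{BA}(P)$, since $R_i(x)\subseteq S$ always holds. Next, for $X,Y\in\mathsf{BA}(P)$, we have the chain of equivalences: $x\in\Box_i(X\cap Y)$ iff $R_i(x)\subseteq X\cap Y$ iff $R_i(x)\subseteq X$ and $R_i(x)\subseteq Y$ iff $x\in\Box_i X$ and $x\in\Box_i Y$ iff $x\in\Box_i X\cap\Box_i Y$. Here I use that in $\mathsf{BA}(P)$ the meet of $X$ and $Y$ is computed as in $\mathcal{RO}(S,\sqsubseteq)$, where by Theorem \ref{FirstThm}.\ref{FirstThm1} finite meets are just intersections; so $X\wedge Y = X\cap Y$ and likewise $\Box_i X\wedge\Box_i Y = \Box_i X\cap\Box_i Y$. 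By induction, $\Box_i$ distributes over all finite meets, which together with (i) and (ii) is exactly the definition of a BAO (Definition \ref{BAOdef}), so $\mathcal{F}^\mathsf{b}$ is a BAO.

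I do not anticipate any genuine obstacle here; the proposition is a routine unwinding of definitions, with all the substantive work (that regular opens form a complete BA, that meets are intersections, that $\mathsf{BA}(P)$ is a subalgebra) already established in Theorem \ref{FirstThm} and Lemma \ref{BAsfromPosFrames}. The only point deserving a word of care is to distinguish the meet operation in the \emph{subalgebra} $\mathsf{BA}(P)$ from set intersection in general; but since the meet in $\mathcal{RO}(S,\sqsubseteq)$ already coincides with intersection and $\mathsf{BA}(P)$ is a subalgebra closed under this operation, the two agree, and the distribution computation goes through verbatim. Hence the expected ``hard part'' is essentially nonexistent — this is a lemma-level fact whose proof is a one-line verification of $\Box_i(X\cap Y)=\Box_i X\cap\Box_i Y$ and $\Box_i S = S$.
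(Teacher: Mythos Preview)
Your proposal is correct and matches the paper's approach: the paper's proof simply says ``Immediate from Definitions \ref{BAOdef} and \ref{RellPossFrame}, using the fact that $\mathcal{RO}(S,\sqsubseteq)$ is a Boolean algebra (Theorem \ref{FirstThm}.\ref{FirstThm1}),'' and you have correctly unpacked precisely what that immediacy consists in. Your explicit verification that $\Box_i S=S$ and $\Box_i(X\cap Y)=\Box_i X\cap\Box_i Y$, together with the observation that meets in $\mathsf{BA}(P)$ are intersections, is exactly the routine check the paper elides.
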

\begin{proof} Immediate from Definitions \ref{BAOdef} and \ref{RellPossFrame}, using the fact that $\mathcal{RO}(S,\sqsubseteq)$ is a Boolean algebra (Theorem \ref{FirstThm}.\ref{FirstThm1}).
\end{proof}

\begin{remark}\label{FullKripkeRemark} \textit{Full} relational \textit{world} frames $\mathcal{F}=(W,=,\wp(W),\{R_i\}_{i\in I})$ may be regarded as the standard Kripke frames \cite{Kripke1963} from possible world semantics. The key difference between full world frames and full possibility frames is that the former can realize only atomic algebras as $\mathcal{F}^\mathsf{b}$. In this chapter, we focus mainly on this difference at the level of frames and algebras. For relations between possibility \textit{models} and world \textit{models}, see \S~\ref{PossKripkeModels} and \cite{HT2017}.
\end{remark}

The following lemma gives convenient sufficient conditions for $\mathcal{RO}(S,\sqsubseteq)$ to be closed under $\Box_i$. The first two are familiar from semantics for intuitionistic modal logic \cite{Wolter1997}. We give necessary and sufficient conditions in Appendix \ref{NecSucc}.

\begin{lemma}[\cite{Holliday2018}]\label{SufficientForFull} \textnormal{For a poset $( S,\sqsubseteq )$ and family $\{R_i\}_{i\in I}$ of binary relations on $S$, suppose the following conditions hold for each $i\in I$:
\begin{itemize}
\item \upR{} -- if $x'\sqsubseteq x$ and $x'R_iy'$, then $xR_iy'$ (see Figure \ref{upRFig}); 
\item \Rdown{} -- if $xR_iy$ and $y'\sqsubseteq y$, then $xR_iy'$ (see Figure \ref{RdownFig});
\item \Rref{} -- if $xR_iy$, then $\exists x'\sqsubseteq x$ $\forall x''\sqsubseteq x'$ $\exists y'\sqsubseteq y$: $x'' R_i y'$ (see Figure~\ref{Rreffig}).
\end{itemize}
Then:
\begin{enumerate}
\item $\mathcal{RO}(S,\sqsubseteq)$ is closed under $\Box_i$;
\item $( S,\sqsubseteq, \mathcal{RO}(S,\sqsubseteq), \{R_i\}_{i\in I})$ is a full relational possibility frame;
\item $( S,\sqsubseteq, P, \{R_i\}_{i\in I})$ is a relational possibility frame for any $P\subseteq \mathcal{RO}(S,\sqsubseteq)$ closed under $\neg$ and~$\cap$.
\end{enumerate}}
\end{lemma}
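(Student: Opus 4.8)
The plan is to reduce everything to the single claim that $\mathcal{RO}(S,\sqsubseteq)$ is closed under $\Box_i$; parts 2 and 3 then follow immediately, since part 2 is just the definition of a full relational possibility frame once closure is known, and part 3 follows because $(S,\sqsubseteq,P)$ is a possibility frame whenever $P\subseteq\mathcal{RO}(S,\sqsubseteq)$ is closed under $\neg$ and $\cap$ (Definition \ref{PosFrameDef}), and the closure of $\mathcal{RO}(S,\sqsubseteq)$ under $\Box_i$ gives $\Box_iZ\in\mathcal{RO}(S,\sqsubseteq)$; but one must still check $\Box_iZ\in P$ when $Z\in P$ — and here I would note that this needs a further argument or a strengthening, so I would instead read part 3 as asserting only that the triple is a \emph{possibility frame} in the weaker sense, or supply the observation that $P$ closed under $\neg,\cap$ together with the box-closure of $\mathcal{RO}$ is not by itself enough, so the intended reading is that one takes $P$ already closed under $\Box_i$. (I will flag this as the one delicate point of bookkeeping.) The genuine mathematical content is part 1.

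For part 1, recall from \S~\ref{CompleteBooleanSection} that $Z\in\mathcal{RO}(S,\sqsubseteq)$ iff $Z$ satisfies \emph{persistence} and \emph{refinability}, so I must verify these two properties for $\Box_iZ=\{x\mid R_i(x)\subseteq Z\}$, assuming $Z$ satisfies them. First, \emph{persistence} for $\Box_iZ$: suppose $x\in\Box_iZ$ and $x'\sqsubseteq x$; I must show $x'\in\Box_iZ$, i.e.\ $R_i(x')\subseteq Z$. Take $y'$ with $x'R_iy'$. By \upR{}, $x'\sqsubseteq x$ and $x'R_iy'$ give $xR_iy'$, so $y'\in R_i(x)\subseteq Z$. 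Hence $R_i(x')\subseteq Z$, as desired. (Note \Rdown{} is not needed for persistence; it will be the ingredient that makes \emph{refinability} go through.)

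Second, \emph{refinability} for $\Box_iZ$: suppose $x\notin\Box_iZ$; I must find $x'\sqsubseteq x$ such that for all $x''\sqsubseteq x'$, $x''\notin\Box_iZ$. Since $x\notin\Box_iZ$, there is $y$ with $xR_iy$ and $y\notin Z$. By \emph{refinability} for $Z$, there is $y^*\sqsubseteq y$ such that for all $y^{**}\sqsubseteq y^*$, $y^{**}\notin Z$. By \Rdown{} applied to $xR_iy$ and $y^*\sqsubseteq y$, we get $xR_iy^*$. Now apply \Rref{} to $xR_iy^*$: there is $x'\sqsubseteq x$ such that for every $x''\sqsubseteq x'$ there is $y'\sqsubseteq y^*$ with $x''R_iy'$. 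Fix any $x''\sqsubseteq x'$ and the corresponding $y'\sqsubseteq y^*$; then $y'\notin Z$ by choice of $y^*$, while $x''R_iy'$, so $R_i(x'')\not\subseteq Z$, i.e.\ $x''\notin\Box_iZ$. This holds for all $x''\sqsubseteq x'$, establishing refinability. Thus $\Box_iZ\in\mathcal{RO}(S,\sqsubseteq)$, and since $\mathcal{RO}(S,\sqsubseteq)$ is a Boolean algebra (Theorem \ref{FirstThm}.\ref{FirstThm1}), parts 1--3 follow.

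The main obstacle is not the closure verification itself — that is a clean three-way interplay of \upR{} for persistence, and \Rdown{} together with \Rref{} for refinability — but rather making sure the quantifier order in \Rref{} is used correctly: one must choose $x'$ \emph{before} fixing $x''$, then extract the witness $y'$ depending on $x''$, and this is exactly what lets the bad value $y^*$ (and its refinements, all outside $Z$) be "reached" from every refinement of $x'$. A secondary point of care is the interaction between \Rdown{} and the refinability witness for $Z$: one applies \Rdown{} to push $xR_iy$ down to $xR_iy^*$ at the already-refined target $y^*$, so that \Rref{} is invoked at a point all of whose accessible refinements lie outside $Z$.
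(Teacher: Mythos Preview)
Your proof of part~1 is correct and is precisely the standard argument: \upR{} yields persistence of $\Box_iZ$, while \Rdown{} (to pass from $xR_iy$ to $xR_iy^*$) followed by \Rref{} (to find $x'$) yields refinability, exactly as you lay out. The paper does not supply its own proof here---the lemma is cited from \cite{Holliday2018}---so there is nothing to compare against beyond noting that your argument is the intended one.

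Your flag on part~3 is also well-taken and is a genuine issue with the \emph{statement} rather than your proof: closure of $P$ under $\neg$ and $\cap$ alone does not force $\Box_iZ\in P$ for $Z\in P$, so as written part~3 does not follow from part~1 without the additional hypothesis that $P$ is closed under $\Box_i$. You are right to read part~3 as either tacitly assuming that extra closure or as a slight overstatement; it is not a gap in your reasoning.
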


 \begin{figure}[H]
\begin{center}
\begin{tikzpicture}[->,>=stealth',shorten >=1pt,shorten <=1pt, auto,node
distance=2cm,thick,every loop/.style={<-,shorten <=1pt}]
\tikzstyle{every state}=[fill=gray!20,draw=none,text=black]

\node (x-up) at (0,0) {{$x$}};
\node (x) at (0,-1.5) {{$x'$}};
\node (y) at (2,-1.5) {{$y'$}};

\node at (3,-1.5) {{\textit{$\Rightarrow$}}};

\path (x) edge[dashed,->] node {{}} (y);
\path (x-up) edge[->] node {{}} (x);

\node (x-up') at (4,0) {{$x$}};
\node (x') at (4,-1.5) {{$x'$}};
\node (y') at (6,-1.5) {{$y'$}};

\path (x') edge[dashed,->] node {{}} (y');
\path (x-up') edge[->] node {{}} (x');
\path (x-up') edge[dashed,->] node {{}} (y');

\end{tikzpicture}
\end{center}
\caption{the \upR{} condition. Given $x'R_iy'$, we may go \textit{up} in the first coordinate to any $x$ above $x'$ to obtain $xR_iy'$.}\label{upRFig}
\end{figure}
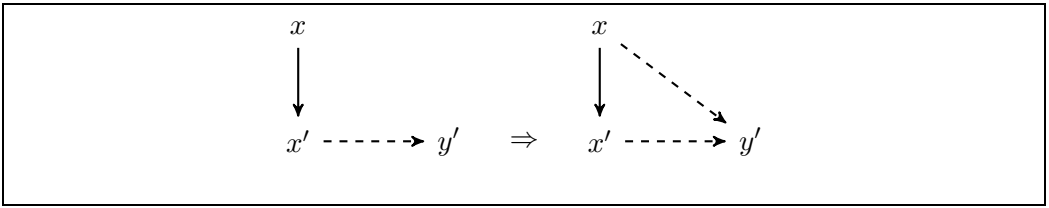

 \begin{figure}[H]
\begin{center}
\begin{tikzpicture}[->,>=stealth',shorten >=1pt,shorten <=1pt, auto,node
distance=2cm,thick,every loop/.style={<-,shorten <=1pt}]
\tikzstyle{every state}=[fill=gray!20,draw=none,text=black]

\node (x) at (0,-1.5) {{$x$}};
\node (y) at (2,-1.5) {{$y$}};
\node (y-down) at (2,-3) {{$y'$}};
\node at (3,-1.5) {{\textit{$\Rightarrow$}}};

\path (x) edge[dashed,->] node {{}} (y);

\path (y) edge[->] node {{}} (y-down);

\node (x') at (4,-1.5) {{$x$}};
\node (y') at (6,-1.5) {{$y$}};
\node (y-down') at (6,-3) {{$y'$}};

\path (x') edge[dashed,->] node {{}} (y');
\path (y') edge[->] node {{}} (y-down');
\path (x') edge[dashed,->] node {{}} (y-down');

\end{tikzpicture}
\end{center}
\caption{the \Rdown{} condition. Given $xR_iy$, we may go \textit{down} in the second coordinate to any $y'$ below $y$ to obtain $xR_iy'$.}\label{RdownFig}
\end{figure}
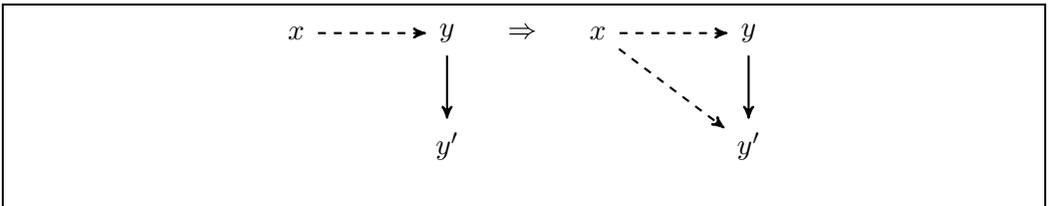

\begin{figure}[H]
 \begin{center}
\begin{tikzpicture}[->,>=stealth',shorten >=1pt,shorten <=1pt, auto,node
distance=2cm,thick,every loop/.style={<-,shorten <=1pt}]
\tikzstyle{every state}=[fill=gray!20,draw=none,text=black]

\node (x0) at (-5,0) {{$x$}};
\node (y0) at (-2,0) {{$y$}};

\path (x0) edge[dashed,->] node {{}} (y0);

\node at (-1,0) {{\textit{$\Rightarrow$}}};

\node (x) at (0,0) {{$x$}};
\node (y) at (3,0) {{$y$}};

\node (x') at (0,-1.5) {{$x'$}};
\node (x'') at (0,-3) {{$x''$}};
\node (y'') at (3,-3) {{$y'$}};

\path (x') edge[<-] node {{ a.$\,\exists\;$}} (x);
\path (x) edge[dashed,->] node {{}} (y);
\path (y) edge[->] node {{ c.$\,\exists$}} (y'');
\path (y'') edge[dashed,<-] node {{}} (x'');
\path (x'') edge[<-] node {{ b.$\,\forall\;$ }} (x');

\end{tikzpicture}
\end{center}
\caption{the \Rref{} condition. The quantifiers in the diagram correspond to the quantifiers in the definition of \Rref{}  in Lemma \ref{SufficientForFull}.}\label{Rreffig}
\end{figure}
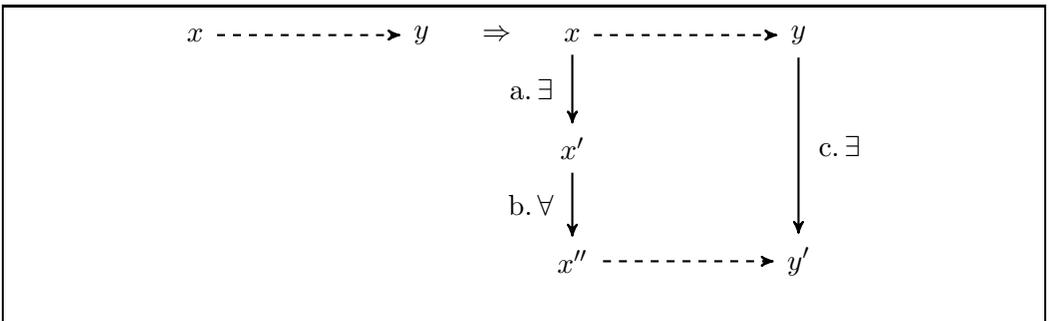

Note our naming convention: we use the name `\upR{}' instead of `$\boldsymbol{R}$\textbf{-up}' because we are going up in the position before `$R$': from $x'R_iy'$ to $xR_iy'$ where $x$ is above $x'$; by contrast, we use the name `\Rdown' because  we are going down in the position after `$R$': from $xR_iy$ to $xR_iy'$ where $y'$ is below $y$. 

A key fact about \Rdown{} is that it allows us to simplify the interpretation of $\Diamond$. Defining $\Diamond_i Z$ as $\neg\Box_i\neg Z$, we have
\begin{equation}\Diamond_iZ=\{x\in S\mid \forall x'\sqsubseteq x\, \exists y'\in R_i(x')\,\exists y''\sqsubseteq y': y''\in Z\}.\label{DiamondDef}\end{equation}
However, with \Rdown{} we achieve the following simplication.
\begin{lemma} \textnormal{Let $\mathcal{F}={( S,\sqsubseteq,P, \{R_i\}_{i\in I})}$ be a possibility frame satisfying \Rdown{}. Then for all $Z\in P$:
\[\Diamond_iZ=\{x\in S\mid \forall x'\sqsubseteq x\, \exists z'\in R_i(x'): z'\in Z\}.\]}
\end{lemma}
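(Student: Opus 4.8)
The claim is that under \Rdown{}, the diamond $\Diamond_i Z = \neg\Box_i\neg Z$, which in general has the two-quantifier-over-refinements form displayed in equation~(\ref{DiamondDef}), collapses to the simpler form $\{x\in S\mid \forall x'\sqsubseteq x\;\exists z'\in R_i(x'):z'\in Z\}$. The plan is to unfold $\neg\Box_i\neg Z$ directly from the definitions and then use \Rdown{} together with persistence of $Z$ (recall $Z\in P\subseteq\mathcal{RO}(S,\sqsubseteq)$, so $Z$ is a downset) to eliminate the innermost existential quantifier $\exists y''\sqsubseteq y'$.

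First I would compute $\neg\Box_i\neg Z$ from scratch. We have $\neg U=\{x\mid\forall x'\sqsubseteq x\; x'\notin U\}$ and $\Box_iU=\{x\mid R_i(x)\subseteq U\}$. So $x\in\Box_i\neg Z$ iff for all $y\in R_i(x)$, $y\in\neg Z$, i.e.\ for all $y\in R_i(x)$ and all $y'\sqsubseteq y$, $y'\notin Z$. Hence $x\notin\Box_i\neg Z$ iff there exist $y\in R_i(x)$ and $y'\sqsubseteq y$ with $y'\in Z$; by \Rdown{}, $y\in R_i(x)$ and $y'\sqsubseteq y$ give $y'\in R_i(x)$, so this is equivalent to: there exists $z'\in R_i(x)$ with $z'\in Z$ (the converse direction is trivial, taking $y=y'=z'$). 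Therefore $x\notin\Box_i\neg Z$ iff $R_i(x)\cap Z\neq\varnothing$, and so $x\in\neg\Box_i\neg Z$ iff for all $x'\sqsubseteq x$, $x'\notin\Box_i\neg Z$, i.e.\ for all $x'\sqsubseteq x$ there is $z'\in R_i(x')$ with $z'\in Z$. This is exactly the asserted formula.

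This is essentially a routine unfolding, so there is no serious obstacle; the only point requiring a moment's care is the use of \Rdown{} to replace ``$\exists y\in R_i(x)\,\exists y'\sqsubseteq y$'' by ``$\exists z'\in R_i(x)$'', and checking that this replacement is an equivalence (the forward direction uses \Rdown{}, the backward direction is immediate). I would present the argument as a short chain of iff's as above, perhaps noting explicitly that $Z$ being a downset (persistence) is what makes the equivalence $x\notin\Box_i\neg Z\iff R_i(x)\cap Z\neq\varnothing$ interact correctly — though in fact persistence of $Z$ is not even needed here once \Rdown{} is in hand, since $z'\in R_i(x)\cap Z$ already witnesses non-membership in $\Box_i\neg Z$ directly. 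So the proof is: unfold $\neg\Box_i\neg Z$, apply \Rdown{} to simplify, and read off the result.
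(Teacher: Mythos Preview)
Your proof is correct and takes essentially the same approach as the paper: the paper's proof simply starts from the already-displayed general form of $\Diamond_i Z$ in equation~(\ref{DiamondDef}) and applies \Rdown{} to absorb the inner $\exists y''\sqsubseteq y'$ into the single witness $z':=y''\in R_i(x')$, which is exactly the simplification you carry out (you just unfold $\neg\Box_i\neg Z$ from scratch rather than citing~(\ref{DiamondDef})).
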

\begin{proof} Let $z'$ be $y''$ in (\ref{DiamondDef}). By \Rdown{}, together $y'\in R_i(x')$ and $y''\sqsubseteq y'$ imply $y''\in R_i(x')$.\end{proof}

\begin{example}\label{SeaBattle} We begin with a simple example of a finite possibility frame. Although finite frames do not generate any new algebras compared to possible world frames, the frames themselves may be interesting structures. We give a temporal frame inspired by Aristotle's Sea Battle argument (see, e.g., \cite[p.~35]{Fitting1998}):
\begin{center}
\begin{tikzpicture}[->,>=stealth',shorten >=1pt,shorten <=1pt, auto,node
distance=2cm,thick,every loop/.style={<-,shorten <=1pt}]
\tikzstyle{every state}=[fill=gray!20,draw=none,text=black]

\node[circle,draw=black!100,fill=black!100, label=above:$present$,inner sep=0pt,minimum size=.175cm] (t) at (2,1.5) {{$$}};

\node[circle,draw=black!100,fill=black!100, label=above:$x$,inner sep=0pt,minimum size=.175cm] (x) at (0,0) {{$$}};
\node[circle,draw=black!100,fill=black!100, label=above:$y$,inner sep=0pt,minimum size=.175cm] (y) at (4,0) {{$$}};

\node[circle,draw=black!100,fill=black!100, label=above:\;\,$x'$,label=below:sea battle,inner sep=0pt,minimum size=.175cm] (x') at (4,-1) {{$$}};
\node[circle,draw=black!100,fill=black!100, label=above:\;$y'$,label=below:no sea battle,inner sep=0pt,minimum size=.175cm] (y') at (8,-1) {{$$}};

\path (t) edge[->] node {{}} (x);
\path (t) edge[->] node {{}} (y);

\path (t) edge[dashed,->] node {{}} (x');
\path (t) edge[dashed,->] node {{}} (y');

\path (x) edge[dashed,->] node {{}} (x');
\path (y) edge[dashed,->] node {{}} (y');

\path (x') edge[dotted,bend left,->] node {{}} (x);
\path (y') edge[dotted,bend left,->] node {{}} (y);

\end{tikzpicture}
\end{center}
Solid arrows represent the refinement relation; dashed arrows represent the future accessibility relation $R_{f}$ used to interpret future modalities $\Box_f$ and $\Diamond_f$ (\textit{henceforth} and \textit{sometime in the future}, respectively); dotted arrows represent the past accessibility relation $R_{p}$ used to interpret past modalities $\Box_p$ and $\Diamond_p$ (\textit{hitherto} and \textit{sometime in the past}, respectively). It is easy to check that \upR{}, \Rdown{}, and \Rref{} hold for $R_f$ and $R_p$. For example, since  $x\sqsubseteq present$ and $xR_fx'$, \upR{} requires that $presentR_fx'$, which indeed holds. The present moment is represented by the possibility labeled `$present$'. There are two possible refinements of the present, $x$ and $y$, but we suppose that neither is currently realized. There are two associated future possibilities for what happens tomorrow: one ($x'$) in which there is sea battle, and one ($y'$) in which there is no sea battle. 

Define a model $\mathcal{M}$ based on this frame where a propositional variable $s$, expressing that there is a sea battle, is true only at $x'$. As we are considering possibility semantics for \textit{classical} logic, either there will be a sea battle tomorrow or there won't be: $\mathcal{M},present\Vdash \Diamond_f s\vee\neg \Diamond_f s$. However, the present does not settle that there will be sea battle tomorrow, and the present does not settle that there won't be a sea battle tomorrow: $\mathcal{M},present\nVdash \Diamond_f s$, since $y\sqsubseteq present$ and $\mathcal{M},y\Vdash \neg\Diamond_f s$, and $\mathcal{M},present\nVdash \neg \Diamond_f  s$, since $x\sqsubseteq present$ and $\mathcal{M},x\Vdash\Diamond_f s$. Thus, the future is presently open. Yet if there is a sea battle, so $x'$ is realized, then the past will turn out to be $x$, in which there would be a future sea battle, whereas if there is no sea battle, so $y'$ is realized, then the past will turn out to be $y$, in which there would be no future sea battle. Come tomorrow, we might say, ``the past is not what it used to be.'' Arguably this picture resolves some puzzles about retrospective assessment of statements of future contingents (cf.~\cite{MacFarlane2003}).\end{example}

\begin{example}\label{TempEx} For another temporal example---but now with infinitely divisible time, as opposed to the discrete time of Example \ref{SeaBattle}---recall from Examples \ref{IntervalEx} and \ref{IntervalEx2} the poset $(S,\sqsubseteq)$ of non-empty open intervals of rational numbers ordered by inclusion. Given intervals $(a,b), (c,d)$, let $(a,b)R_f(c,d)$ if $a < c$. Then we claim that $R_f$ satisfies the three conditions in Lemma \ref{SufficientForFull}:
\begin{itemize}
\item \upR{}: suppose $(a',b')\subseteq (a,b)$ and $(a',b')R_f(c',d')$. From $(a',b')\subseteq (a,b)$, we have $a\leq a'$, and from $(a',b')R_f(c',d')$, we have $a'< c'$. Thus, $a < c'$, which yields $(a,b)R_f(c',d')$.
\item \Rdown{}: suppose $(a,b)R_f(c,d)$ and $(c',d')\subseteq (c,d)$. From $(a,b)R_f(c,d)$, we have $a< c$, and from $(c',d')\subseteq (c,d)$, we have $c\leq c'$. Thus, $a< c'$, which yields $(a,b)R_f(c',d')$.
\item \Rref{}: suppose $(a,b)R_f(c,d)$, so $a< c$. If $b\leq c$, let $(a',b')=(a,b)$. If $c<b$, let  $(a',b')=(a,c)$. Observe that $(a',b')\subseteq (a,b)$ and for all $(a'',b'')\subseteq (a',b')$, $a''<c$, so $(a'',b'')R_f(c,d)$.
\end{itemize}
Thus, by Lemma \ref{SufficientForFull}, $(S,\sqsubseteq, \mathcal{RO}(S,\sqsubseteq),R_f)$ is a full relational possibility frame. In line with the temporal interpretation of $(S,\sqsubseteq)$ mentioned in Example \ref{IntervalEx}, we may regard $R_f$ as the future accessibility relation for future modalities $\Box_f$ and $\Diamond_f$ (similarly, we can introduce a past accessibility relation by $(a,b)R_{p}(c,d)$ if $d < b$). Note that despite $R_f$ being irreflexive, $\Box_f p\to p$ (resp.~$ p\to\Diamond_f p$) is valid: supposing $\Box_f p$ is true at $(a,b)$, observe that $\forall (a',b')\subseteq (a,b)$ $\exists (a'',b'')\subseteq (a',b')$ such that $a<a''$ and hence $(a,b)R_f(a'',b'')$, which with $\Box_fp$ being true at $(a,b)$ implies that $p$ is true at $(a'',b'')$, which by refinability implies that $p$ is true at $(a,b)$ (cf.~\cite[p.~459]{Roper1980}).  For the close relation between this possibility semantics for temporal logic and  \textit{interval semantics} for temporal logic, see \S~\ref{Intervals}.
\end{example}

In Example \ref{TempEx} we  showed that the relation $R$ actually satisfies the following strengthening\footnote{Another condition dubbed \RrefPlus{} is considered in \cite{Holliday2018}.} of \Rref{}:
\begin{itemize}
\item \RrefPlusPlus{}: if $xRy$, then $\exists x'\sqsubseteq x$ $\forall x''\sqsubseteq x'$: $x''Ry$.
\end{itemize}
This was Humberstone's original condition for modal semantics instead of \Rref{}, but for reasons explained in \cite{Holliday2014,Holliday2018}, it proved too strong for the general theory of possibility semantics for modal logic. 

Below are intuitive, general explanations of each of the three conditions in Lemma \ref{SufficientForFull}, working with the standard idea of ``accessibility'' that $xR_iy$ iff for every proposition $Z$, if $\Box_i Z$ is true at $x$, then $Z$ is true at $y$:
\begin{itemize}
\item \upR{}. Assume $x'\sqsubseteq x$ and $x'R_iy'$. For $xR_iy'$, we must argue that whenever $\Box Z$ is true at $x$, $Z$ should be true at $y'$. Suppose $\Box Z$ is true at $x$. Then since $x'\sqsubseteq x$, by persistence, $\Box Z$ should be true at $x'$. Then since $x'R_iy'$, $Z$ should be true at $y'$.
\item \Rdown{}. Assume $xR_iy$ and $y'\sqsubseteq y$. For $xR_iy'$, suppose $\Box Z$ is true at $x$. Then since $xR_iy$, $Z$ should be true at $y$, and then since $y'\sqsubseteq y$, $Z$ should be true at $y'$ by persistence.
\item \Rref{}. Think of obtaining $x'$ by extending the description of $x$ with $\Diamond_i \mathord{\downarrow}y$, using the fact that $\mathord{\downarrow}y$ is regular open if $(S,\sqsubseteq)$ is separative (recall Definition \ref{SepDef} and Proposition \ref{SepProp}).
\end{itemize}

It will be convenient to have a term for frames satisfying the three properties above.

\begin{definition} A \textit{paradigm} relational possibility frame is a relational possibility frame satisfying \upR{}, \Rdown{}, and \Rref{}.
\end{definition}

The possibility frames we will define from algebras also satisfy an additional property that can be useful:
\begin{itemize}
\item \Rdense{} -- $xR_iy$ if $\forall y'\sqsubseteq y$ $\exists y''\sqsubseteq y'$: $xR_iy''$ (see Figure \ref{RdenseFig}).
\end{itemize}
Note that together \Rdown{} and \Rdense{} are equivalent to the condition that for each $x\in S$, we have $R_i(x)\in\mathcal{RO}(S,\sqsubseteq)$.

 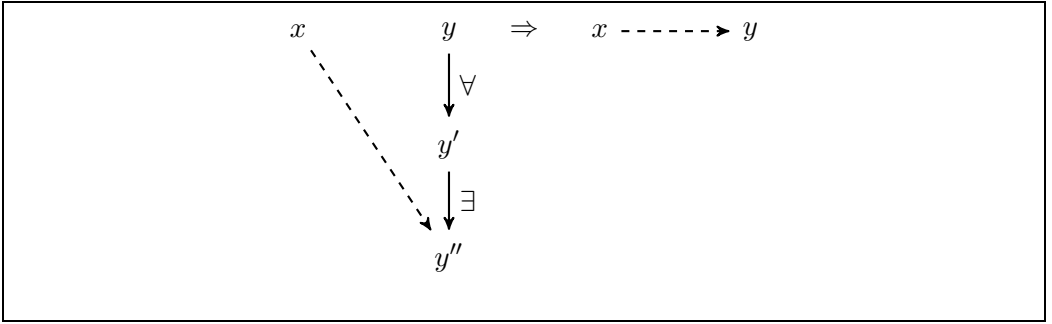
\begin{figure}
\begin{center}
\begin{tikzpicture}[->,>=stealth',shorten >=1pt,shorten <=1pt, auto,node
distance=2cm,thick,every loop/.style={<-,shorten <=1pt}]
\tikzstyle{every state}=[fill=gray!20,draw=none,text=black]

\node (x) at (0,0) {{$x$}};
\node (y) at (2,0) {{$y$}};
\node (y') at (2,-1.5) {{$y'$}};
\node (y'') at (2,-3) {{$y''$}};

\node at (3,0) {{\textit{$\Rightarrow$}}};

\path (x) edge[dashed,->] node {{}} (y'');
\path (y) edge[->] node {{$\forall$}} (y');
\path (y') edge[->] node {{$\exists$}} (y'');

\node (xR) at (4,0) {{$x$}};
\node (yR) at (6,0) {{$y$}};

\path (xR) edge[dashed,->] node {{}} (yR);

\end{tikzpicture}
\end{center}
\caption{the \Rdense{} condition. The quantifiers in the diagram correspond to the quantifiers in the definition of \Rdense{} in the text.}\label{RdenseFig}
\end{figure}

\begin{definition}\label{StrongDef} A \textit{strong} relational possibility frame is a relational possibility frame satisfying \upR{}, \Rdown{}, \Rref{}, and \Rdense{}.
\end{definition}

\begin{example} It is easy to check that the temporal frame in Examples \ref{SeaBattle} satisfies \Rdense{}. However, the temporal frame in Example \ref{TempEx} does not, for a reason raised in our earlier discussion: although $\forall (a',b')\subseteq (a,b)$ $\exists (a'',b'')\subseteq (a',b')$ such that $a<a''$ and hence $(a,b)R_f(a'',b'')$, it is not the case that $(a,b)R_f(a,b)$, because $a\not < a$. In fact, we obtain a frame that realizes the same BAO if we define $(a,b)R_f'(c,d)$ by $a\leq c$, instead of $a<c$, and then \Rdense{} is satisfied. We adopted the definition with $a<c$ to facilitate comparison with \cite{Roper1980} in \S~\ref{Intervals}.\end{example}

In Appendix \ref{NecSucc}, we give a single condition equivalent to the conjunction of the four in Definition \ref{StrongDef}. We can relate the notion of a strong frame to the standard notion of \textit{tightness} of general frames (see, e.g., \cite[p.~251]{Chagrov1997}) as follows.

\begin{definition}\label{RTightDef} A relational possibility frame $\mathcal{F}={( S,\sqsubseteq, P, \{R_i\}_{i\in I})}$ is \textit{$R$-tight} if for all $x,y\in S$, if for all $Z\in P$, $x\in \Box_i Z$ implies $y\in Z$, then $xR_iy$.
\end{definition}

\begin{lemma}[\cite{Holliday2018}]\label{TightStrong} \textnormal{For any relational possibility frame $\mathcal{F}$:
\begin{enumerate}
\item\label{TightStrong1} if $\mathcal{F}$ is $R$-tight, then $\mathcal{F}$ satisfies \upR{}, \Rdown{}, and \Rdense{};
\item\label{TightStrong1.5} if $\mathcal{F}$ is $R$-tight and satisfies \Rref{}, then $\mathcal{F}$ is strong;
\item\label{TightStrong1.75} if $\mathcal{F}$ is \textit{full}, then $\mathcal{F}$ is $R$-tight iff $\mathcal{F}$ is strong.
\end{enumerate}}
\end{lemma}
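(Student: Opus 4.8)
The plan is to make part \ref{TightStrong1} the engine and obtain the other two parts from it together with one extra observation. Throughout fix $i\in I$. The organizing remark is that, by Definition \ref{RTightDef}, to establish $xR_iy$ it suffices to show $y\in Z$ for \emph{every} $Z\in P$ with $x\in\Box_iZ$ (i.e. with $R_i(x)\subseteq Z$); and since $P\subseteq\mathcal{RO}(S,\sqsubseteq)$ and, by clause~3 of Definition \ref{RellPossFrame}, also $\Box_iZ\in P\subseteq\mathcal{RO}(S,\sqsubseteq)$, each such $Z$ and $\Box_iZ$ is persistent (a downset) and refinable (satisfies (\ref{ROeq})). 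These two closure properties carry the whole of part \ref{TightStrong1}.

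For part \ref{TightStrong1}: for \upR{}, given $x'\sqsubseteq x$ with $x'R_iy'$ and any $Z\in P$ with $R_i(x)\subseteq Z$, persistence of $\Box_iZ$ gives $x'\in\Box_iZ$, so $y'\in R_i(x')\subseteq Z$; as $Z$ was arbitrary, $R$-tightness yields $xR_iy'$. For \Rdown{}, given $xR_iy$, $y'\sqsubseteq y$, and $Z\in P$ with $R_i(x)\subseteq Z$, we have $y\in Z$ and then $y'\in Z$ by persistence of $Z$; hence $xR_iy'$. For \Rdense{}, assume $\forall y'\sqsubseteq y\,\exists y''\sqsubseteq y':xR_iy''$ and take $Z\in P$ with $R_i(x)\subseteq Z$; then for every $y'\sqsubseteq y$ there is $y''\sqsubseteq y'$ with $y''\in R_i(x)\subseteq Z$, so $y\in Z$ by the regular-open characterization (\ref{ROeq}); hence $xR_iy$.

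Part \ref{TightStrong1.5} is then immediate: $R$-tightness already forces \upR{}, \Rdown{}, \Rdense{} by part \ref{TightStrong1}, and \Rref{} is assumed, so $\mathcal{F}$ is strong by Definition \ref{StrongDef}. The $(\Leftarrow)$ direction of part \ref{TightStrong1.75} is also short: a strong frame satisfies \Rdown{} and \Rdense{}, so each $R_i(x)\in\mathcal{RO}(S,\sqsubseteq)$ (persistence from \Rdown{}, refinability from \Rdense{}); when $\mathcal{F}$ is full this means $R_i(x)\in P$, so taking $Z:=R_i(x)$ in Definition \ref{RTightDef} gives $x\in\Box_iZ$ and hence $y\in R_i(x)$, i.e. $xR_iy$, whenever $y$ lies in every $Z\in P$ with $x\in\Box_iZ$. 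Thus $\mathcal{F}$ is $R$-tight.

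The one genuinely non-mechanical step is the $(\Rightarrow)$ direction of part \ref{TightStrong1.75}: from fullness and $R$-tightness derive \Rref{} (the remaining conditions coming from part \ref{TightStrong1}). The key point is that \Rref{} at a pair $xR_iy$ is \emph{exactly} refinability, at the point $x$, of the admissible set $B_y:=\Box_i(\neg\mathord{\downarrow}y)$. Indeed, $\neg\mathord{\downarrow}y\in\mathcal{RO}(S,\sqsubseteq)$ (the negation of a downset is a fixpoint of $\neg\neg$ by the triple-negation law, hence regular open in the sense of Theorem \ref{FirstThm}.\ref{FirstThm1}), so fullness gives $\neg\mathord{\downarrow}y\in P$, whence $B_y\in P\subseteq\mathcal{RO}(S,\sqsubseteq)$ by clause~3 of Definition \ref{RellPossFrame}; and using \Rdown{} (hence that each $R_i(z)$ is a downset) one checks $R_i(z)\subseteq\neg\mathord{\downarrow}y$ iff $R_i(z)\cap\mathord{\downarrow}y=\varnothing$, so $B_y=\{z\in S\mid R_i(z)\cap\mathord{\downarrow}y=\varnothing\}$. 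Since $xR_iy$ gives $y\in R_i(x)\cap\mathord{\downarrow}y$, we have $x\notin B_y$; refinability of the regular open set $B_y$ then produces $x'\sqsubseteq x$ with $x''\notin B_y$ for all $x''\sqsubseteq x'$, i.e. $R_i(x'')\cap\mathord{\downarrow}y\neq\varnothing$, i.e. $\exists y'\sqsubseteq y:x''R_iy'$ — precisely the conclusion of \Rref{}. I expect the only real obstacle to be spotting this encoding, and in particular seeing that fullness is used exactly to make $\neg\mathord{\downarrow}y$, hence $\Box_i(\neg\mathord{\downarrow}y)$, an admissible proposition whose built-in refinability \emph{is} \Rref{}; everything else is unwinding Definitions \ref{RellPossFrame}, \ref{StrongDef}, \ref{RTightDef} and the defining property (\ref{ROeq}).
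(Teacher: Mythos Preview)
Your proof is correct. The paper itself does not supply a proof of this lemma (it is merely cited from \cite{Holliday2018}), so there is nothing to compare against; but your argument stands on its own. Part~\ref{TightStrong1} is exactly the right use of persistence and refinability of admissible sets and their $\Box_i$-images, and your key observation for the $(\Rightarrow)$ direction of part~\ref{TightStrong1.75}---that \Rref{} at $xR_iy$ is precisely the refinability of $\Box_i(\neg\mathord{\downarrow}y)$ at $x$, with fullness needed to ensure $\neg\mathord{\downarrow}y\in P$---is the natural and standard one. The use of \Rdown{} (already obtained from part~\ref{TightStrong1}) to rewrite $R_i(z)\subseteq\neg\mathord{\downarrow}y$ as $R_i(z)\cap\mathord{\downarrow}y=\varnothing$ is also correctly justified.
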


\begin{lemma}[\cite{Holliday2018}]\label{FullToStrong} \textnormal{For any possibility frame $\mathcal{F}={( S,\sqsubseteq, P,\{R_i\}_{i\in I})}$, define $\mathcal{F}={( S,\sqsubseteq, P, \{R_i^\Box\}_{i\in I})}$ by $xR_i^\Box y$ iff for all $Z\in P$, $x\in \Box_i Z$ implies $y\in Z$. Then:
\begin{enumerate}
\item $\mathcal{F}^\mathsf{b}=\mathcal{F}^{\Box\mathsf{b}}$, and $\mathcal{F}^{\Box}$ is $R$-tight;
\item if $\mathcal{F}$ is full, then $\mathcal{F}^\Box$ is strong.
\end{enumerate}}
\end{lemma}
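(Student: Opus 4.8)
The plan is to reduce everything to one elementary identity: for each $i\in I$ and each $Z\in P$, the box operators of $\mathcal{F}$ and $\mathcal{F}^\Box$ coincide, $\Box_i Z=\Box_i^\Box Z$, where $\Box_i^\Box Z=\{x\in S\mid R_i^\Box(x)\subseteq Z\}$. Once this is in hand, all three conclusions follow immediately from definitions together with Lemma \ref{TightStrong}.

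To prove the identity, fix $Z\in P$. For the inclusion $\Box_i Z\subseteq\Box_i^\Box Z$: if $x\in\Box_i Z$ and $xR_i^\Box y$, then since $Z\in P$ and $x\in\Box_i Z$, the definition of $R_i^\Box$ yields $y\in Z$; hence $R_i^\Box(x)\subseteq Z$. For the converse, suppose $x\in\Box_i^\Box Z$ and let $y\in R_i(x)$ be arbitrary. I first claim $xR_i^\Box y$: for any $W\in P$ with $x\in\Box_i W$ we have $R_i(x)\subseteq W$ and so $y\in W$, which is exactly the defining condition of $xR_i^\Box y$. Then $x\in\Box_i^\Box Z$ gives $y\in Z$, and since $y\in R_i(x)$ was arbitrary, $R_i(x)\subseteq Z$, i.e.\ $x\in\Box_i Z$. (En route this also records $R_i(x)\subseteq R_i^\Box(x)$ for every $x$.) The one place needing a little care is precisely this second inclusion, where one must manufacture, for an arbitrary $R_i$-successor $y$ of $x$, a proof that $xR_i^\Box y$; this is where the quantification over \emph{all} admissible $W$ in the definition of $R_i^\Box$ is used, not merely over the fixed $Z$.

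With the identity established, part 1 follows quickly. From $\Box_i^\Box Z=\Box_i Z\in P$ (using that $\mathcal{F}$ is a relational possibility frame) we get that $\mathcal{F}^\Box$ satisfies clause 3 of Definition \ref{RellPossFrame}, hence is a relational possibility frame; and since $\mathsf{BA}(P)$ is the same for both and the box operations agree, $\mathcal{F}^{\mathsf{b}}=\mathcal{F}^{\Box\mathsf{b}}$ by Proposition \ref{BAOfromFrame}. For $R$-tightness of $\mathcal{F}^\Box$ (Definition \ref{RTightDef}): assume that for all $Z\in P$, $x\in\Box_i^\Box Z$ implies $y\in Z$; rewriting $\Box_i^\Box Z$ as $\Box_i Z$, this hypothesis is literally the statement that $xR_i^\Box y$, so $\mathcal{F}^\Box$ is $R$-tight. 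For part 2: if $\mathcal{F}$ is full then $P=\mathcal{RO}(S,\sqsubseteq)$ is untouched, so $\mathcal{F}^\Box$ is full; being full and $R$-tight, $\mathcal{F}^\Box$ is strong by Lemma \ref{TightStrong}.\ref{TightStrong1.75}. I do not expect any genuine obstacle beyond the bookkeeping in the second inclusion noted above.
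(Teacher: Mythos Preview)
Your proof is correct. The paper does not include its own proof of this lemma (it is stated with a citation to \cite{Holliday2018}), so there is nothing in-paper to compare against; but your argument is the natural one, and the reduction to the identity $\Box_i Z=\Box_i^\Box Z$ together with Lemma~\ref{TightStrong}.\ref{TightStrong1.75} is exactly the expected route.
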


Thus, may assume without loss of generality that any full relational possibility frame we are working with is in fact strong. Indeed, we may assume this without loss of generality for any relational possibility frame (not just a full one), but we postpone the reason to \S~\ref{GenRelFrame}. 

One of the appeals of Kripke frame semantics for normal modal logics is the correspondence theory between modal axioms and first-order properties of the $R_i$ relations \cite{Benthem1980,Benthem1983}. In the setting of full possibility frames, we also have an appealing correspondence between modal axioms and first-order properties of $R_i$ \textit{and} $\sqsubseteq$. Compare the following two theorems, where for a sequence $\alpha=(\alpha_1,\dots,\alpha_n)$ of modal indices from $I$, $\Diamond_\alpha\varphi :=\Diamond_{\alpha_1}\dots\Diamond_{\alpha_n}\varphi$, $\Box_\alpha\varphi:=\Box_{\alpha_1}\dots\Box_{\alpha_n}\varphi$, and $xR_\alpha y$ iff there are $x_0,\dots,x_n$ with $x_0=x$, $x_n=y$, and $x_0R_{\alpha_1}x_1$, $x_1R_{\alpha_2}x_2$, \dots, $x_{n-1}R_{\alpha_n}x_n$ (if $\alpha$ is the empty sequence, $\Diamond_\alpha\varphi=\Box_\alpha\varphi=\varphi$ and $xR_\alpha y$ iff $x=y$).

\begin{theorem}[\cite{Lemmon1977}]\label{LS} \textnormal{Let $\mathfrak{F}$ be a Kripke frame. Then for any sequences $\alpha$, $\beta$, $\delta$, and $\gamma$ of indices from $I$, $\Diamond_\alpha\Box_\beta p\rightarrow \Box_\delta\Diamond_\gamma p$ is valid on $\mathfrak{F}$ iff $\mathfrak{F}$ satisfies:
\begin{equation*}\forall {x}\forall {y}\forall {z} (({x} R_\delta{y}\wedge {x} R_\alpha {z})\rightarrow \exists {u} ( {y} R_\gamma {u} \wedge {z} R_\beta {u})).\label{LSeq}\end{equation*}}
\end{theorem}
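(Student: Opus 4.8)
The plan is to establish both directions of the biconditional directly: the direction from the frame condition to validity by unwinding the semantics of the iterated modalities, and the converse by contraposition using a minimal (``standard'') valuation that makes $p$ true exactly at the $\beta$-successors of a bad point. Before either direction I would record the routine fact, proved by induction on the length of a sequence $\sigma$ of indices from $I$, that in any model $\mathcal{M}$ on a Kripke frame one has $\mathcal{M},x\Vdash \Diamond_\sigma \psi$ iff there is $z$ with $xR_\sigma z$ and $\mathcal{M},z\Vdash\psi$, and dually $\mathcal{M},x\Vdash \Box_\sigma\psi$ iff $\mathcal{M},w\Vdash\psi$ for every $w$ with $xR_\sigma w$; the empty-sequence base case uses the convention that $xR_\sigma y$ iff $x=y$ and $\Diamond_\sigma\psi=\Box_\sigma\psi=\psi$.

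For the direction from the frame condition to validity, assume $\mathfrak{F}$ satisfies the displayed condition, let $\mathcal{M}$ be any model on $\mathfrak{F}$, let $x$ be any point, and suppose $\mathcal{M},x\Vdash\Diamond_\alpha\Box_\beta p$. By the recorded fact there is a $z$ with $xR_\alpha z$ and $\mathcal{M},z\Vdash\Box_\beta p$, so $\mathcal{M},u\Vdash p$ for every $u$ with $zR_\beta u$. To show $\mathcal{M},x\Vdash\Box_\delta\Diamond_\gamma p$, fix any $y$ with $xR_\delta y$. Applying the frame condition to $x,y,z$ (using $xR_\delta y$ and $xR_\alpha z$) yields a $u$ with $yR_\gamma u$ and $zR_\beta u$; the latter gives $\mathcal{M},u\Vdash p$, and then $yR_\gamma u$ gives $\mathcal{M},y\Vdash\Diamond_\gamma p$. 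Since $y$ was arbitrary, $\mathcal{M},x\Vdash\Box_\delta\Diamond_\gamma p$, and since $\mathcal{M}$ and $x$ were arbitrary, $\Diamond_\alpha\Box_\beta p\to\Box_\delta\Diamond_\gamma p$ is valid on $\mathfrak{F}$.

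For the converse, I argue contrapositively. Suppose the frame condition fails, witnessed by $x,y,z$ with $xR_\delta y$, $xR_\alpha z$, and no $u$ satisfying both $yR_\gamma u$ and $zR_\beta u$. Define a valuation $\pi$ on $\mathfrak{F}$ with $\pi(p)=\{u\mid zR_\beta u\}$ (and $\pi$ arbitrary on the other propositional variables), and let $\mathcal{M}$ be the resulting model. Then every $\beta$-successor of $z$ lies in $\pi(p)$, so $\mathcal{M},z\Vdash\Box_\beta p$, and with $xR_\alpha z$ the recorded fact gives $\mathcal{M},x\Vdash\Diamond_\alpha\Box_\beta p$. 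On the other hand, for any $u$ with $yR_\gamma u$ the choice of $x,y,z$ forbids $zR_\beta u$, whence $u\notin\pi(p)$ and $\mathcal{M},u\nVdash p$; thus $\mathcal{M},y\nVdash\Diamond_\gamma p$, and with $xR_\delta y$ we obtain $\mathcal{M},x\nVdash\Box_\delta\Diamond_\gamma p$. Hence $\mathcal{M},x\nVdash \Diamond_\alpha\Box_\beta p\to\Box_\delta\Diamond_\gamma p$, so the formula is not valid on $\mathfrak{F}$. The only point requiring care is purely bookkeeping: the empty-sequence cases for any of $\alpha,\beta,\delta,\gamma$ must be covered uniformly by the composition conventions above — for instance, when $\gamma$ is empty, the ``no common witness'' hypothesis applied with $u:=y$ reduces to $\neg(zR_\beta y)$, which is exactly what makes $\mathcal{M},y\nVdash p=\Diamond_\gamma p$ — so I expect no genuine obstacle beyond this.
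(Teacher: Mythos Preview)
Your proof is correct and is the standard Lemmon--Scott argument. Note, however, that the paper does not give its own proof of this theorem: it is stated with a citation to \cite{Lemmon1977} as a classical result, included only as the Kripke-frame baseline against which the paper's possibility-frame analogue (the next theorem) is to be compared. So there is nothing in the paper to compare your argument against; your write-up is exactly the textbook proof one would expect for this cited result.
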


\begin{theorem}[\cite{Holliday2018}]\label{Gklmn} \textnormal{Let $\mathcal{F}$ be a full paradigm relational possibility frame. Then for any sequences $\alpha$, $\beta$, $\delta$, and $\gamma$ of indices from $I$, $\Diamond_\alpha  \Box_\beta  p\rightarrow \Box_\delta   \Diamond_\gamma  p$ is valid on $\mathcal{F}$ iff $\mathcal{F}$ satisfies
\begin{equation*}\forall{x}\forall{y}\big({x}R_\delta {y}\rightarrow \exists {x'}\sqsubseteq{x} \;\forall {z}( {x'}R_\alpha   {z} \rightarrow\exists {u} ({y} R_\gamma  {u}\wedge {z} R_\beta  {u}))\big).\label{Gklmn-rel-con}\end{equation*} 
For the case where $\alpha$ is empty, $\Box_\beta  p\rightarrow \Box_\delta   \Diamond_\gamma  p$ is valid on $\mathcal{F}$ iff $\mathcal{F}$ satisfies
\begin{equation*}\forall{x}\forall{y}({x}R_\delta {y}\rightarrow \exists {u} ({y} R_\gamma  {u}\wedge x R_\beta  {u})).\label{Gklmn-rel-con3}\end{equation*}}
\end{theorem}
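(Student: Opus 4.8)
The plan is to prove this Sahlqvist-style correspondence result by adapting the standard Lemmon--Scott argument (Theorem \ref{LS}) to the possibility-semantic setting, using the machinery of full paradigm relational possibility frames. The key technical tools will be the truth conditions for $\Box_i$ and $\Diamond_i$ on such frames, the simplified $\Diamond_i$ clause available under \Rdown{} (from the lemma following equation (\ref{DiamondDef})), and the fact that on a \textit{full} frame every $\mathord{\downarrow}z$ and more generally every regular open set is available as a value for $p$. I would first record two preliminary facts about iterated modalities on paradigm frames: (i) the conditions \upR{}, \Rdown{}, \Rref{} are inherited (in the appropriate composite sense) by the relations $R_\alpha$, so that $\Diamond_\alpha$ and $\Box_\alpha$ have the expected truth conditions, and in particular $\mathcal{M},x\Vdash\Box_\alpha\varphi$ iff $\forall y\,(xR_\alpha y\Rightarrow \mathcal{M},y\Vdash\varphi)$ and, using the simplified $\Diamond$ clause repeatedly, $\mathcal{M},x\Vdash\Diamond_\alpha\varphi$ iff $\forall x'\sqsubseteq x\,\exists z\in R_\alpha(x')\,\exists z'\sqsubseteq z:\mathcal{M},z'\Vdash\varphi$ (with a bit of care this collapses further when the final relation satisfies \Rdown{}); and (ii) for a regular open $Z$, $x\in\Box_\beta Z$ iff $R_\beta(x)\subseteq Z$.

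The heart of the proof is the two directions of the biconditional. For the \emph{soundness} direction (right-to-left), I assume the first-order condition
\[\forall x\forall y\big(xR_\delta y\rightarrow \exists x'\sqsubseteq x\,\forall z(x'R_\alpha z\rightarrow\exists u(yR_\gamma u\wedge zR_\beta u))\big)\]
and an arbitrary model $\mathcal{M}$ and possibility $x$ with $\mathcal{M},x\Vdash\Diamond_\alpha\Box_\beta p$; I must show $\mathcal{M},x\Vdash\Box_\delta\Diamond_\gamma p$. So fix $y$ with $xR_\delta y$ and, to show $\mathcal{M},y\Vdash\Diamond_\gamma p$, fix $y'\sqsubseteq y$; I need $z'\sqsubseteq y'$ (down a step) with $p$ true somewhere below. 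Here I plan to apply the condition to $x$ and $y'$ (noting $xR_\delta y'$ follows from $xR_\delta y$ and $y'\sqsubseteq y$ by \Rdown{} for $R_\delta$) to get $x'\sqsubseteq x$ with the universal consequent. From $\mathcal{M},x\Vdash\Diamond_\alpha\Box_\beta p$ and $x'\sqsubseteq x$ one extracts, via the $\Diamond_\alpha$ truth condition, a refinement $x''\sqsubseteq x'$ and a $z$ with $x''R_\alpha z$ and $\mathcal{M},z\Vdash\Box_\beta p$ (modulo another \Rref{}/\Rdown{} bookkeeping step); using \upR{}-type monotonicity to promote $x''R_\alpha z$ back to $x'R_\alpha z$, the consequent of the condition yields $u$ with $yR_\gamma u$ and $zR_\beta u$, whence $\mathcal{M},u\Vdash p$ from $\mathcal{M},z\Vdash\Box_\beta p$. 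Then $yR_\gamma u$ — together with persistence and a final appeal to \Rdown{}/\Rref{} to locate a witness below $y'$ — gives $\mathcal{M},y'\Vdash\Diamond_\gamma p$ in the required refinement sense, hence $\mathcal{M},y\Vdash\Diamond_\gamma p$, hence $\mathcal{M},x\Vdash\Box_\delta\Diamond_\gamma p$.

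For the \emph{completeness} direction (left-to-right), I assume the frame condition fails and build a refuting model. So suppose there are $x,y$ with $xR_\delta y$ but for every $x'\sqsubseteq x$ there is $z$ with $x'R_\alpha z$ yet no $u$ with $yR_\gamma u$ and $zR_\beta u$. The plan — exactly the possibility-semantic analogue of the minimal-valuation trick — is to interpret $p$ as the regular open set $U:=\neg R_\gamma(y)$, i.e., $U=\{w\mid \mathord{\downarrow}w\cap R_\gamma(y)=\varnothing\}$; this is regular open since by \Rdown{} and \Rdense{} (available on a strong frame, and by Lemma \ref{FullToStrong} we may assume our full frame is strong without affecting $\mathcal{F}^\mathsf{b}$, hence validity) $R_\gamma(y)\in\mathcal{RO}(S,\sqsubseteq)$, and the complement of a regular open set is regular open (Theorem \ref{FirstThm}.\ref{FirstThm1}); fullness guarantees $U$ is an admissible value. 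With $\pi(p)=U$ one checks: (a) $\mathcal{M},y\nVdash\Diamond_\gamma p$, because every $u\in R_\gamma(y)$ has $\mathord{\downarrow}u\cap R_\gamma(y)\ni u$ so fails $p$ throughout a neighborhood, defeating the $\forall\exists$ in the $\Diamond_\gamma$ clause; combined with $xR_\delta y$ this gives $\mathcal{M},x\nVdash\Box_\delta\Diamond_\gamma p$. And (b) $\mathcal{M},x\Vdash\Diamond_\alpha\Box_\beta p$: given $x'\sqsubseteq x$, the failure hypothesis supplies $z$ with $x'R_\alpha z$ and $R_\beta(z)\cap R_\gamma(y)=\varnothing$, i.e., $R_\beta(z)\subseteq\neg R_\gamma(y)=U$ — modulo checking $R_\beta(z)$ sits inside the \emph{regular open} $U$ rather than merely the set complement, which is where \Rdense{} for $R_\gamma$ is used — so $z\in\Box_\beta p$ and hence the $\Diamond_\alpha$ clause at $x$ is satisfied. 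This refutes $\Diamond_\alpha\Box_\beta p\rightarrow\Box_\delta\Diamond_\gamma p$ on $\mathcal{F}$. The case where $\alpha$ is empty is the same argument with $x'=x$ and $z=x$ throughout, collapsing to the stated simpler condition $\forall x\forall y(xR_\delta y\rightarrow\exists u(yR_\gamma u\wedge xR_\beta u))$.

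The main obstacle I anticipate is the careful \Rref{}/\Rdown{}/\upR{} bookkeeping in the soundness direction: unlike the Kripke case, accessibility witnesses obtained from a $\Diamond$ truth condition live only ``densely below'' a refinement, so each time one extracts a witness $z$ from $\Diamond_\alpha\Box_\beta p$ or needs to place a final witness below $y'$, one must pass through an extra refinement and then use \upR{} (to push the $R_\alpha$-edge back up to $x'$) or \Rdown{}/persistence (to relocate the $R_\gamma$-witness) — and one must verify these steps compose correctly through the sequences $\alpha,\beta,\gamma,\delta$. A secondary subtlety, flagged above, is ensuring in the completeness direction that $R_\beta(z)\subseteq U$ holds for the \emph{poset-regular-open} $U$, which relies on $R_\gamma(y)$ being regular open (\Rdense{}) so that $\neg R_\gamma(y)$ behaves as a genuine Boolean complement; this is exactly why the theorem is stated for full paradigm frames and why invoking Lemma \ref{FullToStrong} to assume strongness is convenient. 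Everything else is a routine transcription of Lemmon--Scott.
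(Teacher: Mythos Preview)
The paper does not include a proof of this theorem; it is stated with a citation to \cite{Holliday2018}. So there is nothing in the present paper to compare your argument against. That said, your approach is the natural adaptation of the Lemmon--Scott minimal-valuation method to paradigm possibility frames and is essentially correct, with one caveat.

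Your invocation of Lemma \ref{FullToStrong} to pass to a strong frame is both unnecessary and potentially illegitimate. It is unnecessary because the step you flag as delicate---showing $R_\beta(z)\cap R_\gamma(y)=\varnothing$ implies $R_\beta(z)\subseteq \neg R_\gamma(y)$---needs only that $R_\beta(z)$ is a \emph{downset}, which follows from \Rdown{} alone (composed along $\beta$): if $w\in R_\beta(z)$ and $w'\sqsubseteq w$, then $w'\in R_\beta(z)$, hence $w'\notin R_\gamma(y)$, so $w\in\neg R_\gamma(y)$. No appeal to \Rdense{} (and hence no regular-openness of $R_\gamma(y)$) is required; $\neg Z$ is regular open for any $Z$. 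It is potentially illegitimate because Lemma \ref{FullToStrong} preserves $\mathcal{F}^\mathsf{b}$ and hence validity, but it \emph{changes the accessibility relations}, so the first-order frame condition in the statement may hold on $\mathcal{F}$ yet fail on $\mathcal{F}^\Box$ or vice versa; the correspondence is asserted for the given paradigm frame, not for its strong modification. Once you drop this detour, both directions go through exactly as you sketch: in soundness, after obtaining $z\in R_\alpha(x')$ with some $z'\sqsubseteq z$ in $\Box_\beta P$, composite \Rdown{} gives $x'R_\alpha z'$, and the frame condition applied to $z'$ yields the required $u$; in completeness, the computation $\Box_\alpha Z=\{x\mid R_\alpha(x)\subseteq Z\}$ for the composite relation makes your verification of $x\in\Diamond_\alpha\Box_\beta U$ immediate.
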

\begin{remark} Over \textit{strong} full possibility frames, the first-order conditions in Theorem \ref{Gklmn} may simplify: for example, $\Box_i p\to p$ corresponds just to reflexivity of $R_i$ and $\Box_ip\to\Box_ip\Box_ip$ just to transitivity. Diamond formulas also simplify but still involve $\sqsubseteq$. For example, $\Diamond_ip\to\Box_i\Diamond_ip$ corresponds over strong frames to $\forall{x}\forall{y}\big({x}R_i{y}\rightarrow \exists {x'}\sqsubseteq{x} \;\forall {z}( {x'}R_i {z} \rightarrow yR_i z )\big)$.\end{remark}

More generally, Yamamoto \cite{Yamamoto2017} has proved the analogue of the Sahlqvist correspondence theorem (see \cite[\S~3.6]{Blackburn2001}) for full possibility frames.

\begin{theorem}[\cite{Yamamoto2017}] \textnormal{For every Sahlqvist formula $\varphi$ of $\mathcal{L}(I)$, there is a formula $c_\varphi$ in the first-order language with relation symbols for $R_i$ and $\sqsubseteq$ such that a full possibility frame $\mathcal{F}=(S,\sqsubseteq, \mathcal{RO}(S,\sqsubseteq), \{R_i\}_{i\in I})$ validates $\varphi$ iff $(S,\sqsubseteq, \{R_i\}_{i\in I})$ as a first-order structure satisfies $c_\varphi$. Moreover, $c_\varphi$ is effectively computable from $\varphi$.}
\end{theorem}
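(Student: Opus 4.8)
For every Sahlqvist formula $\varphi$ of $\mathcal{L}(I)$, there is a first-order formula $c_\varphi$ in the language with relation symbols for $R_i$ and $\sqsubseteq$ such that a full possibility frame $\mathcal{F}=(S,\sqsubseteq,\mathcal{RO}(S,\sqsubseteq),\{R_i\}_{i\in I})$ validates $\varphi$ iff the relational structure $(S,\sqsubseteq,\{R_i\}_{i\in I})$ satisfies $c_\varphi$, and $c_\varphi$ is effectively computable.

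\medskip

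The plan is to run the algebraic/relational version of the Sahlqvist--van Benthem substitution argument, adapted from Kripke frames to full possibility frames. First I would fix a full paradigm frame $\mathcal{F}$ (recall that by Lemma \ref{FullToStrong} we may even assume $\mathcal{F}$ is strong, which I will use to keep the first-order translations manageable and to make $R_i(x)\in\mathcal{RO}(S,\sqsubseteq)$). Validity of $\varphi$ on $\mathcal{F}$ is a $\Pi^1_1$ statement: for all valuations $\pi:\mathsf{Prop}\to\mathcal{RO}(S,\sqsubseteq)$ and all $x\in S$, $\mathcal{M},x\Vdash\varphi$. The core of the proof is the standard reduction of this second-order quantifier to a first-order condition via the \emph{minimal valuation}. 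As usual one rewrites $\varphi$ in the Sahlqvist-implication normal form $\mathsf{Box}\,(\mathrm{ant}\to\mathrm{cons})$, where $\mathrm{ant}$ is built from $\top,\bot$, boxed atoms, and negative formulas using $\wedge,\vee,\Diamond$, and $\mathrm{cons}$ is positive. Using the frame correspondence apparatus already developed in the excerpt --- in particular the explicit truth clauses for $\Box_i,\Diamond_i$ over possibility frames (Definition \ref{RellPossFrame} and the simplified $\Diamond_i$-clause under \Rdown) and the Lemmon--Scott-style computations of Theorem \ref{Gklmn} --- one standardly pushes the antecedent's information down to constraints on the points reachable from $x$ by $R_\alpha$-paths, and then defines, for each propositional variable $p$, the minimal regular open set forced to contain $p$ by those constraints. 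Here is the first place where one must check that the possibility setting behaves: the minimal valuation for $p$ is something like the regular-open closure (double negation, i.e. $\neg\neg$ in $\mathcal{RO}(S,\sqsubseteq)$) of a union of principal downsets $\mathord{\downarrow}y$ over the finitely many "boxed-atom witnesses" $y$; these $\mathord{\downarrow}y$ are regular open by separativity (Definition \ref{SepDef}), and applying $\neg\neg$ keeps us inside $\mathcal{RO}(S,\sqsubseteq)$ by Theorem \ref{FirstThm}.\ref{FirstThm1}. One then verifies, as in the classical proof, that (i) this minimal valuation makes $\mathrm{ant}$ true at $x$, and (ii) if $\mathrm{cons}$ (being positive, hence monotone in each $\pi(p)$) is true at $x$ under \emph{some} valuation satisfying $\mathrm{ant}$, it is true under the minimal one. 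This yields that $\varphi$ is valid on $\mathcal{F}$ iff the minimal valuation makes $\mathrm{ant}\to\mathrm{cons}$ true everywhere, and the latter, once the minimal valuation is substituted in, is a first-order statement $c_\varphi$ about $R_i$ and $\sqsubseteq$; its effective computability is immediate from the recursive description of the normal form, the witnesses, and the minimal valuation.

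\medskip

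Two points require genuine care beyond a transcription of the Kripke-frame argument, and the harder of the two I expect to be the main obstacle. The minor point: the semantic clauses for $\neg$ and $\Diamond$ in possibility semantics quantify over refinements $x'\sqsubseteq x$, so the first-order translation of even a boxed atom or a negative formula carries extra $\sqsubseteq$-quantifiers compared with the Kripke case; this is exactly why $c_\varphi$ lives in the language with \emph{both} $R_i$ and $\sqsubseteq$, and why Theorem \ref{Gklmn}'s conditions (the template subcase) already show the expected shape $\forall x\forall y(xR_\delta y\to\exists x'\sqsubseteq x\,\forall z(\dots))$. These extra quantifiers must be bookkept correctly but introduce no conceptual difficulty, particularly because on a \emph{strong} frame $R_i(x)$ is itself regular open and the $\Diamond_i$-clause simplifies (as in the lemma just before Example \ref{SeaBattle}). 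The harder point is the "closure under $\neg\neg$" step in the minimal-valuation definition: one must check that applying the regular-open closure to the candidate union of downsets does not destroy the inequality/truth computations that drive step (ii) --- i.e., that $\mathrm{cons}$ being positive really does interact monotonically with the \emph{regular-open} minimal valuation, not just with the raw union. The reason this works is that $\mathcal{RO}(S,\sqsubseteq)$ is a complete Boolean algebra (Theorem \ref{FirstThm}.\ref{FirstThm1}) in which $\Diamond_i$ distributes over arbitrary joins and $\Box_i$ over arbitrary meets when \Rdense\ holds, so the algebraic Esakia/Jónsson-style continuity lemmas underpinning the Sahlqvist argument are available; still, one must phrase the minimal-valuation calculation entirely in terms of the join operation of $\mathcal{RO}(S,\sqsubseteq)$ (which is $\neg\neg$ of the union, not the union), and verify compatibility at each connective of the positive consequent. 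I would handle this by an induction on the positive $\mathrm{cons}$, using the explicit join and box/diamond formulas from Theorem \ref{FirstThm}, Definition \ref{RellPossFrame}, and the strong-frame simplifications, mirroring Yamamoto's treatment.

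\medskip

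In summary, the proof is: (1) normalize $\varphi$ to Sahlqvist-implication form; (2) by Lemma \ref{FullToStrong} reduce to strong full frames and use the simplified $\Box_i,\Diamond_i$ clauses; (3) identify the finitely many $R_\alpha$-reachable witnesses forced by the antecedent, using the Lemmon--Scott-type computations in Theorem \ref{Gklmn}; (4) define the minimal regular open valuation as $\neg\neg$ of a finite union of principal downsets, which is legitimate by separativity plus Theorem \ref{FirstThm}.\ref{FirstThm1}; (5) prove the antecedent holds under it, and by positivity/monotonicity of the consequent that validity of $\varphi$ reduces to truth of $\mathrm{ant}\to\mathrm{cons}$ under the minimal valuation; (6) read off $c_\varphi$ as the resulting first-order sentence over $R_i$ and $\sqsubseteq$, effectively computable by construction. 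The main obstacle is step (5), specifically ensuring the regular-open-closure step is harmless for the monotonicity argument on the positive consequent.
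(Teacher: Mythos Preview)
The paper does not contain a proof of this theorem; it is stated as a result of Yamamoto and cited from \cite{Yamamoto2017}, with the subsequent remark that it was generalized to inductive formulas in \cite{Zhao2018,Zhao2016}. So there is no in-paper proof to compare against.

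Your outline is the natural adaptation of the Sahlqvist--van Benthem minimal-valuation method to the possibility setting, and is presumably in the spirit of what Yamamoto does. Two small cautions about your write-up. First, you invoke separativity to conclude that principal downsets $\mathord{\downarrow}y$ are regular open, but the theorem as stated is for arbitrary full possibility frames, whose underlying posets need not be separative; you should either work with $\neg\neg\mathord{\downarrow}y$ throughout (which is always in $\mathcal{RO}(S,\sqsubseteq)$ and is first-order definable from $\sqsubseteq$), or explicitly reduce to the separative quotient and argue that this reduction is compatible with the first-order correspondent. Second, your appeal to Lemma~\ref{FullToStrong} to assume strength replaces $R_i$ by $R_i^\Box$, but $R_i^\Box$ is defined by quantifying over all $Z\in\mathcal{RO}(S,\sqsubseteq)$, which is not obviously first-order in $R_i$ and $\sqsubseteq$; if you want $c_\varphi$ to speak about the \emph{original} $R_i$, you should either avoid this reduction or check that the first-order condition you obtain for the strong frame pulls back to a first-order condition on the original frame. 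Neither point undermines the overall strategy, but both need to be handled explicitly for the argument to go through as stated.
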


\noindent This result was generalized to the larger class of \textit{inductive} formulas in \cite{Zhao2018,Zhao2016}.

\subsubsection{Relational possibility models and Kripke models}\label{PossKripkeModels}

As noted in Remark \ref{FullKripkeRemark}, our focus in this chapter is on possibility frames, frame validity of formulas, and algebras arising from frames. At this level, the limitation of Kripke frames is that they can realize only atomic algebras. However, here we will briefly discuss the connection between possibility \textit{models} and Kripke \textit{models}. One direction is obvious: every Kripke model can be viewed as a possibility model where $\sqsubseteq$ is the identity relation. In the other direction, given any possibility model $\mathcal{M}$ and formula $\varphi$, we can construct a Kripke model based on the \textit{$\varphi$-decisive possibilities} in $\mathcal{M}$, which decide the truth value of every subformula of $\varphi$.

\begin{definition} Given a possibility model $\mathcal{M}=(S,\sqsubseteq, \{R_i\}_{i\in I},\pi)$ and formula $\varphi$,\footnote{We have omitted the specification of the set $P$ of admissible propositions of the underlying frame of $\mathcal{M}$, as only the valuation $\pi$ matters for the model.} we define the Kripke model $\mathcal{M}_\varphi=(S_\varphi, \{R_{i,\varphi}\}_{i\in I},\pi_\varphi)$ as follows:
\begin{enumerate}
\item $S_\varphi = \{x\in S\mid \mbox{for all subformulas }\psi\mbox{ of }\varphi, \mathcal{M},x\Vdash\psi\mbox{ or }\mathcal{M},x\Vdash\neg\psi\}$.
\item $xR_{i,\varphi}y$ iff $\exists z\in R_i(x)$: $y\sqsubseteq z$;
\item $\pi_\varphi(p)=\pi(p)\cap S_\varphi$.
\end{enumerate}
\end{definition}

\begin{lemma}\label{PossToKripke}Let  $\mathcal{M}=(S,\sqsubseteq, \{R_i\}_{i\in I},\pi)$ be a possibility model and $\varphi$ a formula. Then:
\begin{enumerate} 
\item\label{PossToKripke1} for every $x\in S$, there is an $x'\in S_\varphi$ such that $x'\sqsubseteq x$;
\item\label{PossToKripke2} for every $x\in S_\varphi$ and subformula $\psi$ of $\varphi$, $\mathcal{M},x\Vdash\psi$ iff $\mathcal{M}_\varphi,x\Vdash\psi$.
\end{enumerate}
\end{lemma}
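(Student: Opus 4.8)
\textbf{Proof plan for Lemma \ref{PossToKripke}.}

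The plan is to prove both parts together, with part \ref{PossToKripke1} as a warm-up and part \ref{PossToKripke2} by induction on the structure of the subformula $\psi$. For part \ref{PossToKripke1}, given $x \in S$, I would run through the finitely many subformulas $\psi_1, \dots, \psi_n$ of $\varphi$ in some order and successively refine $x$. At each stage, if the current possibility $x_k$ already decides $\psi_{k+1}$ (i.e.\ forces $\psi_{k+1}$ or forces $\neg \psi_{k+1}$), set $x_{k+1} = x_k$; otherwise, since $\|\psi_{k+1}\|_{\mathcal{M}} \in \mathcal{RO}(S,\sqsubseteq)$ and $x_k \not\Vdash \psi_{k+1}$, by refinability there is $x_{k+1} \sqsubseteq x_k$ with $x_{k+1} \Vdash \neg\psi_{k+1}$. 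The key point is that persistence of the semantic values (Lemma \ref{FORO}-style reasoning, but here for the modal language: $\|\psi\|_{\mathcal{M}} \in \mathcal{RO}(S,\sqsubseteq)$, so in particular it is a downset, and likewise $\|\neg\psi\|_{\mathcal{M}}$ is a downset) means that a decision once made is never undone by further refinement. After $n$ steps, $x' := x_n \sqsubseteq x$ decides every subformula of $\varphi$, so $x' \in S_\varphi$.

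For part \ref{PossToKripke2}, the induction is routine in the propositional cases: the atomic case is immediate from $\pi_\varphi(p) = \pi(p) \cap S_\varphi$; the $\wedge$ case is immediate; and the $\neg$ case uses part \ref{PossToKripke1} crucially. Specifically, for $\mathcal{M}, x \Vdash \neg\psi$ with $x \in S_\varphi$: the forward direction says $x$ forces $\neg\psi$, so no refinement forces $\psi$, and since every element of $S_\varphi$ below $x$ in the Kripke model's accessibility is in particular a refinement (no — more carefully: in the Kripke model $\neg$ is evaluated at the single point $x$), the argument is that $\mathcal{M}_\varphi, x \Vdash \neg\psi$ iff $\mathcal{M}_\varphi, x \nVdash \psi$ iff (by IH) $\mathcal{M}, x \nVdash \psi$; and since $x \in S_\varphi$ decides $\psi$, $\mathcal{M}, x \nVdash \psi$ is equivalent to $\mathcal{M}, x \Vdash \neg\psi$. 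So the $\neg$ case reduces to the fact that points of $S_\varphi$ are two-valued on subformulas of $\varphi$, which is exactly how $S_\varphi$ was defined.

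The main obstacle, and the only genuinely modal part, is the $\Box_i$ case, where one must check $\mathcal{M}, x \Vdash \Box_i\psi$ iff $\mathcal{M}_\varphi, x \Vdash \Box_i\psi$ for $x \in S_\varphi$, using the definition $xR_{i,\varphi}y$ iff $\exists z \in R_i(x)\colon y \sqsubseteq z$. For the forward direction: assume $\mathcal{M},x \Vdash \Box_i\psi$ and let $y \in S_\varphi$ with $xR_{i,\varphi}y$, so $y \sqsubseteq z$ for some $z \in R_i(x)$; then $\mathcal{M}, z \Vdash \psi$ by the truth clause for $\Box_i$, hence $\mathcal{M}, y \Vdash \psi$ by persistence of $\|\psi\|_{\mathcal{M}}$, and then $\mathcal{M}_\varphi, y \Vdash \psi$ by IH. For the converse, I would prove the contrapositive: if $\mathcal{M}, x \nVdash \Box_i\psi$, then there is $z \in R_i(x)$ with $\mathcal{M}, z \nVdash \psi$; by part \ref{PossToKripke1} pick $z' \in S_\varphi$ with $z' \sqsubseteq z$, noting $z'$ decides $\psi$, and use persistence of $\|\neg\psi\|_{\mathcal{M}}$ together with the regularity of $\|\psi\|_{\mathcal{M}}$ — from $\mathcal{M}, z \nVdash \psi$ there is a refinement of $z$ forcing $\neg\psi$, and one must arrange that this refinement can be taken inside $S_\varphi$; combining the refinement-to-force-$\neg\psi$ step with the $S_\varphi$-refinement step of part \ref{PossToKripke1} gives a single $z' \in S_\varphi$, $z' \sqsubseteq z$, with $\mathcal{M}, z' \Vdash \neg\psi$, hence $\mathcal{M}, z' \nVdash \psi$, hence $\mathcal{M}_\varphi, z' \nVdash \psi$ by IH; and $xR_{i,\varphi}z'$ since $z' \sqsubseteq z \in R_i(x)$, so $\mathcal{M}_\varphi, x \nVdash \Box_i\psi$. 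The one subtlety to handle with care is making sure the order of subformulas chosen in part \ref{PossToKripke1} is compatible with forcing $\neg\psi$ at the start — but since $\neg\psi$ is itself (up to the outermost negation) built from subformulas of $\varphi$, one can simply start the refinement process from a possibility already forcing $\neg\psi$ and then refine further to land in $S_\varphi$, using that forcing $\neg\psi$ persists downward.
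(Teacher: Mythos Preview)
Your proposal is correct and follows essentially the same approach as the paper's proof: the same iterated refinement using refinability and persistence for part~\ref{PossToKripke1}, and the same induction for part~\ref{PossToKripke2}, including the key move in the $\Box_i$ case of first refining to force $\neg\psi$ and then applying part~\ref{PossToKripke1} to land in $S_\varphi$. The only cosmetic difference is that the paper proves the forward direction of the $\Box_i$ case contrapositively (from $\mathcal{M}_\varphi,x\nVdash\Box_i\psi$ to $\mathcal{M},x\nVdash\Box_i\psi$, using the contrapositive of persistence), whereas you argue it directly; and your closing ``subtlety'' is not really a subtlety once you observe, as you do, that forcing $\neg\psi$ persists under the further refinements of part~\ref{PossToKripke1}.
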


\begin{proof} For part \ref{PossToKripke1}, note that for any $y\sqsubseteq x$, if $\mathcal{M},y\nVdash \psi$, then by refinability there is a $z\sqsubseteq y$ such that $\mathcal{M},z\Vdash\neg\psi$; and if $\mathcal{M},y\Vdash\psi$, then by persistence, for any $z\sqsubseteq y$, $\mathcal{M},z\Vdash\psi$. Thus, starting with $x$, we can apply refinability and persistence for each subformula $\psi$ of $\varphi$ in turn until reaching the desired $x'\in S_\varphi$ with $x'\sqsubseteq x$.

Part \ref{PossToKripke2} is by induction. The base case and $\wedge$ case are obvious. For any subformula $\neg\psi$ of $\varphi$, we have $\mathcal{M},x\Vdash\neg\psi$ iff $\mathcal{M},x\nVdash\psi$ (since $x\in S_\varphi$) iff $\mathcal{M}_\varphi,x\nVdash\psi$ (by the inductive hypothesis) iff $\mathcal{M}_\varphi,x\Vdash\neg\psi$ (since $\mathcal{M}_\varphi$ is a Kripke model). For any subformula $\Box_i\psi$ of $\varphi$, if $\mathcal{M}_\varphi,x\nVdash\Box_i\psi$, then there is a $y\in S_\varphi$ such that $xR_{i,\varphi}y$ and $\mathcal{M}_\varphi,y\nVdash\psi$, which implies $\mathcal{M},y\nVdash\psi$ by the inductive hypothesis. Since $xR_{i,\varphi}y$, there is a $z\in R_i(x)$ with $y\sqsubseteq z$. Then by persistence, $\mathcal{M},y\nVdash\psi$ implies $\mathcal{M},z\nVdash\psi$, so $\mathcal{M},x\nVdash\Box_i\psi$. Conversely, if $\mathcal{M},x\nVdash\Box_i\psi$, so there is a $y\in R_i(x)$ such that $\mathcal{M},y\nVdash\psi$, then by refinability there is a $y'\sqsubseteq y$ such that $\mathcal{M},y'\Vdash\neg\psi$. Then by part \ref{PossToKripke1}, there is a $y''\in S_\varphi$ with $y''\sqsubseteq y'$, which implies $\mathcal{M},y''\nVdash\psi$ and $y''\sqsubseteq y$. It follows that $xR_{i,\varphi}y''$ and hence $\mathcal{M}_\varphi,x\nVdash \Box_i\psi$.\end{proof}

Note, however, that the underlying Kripke frame of $\mathcal{M}_\varphi$ may fail to validate the same formulas as the underlying full possibility frame of $\mathcal{M}$. Nonetheless, Lemma \ref{PossToKripke} and the soundness of the minimal normal modal logic $\mathsf{K}$ with respect to Kripke models yields soundness of $\mathsf{K}$ with respect to possibility models without any additional checking of Boolean or modal axioms.

\subsubsection{From $\mathcal{V}$-BAOs to full and principal frames}\label{VtoFrame}

We now return to our discussion of the relation between possibility frames and BAOs. In this subsection and the next, we will make precise the following claim: from an algebraic point of view, the essence of \textit{relational semantics} for modal logic  in its most basic form (i.e., before the step to general frames or topological frames that can realize all modal algebras) is that the $\Box_i$ modalities distribute over arbitrary conjunctions---or dually, the $\Diamond_i$ modalities distribute over arbitrary disjunctions.  The atomicity of the dual algebras of Kripke frames can be disassociated from the essence of the basic relational semantics.

Formally, following \cite{Litak2005b}, we call a Boolean algebra equipped with unary $\Box_i$ operations a $\mathcal{V}$-BAO if $\Box_i$ distributes over any existing meets:
 \[\mbox{if $\underset{a\in A}{\bigwedge}a$ exists, then $\Box_i (\underset{a\in A}{\bigwedge}a)= \underset{a\in A}{\bigwedge}{\Box_i a}$}.\]
 Then $\Box_i$ is said to be \textit{completely multiplicative}. This is equivalent to the $\Diamond_i$ operation related by $\Diamond_i a=\neg\Box_i\neg a$ being \textit{completely additive}:
 \[\mbox{if $\underset{a\in A}{\bigvee}a$ exists, then $\Diamond_i (\underset{a\in A}{\bigvee}a)= \underset{a\in A}{\bigvee}{\Diamond_i a}$}.\]
The `$\mathcal{V}$' is chosen to suggest the big join $\bigvee$, thinking in terms of $\Diamond_i$ as primitive (in contrast to our approach of taking $\Box_i$ as primitive). 
 
We can turn $\mathcal{V}$-BAOs into relational possibility frames as follows.

\begin{theorem}[\cite{Holliday2018}]\label{VtoPoss}\textnormal{For any $\mathcal{V}$-BAO $\mathbb{B}=(B,\{\Box_i\}_{i\in I})$, define:
\begin{eqnarray*}
\mathbb{B}_\mathsf{u}&=&(B_+,\leq_+,\mathcal{RO}(B_+,\leq_+),\{R_i\}_{i\in I}),\mbox{the \textit{full frame of $\mathbb{B}$}, and} \\
\mathbb{B}_\mathsf{p}&=&(B_+,\leq_+,\{\mathord{\downarrow}_+ x\mid x\in B_+\}\cup \{\varnothing\},\{R_i\}_{i\in I}),\mbox{the \textit{principal frame of $\mathbb{B}$}},
\end{eqnarray*}
where  $(B_+,\leq_+)$ is as in Theorem \ref{FirstThm}.\ref{FirstThm2} and for $x,y\in B_+$ and $i\in I$, 
\begin{equation}\mbox{$xR_iy$ iff $\forall y'\in B_+$, if $y'\leq_+ y$ then $x\wedge\Diamond_iy'\neq 0$ in $\mathbb{B}$}.\label{KeyRDef}\end{equation}
Then:
\begin{enumerate}
\item $\mathbb{B}_\mathsf{u}$ and $\mathbb{B}_\mathsf{p}$ are strong relational possibility frames, with $\mathbb{B}_\mathsf{u}$ being full; 
\item if $B$ is a complete Boolean algebra, then $\mathbb{B}_\mathsf{u}=\mathbb{B}_\mathsf{p}$;
\item $(\mathbb{B}_\mathsf{u})^\mathsf{b}$ is the lower MacNeille completion (or Monk completion) of $\mathbb{B}$;
\item\label{VtoPoss4} $(\mathbb{B}_\mathsf{p})^\mathsf{b}$ is isomorphic to $\mathbb{B}$.
\end{enumerate}}
\end{theorem}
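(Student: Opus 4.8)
The plan is to prove all four parts in tandem, with parts (1), (2), and (4) as the foundation and part (3) as a corollary of (1) together with Theorem \ref{FirstThm}.\ref{FirstThm2} and the already-established Boolean facts about $\mathcal{RO}(B_+,\leq_+)$. First I would verify that $\mathbb{B}_\mathsf{u}$ and $\mathbb{B}_\mathsf{p}$ are strong relational possibility frames. By Theorem \ref{FirstThm}.\ref{FirstThm2} we already know $(B_+,\leq_+)$ is a separative poset whose regular open algebra is (isomorphic to) $\mathbb{B}$'s underlying Boolean algebra's MacNeille completion, and that $\{\mathord{\downarrow}_+a\mid a\in B\}$ is a Boolean subalgebra of $\mathcal{RO}(B_+,\leq_+)$ isomorphic to $B$. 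So the Boolean-frame requirements (Definition \ref{PosFrameDef}) are met; what remains on the modal side is to check \upR{}, \Rdown{}, \Rref{}, and \Rdense{} for the relation $R_i$ defined by (\ref{KeyRDef}), and then to invoke Lemma \ref{SufficientForFull} to conclude $\Box_i$ preserves $\mathcal{RO}(B_+,\leq_+)$ and (for the principal frame) the subalgebra $\{\mathord{\downarrow}_+ a\mid a\in B\}\cup\{\varnothing\}$. The cleanest route to the frame conditions is probably to show directly that $\mathbb{B}_\mathsf{u}$ (and $\mathbb{B}_\mathsf{p}$) is $R$-tight: unwinding Definition \ref{RTightDef}, $R$-tightness at the full frame says that if every admissible $Z$ with $x\leq_+ \Box_i^{\mathrm{alg}}(\bigvee Z)$ has $y\in Z$, then $xR_iy$; using complete multiplicativity of $\Box_i$ (i.e. that $\mathbb{B}$ is a $\mathcal{V}$-BAO), one translates this into exactly condition (\ref{KeyRDef}). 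By Lemma \ref{TightStrong}.\ref{TightStrong1.75} an $R$-tight full frame is strong, which handles $\mathbb{B}_\mathsf{u}$; for $\mathbb{B}_\mathsf{p}$ one uses Lemma \ref{TightStrong}.\ref{TightStrong1} together with a direct check of \Rref{}, or notes that \upR{}, \Rdown{}, \Rdense{} for $R_i$ do not depend on $P$ and \Rref{} follows from separativity of $(B_+,\leq_+)$ as in the intuitive explanation after Lemma \ref{SufficientForFull}.

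Part (2) is immediate: if $B$ is complete then $\mathcal{RO}(B_+,\leq_+)$ already equals $\{\mathord{\downarrow}_+ a\mid a\in B\}$ (this is Theorem \ref{FirstThm}.\ref{FirstThm2}, the isomorphism being surjective), so the two admissible-set families coincide, hence $\mathbb{B}_\mathsf{u}=\mathbb{B}_\mathsf{p}$. Part (3) then follows from part (1): $(\mathbb{B}_\mathsf{u})^\mathsf{b}$ has underlying Boolean algebra $\mathcal{RO}(B_+,\leq_+)$, which is the MacNeille completion of $B$ (Proposition \ref{MacNeilleProp}), and the $\Box_i$ operation on it is $Z\mapsto\{x\mid R_i(x)\subseteq Z\}$; the content of ``lower MacNeille / Monk completion'' is precisely that the modal operators are extended to the MacNeille completion by the lower (meet-preserving) extension, which is what $R_i$ defined via (\ref{KeyRDef}) computes — so one records that $\Box_i^{(\mathbb{B}_\mathsf{u})^\mathsf{b}}$ restricted to the image of $B$ agrees with $\Box_i^{\mathbb{B}}$ and is completely multiplicative, and invokes the characterization/uniqueness of the lower MacNeille completion from the cited literature (\cite{Litak2005b}).

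The real work, and the step I expect to be the main obstacle, is part (4): showing $(\mathbb{B}_\mathsf{p})^\mathsf{b}\cong\mathbb{B}$. We already have from Theorem \ref{FirstThm}.\ref{FirstThm2} a Boolean isomorphism $\varphi\colon B\to \{\mathord{\downarrow}_+ a\mid a\in B\}$, $\varphi(b)=\mathord{\downarrow}_+ b$, and by part (1) the right-hand side is closed under $\Box_i$, so $\varphi$ is a Boolean-algebra isomorphism onto the Boolean reduct of $(\mathbb{B}_\mathsf{p})^\mathsf{b}$; what must be checked is that it commutes with the modalities, i.e. $\mathord{\downarrow}_+\Box_i b = \Box_i(\mathord{\downarrow}_+ b) = \{x\in B_+\mid R_i(x)\subseteq \mathord{\downarrow}_+ b\}$. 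The $\subseteq$ direction: suppose $x\leq_+\Box_i b$ and $xR_iy$; we want $y\leq_+ b$, equivalently $y\wedge\neg b = 0$; if not, set $y' = y\wedge\neg b\in B_+$, so $y'\leq_+ y$ and hence by (\ref{KeyRDef}) $x\wedge\Diamond_i y'\neq 0$; but $y'\leq\neg b$ gives $\Diamond_i y'\leq\Diamond_i\neg b = \neg\Box_i b$, so $x\wedge\neg\Box_i b\neq 0$, contradicting $x\leq\Box_i b$. The $\supseteq$ direction is the delicate one and is where complete multiplicativity is essential: suppose $x\not\leq_+\Box_i b$, so $x\wedge\neg\Box_i b\neq 0$, i.e. $x\wedge\Diamond_i\neg b\neq 0$; one must produce $y\in B_+$ with $xR_iy$ and $y\not\leq_+ b$. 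Here one uses that $\Diamond_i$ is completely additive together with a Zorn-type or direct argument to find a suitable $y\leq\neg b$ witnessing $xR_iy$ — concretely, one shows $R_i(x)\cap\mathord{\downarrow}_+\neg b\neq\varnothing$ by exhibiting $y$ below $\neg b$ with $x\wedge\Diamond_i y'\neq 0$ for every $y'\leq_+ y$; the natural candidate is built from the element $x\wedge\Diamond_i\neg b$ and complete additivity guarantees it can be ``localized'' appropriately. I would structure this as a lemma isolating exactly the claim ``$x\not\leq\Box_i b \Rightarrow \exists y\leq\neg b$ with $xR_iy$,'' prove it using complete additivity of $\Diamond_i$ (this is the only place the $\mathcal{V}$-BAO hypothesis is used beyond part (1)), and then assemble the isomorphism. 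One should also separately verify $\Box_i$ sends $\varnothing$ to an admissible set of $\mathbb{B}_\mathsf{p}$, which is automatic since $\Box_i\varnothing = \mathord{\downarrow}_+\Box_i 0 \in \{\mathord{\downarrow}_+ a\mid a\in B\}$ once the commutation identity is known for $b = 0$.
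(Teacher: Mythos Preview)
Your overall strategy is right and matches the paper's indicated approach: the crux is the first-order lemma stated in the Remark following the theorem, namely that complete additivity of $\Diamond_i$ is equivalent to ``if $x\wedge\Diamond_i y\neq 0$, then $\exists y'\leq_+ y$ with $xR_iy'$.'' You correctly identify this as the heart of part~(4), and your $\subseteq$ direction for $\mathord{\downarrow}_+\Box_i b = \Box_i(\mathord{\downarrow}_+ b)$ is clean.

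Two points to tighten. First, the $R$-tightness route to part~(1) has a bootstrapping problem: Lemma~\ref{TightStrong}.\ref{TightStrong1.75} presupposes $\mathcal{F}$ is already a relational possibility frame, i.e., that $\mathcal{RO}(B_+,\leq_+)$ is closed under $\Box_i$, which is what you are trying to establish. The direct route is better: verify \upR{}, \Rdown{}, \Rdense{} straight from (\ref{KeyRDef}) (these are easy), then \Rref{}, then invoke Lemma~\ref{SufficientForFull}. But note that \Rref{} does \emph{not} follow from separativity of $(B_+,\leq_+)$ as you suggest; it needs exactly the same key lemma. Given $xR_iy$, set $x'=x\wedge\Diamond_i y$; then for any $x''\leq_+ x'$ one has $x''\wedge\Diamond_i y\neq 0$, and it is precisely the first-order form of complete additivity that supplies a $y'\leq_+ y$ with $x''R_iy'$. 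So the $\mathcal{V}$-BAO hypothesis is already used in part~(1), not only in part~(4).

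Second, drop the ``Zorn-type'' suggestion for the $\supseteq$ direction. The entire point of the paper's Remark is that the needed implication is a \emph{first-order}, choice-free consequence of complete additivity (proved in the cited \cite{Holliday2015,Litak2019}); no maximality argument is involved. Your instinct to isolate it as a standalone lemma is exactly right---just state it as the first-order equivalent of complete additivity and cite accordingly.
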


\begin{remark} The proof of part \ref{VtoPoss4} takes advantage of the surprising fact---to which the study of possibility semantics led---that complete multiplicativity of $\Box_i$ (or complete additivity of $\Diamond_i$), an ostensibly \textit{second-order} condition on a BAO, is in fact equivalent to the following \textit{first-order} condition on a BAO, defined using the $R_i$ relation from (\ref{KeyRDef}):
\[\mbox{if $x\wedge \Diamond_iy\neq 0$, then $\exists y'\in B_+$: $y'\leq_+ y$ and $xR_iy'$}.\] 
This is proved in \cite{Holliday2015,Litak2019} for BAOs (for a generalization to posets, see \cite{Andreka2016}).\end{remark}

By Theorem \ref{VtoPoss}, any complete $\mathcal{V}$-BAO can be represented as the algebra associated with a \textit{full} possibility frame, while any $\mathcal{V}$-BAO can be represented as the algebra associated with a possibility frame whose admissible sets are precisely the principle downsets together with $\varnothing$. We will use these facts in what follows. They imply that any logic complete with respect to a class of complete $\mathcal{V}$-BAOs (resp. $\mathcal{V}$-BAOs) is complete with respect to a class of the corresponding frames.

A $\mathcal{T}$-BAO \cite{Litak2005b} is a BAO $\mathbb{B}=(B,\{\Box_i\}_{i\in I})$ in which for each $i\in I$, there is an $f_i:B\to B$ that is a residual of $\Box_i$,\footnote{$\mathcal{T}$ stands for `tense', as in tense logics the past diamond operator is a residual of the future box operator.} i.e., for all $a,b\in B$:
\begin{equation}a\leq \Box_ib\mbox{ iff }f_i(a)\leq b.\label{residuation}\end{equation}
A useful fact is that a \textit{complete} BAO is a $\mathcal{V}$-BAO iff it is a $\mathcal{T}$-BAO. The right to left direction is obvious. For the left to right, simply take
\[f_i(a)=\bigwedge \{ c\in B\mid a\leq \Box_i c \}.\] 
Now we have an analogue of Proposition \ref{NeighDef} for relational possibility frames. 

\begin{theorem}\label{RelDef} \textnormal{Suppose $B$ is a Boolean algebra isomorphic to  $\mathcal{F}^\mathsf{b}$  for some possibility frame $\mathcal{F}=(S,\sqsubseteq,P)$ in which every principal downset belongs to $P$.\footnote{E.g., $P=\mathcal{RO}(S,\sqsubseteq)$ for a separative poset $(S,\sqsubseteq)$.} Then for any $\mathcal{T}$-BAO $\mathbb{B}=(B,\{\Box_i\}_{i\in I})$, there is a strong relational  possibility frame $\mathbb{B}_\mathcal{F}=(S,\sqsubseteq, P, \{R_i\}_{i\in I})$ such that $(\mathbb{B}_\mathcal{F})^\mathsf{b}$ is isomorphic to $\mathbb{B}$.}
\end{theorem}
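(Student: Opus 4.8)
The plan is to mimic the proof of Theorem~\ref{VtoPoss}, using the isomorphism $\sigma\colon \mathcal{F}^\mathsf{b}\to B$ to transport the $\mathcal{T}$-BAO structure onto a relational possibility frame built on the given poset $(S,\sqsubseteq)$. First I would note that since $B$ is a $\mathcal{T}$-BAO, each $\Box_i$ has a residual $f_i$ satisfying (\ref{residuation}); pulling back along $\sigma$, we obtain operations $\Box_i'=\sigma^{-1}\circ\Box_i\circ\sigma$ and $f_i'=\sigma^{-1}\circ f_i\circ\sigma$ on $\mathcal{F}^\mathsf{b}$ which again satisfy the residuation law $Z\leq \Box_i'W$ iff $f_i'(Z)\leq W$ for $Z,W\in\mathcal{F}^\mathsf{b}$. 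The hypothesis that every principal downset $\mathord{\downarrow}x$ belongs to $P$ is what lets me define the accessibility relations pointwise: set
\[
xR_iy \iff \forall y'\sqsubseteq y\; \big(f_i'(\mathord{\downarrow}y')\cap \mathord{\downarrow}x\neq\varnothing\big),
\]
in analogy with (\ref{KeyRDef}), where $f_i'(\mathord{\downarrow}y')$ plays the role of $\Diamond_i\mathord{\downarrow}y'$ (note $\Diamond_i$ on a complete BAO is exactly the residual of $\Box_i$, so this is the natural transcription). One should check at the outset that $\mathcal{F}^\mathsf{b}$, while perhaps not complete as an abstract Boolean algebra, is complete \emph{as realized inside} $\mathcal{RO}(S,\sqsubseteq)$ in the relevant sense — actually the cleanest route is to work in $\mathcal{RO}(S,\sqsubseteq)$ throughout (which \emph{is} complete by Theorem~\ref{FirstThm}.\ref{FirstThm1}) using that $P$ contains all principal downsets, so the formula for $R_i$ makes sense and the join $\bigvee\{\mathord{\downarrow}y'\mid y'\in Z\}=Z$ for $Z\in\mathcal{RO}(S,\sqsubseteq)$.

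Next I would verify that $\mathcal{F}'=(S,\sqsubseteq,P,\{R_i\}_{i\in I})$ is a strong relational possibility frame, i.e.\ satisfies \upR{}, \Rdown{}, \Rref{}, and \Rdense{}. These follow exactly as in the proof of Theorem~\ref{VtoPoss} (cited to \cite{Holliday2018}): \upR{} and \Rdown{} are immediate from the shape of the definition (the universal quantifier $\forall y'\sqsubseteq y$ handles \Rdown{}, and persistence/monotonicity handles \upR{}); \Rdense{} is likewise built into the definition; and \Rref{} uses separativity of $(S,\sqsubseteq)$ — which holds because every principal downset is in $P\subseteq\mathcal{RO}(S,\sqsubseteq)$, so $(S,\sqsubseteq)$ is separative by Definition~\ref{SepDef}. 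By Lemma~\ref{SufficientForFull}, $\mathcal{RO}(S,\sqsubseteq)$ is closed under each $\Box_i$, and since $P$ is closed under $\neg$ and $\cap$, I then need the extra check that $P$ is closed under $\Box_i$; this is where the residuation hypothesis earns its keep, via the identity $\Box_i Z=\Box_i'Z$ for $Z\in P$, to be established in the next step.

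The heart of the proof — and the main obstacle — is showing that for every $Z\in P$ we have $\Box_i Z=\Box_i' Z$, where $\Box_i Z=\{x\in S\mid R_i(x)\subseteq Z\}$ is the frame-theoretic operation and $\Box_i'Z$ is the transported algebraic one; this simultaneously shows $P$ is closed under $\Box_i$ and that $(\mathcal{F}')^\mathsf{b}=(\mathsf{BA}(P),\{\Box_i\}_{i\in I})$ is isomorphic (via $\sigma$) to $\mathbb{B}$. The inclusion $\Box_i'Z\subseteq\Box_iZ$ should go through by unwinding the residuation law and the definition of $R_i$: if $x\in\Box_i'Z$, i.e.\ $\mathord{\downarrow}x\subseteq \Box_i'Z$ — actually one works with $x\in\Box_i'Z$ directly — and $xR_iy$, one shows $y\in Z$ by assuming $y\notin Z$, using refinability to find $y'\sqsubseteq y$ with $\mathord{\downarrow}y'\cap Z=\varnothing$, hence $\mathord{\downarrow}y'\leq\neg Z$, so $f_i'(\mathord{\downarrow}y')\leq f_i'(\neg Z)$, and $f_i'(\neg Z)\cap\mathord{\downarrow}x=\varnothing$ would follow from $\mathord{\downarrow}x\leq\Box_i'Z=\neg f_i'(\neg Z)$ — wait, this needs $\Box_i'Z=\neg\Diamond_i'\neg Z$ and the residual $f_i'$ of $\Box_i'$ to coincide with $\Diamond_i'$, which is exactly the complete-BAO fact "a complete BAO is a $\mathcal{V}$-BAO iff a $\mathcal{T}$-BAO" invoked in the remark after Theorem~\ref{VtoPoss} applied inside $\mathcal{RO}(S,\sqsubseteq)$. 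The reverse inclusion $\Box_iZ\subseteq\Box_i'Z$ is the genuinely delicate direction and relies on the first-order characterization of complete multiplicativity noted in the remark after Theorem~\ref{VtoPoss}: if $x\notin\Box_i'Z$ then $\mathord{\downarrow}x\wedge\Diamond_i'\neg Z\neq\varnothing$, so by that first-order property there is $y'\leq_+\neg Z$ (i.e.\ $y'\in S$, $\mathord{\downarrow}y'\subseteq\neg Z$ up to the usual identification) with $xR_iy'$; then $y'\notin Z$ (since $\mathord{\downarrow}y'\cap Z=\varnothing$ and $y'\in\mathord{\downarrow}y'$, using separativity), witnessing $x\notin\Box_iZ$. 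Assembling: $\sigma$ restricted to $P$ is a Boolean isomorphism commuting with each $\Box_i$, so $(\mathbb{B}_\mathcal{F})^\mathsf{b}=(\mathcal{F}')^\mathsf{b}\cong\mathbb{B}$, and by Lemma~\ref{TightStrong} (or \ref{FullToStrong}) we may moreover take the frame to be $R$-tight/strong. I expect the bookkeeping around "$\Box_i'$ vs.\ $\Diamond_i'=f_i'$ vs.\ the frame operations" and the careful invocation of the first-order characterization of complete additivity to be the only real content; everything else is a transcription of the separative-poset case of Theorem~\ref{VtoPoss}.
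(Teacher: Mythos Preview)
Your proposal contains a genuine error: the residual $f_i$ of $\Box_i$ (satisfying $a\leq\Box_ib$ iff $f_i(a)\leq b$) is \emph{not} the same operation as $\Diamond_i=\neg\Box_i\neg$. The residual is the left adjoint of $\Box_i$; relationally it sends a set to its \emph{forward} $R_i$-image, whereas $\Diamond_i$ is the \emph{backward} image. (This is why the footnote on $\mathcal{T}$-BAOs invokes tense logic: the residual of the future box is the \emph{past} diamond, not the future diamond.) Your definition $xR_iy\iff\forall y'\sqsubseteq y\,(f_i'(\mathord{\downarrow}y')\cap\mathord{\downarrow}x\neq\varnothing)$ therefore does not transcribe (\ref{KeyRDef}); unwinding it, $z\in f_i'(\mathord{\downarrow}y')$ means $y'R_i^{\text{intended}}z$, so your relation is essentially the converse of the one you want, and the identity $\Box_iZ=\Box_i'Z$ will fail. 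The same confusion reappears later when you write ``the residual $f_i'$ of $\Box_i'$ to coincide with $\Diamond_i'$''---the fact that complete $\mathcal{V}$-BAOs and complete $\mathcal{T}$-BAOs coincide as classes does not say that $f_i=\Diamond_i$.

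The paper's proof sidesteps all of this by using the residual in the opposite slot: it simply \emph{defines} $R_i(x)=\sigma^{-1}(f_i(\sigma(\mathord{\downarrow}x)))$, i.e., takes $R_i(x)$ to be an element of $P$ directly. Then residuation gives $R_i(x)\subseteq Z$ iff $\mathord{\downarrow}x\subseteq\Box_i'Z$ iff $x\in\Box_i'Z$, so $\Box_iZ=\Box_i'Z$ is a one-line calculation---no appeal to the first-order characterization of complete additivity is needed. The strong-frame conditions then fall out easily: \Rdown{} and \Rdense{} because $R_i(x)\in P\subseteq\mathcal{RO}(S,\sqsubseteq)$, \upR{} by monotonicity of $f_i$, and \Rref{} by a short argument using $x\notin\Box_i\neg\mathord{\downarrow}y$. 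If you want to salvage your approach, replace $f_i'$ by $\Diamond_i'$ in your definition of $R_i$ (this is the genuine analogue of (\ref{KeyRDef})); but the paper's route is considerably shorter.
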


\begin{proof} We use an isomorphism $\sigma$ from $\mathcal{F}^\mathsf{b}$ to $B$ to define accessibility relations $R_i$ on $S$. Let $f_i$ be the residual of $\Box_i$ in the  $\mathcal{T}$-BAO based on $B$. Then for $x\in S$, let
\[R_i(x)= \sigma^{-1}(f_i(\sigma(\mathord{\downarrow}x))).\]
With this definition, we claim that for any $Z\in P$, $\sigma(\Box_iZ)=\Box_i\sigma(Z)$. Indeed:
\begin{eqnarray*}
\sigma(\Box_iZ) & = & \sigma (\{x\in S\mid R_i(x)\subseteq Z\}\\
& = & \sigma (\{x\in S\mid f_i(\sigma(\mathord{\downarrow}x))\leq \sigma(Z)\}\\
& = & \sigma (\{x\in S\mid \sigma(\mathord{\downarrow}x)\leq \Box_i\sigma(Z)\}\mbox{ by (\ref{residuation})}\\
& = & \sigma (\{x\in S\mid \mathord{\downarrow}x\subseteq \sigma^{-1}(\Box_i\sigma(Z))\}\\
&=& \sigma (\sigma^{-1}(\Box_i\sigma(Z)))\\
&=& \Box_i\sigma(Z).
\end{eqnarray*}
Hence $(\mathbb{B}_\mathcal{F})^\mathsf{b}$ is isomorphic to $\mathbb{B}$. Let $\mathsf{f}_i:P\to P$ be the residual of $\Box_i$ in $(\mathbb{B}_\mathcal{F})^\mathsf{b}$, so  \[R_i(x)=\mathsf{f}_i(\mathord{\downarrow}x).\] 

We now show that $\mathbb{B}_\mathcal{F}$ is strong. First, \upR{} is equivalent to the condition that $x'\sqsubseteq x$ implies $R_i(x')\subseteq R_i(x)$. Now if $x'\sqsubseteq x$,  so $\mathord{\downarrow}x'\subseteq\mathord{\downarrow}x$, then  ${\mathsf{f}_i(\mathord{\downarrow}x')\subseteq \mathsf{f}_i(\mathord{\downarrow}x)}$, so $R_i(x')\subseteq R_i(x)$. \Rdown{} and \Rdense{} are immediate from the fact that  $\mathsf{f}_i(\mathord{\downarrow}x)\in P\subseteq\mathcal{RO}(S,\sqsubseteq)$. Finally, for \Rref{}, suppose $xR_iy$. Then ${x\not\in \Box_i\neg\mathord{\downarrow}y}$,  so there is an $x'\sqsubseteq x$ such that $x'\in \Diamond_i \mathord{\downarrow}y $. Consider any $x''\sqsubseteq x'$. Then $\mathord{\downarrow}x''\not\subseteq \Box_i\neg\mathord{\downarrow}y$, so by residuation, $\mathsf{f}_i(x'')\not\subseteq \neg\mathord{\downarrow}y$, i.e., $R_i(x'')\not\subseteq \neg\mathord{\downarrow}y$. Hence there is some $z\in R_i(x'')$ such that $z\not\in \neg\mathord{\downarrow}y$, which implies there is a $y'\sqsubseteq z$ such that $y'\sqsubseteq y$. Then since $R_i(x'')$ is a downset, we have $x''R_iy'$, which establishes \Rref{}. 
\end{proof}

Theorem \ref{RelDef} has a tree completeness corollary directly analogous to Corollary \ref{TreeThm}, but now for relational possibility frames based on $2^{<\omega}$.

\subsubsection{Dual equivalence with complete $\mathcal{V}$-BAOs}\label{DualEquiv}

The representation theorem for complete $\mathcal{V}$-BAOs in Theorem \ref{VtoPoss} can be turned into a full categorical duality. On the algebra side, we have the following category.

\begin{proposition} \textnormal{Complete $\mathcal{V}$-BAOs with complete BAO-homomorphisms, i.e., Boolean homomorphisms preserving arbitrary meets and each $\Box_i$ operator, form a category, \textbf{$\mathcal{CV}$-BAO}.}
\end{proposition}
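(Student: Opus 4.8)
The statement to be proved is that complete $\mathcal{V}$-BAOs together with complete BAO-homomorphisms form a category, \textbf{$\mathcal{CV}$-BAO}. The plan is to verify the three defining requirements of a category: that composition of morphisms is well-defined (i.e., the composite of two complete BAO-homomorphisms is again one), that composition is associative, and that identity morphisms exist. Associativity and the existence of identities are essentially inherited for free, since complete BAO-homomorphisms are in particular functions between sets and composition of functions is associative with the identity function serving as a unit; the only real content is closure under composition.

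First I would fix complete $\mathcal{V}$-BAOs $\mathbb{B}_1=(B_1,\{\Box_i^1\}_{i\in I})$, $\mathbb{B}_2=(B_2,\{\Box_i^2\}_{i\in I})$, $\mathbb{B}_3=(B_3,\{\Box_i^3\}_{i\in I})$ and complete BAO-homomorphisms $h\colon \mathbb{B}_1\to\mathbb{B}_2$ and $k\colon \mathbb{B}_2\to\mathbb{B}_3$. I would then check that $k\circ h$ is a complete BAO-homomorphism, breaking this into: (i) $k\circ h$ is a Boolean homomorphism — immediate, since a composite of Boolean homomorphisms preserves $\neg$, $\wedge$, and the constants; (ii) $k\circ h$ preserves arbitrary existing meets — given $A\subseteq B_1$ with $\bigwedge A$ existing in $B_1$, we have $h(\bigwedge A)=\bigwedge h[A]$ (in particular this meet exists in $B_2$), and then $k(\bigwedge h[A])=\bigwedge k[h[A]]$, so $(k\circ h)(\bigwedge A)=\bigwedge (k\circ h)[A]$; (iii) $k\circ h$ commutes with each $\Box_i$ — for $a\in B_1$, $(k\circ h)(\Box_i^1 a)=k(h(\Box_i^1 a))=k(\Box_i^2 h(a))=\Box_i^3 k(h(a))=\Box_i^3 (k\circ h)(a)$. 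After closure under composition, I would note that for any complete $\mathcal{V}$-BAO $\mathbb{B}$ the identity map $\mathrm{id}_B$ trivially preserves arbitrary meets and commutes with every $\Box_i$, hence is a complete BAO-homomorphism and serves as the categorical identity; and associativity of $\circ$ is just associativity of function composition.

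I do not anticipate a genuine obstacle here — this is a routine verification, and indeed the excerpt flags the corresponding fact for \textbf{BAE} (Proposition \ref{NeighBAE}.\ref{NeighBAE2}) as ``well known.'' The only point requiring a moment's care is step (ii): one must observe that complete homomorphisms preserve the \emph{existence} of meets, not merely their value when both sides are known to exist, so that the chain $h(\bigwedge A)=\bigwedge h[A]$ legitimately produces a meet in $B_2$ that $k$ can then be applied to. This is built into the definition of ``preserving arbitrary meets'' as stated in the proposition (``Boolean homomorphisms preserving arbitrary meets''), so no extra argument is needed. Accordingly I would present the proof as a brief check of these three clauses for composition plus a one-line remark on identities and associativity.
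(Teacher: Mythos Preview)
Your proposal is correct and is exactly the routine verification one would expect; the paper itself states this proposition without proof, treating it as evident (just as the analogous Proposition~\ref{NeighBAE}.\ref{NeighBAE2} for \textbf{BAE} is flagged as ``well known''). Your check of closure under composition, associativity, and identities is the standard argument and fills in what the paper leaves implicit.
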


On the possibility side, we first characterize the possibility frames that arise from complete $\mathcal{V}$-BAOs via the $(\cdot )_\mathsf{u}$ map.

\begin{lemma}[\cite{Holliday2018}]\label{RichLem} \textnormal{Let $\mathcal{F}={(S,\sqsubseteq, \mathcal{RO}(S,\sqsubseteq), \{R_i\}_{i\in I})}$ be a full relational possibility frame. Then $\mathcal{F}$ is isomorphic to $(\mathcal{F}^\mathsf{b})_\mathsf{u}$ iff the underlying poset of $\mathcal{F}$ is obtained from a complete Boolean lattice by deleting its bottom element and for all $i\in I$ and $x\in S$, $R_i(x)$ is a principal downset in $(S,\sqsubseteq)$.}
\end{lemma}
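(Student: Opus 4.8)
The statement is a characterization of which full relational possibility frames arise, up to isomorphism, as $(\mathcal{F}^\mathsf{b})_\mathsf{u}$ for a complete $\mathcal{V}$-BAO. The plan is to prove the two directions separately, using Theorem \ref{VtoPoss} and the non-modal facts from Theorem \ref{FirstThm}.\ref{FirstThm2}. First I would handle the easy direction: if $\mathcal{F}\cong(\mathcal{F}^\mathsf{b})_\mathsf{u}$, then by the definition of the $(\cdot)_\mathsf{u}$ map in Theorem \ref{VtoPoss}, the underlying poset of $(\mathcal{F}^\mathsf{b})_\mathsf{u}$ is $(B_+,\leq_+)$ where $B=\mathcal{F}^\mathsf{b}$ is a complete Boolean lattice (since $\mathcal{F}$ is full, $\mathcal{F}^\mathsf{b}=\mathcal{RO}(S,\sqsubseteq)$ is complete by Theorem \ref{FirstThm}.\ref{FirstThm1}), i.e. a complete Boolean lattice with its bottom deleted; and by construction each $R_i(x)$ in $(\mathcal{F}^\mathsf{b})_\mathsf{u}$ equals $\mathord{\downarrow}_+ z$ for an appropriate $z$ (one checks from \eqref{KeyRDef} that $R_i(x) = \mathord{\downarrow}_+\Diamond_i^{\mathbb{B}} x$ when $\Diamond_i x \neq 0$, and $R_i(x)=\varnothing=\mathord{\downarrow}_+0$ otherwise, using that $\Box_i$ is completely multiplicative so $\Diamond_i$ is completely additive). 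Transporting along the isomorphism, these two properties pass to $\mathcal{F}$ itself.

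\textbf{The converse direction.} Now suppose $\mathcal{F}=(S,\sqsubseteq,\mathcal{RO}(S,\sqsubseteq),\{R_i\}_{i\in I})$ is full, its underlying poset $(S,\sqsubseteq)$ is obtained from a complete Boolean lattice $L$ by deleting the bottom, and each $R_i(x)$ is a principal downset. The first task is to recognize $\mathcal{F}^\mathsf{b}=\mathcal{RO}(S,\sqsubseteq)$ as a complete $\mathcal{V}$-BAO: completeness comes from Theorem \ref{FirstThm}.\ref{FirstThm1}; for complete multiplicativity of $\Box_i$ I would use the hypothesis that $R_i(x)=\mathord{\downarrow}z_x$ for some $z_x$ — then for any family $\{U_a\}$ with meet $\bigcap U_a$ in $\mathcal{RO}(S,\sqsubseteq)$, $x\in\Box_i(\bigwedge_a U_a)$ iff $\mathord{\downarrow}z_x\subseteq\bigcap_a U_a$ iff $z_x\in U_a$ for all $a$ iff $\mathord{\downarrow}z_x\subseteq U_a$ for all $a$ (each $U_a$ being a downset) iff $x\in\bigcap_a\Box_i U_a$, and $\bigcap_a\Box_i U_a$ is the meet of $\{\Box_i U_a\}$ in $\mathcal{RO}(S,\sqsubseteq)$ since meet is intersection there. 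So $\mathcal{F}^\mathsf{b}$ is a complete $\mathcal{V}$-BAO. Next I would form $(\mathcal{F}^\mathsf{b})_\mathsf{u}$ via Theorem \ref{VtoPoss} and build the isomorphism. Since $(S,\sqsubseteq)$ arises from a complete Boolean lattice by deleting the bottom and $\mathcal{F}$ is full and separative (a full frame over a separative poset; one should note $(S,\sqsubseteq)$ is separative here because it is $L_+$ for a complete — hence separative — Boolean lattice $L$, cf. the remark after Theorem \ref{FirstThm}), the map $x\mapsto\mathord{\downarrow}x$ is a dense order-embedding of $(S,\sqsubseteq)$ into $(\mathcal{RO}(S,\sqsubseteq),\subseteq)$ by Proposition \ref{SepProp}.\ref{SepProp3}, and in fact a poset isomorphism $S\cong(\mathcal{RO}(S,\sqsubseteq))_+=(\mathcal{F}^\mathsf{b})_+$ by Theorem \ref{FirstThm}.\ref{FirstThm2} applied to the complete algebra $\mathcal{F}^\mathsf{b}$. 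It remains to check this bijection $\psi:x\mapsto\mathord{\downarrow}x$ intertwines $R_i^{\mathcal{F}}$ with the accessibility relation $R_i^{(\mathcal{F}^\mathsf{b})_\mathsf{u}}$ defined by \eqref{KeyRDef} in $\mathbb{B}=\mathcal{F}^\mathsf{b}$.

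\textbf{The main obstacle.} The crux is this last verification: showing $xR_i^{\mathcal{F}}y$ iff $\psi(x)\,R_i^{(\mathcal{F}^\mathsf{b})_\mathsf{u}}\,\psi(y)$. Unwinding \eqref{KeyRDef} for $\mathbb{B}=\mathcal{RO}(S,\sqsubseteq)$, $\psi(x)R_i\psi(y)$ means: for every nonzero $V\in\mathcal{RO}(S,\sqsubseteq)$ with $V\subseteq\mathord{\downarrow}y$, we have $\mathord{\downarrow}x\wedge\Diamond_i V\neq\varnothing$ — equivalently, by separativity, for every $y'\sqsubseteq y$ we have $\mathord{\downarrow}x\cap\Diamond_i\mathord{\downarrow}y'\neq\varnothing$. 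I would compute $\Diamond_i\mathord{\downarrow}y'$ using $R_i(x)=\mathord{\downarrow}z_x$: under \Rdown{} (which holds since $R_i(x)$ is a downset) and \Rdense{} (since $R_i(x)\in\mathcal{RO}$), $x'\in\Diamond_i\mathord{\downarrow}y'$ iff $R_i(x')\cap\mathord{\downarrow}y'\neq\varnothing$. So the condition becomes: for all $y'\sqsubseteq y$ there is $x'\sqsubseteq x$ with $z_{x'}\comp y'$ (compatible). This must be matched against $xR_i^{\mathcal{F}}y$, i.e. $y\in R_i(x)=\mathord{\downarrow}z_x$, i.e. $y\sqsubseteq z_x$. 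The forward implication uses \upR{} (giving $z_{x'}\sqsubseteq z_x$ for $x'\sqsubseteq x$... or rather $R_i(x')\subseteq R_i(x)$, hence $z_{x'}\sqsubseteq z_x$ by separativity) and density of $\mathord{\downarrow}$-images; the reverse uses \Rref{} — already available since $\mathcal{F}$, being a full frame that is $R$-tight (one may invoke Lemma \ref{FullToStrong}/Lemma \ref{TightStrong}.\ref{TightStrong1.75} to assume $\mathcal{F}$ strong, or observe $R$-tightness follows from $R_i(x)$ being a principal downset together with separativity, since then $y\in\bigcap\{Z\mid x\in\Box_iZ\}$ forces $y\sqsubseteq z_x$). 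I expect the bookkeeping around "$z_x$ exists and behaves monotonically in $x$" — extracting the generator of the principal downset $R_i(x)$ and verifying it is the image under $\sigma^{-1}\circ f_i\circ\sigma$ of $\mathord{\downarrow}x$ — to be where the real care is needed, but each piece is forced by \upR{}, \Rdown{}, \Rref{}, separativity, and the residuation identity $f_i(a)=\bigwedge\{c\mid a\leq\Box_ic\}$ valid in the complete $\mathcal{V}$-BAO $\mathcal{F}^\mathsf{b}$. Once the intertwining is established, $\psi$ is a frame isomorphism $\mathcal{F}\cong(\mathcal{F}^\mathsf{b})_\mathsf{u}$, completing the proof.
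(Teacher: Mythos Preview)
The paper does not prove this lemma in the text; it is cited from \cite{Holliday2018}. The closest argument actually given is the proof of the neighborhood analogue, Theorem \ref{BAEtoBasicNeigh}.\ref{BAEtoBasicNeigh3}, which uses the same isomorphism $\psi(x)=\mathord{\downarrow}x$ that you propose and then verifies the intertwining condition directly. Your overall plan matches that template and is sound.

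Two remarks. First, a slip in the forward direction: from \eqref{KeyRDef} one computes $R_i(x)=\mathord{\downarrow}_+ f_i(x)$, where $f_i$ is the \emph{residual} of $\Box_i$ (i.e., $a\leq\Box_ib$ iff $f_i(a)\leq b$), not $\mathord{\downarrow}_+\Diamond_i^{\mathbb{B}} x$. Indeed, $x\wedge\Diamond_iy'\neq 0$ iff $x\not\leq\Box_i\neg y'$ iff $f_i(x)\not\leq\neg y'$ iff $f_i(x)\wedge y'\neq 0$, so $xR_iy$ iff for all nonzero $y'\leq y$ we have $f_i(x)\wedge y'\neq 0$, i.e., $y\leq f_i(x)$. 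The conclusion that $R_i(x)$ is a principal downset stands; only the generator is misidentified. (You do invoke the correct residual at the end of your outline, so this is an inconsistency rather than a conceptual error.)

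Second, your converse argument can be streamlined using that same residual, avoiding the detour through \upR{}, \Rref{}, and compatibility computations. Once $\mathcal{F}^\mathsf{b}$ is a complete $\mathcal{V}$-BAO (hence a $\mathcal{T}$-BAO) and $\psi(x)=\mathord{\downarrow}x$ is the poset isomorphism, compute in $\mathcal{F}^\mathsf{b}$:
\[
f_i(\mathord{\downarrow}x)=\bigcap\{Z\in\mathcal{RO}(S,\sqsubseteq)\mid \mathord{\downarrow}x\subseteq\Box_iZ\}=\bigcap\{Z\in\mathcal{RO}(S,\sqsubseteq)\mid R_i^\mathcal{F}(x)\subseteq Z\}.
\]
Since by hypothesis $R_i^\mathcal{F}(x)$ is a principal downset and hence itself regular open, this intersection equals $R_i^\mathcal{F}(x)$. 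Then by the first computation, $\psi(x)\,R_i^{(\mathcal{F}^\mathsf{b})_\mathsf{u}}\,\psi(y)$ iff $\mathord{\downarrow}y\subseteq f_i(\mathord{\downarrow}x)=R_i^\mathcal{F}(x)$ iff $y\in R_i^\mathcal{F}(x)$ iff $xR_i^\mathcal{F}y$. This is the relational analogue of the short verification the paper gives for Theorem \ref{BAEtoBasicNeigh}.\ref{BAEtoBasicNeigh3}, and it makes the appeal to the individual interaction conditions (and to Lemmas \ref{TightStrong}--\ref{FullToStrong}) unnecessary.
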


\begin{definition} A \textit{rich possibility frame} is a full possibility frame satisfying the right-hand side of the equivalence in Lemma \ref{RichLem}.
\end{definition}

To define categories of full and rich possibility frames, we need to introduce morphisms. We use the notation $x\comp y$ for $\mathord{\downarrow}x\cap\mathord{\downarrow}y\neq\varnothing$.

\begin{definition}\label{PossMorphs} Given full possibility frames $\mathcal{F}$ and $\mathcal{F}'$, a \textit{strict possibility morphism} from $\mathcal{F}$ to $\mathcal{F}'$ is an $h\colon S\to S'$ such that for all $x,y\in S$ and $y'\in S'$:
\begin{enumerate}
\item\label{SqForthCon} \SqForth{} -- if $y\sqsubseteq x$, then $h(y)\sqsubseteq' h(x)$;
\item \SqBack{} -- if $y'\sqsubseteq' h( x )$, then $\exists y$: $y\sqsubseteq x $ and $h(y)\sqsubseteq' y'$;
\item \RForth{} -- if $xR  y$, then $h(x)R 'h(y)$;
\item\label{SRBackCon} \SRBack{} -- if $h(x)R' y'$ and $z'\sqsubseteq ' y'$, then $\exists y$: $xR  y$ and $h(y)\comp' z'$.
\end{enumerate}
A \textit{p-morphism} is defined in the same way as a strict possibility morphism, but with strengthened versions of the two back conditions:
\begin{enumerate}
\item[$2'.$] \pSqBack{} -- $y'\sqsubseteq' h( x )$, then $\exists y$: $y\sqsubseteq x $ and $h(y)= y'$;
\item[$4'.$] \pRBack{} -- if $h(x)R' y'$, then $\exists y$: $xR  y$ and $h(y)=y'$.
\end{enumerate}
\end{definition}

\begin{proposition}[\cite{Holliday2018}]$\,$\textnormal{
\begin{enumerate}
\item Full possibility frames with strict possibility morphisms form a category, \\ $\mathbf{FullPoss}$;
\item Rich possibility frames with p-morphisms form a category, $\mathbf{RichPoss}$.
\end{enumerate}}
\end{proposition}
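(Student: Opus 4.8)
The final statement to prove is that the two categories $\mathbf{FullPoss}$ and $\mathbf{RichPoss}$ are indeed categories (i.e.\ that composition of strict possibility morphisms is again a strict possibility morphism, that composition of p-morphisms is again a p-morphism, that identities work, and that composition is associative). Associativity and identity laws are automatic since the morphisms are functions and composition is ordinary function composition; so the real content is closure of the morphism classes under composition.

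\medskip

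\noindent\textbf{Plan.} Let $h\colon\mathcal{F}\to\mathcal{F}'$ and $k\colon\mathcal{F}'\to\mathcal{F}''$ be strict possibility morphisms, and set $\ell=k\circ h$. I would verify the four defining conditions of Definition~\ref{PossMorphs} for $\ell$ in turn.
\begin{enumerate}
\item \SqForth{} for $\ell$: if $y\sqsubseteq x$, then $h(y)\sqsubseteq' h(x)$ by \SqForth{} for $h$, then $k(h(y))\sqsubseteq'' k(h(x))$ by \SqForth{} for $k$, i.e.\ $\ell(y)\sqsubseteq'' \ell(x)$. Immediate.
\item \RForth{} for $\ell$: analogous two-step chase, using \RForth{} for $h$ then for $k$. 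Immediate.
\item \SqBack{} for $\ell$: suppose $y''\sqsubseteq'' \ell(x)=k(h(x))$. Apply \SqBack{} for $k$ at the point $h(x)\in S'$: there is $y'\in S'$ with $y'\sqsubseteq' h(x)$ and $k(y')\sqsubseteq'' y''$. Now apply \SqBack{} for $h$ at $x$ with $y'\sqsubseteq' h(x)$: there is $y\in S$ with $y\sqsubseteq x$ and $h(y)\sqsubseteq' y'$. Then $k(h(y))\sqsubseteq'' k(y')$ by \SqForth{} for $k$, and composing with $k(y')\sqsubseteq'' y''$ and transitivity of $\sqsubseteq''$ gives $\ell(y)\sqsubseteq'' y''$. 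So $\ell$ satisfies \SqBack{}.
\item \SRBack{} for $\ell$: suppose $\ell(x)R'' y''$ and $z''\sqsubseteq'' y''$, i.e.\ $k(h(x))R'' y''$ and $z''\sqsubseteq'' y''$. Apply \SRBack{} for $k$ at $h(x)$: there is $y'\in S'$ with $h(x)R' y'$ and $k(y')\comp'' z''$; pick a common refinement $w''\sqsubseteq'' k(y')$, $w''\sqsubseteq'' z''$. Since $h(x)R' y'$, \Rdown{} holds in $\mathcal{F}'$ (full frames arising here are strong, but more directly I only need that $R'$-images are downsets, which follows from the frame conditions — I would cite the \Rdown{}/\Rdense{} discussion or simply use $R'(h(x))\in\mathcal{RO}$), so I may push $y'$ down if needed; in any case apply \SRBack{} for $h$ at $x$ with $h(x)R' y'$ and $w''\sqsubseteq'' k(y')$ \emph{after} first applying \SRBack{} for $h$ with the pair $(y', \text{some refinement})$. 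More carefully: from $h(x)R' y'$ and $k(y')\comp'' z''$, let $u'' \sqsubseteq'' k(y')$ with $u''\comp'' z''$ (take $u''$ a common refinement of $k(y')$ and $z''$, then $u''\sqsubseteq'' k(y')$ and $u''\comp'' z''$). Apply \SRBack{} for $k$ again? No — instead apply \SRBack{} for $h$: given $h(x)R' y'$ and any $z'\sqsubseteq' y'$, we get $y\in S$ with $xR y$ and $h(y)\comp' z'$. I want to choose $z'$ so that $k$ sends the compatibility witness near $z''$. This is where the argument needs care.
\end{enumerate}

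\medskip

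\noindent\textbf{Main obstacle.} The genuinely delicate step is closure under composition for the \SRBack{} condition, because it mixes the accessibility relation with the refinement order and with the compatibility relation $\comp$, and the witness produced by \SRBack{} for $h$ only yields $\comp'$-compatibility, not an actual refinement. The clean way to handle it is: given $\ell(x)R'' y''$ and $z''\sqsubseteq'' y''$, first use \SRBack{} for $k$ to get $y'$ with $h(x)R' y'$ and $k(y')\comp'' z''$; then note $k(y')\comp'' z''$ means there is $w''$ with $w''\sqsubseteq'' k(y')$ and $w''\sqsubseteq'' z''$; apply \SqBack{} for $k$ to $w''\sqsubseteq'' k(y')$ to get $z'\sqsubseteq' y'$ with $k(z')\sqsubseteq'' w''\sqsubseteq'' z''$; then apply \SRBack{} for $h$ to $h(x)R' y'$ and $z'\sqsubseteq' y'$ to get $y$ with $xRy$ and $h(y)\comp' z'$, say $v'\sqsubseteq' h(y)$ and $v'\sqsubseteq' z'$; finally $k(v')\sqsubseteq'' k(h(y))=\ell(y)$ and $k(v')\sqsubseteq'' k(z')\sqsubseteq'' z''$ by \SqForth{} for $k$, so $\ell(y)\comp'' z''$, as required. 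For p-morphisms the same template works but is easier: the back conditions produce genuine equalities $h(y)=y'$, so the chase closes without the compatibility gymnastics, giving $\ell(y)=y''$ directly. I would write out the strict-morphism case in full and remark that the p-morphism case is the same argument with ``$\comp$'' replaced by ``$=$'' throughout, noting that \pSqBack{}/\pRBack{} compose exactly as \SqForth{}/\RForth{} do. Identity morphisms trivially satisfy all conditions, and associativity is inherited from function composition, completing the verification that $\mathbf{FullPoss}$ and $\mathbf{RichPoss}$ are categories.
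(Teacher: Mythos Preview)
Your argument is correct; the only substantive point is the composition of \SRBack{}, and the clean version you give in the ``Main obstacle'' paragraph (use \SRBack{} for $k$, then \SqBack{} for $k$ to pull the compatibility witness back to a refinement $z'\sqsubseteq' y'$, then \SRBack{} for $h$, then \SqForth{} for $k$ to push the resulting compatibility forward) works exactly as you say. The paper itself does not supply a proof here---it simply cites \cite{Holliday2018}---so there is nothing further to compare against; your verification is the standard direct check, and the earlier detour in item~4 about needing \Rdown{} on $\mathcal{F}'$ is indeed unnecessary, as your final argument uses only the morphism conditions.
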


Now we can make precise the categorical relationship between full possibility frames and complete $\mathcal{V}$-BAOs.

\begin{theorem}[\cite{Holliday2018}] $\,$\textnormal{
\begin{enumerate} 
\item \textbf{RichPoss} is a reflective subcategory of \textbf{FullPoss};
\item \textbf{RichPoss} is dually equivalent to \textbf{$\mathcal{CV}$-BAO}.
\end{enumerate}}
\end{theorem}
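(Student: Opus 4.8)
The statement to prove has two parts: (1) \textbf{RichPoss} is a reflective subcategory of \textbf{FullPoss}, and (2) \textbf{RichPoss} is dually equivalent to $\mathcal{CV}$\textbf{-BAO}. The plan is to build the contravariant functors in both directions and then check the unit and counit of the adjunction, using the already-established object-level correspondence from Theorem \ref{VtoPoss} (the $(\cdot)_\mathsf{u}$ map and $(\cdot)^\mathsf{b}$ map) and its characterization of rich frames in Lemma \ref{RichLem}.

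For part (2), first I would define the functor $(\cdot)^\mathsf{b}\colon \mathbf{RichPoss}\to\mathcal{CV}\text{-}\mathbf{BAO}$: on objects it sends a rich frame $\mathcal{F}$ to $\mathcal{F}^\mathsf{b}=(\mathcal{RO}(S,\sqsubseteq),\{\Box_i\}_{i\in I})$, which is a complete $\mathcal{V}$-BAO by Theorem \ref{VtoPoss} (since $\mathcal{F}\cong (\mathcal{F}^\mathsf{b})_\mathsf{u}$ and the $(\cdot)_\mathsf{u}$-image of a complete $\mathcal{V}$-BAO realizes that very algebra); on a p-morphism $h\colon\mathcal{F}\to\mathcal{F}'$ it sends $h$ to the preimage map $h^{-1}[\cdot]\colon\mathcal{RO}(S',\sqsubseteq')\to\mathcal{RO}(S,\sqsubseteq)$, and one checks this is a complete BAO-homomorphism — preservation of $\neg$, $\cap$, arbitrary meets is routine from \SqForth{} and \pSqBack{}, and preservation of $\Box_i$ uses \RForth{} and \pRBack{}. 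In the other direction, define $(\cdot)_\mathsf{u}\colon\mathcal{CV}\text{-}\mathbf{BAO}\to\mathbf{RichPoss}$ on objects by Theorem \ref{VtoPoss} and on a complete BAO-homomorphism $k\colon\mathbb{B}\to\mathbb{B}'$ by sending $k$ to the restriction of the ``inverse image'' of $k$ to nonzero elements — concretely, since $k$ preserves arbitrary meets it has a left adjoint $k_\flat$ (or one works directly with the map $x'\mapsto\bigwedge\{x\in B\mid x'\leq k(x)\}$), and the induced map on the posets $B'_+\to B_+$ should be shown to be a p-morphism using the definition of $R_i$ in \eqref{KeyRDef} and residuation. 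Then I would verify functoriality of both assignments and that the composite $(\cdot)_\mathsf{u}\circ(\cdot)^\mathsf{b}$ is naturally isomorphic to the identity on \textbf{RichPoss} (this is exactly the content of Lemma \ref{RichLem} at the object level, promoted to a natural transformation via the canonical isomorphism $x\mapsto\mathord{\downarrow}x$), and $(\cdot)^\mathsf{b}\circ(\cdot)_\mathsf{u}$ is naturally isomorphic to the identity on $\mathcal{CV}\text{-}\mathbf{BAO}$ (this is Theorem \ref{VtoPoss}.4, again promoted to naturality via $\varphi(b)=\mathord{\downarrow}_+b$). Naturality squares should commute by direct diagram-chasing against the definitions of the morphism maps.

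For part (1), the reflection functor is the composite $R:=(\cdot)_\mathsf{u}\circ(\cdot)^\mathsf{b}\colon\mathbf{FullPoss}\to\mathbf{RichPoss}$, sending a full possibility frame $\mathcal{F}$ to the rich frame built from its algebra $\mathcal{F}^\mathsf{b}$. The key point is that $\mathcal{F}^\mathsf{b}$ is always a complete $\mathcal{V}$-BAO for a full relational possibility frame — here I would use that a full relational possibility frame can be assumed strong (Lemma \ref{FullToStrong}), that $\Box_i$ then distributes over arbitrary meets because $\mathcal{RO}(S,\sqsubseteq)$ is complete and $R_i$-based operators in strong frames are completely multiplicative — so $R$ is well-defined. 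The unit of the reflection is the map $\eta_\mathcal{F}\colon\mathcal{F}\to R(\mathcal{F})$ given by $x\mapsto\mathord{\downarrow}x$ (a strict possibility morphism landing in the rich frame, essentially the ``reflection'' of a separative poset into the regular opens of its algebra, cf.\ Proposition \ref{SepProp}.\ref{SepProp3}); one then verifies the universal property: any strict possibility morphism from $\mathcal{F}$ into a rich frame $\mathcal{G}$ factors uniquely through $\eta_\mathcal{F}$. Since a rich frame is determined up to isomorphism by its algebra, this factorization reduces, via the duality of part (2), to the fact that $(\cdot)^\mathsf{b}$ applied to the unit is an isomorphism, which is again Theorem \ref{VtoPoss}.4.

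The main obstacle I expect is the correct definition of the functor $(\cdot)_\mathsf{u}$ on morphisms and the verification that it yields genuine p-morphisms: a complete BAO-homomorphism $k\colon\mathbb{B}\to\mathbb{B}'$ does not directly induce a map on nonzero elements, so one must use the left adjoint to $k$ (guaranteed by complete meet-preservation) and then check that the \pSqBack{}, \RForth{}, and \pRBack{} conditions all hold — the $R$-conditions are the delicate ones, since they require unwinding the definition \eqref{KeyRDef} of $R_i$ in terms of $\Diamond_i$ and the adjunction, and making sure that $k$'s preservation of $\Box_i$ (equivalently, of $\Diamond_i$ in the appropriate completely-additive sense) transfers correctly. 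The rest — functoriality, naturality of the unit and counit, the triangle identities, and the universal property of the reflection — are essentially bookkeeping once the morphism functors are pinned down, and I would handle them by citing the analogous verifications for the non-modal case in \cite{Holliday2018} and indicating the extra clauses needed for the $R_i$ and $\Box_i$ structure.
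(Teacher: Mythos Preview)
The paper does not give a proof of this theorem; it is simply cited from \cite{Holliday2018}, so there is no in-text argument to compare your proposal against. Your outline follows the standard shape for establishing such a dual equivalence and reflection, using the object-level results already recorded in the paper (Theorem \ref{VtoPoss} and Lemma \ref{RichLem}) and promoting them to functoriality and naturality.

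One point to watch: your proposed unit $\eta_\mathcal{F}(x)=\mathord{\downarrow}x$ lands in $\mathcal{RO}(S,\sqsubseteq)_+$ only when $(S,\sqsubseteq)$ is separative (as Proposition \ref{SepProp}.\ref{SepProp3}, which you cite, explicitly assumes), but an arbitrary full possibility frame need not have a separative underlying poset. The correct unit in general sends $x$ to the regular open closure $\neg\neg\mathord{\downarrow}x$; this is still a strict possibility morphism but is no longer injective on non-separative frames. Relatedly, for \textbf{RichPoss} to sit inside \textbf{FullPoss} as a (reflective) subcategory you should note why the morphisms are compatible---that between rich frames, strict possibility morphisms and p-morphisms coincide---which is what makes \textbf{RichPoss} a \emph{full} subcategory and the reflection go through cleanly. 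These are the details that need attention beyond the bookkeeping you describe; the full verification appears in \cite{Holliday2018}.
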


Recall, by contrast, that the category of Kripke frames with p-morphisms is dually equivalent to the category of complete \textit{and atomic} $\mathcal{V}$-BAOs ($\mathcal{CAV}$-BAOs) with complete BAO-homomorphisms \cite{Thomason1975}.

\subsubsection{Quasi-normal possibility frames}\label{QuasiSection}

In this section, we briefly comment on possibility semantics for quasi-normal modal logics. These logics have important applications in provability logic \cite{Boolos1993}, which is the subject of \S\S~\ref{FullRelFrameSection}-\ref{PrincRelFrame}.

A \textit{quasi-normal modal logic} is a set $\mathsf{L}$ of $\mathcal{L}(I)$ formulas that contains all theorems of the smallest normal modal logic, $\mathsf{K}$, and is closed under modus ponens and uniform substitution---but not necessarily under the \textit{necessitation rule} stating that if $\varphi\in\mathsf{L}$, then $\Box_i\varphi\in\mathsf{L}$. Algebraic semantic for quasi-normal modal logics can be given using matrices $(\mathbb{B},F)$ where $\mathbb{B}=(B,\{\Box_i\}_{i\in I})$ is a BAO and $F$ is a distinguished filter in $B$. Instead of defining a formula $\varphi$ to be valid if for any valuation $\theta$ on $\mathbb{B}$, $\tilde{\theta}(\varphi)=1_B$, we define $\varphi$ to be valid if for any valuation $\theta$ on $\mathbb{B}$, $\tilde{\theta}(\varphi)\in F$. Then it is possible for $\varphi$ to be valid on $\mathbb{B}$ while $\Box_i\varphi$ is not valid. Yet modus ponens and uniform substitution still preserve validity. 

One obvious way to give possibility semantics for quasi-normal modal logics is to equip possibility frames with a distinguished filter of propositions.

\begin{definition}\label{Quasi1} A \textit{relational possibility frame with distinguished filter} is a pair $(\mathcal{F},F)$ where $\mathcal{F}=( S,\sqsubseteq,P, \{R_i\}_{i\in I})$ is a relational possibility frame and $F$ is a filter in the algebra $P$ of propositions.\end{definition}

\begin{remark} When $\sqsubseteq$ is the identity relation, $P=\wp(S)$, and $F$ is a principal filter, we obtain the standard notion of a \textit{Kripke frame with distinguished worlds} as a special case of Definition \ref{Quasi1}.
\end{remark}

A perhaps more appealing approach, given our move from worlds to possibilities, is to distinguish a directed set $S_0$ of possibilities.

\begin{definition}\label{Quasi2} A \textit{quasi-normal relational possibility frame} is a tuple $\mathcal{Q}={( S,\sqsubseteq,P, \{R_i\}_{i\in I},S_0)}$ where $\mathcal{F}_\mathcal{Q}=( S,\sqsubseteq,P, \{R_i\}_{i\in I})$ is a relational possibility frame and $S_0\subseteq S$ is such that if $x,y\in S_0$, then there is a $z\in S_0$ such that $z\sqsubseteq x$ and $z\sqsubseteq y$.  A formula $\varphi$ is \textit{valid on} $\mathcal{Q}$ if for every model $\mathcal{M}$ based on  $\mathcal{F}_\mathcal{Q}$, we have $\mathcal{M},x\Vdash\varphi$ for some $x\in S_0$.

If $S_0$ contains a maximum element with respect to $\sqsubseteq$, we call $\mathcal{Q}$ a \textit{relational possibility frame with a distinguished possibility}.
\end{definition} 

Any such frame gives us a matrix $\mathcal{Q}^\mathsf{m}=((\mathcal{F}_\mathcal{Q})^\mathsf{b}, F)$ with $(\mathcal{F}_\mathcal{Q})^\mathsf{b}$ as in Proposition \ref{BAOfromFrame} and the filter $F$ defined by:
\[F=\{Z\in P\mid S_0\cap Z\neq\varnothing \}\]
(which is principal if $\mathcal{Q}$ has a distinguished possibility). This yields the following, which can also be easily confirmed directly.
\begin{proposition} \textnormal{The set of formulas valid on any class of quasi-normal relational possibility frames is a quasi-normal modal logic.}
\end{proposition}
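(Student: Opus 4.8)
The statement to prove is that the set of formulas valid on any class $\mathsf{K}$ of quasi-normal relational possibility frames is a quasi-normal modal logic. By Definition~\ref{Quasi2}, validity on a class is validity on each member, and validity on $\mathcal{Q}$ means: in every model $\mathcal{M}$ based on $\mathcal{F}_\mathcal{Q}$, there is some $x\in S_0$ with $\mathcal{M},x\Vdash\varphi$. So I must verify the three closure conditions from the definition of a quasi-normal modal logic: (i) every theorem of $\mathsf{K}$ is valid on $\mathsf{K}$; (ii) the set of valid formulas is closed under uniform substitution; (iii) it is closed under modus ponens. The cleanest route is to reduce everything to the already-established matrix semantics: each $\mathcal{Q}\in\mathsf{K}$ gives a matrix $\mathcal{Q}^\mathsf{m}=((\mathcal{F}_\mathcal{Q})^\mathsf{b},F)$ with $F=\{Z\in P\mid S_0\cap Z\neq\varnothing\}$, and I would first check that $F$ really is a filter in $P$. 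Nonemptiness: $S\in P$ and $S_0\neq\varnothing$ (directedness presupposes $S_0$ is nonempty, or at least if $S_0=\varnothing$ the frame validates nothing and the claim is trivial). Upward closure in $P$ is immediate. Closure under meet $\cap$ is exactly where directedness of $S_0$ is used: if $x\in S_0\cap Z_1$ and $y\in S_0\cap Z_2$, pick $z\in S_0$ with $z\sqsubseteq x,y$; then by persistence of the regular open sets $Z_1,Z_2$ (Theorem~\ref{FirstThm}.\ref{FirstThm1}), $z\in Z_1\cap Z_2$, so $z\in S_0\cap(Z_1\cap Z_2)$.

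Next I would record the bridge lemma: for any model $\mathcal{M}=(\mathcal{F}_\mathcal{Q},\pi)$ with induced valuation $\theta$ on $(\mathcal{F}_\mathcal{Q})^\mathsf{b}$, and any formula $\varphi$, we have $\tilde\theta(\varphi)=\llbracket\varphi\rrbracket^{\mathcal{M}}=\{x\in S\mid \mathcal{M},x\Vdash\varphi\}$ — this is just the standard fact that the algebraic value of a formula equals its extension, following from the semantic clauses in Definition~\ref{RellPossFrame} matching the operations of $(\mathcal{F}_\mathcal{Q})^\mathsf{b}$ as a BAO (Proposition~\ref{BAOfromFrame}), by induction on $\varphi$. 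Consequently $\varphi$ is valid on $\mathcal{Q}$ iff $\tilde\theta(\varphi)\in F$ for every valuation $\theta$ on $(\mathcal{F}_\mathcal{Q})^\mathsf{b}$, i.e.\ iff $\mathcal{Q}^\mathsf{m}$ validates $\varphi$ in the matrix sense described just before Definition~\ref{Quasi1}. So the logic of $\mathsf{K}$ equals the logic of the class of matrices $\{\mathcal{Q}^\mathsf{m}\mid\mathcal{Q}\in\mathsf{K}\}$, and it suffices to know that any class of BAO-matrices determines a quasi-normal modal logic — which is exactly the fact recalled in the paragraph preceding Definition~\ref{Quasi1}: modus ponens and uniform substitution preserve membership of a formula's value in the distinguished filter $F$ (for MP, $a\in F$ and $a\Rightarrow b=\neg a\vee b\in F$ force $b=a\wedge(\neg a\vee b)\in F$ since $F$ is a filter; for substitution, a substitution instance's value under $\theta$ is the original formula's value under a reinterpreted valuation), and all $\mathsf{K}$-theorems have value $1_B\in F$ in every BAO, which holds because every BAO validates $\mathsf{K}$.

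Alternatively — and perhaps more in the spirit of ``which can also be easily confirmed directly'' — I would just argue semantically without passing through matrices. For (i): if $\varphi$ is a $\mathsf{K}$-theorem, then in any model $\mathcal{M}$ based on $\mathcal{F}_\mathcal{Q}$ we have $\mathcal{M},x\Vdash\varphi$ for \emph{all} $x\in S$ (soundness of $\mathsf{K}$ for relational possibility frames, using Theorem~\ref{FirstThm}.\ref{FirstThm1} for the Boolean axioms and the normality conditions for the modal ones), hence for some $x\in S_0\neq\varnothing$. For (iii): suppose $\varphi$ and $\varphi\to\psi$ are valid on $\mathcal{Q}$ and let $\mathcal{M}$ be a model; get $x\in S_0$ with $\mathcal{M},x\Vdash\varphi$ and $y\in S_0$ with $\mathcal{M},y\Vdash\varphi\to\psi$, then by directedness take $z\in S_0$, $z\sqsubseteq x,y$; by persistence $\mathcal{M},z\Vdash\varphi$ and $\mathcal{M},z\Vdash\varphi\to\psi$, and by the derived semantics of $\to$ (Definition~\ref{RellPossFrame}, clauses as in Lemma~\ref{DerivedClauses}.\ref{DerivedClauses2}, which apply here since the non-modal clauses are unchanged) applied at $z\sqsubseteq z$, we get $\mathcal{M},z\Vdash\psi$ with $z\in S_0$. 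For (ii): if $\varphi$ is valid and $\varphi'=\varphi^{\bar p}_{\bar\chi}$ is a substitution instance, then for any model $\mathcal{M}=(\mathcal{F}_\mathcal{Q},\pi)$ define $\pi'(p)=\llbracket\chi_p\rrbracket^{\mathcal{M}}$ (which lies in $P$ since $P$ is a subalgebra closed under $\neg,\cap$, and $\Box_i$); then $\mathcal{M}'=(\mathcal{F}_\mathcal{Q},\pi')$ is a legitimate model and a routine induction gives $\llbracket\varphi'\rrbracket^{\mathcal{M}}=\llbracket\varphi\rrbracket^{\mathcal{M}'}$, so validity of $\varphi$ on $\mathcal{Q}$ (applied to $\mathcal{M}'$) yields some $x\in S_0$ with $\mathcal{M},x\Vdash\varphi'$.

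The only genuine subtlety — the ``main obstacle'', though it is minor — is making sure directedness of $S_0$ is invoked at exactly the right point and no more is claimed: validity quantifies $S_0$ existentially and positively, which is why MP needs a common lower bound (to merge the two witnesses) but necessitation is \emph{not} in general preserved (there need be no single $x\in S_0$ with $R_i(x)$ landing inside the set of $\varphi$-witnesses), correctly matching the definition of quasi-normal logic. I would also note in passing the edge case $S_0=\varnothing$, handled trivially since then every formula (and its negation) is ``valid'', which is still a quasi-normal logic (the inconsistent one); but since Definition~\ref{Quasi2} implicitly intends $S_0$ nonempty and directed, I would simply state that. I expect the write-up to be three or four short paragraphs, mostly the matrix reduction plus the filter check, citing Proposition~\ref{BAOfromFrame}, Theorem~\ref{FirstThm}.\ref{FirstThm1}, and the matrix discussion before Definition~\ref{Quasi1}.
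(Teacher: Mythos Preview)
Your proposal is correct and takes essentially the same approach as the paper, which simply notes that the proposition follows from the matrix $\mathcal{Q}^\mathsf{m}=((\mathcal{F}_\mathcal{Q})^\mathsf{b},F)$ construction (and ``can also be easily confirmed directly''). You have filled in precisely the details the paper leaves implicit---the verification that $F$ is a filter via directedness of $S_0$, the bridge to matrix validity, and the direct check of the closure conditions---so there is nothing to correct.
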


Conversely, any matrix $(\mathbb{B},F)$ where $\mathbb{B}$ is a complete $\mathcal{V}$-BAO and $F$ a proper filter gives us a quasi-normal relational possibility frame $(\mathbb{B}_\mathsf{u},S_0)$ with $\mathbb{B}_\mathsf{u}$ as in Theorem \ref{VtoPoss} and $S_0=F$, such that $(\mathbb{B}_\mathsf{u},S_0)^\mathsf{m}$ is isomorphic to $(\mathbb{B},F)$.

In addition, we note that any \textit{rooted} relational possibility frame, i.e., in which the poset $(S,\sqsubseteq)$ has a maximum element $m$, may be regarded as a quasi-normal relational possibility frame in which $S_0=\{m\}$. There is no loss of generality in possibility semantics for $\mathcal{L}(I)$ in working with rooted frames (note that $\mathbb{B}_\mathsf{u}$ is rooted, as is $\mathbb{B}_\mathsf{g}$ in \S~\ref{GenRelFrame}), so in this sense quasi-normal relational possibility frames are a generalization of relational possibility frames.

\subsubsection{Full frames and world incompleteness in provability logic}\label{FullRelFrameSection}

In this section, we illustrate the greater generality of full relational possibility frames over Kripke frames using examples inspired by provability logic. Recall that in possibility semantics, Kripke frames may be regarded as full relational world frames (see Remark \ref{FullKripkeRemark}). Already for the basic unimodal logic, there are many Kripke incomplete but full-possibility-frame complete logics. The following result adapts results of \cite{Litak2005} and \cite{Kracht1999b} to the setting of possibility semantics. 

\begin{theorem}[\cite{Holliday2018}]\label{CVnotCAV} \textnormal{There are continuum-many normal unimodal logics $\mathsf{L}$ such that $\mathsf{L}$ is the logic of a class of full relational possibility frames but $\mathsf{L}$ is not the logic of any class of full relational world frames.}
\end{theorem}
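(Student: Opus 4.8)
The plan is to leverage the representation machinery already developed in the excerpt, reducing the desired frame-theoretic statement to a purely algebraic one. The key observation is Theorem \ref{VtoPoss}: any $\mathcal{V}$-BAO $\mathbb{B}$ can be represented as $(\mathbb{B}_\mathsf{p})^\mathsf{b}$ for a strong relational possibility frame $\mathbb{B}_\mathsf{p}$, and any \emph{complete} $\mathcal{V}$-BAO is $(\mathbb{B}_\mathsf{u})^\mathsf{b}$ for a \emph{full} strong relational possibility frame. Hence, if a normal unimodal logic $\mathsf{L}$ is the logic of a class of complete $\mathcal{V}$-BAOs, then $\mathsf{L}$ is the logic of the corresponding class of full relational possibility frames: soundness transfers because $\mathcal{F}^\mathsf{b}$ is a BAO (Proposition \ref{BAOfromFrame}) in the appropriate class, and completeness transfers because every complete $\mathcal{V}$-BAO refuting a non-theorem reappears as the algebra of a full frame. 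So the task becomes: exhibit continuum-many normal unimodal logics each of which is complete with respect to some class of complete $\mathcal{V}$-BAOs but is Kripke incomplete. Then the final ingredient is that Kripke frames, regarded as full relational world frames, realize exactly the complete \emph{and atomic} $\mathcal{V}$-BAOs (Remark \ref{FullKripkeRemark} together with Tarski's CABA representation), so "not the logic of any class of full relational world frames" follows from "not the logic of any class of complete and atomic $\mathcal{V}$-BAOs."

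\textbf{Key steps.} First I would recall or cite the relevant algebraic completeness fact: every normal modal logic $\mathsf{L}$ is complete with respect to its variety of modal algebras, and via the MacNeille/canonical completion one can often pass to \emph{complete} algebras; more to the point, the theorem is credited to adaptations of results of Litak \cite{Litak2005} and Kracht--Wolter \cite{Kracht1999b}, where continuum-many logics are constructed that are complete with respect to complete $\mathcal{V}$-BAOs (equivalently, whose $\Diamond$ is completely additive in the relevant algebras) yet Kripke incomplete. The construction in those sources typically builds a continuum-sized family of logics by encoding subsets of $\omega$ into axioms (e.g., via a suitable sequence of pairwise "independent" modal formulas), arranging that each logic is determined by a complete atomless-type algebra but that no Kripke frame validates it, often because validity on a Kripke frame would force a first-order condition that conflicts with the encoded data. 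Second, I would verify the two transfer directions explicitly but briefly: (i) soundness — for any full relational possibility frame $\mathcal{F}$, $\mathcal{F}^\mathsf{b}$ is a complete $\mathcal{V}$-BAO (it is a BAO by Proposition \ref{BAOfromFrame}, complete since $\mathcal{RO}(S,\sqsubseteq)$ is a complete Boolean algebra by Theorem \ref{FirstThm}.\ref{FirstThm1}, and $\Box$ is completely multiplicative because meets are intersections and $\Box Z = \{x \mid R(x) \subseteq Z\}$ commutes with arbitrary intersections); (ii) completeness — given a complete $\mathcal{V}$-BAO $\mathbb{B}$ with $\tilde\theta(\varphi) \neq 1$, the frame $\mathbb{B}_\mathsf{u}$ of Theorem \ref{VtoPoss} is a full relational possibility frame with $(\mathbb{B}_\mathsf{u})^\mathsf{b} \cong \mathbb{B}$, so the induced possibility model refutes $\varphi$. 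Third, for the negative half: if $\mathsf{L}$ were the logic of a class of full relational \emph{world} frames, then since each such frame's algebra is complete \emph{and atomic} (the powerset of the worlds, by Tarski's CABA representation), $\mathsf{L}$ would be the logic of a class of complete atomic $\mathcal{V}$-BAOs, contradicting the chosen property of $\mathsf{L}$ from the cited construction.

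\textbf{Main obstacle.} The genuine content lies entirely in the cited construction of continuum-many Kripke-incomplete logics that are nevertheless complete with respect to complete $\mathcal{V}$-BAOs; reproducing or even sketching that construction faithfully is the hard part, since it requires the independent-axiom encoding of subsets of $\omega$ and the verification that the resulting Lindenbaum-type algebras (or their completions) remain $\mathcal{V}$-BAOs while no Kripke frame can validate the logic. I expect the proof in the paper to simply invoke \cite{Litak2005,Kracht1999b} for this and then perform the routine transfer argument above; so in my plan the bulk of the work is the bookkeeping of steps (i)--(iii), while the existence of the continuum-many witness logics is imported wholesale. One subtlety worth flagging is that "complete $\mathcal{V}$-BAO" is exactly the right algebraic class — not all complete BAOs, since $\Box$ in a relational possibility frame must distribute over arbitrary meets — so the cited logics must be chosen to be complete with respect to this narrower class, and one should double-check that the Litak/Kracht--Wolter examples indeed have this stronger property (they do, as their canonical or MacNeille-type completions preserve complete additivity of $\Diamond$ in the constructions used).
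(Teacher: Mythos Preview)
Your proposal is correct and matches the paper's approach exactly: the paper does not give a self-contained proof but simply states that the result ``adapts results of \cite{Litak2005} and \cite{Kracht1999b} to the setting of possibility semantics,'' which is precisely your plan of importing the continuum-many $\mathcal{CV}$-complete but Kripke-incomplete logics from those sources and then applying the transfer via Theorem~\ref{VtoPoss} and Remark~\ref{FullKripkeRemark}. Your explicit verification that $\mathcal{F}^\mathsf{b}$ is a complete $\mathcal{V}$-BAO for any full relational possibility frame (using that $\Box_i Z = \{x \mid R_i(x) \subseteq Z\}$ commutes with arbitrary intersections) is a useful detail the paper leaves implicit.
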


If we go beyond the basic unimodal language, we can obtain very natural examples of Kripke incomplete but full-possibility-frame complete logics. Here we consider the language of \textit{propositional term modal logic} \cite{Fitting2001,Padmanabha2019,Padmanabha2019b,Padmanabha2019c}, which allows quantification over modalities, as well as what can be regarded as a fragment of that language---the language of ``someone believes'' from doxastic logic \cite{Halpern1990,Grove1993}. These languages have natural applications in multi-agent doxastic or epistemic logic, as well as (we will propose) polymodal provability logic. In addition to the devices for quantifying over modalities, we consider extensions of the languages with 0-ary modalities $G(i)$ expressing that \textit{agent $i$ belongs to the group of agents over whom we are quantifying}, and 0-ary $O(i)$ modalities expressing the ``opinionatedness'' (in doxastic logic) or ``omniscience'' (in epistemic logic) or ``negation-completeness'' (in provability logic) of $i$. Fixing a countably infinite set $\mathsf{Var}$ of variables, the languages are:
\begin{eqnarray*}
&\mathcal{TML}_{GO} &\qquad \varphi::= p\mid \neg\varphi \mid (\varphi\wedge\varphi)\mid \Box_\mathrm{v} \varphi\mid \forall \mathrm{v} \varphi \mid G(\mathrm{v})\mid O(\mathrm{v}) \\
&\mathcal{TML}_{G} &\qquad \varphi::= p\mid \neg\varphi \mid (\varphi\wedge\varphi)\mid \Box_\mathrm{v} \varphi\mid \forall \mathrm{v} \varphi \mid G(\mathrm{v}) \\
&\mathcal{TML}_{O} &\qquad \varphi::= p\mid \neg\varphi \mid (\varphi\wedge\varphi)\mid \Box_\mathrm{v} \varphi\mid \forall \mathrm{v} \varphi \mid O(\mathrm{v}) \\
&\mathcal{TML} &\qquad \varphi::= p\mid \neg\varphi \mid (\varphi\wedge\varphi)\mid \Box_\mathrm{v} \varphi\mid \forall \mathrm{v} \varphi \\
&\mathcal{L}_{SG}(I)&\qquad\varphi::= p\mid \neg\varphi\mid (\varphi\wedge\varphi)\mid \Box_i\varphi\mid S\varphi \mid G(i) \\
&\mathcal{L}_{S}(I)&\qquad\varphi::= p\mid \neg\varphi\mid (\varphi\wedge\varphi)\mid \Box_i\varphi\mid S\varphi 
\end{eqnarray*}
where $p\in\mathsf{Prop}$, $\mathrm{v}\in \mathsf{Var}$, and $i\in I$. We define $\exists\mathrm{v}\varphi$ as $\neg\forall\mathrm{v}\neg\varphi$.

As the semantics below will show, the languages with $S$ may be regarded as fragments of the term modal languages by translating $S\varphi$ as $\exists \mathrm{v} \Box_\mathrm{v}\varphi$.

We can interpret all of these languages using the following frames.

\begin{definition}\label{VarAgt} A \textit{varying agent-domain full relational possibility frame} is a tuple $(\mathcal{F},\mathrm{G})$ where $\mathcal{F}=(S,\sqsubseteq, \mathcal{RO}(S,\sqsubseteq),\{R_i\}_{i\in I})$ is a full relational possibility frame for $\mathcal{L}(I)$ and $\mathrm{G}:S\to \wp(I)\setminus\{\varnothing\}$ satisfies the following:
\begin{enumerate}
\item \textit{persistence for $\mathrm{G}$}: if $x'\sqsubseteq x$, then $\mathrm{G}(x')\supseteq\mathrm{G}(x)$;
\item \textit{refinability for $\mathrm{G}$}: if $i\in I\setminus \mathrm{G}(x)$, then $\exists x'\sqsubseteq x$ $\forall x''\sqsubseteq x'$  $i\not\in \mathrm{G}(x'')$.
\end{enumerate}
We say $\mathcal{F}$ has \textit{universal agent-domain} if $\mathrm{G}(x)=I$ for all $x\in S$.

Varying agent-domain domain full \textit{neighborhood} possibility frames (see \S~\ref{NeighSection}) are defined analogously. Quasi-normal versions of these frames are defined as in Definitions \ref{Quasi1}-\ref{Quasi2} in the obvious way.
\end{definition}

We first explain the simpler semantics for $\mathcal{L}_{SG}(I)$.  

\begin{definition}\label{VarAgtSem} Given a possibility model $\mathcal{M}$ based on a frame $\mathcal{F}$ as in Definition \ref{VarAgt} and $\varphi\in\mathcal{L}_{SG}(I)$, we define $\mathcal{M},x\Vdash\varphi$ with the usual clauses for $\neg$, $\wedge$, and $\Box_i$, plus the following: 
\begin{enumerate}
\item $\mathcal{M},x\Vdash S\varphi$ iff $\forall x'\sqsubseteq x$ $\exists x''\sqsubseteq x'$ $\exists i\in \mathrm{G}(x'')$: $\mathcal{M},x''\Vdash \Box_i\varphi$;
\item $\mathcal{M},x\Vdash G(i)$ iff $i\in \mathrm{G}(x)$.
\end{enumerate}
A formula $\varphi$ is valid on $\mathcal{F}$ if for every model $\mathcal{M}$ based on $\mathcal{F}$ and every possibility $x$ in $\mathcal{F}$, we have $\mathcal{M},x\Vdash\varphi$.
\end{definition}

The following is immediate from the semantic clause for $S$.
\begin{lemma} \textnormal{If $\mathcal{M}$ is a world model, i.e., $\sqsubseteq$ is the identity relation, then:
\[\mbox{$\mathcal{M},w\Vdash S\varphi$ iff $\exists i\in\mathrm{G}(w)$: $\mathcal{M},w\Vdash \Box_i\varphi$}.\]}
\end{lemma}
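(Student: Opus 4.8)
The plan is to unfold the semantic clause for $S\varphi$ from Definition \ref{VarAgtSem} under the hypothesis that $\sqsubseteq$ is the identity relation, and observe that the quantification over refinements collapses. First I would note that when $\sqsubseteq$ is the identity, the only $x'$ with $x'\sqsubseteq w$ is $w$ itself, and likewise the only $x''\sqsubseteq x'$ is $w$; hence the clause ``$\forall x'\sqsubseteq x\ \exists x''\sqsubseteq x'\ \exists i\in\mathrm{G}(x'')\colon \mathcal{M},x''\Vdash\Box_i\varphi$'' becomes simply ``$\exists i\in\mathrm{G}(w)\colon \mathcal{M},w\Vdash\Box_i\varphi$.''

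The only point requiring a sentence of justification is that the leading universal and the existential over $x''$ are over singletons, so $\forall x'\sqsubseteq w$ reduces to the single instance $x'=w$, and for that instance $\exists x''\sqsubseteq w$ reduces to $x''=w$; nothing else changes in the clause. Thus $\mathcal{M},w\Vdash S\varphi$ iff there is $i\in\mathrm{G}(w)$ with $\mathcal{M},w\Vdash\Box_i\varphi$, which is exactly the claimed equivalence. I would write this as a one- or two-line proof, perhaps spelling out the chain of equivalences in a displayed \texttt{align*} block.

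Honestly, there is no real obstacle here: the lemma is an immediate specialization of the refinement-quantified clause to the degenerate poset, and the ``hard part'' is merely to state it cleanly. The only thing to double-check is that $\mathrm{G}(w)$ is nonempty (so the existential is not vacuously problematic in either direction), which is guaranteed by the requirement $\mathrm{G}\colon S\to\wp(I)\setminus\{\varnothing\}$ in Definition \ref{VarAgt}; but in fact the equivalence holds regardless, since both sides are just the same formula with the trivial quantifiers erased.
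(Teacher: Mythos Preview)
Your proposal is correct and matches the paper's approach exactly: the paper simply states that the lemma ``is immediate from the semantic clause for $S$,'' which is precisely the collapse of the refinement quantifiers that you spell out.
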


Given the semantic clauses of Definition \ref{VarAgtSem}, we have the validity of the  (free logical) existential generalization principle:
\[(\Box_i\varphi\wedge G(i))\to S\varphi.\]

Now we turn to the term modal languages.

\begin{definition} Given a varying agent-domain full relational or neighborhood possibility frame $\mathcal{F}$ for a language $\mathcal{L}(I)$, a model $\mathcal{M}$ based on $\mathcal{F}$, variable assignment $g:\mathsf{Var}\to I$, and $\varphi\in\mathcal{TML}_{GO}$, we define $\mathcal{M},x\Vdash_g \varphi$ and $\llbracket \varphi\rrbracket^\mathcal{M}_g=\{y\in S\mid \mathcal{M},y\Vdash_g \varphi\}$ as follows:
\begin{itemize}
\item $\mathcal{M},x\Vdash_g \Box_\mathrm{v} \varphi$ iff $x\in \Box_{g(\mathrm{v})}\llbracket \varphi\rrbracket^\mathcal{M}_g$;
\item $\mathcal{M},x\Vdash_g \forall v\varphi$ iff for all $i\in \mathrm{G}(x)$, $\mathcal{M},x\Vdash_{g[\mathrm{v}:=i]} \varphi$;
\item $\mathcal{M},x\Vdash_g G(\mathrm{v})$ iff $g(\mathrm{v})\in \mathrm{G}(x)$;
\item $\mathcal{M},x\Vdash_g O(\mathrm{v})$  iff for all $Z\in \mathcal{RO}(S,\sqsubseteq)$, $x\in \Box_{g(\mathrm{v})}Z\vee \Box_{g(\mathrm{v})}\neg Z$.
\end{itemize}
A formula $\varphi$ is \textit{valid with respect to $\mathcal{F},g$} if for every model $\mathcal{M}$ based on $\mathcal{F}$ and every possibility $x$ in $\mathcal{F}$, $\mathcal{M},x\Vdash_g \varphi$. A formula is \textit{valid on $\mathcal{F}$} if for every variable assignment $g$, $\varphi$ is valid with respect to $\mathcal{F},g$. 
\end{definition}

Note the validity  according to this semantics of the principle
\[O(\mathrm{v})\to (\Box_\mathrm{v} p\vee \Box_\mathrm{v} \neg p).\]

The inadequacy of Kripke frames for the study of these languages is shown by a series of related problems to follow. In fact, they afflict not only Kripke frames but more generally any full neighborhood world frame (recall Remark \ref{FullNeighWorld}) that is \textit{monotonic}, i.e.,  each $N_i(w)$ is closed under supersets, which is the condition corresponding to the validity of $\Box_i(p\wedge q)\to \Box_i p$.  The essential problem is that world frames commit us to the idea that if every truth is believed by \textit{some agent/theory or other}, formalized by the frame validity of
\begin{equation*}p\to \exists \mathrm{v} \Box_\mathrm{v} p,\end{equation*} then there is a \textit{single agent/theory} who is fully opinionated:
\begin{equation*}\exists \mathrm{v} O(\mathrm{v}).\end{equation*}
 To see that world frames have this commitment, consider for a given world $w$ the ``world proposition'' $\{w\}$; then the agent who believes $\{w\}$ is fully opinionated. This can easily be turned into a proof of Proposition \ref{OmProp} below.
 
 \begin{remark}First, we note a connection to ``Fitch's paradox'' of knowability \cite{Fitch1963,Brogaard2019}: under weak assumptions, the ``verifiability principle'' that every truth \textit{could in principle} be known (at some time) by some agent or other, $p\to \blacklozenge \exists \mathrm{v} \Box_\mathrm{v} p$ (where $\blacklozenge$ is some kind of possibility modal), entails the stricter verificationist principle that every truth \textit{is} known (at some time) by some agent or other, $p\to \exists \mathrm{v} \Box_\mathrm{v} p$. Now we add that world frames commit the strict verificationist to the implausible principle that there is (at some time) a single omniscient agent.
 \end{remark}
 
\begin{proposition}\label{OmProp} \textnormal{Any varying agent-domain full monotonic neighborhood world frame that validates  $p\to \exists \mathrm{v} \Box_\mathrm{v} p$ validates $\exists \mathrm{v} O(\mathrm{v})$.}
\end{proposition}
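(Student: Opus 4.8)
The plan is to argue semantically using the ``world proposition'' hint given just before the statement. Fix a varying agent-domain full monotonic neighborhood world frame $(\mathcal{F},\mathrm{G})$ with $\mathcal{F}=(W,=,\wp(W),\{N_i\}_{i\in I})$ that validates $p\to\exists\mathrm{v}\,\Box_\mathrm{v}p$, and suppose for contradiction that $\exists\mathrm{v}\,O(\mathrm{v})$ is not valid on $(\mathcal{F},\mathrm{G})$. Then there is a model $\mathcal{M}=((\mathcal{F},\mathrm{G}),\pi)$, a variable assignment $g$, and a world $w\in W$ with $\mathcal{M},w\nVdash_g\exists\mathrm{v}\,O(\mathrm{v})$, i.e.\ $\mathcal{M},w\Vdash_g\forall\mathrm{v}\,\neg O(\mathrm{v})$, so that for every $i\in\mathrm{G}(w)$ we have $\mathcal{M},w\nVdash_{g[\mathrm{v}:=i]}O(\mathrm{v})$; by the semantic clause for $O$ this means for each such $i$ there is some $Z_i\in\wp(W)$ (here $\mathcal{RO}(W,=)=\wp(W)$ since $\sqsubseteq$ is the identity) with $w\notin\Box_iZ_i$ and $w\notin\Box_i\neg Z_i$, i.e.\ $Z_i\notin N_i(w)$ and $W\setminus Z_i\notin N_i(w)$.

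Next I would exploit the validity of $p\to\exists\mathrm{v}\,\Box_\mathrm{v}p$ by choosing a valuation that makes $p$ true at exactly $w$. Let $\mathcal{M}'$ differ from $\mathcal{M}$ only in setting $\pi'(p)=\{w\}$. Since $\mathcal{M}',w\Vdash p$ and the frame validates $p\to\exists\mathrm{v}\,\Box_\mathrm{v}p$, we get $\mathcal{M}',w\Vdash\exists\mathrm{v}\,\Box_\mathrm{v}p$, i.e.\ there is $i^*\in\mathrm{G}(w)$ with $\mathcal{M}',w\Vdash_{g[\mathrm{v}:=i^*]}\Box_{i^*}p$, which means $\llbracket p\rrbracket^{\mathcal{M}'}=\{w\}\in N_{i^*}(w)$. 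Now I claim this forces $O(\mathrm{v})$ to hold at $w$ under $g[\mathrm{v}:=i^*]$ in \emph{every} model based on $(\mathcal{F},\mathrm{G})$, contradicting the choice of $w$ above (applied with $i=i^*$). Indeed, for an arbitrary $Z\in\wp(W)$: either $w\in Z$, in which case $\{w\}\subseteq Z$, so by monotonicity of $N_{i^*}(w)$ and $\{w\}\in N_{i^*}(w)$ we get $Z\in N_{i^*}(w)$, i.e.\ $w\in\Box_{i^*}Z$; or $w\notin Z$, in which case $\{w\}\subseteq W\setminus Z$, so similarly $W\setminus Z\in N_{i^*}(w)$, i.e.\ $w\in\Box_{i^*}\neg Z$. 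Either way $w\in\Box_{i^*}Z\vee\Box_{i^*}\neg Z$, so $\mathcal{M},w\Vdash_{g[\mathrm{v}:=i^*]}O(\mathrm{v})$ for the original model $\mathcal{M}$ as well (since $N_{i^*}$ is part of the frame, not the valuation), contradicting $\mathcal{M},w\nVdash_{g[\mathrm{v}:=i^*]}O(\mathrm{v})$.

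The one subtlety to be careful about is the bookkeeping between the two valuations $\mathcal{M}$ and $\mathcal{M}'$: the failure of $O(\mathrm{v})$ is a statement about the frame's neighborhood functions and is therefore valuation-independent, so the witness $Z_{i^*}$ refuting $O$ at $w$ in $\mathcal{M}$ is equally a refutation in $\mathcal{M}'$; conversely the membership $\{w\}\in N_{i^*}(w)$ derived in $\mathcal{M}'$ is again a fact about the frame. So there is no real clash, but the writeup should make explicit that $O(\mathrm{v})$, $G(\mathrm{v})$, $\Box_\mathrm{v}$ applied to frame-level sets, and in particular the key membership $\{w\}\in N_{i^*}(w)$, depend only on $\mathcal{F}$. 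I expect this is the only place where care is needed; the rest is the routine monotonicity argument sketched above. (If one prefers, the whole thing can be packaged without a separate $\mathcal{M}'$ by simply noting that validity of $p\to\exists\mathrm{v}\,\Box_\mathrm{v}p$ under \emph{all} valuations, instantiated at the valuation sending $p$ to $\{w\}$, yields some $i^*\in\mathrm{G}(w)$ with $\{w\}\in N_{i^*}(w)$, and then concluding as above.)
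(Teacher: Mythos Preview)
Your proof is correct and follows exactly the approach the paper sketches in the paragraph preceding the proposition: pick the ``world proposition'' $\{w\}$, use validity of $p\to\exists\mathrm{v}\,\Box_\mathrm{v}p$ to find an $i^*\in\mathrm{G}(w)$ with $\{w\}\in N_{i^*}(w)$, and then use monotonicity to conclude $O(\mathrm{v})$ holds at $w$ for $i^*$. The paper does not spell out a formal proof beyond this hint, so your writeup (including the care about valuation-independence of $O$) is a faithful expansion of the intended argument.
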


Yet it is straightforward to construct a full relational possibility frame without the unwanted consequence.

\begin{proposition}\label{NoOmniscience}\textnormal{There is a universal agent-domain full relational possibility frame that validates  $p\to \exists \mathrm{v} \Box_\mathrm{v} p$ but not $\exists \mathrm{v} O(\mathrm{v})$.}
\end{proposition}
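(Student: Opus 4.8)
The plan is to build an explicit full relational possibility frame on a simple poset, with one accessibility relation per agent, arranged so that every regular open proposition is ``believed'' by some agent but no agent is opinionated. First I would take the poset to be the full infinite binary tree $2^{<\omega}$ from Example \ref{BinaryTree0}, which is separative, so that $\mathcal{RO}(2^{<\omega},\sqsubseteq)$ is the countable atomless Boolean algebra's MacNeille completion (Example \ref{BinaryTree}); the key feature is that no principal downset $\mathord{\downarrow}x$ is an atom, and indeed every nonempty regular open $U$ properly contains a strictly smaller nonempty regular open. I would let the agent index set be $I=\mathcal{RO}(2^{<\omega},\sqsubseteq)\setminus\{\varnothing\}$ itself (or, to keep $I$ a set of manageable objects, some index set bijective with it), and take universal agent-domain $\mathrm{G}(x)=I$ for all $x$. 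For each agent $U\in I$, I would define the accessibility relation $R_U$ so that $\Box_U Z$ behaves like ``$Z$ is forced relative to $U$'': concretely, set $R_U(x)=U$ when $x\in U$ and $R_U(x)=\varnothing$ otherwise — but this needs adjustment to satisfy \upR{}/\Rdown{}/\Rref{}, so more carefully I would define $x R_U y$ iff ($x\in U$ and $y\in U$), i.e.\ $R_U$ restricted to $U$ is the total relation on $U$ and empty elsewhere. Then $\Box_U Z=\{x\mid R_U(x)\subseteq Z\}$ equals $S$ if $U\subseteq Z$, equals $\neg U$ (the pseudocomplement in $\mathcal{RO}$) if $U\not\subseteq Z$... wait, that is not regular open in general, so I would instead take $R_U$ to have $R_U(x)=U$ for all $x\in S$ (a ``constant'' relation), giving $\Box_U Z = S$ if $U\subseteq Z$ and $\Box_U Z=\varnothing$ otherwise — this is a proper-filter-style modality, clearly landing in $\mathcal{RO}(S,\sqsubseteq)$, and I would check \upR{}, \Rdown{}, \Rref{} for the constant relation $R_U$ directly (\upR{} and \Rdown{} because $R_U(x)$ is the same regular open set $U$ for every $x$ and regular opens are downsets; \Rref{} because given $xR_Uy$, i.e.\ $y\in U$, for any $x'\sqsubseteq x$ and then any $x''\sqsubseteq x'$ we still have $x''R_Uy$, in fact the stronger \RrefPlusPlus{} holds).

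Next I would verify the two target validities and invalidity. For $p\to\exists\mathrm{v}\,\Box_\mathrm{v} p$: fix a model $\mathcal{M}$ based on this frame, a possibility $x$, and a valuation $\pi$; if $\mathcal{M},x\Vdash p$, let $U=\pi(p)\in\mathcal{RO}(S,\sqsubseteq)$. If $U\neq\varnothing$ then $U\in I$ and with the variable assignment sending $\mathrm{v}$ to the agent $U$ we get $\Box_U\llbracket p\rrbracket^\mathcal{M}=\Box_U U=S$ since $U\subseteq U$, so $\mathcal{M},x\Vdash_{g[\mathrm{v}:=U]}\Box_\mathrm{v} p$, hence $\mathcal{M},x\Vdash\exists\mathrm{v}\,\Box_\mathrm{v} p$; the case $U=\varnothing$ cannot arise since $x\in U$. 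Using the derived $\exists$-clause (Lemma \ref{DerivedClauses}, analogue for this language) one checks the ``$\forall x'\sqsubseteq x\,\exists x''\sqsubseteq x'$'' pattern is met because the witness works at $x$ itself and persists downward. For the \emph{failure} of $\exists\mathrm{v}\,O(\mathrm{v})$: I must exhibit $\mathcal{M},x$ with $\mathcal{M},x\nVdash\exists\mathrm{v}\,O(\mathrm{v})$, i.e.\ for every $x'\sqsubseteq x$ there is $x''\sqsubseteq x'$ such that no agent is opinionated at $x''$. But $O(\mathrm{v})$ is true at a possibility $y$ (under $g$) iff for all $Z\in\mathcal{RO}(S,\sqsubseteq)$, $y\in\Box_{g(\mathrm{v})}Z\vee\Box_{g(\mathrm{v})}\neg Z$; with our modalities $\Box_U Z\vee\Box_U\neg Z$ equals $S$ if $U\subseteq Z$ or $U\subseteq\neg Z$, and $\varnothing$ otherwise, so $O(U)$ holds at $y$ iff for every regular open $Z$, either $U\subseteq Z$ or $U\subseteq\neg Z$ — which forces $U$ to be an atom of $\mathcal{RO}(S,\sqsubseteq)$. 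Since $\mathcal{RO}(2^{<\omega},\sqsubseteq)$ is atomless, \emph{no} $U\in I$ satisfies this, so $O(\mathrm{v})$ is false at \emph{every} possibility under \emph{every} assignment, whence $\exists\mathrm{v}\,O(\mathrm{v})$ is false everywhere (its truth requires, going through the $\exists=\neg\forall\neg$ unfolding, some refinement witnessing $O$, which never exists). In particular it fails at the root $\epsilon$.

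The main obstacle I anticipate is not the logic of the two formulas but the bookkeeping around making the whole tuple $(\mathcal{F},\mathrm{G})$ genuinely a \emph{varying agent-domain full relational possibility frame} in the sense of Definition \ref{VarAgt}: I need $I$ to be a legitimate nonempty set (taking $I=\mathcal{RO}(S,\sqsubseteq)\setminus\{\varnothing\}$ is fine set-theoretically), I need each $R_U$ to satisfy the sufficient conditions of Lemma \ref{SufficientForFull} so that $\mathcal{RO}(S,\sqsubseteq)$ is closed under every $\Box_U$ simultaneously (the constant-relation choice makes each check uniform and trivial, as sketched, and $\RrefPlusPlus{}$ implies $\Rref{}$), and I need $\mathrm{G}\equiv I$ to trivially satisfy persistence and refinability for $\mathrm{G}$ (immediate since $I\setminus\mathrm{G}(x)=\varnothing$). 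A secondary subtlety is handling the nested $\forall x'\sqsubseteq x\,\exists x''\sqsubseteq x'$ quantifier patterns in the derived clauses for $\exists\mathrm{v}$ and in the definition of the satisfaction of $\exists\mathrm{v}\,O(\mathrm{v})$ — but because in our frame the relevant facts ($\llbracket p\rrbracket$ is nonempty regular open; $O(U)$ is \emph{nowhere} true) are either persistent or globally constant, these quantifier alternations collapse and cause no real trouble. Finally, one should double-check that the existential generalization principle $(\Box_i\varphi\wedge G(i))\to S\varphi$ and the universal-agent clauses are consistent with our reading — they are, since $\mathrm{G}\equiv I$ makes $G(i)$ always true, matching the usual term-modal reading.
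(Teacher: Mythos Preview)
Your proof is correct and follows essentially the same strategy as the paper's: build the frame on the infinite binary tree $2^{<\omega}$, take $\mathrm{G}\equiv I$, and use constant-in-$x$ accessibility relations $R_i(x)=U_i$ for regular open $U_i$, so that $\Box_i$ is the filter-style modality with $\Box_i Z\in\{\varnothing,S\}$ and non-omniscience follows from atomlessness of $\mathcal{RO}(2^{<\omega},\sqsubseteq)$. The only difference is the indexing of agents---the paper takes $I$ countable with a bijection $f\colon I\to 2^{<\omega}$ and sets $R_i(x)=\mathord{\downarrow}f(i)$ (so the witness for $\exists\mathrm{v}\,\Box_\mathrm{v} p$ at $x$ is $f^{-1}(x)$), whereas you take $I=\mathcal{RO}(2^{<\omega},\sqsubseteq)\setminus\{\varnothing\}$ with $R_U(x)=U$ (so the witness is $\pi(p)$ itself); the paper's choice is more parsimonious, yours slightly more direct, but the arguments are otherwise identical.
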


\begin{proof} Where $I$ is countably infinite, we build a possibility frame for $\mathcal{L}(I)$ based on the full infinite binary tree $2^{<\omega}$ regarded as a poset $(2^{<\omega},\sqsubseteq)$ as in Examples \ref{BinaryTree0} and \ref{BinaryTree0b}. Fix a bijection $f$ from $I$ to $2^{<\omega}$, and for $x\in 2^{<\omega}$, let $R_i(x)=\mathord{\downarrow}f(i)$. Then it is easy to see that $R_i$ satisfies \upR{}, \Rdown{}, and \Rref{}, so by Lemma \ref{SufficientForFull}, $(2^{<\omega},\sqsubseteq)$ equipped with $\mathcal{RO}(2^{<\omega},\sqsubseteq)$ and $\{R_i\}_{i\in I}$ is a full relational possibility frame, and setting $\mathrm{G}(x)=I$ gives us a universal agent-domain frame $\mathcal{F}$. Let $\mathcal{M}$ be any model based on $\mathcal{F}$ and $g_0:\mathsf{Var}\to I$. To see that $p\to \exists \mathrm{v} \Box_\mathrm{v} p$ is globally true in $\mathcal{M}$, observe that if $\mathcal{M},x\Vdash_{g_0} p$, then $\mathcal{M},x\Vdash_g \Box_\mathrm{v}p$ where $g(\mathrm{v})=f^{-1}(x)$, so $\mathcal{M},x\Vdash_{g_0}\exists \mathrm{v}\Box_\mathrm{v}p$. Finally, since there is no $x\in X$ and $i\in I$ such that for all $Z\in \mathcal{RO}(2^{<\omega},\sqsubseteq)$, $x\in \Box_i Z\vee\Box_i\neg Z$, the formula $\exists \mathrm{v} O(\mathrm{v})$ is not true at any possibility in the model.\end{proof}

Let us now consider a related example in the language $\mathcal{L}_S(I)$. For this, we draw an explicit connection to provability logic \cite{Artemov2005}. The key idea is to use the formula corresponding to G\"{o}del's Second Incompleteness Theorem:
\[\Box_i\neg\Box_i\bot\to \Box_i\bot.\]
The Second Incompleteness Theorem entails, for the theories it covers, that if a \textit{single} theory can prove every truth, then it is inconsistent, so there is an inconsistent theory: $S\bot$. World frames commit us to the idea that if every truth is provable in some theory or other in a class, then there is a single theory that proves every truth---hence the class contains an inconsistent theory by the Second Incompleteness Theorem. But we know from G\"{o}del that this is not so for every class of theories: it can be that every truth is provable in some theory or other in a class, yet there is no inconsistent theory in the class. This reasoning inspires the following world-incompleteness result in Theorem \ref{ProvThm}. Recall that the  Second Incompleteness formula is a special case (substituting $\bot$ for $p$) of the formula corresponding to L\"{o}b's Theorem in provability logic: \[ \Box_i (\Box_ip\to p)\to \Box_i p.\]

\begin{theorem}\label{ProvThm}$\,$\textnormal{
\begin{enumerate}
\item\label{ProvThm1} Any varying agent-domain full monotonic neighborhood world frame that validates $p\to Sp$ and $ \Box_i\neg\Box_i\bot\to \Box_i \bot$ for each $i\in I$ also validates $S\bot$.
\item\label{ProvThm2} There are varying agent-domain full relational possibility frames validating $p\to Sp$ and $\Box_i (\Box_i p\to p)\to \Box_i p$ for each $i\in I$ but not $S\bot$.
\end{enumerate}}
\end{theorem}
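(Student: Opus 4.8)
The plan is to establish Part~1 by a direct semantic argument using ``world propositions'' (in the spirit of the remark preceding Proposition~\ref{OmProp}), and Part~2 by exhibiting a full relational possibility frame realizing the provability-logical picture sketched just above the statement.

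For Part~1, fix an arbitrary world $u$ in a varying agent-domain full monotonic neighborhood world frame $\mathcal{F}=(W,=,\wp(W),\{N_i\}_{i\in I})$ with agent-domain map $\mathrm{G}$, and suppose $\mathcal{F}$ validates $p\to Sp$ and $\Box_i\neg\Box_i\bot\to\Box_i\bot$ for each $i\in I$. Consider the model $\mathcal{M}$ on $\mathcal{F}$ with $\pi(p)=\{u\}$ (legitimate because $\mathcal{F}$ is full). Then $\mathcal{M},u\Vdash p$, so by $p\to Sp$ and the world-model form of the $S$-clause (the lemma following Definition~\ref{VarAgtSem}) there is an $i\in\mathrm{G}(u)$ with $\{u\}=\llbracket p\rrbracket^\mathcal{M}\in N_i(u)$; by monotonicity of $N_i$ it follows that $Z\in N_i(u)$ for every $Z\ni u$. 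I claim $\varnothing\in N_i(u)$. If not, then $u\notin\{v:\varnothing\in N_i(v)\}=\llbracket\Box_i\bot\rrbracket^\mathcal{M}$, i.e.\ $u\in\llbracket\neg\Box_i\bot\rrbracket^\mathcal{M}$, hence $\llbracket\neg\Box_i\bot\rrbracket^\mathcal{M}\in N_i(u)$ by monotonicity, i.e.\ $\mathcal{M},u\Vdash\Box_i\neg\Box_i\bot$; then the validity of $\Box_i\neg\Box_i\bot\to\Box_i\bot$ gives $\mathcal{M},u\Vdash\Box_i\bot$, i.e.\ $\varnothing\in N_i(u)$, a contradiction. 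So $\varnothing\in N_i(u)$ with $i\in\mathrm{G}(u)$, whence $\mathcal{M},u\Vdash S\bot$; and since ``$\varnothing\in N_i(u)$ for some $i\in\mathrm{G}(u)$'' depends only on the frame, $S\bot$ is valid on $\mathcal{F}$.

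For Part~2, the goal is a full relational possibility frame $\mathcal{F}=(S,\sqsubseteq,\mathcal{RO}(S,\sqsubseteq),\{R_i\}_{i\in I})$ with agent-domain map $\mathrm{G}$ whose data arrange three things at once. \textbf{(a)} For every possibility $y$ there is an $i\in\mathrm{G}(y)$ with $R_i(y)\subseteq\mathord{\downarrow}y$. This already forces $p\to Sp$: if $\mathcal{M},y\Vdash p$ then $\mathord{\downarrow}y\subseteq\llbracket p\rrbracket^\mathcal{M}$ by persistence, so $\mathcal{M},y\Vdash\Box_i p$ for the chosen $i\in\mathrm{G}(y)$, and applying this at each refinement of $y$ witnesses $\mathcal{M},y\Vdash Sp$ through the $S$-clause. \textbf{(b)} Each $\Box_i$ validates Löb's axiom --- equivalently, since relational possibility frames are normal, the logic $\mathsf{GL}$ --- on $\mathcal{F}^\mathsf{b}$; in particular $\Box_i\neg\Box_i\bot\to\Box_i\bot$ is validated. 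Note this forces each set $B_i:=\{x\in S:R_i(x)=\varnothing\}$ (the extension of $\Box_i\bot$) to be nonempty and regular open, since if $B_i=\varnothing$ then $\Diamond_i\top$ is valid and the $p:=\bot$ instance of Löb's axiom fails. \textbf{(c)} There is a possibility $x^*$ with $\mathord{\downarrow}x^*\cap B_i=\varnothing$ for every $i$ that lies in $\mathrm{G}$ throughout $\mathord{\downarrow}x^*$ (by persistence of $\mathrm{G}$ this includes all of $\mathrm{G}(x^*)$); then $S\bot$ is false at $x^*$, so not valid. Concretely, I would take $(S,\sqsubseteq)$ to be a copy of the infinite binary tree $2^{<\omega}$ --- the ``consistency-closed region'', with $x^*$ a designated node --- together with extra material glued strictly below the complement of $\mathord{\downarrow}x^*$ providing, for each agent $i$, an $R_i$-``endpoint'' where $R_i$ dies out, and then define each $R_i$ so that $R_i(x)$ is a downset, antitone in $\sqsubseteq$ (i.e.\ $x'\sqsubseteq x$ implies $R_i(x')\subseteq R_i(x)$, giving \upR{} and \Rdown{}, cf.\ Lemma~\ref{SufficientForFull}), satisfying \Rref{}, nonempty throughout $\mathord{\downarrow}x^*$, but transitive and converse-well-founded globally in the way that makes $\Box_i$ a $\mathsf{GL}$-operator --- mirroring and generalizing the binary-tree construction of Proposition~\ref{NoOmniscience}. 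Equivalently, one may read the frame off the provability-logical story: possibilities a separative poset of ``consistent finite approximations to true arithmetic'', agents the theories $\mathsf{PA}+\sigma$ for $\sigma$ in a fixed recursive set, $R_i$ the provability relation of theory $i$; every true sentence is provable in $\mathsf{PA}+\sigma$ for $\sigma$ itself true, no theory in the family is inconsistent, and each is a provability predicate, which yields (a)--(c).

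The main obstacle is Part~2, and within it the simultaneous fulfillment of (b) and (c) together with (a): $\mathsf{GL}$ requires each $R_i$ to have ``endpoints'' with empty successor-set, yet all of these must be kept out of the common region $\mathord{\downarrow}x^*$, while (a) still demands that \emph{every} possibility in $\mathord{\downarrow}x^*$ be $\Box_i$-dominated by its own principal downset for some agent $i$ present in its agent-domain. That this can be done at all depends essentially on the refinement structure of a genuine, non-world possibility frame --- the impossibility of doing it with world frames being exactly Part~1 --- and the delicate checking will be (i) that the chosen $R_i$ satisfy \upR{}, \Rdown{}, \Rref{} so that Lemma~\ref{SufficientForFull} applies, and (ii) that each $\Box_i$ really validates $\mathsf{GL}$ and not merely $\mathsf{K4}$, which comes down to controlling the behaviour of $\Box_i$ on the ``endpoint'' region of the poset.
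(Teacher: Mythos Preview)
Your Part~1 argument is correct and essentially identical to the paper's.

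Your plan for Part~2, however, contains a genuine gap: conditions (a), (b), (c) as you state them are jointly inconsistent. Apply (a) at $y=x^*$ to obtain an $i_0\in\mathrm{G}(x^*)$ with $R_{i_0}(x^*)\subseteq\mathord{\downarrow}x^*$. By persistence of $\mathrm{G}$, $i_0$ lies in $\mathrm{G}$ throughout $\mathord{\downarrow}x^*$, so (c) gives $R_{i_0}(y)\neq\varnothing$ for every $y\in\mathord{\downarrow}x^*$. By \upR{}, which you impose, $R_{i_0}(y)\subseteq R_{i_0}(x^*)\subseteq\mathord{\downarrow}x^*$ for all such $y$. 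It follows that $\mathord{\downarrow}x^*\subseteq\llbracket\neg\Box_{i_0}\bot\rrbracket$, hence $R_{i_0}(x^*)\subseteq\llbracket\neg\Box_{i_0}\bot\rrbracket$, so $x^*\Vdash\Box_{i_0}\neg\Box_{i_0}\bot$; yet $R_{i_0}(x^*)\neq\varnothing$, so $x^*\nVdash\Box_{i_0}\bot$. Thus the Second Incompleteness instance of L\"ob fails for $\Box_{i_0}$ at $x^*$, contradicting (b). Neither the ``gluing'' sketch nor the arithmetic heuristic evades this, since both are intended to realize (a)--(c). (There is also a smaller issue: your (c) only constrains agents in $\mathrm{G}(x^*)$, whereas refuting $S\bot$ at $x^*$ requires handling every $i\in\mathrm{G}(x'')$ for refinements $x''$, which may properly extend $\mathrm{G}(x^*)$.)

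The paper's construction avoids the trap by \emph{not} satisfying your (a). It takes a tree $T$ whose nodes have finitely many ($\geq 2$) children, fixes a bijection $f\colon I\to T$, and for the agent $i$ with $f(i)$ having children $c_1,\dots,c_n$ defines $R_i(x)$ as a union of certain $\mathord{\downarrow}c_j$'s depending on the position of $x$ relative to the $c_k$'s. Then $\mathrm{G}$ is \emph{defined} by $\mathrm{G}(x)=\{i:\forall x'\sqsubseteq x,\ R_i(x')\neq\varnothing\}$, which makes $i\in\mathrm{G}(x)$ automatically entail $R_i(x)\neq\varnothing$, so $S\bot$ fails everywhere. To witness $Sp$ at a refinement $x'$ of $x$, one passes to the \emph{first child} $x''=c_1$ of $x'$ and uses the agent $i=f^{-1}(x')$: here $R_i(c_1)=\mathord{\downarrow}c_2\cup\cdots\cup\mathord{\downarrow}c_n\subseteq\mathord{\downarrow}x'$, and $i\in\mathrm{G}(c_1)$ even though $i\notin\mathrm{G}(x')$ (since $R_i(c_n)=\varnothing$). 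The witnessing agent is thus only in $\mathrm{G}$ \emph{strictly below} $x'$, and its accessibility set lives among the \emph{siblings} of the witnessing possibility rather than below it --- which is exactly what lets L\"ob be verified by a finite induction on the children of $f(i)$.
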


\begin{proof} For part \ref{ProvThm1}, let $\mathcal{F}$ be a frame satisfying the hypothesis. To show that $\mathcal{F}$ validates $S\bot$, consider any world $w$ in $\mathcal{F}$. Let $\mathcal{M}$ be a model based on $\mathcal{F}$ such that $\pi(p)=\{w\}$. Then since $\mathcal{F}$ validates $p\to Sp$, we have $\mathcal{M},w\Vdash Sp$, so there is some $i\in \mathrm{G}(w)$ such that $\mathcal{M},w\Vdash \Box_ip$ and hence $\{w\}\in N_i(w)$. Then by monotonicity, $N_i(w)$ contains every $A\subseteq W$ with $w\in A$. Thus, if  $\mathcal{M},w\Vdash \neg\Box_i \bot$, then $\mathcal{M},w\Vdash \Box_i\neg\Box_i\bot$. But $\mathcal{F}$ validates $ \Box_i\neg\Box_i\bot\to \Box_i \bot$, so we conclude that $\mathcal{M},w\Vdash \Box_i \bot$. Hence $\mathcal{M},w\Vdash S\bot$. Since $S\bot$ contains no propositional variables, it follows that $\mathcal{F},w$ validates $S\bot$. Since $w$ was arbitrary, $\mathcal{F}$ validates $S\bot$.

For part \ref{ProvThm2}, let $T$ be any tree in which every node has at least 2, but only finitely many, children. For $x,y\in T$, we set $x\sqsubseteq y$ if $x=y$ or $x$ is a descendent of $y$. Fix a bijection $f$ from $I$ to $T$. Then for any $i\in I$ and $x\in T$, where $c_1,\dots,c_n$ are the children of $f(i)$, define $R_i(x)$ as follows (recalling that $\mathord{\downarrow}c_j=\{y\in S\mid y\sqsubseteq c_j\}$):

\[R_i(x)= \begin{cases}
\underset{1<j\leq n}{\bigcup}\mathord{\downarrow}c_j &\mbox{if }f(i)\sqsubseteq x \\
\underset{k<j\leq n}{\bigcup}\mathord{\downarrow}c_j &\mbox{if }\exists k: x\sqsubseteq c_k\\
\underset{k_{i}<j\leq n}{\bigcup}\mathord{\downarrow}c_j & \mbox{otherwise},\end{cases}\] 
where $k_{i}$ can be chosen to be any integer from $1$ to $n$. See Figure \ref{GLtree} below.

We claim that $R_i$ satisfies the following properties:
\begin{enumerate}
\item \upR{} -- if $x'\sqsubseteq x$ and $x'R_iy'$, then $xR_iy'$;
\item \Rdown{} -- if $xR_iy$ and $y'\sqsubseteq y$, then $xR_iy'$;
\item \Rref{} -- if $xR_iy$, then $\exists x'\sqsubseteq x$ $\forall x''\sqsubseteq x'$ $\exists y'\sqsubseteq y$: $x'' R_i y'$.
\end{enumerate}

For \upR{}, this is equivalent to the condition that $x'\sqsubseteq x$ implies $R_i(x')\subseteq R_i(x)$, which is clear from the definition of $R_i$. That \Rdown{} holds is immediate from the definition of $R_i(x)$ as a downset. Finally, for \Rref{}, suppose $xR_iy$. Then either (i) $f(i)\sqsubseteq x$, (ii) $x\sqsubseteq c_k$ for some child $c_k$ of $f(i)$, or (iii) neither (i) nor (ii) holds. In addition, $y\sqsubseteq c_j$ for some child $c_j$ of $f(i)$ with $j>1$ in cases (i) and (iii) and $j>k$ in case (ii). In case (i), let $x':=c_1$, while in cases (ii) and (iii), let $x':=x$. Then note that that for all $x''\sqsubseteq x'$, we have $x''R_iy$.

Thus, by Lemma \ref{SufficientForFull}, $(T,\sqsubseteq)$ equipped with $\mathcal{RO}(T,\sqsubseteq)$ and $\{R_i\}_{i\in I}$  is a full relational possibility frame. Now define $\mathrm{G}:T\to \wp(I)\setminus \{\varnothing\}$ as follows:
\[\mathrm{G}(x)=\{i\in I\mid \forall x'\sqsubseteq x\, R_i(x')\neq\varnothing\}.\]
Then we claim that persistence and refinability hold for $\mathrm{G}$. Persistence is immediate from the definition. For refinability, suppose $i\in I\setminus\mathrm{G}(x)$, so there is an $x'\sqsubseteq x$ such that $R_i(x')=\varnothing$. Then by \upR{}, for all $x''\sqsubseteq x'$ we have  $R_i(x'')=\varnothing$ and hence $i\not\in \mathrm{G}(x'')$. Thus, we have a varying agent-domain full relational possibility frame. Let $\mathcal{M}$ be any model based on the frame. 

First, we show that $\Box_i (\Box_i p\to p)\to \Box_i p$ is true at every $x$ in $\mathcal{M}$. Suppose $\mathcal{M},x\Vdash \Box_i(\Box_ip\to p)$. If $R_i(x)=\varnothing$, then immediately $\mathcal{M},x\Vdash \Box_ip$. Otherwise $xR_ic_n$ where $c_n$ is the last of the children of $f(i)$ in our enumeration of $f(i)$'s children, so from $\mathcal{M},x\Vdash \Box_i(\Box_ip\to p)$ we have $\mathcal{M},c_n\Vdash \Box_ip\to p$. Since $R_i(c_n)=\varnothing$, we have $\mathcal{M},c_n\Vdash\Box_ip$ and hence $\mathcal{M},{c_n}\Vdash p$. Then since $R_i(c_{n-1})=\mathord{\downarrow}c_n$, we have $\mathcal{M},c_{n-1}\Vdash \Box_i p$. Hence if $x\sqsubseteq c_{n-1}$, we have $\mathcal{M},x\Vdash\Box_ip$, so we are done. If instead  $xR_ic_{n-1}$, then from $\mathcal{M},x\Vdash\Box(\Box_i p\to p)$ we have $\mathcal{M},c_{n-1}\Vdash\Box_ip\to p$, which with $\mathcal{M},c_{n-1}\Vdash \Box_i p$ from above yields $\mathcal{M},c_{n-1}\Vdash p$ and hence $\mathcal{M},c_{n-2}\Vdash \Box_ip$, so  $\mathcal{M},x\Vdash \Box_ip$ if $x\sqsubseteq c_{n-2}$.  Repeating this reasoning, if $x$ is under some child of $f(i)$, we obtain $\mathcal{M},x\Vdash \Box_ip$, and otherwise we obtain $\mathcal{M},c_k\Vdash p$ for every child $c_k$ of $f(i)$ and hence $\mathcal{M},x\Vdash \Box_ip$ for every $x$ in $\mathcal{M}$.

Next, we show that $p\to Sp$ is globally true. Suppose $\mathcal{M},x\Vdash p$ and $x'\sqsubseteq x$. Let $x''$ be the first child of $x'$, i.e., $c_1$, in our enumeration of the children of $x'$. Then observe that $f^{-1}(x')\in\mathrm{G}(x'')$ and $\mathcal{M},x''\Vdash \Box_{f^{-1}(x')}p$. Hence $\mathcal{M},x\Vdash Sp$.

Finally, it is immediate from our definition of $\mathrm{G}$ that if $\mathcal{M},x\Vdash \Box_i\bot$, then $i\not\in\mathrm{G}(x)$. Hence there is no possibility that makes $S\bot$ true.\end{proof}

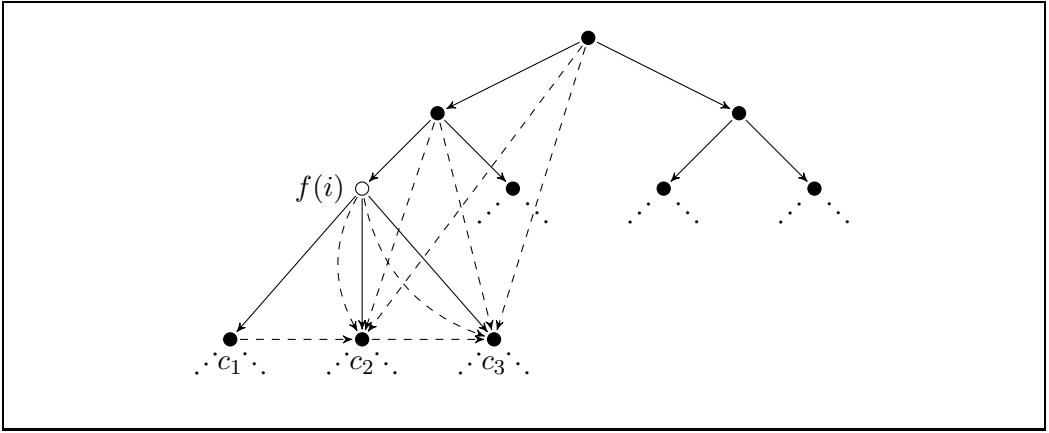
\begin{figure}
\begin{center}
\begin{tikzpicture}[yscale=1, ->,>=stealth',shorten >=1pt,shorten <=1pt, auto,node
distance=2cm,every loop/.style={<-,shorten <=1pt}]
\tikzstyle{every state}=[fill=gray!20,draw=none,text=black]
\node[circle,draw=black!100,fill=black!100, label=above:$$,inner sep=0pt,minimum size=.175cm] (0) at (0,0) {{}};
\node[circle,draw=black!100,fill=black!100, label=left:$$,inner sep=0pt,minimum size=.175cm] (00) at (-2,-1) {{}};
\node[circle,draw=black!100,fill=black!100, label=right:$$,inner sep=0pt,minimum size=.175cm] (01) at (2,-1) {{}};
\node[circle,draw=black!100, label=left:$f(i)$,inner sep=0pt,minimum size=.175cm] (000) at (-3,-2) {{}};

\node[circle,draw=black!100,fill=black!100, label=below:$c_1$,inner sep=0pt,minimum size=.175cm] (000a) at (-4.75,-4) {{}};
\node[circle,draw=black!100,fill=black!100, label=below:$c_2$,inner sep=0pt,minimum size=.175cm] (000b) at (-3,-4) {{}};
\node[circle,draw=black!100,fill=black!100, label=below:$c_3$,inner sep=0pt,minimum size=.175cm] (000c) at (-1.25,-4) {{}};

\node[circle,draw=black!100,fill=black!100, label=right:$$,inner sep=0pt,minimum size=.175cm] (001) at (-1,-2) {{}};
\node[circle,draw=black!100,fill=black!100, label=left:$$,inner sep=0pt,minimum size=.175cm] (010) at (1,-2) {{}};
\node[circle,draw=black!100,fill=black!100, label=right:$$,inner sep=0pt,minimum size=.175cm] (011) at (3,-2) {{}};
\path (0) edge[->] node {{}} (00);
\path (0) edge[->] node {{}} (01);
\path (00) edge[->] node {{}} (000);
\path (00) edge[->] node {{}} (001);
\path (01) edge[->] node {{}} (010);
\path (01) edge[->] node {{}} (011);

\path (000) edge[->] node {{}} (000a);
\path (000) edge[->] node {{}} (000b);
\path (000) edge[->] node {{}} (000c);

\path (0) edge[->,dashed] node {{}} (000b);
\path (0) edge[->,dashed] node {{}} (000c);

\path (00) edge[->,dashed] node {{}} (000b);
\path (00) edge[->,dashed] node {{}} (000c);

\path (000) edge[->,dashed,bend right] node {{}} (000b);
\path (000) edge[->,dashed,bend right] node {{}} (000c);
\path (000a) edge[->,dashed] node {{}} (000b);
\path (000b) edge[->,dashed] node {{}} (000c);

\node at (-5.05,-4.3) {{$ \rotatebox{45}{\dots}$}};
\node at (-4.41,-4.34) {{$ \rotatebox{-45}{\dots}$}};

\node at (-3.3,-4.3) {{$ \rotatebox{45}{\dots}$}};
\node at (-2.66,-4.34) {{$ \rotatebox{-45}{\dots}$}};

\node at (-1.55,-4.3) {{$ \rotatebox{45}{\dots}$}};
\node at (-0.91,-4.34) {{$ \rotatebox{-45}{\dots}$}};

\node at (-1.3,-2.3) {{$ \rotatebox{45}{\dots}$}};
\node at (-0.66,-2.34) {{$ \rotatebox{-45}{\dots}$}};

\node at (3.34,-2.34) {{$ \rotatebox{-45}{\dots}$}};
\node at (2.7,-2.3) {{$ \rotatebox{45}{\dots}$}};

\node at (1.34,-2.34) {{$ \rotatebox{-45}{\dots}$}};
\node at (0.7,-2.3) {{$ \rotatebox{45}{\dots}$}};

\end{tikzpicture}
\end{center}
\caption{illustration of the construction in the proof of Theorem \ref{ProvThm}.\ref{ProvThm2} for $n=k_i=3$. Solid lines are for $\sqsubseteq$ and dashed lines are for $R_i$. Every possibility with an accessibility arrow to $c_j$ also has accessibility arrows to all descendants of $c_j$, but these arrows are not shown in the diagram.}\label{GLtree}
\end{figure}

\begin{corollary}\label{SlogicWorldIncomplete} \textnormal{There is a varying-agent domain full relational possibility frame whose $\mathcal{L}_S(I)$-logic is not that of any varying agent-domain full monotonic neighborhood world frame}.\end{corollary}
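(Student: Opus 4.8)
The plan is to combine Theorem \ref{ProvThm}.\ref{ProvThm2} with Proposition \ref{OmProp}, but the latter as stated only kills world frames for $\exists\mathrm{v}O(\mathrm{v})$, so I need the analogous world-commitment fact phrased for the language $\mathcal{L}_S(I)$ rather than $\mathcal{TML}_{GO}$. That fact is exactly Theorem \ref{ProvThm}.\ref{ProvThm1}: any varying agent-domain full monotonic neighborhood world frame validating $p\to Sp$ and the Second Incompleteness formulas $\Box_i\neg\Box_i\bot\to\Box_i\bot$ also validates $S\bot$. So the strategy is: exhibit a single full relational possibility frame $\mathcal{F}$ (the one built in the proof of Theorem \ref{ProvThm}.\ref{ProvThm2}, say with the binary tree as underlying poset) whose $\mathcal{L}_S(I)$-logic $\mathsf{L}=\{\varphi\in\mathcal{L}_S(I)\mid \mathcal{F}\models\varphi\}$ contains $p\to Sp$ and all the L\"ob formulas $\Box_i(\Box_ip\to p)\to\Box_i p$ (hence, substituting $\bot$ for $p$, all the Second Incompleteness formulas) but does \emph{not} contain $S\bot$. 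Then argue that no varying agent-domain full monotonic neighborhood world frame can have $\mathsf{L}$ as its $\mathcal{L}_S(I)$-logic.

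\textbf{Key steps, in order.} First I would fix the frame $\mathcal{F}$ from Theorem \ref{ProvThm}.\ref{ProvThm2}: the tree $T$ in which every node has at least two but finitely many children, $\sqsubseteq$ the descendant order, $f$ a bijection $I\to T$, the relations $R_i$ defined by the three-case formula there, $\mathrm{G}(x)=\{i\in I\mid \forall x'\sqsubseteq x\ R_i(x')\neq\varnothing\}$, equipped with $\mathcal{RO}(T,\sqsubseteq)$ as admissible sets. Theorem \ref{ProvThm}.\ref{ProvThm2} already tells us that $\mathcal{F}$ validates $p\to Sp$ and each $\Box_i(\Box_ip\to p)\to\Box_ip$ but not $S\bot$; so $p\to Sp$ and every L\"ob instance lie in $\mathsf{L}$, and $S\bot\notin\mathsf{L}$. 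Since $\Box_i\neg\Box_i\bot\to\Box_i\bot$ is the substitution instance of $\Box_i(\Box_ip\to p)\to\Box_ip$ with $\bot$ for $p$, and $\mathsf{L}$, being the $\mathcal{L}_S(I)$-logic of a class of frames, is closed under uniform substitution, every Second Incompleteness formula is in $\mathsf{L}$ as well. Second, I would invoke Theorem \ref{ProvThm}.\ref{ProvThm1}: suppose for contradiction that $\mathsf{L}$ is the $\mathcal{L}_S(I)$-logic of some varying agent-domain full monotonic neighborhood world frame $\mathcal{G}$. Then $\mathcal{G}$ validates everything in $\mathsf{L}$, in particular $p\to Sp$ and each $\Box_i\neg\Box_i\bot\to\Box_i\bot$; but then by Theorem \ref{ProvThm}.\ref{ProvThm1}, $\mathcal{G}$ validates $S\bot$, so $S\bot\in\mathsf{L}$ --- contradicting $S\bot\notin\mathsf{L}$. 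Hence no such world frame exists, which is the content of the corollary.

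\textbf{Main obstacle.} The genuinely substantive work is all inside Theorem \ref{ProvThm} and is available to cite, so the corollary itself is a short deduction; the only care needed is bookkeeping. The one point to double-check is the passage from ``validates each L\"ob instance'' to ``validates each Second Incompleteness instance'': this uses closure of $\mathsf{L}$ under uniform substitution, which holds because $\mathsf{L}$ is defined as the logic of a class of possibility frames and frame validity is closed under substitution (a possibility model over $\mathcal{F}$ assigning $\pi(p)=\varnothing$ witnesses the $\bot$-substitution, and $\varnothing\in\mathcal{RO}(T,\sqsubseteq)$). A secondary point is making sure Theorem \ref{ProvThm}.\ref{ProvThm1} is genuinely applicable: it is stated for ``any varying agent-domain full monotonic neighborhood world frame,'' and by Remark \ref{FullNeighWorld} this subsumes Kripke (relational) world frames via the identification of relational frames with neighborhood frames whose $N_i(w)$ is the principal filter generated by $R_i(w)$, which is monotonic; so the statement indeed rules out \emph{all} the world-frame semantics under discussion, not merely the neighborhood ones, matching the phrasing of the corollary.
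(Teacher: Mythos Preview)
Your proposal is correct and is exactly the intended derivation: the paper states this as an immediate corollary of Theorem~\ref{ProvThm} without further proof, and your argument spells out precisely that deduction---take the frame from part~\ref{ProvThm2}, note its logic contains $p\to Sp$ and the L\"ob (hence Second Incompleteness) formulas but omits $S\bot$, and apply part~\ref{ProvThm1} to any putative world frame with the same logic. The only minor simplification: you do not need to invoke closure of $\mathsf{L}$ under substitution as a separate step, since frame validity itself is closed under uniform substitution, so $\mathcal{F}$ validating the L\"ob axiom already gives $\Box_i\neg\Box_i\bot\to\Box_i\bot\in\mathsf{L}$ directly.
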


For actual arithmetic interpretations of $\mathcal{L}_S(I)$, we may want $p\to Sp$ to be valid while ${\Box_i(p\to Sp)}$ is not, as in the following example.

\begin{example}\label{Arithmetic} Consider the language of arithmetic and the theories $\mathsf{Q}$ (Robinson's $\mathsf{Q}$), $\mathsf{EA}$ (Elementary Arithmetic), and $\mathsf{PA}$ (Peano Arithmetic). Let $\mathbb{T}$ be an arithmetically definable set of elementarily presented consistent extensions of $\mathsf{Q}$, containing at least $\mathsf{EA}$ and any consistent extension of $\mathsf{EA}$ with a single sentence. Let $T$ be an arithmetic predicate expressing that a number is the G\"{o}del number of a theory in $\mathbb{T}$ or the G\"{o}del number of a consistent extension of $\mathsf{EA}$ with a single sentence. The latter disjunct is redundant from the point of view of truth, but it ensures that $\mathsf{PA}$ proves ``if $x$ is the G\"{o}del number of a consistent extension of $\mathsf{EA}$ with a single sentence, then $T(x)$,'' which we will use below.

Say that a $\mathbb{T}$-\textit{realization} of a formula $\varphi$ of $\mathcal{L}_S(I)$ is a sentence of arithmetic obtained by: uniformly replacing the propositional variables in $\varphi$ by sentences of arithmetic; for each $i\in I$, uniforming replacing $\Box_i$ by the provability predicate of some theory in $\mathbb{T}$, given by its elementary presentation;\footnote{Note that different realizations may interpret $\Box_i$ as the provability predicate of different theories in $\mathbb{T}$, just as they may interpret $p$ as different arithmetic sentences.}  and replacing $S$ by the arithmetic predicate using $T$ that codes ``there is some theory in $\mathbb{T}$ that proves\dots''. Say that $\varphi$ is \textit{arithmetically $\mathbb{T}$-valid} if all of its $\mathbb{T}$-realizations are provable in $\mathsf{PA}$. (By analogy with modal validity, given a class $\mathfrak{T}$ of such sets $\mathbb{T}$, we could define $\varphi$ to be \textit{arithmetically $\mathfrak{T}$-valid} if $\varphi$ is arithmetically $\mathbb{T}$-valid for every $\mathbb{T}\in\mathfrak{T}$.) 

Then by L\"{o}b's theorem, $\Box_i (\Box_ip\to p)\to \Box_i p$
is arithmetically $\mathbb{T}$-valid for each $i\in I$, as is the Second Incompleteness Theorem formula, $ \Box_i\neg\Box_i\bot\to \Box_i \bot$. In addition, we claim that $p\to Sp$ 
is arithmetically $\mathbb{T}$-valid. First, $\mathsf{PA}$ proves all local reflection principles for $\mathsf{EA}$ \cite{Kreisel1968,Beklemishev2005}: 
\[\mathsf{PA}\vdash \mathsf{Prov}_\mathsf{EA}(\left\ulcorner\chi\right\urcorner)\to \chi \]
and hence
\[\mathsf{PA}\vdash \phi\to \neg \mathsf{Prov}_\mathsf{EA}(\left\ulcorner\phi \to 0=1 \right\urcorner).\]
In addition, we have the following formalized deduction theorem \cite{Feferman1960}:
\[\mathsf{PA}\vdash \mathsf{Prov}_{\mathsf{EA}+\phi}(\left\ulcorner\psi\right\urcorner)\leftrightarrow  \mathsf{Prov}_{\mathsf{EA}}(\left\ulcorner\phi\to\psi \right\urcorner).\]
From the previous two facts, we obtain:
\[\mathsf{PA}\vdash \phi \to \neg \mathsf{Prov}_{\mathsf{EA}+\phi}(\left\ulcorner 0=1\right\urcorner)\]
and hence 
\[\mathsf{PA}\vdash \phi \to \exists x(T(x)\wedge \mathsf{Prov}_x (\left\ulcorner\phi\right\urcorner)),\]
 as $\mathsf{EA}+\phi$ is a witness for the existential statement (recall our specification of the predicate $T$ above). Next, observe that $S\bot$ is not arithmetically $\mathbb{T}$-valid, because all theories in $\mathbb{T}$ are consistent and $\mathsf{PA}$ is sound. Finally, note that $\Box_i(p\to Sp)$ may fail to be arithmetically $\mathbb{T}$-valid, if we realize $i$ as $\mathsf{EA}$ and $\mathsf{EA}$ is unable to prove the consistency of other theories in $\mathbb{T}$ and hence their membership in $\mathbb{T}$. \end{example}

 To validate $p\to Sp$ but not $\Box_j(p\to Sp)$, we can use a possibility frame with a distinguished possibility---or equivalently, a distinguished principal filter---as in \S~\ref{QuasiSection}. Recall that polymodal $\mathsf{GL}$ (G\"{o}del-L\"{o}b logic) is the smallest normal polymodal logic containing the L\"{o}b axiom for each modality $\Box_i$.

\begin{theorem}\label{ProvThmC}$\,$\textnormal{
\begin{enumerate}
\item\label{ProvThm1C} Any varying agent-domain full monotonic neighborhood world frame with a distinguished principal filter (``with distinguished worlds'') that validates $p\to Sp$ and $ \Box_i\neg\Box_i\bot\to \Box_i \bot$ for each $i\in I$ also validates $S\bot$.
\item\label{ProvThm2C} For $j\in I$, there are varying agent-domain full relational possibility frames with a distinguished possibility validating $p\to Sp$ and all theorems of polymodal $\mathsf{GL}$ but not $S\bot$ or $\Box_j(p\to Sp)$.
\end{enumerate}}
\end{theorem}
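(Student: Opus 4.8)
### Proof proposal for Theorem \ref{ProvThmC}

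The plan is to adapt the two arguments already given for Theorem \ref{ProvThm}, inserting the distinguished-possibility/distinguished-filter apparatus from \S~\ref{QuasiSection} and strengthening the frame construction in part \ref{ProvThm2C} so that the accessibility relations validate the full L\"{o}b axiom rather than just its $\bot$-instance. For part \ref{ProvThm1C}, I would argue exactly as in the proof of Theorem \ref{ProvThm}.\ref{ProvThm1}, but now working relative to the distinguished principal filter: since the filter is principal, it is $\{Z\in\wp(W)\mid w_0\in Z\}$ for some world $w_0$, and validity on the frame-with-distinguished-worlds means truth at $w_0$ in every model. So given a frame satisfying the hypotheses, take the model $\mathcal{M}$ with $\pi(p)=\{w_0\}$; from $p\to Sp$ being valid we get $\mathcal{M},w_0\Vdash Sp$, hence some $i\in\mathrm{G}(w_0)$ with $\{w_0\}\in N_i(w_0)$; monotonicity gives every superset of $\{w_0\}$ in $N_i(w_0)$; then the G\"{o}del sentence reasoning yields $\mathcal{M},w_0\Vdash\Box_i\bot$, so $\mathcal{M},w_0\Vdash S\bot$, and since $S\bot$ is variable-free this is validity of $S\bot$ on the frame. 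This part is essentially a transcription.

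For part \ref{ProvThm2C}, I would start from the tree construction in the proof of Theorem \ref{ProvThm}.\ref{ProvThm2} — a tree $T$ where each node has at least $2$ but finitely many children, a bijection $f\colon I\to T$, and the three-case definition of $R_i(x)$ — and make two modifications. First, designate the root of $T$ (or more precisely its principal downset, i.e.\ all of $T$, equivalently the principal filter generated by the root) as the distinguished possibility, so that $\mathcal{Q}$ is a relational possibility frame with a distinguished possibility in the sense of Definition \ref{Quasi2}. Second, I need each $R_i$ to validate the \emph{full} L\"{o}b axiom $\Box_i(\Box_i p\to p)\to\Box_i p$, not merely $\Box_i\neg\Box_i\bot\to\Box_i\bot$; inspecting the proof of Theorem \ref{ProvThm}.\ref{ProvThm2}, the argument given there already establishes full L\"{o}b (it runs the induction down the finitely many children $c_1,\dots,c_n$ of $f(i)$ with $R_i(c_n)=\varnothing$ and each $R_i(c_{j-1})\supseteq\mathord{\downarrow}c_j$), so polymodal $\mathsf{GL}$ is validated on the underlying frame $\mathcal{F}_\mathcal{Q}$ by the same computation. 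The verifications of \upR{}, \Rdown{}, \Rref{}, of $\mathrm{G}$-persistence and $\mathrm{G}$-refinability, and of the global truth of $p\to Sp$ and the failure of $S\bot$ (no possibility makes $\Box_i\bot$ true for any $i\in\mathrm{G}(x)$) all carry over verbatim from that proof.

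The genuinely new ingredient — and the main obstacle — is arranging the \emph{failure} of $\Box_j(p\to Sp)$ at the distinguished possibility while keeping $p\to Sp$ globally true. The point is that although $p\to Sp$ holds at every possibility, $\Box_j(p\to Sp)$ can fail at $x$ because at some $R_j$-successor $y$ the witnessing agent for $Sp$ lies outside $\mathrm{G}(y)$ — mirroring the arithmetic phenomenon in Example \ref{Arithmetic} where $\mathsf{EA}$ cannot prove the membership in $\mathbb{T}$ of the theory $\mathsf{EA}+\phi$ that witnesses $Sp$. Concretely, I would choose a model $\mathcal{M}$ and a propositional variable $p$ whose extension is a principal downset $\mathord{\downarrow}z$ for a node $z$ chosen so that: (i) at $z$ the only agent $i$ with $\mathcal{M},z\Vdash\Box_i p$ is $i=f^{-1}(\mathsf{Par}(z))$, and (ii) $f^{-1}(\mathsf{Par}(z))\notin\mathrm{G}(z')$ for some refinement $z'\sqsubseteq z$ that is itself an $R_j$-successor of the root, so that $Sp$ fails at that $R_j$-successor. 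Making (i) and (ii) simultaneously consistent with the definition of $\mathrm{G}(x)=\{i\mid\forall x'\sqsubseteq x\ R_i(x')\neq\varnothing\}$ and with the global truth of $p\to Sp$ requires a careful choice of $j$, of $z$, and possibly a slight enrichment of the tree (e.g.\ ensuring $f(j)$ sits above the relevant $R_j$-successor so that $R_j(\text{root})$ reaches a node where the $\mathrm{G}$-clause kills the needed witness). I expect this to amount to a finite combinatorial bookkeeping exercise on the tree rather than a conceptual difficulty, but it is where the care is needed; once the node $z$ and modality $j$ are pinned down, the rest is routine checking against Definitions \ref{RellPossFrame}, \ref{VarAgt}, \ref{VarAgtSem}, and \ref{Quasi2}.
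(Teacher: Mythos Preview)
Your treatment of part \ref{ProvThm1C} is correct and matches the paper: pick a world in the generator of the principal filter and rerun the argument of Theorem \ref{ProvThm}.\ref{ProvThm1}.

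For part \ref{ProvThm2C} there is a genuine gap. You write that the verifications ``of the global truth of $p\to Sp$ \dots\ all carry over verbatim'' from Theorem \ref{ProvThm}.\ref{ProvThm2}, and then propose to ``arrange the \emph{failure} of $\Box_j(p\to Sp)$ at the distinguished possibility while keeping $p\to Sp$ globally true.'' These two desiderata are incompatible: if $p\to Sp$ is true at \emph{every} possibility in every model based on the frame, then in particular it is true at every $R_j$-successor of the root, and hence $\Box_j(p\to Sp)$ is true at the root as well. Your concrete plan in (ii)---finding an $R_j$-successor $z'$ at which $Sp$ fails while $p$ holds---is precisely a plan to make $p\to Sp$ fail at $z'$, contradicting the global truth you just asserted. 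No amount of ``combinatorial bookkeeping on the tree'' resolves this: the tree of Theorem \ref{ProvThm}.\ref{ProvThm2} genuinely makes $p\to Sp$ true everywhere, so within that frame $\Box_j(p\to Sp)$ is also true everywhere.

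The missing idea, and what the paper does, is to abandon global truth of $p\to Sp$ while retaining its validity at the distinguished possibility. Concretely: partition $I$ into infinite $I_1$ and $I_2$ with $j\in I_2$, take the tree frame $\mathcal{F}$ of Theorem \ref{ProvThm}.\ref{ProvThm2} for the $I_1$-modalities, take any Kripke frame $\mathcal{G}$ for the $I_2$-modalities validating $\mathsf{GL}$ with some $w$ having $R_i(w)\neq\varnothing$ for all $i\in I_2$, and form the disjoint union with cross-links $R_i(x)=\mathcal{G}$ for $x\in\mathcal{F}$, $i\in I_2$, and $R_i(x)=\varnothing$ for $x\in\mathcal{G}$, $i\in I_1$. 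Then $p\to Sp$ holds at the root $r$ of $\mathcal{F}$ (so is valid on the quasi-normal frame with distinguished possibility $r$) but \emph{fails} at $w\in\mathcal{G}$; since $rR_jw$, we get $\Box_j(p\to Sp)$ false at $r$. The failure at $w$ is exactly the world-frame phenomenon from part \ref{ProvThm1C}: at a world, any witness to $Sp$ for $\pi(p)=\{w\}$ would force $\Box_i\bot$, but all such $i$ fall outside $\mathrm{G}(w)$.
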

\begin{proof} For part 1, pick any world $w$ that belongs to the proposition that generates the principal filter. Then re-run the proof of Theorem \ref{ProvThm}.\ref{ProvThm1}.

For part 2, partition $I$ into infinite sets $I_1$ and $I_2$ with $j\in I_2$, and take a possibility frame $\mathcal{F}$ for $\mathcal{L}(I_1)$ as in the proof of Theorem \ref{ProvThm}.\ref{ProvThm2}. We will build a frame based on the disjoint union of $\mathcal{F}$ and any world frame $\mathcal{G}$ for $\mathcal{L}(I_2)$ validating $\mathsf{GL}$ such that for some $w\in\mathcal{G}$, $R_i(w)\neq\varnothing$ for all $i\in I_2$. Keep all accessibility links between possibilities in $\mathcal{F}$ and between possibilities in $\mathcal{G}$. Then for every $i\in I_2$ and $x\in \mathcal{F}$, let $R_i(x)$ be the set of all worlds in $\mathcal{G}$. For every $i\in I_1$ and $x\in \mathcal{G}$, let $R_i(x)=\varnothing$. Then we have a possibility frame validating $\mathsf{GL}$. Moreover, at the root $r$ of $\mathcal{F}$, $p\to Sp$ is valid, but $\Box_j(p\to Sp)$ is not, due to the fact that $rR_jw$. Thus, we may select $r$ as the distinguished possibility to obtain the result.\end{proof}

\begin{remark} In response to Theorem \ref{ProvThmC}.\ref{ProvThm1C}, one may try to salvage the use of full world frame semantics for $\mathcal{L}_S(I)$ by employing distinguished \textit{non-principal} filters (e.g., the cofinite filter, in which case the semantic value of $p\to Sp$ will trivially belong to the filter whenever the semantic value of $p$ is a finite set). However, Theorem \ref{ProvThmC}.\ref{ProvThm2C} suggests that switching to possibility frames for $\mathcal{L}_S(I)$ may be a more natural approach.
\end{remark}

The use of varying agent-domains was essential for Theorem \ref{ProvThm}.\ref{ProvThm2}, as the following shows. 

\begin{proposition} \textnormal{Any \textit{universal agent-domain} quasi-normal relational possibility frame validating $p\to Sp$ and $ \Box_i\neg\Box_i\bot\to \Box_i \bot$ for each $i\in I$ also validates $S\bot$.}
\end{proposition}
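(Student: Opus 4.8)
The plan is to mimic the argument for Theorem \ref{ProvThm}.\ref{ProvThm1}/Theorem \ref{ProvThmC}.\ref{ProvThm1C}, but now exploiting the universal agent-domain assumption in place of the "world proposition" trick. So let $\mathcal{Q}=( S,\sqsubseteq,P, \{R_i\}_{i\in I},S_0)$ be a universal agent-domain quasi-normal relational possibility frame validating $p\to Sp$ and $\Box_i\neg\Box_i\bot\to\Box_i\bot$ for each $i\in I$; I want to show $S\bot$ is valid, i.e.\ $\mathcal{M},x\Vdash S\bot$ for some $x\in S_0$, for every model $\mathcal{M}$ based on $\mathcal{F}_\mathcal{Q}$. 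Fix such an $\mathcal{M}$.

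First I would unpack what $p\to Sp$ being valid buys us. Since $S_0$ is directed and $p$ can be valued as any admissible proposition, pick $x_0\in S_0$ and set $\pi(p)=S$ (the whole space, which is admissible); then $\mathcal{M},x_0\Vdash p$, so by validity of $p\to Sp$ and the semantic clause for $S$ (Definition \ref{VarAgtSem}), for every $x'\sqsubseteq x_0$ there are $x''\sqsubseteq x'$ and $i\in\mathrm{G}(x'')$ with $\mathcal{M},x''\Vdash\Box_i p=\Box_i S$. Since the agent-domain is universal, $\mathrm{G}(x'')=I$, so this says: densely below $x_0$ there are possibilities $x''$ and indices $i$ with $R_i(x'')\subseteq S$ — which is automatic and hence useless. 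The right move is instead to use the more informative special case: value $p$ as $\mathord{\downarrow}z$ for a suitable $z$, or better, run the argument uniformly. The cleanest route: show $S\bot$ is true at $x_0$ directly by unwinding $S\bot$. By the clause for $S$, $\mathcal{M},x_0\Vdash S\bot$ iff for all $x'\sqsubseteq x_0$ there are $x''\sqsubseteq x'$ and (by universality, some) $i\in I$ with $\mathcal{M},x''\Vdash\Box_i\bot$, i.e.\ $R_i(x'')=\varnothing$. So I must produce, densely below $x_0$, a possibility with \emph{some} empty $R_i$-image.

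The key step is to extract this from the validity of $p\to Sp$ together with Gödel's Second Incompleteness formula. Fix any $x'\sqsubseteq x_0$. Run $p\to Sp$ with a valuation making $p$ true at $x'$ and as "small" as possible — concretely $\pi(p)=\mathord{\downarrow}x'$ if that is admissible, or otherwise value $p$ by a $U\in P$ with $x'\in U$ — obtaining $x''\sqsubseteq x'$ and $i\in I$ with $\mathcal{M},x''\Vdash\Box_i U$, hence $R_i(x'')\subseteq U$. Now I want to iterate: if $R_i(x'')\neq\varnothing$, pick $y\in R_i(x'')\subseteq U$; then $y$ is a refinement of $x'$ (since $U=\mathord{\downarrow}x'$, or using separativity to shrink), and we can re-apply the same construction at $y$, peeling off one more modality. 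The Second Incompleteness formula $\Box_i\neg\Box_i\bot\to\Box_i\bot$ is what forces this peeling to terminate in an empty accessibility set: at the witnessing possibility, were every relevant $R_i$-image nonempty along the chain, one derives $\Box_i\neg\Box_i\bot$ true there (from $\Box_i p$-style facts and the smallness of the valuation), hence $\Box_i\bot$ true there, i.e.\ $R_i(\cdot)=\varnothing$ — contradiction unless we are already at such a possibility. Made precise, this gives an $x''\sqsubseteq x'$ with $R_i(x'')=\varnothing$ for some $i$, which is exactly $\mathcal{M},x''\Vdash\Box_i\bot$, hence $\mathcal{M},x'\Vdash$ "there is a refinement settling $\Box_i\bot$", and since $x'\sqsubseteq x_0$ was arbitrary, $\mathcal{M},x_0\Vdash S\bot$.

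\textbf{Main obstacle.} The delicate point is the termination/peeling argument: translating "$R_i(x'')\subseteq\mathord{\downarrow}x'$ and $R_i(x'')\neq\varnothing$, re-apply" into a genuine contradiction with the Second Incompleteness formula, rather than an infinite regress that never closes. In the world-frame proof (Theorem \ref{ProvThm}.\ref{ProvThm1}) this is handled by the singleton valuation $\pi(p)=\{w\}$ plus monotonicity, which collapses everything in one step; here there are no singletons, so I expect to need either (a) a careful induction on the modal depth exploiting that each application of $p\to Sp$ with a small valuation forces $R_i(x'')$ into a strictly smaller regular open set, together with a well-foundedness or minimality argument on such sets, or (b) a direct algebraic reformulation in the matrix $\mathcal{Q}^\mathsf{m}=((\mathcal{F}_\mathcal{Q})^\mathsf{b},F)$: interpret $p\to Sp$ algebraically as "$a\leq \bigvee_{i}\Diamond_i\top$-type bound for every $a$", combine with the $\mathsf{GL}$-style fixpoint consequence of $\Box_i\neg\Box_i\bot\to\Box_i\bot$ to conclude $\bigvee_i\Diamond_i\bot$-style element lies in $F$, and read this back as validity of $S\bot$. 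Option (b) is likely the safer write-up, since it sidesteps the pointwise regress by working with the Lindenbaum-style algebra and the known behavior of provability operators; the universal agent-domain hypothesis is precisely what makes the algebraic $S$ operator equal to $\bigvee_{i\in I}\Diamond_i(\cdot)$ with the full index set, which is the whole point.
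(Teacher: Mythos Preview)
Your proposal has a genuine gap: neither option (a) nor (b) as sketched actually closes the argument, and you have missed the key idea.

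For option (a), the ``peeling'' argument does not terminate as written. When you apply $p\to Sp$ at $x'$ with $\pi(p)=\mathord{\downarrow}x'$ (assuming this is even admissible, which is not guaranteed for a general $P$) and obtain $x''\sqsubseteq x'$ and $i$ with $R_i(x'')\subseteq\mathord{\downarrow}x'$, picking $y\in R_i(x'')$ gives $y\sqsubseteq x'$, but re-applying the construction at $y$ produces a possibly \emph{different} index $j$, so the Second Incompleteness formula for the fixed $i$ gives you nothing about $R_j$. There is no well-founded descent here: the regular open sets need not shrink strictly, and even if they did, the index set $I$ may be infinite with no structure to exploit. Your option (b) gestures at the right algebra but the formula you write (``$a\leq\bigvee_i\Diamond_i\top$'') is not what $p\to Sp$ says, and ``combine with the $\mathsf{GL}$-style fixpoint consequence'' is not a proof step.

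The paper's argument is a short syntactic derivation exploiting \emph{self-reference}: substitute the closed formula $\neg S\bot$ for $p$ in $p\to Sp$, obtaining the validity of $\neg S\bot\to S\neg S\bot$. Then observe that in a universal agent-domain frame, $\Box_i\bot\to S\bot$ is valid at every possibility (not just in the distinguished filter), hence so is its contrapositive $\neg S\bot\to\neg\Box_i\bot$, hence so is $\Box_i(\neg S\bot\to\neg\Box_i\bot)$, hence $\Box_i\neg S\bot\to\Box_i\neg\Box_i\bot$; composing with the Second Incompleteness axiom gives $\Box_i\neg S\bot\to\Box_i\bot$, and since this holds for every $i$, one gets $S\neg S\bot\to S\bot$. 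Chaining with the first step yields $\neg S\bot\to S\bot$, i.e.\ $S\bot$. The trick you are missing is that instead of trying to manufacture a small valuation of $p$ semantically, you should let $p$ \emph{be} $\neg S\bot$: this is the L\"ob-style move that makes the argument a three-line derivation rather than an unresolved regress. The universal agent-domain hypothesis is used exactly once, to get $\Box_i\bot\to S\bot$ valid everywhere.
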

\begin{proof} For a quasi-normal frame (Definition \ref{Quasi2}), say that $\varphi$ is \textit{super-valid} if $\varphi$ is valid on the underlying frame without the set $S_0$. Then note the following:
\begin{enumerate}
\item $\neg S\bot\to S \neg S\bot$ is valid, substituting $\neg S\bot$ for $p$ in  $p\to S p$;
\item $\Box_i\bot \to S\bot$ is super-valid in universal agent-domain frames
\item[$\Rightarrow$] $ \neg S\bot\to \neg \Box_i\bot $ is super-valid
\item[$\Rightarrow$] $\Box_i( \neg S\bot\to \neg \Box_i\bot)$ is super-valid\footnote{Although $\varphi$ being valid does not entail that $\Box_i\varphi$ is valid in a quasi-normal frame, $\varphi$ being \textit{super-valid} does.}
\item[$\Rightarrow$] $\Box_i \neg S\bot\to \Box_i \neg \Box_i \bot$ is super-valid
\item[$\Rightarrow$] $\Box_i \neg S\bot\to \Box_i\bot$ is valid since $ \Box_i\neg\Box_i\bot\to \Box_i \bot$ is valid
\item[$\Rightarrow$] $S\neg S\bot \to S\bot$ is valid
\item[$\Rightarrow$] $\neg S\bot \to S\bot$ is valid given 1
\item[$\Rightarrow$] $S\bot$ is valid,
\end{enumerate}
which completes the proof.\end{proof}

However, varying agent-domains are not required in general for semantically separating principles that cannot be separated by world frames. We give an example in $\mathcal{TML}$. Suppose that for every truth $p$ and theory $u$, there is some theory $v$ or other such that $u$ correctly believes that $v$ believes $p$:
\begin{equation}p\to \exists \mathrm{v} (\Box_\mathrm{v}p\wedge \Box_\mathrm{u} \Box_\mathrm{v} p).\label{UofV}\end{equation}
In a world frame, take $p$ to be a ``world proposition'' $\{w\}$. Then at $w$, the theory $u$ must believe that the witness to the existential quantifier is a negation-complete theory. But by the Second Incompleteness Theorem, a negation-complete theory must believe itself to be inconsistent: $\Box_\mathrm{v} \Box_\mathrm{v}\bot$. For by negation-completeness, we have $\Box_\mathrm{v} \Box_\mathrm{v}\bot\vee \Box_\mathrm{v}\neg\Box_\mathrm{v}\bot$, and the right disjunct entails $\Box_\mathrm{v}\bot$ and hence $\Box_\mathrm{v} \Box_\mathrm{v}\bot$. Since $u$ believes all of this, $u$ will believe that $v$ believes itself to be inconsistent. So as a world frame consequence of (\ref{UofV}), we will have:
\begin{equation}\exists \mathrm{v} \Box_\mathrm{u} \Box_\mathrm{v} \Box_\mathrm{v} \bot.\label{SingleVU}\end{equation}
Yet (\ref{SingleVU}) is not a possibility frame consequence of (\ref{UofV}).

\begin{theorem}\label{ProvThmB} $\,$\textnormal{
\begin{enumerate}
\item\label{ProvThm1B} Any  varying agent-domain full monotonic neighborhood world frame (with distinguished principal filter) that validates $p\to \exists \mathrm{v} (\Box_\mathrm{v}p\wedge \Box_\mathrm{u} \Box_\mathrm{v} p)$ and $\Box_\mathrm{v}\neg\Box_\mathrm{v}\bot\to \Box_\mathrm{v}\bot$ for each $v\in \mathsf{Var}$ also validates $\exists \mathrm{v} \Box_\mathrm{u} \Box_\mathrm{v} \Box_\mathrm{v}\bot$.
\item\label{ProvThm2B} There are universal agent-domain full relational possibility frames validating $p\to \exists \mathrm{v} (\Box_\mathrm{v}p\wedge \Box_\mathrm{u} \Box_\mathrm{v} p)$ and $\Box_\mathrm{v} (\Box_\mathrm{v} p\to p)\to \Box_\mathrm{v} p$ for each $\mathrm{v}\in \mathsf{Var}$ but not $\exists \mathrm{v} \Box_\mathrm{u} \Box_\mathrm{v} \Box_\mathrm{v}\bot$.
\end{enumerate}}
\end{theorem}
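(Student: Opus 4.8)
Theorem \ref{ProvThmB} has two parts, and the template for each is already visible in the proof of Theorem \ref{ProvThm}: part \ref{ProvThm1B} is a world-frame collapse argument using ``world propositions,'' and part \ref{ProvThm2B} is a construction of a full relational possibility frame based on a suitable tree. For part \ref{ProvThm1B}, I would let $\mathcal{F}$ be a varying agent-domain full monotonic neighborhood world frame validating the two hypotheses, fix an arbitrary world $w$ (or, in the distinguished-filter version, an arbitrary $w$ in the generating proposition), and run a model $\mathcal{M}$ with $\pi(p)=\{w\}$. Since $p\to \exists \mathrm{v}(\Box_\mathrm{v}p\wedge\Box_\mathrm{u}\Box_\mathrm{v}p)$ is valid, at $w$ there is a value $i\in\mathrm{G}(w)$ for $\mathrm{v}$ such that $\mathcal{M},w\Vdash_{g[\mathrm{v}:=i]}\Box_\mathrm{v}p$ and $\mathcal{M},w\Vdash_{g[\mathrm{v}:=i]}\Box_\mathrm{u}\Box_\mathrm{v}p$; the first gives $\{w\}\in N_i(w)$, and by monotonicity $N_i(w)$ contains every superset of $\{w\}$, so $i$ behaves like an opinionated/negation-complete agent at $w$. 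Then, exactly as in the informal discussion preceding the theorem, negation-completeness of $i$ plus the Second Incompleteness formula $\Box_\mathrm{v}\neg\Box_\mathrm{v}\bot\to\Box_\mathrm{v}\bot$ yields $\mathcal{M},w\Vdash_{g[\mathrm{v}:=i]}\Box_\mathrm{v}\Box_\mathrm{v}\bot$ (from $\Box_\mathrm{v}\Box_\mathrm{v}\bot\vee\Box_\mathrm{v}\neg\Box_\mathrm{v}\bot$, the right disjunct entailing $\Box_\mathrm{v}\bot$ and hence $\Box_\mathrm{v}\Box_\mathrm{v}\bot$). Now propagate this through the true belief $\Box_\mathrm{u}$: since $w\Vdash\Box_\mathrm{u}\Box_\mathrm{v}p$ and, at every $u$-accessible world, $\Box_\mathrm{v}p$ holds with $\pi(p)=\{w\}$, one gets that $\Box_\mathrm{v}\bot$ and then $\Box_\mathrm{v}\Box_\mathrm{v}\bot$ holds there as well, giving $w\Vdash\Box_\mathrm{u}\Box_\mathrm{v}\Box_\mathrm{v}\bot$, hence $w\Vdash\exists\mathrm{v}\,\Box_\mathrm{u}\Box_\mathrm{v}\Box_\mathrm{v}\bot$; since this sentence has no propositional variables and $w$ was arbitrary, $\mathcal{F}$ validates it.

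\textbf{Part \ref{ProvThm2B}.} Here the goal is to build a universal agent-domain full relational possibility frame validating L\"ob's axiom $\Box_\mathrm{v}(\Box_\mathrm{v}p\to p)\to\Box_\mathrm{v}p$ for every $\mathrm{v}$ and the hypothesis $p\to\exists\mathrm{v}(\Box_\mathrm{v}p\wedge\Box_\mathrm{u}\Box_\mathrm{v}p)$, but \emph{not} $\exists\mathrm{v}\,\Box_\mathrm{u}\Box_\mathrm{v}\Box_\mathrm{v}\bot$. My plan is to take a tree $T$ as in the proof of Theorem \ref{ProvThm}.\ref{ProvThm2}---every node with at least $2$ but finitely many children, $\sqsubseteq$ the ``is a descendant of or equal to'' order---and a bijection $f\colon I\to T$, and to define $R_i$ by essentially the same three-case recipe on the children $c_1,\dots,c_n$ of $f(i)$, so that each $R_i$ is \upRdown{}-well-behaved and validates L\"ob (this is already checked in that earlier proof, and the verification that $\Box_i(\Box_ip\to p)\to\Box_ip$ is true everywhere carries over verbatim because it only used the shape of $R_i$ restricted to the subtree below $f(i)$ together with the ``empty accessibility at the last child'' feature). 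Crucially, because the domain is now \emph{universal} ($\mathrm{G}(x)=I$ for all $x$), I must modify the construction so that $R_i(x)$ is \emph{never} empty---otherwise $\Box_i\bot$ would be true somewhere while $i\in\mathrm{G}$, which would endanger nothing for $S\bot$ but would also not match the structure needed. The fix: instead of having $R_i$ terminate (go empty) at $c_n$, let $R_i$ ``loop back'' into a fresh infinite subtree, or more simply replace the base tree with one in which below every node there is an infinite descending chain, and arrange $R_i(x)$ always to contain $\mathord{\downarrow}d$ for some node $d\sqsubset x$ that is a proper descendant, preserving L\"ob (conversely-well-founded-ish behavior of $R_i$ within the relevant cone) while keeping $R_i(x)\ne\varnothing$. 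Then one checks $p\to\exists\mathrm{v}(\Box_\mathrm{v}p\wedge\Box_\mathrm{u}\Box_\mathrm{v}p)$ holds: if $\mathcal{M},x\Vdash_{g_0}p$, choose for $\mathrm{v}$ the index $f^{-1}(y)$ where $y$ is an appropriate child of $x$ so that $x$ sits ``above'' the accessibility cone of that index and every node in that cone, including every $\mathrm{u}$-successor of $x$, sees only nodes forced to make $p$ true---here the flexibility of placing $x$ so that $R_{f^{-1}(y)}(x)\subseteq\mathord{\downarrow}(\text{nodes below }x)$ and $R_\mathrm{u}(x)$ lands inside the same region is what makes the double-box $\Box_\mathrm{u}\Box_\mathrm{v}p$ come out true. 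Finally, to refute $\exists\mathrm{v}\,\Box_\mathrm{u}\Box_\mathrm{v}\Box_\mathrm{v}\bot$, show that for no $x$ and no index $i$ is $\Box_\mathrm{u}\Box_i\Box_i\bot$ forced, which follows from $R_i(x)\ne\varnothing$ everywhere (so $\Box_i\Box_i\bot$ is false at every accessible point, since there is always a further $R_i$-step) together with $\mathrm{G}$ being universal.

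\textbf{Main obstacle.} The delicate point is simultaneously satisfying three things in part \ref{ProvThm2B}: (i) L\"ob's axiom for each modality, which morally wants $R_i$ to be conversely well-founded / transitive-like on its cone; (ii) universal agent-domain with $R_i(x)\ne\varnothing$ for all $i,x$, which forbids the ``empty accessibility'' trick that made L\"ob trivial at leaves in Theorem \ref{ProvThm}.\ref{ProvThm2}; and (iii) the positive validity of $p\to\exists\mathrm{v}(\Box_\mathrm{v}p\wedge\Box_\mathrm{u}\Box_\mathrm{v}p)$, which needs, for each possibility $x$ forcing $p$, a single witness index whose accessibility cone \emph{and} its $\mathrm{u}$-image are confined to the region of descendants of $x$ on which $p$ is forced by persistence. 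Getting (i) and (ii) to coexist is the crux: the standard way is to use an $R_i$ that is well-founded ``downward'' in the refinement order within the relevant subtree---i.e. every $R_i$-chain from a point eventually leaves any fixed cone---even though individual $R_i(x)$ are nonempty, so that L\"ob's fixed-point reasoning (which in Theorem \ref{ProvThm}.\ref{ProvThm2} walked $c_n,c_{n-1},\dots$ back to $x$) still terminates. I expect the bulk of the write-up to be the explicit definition of $T$ and the $R_i$ and the routine-but-fiddly verification of \upR{}, \Rdown{}, \Rref{}, persistence and refinability for $\mathrm{G}\equiv I$, and the L\"ob computation; the genuinely new idea over Theorem \ref{ProvThm}.\ref{ProvThm2} is just the replacement of ``accessibility goes empty'' by ``accessibility stays nonempty but is downward well-founded,'' which is also exactly what makes the refutation of $\exists\mathrm{v}\,\Box_\mathrm{u}\Box_\mathrm{v}\Box_\mathrm{v}\bot$ go through.
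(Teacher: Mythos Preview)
Your Part~\ref{ProvThm1B} plan is essentially the paper's. One wrinkle: in a \emph{neighborhood} frame there is no set of ``$u$-accessible worlds'' to walk through, so the propagation step should be done purely with monotonicity, as the paper does: from $\llbracket p\rrbracket=\{w\}\subseteq \llbracket \Box_\mathrm{v}\bot\rrbracket$ one gets $\llbracket \Box_\mathrm{v} p\rrbracket\subseteq \llbracket \Box_\mathrm{v}\Box_\mathrm{v}\bot\rrbracket$ by monotonicity, and since $\llbracket \Box_\mathrm{v} p\rrbracket\in N_{g(\mathrm{u})}(w)$, another monotonicity step yields $\llbracket \Box_\mathrm{v}\Box_\mathrm{v}\bot\rrbracket\in N_{g(\mathrm{u})}(w)$.

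Part~\ref{ProvThm2B}, however, has a real gap. Your ``crucial'' fix---arranging $R_i(x)\neq\varnothing$ for all $i,x$---is incompatible with L\"ob. In any full relational possibility frame, seriality of $R_i$ means $\Box_i\bot$ is forced nowhere; hence $\neg\Box_i\bot$ is forced everywhere (persistence/refinability are trivial here), so $\Box_i\neg\Box_i\bot$ is forced everywhere; substituting $\bot$ for $p$ in L\"ob's axiom then forces $\Box_i\bot$ everywhere, a contradiction. So no amount of ``well-foundedness within cones'' can rescue the plan: serial $R_i$ plus frame-valid L\"ob is simply inconsistent. The underlying misconception is that universal agent-domain requires avoiding $\Box_i\bot$ altogether. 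It doesn't: you only need to refute $\exists\mathrm{v}\,\Box_\mathrm{u}\Box_\mathrm{v}\Box_\mathrm{v}\bot$, which can fail even though $\Box_i\bot$ is true at \emph{some} possibilities.

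The paper keeps the $R_i$ from Theorem~\ref{ProvThm}.\ref{ProvThm2} unchanged (in particular, $R_i(x)$ can be empty), simply sets $\mathrm{G}(x)=I$, and tunes two parameters: the root has four children, all other nodes have at least three, and $k_i=1$ for every $i$. The positive validity is then easy for a reason your sketch misses: if $x\Vdash p$ and $x'\sqsubseteq x$, take $x''$ to be the first child of $x'$ and $\mathrm{v}:=f^{-1}(x')$; by the definition of $R_{f^{-1}(x')}$, \emph{every} possibility's $R_{f^{-1}(x')}$-image lies in $\mathord{\downarrow}x'\subseteq\llbracket p\rrbracket$, so $\Box_\mathrm{v} p$ is \emph{globally} true and $\Box_\mathrm{u}\Box_\mathrm{v} p$ comes for free. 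The refutation is then a concrete chain-finding argument at the root $r$ with $g_0(\mathrm{u})=f^{-1}(r)$: from any $x\sqsubseteq c_1$ one has $xR_{g_0(\mathrm{u})}c_2$, and from $c_2$ one exhibits two further $R_j$-steps for each $j$ (using $c_3,c_4$ when $j=f^{-1}(r)$, and otherwise the ``otherwise''-clause with $k_j=1$ and the fact that non-root nodes have at least three children). This is exactly why those branching constraints and $k_i=1$ are imposed.
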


\begin{proof} For part \ref{ProvThm1}, let $\mathcal{F}$ be a  frame satisfying the hypothesis. To show that $\mathcal{F}$ validates $\exists \mathrm{v} \Box_\mathrm{u} \Box_\mathrm{v} \Box_\mathrm{v}\bot$, consider any world $w$ in $\mathcal{F}$ (that belongs to the proposition that generates the distinguished principal filter). Let $\mathcal{M}$ be a model based on $\mathcal{F}$ such that $\pi(p)=\{w\}$. Then since $\mathcal{F}$ validates $p\to \exists \mathrm{v} (\Box_\mathrm{v}p\wedge \Box_\mathrm{u} \Box_\mathrm{v} p)$, we have $\mathcal{M},w\Vdash_{g_0} \exists \mathrm{v} (\Box_\mathrm{v}p\wedge \Box_\mathrm{u} \Box_\mathrm{v} p)$, so there is an $i\in \mathrm{G}(w)$ such that for $g={g_0[\mathrm{v}:=i]}$, we have $\mathcal{M},w\Vdash_{g}  \Box_\mathrm{v}p\wedge \Box_\mathrm{u} \Box_\mathrm{v} p$, which implies (a) $\{w\}\in N_{g(\mathrm{v})}(w)$ and (b) $\llbracket \Box_\mathrm{v} p\rrbracket^\mathcal{M}_{g}\in N_{g(\mathrm{u})}(w)$. By (a) and monotonicity,  $N_{g(\mathrm{v})}(w)$ contains every $A\subseteq W$ with $w\in A$. Thus, if  $\mathcal{M},w\Vdash_g \neg\Box_\mathrm{v} \bot$, then $\mathcal{M},w\Vdash_g \Box_\mathrm{v}\neg\Box_\mathrm{v}\bot$. But $\mathcal{F}$ validates $\Box_\mathrm{v}\neg\Box_\mathrm{v}\bot\to \Box_\mathrm{v}\bot$, so we conclude that $\mathcal{M},w\Vdash_g \Box_\mathrm{v} \bot$. Hence $\llbracket p\rrbracket^\mathcal{M}_g\subseteq \llbracket \Box_\mathrm{v} \bot\rrbracket^\mathcal{M}_{g}$, so by monotonicity,  $\llbracket \Box_\mathrm{v} p\rrbracket^\mathcal{M}_{g}\subseteq \llbracket \Box_\mathrm{v}\Box_\mathrm{v} \bot\rrbracket^\mathcal{M}_{g}$. Then by (b) and monotonicity again, $\llbracket \Box_\mathrm{v} \Box_\mathrm{v}\bot\rrbracket^\mathcal{M}_{g}\in N_{g(\mathrm{u})}(w)$. Hence $\mathcal{M},w\Vdash_g \Box_\mathrm{u} \Box_\mathrm{v}\Box_\mathrm{v} \bot$, so $\mathcal{M},w\Vdash_{g_0}\exists_\mathrm{v} \Box_\mathrm{u} \Box_\mathrm{v}\Box_\mathrm{v} \bot$.

For part \ref{ProvThm2}, we use a possibility frame as in the proof of Theorem \ref{ProvThm}.\ref{ProvThm2} such that the root node $r$ has four children and every other node has at least three children (recall that the proof worked on the assumption that every node has a finite number of children greater than one) and (b) $k_i=1$ for all $i\in I$. But now we let $\mathrm{G}(x)=I$ for all $x\in T$, so we have a universal agent-domain frame. 

The argument that $\Box_\mathrm{v} (\Box_\mathrm{v} p\to p)\to \Box_\mathrm{v} p$ is valid on the frame is the same as in the proof of Theorem \ref{ProvThm}.\ref{ProvThm2}. The proof that $p\to \exists \mathrm{v} (\Box_\mathrm{v}p\wedge \Box_\mathrm{u} \Box_\mathrm{v} p)$ is valid is also similar to the argument that $p\to Sp$ is valid in the proof of Theorem \ref{ProvThm}.\ref{ProvThm2}. Simply add to that argument that not only do we have $\mathcal{M},x''\Vdash_g \Box_\mathrm{v}p$ for the assignment $g=g_0[\mathrm{v}:=f^{-1}(x')]$, but in fact  $\mathcal{M},y\Vdash_g \Box_\mathrm{v}p$ for any $y\in S$, which implies $\mathcal{M},x''\Vdash_g \Box_\mathrm{v}p\wedge \Box_\mathrm{u} \Box_\mathrm{v}p$. Hence $\mathcal{M},x\Vdash_{g_0} \exists \mathrm{v} (\Box_\mathrm{v}p\wedge \Box_\mathrm{u} \Box_\mathrm{v} p)$.

Finally, we claim that $ \exists \mathrm{v} \Box_\mathrm{u} \Box_\mathrm{v} \Box_\mathrm{v}\bot$ is not valid on $\mathcal{F}$. Suppose $f^{-1}(r)=i$. Let $g_0$ be an assignment such that $g_0(u)=i$. We claim that for any model $\mathcal{M}$ based on $\mathcal{F}$, $\mathcal{M},r\nVdash_{g_0} \exists \mathrm{v} \Box_\mathrm{u} \Box_\mathrm{v} \Box_\mathrm{v}\bot$. Where $c_1$ is the first child of $r$ in our enumeration of $r$'s children, it suffices to show $\mathcal{M},c_1\Vdash_{g_0}  \forall  \mathrm{v} \neg \Box_\mathrm{u} \Box_\mathrm{v} \Box_\mathrm{v}\bot$. For $j\in I$, let $g=g_0[\mathrm{v}:=j]$. We will show  $\mathcal{M},c_1\Vdash_g   \neg \Box_\mathrm{u} \Box_\mathrm{v} \Box_\mathrm{v}\bot$, i.e., for all $x\sqsubseteq c_1$, $\mathcal{M},x\nVdash_g \Box_\mathrm{u} \Box_\mathrm{v} \Box_\mathrm{v}\bot$. For all $x\sqsubseteq c_1$, we have $xR_ic_2$. Thus, it suffices to show that $\mathcal{M},c_2\nVdash_g \Box_\mathrm{v}\Box_\mathrm{v}\bot$. If $i=j$, then $\mathcal{M},c_2\nVdash_g \Box_\mathrm{v}\Box_\mathrm{v}\bot$ follows from the facts that $c_2R_ic_3$ and $c_3R_ic_4$. So suppose $i\neq j$. Then $f(j)\neq f(i)$, i.e., $f(j)\neq r$, so $c_3$ is not under any child of $f(j)$. It follows, given (b) above, that $c_3R_jc_2'$ where $c_2'$ is the second child of $f(j)$ in the enumeration of $f(j)$'s children. Then since $c_2'R_jc_3'$, where $c_3'$ is the third child in the enumeration of $f(j)$'s children,  we have $\mathcal{M},c_2\nVdash_g \Box_\mathrm{v}\Box_\mathrm{v}\bot$.\end{proof}

\begin{corollary}\label{UniversalCor} \textnormal{There is a universal agent-domain full relational possibility frame whose propositional term modal logic is not the logic of any class of  varying agent-domain full monotonic neighborhood world frames.}
\end{corollary}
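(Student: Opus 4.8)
The plan is to derive Corollary~\ref{UniversalCor} directly from Theorem~\ref{ProvThmB}. The corollary asserts that some universal agent-domain full relational possibility frame has a propositional term modal logic (i.e., the set of $\mathcal{TML}$-formulas valid on it) that is not the $\mathcal{TML}$-logic of any class of varying agent-domain full monotonic neighborhood world frames. So I first fix the frame $\mathcal{F}$ witnessing Theorem~\ref{ProvThmB}.\ref{ProvThm2B}: it is a universal agent-domain full relational possibility frame on a suitable tree $(T,\sqsubseteq)$ validating $p\to \exists\mathrm{v}(\Box_\mathrm{v}p\wedge\Box_\mathrm{u}\Box_\mathrm{v}p)$ and the L\"ob axiom $\Box_\mathrm{v}(\Box_\mathrm{v}p\to p)\to\Box_\mathrm{v}p$ for each $\mathrm{v}$, but \emph{not} validating $\exists\mathrm{v}\,\Box_\mathrm{u}\Box_\mathrm{v}\Box_\mathrm{v}\bot$. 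Let $\Lambda$ be the $\mathcal{TML}$-logic of $\mathcal{F}$.

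Next I argue by contradiction. Suppose there were a class $\mathsf{K}$ of varying agent-domain full monotonic neighborhood world frames whose $\mathcal{TML}$-logic is exactly $\Lambda$. Since $\mathcal{F}$ validates $p\to\exists\mathrm{v}(\Box_\mathrm{v}p\wedge\Box_\mathrm{u}\Box_\mathrm{v}p)$, this formula is in $\Lambda$, hence valid on every frame in $\mathsf{K}$. Likewise the Second Incompleteness formula $\Box_\mathrm{v}\neg\Box_\mathrm{v}\bot\to\Box_\mathrm{v}\bot$ — being a substitution instance ($\bot$ for $p$) of the L\"ob axiom, which is in $\Lambda$ and $\Lambda$ is closed under uniform substitution (it is a congruential, indeed normal, modal logic by the soundness of first-order logic / modal logic with respect to the semantics) — is in $\Lambda$ and so valid on every frame in $\mathsf{K}$. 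Now Theorem~\ref{ProvThmB}.\ref{ProvThm1B} applies to each frame in $\mathsf{K}$: every varying agent-domain full monotonic neighborhood world frame validating those two schemata also validates $\exists\mathrm{v}\,\Box_\mathrm{u}\Box_\mathrm{v}\Box_\mathrm{v}\bot$. Hence $\exists\mathrm{v}\,\Box_\mathrm{u}\Box_\mathrm{v}\Box_\mathrm{v}\bot$ is valid on every frame in $\mathsf{K}$, so it belongs to the $\mathcal{TML}$-logic of $\mathsf{K}$, which is $\Lambda$. But then $\exists\mathrm{v}\,\Box_\mathrm{u}\Box_\mathrm{v}\Box_\mathrm{v}\bot\in\Lambda$ means it is valid on $\mathcal{F}$, contradicting the choice of $\mathcal{F}$ from Theorem~\ref{ProvThmB}.\ref{ProvThm2B}. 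Therefore no such $\mathsf{K}$ exists, which is exactly the statement of the corollary.

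The only subtlety — and the main thing to get right rather than a genuine obstacle — is the bookkeeping about which schemata are preserved under passing to $\Lambda$ and then to $\mathsf{K}$. Specifically one must confirm that $\Lambda$ is closed under uniform substitution (so that validity of the L\"ob axiom on $\mathcal{F}$ gives validity of the Second Incompleteness instance), and that ``$\mathcal{TML}$-logic of $\mathsf{K}$ equals $\Lambda$'' genuinely forces every $\Lambda$-theorem to be valid on each member of $\mathsf{K}$ — both of which are immediate from the definition of the logic of a class of frames as the intersection of the frame logics, together with the fact that frame validity is closed under uniform substitution. The role of Theorem~\ref{ProvThmB} is thus to provide the two halves of a ``sandwich'': part~\ref{ProvThm1B} shows the world-frame side is forced to validate $\exists\mathrm{v}\,\Box_\mathrm{u}\Box_\mathrm{v}\Box_\mathrm{v}\bot$, while part~\ref{ProvThm2B} shows the possibility-frame witness $\mathcal{F}$ refuses to. I expect the whole argument to occupy only a few lines once the frame $\mathcal{F}$ from Theorem~\ref{ProvThmB}.\ref{ProvThm2B} is invoked; there is no new construction needed, only the contrapositive packaging.
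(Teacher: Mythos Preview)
Your proposal is correct and is exactly the intended derivation: the paper states the corollary immediately after Theorem~\ref{ProvThmB} without proof, and the argument you give---taking the frame from part~\ref{ProvThm2B}, supposing its $\mathcal{TML}$-logic were that of a class of world frames, and using part~\ref{ProvThm1B} to force validity of $\exists\mathrm{v}\,\Box_\mathrm{u}\Box_\mathrm{v}\Box_\mathrm{v}\bot$ on that class---is the standard unpacking. Your remark about uniform substitution is fine but slightly heavier than needed: since frame validity already quantifies over all valuations, the Second Incompleteness instance follows directly from validity of the L\"ob axiom on $\mathcal{F}$ without appealing to closure properties of $\Lambda$.
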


A systematic study of possibility semantics for the five languages introduced above---and of the relation of possibility semantics to arithmetic interpretations of these languages for provability logic---remains to be carried out.

Note that in this chapter, the natural examples of Kripke incomplete but full-possibility-frame complete \textit{normal} modal logics all involve devices for quantifying over modalities or propositions. This leads to the following question.

\begin{question} Is there a ``natural'' example of a Kripke incomplete but full-possibility-frame complete normal propositional modal logic without devices for quantifying over modalities or propositions?
\end{question}

\subsubsection{Principal frames and $\mathcal{V}$-incompleteness in provability logic}\label{PrincRelFrame}

Since relational possibility frames are more general than Kripke frames, they can be used not only to overcome some Kripke incompleteness results but also to deepen other Kripke incompleteness results, i.e., to prove that some logics known to be Kripke incomplete are in fact incomplete with respect to even more general semantics. An open problem concerning modal incompleteness from \cite{Litak2005,Venema2007} was whether there is any modal logic that is not the logic of any class of $\mathcal{V}$-BAOs---whether there is a ``$\mathcal{V}$-incomplete'' logic. Ideas from possibility semantics led to a positive answer to this question in \cite{Litak2019}. Here we give a possibility semantic proof of two of the main theorems of \cite{Litak2019}, reproduced in Corollary \ref{Vincompleteness} below.

Let $\mathsf{vB}$ be the smallest normal modal logic containing the axiom 
\begin{itemize}
\item $\Box\neg\Box\bot\rightarrow \Box (\Box (\Box p\rightarrow p)\rightarrow p)$.
\end{itemize}
Van Benthem \cite{Benthem1979} introduced this logic, inspired by provability logic (the formula under the $\Box$ in the consequent, $\Box (\Box p\rightarrow p)\rightarrow p)$, is a theorem of the provability logic $\mathsf{GLS}$), and proved that it is Kripke frame incomplete: $\Box\neg\Box\bot\to \Box\bot$ is a Kripke frame consequence of $\mathsf{vB}$ but  is not a theorem of $\mathsf{vB}$.  

Another Kripke incomplete logic---this time not just inspired by provability logic but a key system in the field---is the bimodal provability logic $\mathsf{GLB}$, which is the smallest normal bimodal logic containing the following axioms:
\begin{itemize}
\item $\Box_i(\Box_i\to p)\to \Box_i p$ for $i\in \{0,1\}$;
\item $\Box_0 p\to\Box_1 p$;
\item $\Diamond_0p\to \Box_1\Diamond_0p$.
\end{itemize}
An arithmetic interpretation of $\mathsf{GLB}$ interprets $\Box_0$ as provability in $\mathsf{PA}$ and $\Box_1$ as provability in $\mathsf{PA}$ with one application of the $\omega$-rule  (or in $\mathsf{PA}$ together with all $\Pi^0_1$ arithmetic truths). Japaridze \cite{Japaridze1988} proved the arithmetic completeness of $\mathsf{GLB}$ under this interpretation, as well as the Kripke incompleteness of  $\mathsf{GLB}$: $\Box_1\bot$ is a Kripke frame consequence of $\mathsf{GLB}$ but is not a theorem of $\mathsf{GLB}$.

The logics $\mathsf{vB}$ and $\mathsf{GLB}$ are related by the fact that the following is a theorem of \textsf{GLB} \cite{Litak2019}:
\begin{itemize}
\item $\Box_1 (\Box_0 (\Box_0 p\rightarrow p)\rightarrow p)$.
\end{itemize}
Now we will show that $\Box\neg\Box\bot\to \Box\bot$ is a consequence of \textsf{vB} and $\Box_1\bot$ a consequence of $\mathsf{GLB}$ over a class of possibility frames that can represent all $\mathcal{V}$-BAOs.

\begin{lemma}\label{KeyVLem} \textnormal{Let $\mathcal{F}$ be a paradigm possibility frame in which every principal downset is an admissible set, and let $\varphi$ be a formula not containing $p$. Suppose that $\varphi\to \Box_1(\Box_0(\Box_0 \neg p\to \neg p)\to \neg p)$ (where $0$ and $1$ are two modal indices that may be equal) is globally true in every model based on $\mathcal{F}$ in which $\pi(p)$ is a principal downset. Then $\varphi \to\Box_1\bot$ is valid on $\mathcal{F}$.}
\end{lemma}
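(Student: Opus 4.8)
The first move is to reduce the statement to a claim about $R_1$. Since the semantic value of $\bot$ is $\varnothing$, a possibility $x$ forces $\Box_1\bot$ precisely when $R_1(x)=\varnothing$; so it suffices to show that in any model $\mathcal{M}$ based on $\mathcal{F}$, if $\mathcal{M},x\Vdash\varphi$ then $R_1(x)=\varnothing$. I would argue by contradiction: assume $y_0\in R_1(x)$ for some $y_0$, and manufacture a contradiction by substituting a suitable \emph{principal downset} for $p$. This adapts the familiar Kripke-model argument, where one substitutes the singleton ``world proposition'' $\{y_0\}$; here the role of $\{y_0\}$ is played by $\mathord{\downarrow}y_0$, which is admissible by hypothesis (and regular open, since $P\subseteq\mathcal{RO}(S,\sqsubseteq)$ in Definition~\ref{PosFrameDef}).

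Concretely, I would take $\mathcal{M}'=(\mathcal{F},\pi')$ with $\pi'(p)=\mathord{\downarrow}y_0$ and $\pi'$ agreeing with the valuation of $\mathcal{M}$ on every other propositional variable. Because $p$ does not occur in $\varphi$, $\mathcal{M}',x\Vdash\varphi$, so by the hypothesis the displayed formula is globally true in $\mathcal{M}'$; in particular $\mathcal{M}',x\Vdash\Box_1(\Box_0(\Box_0\neg p\to\neg p)\to\neg p)$, and since $xR_1y_0$ we obtain $\mathcal{M}',y_0\Vdash\Box_0(\Box_0\neg p\to\neg p)\to\neg p$. As $y_0\in\pi'(p)$ we have $\mathcal{M}',y_0\Vdash p$, hence $\mathcal{M}',y_0\nVdash\neg p$; evaluating the implication at the refinement $y_0\sqsubseteq y_0$ (the $\to$ clause of Lemma~\ref{DerivedClauses}) gives $\mathcal{M}',y_0\nVdash\Box_0(\Box_0\neg p\to\neg p)$. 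Now I would unwind the two boxes: there is $w_1\in R_0(y_0)$ with $\mathcal{M}',w_1\nVdash\Box_0\neg p\to\neg p$, hence a $w_1'\sqsubseteq w_1$ with $\mathcal{M}',w_1'\Vdash\Box_0\neg p$ and $\mathcal{M}',w_1'\nVdash\neg p$; the latter yields $z_1\sqsubseteq w_1'$ with $z_1\in\pi'(p)$, i.e.\ $z_1\sqsubseteq y_0$. Since $\llbracket\Box_0\neg p\rrbracket^{\mathcal{M}'}$ is regular open, hence a downset (Theorem~\ref{FirstThm}.\ref{FirstThm1}), $z_1\sqsubseteq w_1'$ forces $\mathcal{M}',z_1\Vdash\Box_0\neg p$, that is $R_0(z_1)\subseteq\llbracket\neg p\rrbracket^{\mathcal{M}'}=\neg\mathord{\downarrow}y_0$.

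To close, I would run the same extraction one more time, now at $z_1$. Because $z_1\sqsubseteq y_0$ and semantic values are persistent, $\mathcal{M}',z_1\Vdash\Box_0(\Box_0\neg p\to\neg p)\to\neg p$; and again $z_1\in\pi'(p)$ gives $\mathcal{M}',z_1\nVdash\neg p$, so $\mathcal{M}',z_1\nVdash\Box_0(\Box_0\neg p\to\neg p)$, producing $w_2\in R_0(z_1)$ with $\mathcal{M}',w_2\nVdash\Box_0\neg p\to\neg p$. But $w_2\in R_0(z_1)\subseteq\neg\mathord{\downarrow}y_0=\llbracket\neg p\rrbracket^{\mathcal{M}'}$, so $\mathcal{M}',w_2\Vdash\neg p$; since $\llbracket\neg p\rrbracket^{\mathcal{M}'}$ is a downset, $\neg p$ persists to every $w_2'\sqsubseteq w_2$, so no refinement of $w_2$ can witness the failure of $\Box_0\neg p\to\neg p$ --- contradicting $\mathcal{M}',w_2\nVdash\Box_0\neg p\to\neg p$. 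Hence $R_1(x)=\varnothing$ and $\mathcal{M},x\Vdash\Box_1\bot$, as required.

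The step I expect to be the main obstacle is largely a conceptual one: one is tempted to expect a genuine L\"ob-style descent along $R_0$ that would require converse well-foundedness (as in honest $\mathsf{GL}$-completeness), but the persistence built into regular-open semantics makes the contradiction appear after only a ``round and a half'' of unwinding, so no well-foundedness assumption on $R_0$ is needed. Indeed, the argument above uses neither \upR{}, \Rdown{}, nor \Rref{} --- only the persistence of semantic values and the admissibility (hence regular-openness) of $\mathord{\downarrow}y_0$ --- so it applies verbatim to the paradigm frames named in the lemma.
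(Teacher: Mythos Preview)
Your argument is correct and takes a genuinely different route from the paper's. The paper argues dually: after setting $\pi(p)=\mathord{\downarrow}y$, it shows $\mathcal{M},y\nVdash\Diamond_0(p\wedge\Box_0\neg p)$ by a self-referential trick (if that formula held at $y$, then since $\llbracket p\rrbracket=\mathord{\downarrow}y$ it would imply the inconsistent $\Diamond_0(\Diamond_0(p\wedge\Box_0\neg p)\wedge\Box_0\neg p)$), then uses \Rdown{} and \Rref{} for $R_1$ to pull this back to a refinement $x'\sqsubseteq x$ forcing $\varphi\wedge\Diamond_1(\Box_0(\Box_0\neg p\to\neg p)\wedge p)$, contradicting the hypothesis at $x'$. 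By contrast, you stay on the $\Box$ side and push forward: after passing along $R_1$ to $y_0$, you unwind $\Box_0(\Box_0\neg p\to\neg p)$ once to produce a $z_1\sqsubseteq y_0$ with $\mathcal{M}',z_1\Vdash\Box_0\neg p$, and then unwind a second time from $z_1$ to obtain $w_2\in R_0(z_1)\subseteq\llbracket\neg p\rrbracket$ at which the implication nevertheless fails, which is impossible since $\neg p$ persists to all refinements of $w_2$.

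What your approach buys is exactly what you note at the end: you never need to return to $x$, so you never invoke \upR{}, \Rdown{}, or \Rref{}. Your proof uses only that $\mathcal{F}$ is a relational possibility frame (so semantic values lie in $P\subseteq\mathcal{RO}(S,\sqsubseteq)$ and persist) and that principal downsets are admissible. The paper's proof, by contrast, genuinely needs the paradigm conditions for the step ``by \Rdown{} and \Rref{}, given $xR_1y$ and $y'\sqsubseteq y$\dots''. So your argument is both more elementary and slightly stronger than the stated lemma requires.
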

\begin{proof} Suppose $\mathcal{N}$ is a model based on $\mathcal{F}$ in which $\mathcal{N},x\Vdash \varphi$. We claim that $\mathcal{N},x\Vdash \Box_1\bot$. Suppose for contradiction that $\mathcal{N},x\nVdash\Box_1\bot$, so there is some $y$ such that $xR_1y$. Let $\mathcal{M}$ be the model on $\mathcal{F}$ that differs from $\mathcal{N}$ at most in that $\pi(p)=\mathord{\downarrow}y$. Then since $\varphi$ does not contain $p$, we have $\mathcal{M},x\Vdash\varphi$. We claim that 
\begin{equation}\mathcal{M},y\nVdash \Diamond_0 (p\wedge\Box_0\neg p).\label{DS}\end{equation}
For if 
\begin{equation}\mathcal{M},y\Vdash \Diamond_0 (p\wedge\Box_0\neg p),\label{FI1}\end{equation}
then $\mathord{\downarrow}y=\llbracket p\rrbracket^\mathcal{M}\subseteq \llbracket\Diamond_0 (p\wedge\Box_0\neg p)\rrbracket^\mathcal{M}$, in which case (\ref{FI1}) and the monotonicity of $\Diamond_0$ together imply 
\begin{equation}\mathcal{M},y\Vdash \Diamond_0 (\Diamond_0 (p\wedge\Box_0\neg p)\wedge\Box_0\neg p),\label{FI2}\end{equation}
which is impossible, since 
\begin{eqnarray*}
&&\Diamond_0 (\Diamond_0 (p\wedge\Box_0\neg p)\wedge\Box_0\neg p)\\
\Rightarrow && \Diamond_0(\Diamond_0 p\wedge\Box\neg p) \\
\Rightarrow && \Diamond_0\bot \\
\Rightarrow && \bot.
\end{eqnarray*}
 Given (\ref{DS}) and $\mathcal{M},y\Vdash p$, by \textit{refinability} and \textit{persistence} there is a $y'\sqsubseteq y$ such that $\mathcal{M},y'\Vdash \neg \Diamond_0 (p\wedge\Box_0\neg p)\wedge p$, which is equivalent to $\mathcal{M},y'\Vdash \Box_0 (\Box_0\neg p\to\neg p)\wedge p$. Now by \Rdown{} and \Rref{}, given $xR_1y$ and $y'\sqsubseteq y$, there is an  $x'\sqsubseteq x$ such that for all $x''\sqsubseteq x'$, there is a $y''\sqsubseteq y'$ such that $x''R_1y''$. Since $y''\sqsubseteq y'$ and $\mathcal{M},y'\Vdash \Box_0 (\Box_0\neg p\to\neg p)\wedge p$, we have $\mathcal{M},y''\Vdash \Box_0 (\Box_0\neg p\to\neg p)\wedge p$. Thus, for every $x''\sqsubseteq x'$, there is a $y''\sqsubseteq y'$ such that $x''R_1y''$ and $\mathcal{M},y''\Vdash \Box_0 (\Box_0\neg p\to\neg p)\wedge p$, which implies that $\mathcal{M},x'\Vdash \Diamond_1 (\Box_0 (\Box_0\neg p\to\neg p)\wedge p)$. Then since $x'\sqsubseteq x$ and $\mathcal{M},x\Vdash \varphi$, we have  $\mathcal{M},x'\Vdash \varphi\wedge \Diamond_1 (\Box_0 (\Box_0\neg p\to\neg p)\wedge p)$. But this contradicts the fact that $\mathcal{M},x'\Vdash \varphi\to \Box_1(\Box_0(\Box_0 \neg p\to \neg p)\to \neg p)$.\end{proof}
 
 \begin{theorem}\label{GLBinc}\textnormal{
 \begin{enumerate}
 \item \textsf{vB} is not the logic of any class of paradigm relational possibility frames in which every principal downset is an admissible set.
 \item $\mathsf{GLB}$ is not the logic of any class of paradigm relational possibility frames in which every principal downset is an admissible set.
 \end{enumerate}}
 \end{theorem}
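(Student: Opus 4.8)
The plan is to derive both parts directly from Lemma~\ref{KeyVLem}, which already carries the semantic content; the remaining work is to feed it the right substitution instances and argue by contradiction. Throughout, let $\mathsf{K}$ denote a class of paradigm relational possibility frames in which every principal downset is admissible. The first observation I would record is that if a normal modal logic $\mathsf{L}$ is the logic of $\mathsf{K}$, then every $\mathcal{F}\in\mathsf{K}$ validates every theorem --- hence every substitution instance of every theorem --- of $\mathsf{L}$; and since principal downsets are admissible in each such $\mathcal{F}$, a valuation $\pi$ with $\pi(p)$ a principal downset is legitimate on $\mathcal{F}$, so ``$\chi$ is valid on $\mathcal{F}$'' already gives the weaker statement ``$\chi$ is globally true in every model based on $\mathcal{F}$ in which $\pi(p)$ is a principal downset'', which is exactly the hypothesis shape of Lemma~\ref{KeyVLem}.

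For part~1, I would suppose for contradiction that $\mathsf{vB}$ is the logic of such a $\mathsf{K}$ and fix $\mathcal{F}\in\mathsf{K}$. Substituting $\neg p$ for $p$ in the $\mathsf{vB}$ axiom gives the $\mathsf{vB}$-theorem $\Box\neg\Box\bot\to\Box(\Box(\Box\neg p\to\neg p)\to\neg p)$, which is therefore valid on $\mathcal{F}$. Applying Lemma~\ref{KeyVLem} with $0$ and $1$ both taken to be the single modality $\Box$ and with $\varphi:=\Box\neg\Box\bot$ (a formula not containing $p$) then yields that $\Box\neg\Box\bot\to\Box\bot$ is valid on $\mathcal{F}$. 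As $\mathcal{F}\in\mathsf{K}$ was arbitrary, $\Box\neg\Box\bot\to\Box\bot$ lies in the logic of $\mathsf{K}$, i.e.\ in $\mathsf{vB}$ --- contradicting van Benthem's result \cite{Benthem1979} that $\Box\neg\Box\bot\to\Box\bot$ is not a theorem of $\mathsf{vB}$.

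For part~2, I would likewise suppose $\mathsf{GLB}$ is the logic of such a $\mathsf{K}$ and fix $\mathcal{F}\in\mathsf{K}$. Since $\Box_1(\Box_0(\Box_0 p\to p)\to p)$ is a theorem of $\mathsf{GLB}$, substituting $\neg p$ for $p$ makes $\psi:=\Box_1(\Box_0(\Box_0\neg p\to\neg p)\to\neg p)$ a theorem of $\mathsf{GLB}$, hence valid on $\mathcal{F}$. Taking $\varphi$ to be any propositional tautology not containing $p$ (say $q\to q$ with $q\neq p$), the formula $\varphi\to\psi$ is valid on $\mathcal{F}$, and Lemma~\ref{KeyVLem} then gives that $\varphi\to\Box_1\bot$ is valid on $\mathcal{F}$; since $\varphi$ is itself valid on $\mathcal{F}$, modus ponens (which preserves validity on a frame) yields that $\Box_1\bot$ is valid on $\mathcal{F}$. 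As $\mathcal{F}$ was arbitrary, $\Box_1\bot$ lies in the logic of $\mathsf{K}$, i.e.\ in $\mathsf{GLB}$ --- contradicting Japaridze's result \cite{Japaridze1988} that $\Box_1\bot$ is not a theorem of $\mathsf{GLB}$.

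With Lemma~\ref{KeyVLem} in hand there is no serious obstacle; the only point needing care --- and it is minor --- is the passage from frame-validity on $\mathcal{F}$ to the restricted-model hypothesis of Lemma~\ref{KeyVLem}, which is exactly where the assumption ``every principal downset is admissible'' is used, together with correctly identifying the substitution instance ($\neg p$ for $p$ in the $\mathsf{vB}$ axiom, resp.\ in the $\mathsf{GLB}$-theorem $\Box_1(\Box_0(\Box_0 p\to p)\to p)$). Once the theorem is proved, I would record the promised consequence: by Theorem~\ref{VtoPoss} every $\mathcal{V}$-BAO is isomorphic to $\mathcal{F}^\mathsf{b}$ for a \emph{strong} --- hence paradigm --- relational possibility frame $\mathcal{F}$ in which every principal downset is admissible, and validity on $\mathbb{B}_\mathsf{p}$ coincides with validity in $\mathbb{B}$, so the theorem immediately implies that neither $\mathsf{vB}$ nor $\mathsf{GLB}$ is the logic of any class of $\mathcal{V}$-BAOs.
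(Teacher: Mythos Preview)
Your proposal is correct and follows essentially the same approach as the paper: both parts are immediate applications of Lemma~\ref{KeyVLem}, yielding that any frame in the class validating $\mathsf{vB}$ (resp.\ $\mathsf{GLB}$) must also validate the known non-theorem $\Box\neg\Box\bot\to\Box\bot$ (resp.\ $\Box_1\bot$). Your write-up is in fact more explicit than the paper's, spelling out the $\neg p$-for-$p$ substitution needed to match the shape of the lemma's hypothesis and the passage from frame-validity to the principal-downset-valuation hypothesis; the paper's proof compresses all of this into two sentences.
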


\begin{proof} For part 1, by Lemma \ref{KeyVLem}, any such possibility frame validating \textsf{vB} also validates the non-theorem $\Box\neg\Box\bot\to \Box\bot$. For part, by Lemma \ref{KeyVLem}, any such possibility frame validating \textsf{GLB} also validates the non-theorem $\Box_1\bot$.\end{proof}

\begin{corollary}\label{Vincompleteness}$\,$\textnormal{
 \begin{enumerate}
 \item \textsf{vB} is not the logic of any class of $\mathcal{V}$-BAOs.
 \item $\mathsf{GLB}$ is not the logic of any class of $\mathcal{V}$-BAOs.
 \end{enumerate}}
\end{corollary}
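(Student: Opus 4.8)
The plan is to derive Corollary~\ref{Vincompleteness} from Theorem~\ref{GLBinc} by the standard argument that $\mathcal{V}$-BAOs correspond (for completeness purposes) to paradigm relational possibility frames in which every principal downset is admissible. First I would recall the key facts established earlier: by Theorem~\ref{VtoPoss}, every $\mathcal{V}$-BAO $\mathbb{B}$ yields a \emph{principal frame} $\mathbb{B}_\mathsf{p} = (B_+, \leq_+, \{\mathord{\downarrow}_+ x \mid x \in B_+\} \cup \{\varnothing\}, \{R_i\}_{i\in I})$ which is a \emph{strong} relational possibility frame---hence in particular a \emph{paradigm} frame (it satisfies \upR{}, \Rdown{}, \Rref{} by Definition~\ref{StrongDef})---and whose admissible sets are exactly the principal downsets of the poset $(B_+,\leq_+)$ together with $\varnothing$. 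Moreover, part~\ref{VtoPoss4} of Theorem~\ref{VtoPoss} gives $(\mathbb{B}_\mathsf{p})^\mathsf{b} \cong \mathbb{B}$. Since the logic of a possibility frame $\mathcal{F}$ is, by definition, the set of formulas valid on $\mathcal{F}$, and validity on $\mathcal{F}$ coincides with validity in the BAO $\mathcal{F}^\mathsf{b}$ (formulas are interpreted via valuations into $P$, which is the underlying set of $\mathsf{BA}(P) = \mathcal{F}^\mathsf{b}$), the logic of $\mathbb{B}_\mathsf{p}$ equals the logic of $\mathbb{B}$.

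The argument then runs as follows. Suppose for contradiction that \textsf{vB} (resp.\ $\mathsf{GLB}$) is the logic of some class $\mathsf{K}$ of $\mathcal{V}$-BAOs. For each $\mathbb{B} \in \mathsf{K}$, form $\mathbb{B}_\mathsf{p}$; by the remarks above, $\mathbb{B}_\mathsf{p}$ is a paradigm relational possibility frame in which every principal downset is admissible, and the logic of $\mathbb{B}_\mathsf{p}$ equals the logic of $\mathbb{B}$. Let $\mathsf{K}_\mathsf{p} = \{\mathbb{B}_\mathsf{p} \mid \mathbb{B} \in \mathsf{K}\}$. Then a formula $\varphi$ is valid on all of $\mathsf{K}_\mathsf{p}$ iff $\varphi$ is valid on all of $\mathsf{K}$, i.e.\ iff $\varphi \in \textsf{vB}$ (resp.\ $\varphi \in \mathsf{GLB}$). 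Hence \textsf{vB} (resp.\ $\mathsf{GLB}$) is the logic of the class $\mathsf{K}_\mathsf{p}$ of paradigm relational possibility frames in which every principal downset is an admissible set---contradicting Theorem~\ref{GLBinc}. I would present both parts in one stroke since they differ only in which logic and which frame class are named.

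I expect the main (indeed only) obstacle to be a bookkeeping one: making sure that ``the logic of the frame $\mathbb{B}_\mathsf{p}$'' is literally the same set of formulas as ``the logic of the algebra $\mathbb{B}$''. This requires spelling out that a possibility model based on $(S,\sqsubseteq,P,\{R_i\})$ assigns propositional variables values in $P$, that the recursive truth clauses compute exactly the Boolean and $\Box_i$ operations of $\mathcal{F}^\mathsf{b} = (\mathsf{BA}(P),\{\Box_i\})$, and that global truth of $\varphi$ in every such model is equivalent to $\tilde\theta(\varphi) = 1$ for every valuation $\theta$ on $\mathcal{F}^\mathsf{b}$---so that frame-validity and algebra-validity agree. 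This is routine and already implicit in Proposition~\ref{BAOfromFrame} together with the definition of validity in Definition~\ref{RellPossFrame}, but it is the one place where care is needed. Everything else is a direct citation: the paradigm property of $\mathbb{B}_\mathsf{p}$ comes from it being strong (Theorem~\ref{VtoPoss}(1) plus Definition~\ref{StrongDef}), the shape of its admissible sets is read off from the definition of $\mathbb{B}_\mathsf{p}$, and the contradiction is immediate from Theorem~\ref{GLBinc}.
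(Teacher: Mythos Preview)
Your proposal is correct and follows essentially the same approach as the paper: use Theorem~\ref{VtoPoss} to pass from each $\mathcal{V}$-BAO to its principal frame $\mathbb{B}_\mathsf{p}$, which is strong (hence paradigm) with all principal downsets admissible and has the same logic as $\mathbb{B}$, then invoke Theorem~\ref{GLBinc} for the contradiction. The paper's proof is just a terser version of exactly this argument.
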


\begin{proof} By Theorem \ref{VtoPoss}, every $\mathcal{V}$-BAO has the same logic as a paradigm relational possibility frame in which every principal downset is admissible. Thus, if $\mathsf{L}$ is the logic of a class of $\mathcal{V}$-BAOs, then it is the logic of a class of possibility frames in which every principal downset is admissible. Now apply Theorem \ref{GLBinc}.
\end{proof}

We have now seen several fruitful connections between possibility semantics and provability logic in  Remark \ref{QArith}, \S~\ref{FullRelFrameSection}, and the results above.

\subsubsection{General frames}\label{GenRelFrame}

As in \S~\ref{GenFrame1}, where we saw that general neighborhood possibility frames can represent arbitrary BAEs, let us briefly consider how general relational possibility frames can represent arbitrary BAOs---choice free.

\begin{proposition}[\cite{Holliday2018}]\label{BAOstoFrames} \textnormal{Given any BAO $\mathbb{B}=( B, \{\Box_i\}_{i\in I})$, define  
\[\mathbb{B}_\mathsf{f}=(B_\mathsf{f}, \{R_i\}_{i\in I})\mbox{ and }\mathbb{B}_\mathsf{g}=(B_\mathsf{g}, \{R_i\}_{i\in I}) \]
where $B_\mathsf{f}$ and $B_\mathsf{g}$ are as in Definition \ref{FiltFrameDef}, and for $F,F'\in \mathsf{PropFilt}(B)$, \[\mbox{$FR_iF'$ iff for all $a\in B$, $\Box_ia\in F$ implies $a\in F'$}.\]
Then:
\begin{enumerate}
\item $\mathbb{B}_\mathsf{f}$ and $\mathbb{B}_\mathsf{g}$ are strong relational possibility frames;
\item\label{BAEstoFrames2} $(\mathbb{B}_\mathsf{f})^\mathsf{b}$ is (up to isomorphism) the canonical extension of $\mathbb{B}$;
\item $(\mathbb{B}_\mathsf{g})^\mathsf{b}$ is isomorphic to $\mathbb{B}$.
\end{enumerate}}
\end{proposition}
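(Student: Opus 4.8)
The plan is to piggyback on the non-modal results of \S\ref{PropFilters}, handling the new modal data $\{R_i\}_{i\in I}$ and $\{\Box_i\}_{i\in I}$ exactly as the non-modal skeleton was handled in Proposition \ref{FiltFrameDef}. First I would observe that for each $i$, the relation $R_i$ defined by ``$FR_iF'$ iff for all $a\in B$, $\Box_i a\in F$ implies $a\in F'$'' is precisely the $R_i^\Box$ operation of Lemma \ref{FullToStrong} applied to the frame $B_\mathsf{f}$ (resp.\ $B_\mathsf{g}$) equipped with the tautologous relation $S\times S$, where $S=\mathsf{PropFilt}(B)$. Hence, once we check that $(S,\supseteq,P)$ is a (filter-descriptive, in the $\mathsf{g}$ case) possibility frame for the appropriate $P$---which is exactly Proposition \ref{FiltFrameDef}---Lemma \ref{FullToStrong} immediately gives that $\mathbb{B}_\mathsf{f}$ and $\mathbb{B}_\mathsf{g}$ are $R$-tight; and by Lemma \ref{TightStrong}.\ref{TightStrong1.5} it remains only to verify \Rref{} to conclude strongness. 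So the work splits into: (i) showing $\Box_i Z\in P$ for $Z\in P$, i.e.\ that these are genuine relational possibility frames in the sense of Definition \ref{RellPossFrame}; (ii) verifying \Rref{}; and (iii) establishing parts 2 and 3, the algebraic identifications.

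For (i), the key computation is $\widehat{\Box_i a} = \Box_i \widehat{a}$ for each $a\in B$. Unfolding definitions: $F\in\Box_i\widehat{a}$ iff $R_i(F)\subseteq\widehat{a}$ iff every proper filter $F'$ with $FR_iF'$ contains $a$; and $F\in\widehat{\Box_i a}$ iff $\Box_i a\in F$. The direction $\widehat{\Box_i a}\subseteq\Box_i\widehat{a}$ is immediate from the definition of $R_i$. The converse uses that if $\Box_i a\notin F$ then the set $\{b\mid \Box_i b\in F\}\cup\{\neg a\}$ generates a proper filter $F'$ (properness follows since $\Box_i$ distributes over finite meets, so $\Box_i b_1,\dots,\Box_i b_n\in F$ gives $\Box_i(b_1\wedge\cdots\wedge b_n)\in F$, and if $b_1\wedge\cdots\wedge b_n\wedge\neg a=0$ then $b_1\wedge\cdots\wedge b_n\leq a$, so $\Box_i a\in F$, contradiction), and that $F'$ witnesses $FR_iF'$ with $a\notin F'$. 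This gives $\widehat{\Box_i a}=\Box_i\widehat{a}\in\{\widehat{b}\mid b\in B\}$, handling the $\mathsf{g}$ case; for the $\mathsf{f}$ case one further checks that if $Z\in\mathcal{RO}(S,\supseteq)$ is arbitrary then $\Box_i Z\in\mathcal{RO}(S,\supseteq)$, which follows since $\Box_i$ applied to a regular open set is regular open in any relational possibility frame satisfying \upR{}, \Rdown{}, \Rdense{} (Lemma \ref{TightStrong}.\ref{TightStrong1}, since $R$-tightness holds). For (ii), \Rref{}: given $FR_iF'$, we want $\mathcal{F}_0\supseteq F$ (a refinement) such that every refinement of $\mathcal{F}_0$ has an $R_i$-successor below some fixed refinement of $F'$. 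Concretely, extend $F$ by $\Diamond_i b$ for each $b\in F'$; the resulting filter is proper (this is where $FR_iF'$ is used, together with the fact that $F'$ is a filter so its elements have nonzero meets, and complete multiplicativity is \emph{not} needed---only the finitary BAO structure), and one checks every proper filter extending it $R_i$-accesses a filter extending $F'$.

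Step (iii) is the cleanest: for part 2, $(\mathbb{B}_\mathsf{f})^\mathsf{b}=(\mathcal{RO}(\mathsf{PropFilt}(B),\supseteq),\{\Box_{N_i}\}_{i\in I})$ where the underlying Boolean algebra is the constructive canonical extension of $B$ by Proposition \ref{FiltFrameDef}.\ref{FiltFrameDef2}, and the map $a\mapsto\widehat{a}$ already commutes with $\Box_i$ by the computation $\widehat{\Box_i a}=\Box_i\widehat{a}$ above; since canonical extension of a BAO is uniquely determined by the underlying Boolean canonical extension together with the requirement that the extended operations agree with the original on the image of $B$ and are computed by the standard $\sigma$/$\pi$-extension formulas (which $\Box_i$ on $\mathcal{RO}(S,\supseteq)$ satisfies, being a complete operator whose restriction along $\widehat{(\cdot)}$ is $\Box_i^B$), we get the isomorphism of BAOs. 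For part 3, $(\mathbb{B}_\mathsf{g})^\mathsf{b}=(\{\widehat{a}\mid a\in B\},\{\Box_{N_i}\})$, and $a\mapsto\widehat{a}$ is a Boolean isomorphism onto its image by Proposition \ref{FiltFrameDef} which commutes with $\Box_i$ by the same computation, hence a BAO-isomorphism. As the excerpt says, this is ``an easy extension of the proof of Proposition \ref{FiltFrameDef} taking into account $\Box_i$ and $R_i$.''

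\textbf{Main obstacle.} The only genuinely non-routine point is the properness verification in step (i)'s converse inclusion and in step (ii): one must see that the finitary distribution of $\Box_i$ over meets (the BAO axiom, Definition \ref{BAOdef}) is exactly what makes the candidate filters $\{b\mid\Box_i b\in F\}$ and $F\cup\{\Diamond_i b\mid b\in F'\}$ proper, and in particular that \emph{no} appeal to completeness of $B$ or to complete multiplicativity is needed---this is what makes the construction work for arbitrary BAOs and choice-free. Everything else is bookkeeping atop Lemmas \ref{FullToStrong}, \ref{TightStrong} and Proposition \ref{FiltFrameDef}.
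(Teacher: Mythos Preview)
Your overall plan is correct and matches the paper's (one-line) sketch: extend Proposition~\ref{FiltFrameDef} by handling the modal data via the key identity $\widehat{\Box_i a}=\Box_i\widehat{a}$, verify \Rref{} by extending $F$ with $\{\Diamond_i b\mid b\in F'\}$, and read off parts~2 and~3 from the non-modal results. Your verification that the relevant filters are proper, using only finitary multiplicativity of $\Box_i$, is exactly right and is indeed the one non-bookkeeping point.

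There is one genuine slip. Your claim that the relation $R_i$ in the statement ``is precisely the $R_i^\Box$ operation of Lemma~\ref{FullToStrong} applied to the frame $B_\mathsf{f}$ (resp.\ $B_\mathsf{g}$) equipped with the tautologous relation $S\times S$'' is false. In Lemma~\ref{FullToStrong}, the operation $\Box_i$ used to define $R_i^\Box$ is the one induced by the frame's own accessibility relation; if you feed in $R_i=S\times S$, then $\Box_i Z$ equals $S$ when $Z=S$ and $\varnothing$ otherwise, so $R_i^\Box$ comes back as $S\times S$, not the relation in the proposition. You cannot bootstrap $R$-tightness this way. The fix is simply to bypass Lemma~\ref{FullToStrong} and verify the needed properties directly from the definition of $R_i$: \upR{}, \Rdown{}, and \Rdense{} are each one-line checks (for \Rdense{}, if $a\notin F'$ take $F''$ generated by $F'\cup\{\neg a\}$), and once you have established $\widehat{\Box_i a}=\Box_i\widehat{a}$ in your step~(i), $R$-tightness for $\mathbb{B}_\mathsf{g}$ is literally the definition of $R_i$, while $R$-tightness for $\mathbb{B}_\mathsf{f}$ follows because the hypothesis quantifies over all $Z\in\mathcal{RO}$, in particular over all $\widehat{a}$. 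With this correction, the rest of your argument goes through unchanged.
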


\begin{proposition}[\cite{Holliday2018}]\textnormal{Let $\mathcal{F}= ( S,\sqsubseteq,P, \{R_i\}_{i\in I}) $ be a general relational possibility frame. Then $\mathcal{F}$ is isomorphic to $(\mathcal{F}^\mathsf{b})_\mathsf{g}$ if and only if $( S,\sqsubseteq,P )$ is a filter-descriptive possibility frame and $\mathcal{F}$ is $R$-tight (recall Definition \ref{RTightDef}). We also call such an $\mathcal{F}$ \textit{filter-descriptive}.}
\end{proposition}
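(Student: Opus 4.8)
The statement to be proved characterizes when a general relational possibility frame $\mathcal{F}=(S,\sqsubseteq,P,\{R_i\}_{i\in I})$ is isomorphic to $(\mathcal{F}^\mathsf{b})_\mathsf{g}$: namely, exactly when $(S,\sqsubseteq,P)$ is filter-descriptive (Proposition \ref{SepReal}) and $\mathcal{F}$ is $R$-tight (Definition \ref{RTightDef}). The plan is to split into the two directions and to lean heavily on the already-established non-modal analogue, Proposition \ref{SepReal}, together with Proposition \ref{BAOstoFrames}.

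For the \emph{right-to-left} direction, assume $(S,\sqsubseteq,P)$ is filter-descriptive and $\mathcal{F}$ is $R$-tight. By Proposition \ref{SepReal} there is already an isomorphism of \emph{non-modal} possibility frames $h\colon (S,\sqsubseteq,P)\to (\mathcal{F}^\mathsf{b})_\mathsf{g}$, given (as in the proof of that proposition) by $h(x)=\{U\in P\mid x\in U\}$, the proper filter of admissible sets containing $x$; the filter realization condition guarantees surjectivity, separation guarantees injectivity and that $h$ reflects $\sqsubseteq$. It then remains only to check that $h$ transports the modal structure, i.e.\ that for all $x,y\in S$, $xR_iy$ iff $h(x)R_i^{(\mathcal{F}^\mathsf{b})_\mathsf{g}}h(y)$, where by Proposition \ref{BAOstoFrames} the latter relation is ``for all $Z\in P$, $\Box_iZ\in h(x)$ implies $Z\in h(y)$,'' which unwinds to ``for all $Z\in P$, $x\in\Box_iZ$ implies $y\in Z$.'' The forward implication is just the standard fact that $xR_iy$ and $R_i(x)\subseteq Z$ force $y\in Z$, valid in any relational possibility frame. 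The converse implication is precisely $R$-tightness. Hence $h$ is an isomorphism of relational possibility frames, and one notes in passing that $\mathbb{B}_\mathsf{g}$ being $R$-tight and filter-descriptive (Proposition \ref{BAOstoFrames}, part for $\mathbb{B}_\mathsf{g}$, plus the preceding proposition's characterization of its underlying frame) makes this consistent.

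For the \emph{left-to-right} direction, assume $\mathcal{F}\cong(\mathcal{F}^\mathsf{b})_\mathsf{g}$. Forgetting the accessibility relations, this gives an isomorphism of non-modal possibility frames $(S,\sqsubseteq,P)\cong(\mathcal{F}^\mathsf{b})_\mathsf{g}$ restricted to the non-modal reduct, which is exactly $((\mathcal{F}^\mathsf{b})^{\mathrm{BA}})_\mathsf{g}$ in the notation of Proposition \ref{SepReal}; by that proposition $(S,\sqsubseteq,P)$ is filter-descriptive. For $R$-tightness, transport the question along the isomorphism: it suffices to observe that $(\mathcal{F}^\mathsf{b})_\mathsf{g}$ itself is $R$-tight, since $R$-tightness is preserved under frame isomorphism. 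But in $\mathbb{B}_\mathsf{g}$ the accessibility relation is \emph{defined} by $FR_iF'$ iff $\Box_ia\in F\Rightarrow a\in F'$ for all $a$, and the admissible sets are the $\widehat{a}$; so $F\in\Box_i\widehat{a}$ iff (using $\Box_i\widehat{a}=\widehat{\Box_ia}$) $\Box_ia\in F$, whence ``$F\in\Box_i\widehat{a}$ implies $F'\in\widehat{a}$ for all $a$'' is literally the defining condition $FR_iF'$. Thus $\mathbb{B}_\mathsf{g}$ is $R$-tight, hence so is $\mathcal{F}$.

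\textbf{Main obstacle.} There is no deep obstacle here — the result is a routine ``modalization'' of Proposition \ref{SepReal} — but the one point demanding care is bookkeeping about $\Box_i$ on admissible sets: one must verify $\Box_i\widehat{a}=\widehat{\Box_ia}$ (equivalently $h[\Box_iZ]=\Box_i h[Z]$ under the isomorphism $h$), since the whole correspondence between the frame-level relation $R_i$ and the algebraic operator $\Box_i$ rests on it. This identity is implicit in Proposition \ref{BAOstoFrames} (it is what makes $(\mathbb{B}_\mathsf{g})^\mathsf{b}\cong\mathbb{B}$ and $(\mathbb{B}_\mathsf{f})^\mathsf{b}$ the canonical extension), so it may simply be cited; if spelled out, it is the observation that $\widehat{\Box_ia}=\{F\mid\Box_ia\in F\}=\{F\mid R_i(F)\subseteq\widehat{a}\}=\Box_i\widehat{a}$, where the middle equality uses the definition of $R_i$ in $\mathbb{B}_\mathsf{g}$ in one direction and the fact that $a\notin F'$ for some filter $F'\supseteq F\cup\{\neg a\}$ extends a witness in the other. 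The rest is a direct appeal to the non-modal proposition plus the definition of $R$-tightness.
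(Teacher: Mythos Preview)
The paper does not include its own proof of this proposition; it is stated with a citation to \cite{Holliday2018} and left unproved. Your argument is correct and follows the expected route: reduce the underlying possibility frame to Proposition~\ref{SepReal}, and handle the accessibility relations via $R$-tightness together with the identity $\Box_i\widehat{a}=\widehat{\Box_ia}$ in $\mathbb{B}_\mathsf{g}$.

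One small repair is needed in your ``main obstacle'' paragraph. For the inclusion $\widehat{\Box_ia}\supseteq\Box_i\widehat{a}$ (equivalently: if $\Box_ia\notin F$ then $R_i(F)\not\subseteq\widehat{a}$), the witness filter is $G=\{b\in B\mid\Box_ib\in F\}$, not a filter extending $F\cup\{\neg a\}$ as your parenthetical suggests. This $G$ is a filter because $\Box_i$ preserves finite meets; it is proper because $\Box_i0\in F$ would give $\Box_ia\in F$ by monotonicity, contrary to assumption; it satisfies $FR_iG$ by definition; and $a\notin G$ since $\Box_ia\notin F$. With that correction, your proof is complete and matches the intended argument.
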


Morphisms between filter-descriptive relational possibility frames are \\ p-morphisms as in Definition \ref{PossMorphs} such that the inverse image of any admissible proposition is also an admissible proposition.

\begin{definition} Given relational possibility frames $\mathcal{F}= ( S,\sqsubseteq, P, \{R_i\}_{i\in I}) $ and $\mathcal{F}'= ( S',\sqsubseteq',P' , \{R'_i\}_{i\in I}) $, a \textit{p-morphism} from $\mathcal{F}$ to $\mathcal{F}'$ is a map $h:S\to S'$ satisfying the back and forth conditions 1, $2'$, $3$, and $4'$ from Definition \ref{PossMorphs}, as well as the condition that for all $ Z'\in P'$, $h^{-1}[Z']\in P$.\end{definition}

We can now relate filter-descriptive relational possibility frames and BAOs categorically as follows.

\begin{proposition} $\,$\textnormal{
\begin{enumerate}
\item Filter-descriptive relational possibility frames together with p-morhisms \\ form a category, \textbf{FiltRelPoss} \cite{Holliday2018};
\item BAOs together with BAO-homomorphisms form a category, \textbf{BAO}.
\end{enumerate}}
\end{proposition}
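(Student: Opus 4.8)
The statement to prove is a two-part proposition: (1) filter-descriptive relational possibility frames with p-morphisms form a category \textbf{FiltRelPoss}, and (2) BAOs with BAO-homomorphisms form a category \textbf{BAO}. Part (2) is entirely standard — the composition of two BAO-homomorphisms is a BAO-homomorphism (each preserves the Boolean operations and each $\Box_i$, so the composite does too), the identity map is a BAO-homomorphism, and composition of functions is associative — so I would dispatch it in one sentence, citing that it is well known (cf.\ the treatment of \textbf{BAE} in Proposition~\ref{NeighBAE}). The real content is part (1).

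For part (1), the plan is to verify the two category axioms: closure under composition and existence of identities. The identity map $\mathrm{id}_S\colon S\to S$ on a filter-descriptive relational possibility frame $\mathcal{F}=(S,\sqsubseteq,P,\{R_i\}_{i\in I})$ trivially satisfies all of conditions $1$, $2'$, $3$, $4'$ of Definition~\ref{PossMorphs} together with the admissibility-preservation clause $h^{-1}[Z']\in P$ (since $\mathrm{id}_S^{-1}[Z]=Z$), so identities exist. The substance is composition: given p-morphisms $h\colon\mathcal{F}\to\mathcal{F}'$ and $h'\colon\mathcal{F}'\to\mathcal{F}''$, I must show $h'\circ h$ is a p-morphism $\mathcal{F}\to\mathcal{F}''$. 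I would check each clause in turn. \SqForth{} and \RForth{} are immediate by composing the corresponding implications. For admissibility preservation, $(h'\circ h)^{-1}[Z'']=h^{-1}[h'^{-1}[Z'']]$, and $h'^{-1}[Z'']\in P'$ by $h'$'s clause, hence $h^{-1}[h'^{-1}[Z'']]\in P$ by $h$'s clause. For \pSqBack{}: suppose $y''\sqsubseteq'' (h'\circ h)(x)=h'(h(x))$; by \pSqBack{} for $h'$ there is $y'\sqsubseteq' h(x)$ with $h'(y')=y''$; then by \pSqBack{} for $h$ there is $y\sqsubseteq x$ with $h(y)=y'$, so $(h'\circ h)(y)=h'(y')=y''$. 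The \pRBack{} clause is handled the same way, replacing $\sqsubseteq$ by $R_i$ throughout. Associativity of composition and the identity laws are inherited from composition of functions, so \textbf{FiltRelPoss} is a category.

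The one genuine subtlety — and the step I expect to be the main (mild) obstacle — is confirming that the composite of two p-morphisms between \emph{filter-descriptive} frames again has filter-descriptive \emph{source and target}, i.e.\ that we are not leaving the class of objects. But this is a non-issue: filter-descriptiveness is a property of the individual frames $\mathcal{F},\mathcal{F}',\mathcal{F}''$, fixed in advance, and composition of morphisms does not alter the objects; so once $\mathcal{F}$ and $\mathcal{F}''$ are filter-descriptive by hypothesis, $h'\circ h$ is automatically a morphism between two filter-descriptive frames. Thus no closure property of the object class needs checking. I would therefore present the proof as a short verification, exactly parallel to the proof sketch of Proposition~\ref{NeighBAE} and to the analogous facts for p-morphisms between relational possibility frames in \cite{Holliday2018}, and note that the only place the filter-descriptive hypothesis is used is in ensuring the objects are well-defined — the p-morphism conditions themselves (including $h^{-1}[Z']\in P$) are all preserved by composition regardless.
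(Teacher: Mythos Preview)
Your proposal is correct. The paper itself does not supply a proof of this proposition: it simply states the result, citing \cite{Holliday2018} for part~1 and treating part~2 as standard, in direct parallel with the proof sketch of Proposition~\ref{NeighBAE}. Your detailed verification of closure under composition (chaining the back conditions, composing the forth conditions, and nesting preimages for admissibility preservation) together with the trivial identity check is exactly the routine argument those references contain, so your approach is essentially what the paper defers to.
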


\begin{theorem}[\cite{Holliday2018}]\label{FiltRelPossThm} \textnormal{\textbf{FiltRelPoss} is dually equivalent to \textbf{BAO}.}
\end{theorem}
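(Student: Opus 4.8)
The plan is to exhibit the dual equivalence as a pair of contravariant functors $(\cdot)^{\mathsf{b}}\colon \textbf{FiltRelPoss}\to\textbf{BAO}$ and $(\cdot)_{\mathsf{g}}\colon \textbf{BAO}\to\textbf{FiltRelPoss}$, together with natural isomorphisms $\eta\colon \mathrm{Id}_{\textbf{FiltRelPoss}}\Rightarrow (\cdot)_{\mathsf{g}}\circ(\cdot)^{\mathsf{b}}$ and $\varepsilon\colon \mathrm{Id}_{\textbf{BAO}}\Rightarrow (\cdot)^{\mathsf{b}}\circ(\cdot)_{\mathsf{g}}$, following exactly the template of Theorem~\ref{FiltPossDuality} for the non-modal case and extending it to keep track of the operators. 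First I would check that $(\cdot)^{\mathsf{b}}$ is well-defined on morphisms: given a p-morphism $h\colon\mathcal{F}\to\mathcal{F}'$, the map $Z'\mapsto h^{-1}[Z']$ sends $P'$ into $P$ by the definition of p-morphism, preserves $\neg$ and $\cap$ by the non-modal half of the argument, and preserves each $\Box_i$ precisely because of conditions \RForth{} and \pRBack{}: one shows $h^{-1}[\Box_i Z']=\Box_i h^{-1}[Z']$ by the standard back-and-forth calculation familiar from p-morphisms in modal logic (cf.~\cite[Prop.~3.14]{Blackburn2001}), using $\pRBack$ for the inclusion $\subseteq$ and $\RForth$ for $\supseteq$. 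Functoriality of $(\cdot)^{\mathsf{b}}$ is then immediate.

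Next I would verify $(\cdot)_{\mathsf{g}}$ on morphisms. Given a BAO-homomorphism $\phi\colon\mathbb{B}\to\mathbb{B}'$, define $\phi_{\mathsf{g}}\colon \mathsf{PropFilt}(\mathbb{B}')\to\mathsf{PropFilt}(\mathbb{B})$ by $\phi_{\mathsf{g}}(F')=\phi^{-1}[F']$; one checks this is a proper filter, that it satisfies \SqForth{} and \pSqBack{} with respect to $\supseteq$ (this is the non-modal part, already in the proof of Theorem~\ref{FiltPossDuality}), that $\RForth$ and $\pRBack$ hold with respect to the relations $R_i$ defined in Proposition~\ref{BAOstoFrames} using $\Box_i$-preservation of $\phi$, and that $(\phi_{\mathsf{g}})^{-1}[\widehat{a}]=\widehat{\phi(a)}\in\{\widehat{b}\mid b\in B'\}$ so admissible sets pull back to admissible sets. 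Functoriality is routine. That $\mathbb{B}_{\mathsf{g}}$ is genuinely a filter-descriptive relational possibility frame is Propositions~\ref{BAOstoFrames}(1),(3) together with the characterization just above (filter-descriptive $\Leftrightarrow$ isomorphic to $(\mathcal{F}^{\mathsf{b}})_{\mathsf{g}}$, which also gives the filter-realization, separation, and $R$-tightness properties).

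For the unit and counit: $\varepsilon_{\mathbb{B}}\colon\mathbb{B}\to(\mathbb{B}_{\mathsf{g}})^{\mathsf{b}}$ is the isomorphism $a\mapsto\widehat{a}$ from Proposition~\ref{BAOstoFrames}(3), and I would check it is a BAO-isomorphism (the Boolean part is the non-modal case; $\widehat{\Box_i a}=\Box_{R_i}\widehat{a}$ follows from the definition $FR_iF'\Leftrightarrow(\forall a)(\Box_i a\in F\Rightarrow a\in F')$ by a short filter computation). Naturality of $\varepsilon$ in $\mathbb{B}$ is a diagram chase. The unit $\eta_{\mathcal{F}}\colon\mathcal{F}\to(\mathcal{F}^{\mathsf{b}})_{\mathsf{g}}$ sends $x\in S$ to the proper filter $\{U\in P\mid x\in U\}$ in $\mathcal{F}^{\mathsf{b}}$; that this is a well-defined p-morphism and an isomorphism is exactly the content of ``$\mathcal{F}$ is filter-descriptive iff $\mathcal{F}\cong(\mathcal{F}^{\mathsf{b}})_{\mathsf{g}}$'' — separation gives injectivity, filter realization gives surjectivity, $R$-tightness is what makes $\RForth$/$\pRBack$ hold in both directions — and naturality is again a chase. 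I expect the main obstacle to be the modal clauses of the back-and-forth checks for morphisms, specifically proving $h^{-1}[\Box_i Z']=\Box_i h^{-1}[Z']$ and its dual for $\phi_{\mathsf{g}}$: here one must use $R$-tightness to recover the relation $R_i$ from the operator $\Box_i$ and then invoke \pRBack{} / \RForth{} at the right point, and it is easy to mis-handle the interaction between $R_i$ and $\sqsubseteq$ (e.g.\ needing $\Rdown$ or the regular-open closure of $R_i(x)$ when a witness lands strictly below the target). Everything else is a faithful modal decoration of the proof of Theorem~\ref{FiltPossDuality}; indeed, as the excerpt itself indicates, ``the proof is an easy extension of the proof of Theorem~\ref{FiltPossDuality} taking into account the accessibility relations and normal modalities.''
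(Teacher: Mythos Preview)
Your proposal is correct and follows exactly the approach one would expect: it is the standard Stone-type duality argument, decorating the non-modal proof of Theorem~\ref{FiltPossDuality} with the modal back-and-forth conditions, precisely as the paper does in its proof sketch for the analogous neighborhood result (\textbf{FiltNeighPoss} $\simeq$ \textbf{BAE}$^{\mathrm{op}}$). Note that the paper itself provides no proof of Theorem~\ref{FiltRelPossThm}, deferring instead to \cite{Holliday2018}; your sketch fills in what that deferred proof would look like, and your identification of the key verification---that \RForth{} and \pRBack{} yield $h^{-1}[\Box_i Z']=\Box_i h^{-1}[Z']$, and that $R$-tightness is what makes $\eta_{\mathcal{F}}$ an isomorphism on the relational part---is on target.
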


\begin{remark} Theorem \ref{FiltRelPossThm} is a choice-free possibility semantic analogue of Goldblatt's \cite{Goldblatt1974} result that the category of descriptive world frames is dually equivalent to \textbf{BAO}. In \cite{Holliday2018} it is also shown that \textbf{FiltRelPoss} is a reflective subcategory of the category \textbf{Poss} of \textit{all} relational possibility frames together with maps called \textit{possibility morphisms}. This is an analogue of Goldblatt's \cite{Goldblatt2006b} result that the category of descriptive world frames is a reflective subcategory of the category of all general world frames with \textit{modal maps}.\end{remark}

Building on \S~\ref{TopFrameSection1},  general relational possibility frames can be related to topological relational possibility frames by using the admissible propositions to generate a topology. Filter-descriptive relational possibility frames then correspond to ``modal UV-spaces'' \cite{BH2018}.

\subsection{Functional frames}\label{FunctionalPoss}

The move from worlds to possibilities makes possible a move from accessibility relations to accessibility functions. Suppose $\mathcal{F}=(S,\sqsubseteq, P, \{R_i\}_{i\in I})$ is a possibility frame for $\mathcal{L}(I)$ such that for each  $i\in I$ and $x\in S$, there is an $f_i(x)\in S$ such that \[R_i(x)=\mathord{\downarrow}f_i(x),\]
recalling that $\mathord{\downarrow}y=\{y'\in S\mid y'\sqsubseteq y\}$. Then we may replace the relations $R_i$ by functions $f_i$, obtaining a functional frame $\mathcal{F}=(S,\sqsubseteq, P, \{f_i\}_{i\in I})$,  so that the truth clause for $\Box_i$ becomes:
\[\mathcal{M},x\Vdash \Box_i \varphi\mbox{ iff }\mathcal{M},f_i(x)\Vdash \varphi.\]
This idea was developed in \cite{Holliday2014} (cf.~\cite{HM2021}). For a reading of $\Box_i$ as a belief/knowledge modality, $f_i(x)$ represents ``the world as agent $i$ in $x$ believes/knows it to be,'' which may of course be partial. As observed in \cite{Holliday2018}, full functional possibility frames correspond to complete $\mathcal{T}$-BAOs as in \S~\ref{VtoFrame}, which we saw were equivalent to complete $\mathcal{V}$-BAOs. Hence we may always switch from a full relational possibility frames to a full functional possibility frame realizing the same~BAO.

For functional frames, we have the following elegant analogues of the conditions on the interaction of $R_i$ and $\sqsubseteq$ from \S~\ref{RelationalFrameSection}:
\begin{itemize}
\item \textbf{$f$-persistence}: if $x'\sqsubseteq x$, then $f_i(x')\sqsubseteq f_i(x)$;
\item \textbf{$f$-refinability}: if $y\sqsubseteq f_i(x)$, then $\exists x'\sqsubseteq x$ $\forall x'\sqsubseteq x$  $f_i(x'')\comp y$.
\end{itemize}

\begin{example} The full possibility frame constructed in the proof of Proposition \ref{NoOmniscience} may be viewed as a functional possibility frame.
\end{example}

\subsection{First-order modal logic}\label{QuantModalSection}

In this section, we combine possibility semantics for first-order logic (\S~\ref{FOSection}) and possibility semantics for modal logic (\S~\ref{ModalSection}) to obtain possibility semantics for first-order modal logic. This was done first by Harrison-Trainor \cite{HT2019}, though our setup below will be slightly different. The language is defined just as in first-order logic but with the extra inductive clause that if $\varphi$ is a formula, so is $\Box_i\varphi$ for any $i\in I$. 

In addition to adding accessibility relations (or neighborhood functions) to the first-order possibility models of Definition \ref{FOmodel} to interpret the $\Box_i$, we add a \textit{varying domain} function for the usual reasons that varying domains are allowed in first-order modal logic (see \cite[\S~4.7]{Fitting1998}). As a result, the base logic is a \textit{free logic} instead of classical first-order logic, as $\forall x\varphi \to \varphi^x_t$ is no longer valid and must be replaced by \[(\forall x\varphi\wedge \exists x \,t=x)\to \varphi^x_t.\]

 For simplicity, here we only consider full frames in which all regular open downsets are admissible propositions.

\begin{definition} A (\textit{full}) \textit{first-order relational possibility frame} is a tuple \\ $\mathbf{F}=(S,\sqsubseteq, D,\asymp, d,\{R_i\}_{i\in I})$  where:
\begin{enumerate}
\item $(S,\sqsubseteq)$ is a poset;
\item $D$ is a nonempty set;
\item $\asymp$ is a function assigning to each $s\in S$ an equivalence relation $\asymp_s$ on $D$ as in Definition \ref{FOmodel};
\item $d$ is a function assigning to each $s\in S$ a subset $d(s)\subseteq D$ satisfying:
\item[$\bullet$] \textit{persistence} for $d$: if $a\in d(s)$, $s'\sqsubseteq s$, and $a\asymp_{s'} b$, then $b\in d(s')$;
\item[$\bullet$] \textit{refinability} for $d$: if $a\in D\setminus d(s)$, then $\exists s'\sqsubseteq s$ $\forall s''\sqsubseteq s'$  $a\not\in d(s'')$.
\item $(S,\sqsubseteq, \mathcal{RO}(S,\sqsubseteq), \{R_i\}_{i\in I})$ is a full relational possibility frame as in Definition~\ref{RellPossFrame}.
\end{enumerate}
A \textit{first-order relational world frame} is a first-order relational possibility frame in which $\sqsubseteq$ is the identity relation.

A \textit{first-order relational possibility model} is a tuple \[\mathbf{M}=(S,\sqsubseteq, D,\asymp, d,\{R_i\}_{i\in I},V)\] where $(S,\sqsubseteq, D,\asymp, d,\{R_i\}_{i\in I})$ is a first-order relational possibility frame and \\ $(S,\sqsubseteq, D, \asymp, V)$ is a first-order possibility model as in Definition \ref{FOmodel}.
\end{definition}

\begin{definition}We define the satisfaction relation $\mathbf{M},s\Vdash_g\varphi$ as in Definition \ref{FOPossSat} but with a modified quantifier clause and the additional modal clause:
\begin{itemize}
\item $\mathbf{M},s\Vdash_g \forall x \varphi$ iff for all $a\in d(s)$, $\mathbf{M},s\Vdash_{g[x:=a]} \varphi$.
\item $\mathbf{M},s\Vdash_g \Box_i \varphi$ iff for all $s'\in R_i(s)$, $\mathbf{M},s'\Vdash_g\varphi$.
\end{itemize}
\end{definition}

In addition to satisfaction, we have the usual notion of frame validity from modal logic.

\begin{definition} Given a first-order relational possibility frame $\mathbf{F}$ and formula $\varphi$, we say that $\varphi$ is \textit{valid on $\mathbf{F}$} if for every model $\mathbf{M}$ based on $\mathbf{F}$, every possibility $s$ in $\mathbf{F}$, and every variable assignment $g$, we have  $\mathbf{M},s\Vdash_g\varphi$.
\end{definition}

In the first-order setting, we have some of the simplest world-incompleteness results. Consider a first-order modal language with a single modality $\Box$ and a single unary predicate $Q$. Let $\mathsf{E}(x)$ (``$x$ exists'') abbreviate the formula $\exists x' \, x'=x$, where $x'$ is a variable distinct from $x$. Now consider the following principle:
\begin{equation}Q(c)\to \exists x\Box (\mathsf{E}(x)\leftrightarrow Q(c)).\tag{\textsc{Fact}}\label{Fact}\end{equation}
Intuitively, (\ref{Fact}) says that if $c$ has property $Q$, then there exists an $x$---namely \textit{the fact that $c$ has property $Q$}---such that necessarily, $x$ exists iff $c$ has property $Q$. This principle was considered for first-order modal logic by Fine \cite[p.~88]{Fine1982}. It can be traced back to Bertrand Russell's logical atomism, as explained in \cite{Klement2020}:

\begin{quote}
The simplest propositions in the language of \textit{Principia Mathematica} are what Russell there called ``elementary propositions'', which take forms such as ``$a$ has quality $q$'', ``$a$ has relation [in intension] $R$ to $b$'', or ``$a$ and $b$ and $c$ stand in relation $S$'' (PM, 43-44). Such propositions consist of a simple predicate, representing either a quality or a relation, and a number of proper names. According to Russell, such a proposition is true when there is a corresponding fact or complex, composed of the entities named by the predicate and proper names related to each other in the appropriate way. E.g., the proposition ``$a$ has relation $R$ to $b$'' is true if there exists a corresponding complex in which the entity $a$ is related by the relation $R$ to the entity $b$. If there is no corresponding complex, then the proposition is false.
\end{quote}

Any world frame validating (\ref{Fact}) validates the additional principle, also considered by Fine \cite[p.~88]{Fine1982}, that there is a single fact---the ``world fact''---whose existence entails the existence of everything else:
\begin{equation}\exists x\forall y\Box(\mathsf{E}(x)\to \mathsf{E}(y)).\tag{\textsc{World}}\label{World}\end{equation}
Yet (\ref{World}) is not a theorem of the minimal free first-order modal logic containing as axioms (\ref{Fact}) and the axioms of \textsf{S5} for $\Box$. Both of these claims are shown by the following.

\begin{proposition}\label{WorldFact}$\,$\textnormal{
\begin{enumerate}
\item\label{WorldFact1} Any first-order relational world frame that validates (\ref{Fact}) also validates  (\ref{World}).
\item\label{WorldFact2} There are first-order relational possibility frames in which $R$ is the universal relation that validate (\ref{Fact}) but not (\ref{World}).
\end{enumerate}}
\end{proposition}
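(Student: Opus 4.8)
The plan is to handle part~\ref{WorldFact1} by exploiting ``world propositions'' and part~\ref{WorldFact2} by a direct construction exploiting partiality; part~\ref{WorldFact2} will be the real work.

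For part~\ref{WorldFact1}, fix a first-order relational world frame $\mathbf{F}$ validating (\ref{Fact}), a model $\mathbf{M}$ on $\mathbf{F}$, a world $w$, and a variable assignment $g$. Since (\ref{World}) contains neither the constant $c$ nor the predicate $Q$, its truth at $w$ is unaffected if we pass to the model $\mathbf{M}'$ agreeing with $\mathbf{M}$ except that $c$ and $Q$ are reinterpreted so that $\|Q(c)\|^{\mathbf{M}'}=\{w\}$: concretely, let $c$ denote a fixed $a_0\in D$ everywhere and put $V'(Q,w)=[a_0]_{\asymp_w}$ and $V'(Q,w')=\varnothing$ for $w'\neq w$, which satisfies persistence and refinability for $Q$ trivially since $\sqsubseteq$ is the identity. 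Then $\mathbf{M}',w\Vdash_g Q(c)$, so validity of (\ref{Fact}) yields $a\in d(w)$ with $\mathbf{M}',w\Vdash_{g[x:=a]}\Box(\mathsf{E}(x)\leftrightarrow Q(c))$. Unwinding the clauses for $\Box$ and $\leftrightarrow$ (Lemma~\ref{DerivedClauses}), and noting that by persistence of $d$, in a world frame $\mathbf{M}',w'\Vdash_{g[x:=a]}\mathsf{E}(x)$ iff $a\in d(w')$, this says: for every $w'$ with $wRw'$, $a\in d(w')$ iff $w'=w$. I would then check that this $a$ witnesses (\ref{World}) at $w$: for any $b\in d(w)$ and any $w'$ with $wRw'$, if $a\in d(w')$ then $w'=w$ and hence $b\in d(w)=d(w')$, so $\mathbf{M}',w'\Vdash_{g[x:=a][y:=b]}\mathsf{E}(x)\to\mathsf{E}(y)$; therefore $\mathbf{M}',w\Vdash_{g[x:=a]}\forall y\,\Box(\mathsf{E}(x)\to\mathsf{E}(y))$ and $\mathbf{M}',w\Vdash_g(\ref{World})$. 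By $V$-independence $\mathbf{M},w\Vdash_g(\ref{World})$, and since $\mathbf{M},w,g$ were arbitrary, $\mathbf{F}$ validates (\ref{World}).

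For part~\ref{WorldFact2}, the plan is to exhibit a full first-order relational possibility frame $\mathbf{F}=(S,\sqsubseteq,D,\asymp,d,\{R\})$ with $R$ the universal relation on $S$ -- so that $\Box Z\in\{\varnothing,S\}\subseteq\mathcal{RO}(S,\sqsubseteq)$ for every $Z$, the frame is well defined, and $\Box$ is the universal modality (validating the \textsf{S5} axioms) -- built over a poset $(S,\sqsubseteq)$ with no worlds, together with $D$, $\asymp$, and $d$ chosen so that (\ref{Fact}) is valid while (\ref{World}) fails at some possibility $s_0$. Using Lemma~\ref{FORO} and the derived clauses, validity of (\ref{Fact}) reduces to the requirement that, for every regular open $\Phi$ and every $s\in\Phi$, there be a ``fact'' $a\in d(s)$ whose existence-proposition $\{t\in S\mid a\in d(t)\}$ equals $\Phi$ (as the valuation varies, $\|Q(c)\|$ ranges over all regular open sets), while failure of (\ref{World}) at $s_0$ reduces to the existence of a refinement $s'\sqsubseteq s_0$ at which no $a\in d(s')$ has its existence-proposition contained in that of every object in the domain of $s'$ (equivalently, of some refinement of $s'$). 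I would build $S$ from a tree, with $\asymp$ identifying guises along branches, and define $d$ so that the ``local facts'' required by (\ref{Fact}) are present at each possibility but are merged by $\asymp$ in such a way that $d(s')$ does not contain a single ``world fact'', and then read off validity of (\ref{Fact}) and refutation of (\ref{World}) at $s_0$ from the computed truth-conditions.

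The main obstacle is part~\ref{WorldFact2}: arranging the equivalence relation $\asymp$ and the varying domain $d$ so that ``every true proposition has a witnessing fact'' (needed for (\ref{Fact})) is compatible with the failure of ``there is a world fact whose existence entails everything'' (needed to refute (\ref{World})). The world-frame argument of part~\ref{WorldFact1} shows exactly why this cannot be done when $\sqsubseteq$ is trivial -- there a world proposition $\{w\}$ forces a world fact -- so the construction must put essential weight on partiality, i.e.\ on possibilities that settle existential claims about facts without settling which fact obtains. Getting the bookkeeping of domains along the tree to make both conditions hold simultaneously is the delicate point; once the frame is in hand, verifying the two semantic assertions is a routine, if tedious, unwinding of Definition~\ref{FOPossSat} and Lemma~\ref{FORO}.
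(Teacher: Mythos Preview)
Your part~\ref{WorldFact1} is correct and matches the paper's argument exactly.

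For part~\ref{WorldFact2}, you have correctly identified the semantic targets: with $R$ universal, validity of (\ref{Fact}) amounts to the condition that for every regular open $\Phi$ and every $s\in\Phi$ there is an $a\in d(s)$ whose existence-proposition $\{t:a\in d(t)\}$ is $\Phi$; and failure of (\ref{World}) amounts to there being no $a$ whose existence-proposition is contained in that of every object in the domain (at all refinements). But you then stop short of the construction, proposing instead to ``identify guises along branches'' via $\asymp$ and calling the bookkeeping ``delicate.'' This is where you miss the key idea.

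The paper's construction is almost a one-liner and involves no merging via $\asymp$ at all. Take $(S,\sqsubseteq)=2^{<\omega}$, set $D=\mathcal{RO}(S,\sqsubseteq)$, let each $\asymp_s$ be the \emph{identity}, and define $d(s)=\{Z\in\mathcal{RO}(S,\sqsubseteq)\mid s\in Z\}$. Persistence and refinability for $d$ are then immediate from the fact that each $Z$ is a regular open downset. The existence-proposition of the object $Z$ is $\{s:Z\in d(s)\}=\{s:s\in Z\}=Z$ itself, so your sufficient condition for (\ref{Fact}) is satisfied trivially by taking $a:=\Phi$. For (\ref{World}), note that a witness $a=Z\in d(s)$ would need $Z\subseteq Z'$ for every $Z'\in d(s)$, i.e., $Z$ would have to be a minimal nonempty regular open containing $s$; but $\mathcal{RO}(2^{<\omega},\sqsubseteq)$ is atomless, so no such $Z$ exists at any $s$, and (\ref{World}) fails everywhere. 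The ``objects are propositions'' trick eliminates all the bookkeeping you were bracing for.
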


\begin{proof} For part \ref{WorldFact1}, suppose (\ref{Fact}) is valid on a world frame $\mathbf{F}$. Define a model $\mathbf{M}$ based on $\mathbf{F}$ where $Q(c)$ is true only at a single world $w$. Then since (\ref{Fact}) is valid, it follows that there is an $a\in d(w)$ such that for all $v\in R(w)$, if $v\neq w$, then $a\not\in d(v)$. Thus, $\mathsf{E}(x)$ is true only at $w$ relative to the relevant variable assignment mapping $x$ to $a$. It follows that $a$ is a witness to the truth of (\ref{World}) at $w$.

For part \ref{WorldFact2}, we define a first-order relational possibility frame based on the full infinite binary tree $2^{<\omega}$ regarded as a poset $(2^{<\omega},\sqsubseteq)$ as in Examples \ref{BinaryTree0} and \ref{BinaryTree0b}. Let $D$, the domain of objects, be $\mathcal{RO}(S,\sqsubseteq)$, the set of propositions. For each $s\in 2^{<\omega}$, let $\asymp_s$ be the identity relation, and let $d(s)$ be the set of all sets from $\mathcal{RO}(S,\sqsubseteq)$ to which $s$ belongs. Then note that \textit{persistence} and \textit{refinability}  for $d$ both hold. Finally, let $R$ be the universal relation on $2^{<\omega}$. Then we have defined a first-order relational possibility frame. Note that for any proposition $Z\in  \mathcal{RO}(S,\sqsubseteq)$, there is an object in $D$, namely $Z$ itself, such that for all $s\in 2^{<\omega}$, \[Z\in d(s)\mbox{ iff }s\in Z.\] It follows that (\ref{Fact}) is valid on the frame. But since $\mathcal{RO}(S,\sqsubseteq)$ is atomless, it is easy to see that (\ref{World}) is not valid on the frame.
\end{proof}

\begin{remark} In part \ref{WorldFact1}, we can more generally consider first-order \textit{monotonic neighborhood} world frames \cite{Arlo-Costa2006} and obtain the same result that any such frame that validates (\ref{Fact}) validates (\ref{World}), which yields the following.
\end{remark}
\begin{corollary}\label{FOincomp} \textnormal{There is a first-order modal logic that is the logic of a first-order relational possibility frame but not of any class of first-order monotonic neighborhood world frames.}
\end{corollary}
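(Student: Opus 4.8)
The plan is to deduce Corollary~\ref{FOincomp} directly from Proposition~\ref{WorldFact} together with the strengthened version of part~\ref{WorldFact1} noted in the preceding Remark. First I would fix the specific first-order relational possibility frame $\mathbf{F}$ constructed in the proof of Proposition~\ref{WorldFact}.\ref{WorldFact2}---namely the one based on the full infinite binary tree $2^{<\omega}$, with $D=\mathcal{RO}(2^{<\omega},\sqsubseteq)$, each $\asymp_s$ the identity, $d(s)=\{Z\in\mathcal{RO}(2^{<\omega},\sqsubseteq)\mid s\in Z\}$, and $R$ the universal relation. Let $\mathsf{L}$ be the first-order modal logic of $\mathbf{F}$, i.e., the set of formulas valid on $\mathbf{F}$. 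By Proposition~\ref{WorldFact}.\ref{WorldFact2}, $(\textsc{Fact})\in\mathsf{L}$ but $(\textsc{World})\notin\mathsf{L}$.

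Next I would argue that $\mathsf{L}$ is not the logic of any class $\mathsf{K}$ of first-order monotonic neighborhood world frames. Suppose for contradiction that $\mathsf{L}$ were the set of formulas valid on every frame in such a class $\mathsf{K}$. Since $(\textsc{Fact})\in\mathsf{L}$, every frame in $\mathsf{K}$ validates $(\textsc{Fact})$. By the strengthened form of Proposition~\ref{WorldFact}.\ref{WorldFact1} mentioned in the Remark---any first-order monotonic neighborhood world frame validating $(\textsc{Fact})$ also validates $(\textsc{World})$---every frame in $\mathsf{K}$ validates $(\textsc{World})$, so $(\textsc{World})$ is valid on the whole class $\mathsf{K}$ and hence $(\textsc{World})\in\mathsf{L}$. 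This contradicts $(\textsc{World})\notin\mathsf{L}$. Therefore no such class $\mathsf{K}$ exists, which is exactly the statement of the corollary.

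The only genuine gap to fill is the strengthened version of Proposition~\ref{WorldFact}.\ref{WorldFact1} for monotonic neighborhood world frames, since the proof given in the excerpt is phrased for relational world frames. I would reprove it by re-running the same argument: given a first-order monotonic neighborhood world frame $\mathbf{F}$ validating $(\textsc{Fact})$ and a world $w$, take a model $\mathbf{M}$ with $Q(c)$ true only at $w$; validity of $(\textsc{Fact})$ at $w$ yields $a\in d(w)$ such that $\Box(\mathsf{E}(x)\leftrightarrow Q(c))$ holds at $w$ under the assignment $x\mapsto a$; because $\pi(Q(c))=\{w\}$ is a singleton and, by monotonicity, the neighborhood $N(w)$ contains every superset of any of its members, one checks that $\llbracket \mathsf{E}(x)\rrbracket = \{w\}$ relative to this assignment, so that for every $v$ with $vRv$-accessible-equivalent (more precisely, for every world in the relevant neighborhood sense) the existence of $a$ forces the existence of every object in $d(w)$; hence $a$ witnesses $(\textsc{World})$ at $w$. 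The main obstacle will be stating this monotonic-neighborhood argument carefully enough---in particular pinning down the correct truth clause for $\Box$ in the first-order monotonic neighborhood setting (citing \cite{Arlo-Costa2006}) and verifying that the singleton-proposition trick together with monotonicity still delivers the ``world object,'' just as the ``world proposition'' $\{w\}$ does in the relational case. Everything else is a formal deduction from results already in the excerpt.
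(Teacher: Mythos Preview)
Your overall approach is exactly the paper's: take the frame from Proposition~\ref{WorldFact}.\ref{WorldFact2}, let $\mathsf{L}$ be its logic, and use the Remark's strengthening of part~\ref{WorldFact1} to derive a contradiction from any putative class of monotonic neighborhood world frames for $\mathsf{L}$. The paper gives no further detail than this.

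One correction to your sketch of the neighborhood argument: from $\Box(\mathsf{E}(x)\leftrightarrow Q(c))$ holding at $w$, i.e., $\llbracket \mathsf{E}(x)\leftrightarrow Q(c)\rrbracket\in N(w)$, you \emph{cannot} conclude $\llbracket \mathsf{E}(x)\rrbracket=\{w\}$; neighborhood membership says nothing about the global extension of $\mathsf{E}(x)$. The right route is purely via monotonicity: since $\llbracket \mathsf{E}(x)\leftrightarrow Q(c)\rrbracket\subseteq \llbracket \mathsf{E}(x)\to Q(c)\rrbracket$, monotonicity gives $\llbracket \mathsf{E}(x)\to Q(c)\rrbracket\in N(w)$; and since $\llbracket Q(c)\rrbracket=\{w\}\subseteq \llbracket \mathsf{E}(y)\rrbracket$ for every $b\in d(w)$ (with $y\mapsto b$), we have $\llbracket \mathsf{E}(x)\to Q(c)\rrbracket\subseteq \llbracket \mathsf{E}(x)\to \mathsf{E}(y)\rrbracket$, so monotonicity again yields $\llbracket \mathsf{E}(x)\to \mathsf{E}(y)\rrbracket\in N(w)$. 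This is the careful statement you anticipated needing.
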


The theory of possibility semantics for first-order modal logic is still in its infancy. Any logic complete with respect to first-order world frames is immediately complete with respect to first-order possibility frames, but it remains to systematically study first-order modal logics that are possibility complete but world incomplete.

\section{Further connections and directions}

In this section, we briefly cover some connections between possibility semantics and other frameworks (for more connections, see \cite[\S~8.1]{Holliday2018}), namely interval semantics for temporal logic (\S~\ref{Intervals}) and Kripke semantics for bimodal logics (\S~\ref{Bimodal}).  We then turn to two non-classical generalizations of possibility semantics, namely possibility semantics for intuitionistic logic (\S~\ref{IntuitionisticSection}) and inquisitive logic (\S~\ref{InquisitiveSection}). As noted in \S~\ref{IntroSection}, another impossibility non-classical generalization is possibility semantics for orthologic, for which we refer the reader to \cite{HM2021}.

\subsection{Interval semantics}\label{Intervals}

Humberstone's \cite{Humberstone1981} original development of possibility semantics for modal logic was inspired by \textit{interval semantics} for temporal logic (see \cite{Humberstone1979}, \cite[Ex.~4.1.12]{Humberstone2015}). In such semantics, the truth of a  formula of temporal logic is defined not with respect to an \textit{instant} of time, as in the usual semantics (see, e.g.,  \cite{Goranko2020}), but with respect to an \textit{interval} of time. For influential early developments of interval semantics, see  \cite{Humberstone1979,Kamp1979,Roper1980,Benthem1981}. We will briefly compare one version of interval semantics, due to R\"{o}per  \cite{Roper1980}, to possibility semantics for temporal logic.

Given a relational possibility frame as in \S~\ref{RelationalFrameSection}, call each $x\in S$ a \textit{stretch} of time, and take $x\sqsubseteq y$ to mean that $x$ is a \textit{substretch} of $y$. R\"{o}per \cite{Roper1980} argued that propositions about ``states and processes'' should correspond to regular downsets in $(S,\sqsubseteq)$. The main difference between R\"{o}per's approach and one using relational possibility frames concerns the accessibility relation used to interpret temporal modalities. In possibility semantics for temporal logic, we adopt the following reading of $xRy$:
\[xRy\mbox{ iff  the stretch $x$ begins before the stretch $y$ does}.\]
We claim that under this interpretation of $R$, the conditions \upR{}, \Rdown{}, \Rref{}, and \Rdense{} from \S~\ref{RelationalFrameSection} all make good sense, not only for the specific temporal poset in Example \ref{TempEx} but more generally for any poset of temporal stretches.

We then define ``$\varphi$ is true throughout the stretch $x$'' with our usual clauses, including those for $\Box$ and $\Diamond$ (in frames satisfying \Rdown{}), now read as ``it will always be that\dots'' and ``it will be sometime be that\dots'':
\begin{itemize}
\item $\mathcal{M},x\Vdash \Box\varphi$ iff  $\forall y\in S$, if $xRy$ then $\mathcal{M},y\Vdash\varphi$.
\item $\mathcal{M},x\Vdash\Diamond\varphi$ iff $\forall x'\sqsubseteq x$ $\exists y\in S$: $x'Ry$ and $\mathcal{M},y\Vdash\varphi$.
\end{itemize}

Instead of working with the relation $xRy$ interpreted to mean ``$x$ begins before $y$,'' R\"{o}per, like many authors in the literature on interval semantics, works with a relation $>$ interpreted as follows:
\[x>y\mbox{ iff ``$x$ wholly precedes $y$.''}\]  
Now the following principle, which yields \textit{persistence} for modal formulas in possibility semantics, holds for $Q=R$ but not for $Q=\,>$:
\[\mbox{if $x'\sqsubseteq x$ and $x'Qy'$, then $xQy'$}.\]
(For $R$, this is just \upR{}.) Thus, R\"{o}per is forced to build downward persistence into his semantic clauses for $\Box$ and $\Diamond$ that use $>$:
\begin{itemize}
\item $\mathcal{M},x\Vdash \Box\varphi$ iff $\forall x'\sqsubseteq x$  $\forall y\in S$, if $x'>y$ then $\mathcal{M},y\Vdash\varphi$.
\item $\mathcal{M},x\Vdash \Diamond\varphi$ iff $\forall x'\sqsubseteq x$ $\exists x''\sqsubseteq x'$ $\exists y\in S$: $x''>y$ and $\mathcal{M},y\Vdash\varphi$.
\end{itemize}
Thus, by working with $>$ instead of $R$, R\"{o}per needs one more quantifier in each clause. However, the two approaches can be related by assuming the following:
\[x>y\mbox{ iff }\forall x'\sqsubseteq x\; x'Ry,\]
i.e., $x$ wholly precedes $y$ iff every substretch of $x$ begins before $y$ does. Indeed, in R\"{o}per's canonical model construction in \cite{Roper1980}, he defines $x>y$ in precisely this way from the canonical relation that we would take to be $R$.

There is much more than this brief sketch to say about the relation between possibility semantics and interval semantics. We refer the interested reader to van Benthem's textbook \cite{Benthem1983a} as a starting point for systematic comparison.

\subsection{A bimodal perspective}\label{Bimodal}

Given a full relational possibility frame $(S,\sqsubseteq, \mathcal{RO}(S,\sqsubseteq),R)$ for a unimodal language, we may regard the triple $(S,\sqsubseteq,R)$ as a \textit{bimodal Kripke frame}, or in possibilistic terms, as a bimodal full world frame  $(S,=,\wp(S),\{\sqsubseteq, R\})$ with $\sqsubseteq$ now thought of as an accessibility relation for a modality $\Box_\sqsubseteq$ alongside the modality $\Box_R$, and the poset $(S,=)$ now being discrete. Corresponding to this semantic transformation is the following syntactic translation:
\begin{itemize}
\item $\mathrm{p}(p)=\Box_\sqsubseteq\Diamond_\sqsubseteq p$;
\item $\mathrm{p}(\neg\varphi)=\Box_\sqsubseteq\neg \mathrm{p}(\varphi)$;
\item $\mathrm{p}(\varphi\wedge\psi)=\mathrm{p}(\varphi)\wedge \mathrm{p}(\psi)$;
\item $\mathrm{p}(\Box\varphi)=\Box_R\mathrm{p}(\varphi)$.
\end{itemize}
This \textit{bimodal perspective} on possibility semantics is the topic of \cite{Benthem2017}, where it is shown to lead to interesting translations between some well-known unimodal logics and bimodal logics that can be seen as dynamic topological logics.

\subsection{Intuitionistic case}\label{IntuitionisticSection}\label{IntuitionisticSection}

In this section, we briefly sketch the generalization of possibility semantics for intuitionistic logic. A much more thorough investigation of these ideas appears in \cite{B&H2019}, which studies a hierarchy of semantics for intuitionistic logic.

Recall that algebraic semantics for intuitionistic logic is given using Heyting algebras. A Heyting algebra $H$ may be defined as a bounded distributive lattice with a binary operation $\to$ that is a residual of $\wedge$, i.e., such that for all $a,b,x\in H$,
\[a\wedge x\leq b\mbox{ iff }x\leq a\to b.\]
A complete Heyting algebra is a Heyting algebra that is complete as a lattice. Complete Heyting algebras, also called \textit{locales} or \textit{frames} in pointfree topology,\footnote{The use of `frame' in both modal logic and pointfree topology for  different kinds of objects is unfortunate but now well established.} can be equivalently defined as complete lattices $L$ satisfying the join-infinite distributive law \cite[p.~128]{Birkhoff1967} stating that for all $a\in L$ and $B\subseteq L$,
\[a\wedge \bigvee B= \bigvee \{a\wedge b\mid b\in B\}.\label{JoinLaw}\]
Given any poset $(S,\sqsubseteq)$, the collection $\mathsf{Down}(S,\sqsubseteq)$ of all downward closed sets, ordered by inclusion, forms a locale. More generally, given any topological space $X$, the collection $\Omega(X)$ of all open sets of $X$, ordered by inclusion, forms a locale. These two facts are the basis for the Kripke frame semantics \cite{Dummett1959,Kripke1963b,Grzegorczyk1964,Kripke1965} and topological semantics \cite{Sto37,Tarski1938} for intuitionistic logic, respectively. However, only special locales arise in these ways, as shown by the following well-known results (see, e.g., \cite[Prop.~1.1]{Davey1979} for part 1 and \cite[Prop.~5.3]{Picado2012} for part 2).

\begin{proposition}\label{SpecialLocales}\textnormal{Let $L$ be a lattice.
\begin{enumerate}
\item $L$ is isomorphic to $\mathsf{Down}(S,\sqsubseteq)$ for a poset $(S,\sqsubseteq)$ iff $L$ is a locale in which every element is a join of completely join-prime elements (where an element $p\neq 0$ is completely join-prime if $p\leq \bigvee A$ implies $p\leq a$ for some $x\in A$).
\item $L$ is isomorphic to $\Omega(X)$ for some space $X$ iff $L$ is a locale in which every element is a meet of meet-prime elements (where an element $p\neq 1$ is meet-prime if $a\wedge b\leq p$ implies that $a\leq p$ or $b\leq p$).
\end{enumerate}}
\end{proposition}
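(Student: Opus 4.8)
The statement is a pair of classical representation theorems, and both directions should follow from standard facts about locales together with the two constructions mentioned just before the proposition. I will treat the two parts separately, and within each part verify the ``only if'' (every $\mathsf{Down}(S,\sqsubseteq)$, resp.\ $\Omega(X)$, has the stated property) and the ``if'' (any locale with the property is of the required form) directions.

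For part 1, the plan is as follows. First, for the ``only if'' direction, given a poset $(S,\sqsubseteq)$, recall that $\mathsf{Down}(S,\sqsubseteq)$ ordered by inclusion is a locale (arbitrary unions and intersections of downsets are downsets, and the join-infinite distributive law holds because joins are unions and meets are intersections in a powerset). The key observation is that each principal downset $\mathord{\downarrow}x$ is completely join-prime: if $\mathord{\downarrow}x\subseteq\bigcup_{i\in I}U_i$ with each $U_i$ a downset, then $x\in U_i$ for some $i$, hence $\mathord{\downarrow}x\subseteq U_i$. Since every downset $U$ equals $\bigcup\{\mathord{\downarrow}x\mid x\in U\}$, every element of $\mathsf{Down}(S,\sqsubseteq)$ is a join of completely join-prime elements. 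For the ``if'' direction, suppose $L$ is a locale in which every element is a join of completely join-prime elements. Let $S$ be the set of completely join-prime elements of $L$ with $\sqsubseteq$ the restriction of the lattice order $\leq$ (so $p\sqsubseteq q$ iff $p\leq q$). Define $\varphi:L\to\mathsf{Down}(S,\sqsubseteq)$ by $\varphi(a)=\{p\in S\mid p\leq a\}$, which is a downset of $(S,\sqsubseteq)$. One checks $\varphi$ is order-preserving and order-reflecting (the latter using that $a=\bigvee\{p\in S\mid p\leq a\}$ by hypothesis, so $\varphi(a)\subseteq\varphi(b)$ forces $a\leq b$), hence injective; surjectivity follows because for any downset $U\subseteq S$, $\varphi(\bigvee U)=U$ (the inclusion $U\subseteq\varphi(\bigvee U)$ is clear, and conversely if $p\in S$ and $p\leq\bigvee U$ then complete join-primeness and $U$ being a downset give $p\in U$). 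Since $\varphi$ is an order-isomorphism of lattices it is a lattice isomorphism. I would cite \cite[Prop.~1.1]{Davey1979} here as indicated, so the above can be compressed to a sketch.

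For part 2, the argument is dual in spirit but uses topological spaces rather than posets. For ``only if'': given a space $X$, $\Omega(X)$ is a locale, and each element $X\setminus\overline{\{x\}}$ (the complement of the closure of a point) is meet-prime in $\Omega(X)$ --- indeed $U\cap V\subseteq X\setminus\overline{\{x\}}$ means $x\notin U$ or $x\notin V$, i.e.\ $U\subseteq X\setminus\overline{\{x\}}$ or $V\subseteq X\setminus\overline{\{x\}}$ --- and every open set is the intersection of all such sets containing it. (This last point is where $T_0$-ness is \emph{not} needed; one gets enough meet-primes regardless.) For ``if'': given a locale $L$ in which every element is a meet of meet-prime elements, let $X$ be the set of meet-prime elements topologized by declaring, for each $a\in L$, the set $U_a=\{p\in X\mid a\not\leq p\}$ to be open; one verifies these form a topology (using that the meet-primes are exactly the elements not above which one can separate via $\wedge$, and that $U_a\cap U_b=U_{a\wedge b}$, $\bigcup_i U_{a_i}=U_{\bigvee_i a_i}$, the latter using the join-infinite distributive law) and that $a\mapsto U_a$ is a locale isomorphism $L\to\Omega(X)$, with injectivity coming from the hypothesis that every element is a meet of meet-primes. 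Again I would defer to \cite[Prop.~5.3]{Picado2012} and merely indicate the construction.

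\textbf{Main obstacle.} Neither direction is genuinely hard --- these are textbook facts --- but the step requiring the most care is showing that the candidate map is an isomorphism of \emph{lattices} (preserving arbitrary joins, and in part 2 also finite meets as a map of frames), rather than merely an order-isomorphism; one must use the representability hypothesis (every element a join of completely join-primes, resp.\ a meet of meet-primes) precisely at the point of proving surjectivity and order-reflection, and in part 2 one must invoke the join-infinite distributive law to see that $\bigcup_i U_{a_i}=U_{\bigvee_i a_i}$. Since the proposition is explicitly attributed to \cite{Davey1979} and \cite{Picado2012}, the cleanest writeup is a brief sketch of the two constructions with the verifications left to those references.
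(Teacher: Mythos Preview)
The paper does not actually prove this proposition; it simply records it as a well-known result with citations to \cite[Prop.~1.1]{Davey1979} for part~1 and \cite[Prop.~5.3]{Picado2012} for part~2. Your sketch is correct and is precisely the standard argument one finds in those references, so your proposal is strictly more detailed than what the paper provides. One minor correction: in part~2, the equality $\bigcup_i U_{a_i}=U_{\bigvee_i a_i}$ does not require the join-infinite distributive law---it follows directly from the definition of join (if $a_i\leq p$ for all $i$, then $\bigvee_i a_i\leq p$); the distributive law is what ensures $\Omega(X)$ is a locale in the first place, and in the ``if'' direction it is already given as a hypothesis on $L$.
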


Just as classical possibility semantics seeks to represent all complete Boolean algebras, not just atomic ones, intuitionistic possibility semantics seeks to represent all locales, not just the special ones in Proposition \ref{SpecialLocales}. This is achieved by equipping a poset with an additional datum. To motivate the key definition (Definition \ref{NucFrames}), we first recall relevant notions and results (cf.~\cite{Fourman1979}).

\begin{definition} A \textit{nucleus} on a Heyting algebra $H$ is a unary function \\ ${j:H\to H}$ such that for all $a,b\in H$:
\begin{enumerate}
\item $a\leq ja$ (increasing);
\item $jja\leq ja$ (idempotent);
\item  $j(a\wedge b)=ja\wedge jb$ (multiplicative).
\end{enumerate}
A nucleus $j$ is \textit{dense} if $j0=0$.

A \textit{nuclear algebra} is a pair $(H,j)$ of a Heyting algebra $H$ and a nucleus $j$ on~$H$.
\end{definition}

\begin{example}\label{NucEx} For any Heyting algebra $H$, the operation $j_{\neg\neg}$ defined by \\ $j_{\neg\neg}(a)=\neg\neg a$ is a nucleus, the \textit{double negation nucleus}. In the case of a poset $(S,\sqsubseteq)$, the nucleus $j_{\neg\neg}$ on $\mathsf{Down}(S,\sqsubseteq)$ is given by:
\[ j_{\neg\neg}Z=\{x\in S\mid \forall x'\sqsubseteq x\,\exists x''\sqsubseteq x':x''\in Z\}.\]
Another nucleus $j_B$ on $\mathsf{Down}(S,\sqsubseteq)$, the \textit{Beth nucleus}, is defined by:
\[j_B Z=\{x\in S\mid \mbox{every maximal chain in $(S,\sqsubseteq)$ containing $x$ intersects }Z\}.\]
We return to the connection between these nuclei and possibility semantics below.
\end{example}

For our purposes, the key point about nuclei is that the fixpoints of a nucleus on a Heyting algebra again form a Heyting algebra, as in the following well known-theorem (see, e.g., \cite{Macnab1981} or \cite[p.~71]{Dragalin1988}), where part 3 gives an algebraic version of Glivenko's Theorem (see, e.g., \cite[p.~134]{RS63}).

\begin{theorem}\label{FixpointThm} \textnormal{For any nuclear algebra $(H,j)$, consider the collection \[H_j=\{a\in H\mid ja=a\}\] of fixpoints of $j$.
\begin{enumerate}
\item $H_j$ is a Heyting algebra, called the \textit{algebra of fixpoints} in $(H,j)$, under the following operations for $a,b\in H_j$: 
\begin{eqnarray*}
0_j&=&j0;\\
a\wedge_j b&=& a\wedge b;\\
a\vee_j b &=& j(a\vee b);\\
a\to_j b &=& a\to b. 
\end{eqnarray*}
\item If $H$ is a locale, then so is $H_j$, where $\bigwedge_j A = \bigwedge A$ and $\bigvee_j A = j(\bigvee A)$ for $A\subseteq H_j$.
\item\label{FixpointThm3} If $j$ is the nucleus of double negation, then $H_j$ is a Boolean algebra.
\end{enumerate}}
 \end{theorem}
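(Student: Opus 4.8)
\textbf{Proof proposal for Theorem \ref{FixpointThm}.}

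The plan is to verify the three parts in sequence, establishing part 1 by checking the Heyting algebra axioms directly from the nucleus axioms, then deriving parts 2 and 3 as refinements. For part 1, the first task is to confirm that $H_j$ is closed under the stated operations. Closure under $\wedge$ is immediate from multiplicativity: if $ja=a$ and $jb=b$, then $j(a\wedge b)=ja\wedge jb=a\wedge b$. Closure under $\vee_j$ and $0_j$ is immediate since $j(a\vee b)$ and $j0$ are fixpoints of $j$ by idempotence. Closure under $\to_j$ requires showing $j(a\to b)=a\to b$ when $b$ is a fixpoint; I would prove $j(a\to b)\leq a\to b$ by residuation, i.e.\ show $a\wedge j(a\to b)\leq b$: since $a\leq ja$ and $j$ is multiplicative, $a\wedge j(a\to b)\leq ja\wedge j(a\to b)=j(a\wedge(a\to b))\leq jb=b$. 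The reverse inequality is the increasing property. Next, one checks that $(H_j,\wedge_j,\vee_j,0_j)$ is a bounded lattice with top $1$ (note $j1=1$ since $1\leq j1$) --- the lattice laws for $\wedge$ are inherited from $H$, and for $\vee_j$ one uses that $j$ is a closure operator so $j(a\vee b)$ is the least fixpoint above both $a$ and $b$. Distributivity of $H_j$ follows from distributivity of $H$ together with the fact that $j$ preserves finite meets. Finally, the residuation law $a\wedge_j x\leq b \iff x\leq a\to_j b$ for $a,b,x\in H_j$ reduces to the residuation law in $H$ because $\wedge_j=\wedge$ and $\to_j=\to$, using that $x\leq a\to b$ automatically gives $x\leq j(a\to b)$ --- but $j(a\to b)=a\to b$ by the closure computation above, so no issue arises.

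For part 2, assuming $H$ is a locale (complete, satisfying the join-infinite distributive law), I would show $H_j$ is complete with $\bigwedge_j A=\bigwedge A$ and $\bigvee_j A=j(\bigvee A)$, then verify the join-infinite distributive law in $H_j$. Completeness of meets: $\bigwedge A$ is a fixpoint when each $a\in A$ is, by an infinitary version of multiplicativity --- here I would argue $j(\bigwedge A)\leq j a = a$ for each $a\in A$ (using monotonicity of $j$, which follows from multiplicativity), hence $j(\bigwedge A)\leq\bigwedge A$, and the reverse is the increasing law. The join $\bigvee_j A=j(\bigvee A)$ is the least fixpoint above all of $A$ since $j$ is a closure operator. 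For the join-infinite distributive law in $H_j$: for $a\in H_j$ and $B\subseteq H_j$, I want $a\wedge_j\bigvee_j B=\bigvee_j\{a\wedge_j b\mid b\in B\}$, i.e.\ $a\wedge j(\bigvee B)=j(\bigvee\{a\wedge b\mid b\in B\})$. The direction $\geq$ is monotonicity. For $\leq$: since $H$ is a locale and residuation holds, $a\wedge j(\bigvee B)\leq c$ iff $j(\bigvee B)\leq a\to c$; taking $c=j(\bigvee\{a\wedge b\mid b\in B\})$, it suffices to show $\bigvee B\leq a\to c$, i.e.\ $a\wedge b\leq c$ for each $b\in B$, which holds since $a\wedge b\leq\bigvee\{a\wedge b'\mid b'\in B\}\leq c$. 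This uses that $a\to c$ is a fixpoint (by part 1's closure argument, as $c$ is a fixpoint), so $j(\bigvee B)\leq a\to c$ follows from $\bigvee B\leq a\to c$.

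For part 3, when $j=j_{\neg\neg}$, I would show every fixpoint is complemented in $H_j$, i.e.\ $a\vee_j\neg a=1$ and $a\wedge_j\neg a=0_j$ for $a\in H_j$ (noting $\neg a=a\to 0$ is always a fixpoint of $j_{\neg\neg}$, and $0_j=\neg\neg 0=0$). The meet identity $a\wedge\neg a=0$ is a Heyting algebra law. The join identity requires $j_{\neg\neg}(a\vee\neg a)=1$, equivalently $\neg\neg(a\vee\neg a)=1$, equivalently $\neg(a\vee\neg a)=0$; but $\neg(a\vee\neg a)=\neg a\wedge\neg\neg a=0$, which is a Heyting algebra law. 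Together with distributivity from part 1, this makes $H_j$ a Boolean algebra. The main obstacle I anticipate is not any single hard step but rather the care needed in part 1 to show $\to_j$ lands in $H_j$ and behaves as a genuine residual --- the subtlety is that $\to_j$ is defined as the ambient $\to$ rather than a relativized operation, so one must verify this coincides with the residual of $\wedge_j$ in the sublattice, which is exactly where the closure-operator identity $j(a\to b)=a\to b$ for fixpoint $b$ does the work. Once that identity is in hand, parts 2 and 3 are comparatively routine applications of locale distributivity and elementary Heyting-algebra identities respectively.
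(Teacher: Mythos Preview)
The paper does not actually prove this theorem: it is stated as a well-known result with references to Macnab and Dragalin, and no proof is given in the text. Your proposal is a correct and standard verification of all three parts, following essentially the approach one finds in those references. The key technical point you identify---that $a\to b$ is already a $j$-fixpoint whenever $b$ is, via the computation $a\wedge j(a\to b)\leq j(a\wedge(a\to b))\leq jb=b$---is exactly the lemma that makes the residuation law in $H_j$ go through and that drives the join-infinite distributive law in part 2. Your argument for part 3 using $\neg(a\vee\neg a)=\neg a\wedge\neg\neg a=0$ is the standard one.
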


It is easy to see in light of part \ref{FixpointThm3} that classical possibility semantics is based on taking the double negation fixpoints in the Heyting algebra $\mathsf{Down}(S,\sqsubseteq)$ of downsets of a poset. By switching the nucleus from double negation to some other non-Boolean nucleus, we obtain an intuitionistic generalization.

\begin{definition}\label{NucFrames} A \textit{nuclear frame} is a triple $(S,\sqsubseteq,j)$ such that $(S,\sqsubseteq)$ is a poset and $j$ is a nucleus on $\mathsf{Down}(S,\sqsubseteq)$. The nuclear frame is \textit{dense} if $j$ is dense.
\end{definition}

Then any nuclear frame gives us a locale $\mathsf{Down}(S,\sqsubseteq)_j$. As explained in \cite{Dragalin1979,Dragalin1988,B&H2019}, Beth semantics for intuitionistic logic \cite{Beth1956}, which predated Kripke semantics, can be seen as a special case of this construction. Crucially, as shown by Dragalin \cite[p.~75]{Dragalin1979,Dragalin1988}, \textit{every} locale can be represented as $\mathsf{Down}(S,\sqsubseteq)_j$ for some nuclear frame, in contrast to Proposition \ref{SpecialLocales}. We include a proof, presented as in \cite{BH2016}, to show its connection with the analogous Theorem \ref{FirstThm}.\ref{FirstThm2} in the Boolean case.

\begin{theorem}[\cite{Dragalin1979}]\label{DragThm} \textnormal{A lattice $L$ is a locale iff $L$ is isomorphic to $\mathsf{Down}(S,\sqsubseteq)_j$ for some dense nuclear frame $(S,\sqsubseteq,j)$.}
\end{theorem}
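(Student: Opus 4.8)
The ``if'' direction follows immediately from Theorem~\ref{FixpointThm}: if $(S,\sqsubseteq,j)$ is a nuclear frame, then $\mathsf{Down}(S,\sqsubseteq)$ is a locale (being the open-set lattice of the Alexandroff space of the poset, or directly a complete lattice satisfying the join-infinite distributive law), so by part~2 of Theorem~\ref{FixpointThm} the algebra of fixpoints $\mathsf{Down}(S,\sqsubseteq)_j$ is again a locale. So the content is in the ``only if'' direction, which is a representation theorem exactly parallel to Theorem~\ref{FirstThm}.\ref{FirstThm2}.

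For the ``only if'' direction, given a locale $L$, the plan is to take the poset $(S,\sqsubseteq)$ to be $(L_+,\leq_+)$ — that is, $L$ with its bottom element deleted, ordered by the restriction of $\leq$ — exactly as in Theorem~\ref{FirstThm}.\ref{FirstThm2}, and then define a nucleus $j$ on $\mathsf{Down}(L_+,\leq_+)$ whose fixpoints are precisely the principal downsets $\mathord{\downarrow}_+ a = \{b\in L_+ \mid b\leq_+ a\}$ for $a\in L$. The natural candidate is the closure operator associated with the map sending a downset $Z$ to $\mathord{\downarrow}_+(\bigvee Z)$; concretely, set
\[
 jZ = \{b\in L_+ \mid b \leq \bigvee Z\}.
\]
First I would check that $j$ is a nucleus on $\mathsf{Down}(L_+,\leq_+)$: it is increasing (if $b\in Z$ then $b\leq\bigvee Z$) and idempotent (since $\bigvee jZ = \bigvee Z$, using that $\bigvee Z$ itself lies in the set of upper bounds), and the multiplicativity $j(Z\cap W)=jZ\cap jW$ is exactly where the join-infinite distributive law of the locale $L$ is used — this mirrors the appeal to the join-infinite distributive law in the proof sketch of Theorem~\ref{FirstThm}.\ref{FirstThm2}, where one argues that if $b'\wedge a_i = 0$ for all $i$ then $b'\wedge\bigvee a_i = 0$. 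Then I would verify that $jZ = Z$ iff $Z = \mathord{\downarrow}_+ a$ for $a = \bigvee Z$: the fixpoints are clearly downsets of the form $\mathord{\downarrow}_+ a$, and conversely each $\mathord{\downarrow}_+ a$ is a fixpoint since $\bigvee \mathord{\downarrow}_+ a = a$. Finally, the map $\varphi\colon L\to \mathsf{Down}(L_+,\leq_+)_j$ given by $\varphi(a) = \mathord{\downarrow}_+ a$ is an order-isomorphism onto the fixpoint locale: it is injective and order-preserving in both directions by definition of $\leq_+$, and surjective because every fixpoint has the displayed form. Density of $j$ amounts to $j\varnothing = \varnothing$, i.e.\ $\{b\in L_+\mid b\leq \bigvee\varnothing\} = \{b\in L_+\mid b\leq 0\} = \varnothing$, which holds since $L_+$ excludes $0$; alternatively $j0_L$-computations give the bottom of the fixpoint locale as $\varnothing$, which matches $\varphi(0_L)$.

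To finish, I would confirm that $\varphi$ is not merely an order-isomorphism but a locale isomorphism, i.e.\ that it respects the lattice operations computed in $\mathsf{Down}(L_+,\leq_+)_j$ as in Theorem~\ref{FixpointThm}: $\varphi(a\wedge b) = \mathord{\downarrow}_+ a \cap \mathord{\downarrow}_+ b$ is immediate, and $\varphi(\bigvee A) = j(\bigcup_{a\in A}\mathord{\downarrow}_+ a)$ follows since $\bigvee \bigcup_{a\in A}\mathord{\downarrow}_+ a = \bigvee A$; but in fact any order-isomorphism between complete lattices is automatically a lattice isomorphism, so this step is essentially free once the order-isomorphism is established. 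The main obstacle — really the only nontrivial point — is the verification that $j$ is multiplicative, which is precisely the place the locale structure (as opposed to mere completeness) is indispensable; everything else is bookkeeping closely modeled on the Boolean case already carried out in the proof sketch of Theorem~\ref{FirstThm}.\ref{FirstThm2} and in Proposition~\ref{MacNeilleProp}.
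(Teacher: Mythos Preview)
Your proposal is correct and follows essentially the same approach as the paper: both take $(S,\sqsubseteq)=(L_+,\leq_+)$, define $jZ=\mathord{\downarrow}_+\bigvee Z$, verify that $j$ is a dense nucleus (with multiplicativity being the one place the join-infinite distributive law is needed), and conclude via the order-isomorphism $a\mapsto\mathord{\downarrow}_+a$ onto the fixpoint algebra. The paper's proof is slightly terser but structurally identical, including the same identification of density via $j\varnothing=\varnothing$.
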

\begin{proof} The right-to-left direction is given by Theorem \ref{FixpointThm}. For the left-to-right, as in Theorem \ref{FirstThm}.\ref{FirstThm2}, let $L_+$ be $L\setminus\{0\}$ and $\leq_+$ the restriction to $L_+$ of the lattice order $\leq$ of $L$. Define a unary function $j$ on $\mathsf{Down}(L_+,\leq_+)$ by 
\begin{equation} jX=\mathord{\downarrow}\bigvee X, \label{jeq}\end{equation}
where $\bigvee X$ is the join of $X$ in $L$, and $\mathord{\downarrow}$ indicates the downset in $(L_+,\leq_+)$. It is easy to see that $j$ is inflationary, idempotent, and that $\mathord{\downarrow} \bigvee (X\cap Y)\subseteq (\mathord{\downarrow}\bigvee X)\cap (\mathord{\downarrow}\bigvee Y)$ for $X,Y\in \mathsf{Down}(S,\sqsubseteq)$. To see that  $\mathord{\downarrow} \bigvee (X\cap Y)\supseteq (\mathord{\downarrow}\bigvee X)\cap (\mathord{\downarrow}\bigvee Y)$, suppose that $a\in L_+$ is in the right-hand side, so $a\leq \bigvee X$ and $a\leq \bigvee Y$, whence $a\leq (\bigvee X)\wedge (\bigvee Y)$. By the join-infinite distributive law for locales,
\[(\bigvee X)\wedge (\bigvee Y) = \bigvee \{x\wedge y\mid x\in X, y\in Y\},\]
so $a\leq \bigvee \{x\wedge y\mid x\in X, y\in Y\}$. Since $X$ and $Y$ are downsets, we have $\{x\wedge y\mid x\in X, y\in Y\}\subseteq (X\cap Y)\cup\{0\}$, so $\bigvee \{x\wedge y\mid x\in X, y\in Y\}\leq \bigvee ((X\cap Y) \cup\{0\}) = \bigvee (X\cap Y)$. Thus, $a\leq \bigvee (X\cap Y)$ and hence $a\in \mathord{\downarrow} \bigvee (X\cap Y)$. Therefore, $j$ is a nucleus. To see that $j$ is dense, observe that $j\varnothing = \mathord{\downarrow}\bigvee \varnothing=\mathord{\downarrow} 0=\varnothing$ since $0\not\in S$.

Finally, we must check that $L$ is isomorphic to $\mathsf{Down}(L_+,\leq_+)_j$. Observe that the elements of $\mathsf{Down}(L_+,\leq_+)_j$ are exactly the principal downsets in $(L_+,\leq_+)$ plus $\varnothing$. Thus, the map sending each $x$ to $\mathord{\downarrow} x$ is the desired isomorphism.\end{proof}

The next step is to realize $j$ in some more concrete way. For example, \cite{BH2016} considers \textit{Dragalin frames}, originating in \cite{Dragalin1979,Dragalin1988}, which are triples $(S,\sqsubseteq, D)$ where $D$ is a certain function from which a nucleus $j_D$ of $\mathsf{Down}(S,\sqsubseteq)$ can be defined. It is then shown in \cite{BH2016} that for every nuclear frame $(S,\sqsubseteq,j)$, there is a Dragalin frame $(S,\sqsubseteq,D)$ (based on the same poset) such that $j=j_D$. Thus, by Theorem \ref{DragThm}, Dragalin frames can be used to represent all locales.  

Another approach, originating in \cite{Fairtlough1997}, works with triples $(S,\sqsubseteq, \leqslant)$ where $\leqslant$ is another partial order (or preorder) and an operation $j_{\Box_\sqsubseteq\Diamond_\leqslant}$ is defined by
\[j_{\Box_\sqsubseteq\Diamond_\leqslant}Z=\{x\in S\mid \forall x'\sqsubseteq x\,\exists x''\leqslant x': x''\in Z\}.\]
A sufficient (but not necessary) condition for $j_{\Box_\sqsubseteq\Diamond_\leqslant}$ to be a nucleus on $\mathsf{Down}(S,\sqsubseteq)$ is that $\leqslant$ is a subrelation of $\sqsubseteq$. Such frames are called FM-frames after Fairtlough and Mendler \cite{Fairtlough1997}. For a short proof of the following, see \cite[Thm.~4.33]{B&H2019}.

\begin{theorem}[\cite{BH2016,Massas2016}] \textnormal{For every locale $L$, there is an FM-frame $(S,\sqsubseteq, \leqslant)$ such that $L$ is isomorphic to $\mathsf{Down}(S,\sqsubseteq)_{j_{\Box_\sqsubseteq\Diamond_\leqslant}}$.}
\end{theorem}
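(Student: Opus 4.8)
The statement to prove is that for every locale $L$, there is an FM-frame $(S,\sqsubseteq,\leqslant)$ with $L\cong \mathsf{Down}(S,\sqsubseteq)_{j_{\Box_\sqsubseteq\Diamond_\leqslant}}$. The natural plan is to reuse the construction from the proof of Theorem \ref{DragThm} (the Boolean-case analogue being Theorem \ref{FirstThm}.\ref{FirstThm2}), namely take $S=L_+=L\setminus\{0\}$ with $\sqsubseteq$ the restriction of the lattice order, and then concoct a subrelation $\leqslant$ of $\sqsubseteq$ whose associated nucleus $j_{\Box_\sqsubseteq\Diamond_\leqslant}$ coincides with the nucleus $jX=\mathord{\downarrow}\bigvee X$ used there. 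So the first step is to guess the right $\leqslant$: the obvious candidate is to let $\leqslant$ be equality, i.e.\ $x\leqslant y$ iff $x=y$ (which is trivially a subrelation of $\sqsubseteq$, hence $j_{\Box_\sqsubseteq\Diamond_\leqslant}$ is a nucleus), since with $\leqslant\,=\,=$ we get $j_{\Box_\sqsubseteq\Diamond_=}Z=\{x\mid \forall x'\sqsubseteq x\,\exists x''= x': x''\in Z\}=\{x\mid \forall x'\sqsubseteq x\ x'\in Z\}=Z$ for downsets $Z$ — that is the identity, which is \emph{not} the nucleus we want unless $L$ is already a complete Heyting algebra of downsets. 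So equality is wrong, and I would instead look for a genuinely larger relation.

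The better candidate, mirroring the known fact (cited just before the theorem, and proved in \cite{BH2016}) that Dragalin frames suffice, is to pull the FM-frame off the Dragalin frame or off the nucleus directly. Concretely, the second step is: given the dense nuclear frame $(L_+,\leq_+,j)$ with $jX=\mathord{\downarrow}\bigvee X$, define $x\leqslant y$ iff $x\leq_+ y$ and there is no ``gap'', or more usefully, iff $x\in j(\mathord{\downarrow}_+\!\{y\})$ in a way that forces the fixpoints to come out right — but since $\mathsf{Down}(L_+,\leq_+)_j$ consists exactly of the principal downsets $\mathord{\downarrow}_+ a$ together with $\varnothing$, what we need is that $j_{\Box_\sqsubseteq\Diamond_\leqslant}$ fixes precisely these sets. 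The cleanest route is to invoke the already-stated result that every nuclear frame $(S,\sqsubseteq,j)$ has a Dragalin frame $(S,\sqsubseteq,D)$ with $j=j_D$, and then show every $j_D$ arising this way is an $j_{\Box_\sqsubseteq\Diamond_\leqslant}$ for a suitable subrelation $\leqslant$; alternatively, and more self-containedly, I would unfold the Dragalin $D$-function for the specific $j$ above and read off $\leqslant$. For $jX=\mathord{\downarrow}\bigvee X$ on $(L_+,\leq_+)$, one checks that $x\in jX$ iff $x\leq\bigvee X$ iff (by the join-infinite distributive law) $x=\bigvee\{x\wedge y: y\in X\}$; this suggests setting $x\leqslant y$ iff $y\leq_+ x$ — wait, direction matters — iff $y$ is one of the ``pieces'' below $x$, i.e.\ $x\leqslant y \iff y\leq_+ x$ is backwards; the correct reading, since $\leqslant$ must be a subrelation of $\sqsubseteq$ (which is $\leq_+$), is $x\leqslant y \iff x\leq_+ y$ \emph{and} $x$ is ``join-accessible'' from $y$. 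I would define $x\leqslant y$ iff $x\leq_+ y$ simply — testing this: $j_{\Box_\sqsubseteq\Diamond_\leqslant}Z=\{x\mid\forall x'\leq_+ x\,\exists x''\leq_+ x': x''\in Z\}$, and for $Z$ a downset this is $\{x\mid \forall x'\leq_+ x\,\exists x''\leq_+ x': x''\in Z\}$, which for $Z=\mathord{\downarrow}_+ a$ gives $\{x\mid \forall x'\leq_+ x\ \mathord{\downarrow}_+x'\cap\mathord{\downarrow}_+a\neq\varnothing\}=\{x\mid \forall x'\leq_+ x\ x'\wedge a\neq 0\}$, i.e.\ $\{x\mid x\wedge a$ is dense below $x\}$; this is generally strictly larger than $\mathord{\downarrow}_+a$, so $\leqslant\,=\,\leq_+$ is also wrong. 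The relation must therefore be strictly between equality and $\leq_+$, and pinning it down is the crux.

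\textbf{Key steps, in order.} (1) Set $S=L_+$, $\sqsubseteq\,=\,\leq_+$. (2) Define the candidate subrelation $\leqslant$ of $\sqsubseteq$; the principled choice is $x\leqslant y \iff x\leq_+ y$ and for all $a\in L$, if $x\leq a$ then $y\leq a$ fails to help — rather, I would use: $x\leqslant y$ iff $x\leq_+ y$ and $x$ lies in every $j$-closed downset containing $y$, equivalently (since $j$-closed downsets are the principal ones) iff $y\leq_+ z$ implies $x\leq_+ z$ for the relevant witnesses — effectively taking $\leqslant$ to encode ``$x$ refines $y$ tightly''. The actually-working definition, which I expect to recover from the Dragalin-frame construction of \cite{BH2016} specialized to $jX=\mathord\downarrow\bigvee X$, is: $x\leqslant y$ iff $x\leq_+ y$; then the $D$-function of the Dragalin frame assigns to $x$ the collection of its down-covers, and the content of the \cite{BH2016} representation is precisely that one may flatten $D$ to a single subrelation when $j$ is of this ``principal'' form. (3) Verify $\leqslant\,\subseteq\,\sqsubseteq$, so $j_{\Box_\sqsubseteq\Diamond_\leqslant}$ is a nucleus on $\mathsf{Down}(S,\sqsubseteq)$ by the sufficient condition stated for FM-frames. (4) Prove $j_{\Box_\sqsubseteq\Diamond_\leqslant}=j$ by showing both have the same fixpoints: show $j_{\Box_\sqsubseteq\Diamond_\leqslant}(\mathord\downarrow_+ a)=\mathord\downarrow_+ a$ for every $a\in L$ and $j_{\Box_\sqsubseteq\Diamond_\leqslant}\varnothing=\varnothing$, using the join-infinite distributive law exactly as in the proof of Theorem \ref{DragThm}, and conversely that every $j_{\Box_\sqsubseteq\Diamond_\leqslant}$-fixpoint is of this form. (5) Conclude $\mathsf{Down}(S,\sqsubseteq)_{j_{\Box_\sqsubseteq\Diamond_\leqslant}}=\mathsf{Down}(L_+,\leq_+)_j\cong L$ via the isomorphism $a\mapsto\mathord\downarrow_+ a$ established in Theorem \ref{DragThm}.

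\textbf{Main obstacle.} The hard part is step (2)–(4): identifying a \emph{single} subrelation $\leqslant$ of $\sqsubseteq$ on $L_+$ that reproduces the nucleus $jX=\mathord\downarrow\bigvee X$, rather than just a Dragalin development function. My fallback, and probably the cleanest write-up, is not to reinvent this but to cite the already-stated reductions: Theorem \ref{DragThm} gives a dense nuclear frame on $(L_+,\leq_+)$; the result from \cite{BH2016} that every nuclear frame admits a Dragalin frame on the same poset with the same nucleus; and then a lemma (also in the \cite{BH2016}/\cite{B&H2019} circle of ideas) that the Dragalin frames arising from nuclei of the special form $X\mapsto\mathord\downarrow\bigvee X$ can be replaced by FM-frames — the subrelation being recoverable as $x\leqslant y$ iff $\mathord\downarrow_+ x$ belongs to $D(y)$ in a chosen Dragalin representation, checked to be contained in $\sqsubseteq$. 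Assembling these three cited facts yields the theorem; the only genuinely new verification is that the flattening of $D$ to $\leqslant$ does not change the induced nucleus, which reduces again to the join-infinite distributive law.
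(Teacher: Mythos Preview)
The paper does not actually prove this theorem in the text; it simply refers the reader to \cite[Thm.~4.33]{B&H2019} for ``a short proof.'' So there is no in-paper argument to compare against, only your attempt to assess on its own.

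Your exploration correctly isolates the crux---finding the second relation $\leqslant$---and your two direct trials are instructive failures: $\leqslant\,=\,{=}$ collapses $j_{\Box_\sqsubseteq\Diamond_\leqslant}$ to the identity, while $\leqslant\,=\,\leq_+$ gives the double-negation nucleus, which on $L_+$ need not coincide with $jX=\mathord{\downarrow}\bigvee X$ (your counterexample with $\{x\mid x\wedge a$ dense below $x\}\supsetneq\mathord{\downarrow}_+a$ is exactly right). But after these two failures you never produce a working candidate on $(L_+,\leq_+)$; the ``principled choices'' you sketch are circular (``$x$ lies in every $j$-closed downset containing $y$'' just rephrases $x\leq_+ y$, since the $j$-closed downsets are the principal ones) or abandoned mid-sentence.

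Your fallback plan---go through Theorem \ref{DragThm} to get a nuclear frame, invoke the Dragalin representation on the same poset, then ``flatten'' the Dragalin $D$-function to a single subrelation $\leqslant$---has a genuine gap at the last step. You assert this flattening as ``a lemma in the \cite{BH2016}/\cite{B&H2019} circle of ideas,'' but it is precisely the nontrivial content of the theorem, and it is not true that an arbitrary Dragalin development function on a poset can be replaced by a single subrelation $\leqslant\subseteq\sqsubseteq$ giving the same nucleus on that poset. The actual constructions in the cited references do not stay on $(L_+,\leq_+)$: they pass to a different, richer carrier set (the paper hints at this immediately after the theorem, noting the result is ``a special case of a representation of arbitrary complete lattices using doubly ordered structures''). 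Your commitment to $S=L_+$ from step (1) is therefore the wrong first move, not a harmless simplification, and the plan as written does not close.
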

\noindent This representation of locales is a special case of a representation of arbitrary complete lattices using doubly ordered structures \cite{Allwein2001}, which is compared to other representations of complete lattices in \cite{Holliday2021}.

Let us now connect these algebraic issues back to logic. A \textit{superintuitionistic logic} (si-logic) is a set of formulas in the language of propositional logic containing all theorems of the intuitionistic propositional calculus and closed under modus ponens and uniform substitution (see \cite{Chagrov1997}). Every class of Heyting algebras determines an si-logic in the obvious way, and every si-logic is the logic of a Heyting algebra, namely the Lindenbaum-Tarski algebra of the logic. Well-known si-logics are determined by more concrete Heyting algebras, such as those arising from posets (Kripke frames) or topological spaces as in Proposition \ref{SpecialLocales}. However, Shehtman \cite{Shehtman1977,Shehtman1980,Shehtman2005} has shown the following.

\begin{theorem}[\cite{Shehtman1977,Shehtman1980,Shehtman2005}] \textnormal{There is an si-logic that is not the logic of any class of Kripke frames.}
\end{theorem}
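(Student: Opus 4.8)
The plan is to construct an si-logic $\mathsf{L}$ and a Heyting algebra $H$ such that $\mathsf{L}$ is the logic of $H$, but then show that no Kripke frame (equivalently, no Heyting algebra of the form $\mathsf{Down}(S,\sqsubseteq)$, by Proposition \ref{SpecialLocales}.1) validates exactly $\mathsf{L}$. The standard Shehtman-style strategy is to find a single formula $\varphi$ such that $\varphi\notin\mathsf{L}$, yet $\varphi$ is valid on every Kripke frame on which all of $\mathsf{L}$ is valid; this gives $\mathsf{L}\subsetneq\mathsf{L}'$ for any Kripke-complete logic $\mathsf{L}'\supseteq\mathsf{L}$, so $\mathsf{L}$ itself is Kripke incomplete. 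Concretely, I would take $\mathsf{L}$ to be an si-logic axiomatized so as to force any Kripke frame validating it to satisfy a structural ``Jankov--de Jongh''-type condition (e.g.\ bounded width or bounded cardinality of certain antichains, or a condition preventing infinite ascending/descending patterns), while also ensuring, via a carefully chosen algebra $H$ that is \emph{not} of the form $\mathsf{Down}(S,\sqsubseteq)$, that the ``diagnostic'' formula $\varphi$ is refuted by $H$ and hence not in $\mathsf{L}$.

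First I would recall the frame-theoretic machinery: a descriptive/Kripke frame for intuitionistic logic is a poset $(S,\sqsubseteq)$, propositions are downsets, and validity of si-logic axioms corresponds to first-order or second-order conditions on $(S,\sqsubseteq)$ (the Jankov--Fine formulas, the bounded-depth/bounded-width axioms $\mathrm{bd}_n$, $\mathrm{bw}_n$, and the Gabbay--de Jongh formulas being the standard toolkit). The key move is to exploit the fact, captured in Proposition \ref{SpecialLocales}.1, that $\mathsf{Down}(S,\sqsubseteq)$ is a locale in which \emph{every element is a join of completely join-prime elements}. So I would pick $H$ to be a locale (equivalently, a complete Heyting algebra) with \emph{no} completely join-prime elements, or more precisely one whose completely join-prime elements do not join to the top --- the analogue of an atomless Boolean algebra in the classical case, e.g.\ $\mathcal{RO}(2^{<\omega})$ viewed via its Heyting structure, or the open-set locale of $\mathbb{R}$. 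Then I would carve out a finite-axiomatizable (or at least recursively axiomatizable) si-logic $\mathsf{L}$ that $H$ validates but whose ``frame companion'' forces a completely-join-prime element to exist, making $\varphi$ a Kripke validity outside $\mathsf{L}$.

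The key steps, in order: (1) fix the witnessing algebra $H$ and verify it is a Heyting algebra (a locale), invoking Theorem \ref{FixpointThm} if $H$ is realized as an algebra of fixpoints of a nucleus, or Theorem \ref{DragThm} to realize $H$ as $\mathsf{Down}(S,\sqsubseteq)_j$ for a dense nuclear frame; (2) define $\mathsf{L}$ as the logic of $H$ (automatically an si-logic) and identify an explicit axiom schema generating a recursive subset of $\mathsf{L}$; (3) exhibit the diagnostic formula $\varphi$ and prove $H\not\models\varphi$, hence $\varphi\notin\mathsf{L}$, by a direct computation in $H$ using the absence of completely join-prime elements; (4) prove that any Kripke frame $(S,\sqsubseteq)$ with $(S,\sqsubseteq)\models\mathsf{L}$ must, by the chosen axioms, contain a point whose principal downset is completely join-prime and minimal enough to force $(S,\sqsubseteq)\models\varphi$; this is where Proposition \ref{SpecialLocales}.1 does the work, converting the structural conclusion about $(S,\sqsubseteq)$ into validity of $\varphi$; (5) conclude $\mathsf{L}\subsetneq\mathsf{Log}((S,\sqsubseteq))$ for every Kripke frame validating $\mathsf{L}$, so $\mathsf{L}$ is not the logic of any class of Kripke frames.

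\emph{The main obstacle} is step (4) together with the tension between steps (2) and (3): one must design the axioms of $\mathsf{L}$ delicately enough that (a) the target algebra $H$ genuinely validates all of them --- this requires checking each axiom schema against the locale structure of $H$, a nontrivial algebraic verification --- while (b) the axioms are still strong enough that every Kripke frame validating them collapses in a way that forces $\varphi$. In Shehtman's actual constructions this balance is achieved by rather intricate frame conditions (his examples use infinite-depth frames with width and ``width-at-depth'' restrictions, or partitioned antichain conditions); reproducing or adapting such a construction, and in particular proving that the chosen $H$ refutes $\varphi$ but satisfies the (infinitely many) axioms, is the technical heart of the argument. A secondary but real difficulty is ensuring $\varphi\notin\mathsf{L}$ in the first place, i.e.\ that the separating formula is not accidentally derivable from the axioms; this is handled precisely by the algebraic refutation in $H$, which is why choosing a good $H$ up front is the linchpin of the whole plan.
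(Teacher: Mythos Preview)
The paper does not prove this theorem; it is stated as a known result with citations to Shehtman's papers, so there is no ``paper's own proof'' to compare against. That said, your proposal has a genuine gap worth flagging.

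Your plan hinges on choosing a locale $H$ with no completely join-prime elements---you suggest $\mathcal{RO}(2^{<\omega})$ or the open-set locale of $\mathbb{R}$---and taking $\mathsf{L}=\mathrm{Log}(H)$. But for both of these choices, $\mathrm{Log}(H)$ is simply intuitionistic propositional logic $\mathsf{IPC}$: by the McKinsey--Tarski theorem, the intuitionistic logic of any dense-in-itself metrizable space (in particular $\mathbb{R}$) is $\mathsf{IPC}$, and $\mathcal{RO}(2^{<\omega})$, being a Boolean algebra, validates classical logic, whose intersection with si-logics is again $\mathsf{IPC}$ on the intuitionistic fragment (or $\mathsf{CPC}$ if you mean it as a Heyting algebra, which is certainly Kripke complete). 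Either way the resulting logic is Kripke complete, so the argument collapses. The failure here is the implicit assumption that ``$H$ is not isomorphic to any $\mathsf{Down}(S,\sqsubseteq)$'' implies ``$\mathrm{Log}(H)$ is not the logic of any class of Kripke frames.'' That implication is false: most non-Alexandroff topological spaces give algebras outside the range of Proposition~\ref{SpecialLocales}.1, yet their logics are perfectly Kripke complete.

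Shehtman's actual constructions do not start from a naturally occurring algebra and read off its logic. They proceed in the opposite direction: one writes down an explicit axiom system (typically combining bounded-width and bounded-branching schemas in a carefully conflicting way, or transferring a known Kripke-incomplete modal logic above $\mathsf{S4}$ through the Blok--Esakia correspondence) and then exhibits a \emph{general} (descriptive) frame or a non-spatial Heyting algebra that validates the axioms while refuting the diagnostic formula $\varphi$. The algebra is tailored to the axioms, not the other way around, and it is far from a familiar locale like $\Omega(\mathbb{R})$. Your step~(4) outline is the right shape for the incompleteness half, but without a concrete axiom system and a matching refuting algebra, the proposal is a description of the strategy rather than a proof; the specific witnesses you name will not work.
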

\noindent In fact, there are continuum-many such si-logics \cite{Litak2018}.

A famous open problem, posed by Kuznetsov \cite{Kuznetsov1975} in 1975, is whether there are not only Kripke-incomplete but even topologically-incomplete si-logics.

\begin{question}[Kuznetsov's Problem for spaces]\label{K1} Is every si-logic the logic of a class of topological spaces?
\end{question}

We can also consider the analogous question for locales.

\begin{question}[Kuznetsov's Problem for locales]\label{K2} Is every si-logic the logic of a class of  locales?
\end{question}

Given that FM-frames can represent arbitrary locales, one may hope that they or related structures could be used as tools to make progress on Questions \ref{K1}-\ref{K2}. Indeed, Massas \cite{Massas2020} has developed a theory of intuitionistic possibility frames $(S,\sqsubseteq, \leqslant)$ leading to a new kind of si-incompleteness result. We recall that a \textit{complete bi-Heyting algebra} is a locale $L$ whose order dual $L^\partial$ is also a locale (for recent applications of these algebras in topos quantum theory, see \cite{Doering2012,Flori2018}). The following result was also obtained independently in \cite{Bezhanishvili2021} using Esakia duality.

\begin{theorem}[\cite{Bezhanishvili2021,Massas2020}] \textnormal{There is an si-logic that is not the logic of any class of complete bi-Heyting algebras.}\end{theorem}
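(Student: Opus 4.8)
The plan is to exhibit a single si-logic together with a concrete semantic construction that cannot be matched by any complete bi-Heyting algebra, and the natural vehicle for this in the present framework is an intuitionistic possibility frame $(S,\sqsubseteq,\leqslant)$ whose associated locale $\mathsf{Down}(S,\sqsubseteq)_{j_{\Box_\sqsubseteq\Diamond_\leqslant}}$ fails to be a complete bi-Heyting algebra. First I would recall the characterization of complete bi-Heyting algebras: $L$ is complete bi-Heyting iff $L$ is a locale and binary joins distribute over arbitrary meets, equivalently the co-Heyting implication $a \mathbin{\leftslice} b = \bigwedge\{c \mid b \le a \vee c\}$ exists and behaves as a residual of $\vee$. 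The key point is that this is a \emph{second}-order demand on the locale, and the fixpoint locales $\mathsf{Down}(S,\sqsubseteq)_j$ produced by nuclear/FM frames are flexible enough to violate it while still validating a chosen si-logic.

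The core of the argument is to isolate a formula scheme (or finite set of formulas) $\Phi$ in the intuitionistic language whose validity is forced in every complete bi-Heyting algebra by the extra co-distributivity, yet which is not intuitionistically derivable, and then build an FM-frame refuting $\Phi$. A promising candidate family comes from the co-Heyting operation: over a complete bi-Heyting algebra one has, for the locale of its fixpoints, identities corresponding to the \emph{meet}-infinite distributive law, which in logical terms can be packaged as an infinitary or schematic weakening of excluded middle valid over such algebras. I would take the si-logic $\mathsf{L}$ to be the logic of the class of \emph{all} complete bi-Heyting algebras itself (so $\mathsf{L}$ is by definition valid on all of them), and then the task reduces to: (i) show $\mathsf{L}$ is strictly weaker than could be captured --- i.e., $\mathsf{L}$ is a proper si-logic --- and (ii) produce, via Theorem~\ref{DragThm} and the FM-frame representation, a locale $\mathsf{Down}(S,\sqsubseteq)_{j_{\Box_\sqsubseteq\Diamond_\leqslant}}$ validating $\mathsf{L}$ but \emph{not} a complete bi-Heyting algebra, witnessed by the failure of some specific $\mathsf{L}$-consistent formula on that locale. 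The representation theorems quoted in the excerpt guarantee that \emph{every} locale, hence every complete bi-Heyting algebra, arises as such a fixpoint locale, so the class of FM-frame locales is exactly the class of locales; the separating construction must therefore exploit that FM-frames also realize locales that are \emph{not} bi-Heyting.

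Concretely, I would look for an FM-frame $(S,\sqsubseteq,\leqslant)$ on a countable poset --- say a tree-like or interval-like structure as in Examples~\ref{BinaryTree0} and \ref{IntervalEx} --- with $\leqslant$ a suitable subrelation of $\sqsubseteq$ chosen so that $j := j_{\Box_\sqsubseteq\Diamond_\leqslant}$ has infinitely many fixpoints whose joins fail to interact co-distributively. The verification splits into three checks: that $j$ is a nucleus (automatic since $\leqslant\,\subseteq\,\sqsubseteq$), that the fixpoint locale validates the target si-logic $\mathsf{L}$ (this is where one must be careful to pick $\mathsf{L}$ low enough, e.g. intuitionistic logic plus a modest axiom, or just $\mathsf{IPC}$ itself if $\mathsf{IPC}$ is already known to be bi-Heyting-incomplete --- but that would be too strong, since $\mathsf{IPC}$ is complete with respect to Kripke frames which are bi-Heyting, so $\mathsf{L}$ must be chosen properly between $\mathsf{IPC}$ and some upper bound), and that a designated formula refuted on the fixpoint locale is nonetheless $\mathsf{L}$-consistent. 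The main obstacle, and the step I expect to absorb most of the work, is pinning down the right si-logic $\mathsf{L}$ and the right separating formula: one needs a property that \emph{all} complete bi-Heyting algebras share (so it constrains the logic upward) but that is \emph{not} a theorem of the logic one is refuting it against --- essentially one must extract from the order-dual-locale condition a first-order-describable frame constraint on $\leqslant$ and $\sqsubseteq$ and then arrange an FM-frame violating it while keeping the logic fixed. I would structure the proof by first reverse-engineering, from the defining inequality of the co-Heyting residual, a frame correspondent $c$ in the language of $(S,\sqsubseteq,\leqslant)$ such that the fixpoint locale is bi-Heyting iff $(S,\sqsubseteq,\leqslant)$ satisfies $c$; then the remaining task is a combinatorial construction of a frame satisfying the logic-preserving conditions but falsifying $c$, which is where the cited independent Esakia-duality proof of \cite{Bezhanishvili2021} and Massas's frame-theoretic proof \cite{Massas2020} presumably diverge in technique.
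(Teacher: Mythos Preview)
The paper does not actually prove this theorem; it merely cites the result from \cite{Bezhanishvili2021,Massas2020} and indicates that Massas's proof uses intuitionistic possibility frames $(S,\sqsubseteq,\leqslant)$ while the other proof uses Esakia duality. So there is no ``paper's own proof'' to compare against in detail, only a one-line pointer to the approach.

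That said, your proposal contains a genuine logical error that would prevent any version of the argument from going through. You propose to take $\mathsf{L}$ to be ``the logic of the class of all complete bi-Heyting algebras itself.'' But then $\mathsf{L}$ \emph{is} the logic of a class of complete bi-Heyting algebras---namely, the class of all of them---so it cannot witness the theorem. You seem to sense this later when you waver about the choice of $\mathsf{L}$, but the structure of the argument you outline in items (i) and (ii) is still backwards: exhibiting a non--bi-Heyting locale that validates $\mathsf{L}$ says nothing about whether $\mathsf{L}$ is the logic of some class of bi-Heyting algebras. What is actually required is the standard incompleteness template: fix a specific si-logic $\mathsf{L}$ and a formula $\varphi\notin\mathsf{L}$, and show that every complete bi-Heyting algebra validating $\mathsf{L}$ also validates $\varphi$. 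Then the logic of any class of complete bi-Heyting algebras validating $\mathsf{L}$ strictly extends $\mathsf{L}$, so none of them equals $\mathsf{L}$.

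The role of the FM-frame machinery in Massas's approach is not to produce the separating locale you describe, but rather to give a tractable correspondence-theoretic handle on when a fixpoint locale is (or fails to be) bi-Heyting, so that one can analyze which si-logics force bi-Heyting validity of formulas they do not themselves prove. Your closing paragraph gestures at this (``reverse-engineering\ldots a frame correspondent $c$''), but the earlier misidentification of $\mathsf{L}$ and of what must be separated from what means the proposal as written does not constitute a proof sketch.
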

\noindent Indeed, there are continuum-many such si-logics \cite{Bezhanishvili2021}. 

To develop possibility semantics not only for complete Heyting algebras but for arbitrary Heyting algebras, we need an appropriate analogue of ``choice-free Stone duality'' for Heyting algebras. We leave this as an open problem.

\begin{question} What is the most natural analogue for Heyting algebras of UV-spaces for Boolean algebras?
\end{question}

\subsection{Inquisitive case}\label{InquisitiveSection}

So far we have used possibility frames to interpret standard languages, e.g., the language of propositional logic, the language of  modal logic, the language of first-order logic, etc. However, as suggested in \cite{Benthem2016b}, once we make the switch from structures based on a set $W$ to structures based on a poset $(S,\sqsubseteq)$, we may naturally introduce new logical operators, beyond those in the traditional repertoire, that exploit the partial order structure. An example of exactly this move comes from the program of \textit{inquisitive logic}  \cite{Ciardelli2009,Ciardelli2011,Ciardelli2013,Ciardelli2016,Ciardelli2018}, which adds a new question-forming disjunction $\Inqvee$ to the language of propositional (or modal or first-order) logic. Intuitively, the formula $p \inqvee q$ represents the question of \textit{whether $p$ or $q$}, in contrast to the formula $p\vee q$, which represents the declarative sentence \textit{$p$ or $q$}. 

Classical inquisitive logic is based on the language $\mathcal{L}_{\Inqvee}$ defined by 
\[\varphi::= p\mid \neg\varphi\mid (\varphi\wedge\varphi)\mid (\varphi\inqvee\varphi).\]
Let $\mathcal{L}$ be the fragment without $\Inqvee$. Then classical inquisitive semantics uses posets $(S,\sqsubseteq)$ (in fact, special posets, as in Remark \ref{SpecialPosets} below), takes the semantic values of propositional variables to be elements of $\mathcal{RO}(S,\sqsubseteq)$, and interprets $\mathcal{L}$ exactly as in possibility semantics. But $\inqvee$ is interpreted as follows:
\begin{itemize}
\item $\mathcal{M},x\Vdash \varphi\inqvee\psi$ iff $\mathcal{M},x\Vdash\varphi$ or $\mathcal{M},x\Vdash\psi$.
\end{itemize}
Compare this with the clause for $\vee$, which is defined in terms of $\neg$ and $\wedge$ by the classical definition $(\varphi\vee\psi):=\neg(\neg\varphi\vee\neg\psi)$:
\begin{itemize}
\item $\mathcal{M},x\Vdash\varphi\vee\psi$ iff $\forall x'\sqsubseteq x'$ $\exists x''\sqsubseteq x'$: $\mathcal{M},x''\Vdash\varphi$ or $\mathcal{M},x''\Vdash\psi$.
\end{itemize}
While $\mathcal{M},x\Vdash\varphi\vee\psi$ represents that $x$, now conceived as an \textit{information state}, settles that the disjunctive statement is true, $\mathcal{M},x\Vdash \varphi\inqvee\psi$ represents that the information state $x$ \textit{answers the question of whether $\varphi$ or $\psi$}. This idea has numerous applications to natural language semantics (see the references above). For further discussion of inquisitive disjunction and related issues in connection with possibility semantics, see \cite{Humberstone2019}.

\begin{example}\label{InqEx} Consider a possibility model with the following poset and valuation (so $p$ is forced only at the bottom left node, etc.):
\begin{center}
\begin{tikzpicture}[->,>=stealth',shorten >=1pt,shorten <=1pt, auto,node
distance=2cm,thick,every loop/.style={<-,shorten <=1pt}] \tikzstyle{every state}=[fill=gray!20,draw=none,text=black] 

\node[circle,draw=black!100,fill=black!100, label=left:$x$,inner sep=0pt,minimum size=.175cm] (bot) at (1.5,0) {{}}; 
\node[circle,draw=black!100,fill=black!100, label=left:$y$,inner sep=0pt,minimum size=.175cm] (A) at (0,-1) {{}}; 
\node[circle,draw=black!100,fill=black!100, label=right:,inner sep=0pt,minimum size=.175cm] (B) at (1.5,-1) {{}}; 
\node[circle,draw=black!100,fill=black!100, label=right:,inner sep=0pt,minimum size=.175cm] (C) at (3,-1) {{}}; 
\node[circle,draw=black!100,fill=black!100, label=below:$p$,inner sep=0pt,minimum size=.175cm] (A') at (0,-2) {{}}; 
\node[circle,draw=black!100,fill=black!100, label=below:$q$,inner sep=0pt,minimum size=.175cm] (B') at (1.5,-2) {{}}; 
\node[circle,draw=black!100,fill=black!100, label=below:$r$,inner sep=0pt,minimum size=.175cm] (C') at (3,-2) {{}}; 
\node at (3.3,-2) {{$z$}};

\path (A) edge[->] node {{}} (A'); 
\path (A) edge[->] node {{}} (B'); 
\path (B) edge[->] node {{}} (A'); 
\path (B) edge[->] node {{}} (C'); 
\path (C) edge[->] node {{}} (C'); 
\path (C) edge[->] node {{}} (B'); 
\path (A) edge[<-] node {{}} (bot); 
\path (B) edge[<-] node {{}} (bot); 
\path (C) edge[<-] node {{}} (bot);

\end{tikzpicture}
\end{center}
At the root node $x$, we have $\mathcal{M},x\Vdash (p\vee q)\vee r$ but $\mathcal{M},x\nVdash (p\vee q)\inqvee r$. For the latter claim, observe that $\mathcal{M},x\nVdash r$ by the valuation and $\mathcal{M},x\nVdash (p\vee q)$ because  $z\sqsubseteq x$ and $\mathcal{M},z\nVdash p\vee q$. By contrast, $\mathcal{M},y\Vdash  (p\vee q)\inqvee r$ because $\mathcal{M},y\Vdash p\vee q$.\end{example}

\begin{remark}\label{SpecialPosets} Inquisitive logicians typically assume that their poset $(S,\sqsubseteq)$ is of a very special form: it is the poset of all nonempty subset of a set $W$, ordered by inclusion (in the finite case these posets---or their duals---appear as \textit{Medvedev frames} in semantics for the si-logic known as Medvedev's logic \cite{Medvedev1966}, as in \cite{Holliday2017}). The poset used in Example \ref{InqEx} is isomorphic to such a poset. One can check that the description of classical inquisitive semantics above is equivalent to the usual one in the literature, assuming $(S,\sqsubseteq)$ is of the special form.\end{remark}

For the special posets in Remark \ref{SpecialPosets}, an axiomatization of the validities of $\mathcal{L}_{\Inqvee}$ is available, yielding classical inquisitive propositional logic (see, e.g., \cite{Ciardelli2009}). 

Classical inquisitive logic may be understood algebraically in the terms of \S~\ref{IntuitionisticSection} as follows: working with the Heyting algebra $\mathsf{Down}(S,\sqsubseteq)$ of downsets of a poset, inquisitive logicians (i) interpret propositional variables as elements of the fixpoint algebra $\mathsf{Down}(S,\sqsubseteq)_{j_{\neg\neg}}$ of the double negation nucleus $j_{\neg\neg}$, (ii) interpret $\vee$ (when defined in terms of $\neg$ and $\wedge)$ as $j_{\neg\neg}$ applied to the join in $\mathsf{Down}(S,\sqsubseteq)$, as in possibility semantics, and (iii) interpret $\Inqvee$ as the ordinary join in $\mathsf{Down}(S,\sqsubseteq)$. The same perspective is adopted in \cite{Bezhanishvili2019}, where a new topological semantics for classical inquisitive logic is given using the UV-spaces discussed in \S~\ref{TopFrameSection2}. 

Once we have this perspective, there is a natural way to study inquisitive logic on an \textit{intuitionistic} base \cite{Holliday2020}. That is, we now fix the language $\mathcal{L}_{\mbox{$\vee$},\Inqvee}$ as
\[\varphi::= \bot \mid p\mid (\varphi\wedge\varphi)\mid (\varphi\vee\varphi)\mid (\varphi\to\varphi)\mid (\varphi\inqvee\varphi),\]
and we interpret $\mathcal{L}_{\mbox{$\vee$},\Inqvee}$ in nuclear frames $(S,\sqsubseteq, j)$ where $j$ need not be the nucleus of double negation. As before, we (i) interpret propositional variables as elements of $\mathsf{Down}(S,\sqsubseteq)_j$, (ii) interpret $\vee$ as $j$ applied to the join in $\mathsf{Down}(S,\sqsubseteq)$, and (iii) interpret $\Inqvee$ as the ordinary join in $\mathsf{Down}(S,\sqsubseteq)$. In \cite{Holliday2020} this approach is instantiated using the Beth nucleus defined in Example \ref{NucEx}, yielding a Beth semantics (cf.~\cite{Beth1956,B&H2019}) for inquisitive intuitionistic logic, which is completely axiomatized. 

\subsection{Further language extensions}

The distinction between the declarative disjunction $\vee$ and the inquisitive disjunction $\Inqvee$ in \S~\ref{InquisitiveSection} is an example of the kind of distinction that can be made in a semantics based on posets $(S,\sqsubseteq)$ instead of sets $W$.\footnote{While the poset typically used in inquisitive semantics is the poset $(\wp(W)\setminus\{\varnothing\},\subseteq)$ for some set $W$ (see Remark \ref{SpecialPosets}), we still consider inquisitive semantics a semantics based on posets, since it uses the structure of the poset $(\wp(W)\setminus\{\varnothing\},\subseteq)$, not just the structure of the set $W$.} There is a large literature on using posets and related structures to interpret operators beyond the standard connectives, modalities, and quantifiers (see, e.g., \cite{Benthem1983a} and \cite{Halpern1991} in a temporal context and \cite{Benthem2016} and \cite[\S~12]{Benthem2019} in an informational context). Less has been written about adding semantic clauses for new operators alongside the specifically possibility-semantic clauses for the standard connectives, modalities, and quantifiers. Thus, we finish this survey with a broad open problem for the future development of possibility semantics.

\begin{question} What other operators can we add to the languages already interpreted by possibility semantics, exploiting the extra structure in a poset of possibilities beyond that of a flat set of worlds? For example: Epistemic modals \cite{HM2021}? Indicative conditionals \cite{HM2021}? Counterfactuals \cite{Santorio2021}? Determinacy operators~\cite{Cariani2021}? Awareness operators (cf.~\cite{Halpern2009})?\end{question}

\section{Conclusion}

Classical possibility semantics offers, in the case of discrete duality, freedom from the restriction to atomic algebras, and in the case of topological duality, freedom from the reliance on nonconstructive choice principles. In both cases, possibility semantics shares some of the spirit of other research programs. 

In the first case, just as forcing and Boolean-valued semantics for set theory made possible proofs of independence results in set theory that could not be achieved by previous methods, possibility semantics for modal logic makes possible proofs of independence results that cannot be proved using atomic algebras (Propositions \ref{AtomNotThm}, \ref{Snotvalid}-\ref{ESTvalid}, and \ref{OmProp}-\ref{NoOmniscience}, Theorems \ref{ProvThm}, \ref{ProvThmC}, and \ref{ProvThmB}, and Proposition \ref{WorldFact}).  Indeed, we have not only independence results but completeness results for logics that are incomplete with respect to semantics based on atomic algebras (for axiomatized logics, recall Theorems \ref{S5PiComp}, \ref{S5PiExtComp}, \ref{DingThm}, \ref{DHthm}, \ref{DHthm2}, and for semantically defined logics, recall Theorems \ref{CVnotCAV} and Corollaries \ref{SlogicWorldIncomplete}, \ref{UniversalCor}, and \ref{FOincomp}). And as we have seen, possibility semantics for first-order logic essentially \textit{is} forcing and Boolean-valued semantics writ large---applied not only to the language of set theory but to arbitrary first-order languages. Possibility semantics for first-order modal logic remains to be systematically explored, as do further applications of possibility semantics in provability logic (recall Remark \ref{QArith} and \S\S~\ref{FullRelFrameSection}-\ref{PrincRelFrame}).

 In the second case mentioned above, just as pointfree topology seeks to separate classical results on spaces into a constructive localic argument plus the application of a nonconstructive choice principle, ``choice-free Stone duality'' and ``neighborhood possibility systems'' may effect a similar separation but now using a spatial or poset-based---but still constructive---argument. 
 
 Finally, there lies the frontier beyond classical logics---intuitionistic logics and logics for extended languages that exploit the extra structure of partial states, not provided by worlds. It remains to be seen how in these areas possibility semantics may open up yet more semantic possibilities.

\section*{Acknowledgment}

For helpful feedback on this chapter, I thank Johan van Benthem, Guram Bezhanishvili, Yifeng Ding, Mikayla Kelley, Guillaume Massas, James Walsh, and the anonymous referee for Landscapes in Logic. For other helpful discussions of possibility semantics, I thank Nick Bezhanishvili, Matthew Harrison-Trainor, Lloyd Humberstone, Tadeusz Litak, and Kentar\^{o} Yamamoto.

\appendix
\section{Appendix}\label{NecSucc}

\counterwithin{theorem}{section}

In this appendix, extending \S~\ref{AccessSection}, we identify the interaction conditions on $R_i$ and  $\sqsubseteq$ that are necessary and sufficient for $\mathcal{RO}(S,\sqsubseteq)$ to be closed under $\Box_i$, as required for a \textit{full} relational possibility frame as in Definition \ref{RellPossFrame}. For $x,y\in S$, let
\begin{eqnarray*}
&&x \incomp y\mbox{ iff }\mathord{\downarrow}x\cap\mathord{\downarrow}y=\varnothing; \\
&&x\comp y\mbox{ iff }\mathord{\downarrow}x\cap\mathord{\downarrow}y\neq\varnothing.
\end{eqnarray*}

\begin{proposition}[\cite{Holliday2018}] \textnormal{For any poset $( S,\sqsubseteq )$ and binary relation $R$ on $S$, the following are equivalent: 
\begin{enumerate}
\item\label{ROtoRO1} $\mathcal{RO}(S,\sqsubseteq)$ is closed under the operation $\Box_i$ defined for $Z\in \mathcal{RO}(S,\sqsubseteq)$ by $\Box_i Z=\{x\in S\mid R_i(x)\subseteq Z\}$;
\item\label{ROtoRO2}  $R$ and $\sqsubseteq$ satisfy the following conditions:
\begin{enumerate}
\item \Rrule{}: if for all $y\in R(x)$, $y\incomp z$, and ${x}'\sqsubseteq{x}$, then for all $y'\in R(x')$, $y'\incomp z$;
\item \Rwinweak{}: if ${x}R{y}$, then $\forall {y'}\sqsubseteq {y}$ $\exists{x'}\sqsubseteq {x}$ $\forall {x''}\sqsubseteq {x'}$ $\exists {y''}\in R(x'')$: $y''\comp y'$.
\end{enumerate}
\end{enumerate}}
\end{proposition}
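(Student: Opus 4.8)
The plan is to prove the equivalence by characterizing when $\Box_i Z$ satisfies \emph{persistence} and \emph{refinability} (the two defining conditions of regular openness recalled in \S~\ref{CompleteBooleanSection}), showing that persistence of $\Box_i Z$ for all $Z\in\mathcal{RO}(S,\sqsubseteq)$ is captured by \Rrule{}, while refinability of $\Box_i Z$ for all such $Z$ is captured by \Rwinweak{}. So the argument splits into two halves, each an ``iff,'' and in each half one works with the contrapositive formulation since $\Box_i Z=\{x\mid R_i(x)\subseteq Z\}$ is naturally negated as ``$\exists y\in R_i(x)$ with $y\notin Z$,'' and for $Z\in\mathcal{RO}(S,\sqsubseteq)$ the condition $y\notin Z$ is itself equivalent (by refinability of $Z$) to $\exists y'\sqsubseteq y$ with $\mathord{\downarrow}y'\cap Z=\varnothing$.

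First, for the persistence half: $\Box_i Z$ is a downset for every $Z\in\mathcal{RO}(S,\sqsubseteq)$ iff \Rrule{} holds. For the ``only if'' direction I would instantiate with a well-chosen regular open $Z$ — the natural candidate is $Z=\neg\mathord{\downarrow}z=\{w\mid \mathord{\downarrow}w\cap\mathord{\downarrow}z=\varnothing\}$, which is regular open since $(S,\sqsubseteq)$ need not be separative but $\neg\mathord{\downarrow}z$ is always in $\mathcal{RO}(S,\sqsubseteq)$ by Theorem \ref{FirstThm}.\ref{FirstThm1}. Then $x\in\Box_i Z$ says exactly ``for all $y\in R_i(x)$, $y\incomp z$,'' so persistence of $\Box_i Z$ from $x$ down to $x'$ is literally the statement of \Rrule{}. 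For the ``if'' direction, suppose \Rrule{} holds, $x\in\Box_i Z$, and $x'\sqsubseteq x$; I must show $R_i(x')\subseteq Z$. Given $y'\in R_i(x')$, if $y'\notin Z$ then by refinability of $Z$ there is $z\sqsubseteq y'$ with $\mathord{\downarrow}z\cap Z=\varnothing$; since $R_i(x)\subseteq Z$, every $y\in R_i(x)$ lies in $Z$, hence $y\incomp z$ (as $Z$ is a downset and $\mathord{\downarrow}z\cap Z=\varnothing$), so the hypothesis of \Rrule{} is met, giving $y'\incomp z$, contradicting $z\sqsubseteq y'$.

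Second, for the refinability half: $\Box_i Z$ satisfies refinability for every $Z\in\mathcal{RO}(S,\sqsubseteq)$ iff \Rwinweak{} holds, \emph{given} that persistence already holds (i.e.\ in the presence of \Rrule{}). For the ``only if'' direction, assume $\mathcal{RO}(S,\sqsubseteq)$ is closed under $\Box_i$ and suppose $xR_iy$; I want the conclusion of \Rwinweak{}. Fix $y'\sqsubseteq y$ and apply refinability of $\Box_i\neg\mathord{\downarrow}y'$: since $y\in R_i(x)$ and $y\not\in\neg\mathord{\downarrow}y'$ (because $y'\sqsubseteq y$ means $\mathord{\downarrow}y\cap\mathord{\downarrow}y'\neq\varnothing$), we get $x\notin\Box_i\neg\mathord{\downarrow}y'$, so refinability yields $x'\sqsubseteq x$ such that $\mathord{\downarrow}x'\cap\Box_i\neg\mathord{\downarrow}y'=\varnothing$, i.e.\ for all $x''\sqsubseteq x'$, $x''\notin\Box_i\neg\mathord{\downarrow}y'$, which unpacks to: there is $y''\in R_i(x'')$ with $y''\notin\neg\mathord{\downarrow}y'$, i.e.\ $y''\comp y'$ — exactly \Rwinweak{}. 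For the ``if'' direction, assume \Rrule{} and \Rwinweak{}, take $Z\in\mathcal{RO}(S,\sqsubseteq)$ and $x\notin\Box_i Z$, so there is $y\in R_i(x)$ with $y\notin Z$; by refinability of $Z$ pick $y_0\sqsubseteq y$ with $\mathord{\downarrow}y_0\cap Z=\varnothing$; apply \Rwinweak{} to $xR_iy$ and $y_0\sqsubseteq y$ to get $x'\sqsubseteq x$ with the property that every $x''\sqsubseteq x'$ has some $y''\in R_i(x'')$ with $y''\comp y_0$, hence (since $\mathord{\downarrow}y_0\cap Z=\varnothing$ and $Z$ is a downset) $y''\notin Z$, so $x''\notin\Box_i Z$; this is refinability of $\Box_i Z$ at $x$.

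The main obstacle I anticipate is the bookkeeping in the refinability ``if'' direction — specifically the inference from $y''\comp y_0$ together with $\mathord{\downarrow}y_0\cap Z=\varnothing$ to $y''\notin Z$: one needs $y''\comp y_0$ to mean there is a common refinement $w\sqsubseteq y''$, $w\sqsubseteq y_0$, and then $w\in\mathord{\downarrow}y_0$ forces $w\notin Z$, and since $Z$ is a downset $w\notin Z$ prevents $y''\in Z$. This is routine but must be stated carefully because $(S,\sqsubseteq)$ is not assumed separative, so one cannot freely pass between $\incomp$ and non-membership in principal downsets without invoking regular openness of the relevant $Z$. A secondary point of care: in the ``only if'' directions I rely on $\neg\mathord{\downarrow}z$ and $\neg\mathord{\downarrow}y'$ being regular open, which is immediate from Theorem \ref{FirstThm}.\ref{FirstThm1} (every $\neg U$ is regular open), so no separativity is needed there either. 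Once these are nailed down, the equivalence $(\ref{ROtoRO1})\Leftrightarrow(\ref{ROtoRO2})$ follows by combining: closure under $\Box_i$ $\iff$ [persistence of every $\Box_i Z$] and [refinability of every $\Box_i Z$] $\iff$ \Rrule{} and [refinability given \Rrule{}] $\iff$ \Rrule{} and \Rwinweak{}.
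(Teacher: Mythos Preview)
Your proposal is correct. The paper does not include a proof of this proposition (it only cites \cite{Holliday2018}), so there is nothing to compare against directly; that said, your argument is exactly the natural one: the key observation is that $y\in\neg\mathord{\downarrow}z$ iff $y\incomp z$, so instantiating with $Z=\neg\mathord{\downarrow}z$ makes persistence of $\Box_i Z$ literally \Rrule{} and refinability of $\Box_i Z$ literally \Rwinweak{}, while the converse directions go through via refinability of the given $Z\in\mathcal{RO}(S,\sqsubseteq)$ as you outline. One minor remark: in the refinability ``if'' direction you assume \Rrule{} but never use it---\Rwinweak{} alone yields refinability of $\Box_i Z$---so the decomposition is cleaner than you suggest: persistence for all $Z\in\mathcal{RO}$ is equivalent to \Rrule{}, refinability for all $Z\in\mathcal{RO}$ is equivalent to \Rwinweak{}, independently.
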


The \Rrule{} condition says if $x$ has ``ruled out'' the possibility $z$, then $z$ remains ruled out by every refinement $x'$ of $x$. Contrapositively, if $z$ is compatible with some possibility accessible from $x'$, then $z$ must be compatible with some possibility accessible from $x$, as shown in Figure \ref{RruleFig}:
\[\mbox{if ${x}'\sqsubseteq{x}$ and ${x}' R_i{y}'\comp{z}$, then $\exists {y}$: ${x} R_i{y}\comp{z}$}.\]

 \begin{figure}[h]
\begin{center}
\begin{tikzpicture}[->,>=stealth',shorten >=1pt,shorten <=1pt, auto,node
distance=2cm,thick,every loop/.style={<-,shorten <=1pt}]
\tikzstyle{every state}=[fill=gray!20,draw=none,text=black]
\node (x-up) at (0,0) {{$x$}};
\node (x) at (0,-1.5) {{$x'$}};
\node (y) at (2,-1.5) {{$y'$}};
\node (z) at (4,-1.5) {{$z$}};
\node (comp1a) at (3.15, -2.25) {{}};
\node (comp1b) at (2.85, -2.25) {{}};

\node at (5,-1.5) {{\textit{$\Rightarrow$}}};

\path (x) edge[dashed,->] node {{}} (y);
\path (x-up) edge[->] node {{}} (x);

\path (y) edge[->] node {{}} (comp1a);
\path (z) edge[->] node {{}} (comp1b);

\node at (7,.3) {{$\exists$}};
\node (x-up') at (6,0) {{$x$}};
\node (x') at (6,-1.5) {{$x'$}};
\node (y') at (8,-1.5) {{$y'$}};
\node (y-up') at (8,0) {{$y$}};

\node (z') at (10,0) {{$z$}};
\node (comp1a') at (9.15, -.75) {{}};
\node (comp1b') at (8.85, -.75) {{}};

\path (x') edge[dashed,->] node {{}} (y');
\path (x-up') edge[->] node {{}} (x');

\path (x-up') edge[dashed,->] node {{}} (y-up');

\path (y-up') edge[->] node {{}} (comp1a');
\path (z') edge[->] node {{}} (comp1b');

\end{tikzpicture}
\end{center}
\caption{the \Rrule{} condition.}\label{RruleFig}
\end{figure}

The \Rwinweak{} condition has a natural game-theoretic interpretation. 

\begin{definition} Given a poset $(S,\sqsubseteq)$, $x,y\in S$, and binary relation $R$ on $S$,  the \textit{accessibility game} $\mathrm{G}(S,\sqsubseteq, R,x,y)$ for players \textbf{A} and \textbf{E} has the following rounds, depicted in Figure \ref{AccGame}:
\begin{enumerate} 
\item \textbf{A} chooses a $y'\sqsubseteq y$; 
\item \textbf{E} chooses an $x'\sqsubseteq x$;
\item \textbf{A} chooses an $x''\sqsubseteq x'$;
\item[] if $R(x'')=\emptyset$, then \textbf{A} wins, otherwise play continues;
\item \textbf{E} chooses a $y''\in R(x'')$;
\item[] if $y''\comp y'$, then \textbf{E} wins, otherwise \textbf{A} wins.
\end{enumerate}
\end{definition}
One can think of \textbf{A} and \textbf{E} as arguing about whether $y$ is  \textit{accessible} to $x$: if it is, then for any way $y'$ of further specifying $y$, there should be some way $x'$ of further specifying $x$ that ``locks in'' access to possibilities compatible with $y'$, i.e., such that all refinements $x''$ of $x'$ have access to some possibility $y''$ compatible with $y'$. If refinements of $x$ cannot keep up with refinements of $y$ in this way, then $y$ is not accessible to $x$. Thus, player \textbf{A} is trying to show that $y$ is not accessible to $x$, while player \textbf{E} is trying to block \textbf{A}'s argument.
\begin{figure}[h]
 \begin{center}
\begin{tikzpicture}[->,>=stealth',shorten >=1pt,shorten <=1pt, auto,node
distance=2cm,thick,every loop/.style={<-,shorten <=1pt}]
\tikzstyle{every state}=[fill=gray!20,draw=none,text=black]

\node (x) at (0,0) {{$x$}};
\node (y) at (3,0) {{$y$}};

\node (x') at (0,-1.5) {{$x'$}};
\node (x'') at (0,-3) {{$x''$}};
\node (y') at (3,-1.5) {{$y'$}};
\node (y'') at (3,-3) {{$y''$}};

\node (5) at (3.1,-2.3) {{\scalebox{1.4}{$\comp$} ?}};

\node (4) at (1.4,-3.4) {{4.\,\textbf{E} chooses}};

\path (x') edge[<-] node {{ 2.\,\textbf{E} chooses\;}} (x);
\path (y) edge[->] node {{ 1.\,\textbf{A} chooses}} (y');
\path (y'') edge[dashed,<-] node {{}} (x'');
\path (x'') edge[<-] node {{ 3.\,\textbf{A} chooses\;}} (x');

\end{tikzpicture}
\end{center}
\caption{the accessibility game $\mathrm{G}$ -- if $y''\comp y'$, \textbf{E} wins, otherwise \textbf{A} wins.}\label{AccGame}
\end{figure}

\begin{lemma}[\cite{Holliday2018}]\textnormal{For any poset $( S,\sqsubseteq )$ and binary relation $R$ on $S$, the following are equivalent: 
\begin{enumerate}
\item $R$ and $\sqsubseteq$ satisfy \Rwinweak{};
\item for any $x,y\in S$, if $xRy$, then \textbf{E} has a winning strategy in $\mathrm{G}(S,\sqsubseteq, R,x,y)$.
\end{enumerate}}
\end{lemma}

A natural strengthening of  \Rwinweak{} entails both  \Rwinweak{} and \Rrule{} and can be assumed without loss of generality. There are two obvious ways to consider strengthening \Rwinweak{}: change the final condition from $y''\comp y'$ to $y''\sqsubseteq y'$, and change the ``if\dots then'' to and ``if and only if''. Making both of these modifications, we obtain:
\begin{itemize}
\item \RWin{} -- $xR_iy$ iff $\forall y'\sqsubseteq y$ $\exists x'\sqsubseteq x$ $\forall x''\sqsubseteq x'$ $\exists y''\in R(x'')$: $y''\sqsubseteq y'$.
\end{itemize}
Changing the winning condition of the game $\mathrm{G}(S,\sqsubseteq, R,x,y)$ to $y''\sqsubseteq y''$ gives us the game $\underline{\mathrm{G}}(S,\sqsubseteq, R,x,y)$, as in Figure \ref{AccGame2}, for which the following holds.

\begin{lemma}[\cite{Holliday2018}]\textnormal{For any poset $( S,\sqsubseteq )$ and binary relation $R$ on $S$, the following are equivalent: 
\begin{enumerate}
\item $R$ and $\sqsubseteq$ satisfy \RWin{};
\item for any $x,y\in S$, $xRy$ iff \textbf{E} has a winning strategy in $\underline{\mathrm{G}}(S,\sqsubseteq, R,x,y)$.
\end{enumerate}}
\end{lemma}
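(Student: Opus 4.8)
The statement to prove is the equivalence, for a poset $(S,\sqsubseteq)$ and a binary relation $R$ on $S$, between (1) $R$ and $\sqsubseteq$ satisfy \RWin{}, and (2) for all $x,y\in S$, $xRy$ iff \textbf{E} has a winning strategy in $\underline{\mathrm{G}}(S,\sqsubseteq, R,x,y)$. The plan is to unpack the quantifier structure of \RWin{} and match it level-by-level against the move structure of the game $\underline{\mathrm{G}}$, exactly as in the companion lemma for \Rwinweak{} and $\mathrm{G}$ (with $\comp$ replaced by $\sqsubseteq$), since the two lemmas differ only in the final winning condition.

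\textbf{Step 1: the right-to-left implication and the direction $xRy \Rightarrow$ \textbf{E} wins.} First I would observe that \RWin{} has the biconditional form ``$xRy$ iff $\Phi(x,y)$'' where $\Phi(x,y)$ abbreviates $\forall y'\sqsubseteq y\;\exists x'\sqsubseteq x\;\forall x''\sqsubseteq x'\;\exists y''\in R(x''):y''\sqsubseteq y'$. I claim $\Phi(x,y)$ holds if and only if \textbf{E} has a winning strategy in $\underline{\mathrm{G}}(S,\sqsubseteq,R,x,y)$. This is a direct translation: a winning strategy for \textbf{E} is a function that, given \textbf{A}'s move $y'$ in round 1, returns $x'$ in round 2, and given \textbf{A}'s subsequent move $x''$ in round 3, returns $y''\in R(x'')$ in round 4; the strategy wins iff the resulting $y''\sqsubseteq y'$ for every play by \textbf{A}, and (note) the clause ``if $R(x'')=\varnothing$ then \textbf{A} wins'' is automatically handled because \textbf{E}'s obligation to produce $y''\in R(x'')$ fails exactly when $R(x'')=\varnothing$. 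So \textbf{E} has a winning strategy iff $\exists x'\sqsubseteq x$ (the value at the first move) such that $\forall x''\sqsubseteq x'$ we have $R(x'')\neq\varnothing$ and $\exists y''\in R(x''):y''\sqsubseteq y'$ — which, collecting the leading $\forall y'\sqsubseteq y$, is precisely $\Phi(x,y)$. Given this equivalence of $\Phi(x,y)$ with ``\textbf{E} wins $\underline{\mathrm{G}}$,'' the biconditional \RWin{} says exactly ``$xRy$ iff \textbf{E} wins $\underline{\mathrm{G}}(S,\sqsubseteq,R,x,y)$,'' which is (2). So (1)$\Leftrightarrow$(2) reduces entirely to the bookkeeping claim that $\Phi(x,y)$ expresses the existence of a winning strategy.

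\textbf{Step 2: making the strategy-to-formula translation precise.} The substantive (if still routine) work is to verify the equivalence between ``\textbf{E} has a winning strategy'' and $\Phi(x,y)$ without hidden gaps: a winning strategy is a pair of functions, but since the game is finite (four rounds, with an early stop), a strategy for \textbf{E} amounts to (i) a single element $x'\sqsubseteq x$ and (ii) for each $x''\sqsubseteq x'$ a choice $y''\in R(x'')$; it is winning iff for all $x''\sqsubseteq x'$ and all \textbf{A}-openings $y'\sqsubseteq y$ the chosen $y''$ satisfies $y''\sqsubseteq y'$ — but $x'$ may depend on $y'$, so more carefully: \textbf{E}'s strategy assigns to each $y'\sqsubseteq y$ an $x'_{y'}\sqsubseteq x$ and then to each $x''\sqsubseteq x'_{y'}$ a $y''\in R(x'')$ with $y''\sqsubseteq y'$. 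Existence of such a strategy is, by unwinding, $\forall y'\;\exists x'\;\forall x''\;\exists y''$ as in $\Phi$. I would present this as a short explicit argument in both directions (strategy $\Rightarrow$ formula by reading off the Skolem functions; formula $\Rightarrow$ strategy by choosing witnesses via the formula), noting that no choice principle beyond what is already in play is needed for a single fixed pair $x,y$ since the game is of finite depth. The main obstacle — and it is minor — is simply being careful that the ``if $R(x'')=\varnothing$, then \textbf{A} wins'' clause is correctly absorbed into the ``$\exists y''\in R(x'')$'' quantifier rather than producing a separate case; once that is spelled out, the proof is a mechanical transcription, entirely parallel to the already-cited lemma relating \Rwinweak{} to $\mathrm{G}$.
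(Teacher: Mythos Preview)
Your proposal is correct. The paper does not actually include a proof of this lemma in the text; it is stated with a citation to \cite{Holliday2018} and left unproved, so there is no in-paper argument to compare against. Your approach---observing that the $\forall\exists\forall\exists$ quantifier pattern in $\Phi(x,y)$ is exactly the Skolemization of ``\textbf{E} has a winning strategy in the four-round game $\underline{\mathrm{G}}$,'' with the clause ``$R(x'')=\varnothing\Rightarrow$ \textbf{A} wins'' absorbed into the innermost $\exists y''\in R(x'')$---is the expected definitional unwinding, and your self-correction that \textbf{E}'s choice of $x'$ may depend on \textbf{A}'s opening $y'$ is exactly the point that needs care.
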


\begin{figure}[h]
 \begin{center}
\begin{tikzpicture}[->,>=stealth',shorten >=1pt,shorten <=1pt, auto,node
distance=2cm,thick,every loop/.style={<-,shorten <=1pt}]
\tikzstyle{every state}=[fill=gray!20,draw=none,text=black]

\node (x) at (0,0) {{$x$}};
\node (y) at (3,0) {{$y$}};

\node (x') at (0,-1.5) {{$x'$}};
\node (x'') at (0,-3) {{$x''$}};
\node (y') at (3,-1.5) {{$y'$}};
\node (y'') at (3,-3) {{$y''$}};

\node (4) at (1.4,-3.4) {{4.\,\textbf{E} chooses}};

\path (x') edge[<-] node {{ 2.\,\textbf{E} chooses\;}} (x);

\path (y) edge[->] node {{ 1.\,\textbf{A} chooses}} (y');
\path (y') edge[->] node {{?}} (y'');
\path (y'') edge[dashed,<-] node {{}} (x'');
\path (x'') edge[<-] node {{ 3.\,\textbf{A} chooses\;}} (x');

\end{tikzpicture}
\end{center}
\caption{the accessibility game $\underline{\mathrm{G}}$ -- if $y''\sqsubseteq y'$, \textbf{E} wins, otherwise \textbf{A} wins.}\label{AccGame2}
\end{figure}
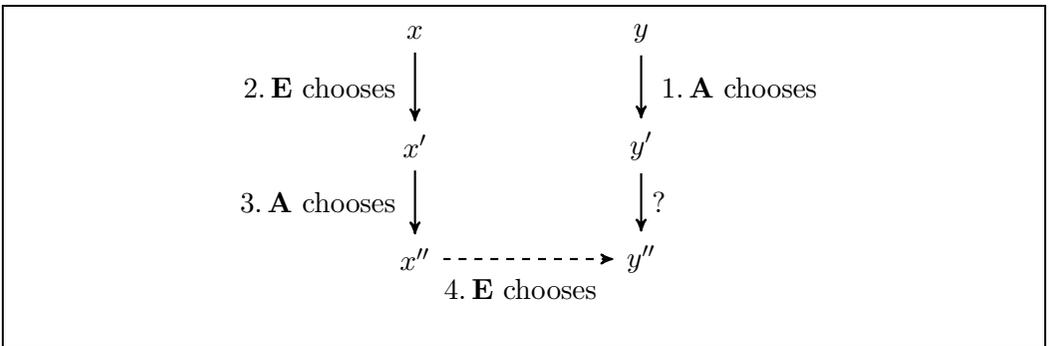

Finally, we can relate \RWin{} to the notion of a \textit{strong} possibility frame from Definition \ref{StrongDef}.

\begin{lemma}[\cite{Holliday2018}] \textnormal{\RWin{} is equivalent to the conjunction of \upR{}, \Rdown{}, \Rref{}, and \Rdense{}. Hence a possibility frame is strong iff it satisfies \RWin{}.}
\end{lemma}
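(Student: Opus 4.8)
The plan is to prove the equivalence by chaining through the two intermediate characterizations already established in the appendix, then invoking the two game-theoretic lemmas only as context. Concretely, the goal statement asserts that \RWin{} is equivalent to the conjunction of \upR{}, \Rdown{}, \Rref{}, and \Rdense{}, so that a possibility frame is strong (Definition \ref{StrongDef}) exactly when it satisfies \RWin{}. Since the second sentence follows immediately from the first together with Definition \ref{StrongDef}, the whole burden is the four-to-one equivalence, and I would prove each direction separately.

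For the direction from the four conditions to \RWin{}, first I would recall that \Rdown{} and \Rdense{} together are exactly the statement that $R_i(x)\in\mathcal{RO}(S,\sqsubseteq)$ for every $x$ (this is noted in the text just after the definition of \Rdense{}). Given $xR_iy$: for the left-to-right implication in \RWin{}, fix $y'\sqsubseteq y$; by \Rref{} applied to $xR_iy$, there is $x'\sqsubseteq x$ such that for all $x''\sqsubseteq x'$ there is $y^\dagger\sqsubseteq y$ with $x''R_iy^\dagger$. This does not yet give $y''\sqsubseteq y'$, so here I would instead argue more carefully using separativity of $(S,\sqsubseteq)$ (which holds because $\mathcal{RO}(S,\sqsubseteq)$ being a Boolean algebra makes principal downsets regular open, recall Proposition \ref{SepProp}), extending the description of a refinement of $x$ by $\Diamond_i\mathord{\downarrow}y'$ exactly as in the intuitive explanation of \Rref{} given in the text and in the proof of Theorem \ref{RelDef}; the point is that $x\in\Diamond_i\mathord{\downarrow}y$ since $R_i(x)$ meets $\mathord{\downarrow}y$, and regular-openness lets us push down to a suitable $x'$ witnessing access to $\mathord{\downarrow}y'$, after which \Rdown{} supplies $y''\sqsubseteq y'$ inside $R_i(x'')$. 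For the right-to-left implication in \RWin{}, suppose the displayed condition holds but $x\,\centernot{R_i}\,y$; using \upR{}, \Rdown{}, \Rdense{} one shows $y\in\neg\neg(\text{stuff})$ fails somewhere, i.e., one derives a contradiction by producing $y'\sqsubseteq y$ from which no $R_i(x'')$-element can descend — this is the standard argument that $R$-tightness-style closure plus the paradigm conditions forces the converse inclusion, essentially Lemma \ref{TightStrong}.\ref{TightStrong1.75} combined with Lemma \ref{FullToStrong}.

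For the converse direction, assuming \RWin{}, I would derive each of the four conditions in turn. \Rdown{}: if $xR_iy$ and $y'\sqsubseteq y$, then applying \RWin{} with any $y''\sqsubseteq y'$ (so also $y''\sqsubseteq y$) and unwinding the quantifiers gives a strategy witnessing $xR_iy'$; more directly, the right-hand side of \RWin{} for $y'$ is implied by that for $y$ since any $\tilde y\sqsubseteq y'$ is also $\sqsubseteq y$. \Rdense{}: if $\forall y'\sqsubseteq y\,\exists y''\sqsubseteq y'\;xR_iy''$, then expanding $xR_iy''$ via \RWin{} and doing a short diagonalization over the nested refinements yields the right-hand side of \RWin{} for $y$ itself, hence $xR_iy$. \upR{}: given $x'\sqsubseteq x$ and $x'R_iy'$, one checks that the \RWin{} right-hand side for the pair $(x,y')$ holds because any refinement of $x$ that is also a refinement of $x'$ inherits the witnessing refinements guaranteed for $x'$ — here the subtlety is ensuring the existential $\exists x'\sqsubseteq x$ in \RWin{} can be taken below $x'$, which works because the condition for $x'R_iy'$ already produces such refinements. \Rref{} is read off almost verbatim from \RWin{}: given $xR_iy$, instantiate the right-hand side of \RWin{} at $y'=y$ to get $x'\sqsubseteq x$ with $\forall x''\sqsubseteq x'\;\exists y''\in R_i(x'')\;y''\sqsubseteq y$, which is exactly \Rref{}.

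The main obstacle I expect is the left-to-right half of \RWin{} in the first direction — getting from the ``compatibility'' flavor of \Rref{} (which only yields some $y^\dagger\sqsubseteq y$ accessible, not $y''\sqsubseteq y'$) to the genuine ``$\sqsubseteq y'$'' conclusion for an arbitrary $y'\sqsubseteq y$. This is precisely the gap between \Rref{} and Humberstone's stronger condition, and bridging it requires using \Rdown{} together with separativity of the poset rather than \Rref{} alone. Rather than re-deriving this from scratch I would lean on the already-cited machinery: Lemma \ref{FullToStrong} shows $\mathcal{F}^\Box$ is strong when $\mathcal{F}$ is full, and Lemma \ref{TightStrong}.\ref{TightStrong1.75} gives strong $\Leftrightarrow$ $R$-tight in the full case, so the cleanest route is to prove \RWin{} $\Leftrightarrow$ $R$-tightness directly on full frames and then quote Lemma \ref{TightStrong}.\ref{TightStrong1.75}; this reduces the whole proof to a routine unwinding of the $R$-tightness definition against the game condition, which is the form in which \cite{Holliday2018} presumably carries it out.
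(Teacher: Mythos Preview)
Your proof of the direction \RWin{} $\Rightarrow$ (four conditions) is essentially correct, though a bit overcomplicated in places. The real problem is in the converse direction, specifically the left-to-right half of \RWin{}. You apply \Rref{} to $xR_iy$ and then worry that the resulting $y^\dagger$ is only $\sqsubseteq y$ rather than $\sqsubseteq y'$, calling this ``the main obstacle'' and proposing to bridge it via separativity or by retreating to $R$-tightness on full frames. But separativity is not assumed (the lemma is stated for an arbitrary poset with a binary relation), and the $R$-tightness route via Lemma \ref{TightStrong}.\ref{TightStrong1.75} only applies to full frames, so neither workaround proves the lemma as stated.

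The gap is illusory: you have \Rdown{} available, so from $xR_iy$ and $y'\sqsubseteq y$ you first get $xR_iy'$, and \emph{then} apply \Rref{} to $xR_iy'$ to obtain $x'\sqsubseteq x$ with $\forall x''\sqsubseteq x'\;\exists y''\sqsubseteq y'$: $x''R_iy''$, which is exactly the right-hand side of \RWin{}. For the right-to-left half of \RWin{}, given the right-hand side and any $y'\sqsubseteq y$, instantiate the quantifiers at $x''=x'$ to get some $y''\in R_i(x')$ with $y''\sqsubseteq y'$; then \upR{} gives $xR_iy''$, and since $y'$ was arbitrary, \Rdense{} yields $xR_iy$. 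So the forward direction uses only \Rdown{} and \Rref{}, the backward only \upR{} and \Rdense{}, and no separativity or fullness is needed anywhere.
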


%%%%%%%%%%%%%%%%%%%%%%%%%%%%%%%%%%%%%%

% The bibliography.  To create this, you of course can use BibTeX, or BibLatex 
% while writing, but before submission, the resulting bbl file should be copied
% to here, and formatted appropriately, if it is not already so.

% I include some of the items from the bibliography of my own paper.
% Please note the use of \sl and \sf.  Also please use
% full author and editor names

{\footnotesize

\bibliographystyle{acm}
\bibliography{possibilities}
}

\def\DITTO{---}

\end{document}